\def\@rst #1 #2other{#1}
\numberwithin{equation}{section}
\newtheorem{theorem}{Theorem}[section]
\newtheorem{lemma}[theorem]{Lemma}
\newtheorem{proposition}[theorem]{Proposition}
\newtheorem{remark}[theorem]{Remark}
\newtheorem{definition}[theorem]{Definition}
\newcommand{\R}{{\mathbb{R}}}
\newcommand{\eps}{{\varepsilon}}
\newcommand{\LF}{\mathrm{LF}}
\newcommand{\bbH}{{\mathbb{H}}}
\newcommand{\cA}{{\mathcal{A}}}
\newcommand{\cL}{{\mathcal{L}}}
\newcommand{\cS}{{\mathcal{S}}}
\newcommand{\C}{{\mathbb{C}}}
\newcommand{\wt}{\widetilde}
\newcommand{\wh}{\widehat}
\newcommand{\cB}{{\mathcal{B}}}
\newcommand{\cM}{{\mathcal{M}}}
\newcommand{\cMtwo}{{\cM^{\textrm{disk}}_2}}
\newcommand{\disk}{{\mathrm{disk}}}
\renewcommand{\P}{\mathbb{P}}
\newcommand{\E}{\mathbb{E}}
\renewcommand{\lg}{\langle}
\newcommand{\rg}{\rangle}
\newcommand{\ii}{\mathbf i}
\newcommand{\HPT}{{H_{\mathrm{PT}}}}
\theoremstyle{definition}
\begin{document}

\title{Derivation of all structure constants for boundary Liouville CFT}

\date{\today}

\author{Morris Ang}
\author{Guillaume Remy} 
\author{Xin Sun} 
\author{Tunan Zhu}

\maketitle

\begin{abstract}
We prove that the probabilistic definition of the most general boundary three-point and bulk-boundary structure constants in Liouville conformal field theory (LCFT) agree respectively with the formula proposed by Ponsot-Techsner (2002) and by Hosomichi (2001). These formulas also respectively describe the fusion kernel and modular kernel of the Virasoro conformal blocks, which are important functions in various contexts of mathematical physics. As an intermediate step, we obtain the formula for the boundary reflection coefficient of LCFT proposed by Fateev-Zamolodchikov-Zamolodchikov (2000). 
Our proof relies on the boundary Belavin-Polyakov-Zamolodchikov differential equation recently proved by the first named author, and inputs from the coupling theory of Liouville quantum gravity (LQG) and Schramm Loewner evolution. Our results supply all the structure constants needed to perform the conformal bootstrap for boundary LCFT.  They also yield exact descriptions for the joint law of the area and boundary lengths of basic LQG surfaces, including quantum triangles and two-pointed quantum disks.
\end{abstract}

\section{Introduction}\label{sec:intro}
Liouville conformal field theory (LCFT) describes the law of the conformal factor of 
random surfaces in Liouville quantum gravity. Introduced by Polyakov \cite{Polyakov1981} in theoretical physics,  LCFT  was recently made rigorous in probability theory, first in the case of the Riemann sphere in \cite{dkrv-lqg-sphere}, and then in the case of a simply connected domain with boundary in \cite{hrv-disk}; see also \cite{LQG_tori, Remy_annulus, LQG_higher_genus} for the case of other topologies. Following the framework of~\cite{BPZ1984}, the correlation functions of LCFT can be solved by the conformal bootstrap program. In the probabilistic framework, this was recently carried out on surfaces without boundary  \cite{DOZZ_proof}, 
\cite{GKRV_bootstrap,GKRV-Segal}. The initial input of the conformal bootstrap are the \emph{structure constants}. For surfaces without boundary, it is the three-point correlation function on the sphere. It has an exact expression called the DOZZ formula which was proposed in \cite{DOZZ1,DOZZ2} and proved in \cite{DOZZ_proof}.

For LCFT on surfaces with boundary, the structure constants are the correlation functions on the disk
with three points on the boundary, or one point in the bulk and one on the boundary. The theory involves both the bulk and the boundary Liouville potentials (see the Liouville action \eqref{liouville_action}). When the bulk Liouville potential is absent, the structure constants were obtained by Remy and Zhu~\cite{rz-boundary}. When there is one bulk insertion and no boundary ones, the structure constant was obtained by Ang, Remy, and Sun in~\cite{ARS-FZZ}, confirming an earlier  proposal of Fateev, Zamolodchikov and Zamolodchikov (FZZ) in~\cite{FZZ}. The conformal bootstrap is also applicable in the boundary case and B. Wu~\cite{Wu-annulus} recently proved a bootstrap formula for the annulus with one insertion at each boundary. %where the FZZ formula gives the needed structure constant. 

In this paper we obtain the exact formula for the boundary   three-point structure constant  proposed by Ponsot and Techsner~\cite{PT_boundary_3pt} and for the bulk-boundary structure constant proposed by Hosomichi~\cite{bulk-boundary}, in the most general case where both the boundary and bulk Liouville potentials are present. As an intermediate step, we obtain the formula for the boundary two-point function of LCFT --- also known as the boundary reflection coefficient --- which was proposed in~\cite{FZZ}.
This completes the derivation of all structure constants required for the conformal bootstrap of  boundary LCFT. We rely on a novel Belavin-Polyakov-Zamolodchikov (BPZ) differential equation for LCFT on the disk established in \cite{ang-lcft-zip} and
inputs from the coupling  theory  of Liouville quantum gravity  and Schramm Loewner evolution. See~Section~\ref{subsec:method} for a summary of our method.

The Liouville structure constants have intrinsic interest beyond the conformal bootstrap. 
The boundary three-point structure constant and the bulk-boundary structure constant agree modulo prefactors with the fusion kernel and the modular kernel of the Virasoro conformal blocks~\cite{PT_boundary_3pt}, an important special function with various interpretations in mathematical physics. Moreover, the boundary reflection coefficient gives the joint area and boundary length distribution of an important family of Liouville quantum gravity surfaces called two-pointed quantum disks~\cite{wedges,ahs-disk-welding};  the boundary three-point structure constant gives the corresponding information for a more general family of Liouville quantum gravity surfaces called quantum triangles~\cite{ASY-triangle,AHS-SLE-integrability}.

\subsection{Main results}\label{subsec:main}
We start by recalling the probabilistic construction of LCFT on the disk from \cite{hrv-disk}, which is adapted from the construction on the Riemann sphere performed in  \cite{dkrv-lqg-sphere}. By conformal invariance we will use the upper half plane $\bbH$ as the base domain. Our presentation is brief with more details provided in Section~\ref{sec:precise}. Fix the global constants
\begin{equation}
\gamma\in (0,2) \quad \textrm{ and } \quad Q=\frac{\gamma}{2}+\frac2{\gamma}.
\end{equation}
In physics LCFT is defined using a formal path integral. Fix $N$ points $z_i \in \mathbb{H}$ with associated weights $\alpha_i \in \mathbb{R}$ and similarly $M$ points $s_j \in \mathbb{R}$ with associated weights $\beta_j \in \mathbb{R}$. The correlation function associated to these points is given by the formal integral
\begin{align}\label{path_int_def}
\left \langle  \prod_{i=1}^N e^{\alpha_i \phi (z_i)}      \prod_{j=1}^M e^{\frac{\beta_i}2 \phi (s_i)} \right \rangle = 
\int_{\phi: \mathbb{H} \mapsto \mathbb{R}} D \phi    \prod_{i=1}^N e^{\alpha_i \phi(z_i)} \prod_{j=1}^M e^{\frac{\beta_j}{2} \phi(s_j)} e^{-S_L(\phi)},
\end{align}
where $S_L$ is the Liouville action given by:
\begin{align}\label{liouville_action}
S_L(\phi) = \frac{1}{4 \pi} \int_{\mathbb{H}} \left( \vert \partial^{\hat g} \phi \vert^2 + Q R_{\hat g} \phi + 4 \pi \mu e^{\gamma \phi} \right  ) d \lambda_{\hat g}  + \frac{1}{2 \pi} \int_{ \mathbb{R}} \left( Q K_{\hat g} \phi + 2 \pi \mu_{B} e^{\frac{\gamma}{2} \phi} \right) d \lambda_{\partial \hat g}.
\end{align}
Here $\hat g$ is a background metric with $R_{\hat g}$ and $\lambda_{\hat g} $ representing the curvature and  volume in the bulk, respectively, and $(K_{\hat g}, \lambda_{\partial \hat g})$ being their  boundary counterparts. 
The terms involving $\mu$ and $\mu_B$ are the bulk and boundary Liouville potentials, respectively. Here $\mu > 0$ and $\mu_B$ is a complex valued function on $\mathbb{R}$ which is piecewise constant in between boundary insertions. We assume the $s_j$'s are chosen in counterclockwise order on $\mathbb{R}$ and denote by $\mu_j $  the value of $\mu_B$ on the interval $ (s_{j-1}, s_{j})$, with convention $s_0 = -\infty$, $s_{M+1} = \infty$. We always assume $\Re(\mu_j) \geq 0$. Furthermore we frequently work under the following conditions on weights $\alpha_i, \beta_j$ known as the \emph{Seiberg bound}: 
\begin{equation}\label{eq:seiberg}
\sum_{i=1}^N \alpha_i + \sum_{j=1}^{M} \frac{\beta_j}{2} > Q, \quad \alpha_i < Q, \quad \textrm{and}\quad  \beta_j <Q. 
\end{equation}

We  now give a rigorous probabilistic meaning to \eqref{path_int_def} for $N=0$, $M=3$ and for $N=1$, $M=1$. Let $P_\bbH$ be the probability measure corresponding to the free-boundary Gaussian free field  on $\bbH$ normalized to have average zero on the semi-circle $\partial \mathbb{D} \cap \bbH$.
Let the infinite measure $\LF_\bbH(d\phi)$ be the law of $\phi(z) = h(z) - 2Q \log |z|_+ + \mathbf c$, where $|z|_+ := \max(|z|,1)$ and $(h, \mathbf c)$ is sampled according to $P_\bbH \times [e^{-Qc} \, dc]$. We call the field $\phi$ sampled from $\LF_{\bbH}$ a Liouville field on $\bbH$. This definition of $\LF_\bbH$ corresponds to choosing the background metric in \eqref{liouville_action} to be $\hat g(z) = |z|_+^{-4}$. We define the bulk and boundary Gaussian multiplicative chaos (GMC) measures  of $\phi$ as the limits (see e.g.~\cite{Ber_GMC,rhodes-vargas-review}):
\begin{equation*}%\label{def:GMC_epsilon}
\cA_\phi =  \lim_{\eps \to 0} \epsilon^{\frac{\gamma^2}{2}} e^{\gamma \phi_{\epsilon}(z)} d^2z \textrm{ on }\bbH, \qquad \text{and} \qquad \cL_\phi = \lim_{\eps \to 0} \epsilon^{\frac{\gamma^2}{4}} e^{\frac{\gamma}{2} \phi_{\epsilon}(z)} dz \textrm{ on }\R.
\end{equation*}
Given three points $s_1,s_2,s_3$ lying counterclockwise on $ \R$, let $L_j$ be the $\cL_\phi$-length of the counterclock arc on $\partial \bbH$ from $s_{j-1}$ to $s_j$, namely $L_j = \cL_{\phi}(s_{j-1}, s_j)$, where we identify $s_0$ as $s_3$. Fix $\mu>0$ and $\mu_1, \mu_2,\mu_3\ge 0$.
For $\beta_1,\beta_2,\beta_3 \in\R$ satisfying  
the bounds of \eqref{eq:seiberg}, the \emph{boundary three-point correlation function} of LCFT on $\bbH$ with bulk cosmological constant $\mu>0$ and boundary cosmological constants $(\mu_j)_{1\le j \le 3}$ is defined by:
 \begin{equation}\label{eq:3-point}
\left \langle   \prod_{j=1}^3 e^{\frac{\beta_j}2 \phi (s_j)}   \right \rangle=  \int  \prod_{j=1}^3 e^{\frac{\beta_j}{2} \phi(s_j)} \cdot e^{-\mu \cA_\phi(\bbH)-\sum_{j=1}^3 \mu_j L_j}  \LF_\bbH(d\phi ).
 \end{equation}
Similarly, consider a point $z \in \mathbb{H}$ and a point $s \in \mathbb{R}$. Let the weights $\alpha, \beta \in \mathbb{R}$ obey \eqref{eq:seiberg}, and let $\mu, \mu_B >0$. We define the \emph{bulk-boundary correlation function} as:
\begin{equation}\label{eq:1-1-point}
\left \langle  e^{\alpha \phi (z)}  e^{\frac{\beta}2 \phi (s)}   \right \rangle= \int  e^{\alpha \phi (z)}  e^{\frac{\beta}2 \phi (s)} \cdot e^{-\mu \cA_\phi(\bbH)- \mu_B \cL_{\phi}(\mathbb{R})}  \LF_\bbH(d\phi ).
 \end{equation}
 Here although $\phi$ is only a generalized function, the factors $\prod_{j=1}^3 e^{\frac{\beta_j}{2}\phi(s_j)}$ and $e^{\alpha \phi (z)}  e^{\frac{\beta}2 \phi (s)}$  in the integrands can be made sense of by regularization and Girsanov's theorem.  Moreover, the Seiberg bounds \eqref{eq:seiberg} ensure that both integrations in \eqref{eq:3-point} and \eqref{eq:1-1-point} are finite. We will review this construction in full detail in 
Section~\ref{subsec:def-LF}.

Due to conformal symmetry, the 3-point boundary correlation function has the following form
\begin{equation}\label{c4}
\left \langle \prod_{j=1}^3 e^{\frac{\beta_j}2 \phi (s_j)}    \right \rangle = \frac{H \mu^{\frac{2Q - \beta_1 - \beta_2 - \beta_3}{2\gamma}}}{\vert s_1 - s_2 \vert^{\Delta_{\beta_1} + \Delta_{\beta_2} - \Delta_{\beta_3}} \vert s_1 - s_3 \vert^{\Delta_{\beta_1} + \Delta_{\beta_3} - \Delta_{\beta_2}} \vert s_2 - s_3 \vert^{\Delta_{\beta_2} + \Delta_{\beta_3} - \Delta_{\beta_1}} },
\end{equation}  
while the bulk-boundary correlation has the form:
\begin{equation}\label{c5}
\left \langle  e^{\alpha \phi (z)}  e^{\frac{\beta}2 \phi (s)}   \right \rangle = \frac{G \mu^{\frac{2Q -2 \alpha - \beta}{2\gamma}}}{\vert z - \overline{z} \vert^{2 \Delta_{\alpha} - \Delta_{\beta}} \vert z - s \vert^{2 \Delta_{\beta} }  }.
\end{equation}  
Here the conformal dimensions are given by:
$$\Delta_{\alpha} = \frac{\alpha}{2}(Q - \frac{\alpha}{2}), \quad\textrm{and}\quad  \Delta_{\beta} = \frac{\beta}{2}(Q - \frac{\beta}{2}).$$ The function $H$ only depends on $\beta_j$ and $\mu_j/\sqrt\mu$ for $1\le j\le 3$, while $G$ only depends on $\alpha, \beta$ and $\mu_B/\sqrt\mu$. We call $H$ the  \emph{boundary three-point structure constant} and $G$ the  \emph{bulk-boundary structure constant} for LCFT. For simplicity, in the rest of the paper we will set the bulk cosmological constant to be $\mu=1$ and write $H$ as  $H^{(\beta_1, \beta_2, \beta_3)}_{(\mu_1, \mu_2, \mu_3)}$ and $G$ as $G_{\mu_B}(\alpha, \beta)$.

The conjectural formulas in physics for  $H$ and $G$  requires a crucial change of variable:
\begin{equation}\label{eq:mu_to_sigma}
\mu_B(\sigma) := \sqrt{\frac{1}{\sin(\pi\frac{\gamma^2}{4})}}\cos \left(\pi\gamma(\sigma-\frac{Q}{2}) \right).
\end{equation}
For $ \Re (\mu_B(\sigma)) >0$, we choose $\Re (\sigma)  \in (- \frac{1}{2 \gamma} + \frac{Q}{2}, \frac{1}{2 \gamma} + \frac{Q}{2})$.
Ponsot-Teschner \cite{PT_boundary_3pt} proposed a remarkable formula for $H$ under the reparametrization:
\begin{equation}\label{eq:mu-sigma}
H
\begin{pmatrix}
\beta_1 , \beta_2, \beta_3 \\
\sigma_1,  \sigma_2,   \sigma_3 
\end{pmatrix}  : = H^{(\beta_1, \beta_2, \beta_3)}_{(\mu_B(\sigma_1), \mu_B(\sigma_2), \mu_B(\sigma_3))}.
%\quad \textrm{where } \mu_B(\sigma) = \sqrt{\frac{1}{\sin(\pi\frac{\gamma^2}{4})}}\cos \left(\pi\gamma(\sigma-\frac{Q}{2}) \right).
\end{equation}
The  Ponsot-Teschner formula is expressed in terms of the Barnes' double Gamma function
$\Gamma_{\frac{\gamma}{2}}(x)$ and the double sine function $S_{\frac{\gamma}{2}}(x)=\frac{\Gamma_{\frac{\gamma}{2}}(x)}{\Gamma_{\frac{\gamma}{2}}(Q-x)}$, which are prevalent in LCFT.
Both functions admit a meromorphic extension on $\mathbb{C}$ with an explicit pole structure; see Section \ref{sec:double_gamma} for more details. Given a function $f$ that will be taken to be $\Gamma_{\frac{\gamma}{2}}$ or $S_{\frac{\gamma}{2}}$ below, we will use the shorthand notation: $$f(a \pm b) := f(a-b) f(a +b)\quad \textrm{and}\quad f(a \pm b\pm c) := f(a-b-c) f(a-b+c)f(a+b-c)f(a+b+c).$$
%and setting $\overline{\beta} = \beta_1 + \beta_2 + \beta_3$, 
The  Ponsot-Teschner function  is given by:\footnote{Our formula contains a $2 \pi$ factor that was not present in \cite{PT_boundary_3pt}.  We believe that it was an error as it was claimed on~\cite[Page 8]{PT_boundary_3pt} that the residue of $H_{\mathrm{PT}}$ at $\beta_1 = 2Q - \beta_2 -\beta_3$ is 1, which is only true if the $2\pi$ factor is included. Moreover,} this factor is consistent with the checks performed in \cite{rz-boundary} and with the $2 \pi$ factor in  $G_{\mathrm{Hos}}$.

\begin{align}\label{formule_PT}
&\HPT
\begin{pmatrix}
\beta_1 , \beta_2, \beta_3 \\
\sigma_1,  \sigma_2,   \sigma_3 
\end{pmatrix} = 2\pi  \left(\frac{\pi  (\frac{\gamma}{2})^{2-\frac{\gamma^2}{2}} \Gamma(\frac{\gamma^2}{4})}{\Gamma(1-\frac{\gamma^2}{4})} \right)^{\frac{2Q- (\beta_1 + \beta_2 + \beta_3)}{2\gamma}}
\\
& \times  \frac{\Gamma_{\frac{\gamma}{2}}(Q - \frac{\beta_2}{2} \pm \frac{Q - \beta_1}{2} \pm \frac{Q - \beta_3}{2} ) }{S_{\frac{\gamma}{2}}(\frac{\beta_3}{2} - \sigma_1 + \frac{Q}{2} \pm ( \frac{Q}{2} - \sigma_3 ))S_{\frac{\gamma}{2}}(\frac{\beta_1}{2} + \sigma_1 - \frac{Q}{2} \pm ( \frac{Q}{2} - \sigma_2 )) \Gamma_{\frac{\gamma}{2}}(Q) \prod_{j=1}^3\Gamma_{\frac{\gamma}{2}}(Q-\beta_j)} \nonumber \\ 
&\times \int_{\mathcal{C}} \frac{S_{\frac{\gamma}{2}}(\frac{Q - \beta_2}{2} + \sigma_3 \pm ( \frac{Q}{2} - \sigma_2) +r)  S_{\frac{\gamma}{2}}(\frac{Q}{2} \pm \frac{Q - \beta_3}{2}  +\sigma_3-\sigma_1+r)}{S_{\frac{\gamma}{2}}(\frac{3Q}{2} \pm \frac{Q - \beta_1}{2}-\frac{\beta_2}{2}+\sigma_3-\sigma_1+r) S_{\frac{\gamma}{2}}(2\sigma_3 + r) S_{\frac{\gamma}{2}}(Q+r)} \frac{dr}{\ii}. \nonumber
\end{align}
Here $\mathcal{C}$ is a properly chosen contour  from $- \ii \infty$ to $ \ii \infty$ such that the integral is meromorphic - namely a ratio of two holomorphic functions - in all of the six variables in the entire complex plane. 
We provide more details on the function $\HPT$ and the contour $\mathcal{C}$ in Section~\ref{sec:PT_intro} below.
%passing to the right of the poles at $r = -(-\frac{\beta_2}{2}+\sigma_2+\sigma_3)-n\frac{\gamma}{2}-m\frac{2}{\gamma}$, $r = -(Q-\frac{\beta_2}{2}+\sigma_3-\sigma_2) -n\frac{\gamma}{2}-m\frac{2}{\gamma}$, $r = -(\frac{\beta_3}{2}+\sigma_3-\sigma_1)-n\frac{\gamma}{2}-m\frac{2}{\gamma}$, $r= -(Q-\frac{\beta_3}{2}+\sigma_3-\sigma_1)-n\frac{\gamma}{2}-m\frac{2}{\gamma}$ and to the left of the poles at $r=-(\frac{\beta_1}{2}-\frac{\beta_2}{2}+\sigma_3-\sigma_1)+n\frac{\gamma}{2}+m\frac{2}{\gamma}$, $r = -(Q-\frac{\beta_1}{2}-\frac{\beta_2}{2}+\sigma_3-\sigma_1)+n\frac{\gamma}{2}+m\frac{2}{\gamma} $, $r=-(2\sigma_3-Q)+n\frac{\gamma}{2}+m\frac{2}{\gamma}$, $r=n\frac{\gamma}{2}+m\frac{2}{\gamma} $ with $m,n \in \mathbb{N}^2$.
The first main result of our paper is the derivation of Ponsot-Teschner formula.
\begin{theorem}\label{thm:H} Let $\gamma \in (0, 2)$, $\sum_{j=1}^3 \beta_j > 2Q$, $ \beta_j < Q$, and $\mu_j=\mu_B(\sigma_j)>0$ for $j=1,2,3$. Then 	\begin{align}\label{main_th2_H}
	  H
	\begin{pmatrix}
	\beta_1 , \beta_2, \beta_3 \\
	\sigma_1,  \sigma_2,   \sigma_3 
	\end{pmatrix}
	=  \HPT
	\begin{pmatrix}
	\beta_1 , \beta_2, \beta_3 \\
	\sigma_1,  \sigma_2,   \sigma_3 
	\end{pmatrix}.
	\end{align}
\end{theorem}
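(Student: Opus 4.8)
The plan is to follow the template that has succeeded for the DOZZ formula and its boundary variants: derive two families of functional (shift) equations obeyed by the probabilistic constant $H$, verify that $\HPT$ satisfies the same equations, and then pin down the remaining freedom by a uniqueness argument together with a computable base case.

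First I would augment the correlation function by a degenerate boundary insertion $e^{-\frac{\gamma}{2}\phi(t)}$, forming the four-point object $\langle e^{-\frac{\gamma}{2}\phi(t)}\prod_{j=1}^3 e^{\frac{\beta_j}{2}\phi(s_j)}\rangle$. By the boundary BPZ equation of \cite{ang-lcft-zip}, this is, as a function of $t$, a solution of a second-order hypergeometric ODE, so its solution space is two dimensional and spanned by explicit hypergeometric functions. Sending $t$ to one of the insertion points, say $s_1$, along the two adjacent boundary arcs — across which the boundary cosmological constant jumps from $\mu_1$ to $\mu_2$ — and matching the two boundary behaviours through the classical hypergeometric connection coefficients, I would extract a functional equation relating $H$ at $(\beta_1,\beta_2,\beta_3)$ to $H$ at $\beta_1\mapsto\beta_1\pm\gamma$, with explicit coefficients built from $\Gamma$-functions and trigonometric functions of the $\sigma_j$; tracking how the boundary cosmological constant changes as the degenerate insertion crosses $s_1$ likewise produces shift relations in the $\sigma_j$, and together these close the functional-equation system. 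Here the coupling theory of LQG and SLE plays a double role: it underlies the BPZ equation, and it supplies the exact conditional laws of the boundary lengths of the corresponding quantum triangle, which fix the operator-product constants entering the shift equations. Repeating the argument with the dual insertion $e^{-\frac{2}{\gamma}\phi(t)}$ (equivalently, using $\gamma\leftrightarrow\frac{4}{\gamma}$ duality) yields a second family of shift equations, shifting $\beta_1$ by $\pm\frac{4}{\gamma}$.

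Next I would check that $\HPT$ obeys both families. Using the standard shift relations of the Barnes double Gamma $\Gamma_{\frac{\gamma}{2}}$ and the double sine $S_{\frac{\gamma}{2}}$, the explicit prefactor of $\HPT$ transforms as required, while the contour integral transforms by translating the contour $\mathcal{C}$ and collecting the residues of the poles that cross it, which must recombine precisely into the connection coefficients found probabilistically. Finally, since $\gamma$ and $\frac{4}{\gamma}$ are incommensurable for generic $\gamma$, the ratio $H/\HPT$ is a meromorphic function of $\beta_1$ (and, by symmetry, of $\beta_2,\beta_3$) that is invariant under translations by both $\gamma$ and $\frac{4}{\gamma}$, hence under a dense set of translations, and of controlled growth, so it is constant (the general case following by continuity in $\gamma$). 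I would determine this constant by specializing to the degenerate configuration $\beta_1\to 2Q-\beta_2-\beta_3$, where $H$ reduces to the boundary reflection coefficient obtained as our intermediate result and $\HPT$ has residue $1$, as recorded above. I expect the main obstacle to be the middle step: the residue bookkeeping across the deliberately flexible contour $\mathcal{C}$ must reproduce the probabilistically derived connection coefficients simultaneously in all six variables $(\beta_1,\beta_2,\beta_3,\sigma_1,\sigma_2,\sigma_3)$, while keeping the shifted integral meromorphic throughout.
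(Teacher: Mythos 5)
Your overall skeleton (BPZ equation for a degenerate boundary insertion, hypergeometric connection coefficients yielding shift equations in $\beta_1$ by $\gamma$ and $\frac{4}{\gamma}$, verification that $\HPT$ obeys the same equations via the shift relations of $\Gamma_{\frac{\gamma}{2}}$, $S_{\frac{\gamma}{2}}$ and contour bookkeeping, then uniqueness plus a residue normalization) coincides with the paper's strategy in Sections \ref{sec:reff} and \ref{sec:proof_H}. But your uniqueness step contains a genuine gap. The shift equations here are \emph{three-term} relations: each of \eqref{shift three point 1}--\eqref{shift three point 2} relates $H$ at three different argument configurations (shifting $\beta_1$ by $\pm\chi$ together with shifts of $\beta_2$ and $\sigma_2$), and even after eliminating the $\beta_2$-shifts one is left with a second-order recursion of the form \eqref{eq:shift_J}, namely $\mathcal{J}(\beta_1+4\chi)+a_\chi(\beta_1)\mathcal{J}(\beta_1+2\chi)+b_\chi(\beta_1)\mathcal{J}(\beta_1)=0$ with $\frac{2}{\chi}$-periodic coefficients. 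For such a recursion the solution space over the field of periodic functions is two-dimensional, so the fact that $H$ and $\HPT$ solve the same system does \emph{not} make the ratio $H/\HPT$ invariant under $\beta_1\mapsto\beta_1+\gamma$ or $\beta_1\mapsto\beta_1+\frac{4}{\gamma}$; the ``doubly periodic meromorphic ratio, hence constant'' argument, which does work for the two-term relations satisfied by $R$ (and in the DOZZ setting), simply fails here. Correspondingly, matching the single residue at $\beta_1\to 2Q-\beta_2-\beta_3$ cannot pin down a solution living in a two-dimensional space.

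The paper's fix is to rewrite the recursion in first-order matrix form: one assembles the $2\times 2$ matrices $M_1,M_2$ of values of $\mathcal{J}_H$ and $\mathcal{J}_{\mathrm{PT}}$ at $\beta_1,\beta_1-\gamma,\beta_1-\frac{4}{\gamma},\beta_1-2Q$, which satisfy $M(\beta_1+\gamma)=A_{\frac{\gamma}{2}}(\beta_1)M(\beta_1)$ and an analogous relation in $\frac{4}{\gamma}$; then $M:=M_1M_2^{-1}$ satisfies $M(\beta_1+\gamma)=A_{\frac{\gamma}{2}}M A_{\frac{\gamma}{2}}^{-1}$ and $M(\beta_1+\frac{4}{\gamma})=M$, and the periodicity argument applies to $M$ rather than to a scalar ratio. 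Making this work requires ingredients absent from your sketch: residue matching at \emph{two} points, $\beta_1=2Q-\beta_2-\beta_3$ and $\beta_1=2Q-\beta_2-\beta_3-\gamma$ (Lemma \ref{lem:res_H}, where the second limit is computed probabilistically as $\frac{\gamma}{2}\E[\wt L]$ and is a nontrivial trigonometric expression, not $1$), combined with the reflection identity $\mathcal{J}(\beta_1)=\mathcal{J}(2Q-\beta_1)$ to fill all four matrix entries, and a proof that $\det M_2$ is not identically zero (via Lemma \ref{lem:secH2}) so that $M_2^{-1}$ exists as a meromorphic matrix. A second, smaller omission: for $\chi=\frac{2}{\gamma}$ the OPE coefficients necessarily involve the reflection coefficient $R$ (the degenerate fusion channel $\beta_1+\frac{2}{\gamma}$ exceeds $Q$), so deriving the dual shift equations for $H$ requires first establishing the shift equations and exact value of $R$ — including the mating-of-trees input for $R(\gamma,\sigma_1,\sigma_2)$ — and the reflection principle \eqref{eq:reflection_id}; your phrase about quantum-triangle length laws gestures at this but does not account for the reflection structure that actually enters.
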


Similarly the formula for the function $G$ will be expressed in terms of $\alpha, \beta, \sigma$ where $\sigma$ is related to $\mu_B$ still by the relation \eqref{eq:mu_to_sigma}. We thus write $ G(\alpha, \beta, \sigma) := G_{\mu_B}(\alpha, \beta)$. The exact formula $G_{\mathrm{Hos}}$ proposed for $G$ by Hosomichi \cite{bulk-boundary} is then given by:\footnote{For $G_{\mathrm{Hos}}$ we have an extra $2^{\Delta_{\beta} - 2 \Delta_{\alpha}}$ compared with \cite{bulk-boundary} which comes from a different convention for the renormalization of the bulk insertion. This factor is consistent with the FZZ formula of our previous work \cite{ARS-FZZ}.}
\begin{align*}
G_{\mathrm{Hos}}(\alpha, \beta, \sigma) =  & 2 \pi 2^{\Delta_{\beta} - 2 \Delta_{\alpha}} \left(\frac{\pi  (\frac{\gamma}{2})^{2-\frac{\gamma^2}{2}} \Gamma(\frac{\gamma^2}{4})}{\Gamma(1-\frac{\gamma^2}{4})} \right)^{ \frac{1}{ \gamma}(Q -\alpha - \frac{\beta}{2})} \frac{\Gamma_{\frac{\gamma}{2}}(2Q - \frac{\beta}{2}  -\alpha ) \Gamma_{\frac{\gamma}{2}}(\alpha-\frac{\beta}{2} )   \Gamma_{\frac{\gamma}{2}}(Q - \frac{\beta}{2}  )^3 }{  \Gamma_{\frac{\gamma}{2}}(Q -\alpha ) \Gamma_{\frac{\gamma}{2}}(Q - \beta ) \Gamma_{\frac{\gamma}{2}}(\alpha )\Gamma_{\frac{\gamma}{2}}(Q) \Gamma_{\frac{\gamma}{2}}(  \frac{\beta}{2}) }   \\
& \times \int_{ \mathcal{C}} \frac{d \sigma_2}{\ii} e^{2 \ii \pi (Q - 2 \sigma) \sigma_2} \frac{   S_{\frac{\gamma}{2}}( \frac{1}{2}(\alpha + \frac{\beta}{2} - Q) \pm \sigma_2 ) }{  S_{\frac{\gamma}{2}}( \frac{1}{2}(\alpha - \frac{\beta}{2} + Q) \pm \sigma_2 )   }.
\end{align*}

The contour integral in this expression for $G_{\mathrm{Hos}}$ goes from $- \ii \infty$ to $\ii \infty$ and is converging at $\pm \ii \infty$ if and only if the parameters $\alpha, \beta, \sigma$ obey the conditions $\frac{\Re(\beta)}{2} < Q$, $\Re(\sigma) \in \left[ \frac{\Re(\beta)}{4}, Q - \frac{ \Re(\beta)}{4} \right]$.
See also Section~\ref{sec:PT_intro} for more details on this function. Our second main result is thus:
\begin{theorem}\label{thm:G} Let $\gamma \in (0, 2)$, $\alpha + \frac{\beta}{2} > Q$, $ \alpha < Q$, $ \beta < Q$, and $\mu_B=\mu_B(\sigma)>0$. Then: 	\begin{align}\label{main_th2_G}
	  G(\alpha, \beta, \sigma) =G_{\mathrm{Hos}}(\alpha, \beta, \sigma).
   \end{align}
\end{theorem}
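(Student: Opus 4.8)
The plan is to characterize $G(\alpha,\beta,\sigma)$ through a system of functional (shift) equations that it shares with $G_{\mathrm{Hos}}$, then argue that these equations, together with analyticity and growth control, determine the function up to a multiplicative constant which I fix by a degeneration. To generate the shift equations I would introduce the degenerate auxiliary correlator
\[
\widetilde{G}(t)=\left\langle e^{\alpha\phi(z)}\,e^{\frac{\beta}{2}\phi(s)}\,e^{-\frac{\gamma}{4}\phi(t)}\right\rangle,
\]
where $e^{-\frac{\gamma}{4}\phi(t)}$ is the level-two boundary degenerate field (weight $-\tfrac{\gamma}{2}$) inserted at a point $t\in\mathbb R$. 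By the boundary BPZ equation of \cite{ang-lcft-zip}, $\widetilde G$ solves a second-order ODE in $t$ once the remaining positions are fixed by conformal covariance; reducing to the single real cross-ratio of $z,\bar z,s,t$ turns this into a hypergeometric equation whose local exponents at the three singular points encode the fusion channels — the boundary channel $t\to s$ (shifting $\beta\to\beta\pm\tfrac\gamma2$) and the bulk channel where $t$ collides with the bulk insertion through its mirror image (shifting $\alpha\to\alpha\pm\tfrac\gamma2$).

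Next I would compute the operator product asymptotics of $\widetilde G$ in each channel. As the two insertions collide the relevant GMC develops an integrable singularity and, by Girsanov together with a dominated-convergence argument, $\widetilde G$ factorizes into a shifted copy of $G$ times an explicit constant; one must track both fusion products in each channel, the subleading one carrying the boundary reflection coefficient. The two-term connection formula for the hypergeometric function then relates the asymptotic coefficients across channels, and eliminating the auxiliary data yields closed shift equations coupling $G(\alpha,\beta\pm\tfrac\gamma2,\sigma)$ and $G(\alpha\pm\tfrac\gamma2,\beta,\sigma)$, with the boundary cosmological constants on the arcs adjacent to $t$ entering (hence relating different values of $\sigma$). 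Running the same argument with the dual boundary degenerate insertion $e^{-\frac{1}{\gamma}\phi(t)}$ (weight $-\tfrac{2}{\gamma}$) produces a second, independent family with $\gamma\mapsto 4/\gamma$, and iterating the two families yields integer shifts $\beta\mapsto\beta+\gamma$, $\beta\mapsto\beta+4/\gamma$ and the analogous shifts in $\alpha$.

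I would then verify directly that $G_{\mathrm{Hos}}$ obeys both families, using the standard shift relations for $\Gamma_{\frac{\gamma}{2}}$ and the corresponding identities for $S_{\frac{\gamma}{2}}$, together with contour manipulations of the single $\sigma_2$-integral (shifting the contour and collecting the relevant residues). Consequently the ratio $F:=G/G_{\mathrm{Hos}}$ is invariant under shifts of $\alpha$ and $\beta$ by $\gamma$ and by $4/\gamma$. Since $\gamma^2/4\notin\mathbb Q$ for generic $\gamma$, the group generated by $\gamma$ and $4/\gamma$ is dense, so any $F$ that is analytic in the relevant strip with subexponential growth must be constant in $\alpha$ and $\beta$; the residual $\sigma$-dependence is then removed by the explicit quasi-periodicity of both sides in $\sigma$. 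This gives $G=C\,G_{\mathrm{Hos}}$ with $C$ independent of all variables, and I would pin down $C=1$ by a degeneration: sending $\beta\to 0$ collapses the bulk–boundary correlator (after analytic continuation past the Seiberg bound via the boundary reflection coefficient derived as an intermediate step) to the bulk one-point function with boundary cosmological constant, whose structure constant is exactly the FZZ formula established in \cite{ARS-FZZ}; matching leading coefficients forces $C=1$.

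The main obstacle I expect is the rigorous justification of the operator product asymptotics in the probabilistic setting — controlling the GMC mass near the colliding insertions and extracting the precise constants, including the reflection contributions, that feed into the connection coefficients. The analyticity and growth bounds needed to run the density/uniqueness argument are a secondary but nontrivial point; for these I would lean on the exact joint laws of the two-pointed quantum disk and the quantum triangle coming from the LQG/SLE coupling theory \cite{wedges,ahs-disk-welding,AHS-SLE-integrability}, both to certify the requisite regularity and, if a cleaner anchor is needed, to supply the special value that fixes the normalization.
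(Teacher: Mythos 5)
Your strategy is not the one the paper follows, and as written it has a genuine gap at its very first step. The BPZ input available here (Theorem~\ref{th_bpz_boundary}, imported from \cite{ang-lcft-zip}) is proved only for configurations in which \emph{all} insertions, including the degenerate one, lie on the boundary; the correlator you propose, $\langle e^{\alpha\phi(z)}\,e^{\frac{\beta}{2}\phi(s)}\,e^{-\frac{\chi}{2}\phi(t)}\rangle$ with a genuine bulk insertion, is outside its scope, and no BPZ equation for this mixed bulk--boundary--degenerate configuration is established in the cited literature (\cite{ARS-FZZ} treats one bulk plus one boundary degenerate insertion, with no spectator boundary point). So the hypergeometric ODE you want to start from would itself have to be proved --- a substantial new mating-of-trees argument, not a routine adaptation. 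Two further steps are underestimated. First, once the reflection channel is included, the connection formulas produce \emph{three-term} shift relations (as for $H$ in Theorem~\ref{full_shift_H}), not two-term ones, so your claim that $F=G/G_{\mathrm{Hos}}$ is invariant under the four shifts does not follow by simple elimination; for $H$ the paper needs the matrix/determinant argument of Section~\ref{sec:proof_H}, including nonvanishing of a $2\times 2$ determinant of residues, and you would need an analogue for $G$, which you do not sketch. Second, the meromorphic continuation of the probabilistic $G$ in $(\alpha,\beta,\sigma)$ beyond the Seiberg range --- indispensable both to state the shift equations and for your $\beta\to 0$ anchoring, which exits the Seiberg bounds --- requires truncation and GMC-moment arguments of the type in Propositions~\ref{prop:H-def} and~\ref{prop:R-def}, redone in the presence of a bulk insertion; citing exact laws of quantum disks does not supply this.

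For contrast, the paper proves Theorem~\ref{thm:G} with no new BPZ analysis for $G$ at all: it takes the conformal-welding identity of \cite[Proposition 1.7]{wu2023sle} (Proposition~\ref{prop:Wu1}), which expresses $G_{\mu_B}(\alpha,\beta)$ through the $\mu=0$ structure constants of \cite{rz-boundary}, the moment formula \eqref{eq:CR} for $\Psi_\beta$, and an expectation over the thin-disk measure $\mathcal M$ of a Bessel kernel whose Laplace transform is governed by the reflection coefficient; it then substitutes the just-proved $R=R_{\mathrm{FZZ}}$ (Theorem~\ref{main_th2}) and evaluates the resulting contour integrals via double-sine identities (\cite[Lemma 15]{PT-CMP}, \cite{TesVar2013}), pinning the one remaining $(\gamma,\beta)$-dependent constant from the limit \eqref{eq:limit-G-beta} as $\alpha\to Q-\frac\beta2$, which stays at the edge of the probabilistic range --- a cleaner anchor than your $\beta\to 0$ degeneration. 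Your route is essentially Hosomichi's physics derivation and could conceivably be made rigorous, but only after supplying the missing BPZ equation, the reflection OPE for this configuration, a uniqueness scheme adequate for three-term relations, and the analytic continuations; none of these is addressed at a level that would close the proof.
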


Our last main result concerns the  reflection coefficient for boundary LCFT. Formally speaking,  this quantity is defined as $R_{\mu_1,\mu_2}(\beta) :=\vert s_1 - s_2 \vert^{2 \Delta_{\beta}}\left \langle e^{\frac{\beta}{2} \phi (s_1)}  e^{\frac{\beta}2 \phi (s_2)} \right \rangle $ for $\beta\in \R$, which by conformal symmetry does not depend on $s_1,s_2\in \R$. The name reflection coefficient comes from the fact the values of $H$ at $\beta_1$ and at the reflected $2Q - \beta_1$ are related by a factor of $R$; see~\eqref{eq:reflection_id}.
Although this is in the same spirit as~\eqref{c4}, there are multiple subtleties in the rigorous definition of  $R_{\mu_1,\mu_2}(\beta)$. First, instead of integrating $e^{\frac{\beta}{2} \phi(s_1)} e^{\frac{\beta}{2} \phi(s_2)}\LF_\bbH(d\phi )$ following~\eqref{eq:3-point},  we need to integrate against $\cMtwo(\beta)$, which is the law of the two-pointed quantum disk with $\beta$-insertion. This is because   $\cMtwo(\beta)$ can be viewed as $e^{\frac{\beta}{2} \phi(s_1)} e^{\frac{\beta}{2} \phi(s_2)}\LF_\bbH(d\phi )$ modulo the redundant  conformal symmetries of $\bbH$ fixing $s_1,s_2$.  The measure $\cMtwo(\beta)$ was first introduced in~\cite{wedges} and implicitly used in~\cite{rz-boundary}. It describes the law of a quantum surface with two boundary marked points.  We  recall its precise definition  in Section~\ref{subsec:R-def}. 

Another subtlety in defining $R_{\mu_1,\mu_2}(\beta)$ is that we cannot directly integrate  $e^{- A-\mu_1L_1-\mu L_2}$ over $\cMtwo(\beta)$ as suggested by~\eqref{eq:3-point}, where $A$ and $L_1,L_2$ are the area and the two boundary lengths of a sample from $\cMtwo(\beta)$, respectively. In fact,  there is no $\beta$ such that this integration is finite. The same issue arises in~\cite{ARS-FZZ} where the LCFT correlation function on the disk with one bulk insertion is considered. In both cases, one needs to truncate the function $e^{-x}$ near $x=0$, after which the integral is finite for some range of $\beta$. Concretely,  for $\mu_1\ge 0, \mu_2\ge 0$, we define
\begin{equation}\label{eq:2-point}
R_{\mu_1,\mu_2}(\beta):=\frac{2(Q-\beta)}\gamma \int (e^{-A-\mu_1L_1-\mu_2 L_2} -1 ) \,  \mathrm d \cMtwo(\beta) \quad \textrm{for }\beta\in (\frac2{\gamma},Q).
\end{equation}
Then $R_{\mu_1,\mu_2}(\beta)$ is indeed finite. We will give the full detail of the integral in~\eqref{eq:2-point} in Section~\ref{subsec:R-def}, including its finiteness. Here we still use the convention that bulk cosmological constant $\mu=1$ as in the definition of $H^{(\beta_1, \beta_2, \beta_3)}_{(\mu_1, \mu_2, \mu_3)}$. 
The prefactor $\frac{2(Q-\beta)}\gamma$ is to match with \cite{rz-boundary}.

In the seminal work~\cite{FZZ}, Fateev, Zamolodchikov and Zamolodchikov proposed a formula for the boundary reflection coefficient  under the same reparametrization $\mu_B(\sigma)$ as in~\eqref{eq:mu-sigma}:
\begin{equation}\label{eq:R-mu-sigma}
R(\beta, \sigma_1, \sigma_2)   : =  R_{\mu_B(\sigma_1),\mu_B(\sigma_2)}(\beta).
%\quad \textrm{where } \mu_B(\sigma) = \sqrt{\frac{1}{\sin(\pi\frac{\gamma^2}{4})}}\cos \left(\pi\gamma(\sigma-\frac{Q}{2}) \right).
\end{equation}  
Their formula is:
\begin{align}\label{eq:R-FZZ}
R_{\mathrm{FZZ}}(\beta, \sigma_1, \sigma_2) =  \left(\frac{\pi  (\frac{\gamma}{2})^{2-\frac{\gamma^2}{2}} \Gamma(\frac{\gamma^2}{4})}{\Gamma(1-\frac{\gamma^2}{4})} \right)^{\frac{Q-\beta}{\gamma}}  
\frac{\Gamma_{\frac{\gamma}{2}}(\beta - Q )  S_{\frac{\gamma}{2}}(Q \pm (Q-\sigma_1 -\sigma_2) -  \frac{\beta}{2}) }{     \Gamma_{\frac{\gamma}{2}}(Q- \beta )    S_{\frac{\gamma}{2}}(\frac{\beta}{2} \pm (\sigma_2- \sigma_1))}.
\end{align} 
Our last main result is the confirmation of their proposal.
\begin{theorem}\label{main_th2}  
Let $\gamma \in (0, 2)$, $\beta \in (\frac{2}{\gamma}, Q)$, and $\mu_j=\mu_B(\sigma_j)>0$ for $j=1,2$. Then
	
	\begin{align}\label{main_th2_R}
	R(\beta, \sigma_1, \sigma_2) =  	R_{\mathrm{FZZ}}(\beta, \sigma_1, \sigma_2).
	\end{align} 
\end{theorem}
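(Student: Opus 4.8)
The plan is to characterize $R(\beta,\sigma_1,\sigma_2)$ by two functional (shift) equations in $\beta$ obtained from the boundary BPZ equation of \cite{ang-lcft-zip}, to check that $R_{\mathrm{FZZ}}$ satisfies the same equations, and to conclude by a uniqueness argument valid for generic $\gamma$. First I would fix the probabilistic setup: using the precise description of $\cMtwo(\beta)$ from the coupling theory of LQG and SLE (Section~\ref{subsec:R-def}) together with the reflection identity \eqref{eq:reflection_id}, I record that $R(\beta,\sigma_1,\sigma_2)$ is finite and real-analytic in $\beta\in(\frac2\gamma,Q)$ and admits a meromorphic continuation in $(\sigma_1,\sigma_2)$, and I note the symmetry $\sigma_1\leftrightarrow\sigma_2$ (exchange of the two boundary arcs) together with the behaviour as $\beta\uparrow Q$ that will later fix the normalization.

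For the shift equations I insert a degenerate boundary field. Adding an insertion of label $-\frac\gamma2$ (the field $e^{-\frac\gamma4\phi}$) at a point $t$ on one of the two boundary arcs produces a three-marked-point correlation that, by \cite{ang-lcft-zip}, solves a second-order ODE in $t$ of hypergeometric type. Its two fusion channels as $t$ approaches a $\beta$-insertion are $\beta\pm\frac\gamma2$; matching the hypergeometric connection coefficients at the two $\beta$-insertions $s_1,s_2$ --- and tracking how the boundary cosmological constants $\mu_1,\mu_2$ of the adjacent arcs enter --- yields a first-order shift equation relating $R$ at $\beta$ and at $\beta+\gamma$, with coefficients that are explicit trigonometric functions of $\sigma_1,\sigma_2$ coming from the $\cos\!\big(\pi\gamma(\sigma-\tfrac Q2)\big)$ shape of $\mu_B(\sigma)$ in \eqref{eq:mu_to_sigma}. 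Repeating with the dual degenerate insertion of label $-\frac2\gamma$ (the field $e^{-\frac1\gamma\phi}$) gives a second shift equation with step $\frac4\gamma$.

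I would then verify that $R_{\mathrm{FZZ}}$ solves both shift equations. This reduces to the standard functional equations for $\Gamma_{\frac\gamma2}$ and $S_{\frac\gamma2}$ recorded in Section~\ref{sec:double_gamma}, namely the two shift relations with periods $\frac\gamma2$ and $\frac2\gamma$, the reflection $S_{\frac\gamma2}(x)S_{\frac\gamma2}(Q-x)=1$, and $\frac{S_{\frac\gamma2}(x+\frac\gamma2)}{S_{\frac\gamma2}(x)}=2\sin(\tfrac{\pi\gamma}2 x)$; under $\beta\mapsto\beta+\gamma$ the double-sine arguments $\tfrac\beta2\pm(\sigma_2-\sigma_1)$ and $Q\pm(Q-\sigma_1-\sigma_2)-\tfrac\beta2$ shift by exactly one period, reproducing the trigonometric weights found from the connection problem. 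For uniqueness I restrict to $\gamma^2\notin\mathbb{Q}$, where the two steps $\gamma$ and $\frac4\gamma$ generate a dense subgroup of $\R$, so that a meromorphic function of $\beta$ obeying both equations with the prescribed growth is determined up to a $\beta$-independent constant; I fix this constant from the $\beta\uparrow Q$ asymptotics (or by matching a symmetric special value to an exact boundary-GMC moment), and extend from $\gamma^2\notin\mathbb{Q}$ to all $\gamma\in(0,2)$ by continuity of both sides.

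The main obstacle is the derivation of the shift equations: one must rigorously extract the fusion asymptotics of the probabilistic correlation as $t\to s_j$, and the reflection coefficient itself appears in the subleading term of this expansion, so the relation is implicitly self-referential. Controlling the uniform integrability of the boundary and bulk GMC near the fusion point, and correctly identifying which hypergeometric connection coefficient multiplies the reflected channel (and hence couples to $R$ at the shifted weight), is the technical crux. By comparison the special-function verification and the density/uniqueness argument are routine.
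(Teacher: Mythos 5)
Your skeleton (shift equations from degenerate insertions, verification for $R_{\mathrm{FZZ}}$ via the $\Gamma_{\frac\gamma2}$ and $S_{\frac\gamma2}$ functional relations, uniqueness from the two incommensurate steps $\gamma$ and $\frac4\gamma$ when $\gamma^2\notin\mathbb{Q}$, then continuity in $\gamma$) matches the paper's strategy, but there are two genuine gaps in how you propose to get the shift equations. First, your derivation mechanism is broken as stated: a boundary correlation with two generic $\beta$-insertions and one degenerate insertion has \emph{three} boundary points in total, and the M\"obius group of $\bbH$ fixes all three, so there is no cross-ratio and no hypergeometric ODE in $t$ --- the BPZ equation of \cite{ang-lcft-zip} only produces a nontrivial ODE for the four-point function with three generic insertions. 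This is why the paper first derives shift equations for the three-point constant $H$ from the genuine four-point deformations $H_\chi(t)$, $\tilde H_\chi(t)$ (Theorem~\ref{th_bpz_boundary}, Proposition~\ref{prop:hyper}, Lemma~\ref{Shift1_H_proba}), and only then obtains the equations for $R$ by collapsing two insertions via the limit $\beta_3\downarrow\beta_1-\beta_2$ (Lemmas~\ref{lem:HlimR}, \ref{lem:HlimR_ext}, \ref{lem_shift_eq_R}); that collapse limit is itself nontrivial here because $H$ and $R$ are defined through truncations rather than GMC moments, and its proof occupies Section~\ref{app:lim_H_R}.

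Second, and more fundamentally, you correctly identify that the dual ($\chi=\frac2\gamma$) channel is self-referential --- the OPE coefficient of the reflected channel is $R$ itself (Lemma~\ref{lem_reflection_ope2}) --- but you offer no way to close this loop, and your fallback of fixing the normalization ``by matching a symmetric special value to an exact boundary-GMC moment'' is not available: with both bulk and boundary potentials present the correlators are \emph{not} GMC moments (this is exactly the difficulty the paper emphasizes over \cite{rz-boundary,RZ_interval}). The paper closes the system by injecting an external exact input from outside LCFT: the value $R(\gamma,\sigma_1,\sigma_2)$, computed from the mating-of-trees description of the two-pointed quantum disk (the explicit joint area/length law of $\cMtwo(\gamma)$, Lemmas~\ref{lem-mot-match} and \ref{lem:R_gamma}). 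This value both resolves the self-referential relation~\eqref{equation R intermediary} that yields the $\frac2\gamma$-shift equation and, combined with the shift equations, gives $R(Q)=-1$, which pins the constant in the uniqueness step. Without this (or an equivalent) input, your two functional equations determine $R/R_{\mathrm{FZZ}}$ only up to a constant you cannot evaluate, so the proof as proposed does not close.
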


 Figure 1   summarizes all the structure constants discussed above of the boundary LCFT, including for completeness the bulk one-point function $U_{\mu_B}(\alpha)$ which was proven in \cite{ARS-FZZ} to be equal to the FZZ formula. 

\begin{figure}[!htp]
\centering
\includegraphics[width = 0.9\linewidth]{}
\caption{Structure constants of boundary Liouville CFT.}
\label{figure}
\end{figure}

\pagebreak
\subsection{Overview of the proof}\label{subsec:method}
Our proof strategy can be summarized as follows.
 \begin{itemize}
\item[Step 1] Take as input the BPZ equation stated in Theorem~\ref{th_bpz_boundary} coming from \cite{ang-lcft-zip}.
\item[Step 2] In the case of one degenerate and three generic boundary points this equation reduces to a hypergeometric equation. Using the solution theory for such equations and the asymptotic analysis of Gaussian multiplicative chaos, one obtains a set of functional equations for the probabilistically defined $H$ and $R$ called \emph{shift equations}. This is the content of Section \ref{sec:reff}.
\item[Step 3] Prove that under certain conditions the solution to these shift equations is unique. Since both $R, R_{\mathrm{FZZ}}$ and $H , \HPT$ are respectively solutions to these shift equations and they obey the conditions for uniqueness, this implies Theorems~\ref{thm:H} and~\ref{main_th2}.
\item[Step 4] Using a relation between $G$ and $R$ recently proved  by \cite{wu2023sle}, we deduce Theorem~\ref{thm:G} from Theorem~\ref{main_th2}. This step and the previous one are performed in Section \ref{sec:main}.
 \end{itemize}  
 
There are several novelties in our proof to overcome new difficulties, as we highlight below:
\begin{itemize}
\item[(1).] In the previous works on boundary theory \cite{remy-fb-formula, rz-boundary},  a handy fact is that the Liouville correlation functions can be written as a moment of Gaussian multiplicative chaos times an explicit prefactor. Under this form the probabilistic definition can be extended meromorphically to a range where the shift equations make sense. This is no longer the case in our paper. We have to introduce proper truncations in the spirit of~\eqref{eq:2-point} to extend the probabilistic definitions of $H$ and $R$, and handle analytic issues coming with this complication. This is performed in Section \ref{sec:precise}. Along the same lines the fact that the Liouville correlations contain both area and boundary GMC measures complicates the proof of several results, including the limit giving $R$ from $H$ (see Section \ref{app:lim_H_R}), and the proof of the operator product expansions (see Appendix \ref{sec:ope}).

\item[(2).]  At several places in the proofs we use a blending of the techniques of the exact solvability of LCFT and  the LQG/SLE coupling, namely conformal welding and mating-of-trees theory. We now summarize the main places where the latter theory enters the picture.
Firstly, the BPZ equation from \cite{ang-lcft-zip} was proved using the mating-of-trees theorem.  Secondly,
in the derivation of the shift equations for $R$, we need the exact value of  $R(\gamma, \sigma_1, \sigma_2)$ coming from the description of the quantum disk in the mating-of-trees theory~\cite{wedges,ag-disk}. This is in contrast to \cite{rz-boundary} where the bulk potential is absent and the corresponding input was supplied by~\cite{RZ_interval}.   
Thirdly, the relation between $R$ and $G$ in our Step 4 relies on a conformal welding result of \cite{wu2023sle}, and the FZZ formula whose proof in~\cite{ARS-FZZ} again uses conformal welding and mating-of-trees.

\item[(3).] The fact that we are working with the complicate formulas $\HPT$ and $G_{\mathrm{Hos}}$ introduces several technicalities. The shift equations satisfied by $\HPT$ consist of three term relations, which is significantly more complicated than the two term relations obeyed by $R_{\mathrm{FZZ}}$ or used in \cite{DOZZ_proof}. The uniqueness argument for such relations is quite involved and is carried out in Section \ref{sec:proof_H}. Similarly, obtaining the formula $G_{\mathrm{Hos}}$ requires an involved computation and uses non-trivial identities of integrals of the functions $\Gamma_{\frac{\gamma}{2}}, S_{\frac{\gamma}{2}}$; see Section~\ref{sec:proof_G}. Finally, we need a number of technical computations on $\HPT, G_{\mathrm{Hos}}$ collected in Appendix~\ref{sec:special}.

\end{itemize}

\subsection{Connection with the fusion and modular kernels of Virasoro conformal blocks}\label{subsub:fusion}
Conformal blocks are fundamental analytic functions  that appear in the conformal bootstrap equations in CFT (see~\eqref{eq:disk_boot1eq}). There are two key linear transformations describing the symmetries of the space of conformal blocks. The two kernels appearing in these two transformations are called the fusion and modular kernels respectively. For rational CFT, the existence and properties of such kernels are well known in physics thanks to the work of Verlinde~\cite{Verlinde1989}, and Moore-Seiberg~\cite{Moore-Seiberg}. In mathematics they are understood in the context of modular tensor category; see e.g.~\cite{Bakalov-Kirillov}. For generic Virasoro conformal blocks with central charge $c>25$, the existence and explicit expressions of the two kernels were conjectured in~\cite{PT-fusion,PT-CMP} and~\cite{Teschner-modular}, respectively; see also~\cite{Teschner-revisited,Lorentz2023}. In a work in preparation, the second and the third named authors of this paper  will prove  these conjectures jointly with P. Ghosal and Y. Sun.

As argued by Ponsot-Teschner \cite{PT_boundary_3pt}, the fusion kernel is related to the boundary three-point Liouville correlation. By matching exact formulas, we found that  the modular kernel is related to the bulk-boundary Liouville correlation in a similar manner. Let us now explicitly write these relations. Below we will use the notation $\alpha_i'$ for the alpha parameters appearing in the fusion and modular kernel to clearly distinguish them from the alpha and beta parameters of the current paper. First we give  the relation for the fusion kernel,
under the parameter identification
\begin{align*}
\beta_1 = \alpha_2', \: \: \beta_2 = Q - \ii P', \: \: \beta_3 = \alpha_3', \: \: 2 \sigma_1 = Q + \ii P, \: \: 2 \sigma_2 = \alpha_1', \: \: 2 \sigma_3 = \alpha_4',
\end{align*}
we have
\begin{align}\label{eq:fusion-rel}
&\mathcal{M}_{\alpha'_1, \alpha'_2, \alpha'_3, \alpha'_4 }^{\mathsf{fusion}}(P,P') =  \frac{1}{2\pi}  \left(\frac{\pi  (\frac{\gamma}{2})^{2-\frac{\gamma^2}{2}} \Gamma(\frac{\gamma^2}{4})}{\Gamma(1-\frac{\gamma^2}{4})} \right)^{\frac{ \beta_1 + \beta_2 + \beta_3 - 2Q}{2\gamma}} \frac{\Gamma_{\frac{\gamma}{2}}(Q) \Gamma_{\frac{\gamma}{2}}(Q-\beta_1)\Gamma_{\frac{\gamma}{2}}(Q-\beta_3)}{ \Gamma_{\frac{\gamma}{2}}(\beta_2 - Q )}\\
&\times \frac{  \Gamma_{\frac{\gamma}{2}}(Q  \pm (2 \sigma_1 - Q))  \Gamma_{\frac{\gamma}{2}}( \frac{\beta_2}{2} \pm \frac{Q -2 \sigma_2}{2} \pm \frac{Q - 2 \sigma_3}{2} ) S_{\frac{\gamma}{2}}(  \frac{ Q + \beta_2 -\beta_3 }{2}  \pm \frac{Q -\beta_1}{2}  ) }{\Gamma_{\frac{\gamma}{2}}( Q - \frac{\beta_1}{2} \pm \frac{Q - 2\sigma_1}{2} \pm \frac{Q - 2 \sigma_2}{2}  ) \Gamma_{\frac{\gamma}{2}}( Q - \frac{ \beta_3}{2}  \pm \frac{Q -2 \sigma_1}{2}  \pm \frac{Q -2 \sigma_3}{2} )  } H
	\begin{pmatrix}
	\beta_1 , \beta_2, \beta_3 \\
	\sigma_1,  \sigma_2,   \sigma_3 
	\end{pmatrix}. \nonumber
\end{align}
Next we give the relation for the modular kernel.
Under the parameter identification
\begin{align*}
\alpha = Q + \ii P', \quad \beta = \alpha', \quad 2 \sigma = Q + \ii P,
\end{align*}
we have
\begin{align}\label{eq:modular-rel}
      \mathcal{M}_{\alpha'}^{\mathsf{modular}}(P,P') & = - \frac{\pi}{2} 2^{ 2 \Delta_{\alpha} - \Delta_{\beta} } \left( \frac{\gamma}{2} \right)^{( \frac{2}{\gamma} - \frac{\gamma}{2})(Q -\alpha) +1}  \left(\frac{\pi  (\frac{\gamma}{2})^{2-\frac{\gamma^2}{2}} \Gamma(\frac{\gamma^2}{4})}{\Gamma(1-\frac{\gamma^2}{4})} \right)^{ \frac{1}{ \gamma}(\alpha + \frac{\beta}{2} - Q)} \\
& \times   \frac{ \Gamma_{\frac{\gamma}{2}}(Q \pm (2 \sigma- Q))  \Gamma_{\frac{\gamma}{2}}(Q - \beta ) \Gamma_{\frac{\gamma}{2}}(Q)}{\Gamma(\frac{2}{\gamma}(\alpha - Q)) \Gamma(\frac{\gamma \alpha}{4} - \frac{\gamma^2}{4}) \Gamma_{\frac{\gamma}{2}}(Q - \frac{\beta}{2} \pm (2 \sigma- Q) )   \Gamma_{\frac{\gamma}{2}}(Q - \frac{\beta}{2}  )^2}    G(\alpha, \beta, \sigma).\nonumber
\end{align}

\begin{comment}
OLD:
\begin{align*}
& \mathcal{M}_{\alpha'}^{\mathsf{torus}}(P,P') = \left(\frac{\pi  (\frac{\gamma}{2})^{2-\frac{\gamma^2}{2}} \Gamma(\frac{\gamma^2}{4})}{\Gamma(1-\frac{\gamma^2}{4})} \right)^{ \frac{1}{ \gamma}(\alpha + \frac{\beta}{2} - Q)} \\
& \times - \frac{\pi^2}{\Gamma(\frac{2}{\gamma}(\alpha - Q)) \Gamma(\frac{\gamma \alpha}{4} - \frac{\gamma^2}{4}) }  \frac{ \Gamma_{\frac{\gamma}{2}}(Q \pm \ii P)}{\Gamma_{\frac{\gamma}{2}}(Q - \frac{\beta}{2} \pm \ii P)} \\
& \times \left( \frac{\gamma}{2} \right)^{( \frac{2}{\gamma} - \frac{\gamma}{2})(Q -\alpha) +1}   \frac{  \Gamma_{\frac{\gamma}{2}}(Q - \beta ) \Gamma_{\frac{\gamma}{2}}(Q)  }{  \Gamma_{\frac{\gamma}{2}}(Q - \frac{\beta}{2}  )^2 } G(\alpha, \beta, \sigma).
\end{align*}

\end{comment}

The relation~\eqref{eq:fusion-rel} between the Virasoro fusion kernel and boundary Liouville three-point function was inspired by a similar relation  in the context of Virasoro minimal models~\cite{Runkel}. 
We expect that the relation~\eqref{eq:modular-rel} we discovered between the modular kernel and the bulk-boundary correlation function has a similar interpretation, but this remains to be understood.

\subsection{Outlook and perspective}\label{subsec:outlook}

\subsubsection{Conformal bootstrap for boundary LCFT}\label{subsec:bootstrap}

Conformal bootstrap is a program for computing  correlation functions of a CFT on a Riemann surface with marked points, based on the decomposition of the marked surface into simpler pieces. The data in this program are the spectrum of its Hamiltonian and the structure constants. For LCFT on surfaces without boundary, the bootstrap program was completely carried out in~\cite{DOZZ_proof,GKRV_bootstrap,GKRV-Segal}, where the structure constant is the 3-point spherical correlation function given by the DOZZ formula~\cite{DOZZ_proof}.  

For the boundary LCFT, the structure constants needed for conformal bootstrap are precisely the boundary three-point and bulk-boundary correlation functions whose expressions we have determined in Theorems \ref{thm:H} and \ref{thm:G}. The FZZ formula will also appear as a special case of the bulk-boundary correlation with the boundary weight $\beta$ set to zero. A first case of the conformal bootstrap for a surface with boundary has recently been proved in~\cite{Wu-annulus} on the annulus with one insertion at each boundary. This bootstrap formula involves the bulk-boundary correlation $G$. The general conformal bootstrap for boundary LCFT, which is the counterpart of~\cite{GKRV-Segal}, is a work in progress by the authors of~\cite{GKRV-Segal,Wu-annulus}. As an example involving the boundary three-point function $H$, we state the bootstrap equation for the boundary four-point function:

\begin{align}\label{eq:disk_boot1eq}
\left \langle \prod_{j=1}^4 e^{\frac{\beta_j}{2} \phi(s_j)} \right \rangle  = \frac{1}{4 \pi} s^{( \frac{Q^2}{4} - \Delta_{\beta_1} - \Delta_{\beta_2}) } \int_{\mathbb{R}} H^{(\beta_1, \beta_2, Q + \ii P)}_{(\mu_1, \mu_2, \mu_3)} H^{(\beta_3, \beta_4, Q - \ii P)}_{(\mu_3, \mu_4, \mu_1)} s^{\frac{P^2}{4}} \mathcal{F}_{\beta_1, \beta_2, \beta_3, \beta_4 }(s,P) dP.
\end{align}

Here the marked points $s_j$ are at the locations $(0, s, 1, \infty)$. The correlation function depends on four cosmological constants $(\mu_j)_{j = 1, \dots, 4}$, and the function $\mathcal{F}_{\beta_1, \beta_2, \beta_3, \beta_4 }(s,P)$ is the conformal block.

\subsubsection{Implications for GMC measures}

In the spirit of the discussions in \cite[Section 1.3]{ARS-FZZ} and \cite[Sections 1.3 and 1.4]{rz-boundary}, we give here some consequences of the exact formulas we have derived for the GMC measures. First of all if one sets all boundary cosmological constants to $0$, the probabilistic expressions for $H$ and $G$ will reduce to a moment of the Gaussian multiplicative chaos area measure of $\mathbb{H}$. Thus our results give exact formulas for these 2d GMC measures. 

The function $R$ can also be used to give the tail expansion of Gaussian multiplicative chaos.  
In particular, we expect that when $\beta \in (\frac{\gamma}{2}, Q)$ 
\begin{align*}
\mathbb{P} \left( \int_{\mathbb{H} \cap \mathbb{D}} \frac{1}{ |x|^{\gamma \beta}  } e^{\gamma h(x)} d^2x > u^2\right) \underset{u \rightarrow \infty }{\sim} - \frac{1}{\Gamma(1 - \frac{2(Q -\beta)}{2})} \frac{R_{\mu_1=0,\mu_2=0}(\beta)}{u^{\frac{2}{\gamma}(Q - \beta)}}.
\end{align*}
Furthermore, the joint law of the area and boundary length distribution under $\cMtwo(\beta)$, which is  encoded by $R_{\mu_1,\mu_2}(\beta)$,  should describe the asymptotic behavior of  
\begin{align*}
\mathbb{P} \left( \int_{\mathbb{H} \cap \mathbb{D}} \frac{1}{ |x|^{\gamma \beta}  } e^{\gamma h(x)} d^2x > u^2,  \int_{-1}^{1} \frac{1}{ |r|^{\frac{\gamma \beta}{2}}  } e^{\frac{\gamma}{2} h(r)} d r > u \right).
\end{align*}
 See \cite{RV_reflection, cercle_toda_reflection} on the role of   reflection coefficients in the tail expansions of GMC measures.

\subsubsection{Interaction between the integrability of LCFT and the mating-of-trees theory.}\label{subsub:SLE}
We see from \eqref{eq:2-point} that the boundary reflection coefficient $R$ encodes the joint law of the length and area of the two-pointed quantum disk sampled from $\cMtwo(\beta)$. With Yu, the first and third named authors considered in~\cite{ASY-triangle} the LQG surfaces with three boundary marked points sampled from $ \prod_{j=1}^3 e^{\frac{\beta_j}{2}\phi(s_j)}\LF_\bbH(d\phi )$, which they called the quantum triangles. The boundary structure constant $H$ encodes the joint area and length distribution of these surfaces. 
As shown in~\cite{ahs-disk-welding,AHS-SLE-integrability,ARS-FZZ,ASY-triangle,wu2023sle,backbone},  quantum disks and triangles behave nicely under conformal welding and facilitate a mutually beneficial interaction between the integrability of LCFT and of the mating-of-trees theory. Results of this paper are not only  outcomes of this interaction, but also enrich it in several ways. For example, in a work in preparation with Yu, the first and third named authors will derive an exact formula for a quantity in mating-of-trees which is shown in~\cite{borga2023baxter} to encode  the expected proportion of inversions in a skew Brownian permuton recently introduced in~\cite{borga2021skew}. Our result on $H$ plays a crucial role there. Moreover, conformal welding results give various relations between the fusion kernel and modular kernel of Virasoro conformal blocks. We plan to investigate the possible connection between these relations and those which are well-known in the modular tensor category  context~\cite{Moore-Seiberg}.

\subsection{Outline of the paper}
The rest of our paper is organized as follows. In Section \ref{sec:precise} we give precise definitions of the functions $\HPT$ and $G_{\mathrm{Hos}}$, of the probabilistic expressions for $H$, $G$, and $R$, and prove the required analytic properties of these functions. In Section \ref{sec:reff} we prove the shift equations for  $H$ and $R$.
In Section \ref{sec:main} we complete the proof of our main theorems using these shift equations. In Section~\ref{sec:technical} we supply the proofs of two facts on $R$ that were used in Section~\ref{sec:reff}. 
In the appendix we provide several technical ingredients needed in the main text.

\medskip
\noindent\textbf{Acknowledgements.} 
The authors are grateful for the hospitality of the Institute for Advanced Study (IAS) at Princeton, where the final stage of the project took place. We would like to very warmly thank Lorenz Eberhardt who helped us fix the uniqueness argument for the boundary three-point function by suggesting to write the functional equations in matrix form. We also thank Baojun Wu and Da Wu for helpful discussions.
M.A.\ was partially supported by NSF grant DMS-1712862, and by the Simons Foundation as a Junior Fellow at the Simons Society of Fellows. G.R.\ was partially supported by an NSF mathematical sciences postdoctoral research
fellowship, NSF Grant DMS-1902804, and by a member fellowship from IAS.
X.S.\ was partially supported by the NSF grant DMS-2027986, the NSF Career award 2046514, a start-up grant from the University of Pennsylvania, and by a member fellowship from IAS.

\section{Definitions and meromorphicity  of $\HPT$, $G_{\mathrm{Hos}}$, $H$, $G$, and $R$ }\label{sec:precise}
In this section we start by giving a full definition of the functions $\HPT$ and $G_{\mathrm{Hos}}$ and the probabilistic definition of $H$ and $G$ under the Seiberg bounds \eqref{eq:seiberg}. We then given an extended definition of $H$ and the definition of $R$ using a truncation procedure. We also state and prove analyticity results for these functions. Throughout this section and the next we will frequently refer to a function of several complex variables as being meromorphic on some domain. By this terminology we simply mean that it can be expressed as the ratio of two holomorphic functions of several variables on that domain.

\subsection{The Ponsot-Teschner and Hosimichi functions}\label{sec:PT_intro}

We start by giving full details on the definition of $\HPT$ given by equation \eqref{formule_PT} and some of its properties. 
We first clarify how the contour $\mathcal{C}$ is chosen. For a function $f$, recall our notation $f(a \pm b) = f(a + b) f(a -b)$. 
Consider the integral
\begin{equation}\label{J_PT_app}
\mathcal{J}_{\mathrm{PT}} := \int_{\mathcal{C}} \frac{S_{\frac{\gamma}{2}}(\frac{Q - \beta_2}{2} + \sigma_3 \pm ( \frac{Q}{2} - \sigma_2) +r)  S_{\frac{\gamma}{2}}(\frac{Q}{2} \pm \frac{Q - \beta_3}{2}  +\sigma_3-\sigma_1+r)}{S_{\frac{\gamma}{2}}(\frac{3Q}{2} \pm \frac{Q - \beta_1}{2}-\frac{\beta_2}{2}+\sigma_3-\sigma_1+r) S_{\frac{\gamma}{2}}(2\sigma_3 + r) S_{\frac{\gamma}{2}}(Q+r)} \frac{dr}{\ii},
\end{equation}
so that  $\HPT$ and $\mathcal{J}_{\mathrm{PT}}$ are then related by an explicit prefactor containing the functions $\Gamma_{\frac{\gamma}{2}}$ and $S_{\frac{\gamma}{2}}$ as in~\eqref{formule_PT}.
To describe how to choose the contour of integration $\mathcal{C}$, we must look at the locations of the poles in $r$ of the integrand. Recall that the function $S_{\frac{\gamma}{2}}$ has poles at $x = -n\frac{\gamma}{2}-m\frac{2}{\gamma}$ and zeros at $x = Q+n\frac{\gamma}{2}+m\frac{2}{\gamma}$ for any $n,m \in \mathbb{N}$ (here and throughout the paper $\mathbb{N}$ contains $0$). We will group the poles of the integrand into two groups, the ones coming from the poles of the numerator and the ones coming from the zeros of the denominator. More precisely consider the set of poles in the lattices extending to the left
\begin{align*}
P^{-}_{\mathrm{PT}} = \left \{ \xi - \frac{n \gamma}{2} - \frac{2 m}{\gamma} \Bigg \vert  n,m \in \mathbb{N}, \xi \in  \{  \frac{\beta_2}{2} - \sigma_2 - \sigma_3,  \frac{\beta_2}{2} - Q +\sigma_2 - \sigma_3, - \frac{\beta_3}{2} - \sigma_3 +\sigma_1, \frac{\beta_3}{2} - Q - \sigma_3 +\sigma_1 \} \right \},
\end{align*}
and the set of poles in the lattices extending to the right:
\begin{align*}
P^{+}_{\mathrm{PT}} = \left \{ \xi + \frac{n \gamma}{2} + \frac{2 m}{\gamma} \bigg \vert  n,m \in \mathbb{N}, \xi \in  \{ - \frac{\beta_1}{2} + \frac{\beta_2}{2} - \sigma_3 + \sigma_1, - Q + \frac{\beta_1}{2} + \frac{\beta_2}{2} - \sigma_3 + \sigma_1, - 2\sigma_3 + Q, 0\} \right \}.
\end{align*}

Assume the parameters $\beta_i, \sigma_i$ are chosen so that $P^{-}_{\mathrm{PT}} \cap P^{+}_{\mathrm{PT}} = \emptyset$. Then the contour $\mathcal{C}$ in $\mathcal{J}_{\mathrm{PT}}$ goes from $- \ii \infty$ to $ \ii \infty$ passing to the right to all the poles in $P^{-}_{\mathrm{PT}}$ and to the left of all the poles in $P^{+}_{\mathrm{PT}}$. In the region of large or very negative imaginary part, namely away from the region containing the poles, the contour $\mathcal{C}$ is chosen simply to be a vertical line on the axis $ \ii \mathbb{R}$. Figure 2 below provides an illustration of the contour $\mathcal{C}$ and the lattices of poles $P^{-}_{\mathrm{PT}}$ and $P^{+}_{\mathrm{PT}}$.
The following lemma verifies that this integral is converging at $\pm \ii \infty$.
\begin{figure}[!htp]
\centering
\includegraphics[width=0.6\linewidth]{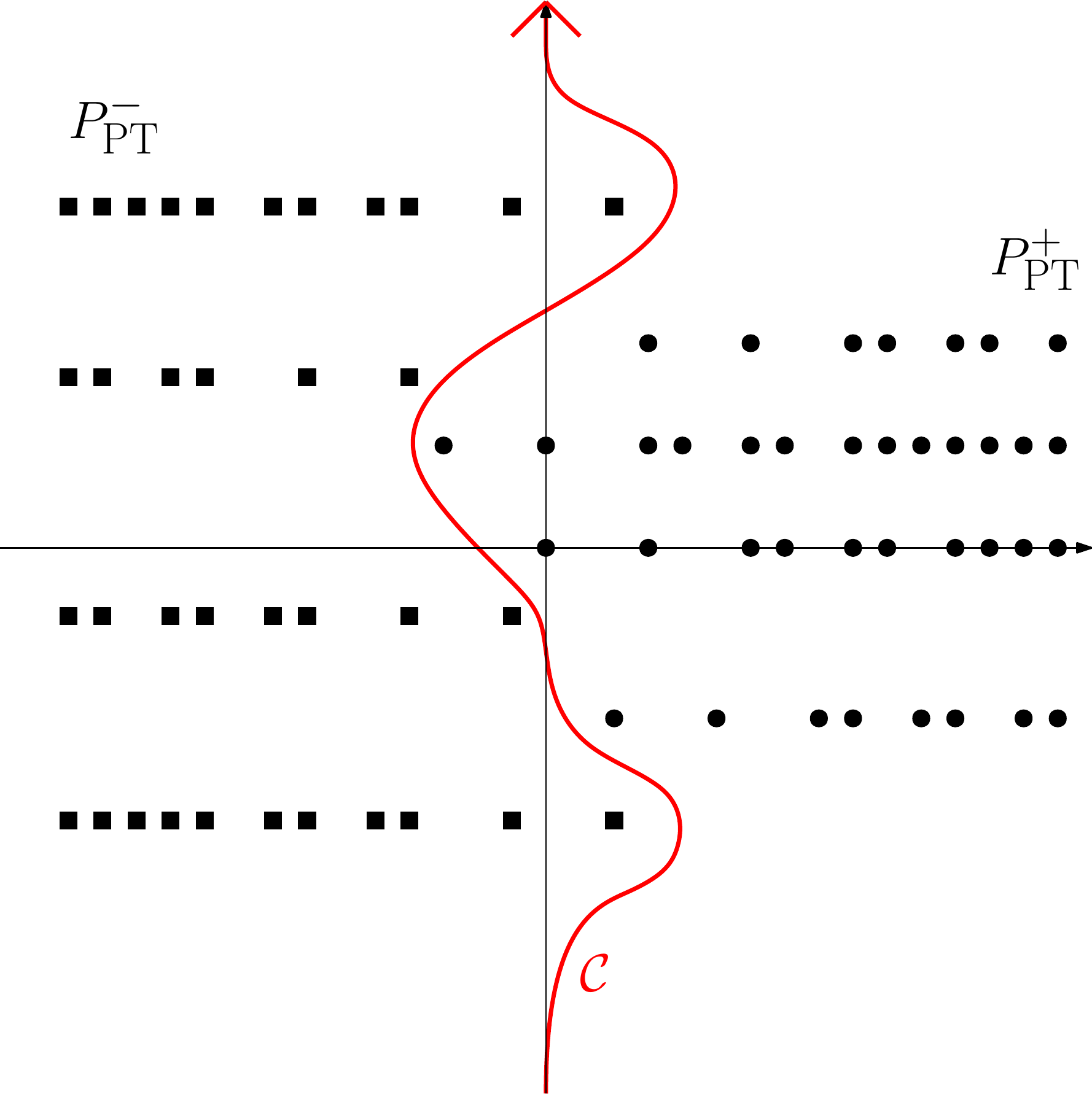}
\caption{The contour of integration $\mathcal{C}$ in the function $\HPT$.}
\label{figure}
\end{figure} 

\begin{lemma}
The integral in \eqref{J_PT_app} is absolutely converging near both $\ii \infty$ and $- \ii \infty$.
\end{lemma}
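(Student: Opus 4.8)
The plan is to reduce the entire statement to the standard asymptotic behaviour of the double sine function $S_{\frac{\gamma}{2}}$ along vertical lines, recalled in Section~\ref{sec:double_gamma}. Namely, as $\Im(x) \to \pm\infty$ one has
$$S_{\frac{\gamma}{2}}(x) = e^{\mp\frac{\ii\pi}{2}\left(x^2 - Qx + c_0\right)}\bigl(1+o(1)\bigr),$$
where $c_0$ is a fixed constant depending only on $\gamma$, and the $o(1)$ error is uniform as long as $\Re(x)$ stays in a bounded set. Writing $x = a + \ii t$ with $a$ real, a direct computation of the real part gives
$$\log\bigl|S_{\frac{\gamma}{2}}(a+\ii t)\bigr| = \pi t\,\bigl(a - \tfrac{Q}{2}\bigr) + o(t) \quad (t\to+\infty),$$
and the same expression with an overall sign flip as $t\to-\infty$. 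This is the only analytic input I would use.

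First I would record the purely bookkeeping fact that the integrand of $\mathcal{J}_{\mathrm{PT}}$ is a ratio of a product of exactly four factors $S_{\frac{\gamma}{2}}(a_k + r)$ in the numerator and exactly four factors $S_{\frac{\gamma}{2}}(b_\ell + r)$ in the denominator, where the shorthand $S_{\frac{\gamma}{2}}(a\pm b + r)$ stands for two factors each. On the portions of the contour $\mathcal{C}$ near $\pm\ii\infty$, where by construction $\mathcal{C}$ is a vertical line $r = \ii t$, all of the real shifts $a_k,b_\ell$ remain fixed, so the asymptotic above applies simultaneously to all eight factors with a uniform error.

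The heart of the argument is the cancellation of the leading order. Summing the logarithmic asymptotics over the numerator factors and subtracting those of the denominator, the coefficient of $\pi t$ equals
$$\Bigl(\sum_{k} a_k - \sum_\ell b_\ell\Bigr) - \tfrac{Q}{2}(4-4) = \sum_{k} a_k - \sum_\ell b_\ell.$$
Crucially, the quadratic-in-$r$ terms $\mp\frac{\ii\pi}{2}r^2$ cancel automatically because the numerator and denominator have the same number of factors, and the $\tfrac{Q}{2}$ contributions cancel for the same reason. A direct evaluation of the shifts yields $\sum_k a_k = 2Q - \beta_2 + 4\sigma_3 - 2\sigma_1$ for the numerator and $\sum_\ell b_\ell = 4Q - \beta_2 + 4\sigma_3 - 2\sigma_1$ for the denominator, so that $\sum_k a_k - \sum_\ell b_\ell = -2Q$. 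Consequently $\log|\text{integrand}| = -2\pi Q\,|t| + o(t)$ as $t\to\pm\infty$, the integrand decays like $e^{-2\pi Q|t|}$, and absolute convergence at both ends follows.

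The only genuine point to handle carefully is subleading control: I would verify that the $o(1)$ error in the double sine asymptotic is uniform along the vertical strip in question (which holds because $\Re(a_k+\ii t)$ and $\Re(b_\ell+\ii t)$ stay bounded), so that adding eight expansions is legitimate and the resulting $o(t)$ remainder cannot overwhelm the leading linear decay. I expect this uniformity check to be the main, though routine, obstacle. A pleasant feature worth emphasizing is that neither the four-versus-four count nor the value $-2Q$ of the shift sum depends on $\beta_j$ or $\sigma_j$; hence the decay rate $e^{-2\pi Q|t|}$ is uniform in all the parameters, which is exactly what is needed for $\mathcal{J}_{\mathrm{PT}}$, and therefore $\HPT$, to be well defined and meromorphic in all six variables.
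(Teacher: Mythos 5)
Your proof is correct and follows essentially the same route as the paper: apply the asymptotic \eqref{eq:lim_S} to each of the four numerator and four denominator factors of the integrand in \eqref{J_PT_app}, use the four-versus-four count to cancel the quadratic (and the $\frac{Q}{2}$) contributions, and read off a linear exponential decay rate proportional to $Q>0$, which gives absolute convergence at both ends. The one discrepancy is the constant: your shift-sum computation $\sum_k a_k - \sum_\ell b_\ell = -2Q$ is correct and yields an integrand equivalent to $c\, e^{2\ii \pi Q r}$ as $r \to +\ii\infty$ (decay $e^{-2\pi Q |t|}$), whereas the paper states $c_1 e^{3\ii \pi Q r}$; the paper's exponent appears to be a harmless arithmetic slip, and since $Q>0$ the conclusion is unaffected either way.
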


\begin{proof}
	We need to determine the asymptotic in $r$ of the integrand of the contour integral as $r$ goes to $\pm \ii \infty$. For this we simply need to use the asymptotic of $S_{\frac{\gamma}{2}}$ given by equation \eqref{eq:lim_S}.
	Using this fact, one obtains that the integrand as $r \rightarrow + \ii \infty$ is equivalent to $c_1 e^{3 \ii \pi Q r}$ and as $r \rightarrow - \ii \infty$ to $c_2 e^{-3 \ii \pi Q r}$ for some constants $c_1, c_2 \in \mathbb{C}$ independent of $r$. Since $Q >0$, the integral is absolutely convergent.
\end{proof}
When the parameters $\beta_i, \sigma_i$ are such that $P^{-}_{\mathrm{PT}} \cap P^{+}_{\mathrm{PT}} \neq \emptyset$, the function $\mathcal{J}_{\mathrm{PT}}$   have a pole which can be determined by a residue computation; see the following Lemma \ref{lem:J}.

\begin{lemma}\label{lem:J}
	The poles of the function $\mathcal{J}_{\mathrm{PT}}$ occur when the set  $ \{n \frac{\gamma}{2} + m \frac{2}{\gamma}:n, m \in \mathbb{N}\}$ contains any of the following
	$$
	\begin{array}{llll}
		\frac{\beta_1}{2} - \sigma_1 - \sigma_2, & \quad Q - \frac{\beta_1}{2} - \sigma_1 - \sigma_2, & \quad \frac{\beta_2}{2} - \sigma_2 + \sigma_3 -Q, & \quad \frac{\beta_2}{2} - \sigma_2 - \sigma_3, \\[0.2cm]
	- Q + \sigma_2 + \frac{\beta_1}{2} - \sigma_1, & \quad \sigma_2  - \sigma_1 - \frac{\beta_1}{2}, & \quad -2 Q + \frac{\beta_2}{2} + \sigma_3 + \sigma_2, & \quad -Q + \frac{\beta_2}{2} - \sigma_3 + \sigma_2, \\[0.2cm]
	\frac{\beta_1}{2} - \frac{\beta_2}{2} - \frac{\beta_3}{2}, & \quad Q - \frac{\beta_1}{2} - \frac{\beta_2}{2} - \frac{\beta_3}{2}, & \quad - Q - \frac{\beta_3}{2} + \sigma_3 + \sigma_1, & \quad - \frac{\beta_3}{2} - \sigma_3 + \sigma_1, \\[0.2cm]
	- Q + \frac{\beta_1}{2} - \frac{\beta_2}{2} + \frac{\beta_3}{2}, & \quad \frac{\beta_3}{2} - \frac{\beta_1}{2} - \frac{\beta_2}{2}, & \quad - 2Q + \frac{\beta_3}{2} + \sigma_3 + \sigma_1, & \quad - Q + \frac{\beta_3}{2} - \sigma_3 + \sigma_1.
	\end{array}
	$$
\end{lemma}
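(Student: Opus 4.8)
The plan is to run the standard contour-pinching analysis for this Mellin--Barnes type integral. The starting point is the pole/zero structure of $S_{\frac{\gamma}{2}}$ recalled just before the statement: $S_{\frac{\gamma}{2}}(x)$ has poles at $x=-n\frac{\gamma}{2}-m\frac{2}{\gamma}$ and zeros at $x=Q+n\frac{\gamma}{2}+m\frac{2}{\gamma}$ for $n,m\in\mathbb{N}$. Consequently the integrand of $\mathcal{J}_{\mathrm{PT}}$ is meromorphic in $r$ with poles splitting into two lattice families: the left family $P^-_{\mathrm{PT}}$, coming from the poles of the four numerator factors, and the right family $P^+_{\mathrm{PT}}$, coming from the zeros of the four denominator factors. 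First I would expand the $\pm$ shorthand in each of the eight $S_{\frac{\gamma}{2}}$ arguments and check that the base points $\xi$ produced are exactly the four listed in $P^-_{\mathrm{PT}}$ (numerator poles, shifting left) and the four listed in $P^+_{\mathrm{PT}}$ (denominator zeros, shifting right); this is a direct computation, e.g. the numerator factor $S_{\frac{\gamma}{2}}(\frac{Q-\beta_2}{2}+\sigma_3-(\frac{Q}{2}-\sigma_2)+r)=S_{\frac{\gamma}{2}}(-\frac{\beta_2}{2}+\sigma_2+\sigma_3+r)$ contributes the base point $\xi=\frac{\beta_2}{2}-\sigma_2-\sigma_3$, and similarly for the rest.

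For parameters in the open set where $P^-_{\mathrm{PT}}\cap P^+_{\mathrm{PT}}=\emptyset$, the contour $\mathcal{C}$ separates the two families, and by the convergence at $\pm\ii\infty$ established in the preceding lemma (which gives local uniform convergence), $\mathcal{J}_{\mathrm{PT}}$ is holomorphic in the six parameters. As the parameters are varied analytically, $\mathcal{J}_{\mathrm{PT}}$ continues holomorphically as long as one can deform $\mathcal{C}$ continuously keeping every pole of $P^-_{\mathrm{PT}}$ to its left and every pole of $P^+_{\mathrm{PT}}$ to its right. The only obstruction to such a deformation, hence the only possible source of a singularity, is a \emph{pinch}: a point where a pole of $P^-_{\mathrm{PT}}$ collides with a pole of $P^+_{\mathrm{PT}}$ from opposite sides of $\mathcal{C}$.

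The collision condition for a left pole $\xi^- - n\frac{\gamma}{2}-m\frac{2}{\gamma}$ and a right pole $\xi^+ + n'\frac{\gamma}{2}+m'\frac{2}{\gamma}$ is $\xi^- - \xi^+ \in \{k\frac{\gamma}{2}+l\frac{2}{\gamma}:k,l\in\mathbb{N}\}$. I would then enumerate the $4\times 4$ pairs $(\xi^-,\xi^+)$; in each difference $\xi^- - \xi^+$ the two $\sigma_3$'s and one $\beta_2/2$ cancel pairwise, and the sixteen results reproduce exactly the sixteen entries of the table (for instance the pair $(\frac{\beta_2}{2}-\sigma_2-\sigma_3,\,-\frac{\beta_1}{2}+\frac{\beta_2}{2}-\sigma_3+\sigma_1)$ gives $\frac{\beta_1}{2}-\sigma_1-\sigma_2$). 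This shows poles can occur only at the stated loci. To confirm that a genuine pole develops at each pinch, I would extract the singular part in the usual way: deform $\mathcal{C}$ across the offending pole, picking up $2\pi$ times the residue (the normalization $\frac{dr}{\ii}$ converts $\oint$ into $2\pi\,\mathrm{Res}$); the residue term carries a factor $1/S_{\frac{\gamma}{2}}$ evaluated at the colliding argument, which acquires a simple pole precisely when the pinch condition holds, while the integral over the deformed contour stays regular there.

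I expect the main difficulty to be the bookkeeping of the pinching argument rather than any single analytic estimate: one must track which pole of $P^+_{\mathrm{PT}}$ meets which pole of $P^-_{\mathrm{PT}}$, keep the contour deformation globally well-defined as several poles move simultaneously, and argue that no spurious cancellation erases the pole (using that the residues of $S_{\frac{\gamma}{2}}$ are nonzero and that the explicit $\Gamma_{\frac{\gamma}{2}}, S_{\frac{\gamma}{2}}$ prefactor relating $\mathcal{J}_{\mathrm{PT}}$ to $\HPT$ is nonzero for generic parameters). The convergence at $\pm\ii\infty$ from the previous lemma guarantees that the tails of the integral never contribute a singularity, so the entire analysis localizes at the finitely many pinch configurations listed.
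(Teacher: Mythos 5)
Your proposal is correct and follows essentially the same route as the paper's proof: the paper likewise argues that a pole of $\mathcal{J}_{\mathrm{PT}}$ arises exactly when $P^{-}_{\mathrm{PT}}$ and $P^{+}_{\mathrm{PT}}$ intersect (so the separating contour gets pinched), extracts the divergence by deforming $\mathcal{C}$ across one of the two collapsing poles and tracking the residue term, and reads off the list from the collision condition $\xi^- - \xi^+ \in \{n\frac{\gamma}{2} + m\frac{2}{\gamma}\}$ over the $4\times 4$ pairs of base points, citing the analogous argument in the $\mu=0$ case. Your version is in fact slightly more detailed than the paper's (which defers to an earlier reference for the deformation mechanics), and your sixteen differences $\xi^- - \xi^+$ do reproduce the table exactly.
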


\begin{proof} The proof follows exactly the same steps as the one of \cite[Lemma 5.11]{rz-boundary}, except that in our case we have four $S_{\frac{\gamma}{2}}$ functions in the numerator and denominator of the integrand of the contour integral of $\mathcal{J}_{\mathrm{PT}}$ (instead of three in the case of \cite[Lemma 5.11]{rz-boundary}). We nonetheless recall below the steps of the proof and explain how to derive the list of poles.

As long as the parameters $\beta_i, \sigma_i$ are chosen such that $P^{-}_{\mathrm{PT}} \cap P^{+}_{\mathrm{PT}}  = \emptyset$, it is always possible to choose the contour $\mathcal{C}$ so that it goes from $- \ii \infty$ to $ \ii \infty$, passing to the right of the poles of $P^{-}_{\mathrm{PT}}$ and to the left of the poles of $P^{+}_{\mathrm{PT}}$. On the other hand, 
it is impossible to choose such a contour when   $P^{-}_{\mathrm{PT}} \cap P^{+}_{\mathrm{PT}}  \neq \emptyset$, since at least one pole from $P^{-}_{\mathrm{PT}}$ collapses with a pole from $P^{+}_{\mathrm{PT}}$, 
This situation will lead to a pole for the function $\mathcal{J}_{\mathrm{PT}}$. To properly identify this pole, one can start by deforming the contour $\mathcal{C}$ to cross one of the two collapsing poles before they collapse and pick up a residue contribution; see the proof of Lemma \ref{lem:secH2} in Appendix \ref{subsec:PT} for an instance of such deformation of contour. This residue term viewed as a function of $\sigma_i, \beta_i$  have a pole at the value of parameters $\sigma_i, \beta_i$ that caused one pole of $P^{+}_{\mathrm{PT}}$ to collapse with a pole of $P^{-}_{\mathrm{PT}}$. Therefore the poles of $\mathcal{J}_{\mathrm{PT}}$ are at the values of $\beta_i, \sigma_i$ where $P^{-}_{\mathrm{PT}} \cap P^{+}_{\mathrm{PT}}  \neq \emptyset$ which is precisely the list given in the lemma.
\end{proof}

From here it is immediate that $\HPT$ is a meromorphic function on $\mathbb{C}^6$.
\begin{lemma}\label{lem:mero_HPT}
	The function $\HPT$ is meromorphic on $\mathbb{C}^6$, namely a ratio of two holomorphic functions on $\mathbb{C}^6$ of the six parameters $\beta_1, \beta_2, \beta_3, \sigma_1, \sigma_2, \sigma_3$. 
\end{lemma}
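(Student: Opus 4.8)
The plan is to read the claim directly off the factorization of $\HPT$ in \eqref{formule_PT}, writing $\HPT$ as a product of three pieces: an elementary exponential prefactor, a finite ratio of $\Gamma_{\frac{\gamma}{2}}$ and $S_{\frac{\gamma}{2}}$ factors, and the contour integral $\mathcal{J}_{\mathrm{PT}}$ of \eqref{J_PT_app}. Since a finite product or quotient of functions each meromorphic on $\mathbb{C}^6$ is again meromorphic on $\mathbb{C}^6$, it suffices to treat the three pieces separately. The first two are elementary given the known analytic properties of $\Gamma_{\frac{\gamma}{2}}$ and $S_{\frac{\gamma}{2}}$ recalled in Section~\ref{sec:double_gamma}; the real content is the third piece, for which the two preceding lemmas do all the work.

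For the prefactor $\left(\frac{\pi(\frac{\gamma}{2})^{2-\gamma^2/2}\Gamma(\gamma^2/4)}{\Gamma(1-\gamma^2/4)}\right)^{\frac{2Q-(\beta_1+\beta_2+\beta_3)}{2\gamma}}$: with $\gamma$ fixed the base is a nonzero constant, so this is the exponential of an affine-linear function of $(\beta_1,\beta_2,\beta_3)$ and hence entire on $\mathbb{C}^6$. For the ratio of special functions, I would observe that every factor is of the form $\Gamma_{\frac{\gamma}{2}}(\ell)$, $\Gamma_{\frac{\gamma}{2}}(\ell)^{-1}$, or $S_{\frac{\gamma}{2}}(\ell)^{-1}$ with $\ell$ an affine-linear function of the six variables $\beta_1,\beta_2,\beta_3,\sigma_1,\sigma_2,\sigma_3$. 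Recall that $\Gamma_{\frac{\gamma}{2}}$ is meromorphic on $\mathbb{C}$ with no zeros and poles only on the explicit lattice, so that $\Gamma_{\frac{\gamma}{2}}$, its reciprocal $1/\Gamma_{\frac{\gamma}{2}}$ (which is in fact entire), and $S_{\frac{\gamma}{2}}(x)=\Gamma_{\frac{\gamma}{2}}(x)/\Gamma_{\frac{\gamma}{2}}(Q-x)$ are all meromorphic in one variable; precomposing each with the affine map $\ell$ preserves meromorphicity on $\mathbb{C}^6$, and the finite product is then meromorphic.

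The third step is to show $\mathcal{J}_{\mathrm{PT}}$ is meromorphic on $\mathbb{C}^6$, and this is where I would invoke the earlier analysis. On the open set of parameters where $P^{-}_{\mathrm{PT}}\cap P^{+}_{\mathrm{PT}}=\emptyset$, the contour $\mathcal{C}$ can be chosen to separate the two lattices, and the convergence lemma proved above gives absolute convergence at $\pm\ii\infty$ with asymptotic constants depending continuously on the parameters, hence local uniform convergence. Holomorphicity of $\mathcal{J}_{\mathrm{PT}}$ in the parameters then follows from Morera's theorem together with Fubini (or by differentiating under the integral sign), and deforming $\mathcal{C}$ continuously as the parameters move within this open set does not cross any pole, so the value is well defined and holomorphic there. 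The function ceases to extend holomorphically exactly when a pole of $P^{-}_{\mathrm{PT}}$ collides with a pole of $P^{+}_{\mathrm{PT}}$ and pinches the contour, and Lemma~\ref{lem:J} pins down precisely this locus together with the fact that $\mathcal{J}_{\mathrm{PT}}$ acquires at worst a pole there (the residue picked up when deforming $\mathcal{C}$ across the colliding poles is itself meromorphic in the parameters).

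The main obstacle is to upgrade this ``holomorphic off the singular locus, with polar singularities there'' statement to the paper's definition of meromorphic as a global ratio of two holomorphic functions on $\mathbb{C}^6$. The singular locus produced by Lemma~\ref{lem:J} is a locally finite union of hyperplanes of the form $\{\ell=n\frac{\gamma}{2}+m\frac{2}{\gamma}:\,n,m\in\mathbb{N}\}$, and these are exactly the pole lattices of $\Gamma_{\frac{\gamma}{2}}$; consequently, multiplying $\mathcal{J}_{\mathrm{PT}}$ by a suitable finite product of factors $\Gamma_{\frac{\gamma}{2}}(-\ell)^{-1}$ (whose zeros sit on these same lattices, with orders matching any lattice coincidences) clears all denominators and yields a holomorphic function on $\mathbb{C}^6$. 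This exhibits $\mathcal{J}_{\mathrm{PT}}$, and therefore $\HPT$, as a ratio of two holomorphic functions on $\mathbb{C}^6$. The only bookkeeping to be careful about is the matching of pole orders at lattice coincidences, which is automatic since the zeros of $1/\Gamma_{\frac{\gamma}{2}}$ accumulate multiplicity in the same way; this is precisely why the conclusion is ``immediate'' once Lemma~\ref{lem:J} is in hand.
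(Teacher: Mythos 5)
Your proposal is correct and follows essentially the same route as the paper: factor $\HPT$ into the elementary prefactor, the ratio of $\Gamma_{\frac{\gamma}{2}}$ and $S_{\frac{\gamma}{2}}$ factors, and $\mathcal{J}_{\mathrm{PT}}$, then invoke Lemma~\ref{lem:J} for the pole structure of the contour integral and the known meromorphicity of the special functions. The extra detail you supply --- the Morera/Fubini argument off the pinching locus and the explicit clearing of denominators by entire factors $\Gamma_{\frac{\gamma}{2}}(-\ell)^{-1}$ to realize $\HPT$ as a global ratio of holomorphic functions on $\C^6$ --- is a careful fleshing-out of what the paper's two-line proof leaves implicit, not a different argument.
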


\begin{proof}
	In Lemma \ref{lem:J} we have established that $\mathcal{J}_{\mathrm{PT}}$ is meromorphic on $\mathbb{C}^6$ with prescribed poles. Since $\HPT$ is equal to $\mathcal{J}_{\mathrm{PT}}$ times an explicit prefactor containing $\Gamma_{\frac{\gamma}{2}}$ and $S_{\frac{\gamma}{2}}$ functions which are meromorphic, this implies the claim.
\end{proof}

We now repeat a similar analysis for the contour integral 
\begin{align*}
\mathcal{J}_{\mathrm{Hos}} := \int_{ \mathcal{C} } \frac{d \sigma_2}{\ii} e^{2 \ii \pi (Q - 2 \sigma) \sigma_2} \frac{   S_{\frac{\gamma}{2}}( \frac{1}{2}(\alpha + \frac{\beta}{2} - Q) - \sigma_2 ) S_{\frac{\gamma}{2}}( \frac{1}{2}(\alpha + \frac{\beta}{2} - Q) + \sigma_2 ) }{  S_{\frac{\gamma}{2}}( \frac{1}{2}(\alpha - \frac{\beta}{2} + Q) + \sigma_2 )       S_{\frac{\gamma}{2}}(\frac{1}{2}(\alpha - \frac{\beta}{2} + Q) - \sigma_2)}.
\end{align*}
defining the function $G_{\mathrm{Hos}}$. There is one notable difference, which is that there will be a condition on the parameters for the contour integral to converge at $ \pm \ii \infty$. In fact, 
using again the asymptotic of $S_{\frac{\gamma}{2}}$ in~\eqref{eq:lim_S}, the
contour integral is converging at $\pm \ii \infty$ if and only if:
\begin{align*}
\frac{\Re(\beta)}{2} < Q, \quad\textrm{and}\quad  \Re(\sigma) \in \left[ \frac{\Re(\beta)}{4}, Q - \frac{ \Re(\beta)}{4} \right].
\end{align*}
Despite this condition it is still possible to analytically continue the function $\mathcal{J}_{\mathrm{Hos}}$ to a meromorphic function of its three parameters $\alpha, \beta, \sigma$ on $\mathbb{C}^3$.  We do not pursue this here, see \cite{Lorenz_notes} for more details on this fact. For the choice of the contour of integration, similarly as for $\mathcal{J}_{\mathrm{PT}}$, it has to pass to the left of the lattice $P_{\mathrm{Hos}}^{+}$ of poles extending to the right,  and to the right of the lattice $P_{\mathrm{Hos}}^{-}$ of poles extending to the left, where
\begin{align}\label{eq:G-range1}
P_{\mathrm{Hos}}^{-} &= \left \{ \xi - \frac{n \gamma}{2} - \frac{2 m}{\gamma} \quad  \Bigg \vert \quad    n,m \in \mathbb{N}, \quad  \xi \in  \{  \frac{1}{2}(Q - \alpha - \frac{\beta}{2}), \frac{1}{2}(\alpha - \frac{\beta}{2} - Q) \} \right \},\\
P_{\mathrm{Hos}}^{+} &= \left \{ \xi + \frac{n \gamma}{2} + \frac{2 m}{\gamma} \quad \bigg \vert \quad  n,m \in \mathbb{N}, \quad  \xi \in  \{  \frac{1}{2}( \alpha + \frac{\beta}{2} - Q), \frac{1}{2}(Q - \alpha + \frac{\beta}{2} ) \} \right \}.\label{eq:G-range2}
\end{align}

\subsection{Definition of $H$ and $G$ under the Seiberg bound}\label{subsec:def-LF}
Let $h$ be the free boundary Gaussian free field  on   the upper half plane $\bbH=\{ z \in \C: \Im (z)>0 \}$ with covariance kernel  
\begin{equation}\label{covariance}
	\E[h(x)h(y)]= G_\bbH(x,y) :=\log \frac{1}{|x-y||x-\bar{y}|} + 2 \log |x|_+ + 2 \log |y|_+,
\end{equation}
where  $|x|_+ := \max(|x|,1)$ and in the sense that  $\E[(h,f) (h,g)]= \iint f(x)\E[h(x)h(y)]g(y) dxdy$, for smooth test functions $f$ and $g$.
Let $P_\bbH$ be the law of $h$, so that  $P_\bbH$ is a probability measure on  the negatively indexed Sobolev space $H^{-1}(\bbH)$ (\cite{shef-gff,dubedat-coupling}).  The particular covariance kernel corresponds to requiring the field to have average 0 on the unit circle.

Following~\cite{AHS-SLE-integrability}, we introduce the Liouville field on $\bbH$, possibly with boundary insertions.
\begin{definition}[Liouville field]\label{def-LF-rz}
	Let $(h, \mathbf c)$ be sampled from $P_\bbH\times [e^{-Qc}dc]$ and set $\phi =  h(z) -2Q \log |z|_+ +\mathbf c$. 
	We write  $\LF_{\bbH}$ as the law of $\phi$, and call  a sample from   $\LF_{\bbH}$ a \emph{Liouville field on $\bbH$}.
\end{definition}

\begin{definition}[Liouville field with insertions]\label{def_LCFT}
		Let $(\beta_j, s_j) \in  \R \times \partial \bbH$ for $j = 1, \dots, M$, where $M \geq 1$ and the $s_j$ are pairwise distinct. Sample $(h, \mathbf c)$ from $C_{\bbH}^{(\beta_j, s_j)_j} P_\bbH\times [e^{(\frac12\sum_j \beta_j - Q)c}dc]$ where
		\[C_{\bbH}^{(\beta_j, s_j)_j}
		= \prod_{j=1}^M |s_j|_+^{-\beta_j(Q-\frac{\beta_j}2)} \prod_{1\leq j<k \leq M} e^{\frac{\beta_j\beta_k}4 G_\bbH(s_j,s_k)}. \]
		Let $\phi(z) = h(z) - 2Q \log |z|_+ + \sum_{j=1}^M \frac{\beta_j}2 G_\bbH(z, s_j) + \mathbf c$. We write $\LF_\bbH^{(\beta_j, s_j)_j}$ for the law of $\phi$ and call a sample from $\LF_\bbH^{(\beta_j, s_j)_j}$
 the \emph{Liouville field on $\bbH$ with insertions $(\beta_j, s_j)_{1 \leq j \leq M}$}.
 \end{definition}
We can also define Liouville fields with an insertion at $\infty$. We will need the  case $\LF_\bbH^{(\beta_1,0), (\beta_2,1), (\beta_3, \infty)}$, which can be defined by $\lim_{s\to\infty} |s|^{2\Delta_3}  \LF_\bbH^{(\beta_1,0), (\beta_2,1), (\beta_3,s)}$ 
with $\Delta_3=\frac{\beta_3}{2}(Q-\frac{\beta_3}{2})$. Here we give a more explicit definition which is equivalent to the limiting procedure; see \cite[Lemma 2.9]{AHS-SLE-integrability}.
\begin{definition}\label{def:LCFT-infty}
Fix $\beta_1,\beta_2,\beta_3\in \R$. Set $s_1=0,s_2=1,s_3=\infty$ and  $G_\bbH(z,\infty)=2\log |z|_+$.
Sample $(h, \mathbf c)$ from $P_\bbH\times [e^{(\frac12\sum_j \beta_j - Q)c}dc]$. 
We write  $\LF_\bbH^{(\beta_1,0), (\beta_2,1), (\beta_3, \infty)}$ for the law of $\phi$ where $\phi(z) = h(z) - 2Q \log |z|_+ + \sum_{j=1}^3 \frac{\beta_j}2 G_\bbH(z, s_j) + \mathbf c$. 
\end{definition}

Given a sample $h$ from $P_\bbH$, the associated quantum area and length measure are defined by  
\begin{equation}\label{eq:AL}
\cA_h =  \lim_{\eps \to 0} \epsilon^{\frac{\gamma^2}{2}}   e^{\gamma h_{\epsilon}(z)} d^2z, \quad \text{and} \quad \cL_h  = \lim_{\eps \to 0} \epsilon^{\frac{\gamma^2}{4}}   e^{\frac{\gamma}{2} h_{\epsilon}(z)} dz,
\end{equation}
where the limits hold in probability in the weak topology of measures.  The existence of limits is well-known from Gaussian multiplicative chaos (GMC); see e.g.~\cite{Ber_GMC,rhodes-vargas-review}.

\begin{remark}
Note that it is also possible to define the GMC measures by renormalizing by the variance. The difference between the two conventions is given by:
\begin{align*}
\mathcal{A}_h = \frac{ |z|_+^{2 \gamma^2}}{|z - \bar z|^{\frac{\gamma^2}{2}}} \lim_{\eps \to 0}   e^{\gamma h_{\epsilon}(z) - \frac{\gamma^2}{2} \mathbb{E}[h_{\epsilon}(z)^2] } d^2z, \quad \text{and} \quad \mathcal{L}_h  = |z|_+^{\frac{\gamma^2}{2}}\lim_{\eps \to 0}   e^{ \frac{\gamma}{2} h_{\epsilon}(z) - \frac{\gamma^2}{8} \mathbb{E}[h_{\epsilon}(z)^2]} dz.
\end{align*}
\end{remark}

Given a sample $\phi$ from $\LF_\bbH$ or $\LF_\bbH^{(\beta_j, s_j)_j}$, we similarly define $\cA_\phi$ and $\cL_\phi$ as in~\eqref{eq:AL} with $h$ replaced by $\phi$. 
This allows us to rigorously define the function $H$ in~\eqref{c4} for a certain range of parameters.
\begin{definition} \label{def:corr}
Let $\mu_i\ge 0$ for $i=1,2,3$.   Suppose $\beta_1,\beta_2,\beta_3\in \R$ satisfy the Seiberg bound: 
\begin{equation}\label{eq:Seiberg-3pt}
\sum_{i=1}^{3} \beta_i> 2Q\quad \textrm{and}\quad  \beta_i <Q. 
\end{equation} 
Then  the boundary three-point structure constant is  defined by 
\begin{equation}\label{def:H-rig}
H^{(\beta_1 , \beta_2, \beta_3)}_{(\mu_1,\mu_2, \mu_3)} :=\int  
e^{- \cA_\phi(\bbH)-\mu_1  \cL_\phi(-\infty,0) -\mu_2 \cL_\phi(0,1)- \mu_3  \cL_\phi(1,+\infty)}   \;
\LF_\bbH^{(\beta_1,0), (\beta_2,1), (\beta_3, \infty)} (\mathrm d \phi).
\end{equation}
\end{definition}

In a similar way we can give the probabilistic definition of $G$. We start with the definition:
\begin{definition}\label{def:LF_G_function}
Fix $\alpha,\beta \in \R$.
Sample $(h, \mathbf c)$ from $2^{-\frac{\alpha^2}{2}} P_\bbH\times [e^{(\alpha + \frac{ \beta}{2} - Q)c}dc]$. 
We write  $\LF_\bbH^{(\alpha, \ii), (\beta,0)}$ for the law of $\phi$ where $\phi(z) = h(z) - 2Q \log |z|_+ + \alpha G_\bbH(z, \ii) + \frac{\beta}{2} G_\bbH(z, 0) + \mathbf c$.
\end{definition}

We can now give the precise definition of the structure constant $G_{\mu_B}(\alpha, \beta)$.

\begin{definition} \label{def:corr_G}
Let $\mu_B \ge 0$. Suppose $\alpha, \beta \in \R$ satisfy the Seiberg bound: 
\begin{equation}
\alpha + \frac{\beta}{2} > Q, \quad \alpha < Q, \quad \textrm{and}\quad  \beta <Q. 
\end{equation} 
Then  the bulk-boundary structure constant is  defined by: 
\begin{equation}\label{def:G-rig}
G_{\mu_B}(\alpha, \beta) :=\int  
e^{- \cA_\phi(\bbH)-\mu_B  \cL_\phi(\mathbb{R})}   \;
\LF_\bbH^{(\alpha, \ii), (\beta,0) } (\mathrm d \phi).
\end{equation}
\end{definition}

\subsection{Meromorphic extension of $H$}\label{subsec:extend}
To make sense of shift equations for $H$   we  meromorphically extend the range of its definition via an integration by parts, based on the following lemma.

\begin{lemma}\label{lem:def_H_trunc}
In the range of parameters as in Definition~\ref{def:corr}, set $s = \frac12\sum\beta_i - Q$ and
	\begin{equation}\label{eq-hatH}
		\hat H^{(\beta_1 , \beta_2, \beta_3)}_{(\mu_1,\mu_2, \mu_3)} := \int (\gamma (s+\frac\gamma2) A + \frac{\gamma^2}{2} A (\sum_i \mu_i L_i) + \frac{\gamma^2}{4}  (\sum_i \mu_i L_i)^2 ) e^{-A - \sum_i \mu_i L_i} \, \LF_\bbH^{(\beta_1,0), (\beta_2,1), (\beta_3, \infty)} (\mathrm d \phi).
	\end{equation}
	where  $A = \cA_\phi(\bbH), L_1 = \cL_\phi(-\infty,0), L_2 = \cL_\phi(0,1)$ and $L_3 = \cL_\phi(1,\infty)$. Then
	\[s(s+\frac\gamma2)H^{(\beta_1 , \beta_2, \beta_3)}_{(\mu_1,\mu_2, \mu_3)} = 	\hat H^{(\beta_1 , \beta_2, \beta_3)}_{(\mu_1,\mu_2, \mu_3)}.\]
\end{lemma}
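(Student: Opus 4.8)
The plan is to exploit the explicit dependence of $\LF_\bbH^{(\beta_1,0),(\beta_2,1),(\beta_3,\infty)}$ on its constant mode and integrate by parts twice. First I would decompose a sample as $\phi=\psi+\mathbf c$, where $\psi(z)=h(z)-2Q\log|z|_++\sum_{j=1}^3\frac{\beta_j}2 G_\bbH(z,s_j)$ with $h\sim P_\bbH$ and $(s_1,s_2,s_3)=(0,1,\infty)$, and $\mathbf c$ is independent with infinite law $e^{sc}\,dc$, $s=\frac12\sum_j\beta_j-Q$. Adding the constant $\mathbf c$ rescales the GMC measures, so $A=\cA_\phi(\bbH)=e^{\gamma\mathbf c}\bar A$ and $L_i=\cL_\phi(\cdots)=e^{\gamma\mathbf c/2}\bar L_i$, where $\bar A:=\cA_\psi(\bbH)$ and $\bar L_i:=\cL_\psi(\cdots)$ depend on $\psi$ only. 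Writing $\Lambda:=\sum_i\mu_iL_i$ and $\bar\Lambda:=\sum_i\mu_i\bar L_i$ and substituting $u=e^{\gamma c/2}$ (so $e^{sc}=u^{p}$ with $p:=2s/\gamma$, and $A=u^2\bar A$, $\Lambda=u\bar\Lambda$), the defining integral becomes, for any nonnegative integrand $F$,
\begin{equation*}
\int F\,\LF_\bbH^{(\beta_1,0),(\beta_2,1),(\beta_3,\infty)}(\mathrm d\phi)=\frac2\gamma\,\E_\psi\Big[\int_0^\infty u^{p-1}F\,\mathrm du\Big],
\end{equation*}
where $\E_\psi$ is expectation over $\psi$ and $F$ is evaluated at $A=u^2\bar A$, $\Lambda=u\bar\Lambda$. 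Setting $g:=e^{-u^2\bar A-u\bar\Lambda}$ and $I_k:=\int_0^\infty u^{k}g\,\mathrm du$, this gives $H=\frac2\gamma\,\E_\psi[I_{p-1}]$ and, after expanding the $\hat H$ integrand using $\gamma(s+\frac\gamma2)=\frac{\gamma^2}2(p+1)$,
\begin{equation*}
\hat H=\gamma\,\E_\psi\Big[(p+1)\bar A\,I_{p+1}+\bar A\bar\Lambda\,I_{p+2}+\tfrac12\bar\Lambda^2\,I_{p+1}\Big].
\end{equation*}

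For fixed $\psi$ this is now a one-dimensional calculus identity. From $\frac{\mathrm d}{\mathrm du}g=-(2u\bar A+\bar\Lambda)g$ and the vanishing of $u^kg$ at both endpoints (double-exponential decay at $u=\infty$, and $u^k\to0$ at $u=0$ for $k>0$), integration by parts yields the master relation
\begin{equation*}
2\bar A\,I_{k+1}+\bar\Lambda\,I_k=k\,I_{k-1}\qquad(k>0).
\end{equation*}
Applying it with $k=p$ and $k=p+1$ gives $\bar A\,I_{p+1}=\tfrac12(p\,I_{p-1}-\bar\Lambda\,I_p)$ and $\bar A\,I_{p+2}=\tfrac12((p+1)I_p-\bar\Lambda\,I_{p+1})$; substituting these into the bracket above, the terms proportional to $\bar\Lambda\,I_p$ and to $\bar\Lambda^2\,I_{p+1}$ cancel in pairs, leaving
\begin{equation*}
(p+1)\bar A\,I_{p+1}+\bar A\bar\Lambda\,I_{p+2}+\tfrac12\bar\Lambda^2\,I_{p+1}=\tfrac12p(p+1)\,I_{p-1}.
\end{equation*}
Taking $\E_\psi$ and using $s=\frac{\gamma p}2$, $s+\frac\gamma2=\frac\gamma2(p+1)$, this reads $\hat H=\frac\gamma2p(p+1)\,\E_\psi[I_{p-1}]=s(s+\frac\gamma2)\cdot\frac2\gamma\,\E_\psi[I_{p-1}]=s(s+\frac\gamma2)H$, which is the claim.

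Because every quantity in sight is nonnegative ($\bar A,\bar L_i\ge0$, $g>0$, and $s+\frac\gamma2>0$ under the Seiberg bound), the interchange of $\E_\psi$ with the $u$-integral is justified by Tonelli, and the pointwise-in-$\psi$ identity integrates directly. The points requiring genuine care are therefore the almost-sure finiteness of $\bar A$ and of $I_{p-1},I_p,I_{p+1},I_{p+2}$ --- which rests on the Seiberg bounds $\beta_j<Q$ controlling the GMC mass near the insertions $0,1,\infty$, together with $s>0$ (i.e.\ $\sum_j\beta_j>2Q$) making the $u\to0$ endpoint integrable --- and the validity of the endpoint vanishing in the integration by parts, again guaranteed by $k>0$. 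I expect this finiteness bookkeeping to be the only real obstacle, since the algebraic heart of the argument is the two integrations by parts encoded in the master relation. Note finally that finiteness of $\hat H$ need not be proved as an input: it follows a posteriori from the identity together with $H<\infty$.
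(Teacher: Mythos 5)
Your proof is correct and follows essentially the same route as the paper: both decompose $\phi$ into the field $\psi$ plus the zero mode $\mathbf c$, pull the $c$-dependence out of the GMC observables via $A=e^{\gamma\mathbf c}\bar A$, $L_i=e^{\gamma\mathbf c/2}\bar L_i$, and then integrate by parts twice in the zero mode (your master relation $2\bar A\,I_{k+1}+\bar\Lambda\,I_k=k\,I_{k-1}$, applied at $k=p$ and $k=p+1$, is exactly the paper's double integration by parts in $c$, repackaged through the substitution $u=e^{\gamma c/2}$). The only cosmetic difference is direction — you reduce $\hat H$ down to $s(s+\frac\gamma2)H$ while the paper builds $H$ up to $\frac{1}{s(s+\gamma/2)}\hat H$ — and your endpoint and nonnegativity bookkeeping (Tonelli, $s>0$, $\bar A>0$ a.s.) is sound.
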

\begin{proof}
Let $h$ be a Gaussian free field and $\wt h := h - 2Q \log |\cdot|_+ + \sum \frac{\beta_i}2 G_\bbH(\cdot, s_i)$, where $(s_1,s_2,s_3) = (0,1,\infty)$. Write $\wt A = \cA_{\wt h}(\bbH)$ and $\wt L_1 = \cL_{\wt h}(-\infty, 0), \wt L_2 = \cL_{\wt h}(0,1), \wt L_3 = \cL_{\wt h}(1, +\infty)$. Since $A = e^{\gamma c} \wt A$ and $L_i = e^{\gamma c/2} \wt L_i$, by Definition~\ref{def:corr}, we have
\[
H^{(\beta_1 , \beta_2, \beta_3)}_{(\mu_1,\mu_2 \mu_3)} =\E\left[\int e^{sc}   \cdot 
e^{-e^{\gamma c}\wt A- e^{\gamma c/2}\sum_{i=1}^3\mu_i \wt L_i}  \, dc \right].\]	
For $s>0$,  $a>0$ and $ \ell\in \C$ with $\Re \ell > 0$, applying integration by parts twice we have 
	\begin{align*}
		&\int e^{sc} \cdot e^{-e^{\gamma c}a - e^{\gamma c/2}\ell} \, dc = - \int (\frac1s e^{sc})((-\gamma e^{\gamma c}a - \frac\gamma2 e^{\gamma c/2} \ell)e^{-e^{\gamma c}a - e^{\gamma c/2}\ell})\, dc \\
		&= \frac\gamma s \int e^{sc} (e^{\gamma c} a) e^{-e^{\gamma c}a - e^{\gamma c/2}\ell} \, dc + \frac{\gamma}{2s} \int e^{sc}(e^{\gamma c/2}\ell) e^{-e^{\gamma c}a - e^{\gamma c/2}\ell}\, dc  \\
		&= \frac\gamma s \int e^{sc} (e^{\gamma c}a) e^{-e^{\gamma c}a - e^{\gamma c/2}\ell} \, dc + \frac{\gamma}{2s} \frac{\gamma}{s+\frac\gamma2} \int e^{sc} (e^{\gamma c}a) (e^{\gamma c/2}\ell) e^{-e^{\gamma c}a - e^{\gamma c/2}\ell}\, dc 
		%\hskip0.18\textwidth (\star)
		\\&+ \frac{\gamma}{2s}\frac{\frac\gamma2}{s+\frac\gamma2} \int e^{sc}(e^{\gamma c/2}\ell)^2 e^{-e^{\gamma c}a - e^{\gamma c/2}\ell}\, dc . 
	\end{align*}  
	Now setting  $s = \frac12\sum \beta_i - Q$, $a = \wt A$ and $\ell = \sum_i \mu_i \wt L_i$ we get the desired equality:
	\begin{equation*}
		H^{(\beta_1 , \beta_2, \beta_3)}_{(\mu_1,\mu_2 \mu_3)} = \int (\frac\gamma s A + \frac{\gamma}{2s} \frac\gamma{s+\frac\gamma2} A (\sum_i \mu_i L_i) + \frac\gamma{2s} \frac{\frac\gamma2}{s+\frac\gamma2} (\sum_i \mu_i L_i)^2 ) e^{-A - \sum_i \mu_i L_i} \, \LF_\bbH^{(\beta_1,0), (\beta_2,1), (\beta_3, \infty)} (\mathrm d \phi).
	\end{equation*}
\end{proof}
The function $\hat H$  has a better analytic property as shown in the following proposition. 
\begin{proposition}\label{prop:H-def}
Set  $V=\{ (\beta_1,\beta_2,\beta_3):   Q - \frac12 \sum \beta_i < \gamma \wedge \frac{2}{\gamma} \wedge \min_i (Q -\beta_i) \textrm{ and } \beta_i<Q \textrm{ for each }i \}$. 
When $(\beta_1,\beta_2,\beta_3)\in V$ and $\Re \mu_i\ge 0$ for $i=1,2,3$,  the integral in~\eqref{eq-hatH}  defining $\hat H$ is absolutely convergent.
Moreover, for each $(\beta_1,\beta_2,\beta_3)\in V$, the function $\hat H$ is holomorphic on  $\{(\mu_1,\mu_2,\mu_3): \Re \mu_i> 0\}$ and continuous on $\{(\mu_1,\mu_2,\mu_3): \Re \mu_i\ge 0\}$. 
Finally, for each $(\mu_1,\mu_2,\mu_3)$ satisfying $ \Re \mu_i> 0$, the function $\hat H$ can be analytically extended  on a complex neighborhood in $\C^3$of $V$.
\end{proposition}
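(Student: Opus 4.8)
The plan is to pass to the $(\mathbf c, h)$ coordinates used in the proof of Lemma~\ref{lem:def_H_trunc}. Writing $\wt h = h - 2Q\log|\cdot|_+ + \sum_{i}\frac{\beta_i}2 G_\bbH(\cdot, s_i)$ with $(s_1,s_2,s_3)=(0,1,\infty)$, and setting $\wt A = \cA_{\wt h}(\bbH)$, $\wt L_1 = \cL_{\wt h}(-\infty,0)$, $\wt L_2 = \cL_{\wt h}(0,1)$, $\wt L_3 = \cL_{\wt h}(1,\infty)$, $\wt M = \sum_i \mu_i \wt L_i$, the relations $A = e^{\gamma c}\wt A$ and $L_i = e^{\gamma c/2}\wt L_i$ turn~\eqref{eq-hatH} into
\begin{equation*}
\hat H = \int_\R e^{sc}\,\E\!\left[\Big(\gamma(s+\tfrac\gamma2)e^{\gamma c}\wt A + \tfrac{\gamma^2}2 e^{3\gamma c/2}\wt A\,\wt M + \tfrac{\gamma^2}4 e^{\gamma c}\wt M^2\Big)\,e^{-e^{\gamma c}\wt A - e^{\gamma c/2}\wt M}\right]dc,
\end{equation*}
where the expectation is over $h\sim P_\bbH$. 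I would split the $c$-integral at $c=0$. The region $c\to+\infty$ is routine: there $A=e^{\gamma c}\wt A\to\infty$ almost surely and the double-exponential factor $e^{-e^{\gamma c}\wt A}$ decays faster than the polynomial prefactor and $e^{sc}$ grow, so this contribution is finite and smooth in all parameters with no extra constraint. To make it rigorous I would use the thin left tail of GMC (finiteness of all negative moments of $\wt A$) together with low-order moments of $\wt M$; for $\Re\mu_i\ge0$ it suffices to bound $|e^{-e^{\gamma c}\wt A - e^{\gamma c/2}\wt M}|\le e^{-e^{\gamma c}\wt A}$.

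The region $c\to-\infty$ is the crux and is where $V$ is pinned down. As $c\to-\infty$ the cutoff tends to $1$, so each term is of order $e^{\gamma c}$ (resp.\ $e^{3\gamma c/2}$ for the cross term) times a truncated moment such as $\E[\wt A\,e^{-e^{\gamma c}\wt A}]$ or $\E[\wt M^2 e^{-e^{\gamma c/2}\wt M}]$. When the relevant GMC moment is finite these converge to a constant and the term decays like $e^{\gamma c}$, forcing $-s<\gamma$ (from the area term) and $-s<\tfrac2\gamma$ (from the length-squared term, whose bulk second moment is finite only when $\gamma\le\sqrt2$). When a weight $\beta_i$ is large the moment diverges and the decay rate is instead governed by the right tail of the GMC measures at the insertion $s_i$, whose exponents are $\tfrac{Q-\beta_i}\gamma$ for the area and $\tfrac{2(Q-\beta_i)}\gamma$ for the boundary length; in both cases this yields the decay $e^{(Q-\beta_i)c}$ and hence $-s<Q-\beta_i$. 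Taking the most restrictive of these gives absolute convergence exactly when $-s<\gamma\wedge\tfrac2\gamma\wedge\min_i(Q-\beta_i)$, i.e.\ on $V$. The main obstacle is that the cutoff acts on the \emph{area} while part of the dangerous growth comes from the \emph{length}; since near an insertion the two measures blow up simultaneously, I would control the truncated moments through the \emph{joint} tail of $(\wt A,\wt M)$ via reflection/tail estimates for GMC with boundary insertions (in the spirit of \cite{rz-boundary,ARS-FZZ}), rather than through the two marginals separately.

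For holomorphy and continuity in $\mu$ I fix $\beta\in V$. On $\{\Re\mu_i>0\}$ there is genuine exponential damping $e^{-\sum_i(\Re\mu_i)L_i}$ in the boundary lengths, which dominates every $\mu$-derivative of the integrand (each $\partial_{\mu_i}$ produces a factor $L_i$); combined with the area cutoff this gives locally uniform integrable bounds on compact subsets of $\{\Re\mu_i>0\}$, and holomorphy follows from Morera's theorem and Fubini. Continuity up to the boundary $\{\Re\mu_i\ge0\}$ follows by dominated convergence, the dominating function being the absolute-convergence bound of the first assertion, which is uniform on $\{\Re\mu_i\ge0\}$.

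For the analytic extension in $\beta$ I fix $\mu$ with $\Re\mu_i>0$. By Definition~\ref{def:LCFT-infty} the $\beta$-dependence is explicit: the insertions enter $\wt A$ and $\wt L_i$ through the deterministic factors $\prod_j|z-s_j|^{-\gamma\beta_j}$ and $\prod_j|x-s_j|^{-\gamma\beta_j/2}$, and the constant $\mathbf c$ carries the weight $e^{sc}$ with $s=\tfrac12\sum\beta_j-Q$; all are holomorphic in $\beta$, so the integrand is jointly holomorphic in $(\beta_1,\beta_2,\beta_3)$. Domination on a small complex polydisc around a point of $V$ comes from $|\wt A(\beta)|\le\wt A(\Re\beta)$ and $|\wt L_i(\beta)|\le\wt L_i(\Re\beta)$ together with the length damping from $\Re\mu_i>0$, reducing matters to the real-$\beta$ estimates already established. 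The delicate point is that $A$ becomes complex, so $e^{-A}$ is no longer bounded by $1$ and could a priori grow as $c\to+\infty$; I would resolve this by shrinking the neighborhood so that the imaginary parts of the $\beta_j$ are small, and by noting that the large-$A$ regime is dominated by the bulk of the surface, away from the insertions, where the phase of $\wt A(\beta)$ is controlled and $\Re A>0$ is preserved. Morera's theorem then yields holomorphy on a complex neighborhood of $V$.
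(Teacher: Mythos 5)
Your reduction to the $(\mathbf c,h)$ coordinates, the identification of where the constraints defining $V$ come from, and the $\mu$-part (Morera on $\{\Re\mu_i>0\}$, dominated convergence up to $\{\Re\mu_i\ge 0\}$) are all consistent with the paper, which indeed disposes of the $\mu$-statement in one line via Morera. One remark on the convergence part: you do not need joint tail or reflection estimates for $(\wt A,\wt M)$. The paper integrates out $c$ exactly via $\int e^{tc}e^{-e^{\gamma c}a}\,dc=\frac1\gamma\Gamma(\frac t\gamma)a^{-t/\gamma}$, reducing everything to mixed moments of the form $\E[\wt L^{j}\wt A^{-p}]$, and these are handled by H\"older's inequality using only the \emph{marginal} moment bounds of \cite{hrv-disk} (positive moments of $\wt L$ up to $\frac4{\gamma^2}\wedge\frac2\gamma\min_i(Q-\beta_i)$, and all relevant moments of $\wt A$ below $\frac2{\gamma^2}\wedge\frac1\gamma\min_i(Q-\beta_i)$); the exponents can always be split this way on $V$.

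The genuine gap is in your analytic extension in $\beta$. For complex $\beta_j$ the quantity $\wt A(\beta)=\int e^{\ii\theta(x)}\rho_{\Re\beta}(x)\,M(d^2x)$ carries the phase $\theta(x)=-\gamma\sum_j (\Im\beta_j)\log|x-s_j|$, which winds without bound as $x\to s_j$ no matter how small $\Im\beta_j$ is. Consequently, on an event of positive probability the GMC mass concentrates where $\theta\approx\pi$ and $\Re\wt A(\beta)<-\delta$; on that event $|e^{-e^{\gamma c}\wt A(\beta)}|=e^{\delta e^{\gamma c}}$ blows up doubly exponentially, so $\int e^{sc}\,\E[\cdots]\,dc$ diverges at $c\to+\infty$. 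Your proposed extension integral is therefore not absolutely convergent for \emph{any} nonzero imaginary parts, so there is no dominating function to find; "shrinking the neighborhood" and "the bulk controls the phase" cannot repair this, since the bad event, however rare, already destroys integrability in $c$. This is precisely why the paper (following \cite{DOZZ_proof}) does not complexify $\beta$ inside the GMC densities at all. Instead it approximates by truncated functionals $f_r$ in which the GMC quantities $A_r,L_r$ are computed on $\bbH\setminus\bigcup_i B_{e^{-r}}(p_i)$ with \emph{no} $\beta$-dependence — hence remain positive for complex $\beta$ — while all $\beta$-dependence sits in the manifestly holomorphic factors $e^{\frac{\beta_i}2 h_r(p_i)-\frac{\beta_i^2}4 r}$ and $e^{sc}$. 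Analyticity of $\hat H$ then follows from locally uniform convergence of the analytic $f_r$, proved by showing $|f_{r+1}-f_r|\le Ce^{(\frac14\sum(\Im\beta_i)^2-C^{-1})r}$ via Girsanov and the GMC increment estimates of Lemma~\ref{lem-exp-bound-holo} (multifractal bound $\P[\wt A_{r+1}-\wt A_r>\eps]\lesssim\eps^{-m}e^{-m\gamma(Q-\max_i\beta_i-m\gamma)r}$ plus the moment bounds above), with the complex neighborhood shrunk so the Gaussian growth $e^{\frac r4\sum(\Im\beta_i)^2}$ loses to $e^{-r/C}$. Without this truncation-and-uniform-limit mechanism, or some substitute for it, your argument for the third assertion of the proposition does not go through.
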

We defer the proof of Proposition~\ref{prop:H-def} to Section~\ref{subsec-anal-H} and proceed to extend the definition of $H$.
\begin{definition}\label{def:H-general}
	Given $(\beta_1,\beta_2,\beta_3)\in V$ in Proposition~\ref{prop:H-def} and $\Re \mu_i\ge 0$ for $i=1,2,3$, define
\begin{equation}
			\label{eq-H-general}
	H^{(\beta_1 , \beta_2, \beta_3)}_{(\mu_1,\mu_2, \mu_3)} := \frac1{s(s+\frac\gamma2)} \hat H^{(\beta_1 , \beta_2, \beta_3)}_{(\mu_1,\mu_2, \mu_3)}.
\end{equation}
\end{definition}

Recall that $H$ from Theorem~\ref{thm:H} is expressed in the $\sigma$ variable via a change of variable. 
We now transfer  Proposition~\ref{prop:H-def} in terms of the $\sigma$ variable.
\begin{proposition}\label{prop:H-def-sigma}
For $V,s$,  and $H^{(\beta_1 , \beta_2, \beta_3)}_{(\mu_1,\mu_2, \mu_3)}$  as in Definition~\ref{def:H-general}, we define
\begin{align}  \label{eq:H-mu-sigma}
H
\begin{pmatrix}
\beta_1 , \beta_2, \beta_3 \\
\sigma_1,  \sigma_2,   \sigma_3 
\end{pmatrix} : = H^{(\beta_1, \beta_2, \beta_3)}_{(\mu_B(\sigma_1), \mu_B(\sigma_2), \mu_B(\sigma_3))}\quad \textrm{where }\mu_B(\sigma) = \sqrt{\frac{1}{\sin(\pi\frac{\gamma^2}{4})}}\cos \left(\pi\gamma(\sigma-\frac{Q}{2}) \right).
\end{align}
Let \( \cB= (-\frac{1}{2 \gamma} + \frac{Q}{2}, \frac{1}{2 \gamma} + \frac{Q}{2} ) \times \mathbb{R}\) and  \( \overline{\cB}= [-\frac{1}{2 \gamma} + \frac{Q}{2}, \frac{1}{2 \gamma} + \frac{Q}{2} ]\times \mathbb{R}\). 
Then for each 	$(\beta_1,\beta_2,\beta_3)\in V$, the function $(\sigma_1,\sigma_2,\sigma_3)\mapsto H$ is holomorphic on $\mathcal B^3$ and  continuous on $\overline{\mathcal B}^3$. 
Moreover, for each $(\sigma_1,\sigma_2,\sigma_3)\in \mathcal B^3$, the function $(\beta_1,\beta_2,\beta_3)\mapsto s(s+\frac{\gamma}{2})H$ is analytic on a complex neighborhood in $\C^3$of $V$.
\end{proposition}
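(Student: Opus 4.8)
The plan is to read Proposition~\ref{prop:H-def-sigma} as a change-of-variables transfer of Proposition~\ref{prop:H-def}: by Definition~\ref{def:H-general} and~\eqref{eq:H-mu-sigma} we have
\begin{equation*}
H\begin{pmatrix}\beta_1,\beta_2,\beta_3\\\sigma_1,\sigma_2,\sigma_3\end{pmatrix} = \frac{1}{s(s+\frac\gamma2)}\,\hat H^{(\beta_1,\beta_2,\beta_3)}_{(\mu_B(\sigma_1),\mu_B(\sigma_2),\mu_B(\sigma_3))},
\end{equation*}
so all the analytic content about $\hat H$ proved in Proposition~\ref{prop:H-def} (joint holomorphy in $(\mu_1,\mu_2,\mu_3)$ on $\{\Re\mu_i>0\}$, continuity on $\{\Re\mu_i\ge0\}$, and $\beta$-analyticity on a neighborhood of $V$) should descend directly once we control the map $\sigma\mapsto\mu_B(\sigma)$. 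Since $\sigma\mapsto\mu_B(\sigma)$ is entire and the prefactor $\frac1{s(s+\frac\gamma2)}$ does not depend on $\sigma$, the only genuine thing to verify is the image of the strip $\cB$ under $\mu_B$.

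The key step is therefore the mapping property: I would show that $\mu_B(\cB)\subset\{\Re\mu>0\}$ and $\mu_B(\overline{\cB})\subset\{\Re\mu\ge0\}$. Writing $\sigma=x+\ii y$ with $x\in(-\frac1{2\gamma}+\frac Q2,\frac1{2\gamma}+\frac Q2)$ and $y\in\R$, we have $a:=\pi\gamma(x-\frac Q2)\in(-\frac\pi2,\frac\pi2)$ and $b:=\pi\gamma y\in\R$. Using $\cos(a+\ii b)=\cos a\cosh b-\ii\sin a\sinh b$ and the fact that $\sqrt{1/\sin(\pi\gamma^2/4)}$ is a positive real constant for $\gamma\in(0,2)$, this gives
\begin{equation*}
\Re\,\mu_B(\sigma) = \sqrt{\tfrac{1}{\sin(\pi\gamma^2/4)}}\,\cos\!\big(\pi\gamma(x-\tfrac Q2)\big)\,\cosh(\pi\gamma y).
\end{equation*}
Since $\cosh\ge1>0$ always and $\cos a>0$ for $a\in(-\frac\pi2,\frac\pi2)$, this is strictly positive on $\cB$; on the closure $\overline{\cB}$ one has $a\in[-\frac\pi2,\frac\pi2]$, hence $\cos a\ge0$ and $\Re\,\mu_B(\sigma)\ge0$.

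With this in hand the three assertions follow by composition. For fixed $(\beta_1,\beta_2,\beta_3)\in V$ (with $s(s+\frac\gamma2)\neq0$), the coordinatewise map $(\sigma_1,\sigma_2,\sigma_3)\mapsto(\mu_B(\sigma_1),\mu_B(\sigma_2),\mu_B(\sigma_3))$ is holomorphic and sends $\cB^3$ into $\{\Re\mu_i>0\}$, so $H$ is holomorphic on $\cB^3$ by the holomorphy part of Proposition~\ref{prop:H-def}; the same map sends $\overline{\cB}^3$ into $\{\Re\mu_i\ge0\}$ and is continuous, giving continuity of $H$ on $\overline{\cB}^3$ from the continuity part of Proposition~\ref{prop:H-def}. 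Finally, for fixed $(\sigma_1,\sigma_2,\sigma_3)\in\cB^3$ I would set $\mu_i=\mu_B(\sigma_i)$, which satisfy $\Re\mu_i>0$ by the previous step; then $s(s+\frac\gamma2)H=\hat H^{(\beta_1,\beta_2,\beta_3)}_{(\mu_1,\mu_2,\mu_3)}$, and the last clause of Proposition~\ref{prop:H-def} yields that this is analytic in $(\beta_1,\beta_2,\beta_3)$ on a complex neighborhood of $V$.

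I expect no serious obstacle here: the heavy analytic lifting (absolute convergence, holomorphy, continuity, and the $\beta$-extension of $\hat H$) is already carried out in Proposition~\ref{prop:H-def}, and the present statement is a routine transfer. The only points requiring care are the elementary real-part computation above and the bookkeeping that the prefactor $\frac1{s(s+\frac\gamma2)}$ is a nonzero constant in $\sigma$; one restricts to the $\beta$-locus where it does not vanish, the complementary set being a proper analytic subset that is harmless given the meromorphy already available.
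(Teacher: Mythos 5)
Your proposal is correct and follows essentially the same route as the paper, whose proof is exactly this transfer: it notes that $\mu_B$ maps $\cB$ holomorphically into $\{\Re\mu>0\}$ (the paper even asserts a bijection, which is stronger than needed) and is continuous on $\overline{\cB}$, then invokes Proposition~\ref{prop:H-def}. Your explicit computation $\Re\,\mu_B(\sigma)=\sqrt{1/\sin(\pi\gamma^2/4)}\,\cos(\pi\gamma(\Re\sigma-\tfrac Q2))\cosh(\pi\gamma\,\Im\sigma)$ correctly supplies the only nontrivial ingredient, and your remark about restricting to the locus $s(s+\frac\gamma2)\neq0$ is a harmless (indeed slightly more careful) bookkeeping point consistent with the meromorphic reading of Definition~\ref{def:H-general}.
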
 
\begin{proof}
The function $\mu_B(\sigma)$ is a  holomorphic bijection between $\{ \mu\in \mathbb C: \Re \mu >0 \}$ and $\cB^3$. Moreover, $\mu_B$ is continuous  on   $\overline{\cB}^3$.  Therefore  Proposition~\ref{prop:H-def} yields  Proposition~\ref{prop:H-def-sigma}. 
\end{proof}
 
\subsection{Probabilistic definition of $R$ via truncations}\label{subsec:R-def}
We first recall the definition of two pointed quantum disk  $\cMtwo(\beta)$  used in the definition of $R$. 
Following~\cite[Section 2]{AHS-SLE-integrability}, we describe it using the horizontal strip $\cS=\R\times (0,\pi)$.
Let  $h_\cS(z)=h_\bbH (e^z)$,  where $h_\bbH$ is sampled from $P_\bbH$.
We call  $h_\cS$ a free-boundary GFF on $\cS$. 
The field $h_\cS$ can be written as $h_\cS=h^{\mathrm c}+h^{\ell}$, where 
$h^{\mathrm c}$ is constant on each vertical line, and $h^{\ell}$  has mean zero on all such
lines~\cite[Section 4.1.6]{wedges}. We call $h^{\ell}$ the \emph{lateral component} of the free-boundary GFF on $\cS$.
\begin{definition}
\label{def-thick-disk}
Fix $\beta<Q$. Let 
\[Y_t =
\left\{
\begin{array}{ll}
B_{2t} - (Q -\beta)t  & \mbox{if } t \geq 0 \\
\wt B_{-2t} +(Q-\beta) t & \mbox{if } t < 0
\end{array}
\right. , \]
where $(B_s)_{s \geq 0}$ is a standard Brownian motion  conditioned on $B_{2s} - (Q-\beta)s<0$ for all $s>0$,\footnote{Here we condition on a zero probability event. This can be made sense of via a limiting procedure.}  and $(\wt B_s)_{s \geq 0}$ is an independent copy of $(B_s)_{s \geq 0}$.
Let $h^1(z) = Y_{\Re z}$ for each $z \in \cS$.
Let $h^2_\cS$ be independent of $h^1$  and have the law  the  lateral component of the free-boundary GFF on $\cS$.
Let $\psi = h^1+h^2_\cS$.
Let  $\mathbf c$ be a real number  sampled from $\frac\gamma2 e^{(\beta-Q)c}dc$ independent of $\psi$ and $\phi=\psi +\mathbf c$.
Let $\cMtwo(\beta)$ be the infinite measure  describing the law of $\phi$. 
\end{definition}

For $\phi$ in Definition~\ref{def-thick-disk},  define \(\cA_\phi =  \lim_{\eps \to 0} \epsilon^{\frac{\gamma^2}{2}}   e^{\gamma \phi_{\epsilon}(z)} d^2z\) on $\cS$ 
 and \( \cL_\phi  = \lim_{\eps \to 0} \epsilon^{\frac{\gamma^2}{4}}   e^{\frac{\gamma}{2} \phi_{\epsilon}(z)} dz\) on $\partial \cS$. Write $A=\cA_\phi (\cS)$ as the total $\cA_\phi$-area,  and write $L_1$, $L_2$ as the $\cL_\phi$-length of top and bottom boundary arc of $\cS$, respectively. It appears that $	\int e^{-A-\mu_1L_1-\mu L_2}  \,  \mathrm d \cMtwo(\beta)=\infty$. 
For $\beta\in (\frac2{\gamma},Q)$, the correct definition is via truncation and we have the following counterpart of Lemma~\ref{lem:def_H_trunc}.
\begin{lemma}\label{lem:def_R_trunc}
Let $\beta\in (\frac2{\gamma},Q)$ and write $A, L_1,L_2$ for the quantum area and boundary arc lengths of a sample from $\cMtwo(\beta)$ as above. Then the following integral defining $R_{\mu_1,\mu_2}(\beta)$ is finite:
\begin{equation}\label{eq:2-point-sec2}
R_{\mu_1,\mu_2}(\beta):=\frac{2(Q-\beta)}\gamma \int (e^{-A-\mu_1L_1-\mu_2 L_2} -1 ) \,  \mathrm d \cMtwo(\beta).
\end{equation}
Furthermore set $s = \beta - Q$ and
\begin{equation}\label{eq-hatR}
 		\hat R_{\mu_1,\mu_2}(\beta) := \frac{-2s}\gamma\int (\gamma (s+\frac\gamma2) A + \frac{\gamma^2}{2} A (\sum_i \mu_i L_i) + \frac{\gamma^2}{4}  (\sum_i \mu_i L_i)^2 ) e^{-A - \sum_i \mu_i L_i} \, d\cMtwo(\beta).
 	\end{equation}Then $s(s+\frac\gamma2)R_{\mu_1,\mu_2}(\beta)= \hat R_{\mu_1,\mu_2}(\beta)$.
\end{lemma}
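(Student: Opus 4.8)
The plan is to mirror the proof of Lemma~\ref{lem:def_H_trunc}, replacing the Liouville field by the two-pointed quantum disk and carrying out the same two-fold integration by parts in the constant mode. By Definition~\ref{def-thick-disk}, a sample $\phi$ from $\cMtwo(\beta)$ has the form $\phi=\psi+\mathbf c$, where $\psi=h^1+h^2_\cS$ has a probability law and $\mathbf c$ is independently sampled from $\frac\gamma2 e^{(\beta-Q)c}\,dc$. Writing $\wt A=\cA_\psi(\cS)$ and $\wt L_1,\wt L_2$ for the two boundary arc lengths of $\psi$, the scaling of the GMC measures gives $A=e^{\gamma c}\wt A$ and $L_i=e^{\gamma c/2}\wt L_i$, so that for any functional $F$ depending on $\phi$ only through $(A,L_1,L_2)$,
\[
\int F\,\mathrm d\cMtwo(\beta)=\frac\gamma2\,\E\Big[\int_\R F\big(e^{\gamma c}\wt A,\,e^{\gamma c/2}\wt L_1,\,e^{\gamma c/2}\wt L_2\big)\,e^{sc}\,dc\Big],\qquad s=\beta-Q,
\]
where $\E$ is the expectation over $\psi$. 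This reduces both $R$ and $\hat R$ to an expectation over $\psi$ of a one-dimensional $c$-integral of exactly the type treated in Lemma~\ref{lem:def_H_trunc}. For fixed $\psi$ the two integrations by parts carry over verbatim: the only inputs are $s<0$ (so the boundary term at $c\to+\infty$ vanishes since $e^{sc}\to0$) and $s+\frac\gamma2>0$ (so the boundary terms at $c\to-\infty$ vanish, using $1-e^{-x}=O(e^{\gamma c/2})$ with $x=e^{\gamma c}\wt A+e^{\gamma c/2}(\mu_1\wt L_1+\mu_2\wt L_2)$). These two conditions are precisely $\beta\in(\frac2\gamma,Q)$. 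Tracking constants then gives, per realization, $\int g\,e^{-\cdots}e^{sc}\,dc=s(s+\frac\gamma2)\int(e^{-\cdots}-1)e^{sc}\,dc$ with $g$ the integrand polynomial of $\hat R$, and multiplying by the overall prefactor yields $s(s+\frac\gamma2)R_{\mu_1,\mu_2}(\beta)=\hat R_{\mu_1,\mu_2}(\beta)$, \emph{provided} all integrals converge absolutely so that Fubini and the integration by parts are justified.

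The heart of the matter is therefore the absolute convergence, which is where the truncation by $-1$ enters, following the strategy of~\cite{ARS-FZZ}. For fixed $\psi$ I would bound the inner $c$-integral by splitting at the crossover scale where $x(c):=e^{\gamma c}\wt A+e^{\gamma c/2}(\mu_1\wt L_1+\mu_2\wt L_2)$ equals $1$. On $\{x(c)\ge1\}$ one uses $|e^{-x}-1|\le1$ and integrates $e^{sc}$ (convergent at $+\infty$ since $s<0$); on $\{x(c)<1\}$ one uses $|e^{-x}-1|\le x$ and integrates $x(c)e^{sc}$ (convergent at $-\infty$ since $s+\gamma>0$ and $s+\frac\gamma2>0$). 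A direct computation shows both pieces are bounded by a constant times $\wt A^{(Q-\beta)/\gamma}+\wt L_1^{\,2(Q-\beta)/\gamma}+\wt L_2^{\,2(Q-\beta)/\gamma}+1$; the same changes of variables $u=e^{\gamma c/2}\wt L_i$ and $u=e^{\gamma c}\wt A$ control the extra polynomial factors $A$, $A\,L_i$, $L_iL_j$ appearing in $\hat R$, again producing only these fractional powers. The exponents $\tfrac{Q-\beta}\gamma$ and $\tfrac{2(Q-\beta)}\gamma$ lie in $(0,\tfrac12)$ and $(0,1)$ respectively exactly when $\beta\in(\frac2\gamma,Q)$.

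It then remains to verify finiteness of the small fractional moments $\E[\wt A^{p}]$ and $\E[\wt L_i^{\,q}]$ with $p=\tfrac{Q-\beta}\gamma$ and $q=\tfrac{2(Q-\beta)}\gamma$. Since $p,q<1<\tfrac4{\gamma^2}$ for $\gamma\in(0,2)$, these orders lie below the GMC moment threshold, so finiteness follows from the standard moment bounds for Gaussian multiplicative chaos (see~\cite{Ber_GMC,rhodes-vargas-review}) together with the fact that the profile $Y_{\Re z}$ of $\psi$ tends to $-\infty$ at both ends of the strip $\cS$, which forces the total area and boundary lengths to have finite positive moments of all sufficiently small orders. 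With these bounds in hand, Tonelli's theorem justifies interchanging $\E$ with the $c$-integral and the two integrations by parts, giving simultaneously the finiteness of the integral defining $R_{\mu_1,\mu_2}(\beta)$ and the identity $s(s+\frac\gamma2)R=\hat R$.

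I expect the main obstacle to be the second paragraph: organizing the two-scale truncation bound (area scaling like $e^{\gamma c}$, lengths like $e^{\gamma c/2}$) so that \emph{all} the terms of $\hat R$ — including the mixed $A(\sum_i\mu_iL_i)$ and $(\sum_i\mu_iL_i)^2$ terms — are uniformly dominated by the same fractional powers of $\wt A$ and $\wt L_i$, and confirming that the admissible range of exponents is exactly $\beta\in(\frac2\gamma,Q)$. The integration-by-parts identity and the matching of constants are routine once this absolute convergence is in place.
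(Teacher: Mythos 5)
Your proposal is correct and follows essentially the same route as the paper: decompose $\phi=\psi+\mathbf c$ so that $A=e^{\gamma c}\wt A$, $L_i=e^{\gamma c/2}\wt L_i$, and apply the two-fold integration by parts in the constant mode exactly as in Lemma~\ref{lem:def_H_trunc}, using that $s=\beta-Q\in(-\frac\gamma2,0)$ precisely when $\beta\in(\frac2\gamma,Q)$. Your second paragraph's two-scale convergence bound (splitting at $x(c)=1$ and using finiteness of the small fractional moments $\E[\wt A^{(Q-\beta)/\gamma}]$, $\E[\wt L_i^{2(Q-\beta)/\gamma}]$, valid since the profile $Y$ is conditioned to stay negative) is a sound, slightly more self-contained justification of the absolute convergence that the paper instead delegates to Proposition~\ref{prop:R-def}.
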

\begin{proof}
    Recall the field $\psi$ used in Definition~\ref{def-thick-disk} and write $\wt A = A_{\psi}(\cS)$ and $L_1 = \cL_\psi(\R \times \{\pi\}), L_2 = \cL_\psi(\R)$. By definition we have
    \[ R_{\mu_1, \mu_2}(\beta) = -\frac{2s}\gamma \E[\int \frac\gamma2 e^{sc} \cdot (e^{-e^{\gamma c} \wt A - e^{-\gamma c/2} \sum_{i=1}^2 \mu_i \wt L_i }-1)\, dc].\]
    
    We have $s \in (-\frac \gamma2, 0)$. For $a>0$ and $ \ell\in \C$ with $\Re \ell > 0$, applying integration by parts twice as in the proof of Lemma~\ref{lem:def_H_trunc}, the expression $\int e^{sc} \cdot (e^{-e^{\gamma c}a - e^{\gamma c/2}\ell} -1) \, dc$ equals 
    \begin{align*}
    &\frac\gamma s \int e^{sc} (e^{\gamma c}a) e^{-e^{\gamma c}a - e^{\gamma c/2}\ell} \, dc + \frac{\gamma}{2s} \frac{\gamma}{s+\frac\gamma2} \int e^{sc} (e^{\gamma c}a) (e^{\gamma c/2}\ell) e^{-e^{\gamma c}a - e^{\gamma c/2}\ell}\, dc 
		%\hskip0.18\textwidth (\star)
		\\&+ \frac{\gamma}{2s}\frac{\frac\gamma2}{s+\frac\gamma2} \int e^{sc}(e^{\gamma c/2}\ell)^2 e^{-e^{\gamma c}a - e^{\gamma c/2}\ell}\, dc . \end{align*}
  Applying this with $a = \wt A$ and $\ell = \sum_{i=1}^2 \mu_i \wt L_i$ gives the desired result.
 
\end{proof}
The following proposition extends the definition of $R$ in the same way as for $H$ in Proposition~\ref{prop:H-def}. We defer its proof to Appendix~\ref{app:GMC} as it is similar to that of Proposition~\ref{prop:H-def}. 
\begin{proposition}\label{prop:R-def}
For $\beta \in ( (\frac2\gamma - \frac\gamma2)  \vee \frac\gamma2, Q)$ and $\Re \mu_1, \Re \mu_2 \geq 0$,  the integral in~\eqref{eq-hatR}  defining $\hat R$ is absolutely convergent. 
Moreover, for each  $\beta \in ( (\frac2\gamma - \frac\gamma2)  \vee \frac\gamma2, Q)$, the function $(\mu_1,\mu_2)\mapsto \hat R$ is holomorphic on 	$\{\Re \mu_1> 0, \Re \mu_2> 0\}$ and continuous on $\{\Re \mu_1 \ge  0, \Re \mu_2\ge 0\}$. 
Finally, for $\Re \mu_1 >  0, \Re \mu_2 > 0$, the function $\beta\mapsto \hat R$ can be analytically extended on a complex neighborhood of  $( (\frac2\gamma - \frac\gamma2)  \vee \frac\gamma2, Q)$.
\end{proposition}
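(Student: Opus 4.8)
The plan is to follow the blueprint of Proposition~\ref{prop:H-def}, exploiting the explicit description of $\cMtwo(\beta)$ from Definition~\ref{def-thick-disk}. First I would disintegrate over the constant mode $\mathbf c$: writing $A = e^{\gamma c}\wt A$ and $L_i = e^{\gamma c/2}\wt L_i$ with $\wt A = \cA_\psi(\cS)$ and $\wt L_1,\wt L_2$ the $\cL_\psi$-lengths of the two boundary arcs, and recalling that $\mathbf c$ has law $\frac\gamma2 e^{sc}\,dc$ with $s = \beta - Q \in (-\tfrac\gamma2, 0)$, the integral $\hat R$ becomes
\[
-s\int_\R e^{sc}\,\E_\psi\!\left[\Big(\gamma(s+\tfrac\gamma2)e^{\gamma c}\wt A + \tfrac{\gamma^2}2 e^{3\gamma c/2}\wt A\!\sum_i\mu_i\wt L_i + \tfrac{\gamma^2}4 e^{\gamma c}\big(\!\sum_i\mu_i\wt L_i\big)^2\Big)e^{-e^{\gamma c}\wt A - e^{\gamma c/2}\sum_i\mu_i\wt L_i}\right]dc.
\]
Since $\Re\mu_i\ge 0$ forces $|e^{-\sum_i\mu_i L_i}|\le 1$ and $|\sum_i\mu_i\wt L_i|\le(\sum_i|\mu_i|)(\wt L_1+\wt L_2)$, taking absolute values reduces everything to estimating the expectations $\E_\psi[\wt A\,e^{-e^{\gamma c}\wt A}]$, $\E_\psi[\wt A\wt L_i\,e^{-e^{\gamma c}\wt A}]$ and $\E_\psi[\wt L_i\wt L_j\,e^{-e^{\gamma c}\wt A}]$, uniformly for $(\mu_1,\mu_2)$ in compact subsets of $\{\Re\mu_i\ge0\}$.

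\textbf{Absolute convergence.} Second, I would analyze the two ends of the $c$-integral separately. As $c\to-\infty$ the double exponential is harmless, each summand scales like $e^{(s+\gamma)c}$ or $e^{(s+3\gamma/2)c}$, and one only needs finiteness of the relevant low-order mixed moments of $(\wt A,\wt L_1,\wt L_2)$; the binding constraint is $s+\gamma>0$, i.e.\ $\beta>\frac2\gamma-\frac\gamma2$, which is exactly the left endpoint of the stated interval. As $c\to+\infty$ the weights $e^{(s+\gamma)c}$ blow up and decay must be extracted from $e^{-e^{\gamma c}\wt A}$: the elementary bound $x^p e^{-e^{\gamma c}x}\lesssim e^{-p\gamma c}$ disposes of the pure-area term, but the terms containing $\wt L_i$ require controlling the joint law of \emph{small} area and boundary length of $\cMtwo(\beta)$. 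This small-deviations estimate, where the threshold $\beta>\frac\gamma2$ and finiteness of the relevant GMC length moments enter, is the most technical point; it can be carried out using the moment bounds for the quantum disk from~\cite{wedges,ag-disk,AHS-SLE-integrability} together with standard GMC estimates~\cite{rhodes-vargas-review}, paralleling the corresponding step in Proposition~\ref{prop:H-def}.

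\textbf{Dependence on $(\mu_1,\mu_2)$.} Third, once an integrable dominating function (uniform for $(\mu_1,\mu_2)$ on compacts of $\{\Re\mu_i\ge0\}$) is produced by the previous step, holomorphicity on $\{\Re\mu_i>0\}$ follows from Fubini together with Morera's theorem, since for each realization of $\psi$ and each $c$ the integrand is entire in $(\mu_1,\mu_2)$; continuity up to $\{\Re\mu_i\ge0\}$ follows from dominated convergence using the same bound. This part is essentially identical to the $H$ case and requires no new idea.

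\textbf{Main obstacle: analyticity in $\beta$.} The genuinely new difficulty, compared with Proposition~\ref{prop:H-def} --- where $\beta_i$ enters $\LF_\bbH^{(\beta_1,0),(\beta_2,1),(\beta_3,\infty)}$ only through deterministic insertions and the constant-mode exponent --- is that here the law of $\psi$ itself depends on $\beta$, through the drift $Q-\beta$ of the process $Y$ in Definition~\ref{def-thick-disk}. Since only the radial part $h^1=Y_{\Re\,\cdot}$ carries this dependence, I would fix a reference parameter $\beta_0$ and absorb the drift change into an explicit Radon--Nikodym derivative for the one-dimensional process $Y$, analytic in $\beta$; the conditioning to stay negative and the infinite time horizon are tamed by the $e^{-e^{\gamma c}\wt A}$ truncation, which localizes the relevant region of the strip. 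Alternatively, one may pass to a Liouville-field representation of $\cMtwo(\beta)$ in which $\beta$ appears only through deterministic data, reducing to a situation parallel to the $H$ case. Once the $\beta$-dependence is made explicit and analytic, the strict inequality $s+\gamma>0$ leaves room to move $\beta$ into a complex neighborhood of the interval, and differentiating under the integral --- justified by a $\beta$-uniform dominating function built from the convergence estimates above --- yields the claimed analyticity.
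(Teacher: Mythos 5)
Your outline is right in shape, and the route you mention only as an ``alternative'' is in fact the one the paper takes: by \cite[Proposition 2.18]{AHS-SLE-integrability}, the two-pointed quantum disk $\cMtwo(\beta)$ \emph{weighted by the quantum length $L_2$ of one boundary arc} is a three-point Liouville field $\LF_\bbH^{(\beta,0),(\gamma,1),(\beta,\infty)}$, so that $\beta$ enters only through deterministic log-insertions and the zero-mode exponent, exactly as in the $H$ case; the paper then reruns the truncation scheme of Proposition~\ref{prop:analytic-appendix} in the form of Proposition~\ref{prop:analytic-appendix-R} (Appendix~\ref{app:GMC}), with analyticity in $(\mu_1,\mu_2)$ by Morera, as you say. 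The price of this representation is the weight $1/L_2$, which is what produces the mixed-sign moment problem discussed next.

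Two steps of your primary route would fail as written. First, the Radon--Nikodym/Girsanov device for the drift change of $Y$ does not exist on the infinite horizon: two Brownian motions with distinct negative drifts are mutually singular on $[0,\infty)$ (by the law of large numbers for $Y_t/t$), and the conditioning $B_{2s}-(Q-\beta)s<0$ for all $s$ is itself a singular conditioning; the factor $e^{-e^{\gamma c}\wt A}$ does not localize the \emph{path measure}, since $\wt A$ depends on the whole strip. Any honest localization (e.g.\ decomposing over the maximum of the field average) essentially rebuilds the Liouville-field identity above. Second, the key moment estimate is not ``standard GMC estimates paralleling Proposition~\ref{prop:H-def}'': the paper stresses that here one must bound moments with a positive quantum-area exponent against a negative length exponent (from the $1/L_2$ weight) --- and in your direct strip computation the $c$-integral likewise produces quantities such as $\E[\wt L^2\,\wt A^{-s/\gamma-1}]$, where $\E[\wt L^2]=\infty$ throughout the range $\beta>(\tfrac2\gamma-\tfrac\gamma2)\vee\tfrac\gamma2$ --- so these are finite only through cancellation between area and length, and a naive H\"older inequality fails. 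The paper's substitute is Lemma~\ref{lem-AL-mixed} (with Lemma~\ref{lem-str-near-infty}): conditionally on the maximum $M$ of the field average, area and length moments are bounded by powers of $e^{\gamma M}$, and $M$ has an explicit exponential law; your phrase ``controlling the joint law of small area and boundary length'' names the right phenomenon but supplies no mechanism. With these two repairs --- the $L_2$-weighted Liouville-field identity and the conditional-on-maximum moment bound --- your plan coincides with the paper's proof; note also that, as in the paper, the analytic extension in $\beta$ is asserted only for $\Re\mu_1,\Re\mu_2>0$, since the argument uses the decay $e^{-K}$ from the boundary term to absorb the powers of length created by the $1/L_2$ weight.
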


\begin{definition}\label{def-R-general}
 	For $\beta \in ( (\frac2\gamma - \frac\gamma2)  \vee \frac\gamma2, Q)$ and $\Re \mu_1, \Re \mu_2 \geq 0$, writing  $s = \beta - Q$, we define 
 	\begin{equation}
 		\label{eq-R-general}
 		R_{\mu_1,\mu_2}(\beta):= \frac1{s(s+\frac\gamma2)} \hat R_{\mu_1,\mu_2}(\beta).
 	\end{equation}
 \end{definition}
We have the following counterpart of Proposition~\ref{prop:H-def-sigma} on the  change of variable. 
\begin{proposition}\label{prop:R-def-sigma}
Recall $\mu_B(\sigma)$ and \( \cB= (-\frac{1}{2 \gamma} + \frac{Q}{2}, \frac{1}{2 \gamma} + \frac{Q}{2} ) \times \mathbb{R}\) in Proposition~\ref{prop:H-def-sigma}. Let  $R(\beta, \sigma_1, \sigma_2)   : =  R_{\mu_B(\sigma_1),\mu_B(\sigma_2)}(\beta)$. For each $\beta \in ( (\frac2\gamma - \frac\gamma2)  \vee \frac\gamma2, Q)$,
 the function $(\sigma_1,\sigma_2)\mapsto R$ is holomorphic on $\mathcal B^2$ and  continuous on $\overline{\mathcal B}^2$. 
 For each $(\sigma_1,\sigma_2) \in \mathcal B^2$, the function $\beta \mapsto s(s+\frac{\gamma}{2})R$ is analytic on a complex neighborhood  of $( (\frac2\gamma - \frac\gamma2)  \vee \frac\gamma2, Q)$.
\end{proposition}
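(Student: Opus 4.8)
The plan is to prove Proposition~\ref{prop:R-def-sigma} in exact parallel with Proposition~\ref{prop:H-def-sigma}: transfer the analyticity statements of Proposition~\ref{prop:R-def}, which are phrased in the cosmological-constant variables $(\mu_1,\mu_2)$, through the change of variable $\mu_B$. The only external input beyond Proposition~\ref{prop:R-def} is the mapping behaviour of $\sigma \mapsto \mu_B(\sigma)$, which is already recorded in the proof of Proposition~\ref{prop:H-def-sigma}. Concretely, I would first note that writing $\sigma = a + \ii b$ one has $\Re \mu_B(\sigma) = \sqrt{1/\sin(\pi\gamma^2/4)}\,\cos(\pi\gamma(a - \frac{Q}{2}))\cosh(\pi\gamma b)$, so that $\mu_B$ is holomorphic on the open strip $\cB$ with image in $\{\Re \mu > 0\}$, and extends continuously to $\overline{\cB}$ with image in $\{\Re\mu \ge 0\}$ (the two boundary lines $\Re\sigma = \pm\frac{1}{2\gamma} + \frac{Q}{2}$ mapping onto the imaginary axis $\{\Re\mu = 0\}$, since there $\cos(\pi\gamma(a-\frac{Q}{2}))=0$).

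For the first assertion, fix $\beta$ in the stated interval with $s(s+\frac\gamma2)\neq 0$, so that the constant prefactor $\frac{1}{s(s+\gamma/2)}$ in Definition~\ref{def-R-general} is finite and nonzero. Then $R(\beta,\sigma_1,\sigma_2) = \frac{1}{s(s+\gamma/2)}\hat R_{\mu_B(\sigma_1),\mu_B(\sigma_2)}(\beta)$ is the composition of the holomorphic map $(\sigma_1,\sigma_2)\mapsto(\mu_B(\sigma_1),\mu_B(\sigma_2))$ from $\cB^2$ into $\{\Re\mu_1>0,\ \Re\mu_2>0\}$ with the map $(\mu_1,\mu_2)\mapsto \hat R_{\mu_1,\mu_2}(\beta)$, which is holomorphic there by Proposition~\ref{prop:R-def}; hence $R$ is holomorphic on $\cB^2$. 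Continuity on $\overline{\cB}^2$ follows identically, using that $\mu_B$ maps $\overline{\cB}$ continuously into $\{\Re\mu\ge 0\}$ and that $\hat R$ is continuous on $\{\Re\mu_1\ge0,\ \Re\mu_2\ge0\}$ by the same proposition. The one point deserving attention here is precisely the boundary: since $\partial\cB$ is sent to the imaginary axis, I must invoke the continuity of $\hat R$ up to and including $\{\Re\mu=0\}$, which is exactly the content of Proposition~\ref{prop:R-def} and is the reason that proposition is stated with closed rather than open half-planes.

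For the second assertion, fix $(\sigma_1,\sigma_2)\in\cB^2$; then $\mu_i := \mu_B(\sigma_i)$ satisfy $\Re\mu_i>0$. By Definition~\ref{def-R-general}, $s(s+\frac\gamma2)R(\beta,\sigma_1,\sigma_2) = \hat R_{\mu_1,\mu_2}(\beta)$, and the last clause of Proposition~\ref{prop:R-def} gives that $\beta\mapsto\hat R_{\mu_1,\mu_2}(\beta)$ extends analytically to a complex neighbourhood of the interval $((\frac{2}{\gamma}-\frac\gamma2)\vee\frac\gamma2,\ Q)$, which is the claimed analyticity of $\beta\mapsto s(s+\frac\gamma2)R$. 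I do not expect any genuine obstacle in this argument: all the analytic difficulty, namely the absolute convergence of the truncated GMC integral and the two analytic-continuation statements, is contained in Proposition~\ref{prop:R-def}, whose proof is deferred to Appendix~\ref{app:GMC} and mirrors that of Proposition~\ref{prop:H-def}. The main thing to verify carefully is the boundary bookkeeping on $\partial\cB$ described above, together with the harmless fact that $s(s+\frac\gamma2)=s(\beta-\frac2\gamma)$ vanishes only at $\beta=\frac2\gamma$ inside the interval, which is exactly why the clean analyticity statement is formulated for $s(s+\frac\gamma2)R=\hat R$ rather than for $R$ itself.
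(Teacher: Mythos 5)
Your proposal is correct and follows essentially the same route as the paper, which likewise proves Proposition~\ref{prop:R-def-sigma} by composing with the holomorphic map $\sigma\mapsto\mu_B(\sigma)$ (sending $\cB$ into $\{\Re\mu>0\}$ and $\overline{\cB}$ continuously into $\{\Re\mu\ge0\}$) and invoking the three analyticity statements of Proposition~\ref{prop:R-def}, exactly as in the proof of Proposition~\ref{prop:H-def-sigma}. Your explicit bookkeeping at $\partial\cB$ and your flagging of the zero of $s(s+\frac\gamma2)$ at $\beta=\frac2\gamma$ are sound refinements that the paper leaves implicit.
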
 
\begin{proof}
This follows from Proposition~\ref{prop:R-def} similarly as in the proof of Proposition~\ref{prop:H-def-sigma}.
\end{proof}

 	\begin{remark}\label{rem-H}
The meromorphic extension of $H$ can also be done by truncations in the spirit of~\eqref{eq:2-point-sec2}. Indeed,
		suppose $\mu_1,\mu_2,\mu_3 \in \C$ satisfy $\Re \mu_i \geq 0$ for $i=1,2,3$, and  $(\beta_1,\beta_2,\beta_3)$ satisfy $0<Q - \frac12 \sum \beta_i <  \frac\gamma2 \wedge \min_i (Q -\beta_i)$, then 
 		\begin{equation}\label{def:H-rig-1}
 		H^{(\beta_1 , \beta_2, \beta_3)}_{(\mu_1,\mu_2 ,\mu_3)} =\int  
 		\left(e^{- \cA_\phi(\bbH)-\sum_{i=1}^3\mu_i L_i} -1\right)  \;
 		\LF_\bbH^{(\beta_1,0), (\beta_2,1), (\beta_3, \infty)} (\mathrm d \phi)
 		\end{equation}
 		If instead
 		$(\beta_1,\beta_2,\beta_3)$ satisfy 
 		$
 		\frac{\gamma}{2}<Q - \frac12 \sum \beta_i < 
 		\gamma\wedge \frac{2}{\gamma} \wedge \min_i (Q -\beta_i)
 		$, then
 		\begin{equation}\label{def:H-rig-2}
 		H^{(\beta_1 , \beta_2, \beta_3)}_{(\mu_1,\mu_2 ,\mu_3)} =\int  
 		\left( e^{- \cA_\phi(\bbH)-\sum_{i=1}^3\mu_i L_i}  -1 +\sum_{i=1}^3 \mu_i L_i\right) \;
 		\LF_\bbH^{(\beta_1,0), (\beta_2,1), (\beta_3, \infty)} (\mathrm d \phi).
 		\end{equation}
Both~\eqref{def:H-rig-1} and~\eqref{def:H-rig-2} follow from integration by parts. One can further extend the range of $H$ via further truncations of $e^{-x}$. We leave these details to the interested readers as we will not need them in this paper.
Similarly, we record without a detailed proof that when	$\beta \in ((Q - \gamma) \vee \frac\gamma2, \frac2\gamma)$,
		\begin{equation*}
		R_{\mu_1,\mu_2}(\beta) =\frac{2(Q-\beta)}\gamma \int  
		\left( e^{- A-\sum_{i=1}^2\mu_i L_i}  -1 +\sum_{i=1}^2 \mu_i L_i\right) \;
		d\cM_2^\disk(\beta).
		\end{equation*}
	\end{remark}

\subsection{Analytic continuation of $H$: proof of Proposition~\ref{prop:H-def}}\label{subsec-anal-H}
We first prove Proposition~\ref{prop:analytic-appendix} which will immediately imply Proposition~\ref{prop:H-def}. 
Our argument follows the same strategy as in~\cite{DOZZ_proof} for the sphere case, with the additional difficulty that 
the correlation functions are no longer  GMC moments due to the presence of both bulk and boundary Liouville potentials.

\begin{proposition}\label{prop:analytic-appendix}
Recall the domain $V$ from Definition~\ref{lem:def_H_trunc} and Proposition~\ref{prop:H-def}. Fix $\mu_i$ with $\Re \mu_i > 0$ for $i=1,2,3$.
Then for each of 
	\begin{align*}
	(\beta_1, \beta_2, \beta_3) &\mapsto \int A e^{-A - \sum \mu_i L_i} \, \LF_\bbH^{(\beta_1,0) (\beta_2, 1), (\beta_3, \infty)}(d\phi),\\
	(\beta_1, \beta_2, \beta_3) &\mapsto \int A (\sum \mu_i L_i) e^{-A - \sum \mu_i L_i} \, \LF_\bbH^{(\beta_1,0) (\beta_2, 1), (\beta_3, \infty)}(d\phi),\\
	(\beta_1, \beta_2, \beta_3) &\mapsto \int (\sum \mu_i L_i)^2 e^{-A - \sum \mu_i L_i} \, \LF_\bbH^{(\beta_1,0) (\beta_2, 1), (\beta_3, \infty)}(d\phi),
	\end{align*}
the integral converges  when $(\beta_1,\beta_2,\beta_3)\in V$, and the function 
extends analytically to a neighborhood of $V$ in $\C^3$. Here, we write $A = \cA_\phi(\bbH)$, $L_1 = \cL_\phi(-\infty,0)$, $L_2 = \cL_\phi(0,1)$ and $L_3 = \cL_\phi(1,\infty)$.   
\end{proposition}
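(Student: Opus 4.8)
The plan is to reduce each of the three integrals to the expectation of a Gaussian multiplicative chaos (GMC) functional by integrating out the constant mode, to prove convergence on $V$ by splitting the constant-mode integral at $c\to\pm\infty$, and then to upgrade convergence to analyticity via holomorphy under the integral sign. Following the computation in the proof of Lemma~\ref{lem:def_H_trunc}, set $\wt h = h - 2Q\log|\cdot|_+ + \sum_i\frac{\beta_i}2 G_\bbH(\cdot,s_i)$ with $(s_1,s_2,s_3)=(0,1,\infty)$, and write $\wt A = \cA_{\wt h}(\bbH)$, $\wt L_1 = \cL_{\wt h}(-\infty,0)$, $\wt L_2 = \cL_{\wt h}(0,1)$, $\wt L_3 = \cL_{\wt h}(1,\infty)$. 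Using the scaling relations $A = e^{\gamma c}\wt A$, $L_i = e^{\gamma c/2}\wt L_i$ and the law $P_\bbH\times[e^{sc}\,dc]$ of $(h,\mathbf c)$ with $s=\tfrac12\sum_i\beta_i-Q$, each of the three integrals takes the form
\[
\E\Big[\int_\R e^{sc}\,P\big(e^{\gamma c}\wt A,\,e^{\gamma c/2}\Sigma\big)\,e^{-e^{\gamma c}\wt A-e^{\gamma c/2}\Sigma}\,dc\Big],\qquad \Sigma:=\textstyle\sum_i\mu_i\wt L_i,
\]
with $P(x,y)=x$, $P(x,y)=xy$, $P(x,y)=y^2$ respectively. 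Since $\Re\mu_i>0$ we have $|e^{-e^{\gamma c/2}\Sigma}|\le 1$, so the boundary term only ever helps.

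For \textbf{convergence}, I would split the $c$-integral at $0$. As $c\to+\infty$ the truncating factor $e^{-e^{\gamma c}\wt A}$ produces super-exponential decay that dominates every polynomial prefactor: using the elementary bound $x\,e^{-tx}\le C_k\,t^{-k}x^{1-k}$ with $t=e^{\gamma c}$ and the fact that all negative moments of the GMC total mass $\wt A$ are finite (and only increased by the insertions), this tail converges with arbitrarily fast decay. As $c\to-\infty$ the truncating exponentials tend to $1$ and the leading behaviour of the integrand is $e^{\theta c}P(\wt A,\Sigma)$ with $\theta=s+\gamma$ for the first and third integrals and $\theta=s+\tfrac{3\gamma}2$ for the second; when the relevant moment $\E[\wt A]$, $\E[\wt A\,\Sigma]$ or $\E[\Sigma^2]$ is finite, convergence holds exactly when $\theta>0$, i.e. $-s = Q-\tfrac12\sum_i\beta_i<\gamma$. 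When some $\beta_i$ is large these moments diverge, but the truncation $e^{-e^{\gamma c}\wt A}$ regularizes the heavy upper tail of $\wt A$ (hence of $\Sigma$), so the threshold is instead governed by the critical moment exponents of the GMC masses carrying the insertions at $0,1,\infty$. These exponents are $\tfrac4{\gamma^2}$ globally and $\tfrac2\gamma(Q-\beta_i)$ at the $i$-th insertion; tracking them through the substitution $t=e^{\gamma c}$ produces precisely the remaining constraints $-s<\tfrac2\gamma$ and $-s<\min_i(Q-\beta_i)$ that cut out $V$.

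For the \textbf{analytic continuation} to a complex neighbourhood of $V$, I would use Girsanov to transfer the $\beta$-dependent deterministic shift $\sum_i\tfrac{\beta_i}2 G_\bbH(\cdot,s_i)$ into a holomorphic Radon--Nikodym weight against a single reference field, so that for a.e.\ field realization the integrand is an entire function of $(\beta_1,\beta_2,\beta_3)$. Holomorphy of the integral then follows from the standard criterion (Morera's theorem with Fubini, or differentiation under the integral sign by dominated convergence) once local uniform integrability is established. The required domination comes from modulus bounds: on a compact subset of a complex neighbourhood of $V$ the integrand is dominated by its counterpart with $\beta_i$ replaced by $\Re\beta_i$, so the very GMC moment estimates of the convergence step furnish a uniform integrable majorant. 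Specializing to real parameters recovers the stated convergence on $V$, yielding Proposition~\ref{prop:analytic-appendix} and hence Proposition~\ref{prop:H-def}.

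I expect the \textbf{main obstacle} to be the simultaneous presence of the area measure $\wt A$ and the boundary length measures $\wt L_i$. In the sphere case of \cite{DOZZ_proof} one integrates out $c$ into a single negative GMC moment times an explicit Gamma factor and analyses that lone moment; here the truncation $e^{-A}$ together with both families of measures blocks such a reduction. The crux is therefore the joint control of the mixed truncated moments $\E\big[\wt A^{\,p}\,\Sigma^{q}\,e^{-e^{\gamma c}\wt A-e^{\gamma c/2}\Sigma}\big]$ — proving their finiteness with the sharp thresholds, uniformly over compact subsets of a complex neighbourhood of $V$, while disentangling the local contributions of the insertions at $0,1,\infty$ from the global tail and verifying that the truncation regularizes the heavy tails exactly enough to reproduce the domain $V$.
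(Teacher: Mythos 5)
Your zero-mode reduction and your convergence analysis on $V$ do track the paper's setup (the computation of Lemma~\ref{lem:def_H_trunc} together with the moment bounds of \cite{hrv-disk}), and the thresholds you extract do cut out $V$, even though for the area measure in this boundary setting the critical exponents are $\frac{2}{\gamma^2}\wedge\frac1\gamma\min_i(Q-\beta_i)$ rather than the boundary-length values you quote. The genuine gap is in your analyticity step. After Girsanov, the $\beta$-dependence sits in a weight that is formally $\prod_i e^{\frac{\beta_i}2 h(s_i)-\frac{\beta_i^2}4\E[h(s_i)^2]}$; since $h$ is not defined pointwise this must be realized through a regularization (circle averages $h_\eps(s_i)$), so the claim that ``for a.e.\ field realization the integrand is an entire function of $(\beta_1,\beta_2,\beta_3)$'' is not meaningful as stated. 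Worse, for $\beta_i=u_i+ v_i\ii$ the modulus of the regularized weight equals the real-$u_i$ weight multiplied by $e^{\frac{v_i^2}4 \E[h_\eps(s_i)^2]}$, and $\E[h_\eps(s_i)^2]\to\infty$ as $\eps\to 0$. Hence your proposed majorant --- the integrand with $\beta_i$ replaced by $\Re\beta_i$ --- does \emph{not} dominate: it is off by an unbounded factor, the family of complex weights is not uniformly integrable as the regularization is removed, and neither Morera-with-Fubini nor differentiation under the integral sign closes as you describe.

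This is precisely the difficulty the paper's proof is engineered around. It approximates $f$ by truncated functionals $f_r$, in which the GMC masses are restricted to $\bbH_r=\bbH\setminus\bigcup_i B_{e^{-r}}(p_i)$ (after first moving the insertions to $(0,2,-2)$ so none sits at $\infty$) and the insertions are represented by the manifestly analytic circle-average weights $e^{\frac{\beta_i}2 h_r(p_i)-\frac{\beta_i^2}4 r}$. The divergent imaginary-part factor then appears explicitly as $e^{\frac{r+1}4\sum v_i^2}$ in \eqref{eq-C0}, and the whole point of Lemma~\ref{lem-exp-bound-holo} (H\"older plus the multifractal estimate \eqref{eq-decay-A}) is the quantitative bound $|f_{r+1}-f_r|\le Ce^{(\frac14\sum v_i^2-C^{-1})r}$, so that after shrinking the complex neighborhood to make $\sum v_i^2$ small the truncation decay beats the divergence and the $f_r$ converge locally uniformly to an analytic limit agreeing with $f$ on $V$. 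To repair your argument you would need an analogous quantitative estimate trading regularization scale against imaginary part; absent that, the dominated-convergence route fails at exactly the step you flag as routine.
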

We only prove the result for the second function in detail as the other two can be proved similarly with the same inputs. 
To simplify notation, we will assume that $\mu_1=\mu_2=\mu_3=1$  but the same argument works for the general case. By the change of coordinates in \cite[Lemma 2.20, Proposition 2.16]{AHS-SLE-integrability}, modulo an explicit prefactor
the second function becomes
	\[f(\beta_1, \beta_2, \beta_3) = \int \cA_\phi(\bbH) \cL_\phi(\R) e^{-\cA_\phi(\bbH) -  \cL_\phi(\R)} \, \LF_\bbH^{(\beta_j,p_j)_j}(d\phi)\quad \textrm{with }(p_1, p_2, p_3) = (0,2,-2).\]
Therefore, it suffices to prove that $f$ extends analytically to a neighborhood of $V$ in $\C^3$. 
To show this, we will approximate $f$ using truncations of $\bbH$ away from the insertions.  
\begin{lemma}\label{lem:claim1}
For $r \geq 1$, let $\bbH_r := \bbH\backslash \bigcup_i B_{e^{-r}}(p_i)$ and $\R_r := \R \backslash \bigcup_i B_{e^{-r}}(p_i)$.
	For a Gaussian free field   $h $  sampled from $P_\bbH$, write $A_r := \cA_{h-2Q\log |\cdot|_+}(\bbH_r)$ and $L_r := \cL_{h-2Q\log |\cdot|_+}(\R_r)$. For $x \in \R$, write $h_r(x)$ for the average of $h$ on $\partial B_{e^{-r}}(x) \cap \bbH$. 
	For $r\geq 1$, let 
	\[f_r(\beta_1, \beta_2, \beta_3) := \int_\R dc \, e^{(\frac12\sum\beta_i - Q)c} \E\left[ \prod_i e^{\frac{\beta_i}2 h_r(p_i) - \frac{\beta_i^2}4r} (e^{\gamma c} A_r) (e^{\frac\gamma2 c} L_r) \exp(-e^{\gamma c} A_r -  e^{\frac\gamma2 c} L_r ) \right]\]
Then each  point in  $V$ has a neighborhood in $\C^3$ on which $f_r$ converges uniformly as $r \to \infty$.    
Moreover, $\lim_{r \to \infty} f_r(\beta_1, \beta_2, \beta_3) = f(\beta_1, \beta_2, \beta_3)$ for all $(\beta_1, \beta_2, \beta_3) \in V$.
\end{lemma}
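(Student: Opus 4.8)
The plan is to deduce both assertions from a single analytic-continuation argument in the spirit of \cite{DOZZ_proof}: once I show that, near every real point $\beta^*\in V$, the holomorphic functions $f_r$ are locally uniformly bounded on a polydisc $N\subset\C^3$ about $\beta^*$ and converge pointwise along the reals, Montel's theorem (a locally bounded family of holomorphic functions of several variables is normal) together with the identity theorem (the totally real slice $\R^3\cap N$ is a uniqueness set) will force locally uniform convergence of the full sequence to a holomorphic limit agreeing with $f$ on $V$. This is exactly the uniform convergence claimed, and the real values identify the limit as $f$.

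First I would rewrite $f_r$ by Girsanov. Reweighting $P_\bbH$ by $\prod_i e^{\frac{\beta_i}2 h_r(p_i)}$ shifts the field by $\sum_i\frac{\beta_i}2 G_\bbH^{(r)}(p_i,\cdot)$, where $G_\bbH^{(r)}(p_i,\cdot)$ is the circle average of $G_\bbH(p_i,\cdot)$; the counterterms $e^{-\beta_i^2 r/4}$ cancel the Girsanov variance $\frac{\beta_i^2}8\mathrm{Var}(h_r(p_i))=\frac{\beta_i^2}4 r+O(1)$ \emph{exactly}, even for complex $\beta_i$, so that $f_r(\beta)=C_r(\beta)\int_\R dc\,e^{sc}\E[(e^{\gamma c}A_r^{(\beta)})(e^{\frac\gamma2 c}L_r^{(\beta)})e^{-e^{\gamma c}A_r^{(\beta)}-e^{\frac\gamma2 c}L_r^{(\beta)}}]$, where $A_r^{(\beta)},L_r^{(\beta)}$ are the (complex) area and length GMC of the shifted truncated field and $C_r(\beta)\to C_\bbH^{(\beta_j,p_j)_j}$. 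For fixed $r$ the integrand is entire in $\beta$ and the integral converges locally uniformly, so each $f_r$ is holomorphic on $N$ by Morera.

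For the pointwise limit along real $\beta\in V$, I would use that $A_r^{(\beta)}\uparrow\cA_\phi(\bbH)<\infty$ and $L_r^{(\beta)}\to\cL_\phi(\R)$ a.s. as $r\to\infty$ (GMC convergence, with the excised balls carrying vanishing mass because each insertion is subcritical, $\beta_i<Q$). To pass to the limit I would integrate the zero mode $c$ first (Tonelli, the integrand being nonnegative): using $xe^{-x}\le1$ on the length factor and the substitution $u=e^{\gamma c}A_r^{(\beta)}$ bounds the $c$-integral by $\tfrac1\gamma\Gamma(\tfrac s\gamma+1)(A_r^{(\beta)})^{-s/\gamma}$, which requires only $s>-\gamma$, i.e. $Q-\tfrac12\sum\beta_i<\gamma$, valid on $V$. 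When $s>0$ this is a negative power of $A_r^{(\beta)}$, bounded by the ($r=1$) negative moment; when $s<0$ it is the positive power $(A_r^{(\beta)})^{|s|/\gamma}\le\cA_\phi(\bbH)^{|s|/\gamma}$, and $\E[\cA_\phi(\bbH)^{|s|/\gamma}]$ is finite precisely under the remaining inequalities defining $V$ (the bound $\tfrac2\gamma$ is the bulk-moment threshold, and the triangle bounds $\beta_i<\beta_j+\beta_k$ control integrability at each insertion). In either case dominated convergence gives $f_r\to f$ on $V$.

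The crux is the local uniform bound $\sup_r\sup_{\beta\in N}|f_r(\beta)|<\infty$. Here one must \emph{not} estimate through the modulus of the weight $\prod_i e^{\frac{\beta_i}2 h_r(p_i)-\frac{\beta_i^2}4 r}$: replacing it by $\prod_i e^{\frac{\Re\beta_i}2 h_r(p_i)-\frac{\Re(\beta_i^2)}4 r}$ reintroduces an uncancelled factor $\prod_i e^{(\Im\beta_i)^2 r/4}$ that blows up as $r\to\infty$. Instead I would keep the exact complex Girsanov representation above, dominate $|A_r^{(\beta)}|\le A_r^{(\Re\beta)}$ and $|L_r^{(\beta)}|\le L_r^{(\Re\beta)}$ (clean because $G_\bbH^{(r)}(p_i,\cdot)$ is real, so the phase $e^{\ii\gamma\sum\frac{\Im\beta_i}2 G_\bbH^{(r)}(p_i,\cdot)}$ has modulus one), and control the leftover factor $e^{-e^{\gamma c}A_r^{(\beta)}-e^{\frac\gamma2 c}L_r^{(\beta)}}$ by shrinking $N$ and splitting $\bbH_r$ into a macroscopic part at fixed distance from the insertions—where the phase $\sum\frac{\Im\beta_i}2 G_\bbH^{(r)}(p_i,\cdot)$ is uniformly small so that $\Re A_r^{(\beta)}$ stays comparable to $A_r^{(\Re\beta)}>0$ and the exponential genuinely decays in $c$—and a microscopic part near the insertions handled by the $c$-integrated moment bounds of the previous paragraph. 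This interplay between the bulk area measure, the boundary length measure, and the $r$-dependent truncation is exactly the difficulty absent from the single-GMC sphere computation of \cite{DOZZ_proof}, and I expect it to be the main technical obstacle; with the uniform bound in hand, Montel and the identity theorem complete the argument as in the first paragraph.
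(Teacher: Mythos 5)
Your overall scaffolding (holomorphy of each $f_r$ by Morera, pointwise convergence on the real slice, then Vitali/Montel plus the identity theorem) would be sound \emph{if} you could establish the crux, namely $\sup_r \sup_{\beta \in N} |f_r(\beta)| < \infty$ on a fixed complex neighborhood $N$. But that bound is exactly where the proposal breaks. After your ``exact complex Girsanov'' step, the factor you must control is $e^{-e^{\gamma c} A_r^{(\beta)} - e^{\gamma c/2} L_r^{(\beta)}}$ with \emph{complex} $A_r^{(\beta)}, L_r^{(\beta)}$, and its modulus is governed by a \emph{lower} bound on $\Re A_r^{(\beta)}$, not by the upper bound $|A_r^{(\beta)}| \leq A_r^{(\Re\beta)}$ that you propose. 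The phase of the area density is $\gamma \sum_i \frac{\Im \beta_i}{2} G_\bbH(p_i, \cdot)$, and $G_\bbH(p_i, x) \approx 2\log\frac{1}{|x - p_i|}$ grows to order $2r$ at the truncation scale $e^{-r}$; so for any fixed nonzero $\Im\beta_i$ — however small, since $N$ must be fixed while $r \to \infty$ — the phase winds unboundedly in the annuli near the insertions. Your ``microscopic part'' is therefore precisely the region where the phase is \emph{not} small: with positive probability $\Re A_r^{(\beta)} < 0$, and on that event $|e^{-e^{\gamma c} A_r^{(\beta)}}|$ blows up doubly exponentially as $c \to +\infty$, so the $c$-integral in your Girsanov representation is not even absolutely convergent; the real-case ``$c$-integrated moment bounds'' you invoke there all rest on positivity and do not apply. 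Salvaging this would require exhibiting genuine oscillatory cancellation inside the expectation, which is a different (and much harder) argument than the domination you describe.

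The paper's proof never attempts a uniform-in-$r$ bound on a fixed complex neighborhood; it proves the Cauchy property directly. One rewrites $f_r$ with $h_{r+1}(p_i)$ in place of $h_r(p_i)$ (conditional independence of the circle-average increments), takes the modulus crudely through the complex weights — accepting the uncancelled factor $e^{\frac{r+1}{4}\sum v_i^2}$, $v_i = \Im\beta_i$, that you were at pains to avoid — and applies Girsanov only with the \emph{real} parts $u_i = \Re\beta_i$. The difference of the two truncations is then bounded, after splitting $g(a,\ell) = a e^{-a} \ell e^{-\ell}$ by the triangle inequality and integrating out $c$ via $\int e^{tc} e^{-e^{\gamma c} a}\, dc = \frac{1}{\gamma}\Gamma(\frac{t}{\gamma}) a^{-t/\gamma}$, by expectations like $\E[(\wt A_{r+1} - \wt A_r)\wt A_{r+1}^{-\tilde s/\gamma - 1}]$, which Lemma~\ref{lem-exp-bound-holo} controls by $C e^{-r/C}$ using H\"older together with the multifractal estimate $\E[(\wt A_{r+1} - \wt A_r)^m] \lesssim e^{-m\gamma(Q - \max_i \tilde\beta_i - m\gamma)r}$. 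Shrinking the neighborhood so that $\frac{1}{4}\sum v_i^2 < \frac{1}{2C}$ makes $|f_{r+1} - f_r| \lesssim e^{-r/2C}$ summable geometrically, giving uniform convergence; the identification of the limit with $f$ on $V$ then follows by the same estimates with $\wt A_\infty, \wt L_\infty$ in place of $\wt A_{r+1}, \wt L_{r+1}$. In short: the imaginary-part blow-up is beaten by the exponential smallness of successive truncation increments, rather than avoided by a complex change of measure — and this trade is what makes the argument close, where yours cannot.
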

Proposition~\ref{prop:analytic-appendix} follows immediately from Lemma~\ref{lem:claim1}.
\begin{proof}[Proof of Proposition~\ref{prop:analytic-appendix}]
Since $f_r$ is analytic,
by Lemma~\ref{lem:claim1}, there is a neighborhood of $V$ on which $\lim_{r \to \infty} f_r$ is holomorphic. Moreover, this limit agrees with $f$ on $V$, hence is the desired analytic continuation of $f$.
\end{proof}

\begin{proof}[Proof of Lemma~\ref{lem:claim1}]
	If we condition on $h|_{\bbH_r}$, then $(h_{r+1}(p_i) - h_{r}(p_i))_{i=1,2,3}$ is a triple of conditionally independent Gaussians. Therefore
	\[f_r(\beta_1, \beta_2, \beta_3) = \int_\R dc \, e^{(\frac12\sum\beta_i - Q)c} \E\left[ \prod_{i=1}^3 e^{\frac{\beta_i}2 h_{r+1}(p_i) - \frac{\beta_i^2}4(r+1)} (e^{\gamma c} A_r) (e^{\frac\gamma2 c} L_r) \exp(-e^{\gamma c} A_r -  e^{\frac\gamma2 c} L_r ) \right].\]
Write $g(a,\ell) := a e^{-a} \ell e^{- \ell}$. We have $f_{r+1}(\beta_1, \beta_2, \beta_3) -f_r(\beta_1, \beta_2, \beta_3)$ equals
\begin{align*}
\int_\R dc \, e^{(\frac12\sum\beta_i - Q)c} \E\left[ \prod_i e^{\frac{\beta_i}2 h_{r+1}(p_i) - \frac{\beta_i^2}4 (r+1)} ( g(e^{\gamma c} A_{r+1}, e^{\frac\gamma2 c} L_{r+1}) - g(e^{\gamma c} A_r, e^{\frac\gamma2 c} L_r) )\right] .
\end{align*}
Fix a point in $V$ and let $O$ be a neighborhood of the point. We will require $O$ to be sufficiently small as explained below. For  $(\beta_1,\beta_2,\beta_3) \in O$ write $\beta_j = u_j +  v_j \ii$ where $u_j,v_j \in \R$. {Set $\tilde s := \frac{1}{2} \sum_i u_i -Q $.}
By the definition of $V$, we can choose $O$ small  enough such that  $\tilde s> -\gamma$.
Write $\wt A_r := \cA_{\wt h}(\bbH_r)$ and $\wt L_r := \cL_{\wt h}(\R_r)$ where $\wt h = h + \sum \frac{u_i}2 G_\bbH(\cdot, p_i) -2Q\log |\cdot|_+$. By Girsanov's theorem,
	\begin{align}
	&|f_{r+1}(\beta_1, \beta_2, \beta_3) -f_r(\beta_1, \beta_2, \beta_3)| \nonumber\\
	&\leq   \int_\R dc \, e^{\tilde s c} \E\left[ \left|\prod_i e^{\frac{\beta_i}2 h_{r+1}(p_i) - \frac{\beta_i^2}4 (r+1)} ( g(e^{\gamma c} A_{r+1}, e^{\frac\gamma2 c} L_{r+1}) - g(e^{\gamma c} A_r, e^{\frac\gamma2 c} L_r) )\right| \right] \nonumber \\
	&\leq C_0 e^{\frac {r+1}4 \sum v_i^2} \int_\R dc \, e^{\tilde s c} \E \left[ |g(e^{\gamma c} \wt A_{r+1}, e^{\frac\gamma2 c} \wt L_{r+1}) - g(e^{\gamma c} \wt A_{r}, e^{\frac\gamma2 c} \wt L_r)| \right], \label{eq-C0}
	\end{align}
 where $C_0 = \sup_{O} \E[ \prod_i e^{\frac {u_i}2 h_{r+1}(p_i) - \frac{u_i^2}4 (r+1)}]$ is a constant not depending on $r$, which is finite if $O$ is sufficiently small.
 Now, using the triangle inequality, and the fact that $\ell \mapsto \ell e^{- \ell}$ is bounded and Lipschitz, we have for any  $a_{r+1} > a_r > 0$ and $\ell_{r+1} > \ell_r > 0$ that
	\[|g(a_{r+1}, \ell_{r+1}) - g(a_r, \ell_r)| \leq C_1( (\ell_{r+1} - \ell_r) a_{r+1} e^{-a_{r+1}} + (a_{r+1} - a_r) e^{-a_{r+1}} + a_r(e^{-a_r} - e^{-a_{r+1}})),\]
 where $C_1$ is a universal constant.
 This gives three terms to bound. 
Now using the identity $\int  e^{tc} e^{-e^{\gamma c} a}\, \mathrm dc = \frac1\gamma \Gamma(\frac {t}\gamma) a^{-\frac {t}\gamma}$ for $t,\ell >0$, we have
	\begin{align*}
			\int_\R dc \, e^{(\tilde s+\gamma + \frac\gamma2)c} \E[(\wt L_{r+1}-\wt L_r) \wt A_{r+1} e^{-e^{\gamma c} \wt A_{r+1}}] &= \frac1\gamma \Gamma(\frac{\tilde s}\gamma + \frac32) \E[(\wt L_{r+1} - \wt L_r) \wt A_{r+1}^{-\frac{\tilde s}\gamma - \frac12}], \\
	\int_\R dc \, e^{(\tilde s+\gamma)c} \E[(\wt A_{r+1} - \wt A_r)e^{-e^{\gamma c} \wt A_{r+1}}] &= \frac1\gamma\Gamma(\frac{\tilde s}{\gamma} + 1) \E[(\wt A_{r+1} - \wt A_r) \wt A_{r+1}^{-\frac{\tilde s}{\gamma} - 1}],  \\
	\int_\R dc \, e^{( \tilde s+\gamma)c} \E[\wt A_r(e^{-e^{\gamma c} \wt A_{r}} - e^{-e^{\gamma c} \wt A_{r+1}} )] &= \frac1\gamma \Gamma(\frac{\tilde s}\gamma + 1) \E[\wt A_r(\wt A_r^{-\frac{\tilde s}{\gamma} - 1} - \wt A_{r+1}^{-\frac{\tilde s}{\gamma} - 1})].
\end{align*} 
Now Lemma~\ref{lem:claim1} follows from Lemma~\ref{lem-exp-bound-holo}, which we state right below.
 \begin{lemma}\label{lem-exp-bound-holo}
	For each point in $V$ there is a neighborhood $U \subset \R^3$ and a constant $C>0$ such that the following holds. For $(u_1,u_2,u_3) \in U$ and $r \geq 1$, with $\tilde s$, $\wt A_r$ and $\wt L_r$ defined above,  
	\[ \E[(\wt A_{r+1} - \wt A_r)\wt A_{r+1}^{-\frac{\tilde s}{\gamma} - 1}], \quad \E[\wt A_r(\wt A_r^{-\frac{\tilde s}{\gamma} - 1} - \wt A_{r+1}^{-\frac{\tilde s}{\gamma} - 1})], \quad 
	\E[(\wt L_{r+1} - \wt L_r) \wt A_{r+1}^{-\frac{\tilde s}{\gamma} - \frac12}] \]
	are all bounded by $C e^{-r/C}$.  
	Moreover, write $\wt A_\infty:= \cA_{\wt h}(\bbH)$ and $\wt L_\infty:= \cL_{\wt h}(\R)$. Then the same holds with $\wt A_\infty,\wt L_\infty $ in place of $\wt A_{r+1},\wt L_{r+1} $.
\end{lemma}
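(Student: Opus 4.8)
The plan is to localize each increment to thin half-annuli around the three insertions and, crucially, to exploit the \emph{negative} power of $\wt A_{r+1}$ to collapse every expectation into a \emph{fractional} moment of Gaussian multiplicative chaos on a single such half-annulus, which then decays by a scaling computation. Fix a point of $V$ and a small real neighborhood $U$ of the corresponding $(u_1,u_2,u_3)$; shrinking $U$ I may assume $u_i<Q$ for each $i$ and that $a:=Q-\tfrac12\sum_j u_j$ satisfies $0<a<\gamma\wedge\tfrac2\gamma$, so $\tilde s=-a>-\gamma$. The defining inequality $Q-\tfrac12\sum_j\beta_j<\min_i(Q-\beta_i)$ of $V$ then reads $\max_i u_i<\tfrac12\sum_j u_j=Q-a$, which I keep in reserve. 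For $r\ge1$ set $A_{r,i}:=\big(B_{e^{-r}}(p_i)\setminus B_{e^{-r-1}}(p_i)\big)\cap\bbH$, so that for large $r$ these are pairwise disjoint, $A_{r,i}\subset\bbH_{r+1}$, and $\wt A_{r+1}-\wt A_r=\sum_i \delta A_{r,i}$ with $\delta A_{r,i}:=\cA_{\wt h}(A_{r,i})$; write $\delta L_{r,i}:=\cL_{\wt h}(\partial A_{r,i}\cap\R)$ for the boundary-length increments.

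For the first quantity I would use only a deterministic inequality. Since $A_{r,i}\subset\bbH_{r+1}$ we have $\delta A_{r,i}\le\wt A_{r+1}$, and as the exponent $p_0:=-\tfrac{\tilde s}\gamma-1=\tfrac a\gamma-1$ lies in $(-1,0)$ on $U$, this yields the pointwise bound $\delta A_{r,i}\,\wt A_{r+1}^{p_0}\le\delta A_{r,i}^{\,1+p_0}=\delta A_{r,i}^{\,a/\gamma}$ with $a/\gamma\in(0,1)$. Summing over $i$ reduces the first estimate to $\sum_i\E[\delta A_{r,i}^{a/\gamma}]$. The second quantity reduces to the first with no new input: expanding
\[\E[\wt A_r(\wt A_r^{p_0}-\wt A_{r+1}^{p_0})]=\big(\E[\wt A_r^{1+p_0}]-\E[\wt A_{r+1}^{1+p_0}]\big)+\E[(\wt A_{r+1}-\wt A_r)\wt A_{r+1}^{p_0}],\]
the first bracket is $\le0$ since $x\mapsto x^{1+p_0}$ is increasing and $\wt A_r\le\wt A_{r+1}$, so the whole expression is bounded by the first quantity. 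It thus remains to bound $\E[\delta A_{r,i}^{a/\gamma}]$ and, for the third quantity, $\E[\delta L_{r,i}\,\wt A_{r+1}^{p_1}]$ with $p_1:=\tfrac a\gamma-\tfrac12$.

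These I would handle by scaling around each $p_i$. Writing $\wt h=h+f$ with $f$ the deterministic logarithmic field, one has $\cA_{\wt h}=e^{\gamma f}\cA_h$ and $\cL_{\wt h}=e^{\frac\gamma2 f}\cL_h$, with $e^{\gamma f}\asymp|z-p_i|^{-\gamma u_i}$ near $p_i$; hence on the scale-$e^{-r}$ half-annulus $\delta A_{r,i}\asymp e^{r\gamma u_i}\cA_h(A_{r,i})$ and $\delta L_{r,i}\asymp e^{r\gamma u_i/2}\cL_h(\partial A_{r,i}\cap\R)$. Using the scale covariance of the free-boundary GFF at the boundary point $p_i$ (the $e^{-r}$-scale circle average having variance $\sim 2r$), the fractional chaos moments satisfy $\E[\cA_h(A_{r,i})^m]\le Ce^{-r(\gamma Q m-\gamma^2 m^2)}$ for $m\in(0,1)$, the coefficient $\gamma^2 m^2$ (rather than $\tfrac{\gamma^2}2 m^2$) reflecting the boundary doubling. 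With $m=a/\gamma$ this gives
\[\E[\delta A_{r,i}^{a/\gamma}]\le C e^{r a u_i}\,e^{-r(Q a-a^2)}=C e^{-r a(Q-a-u_i)},\]
and $Q-a-u_i>0$ on $U$ is exactly the reserved inequality $u_i<Q-a$, so $\E[\delta A_{r,i}^{a/\gamma}]\le Ce^{-r/C}$. For the length term I would instead write, using $\delta A_{r,i}\le\wt A_{r+1}\le\wt A_\infty$ and choosing $t\ge\max(0,-p_1)$, the bound $\wt A_{r+1}^{p_1}\le\delta A_{r,i}^{-t}\,\wt A_\infty^{\,p_1+t}$; Hölder then reduces $\E[\delta L_{r,i}\,\wt A_{r+1}^{p_1}]$ to a product of the finite positive moment $\E[\wt A_\infty^{(p_1+t)p'}]$ (with $(p_1+t)p'<\tfrac4{\gamma^2}$) and the joint scaling moment $\E\big[(\cL_h(\partial A_{r,i}\cap\R)\,\cA_h(A_{r,i})^{-t})^{p}\big]$ at scale $e^{-r}$, whose $u_i$-prefactor $e^{r p\gamma u_i(\frac12-t)}$ is tamed by taking $t$ slightly above $\tfrac12$. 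Finally the $\wt A_\infty,\wt L_\infty$ versions follow by summing these per-scale bounds over $r'\ge r$, since $\wt A_\infty-\wt A_r=\sum_{r'\ge r}\sum_i\delta A_{r',i}$ and the bounds are geometric in $r'$.

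The main obstacle will be the length term: establishing the joint scaling moment $\E[(\cL_h\,\cA_h^{-t})^{p}]\le Ce^{-r\vartheta}$ with a precise, positive exponent $\vartheta$, and then verifying that the net exponent is strictly positive throughout a full neighborhood $U$ of the chosen point of $V$. This requires the sharp boundary multifractal exponents for the free-boundary GFF (the enhanced near-boundary variance making the naive first-moment computation diverge for $\gamma\ge\sqrt2$, which is precisely why the negative area power is indispensable). The necessary ingredients—finiteness of positive chaos moments below $\tfrac4{\gamma^2}$, of negative moments to all orders, and the multifractal scaling—are standard and can be cited from the GMC literature and the analogous estimates in \cite{rhodes-vargas-review,rz-boundary,ARS-FZZ,DOZZ_proof}; the remaining bookkeeping is the matching of the resulting exponents with the inequalities cutting out $V$.
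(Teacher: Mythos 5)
Your argument has two genuine gaps, one of coverage and one of substance. First, the reduction to $a:=Q-\frac12\sum_j u_j\in(0,\gamma\wedge\frac2\gamma)$ is not something you may assume: the set $V$ imposes only an \emph{upper} bound on $Q-\frac12\sum_i\beta_i$, so it contains the entire Seiberg region $\sum_i\beta_i>2Q$ (e.g.\ all $\beta_i$ close to $Q$), where $a<0$; no shrinking of a neighborhood $U$ of such a point makes $a$ positive, and this is in fact the main regime, since it is where $H$ is probabilistically defined. For $a\le 0$ your collapse trick fails structurally: $1+p_0=a/\gamma\le0$, so $\delta A_{r,i}\,\wt A_{r+1}^{p_0}\le\delta A_{r,i}^{a/\gamma}$ yields a nonpositive power of a quantity tending to $0$, whose expectation does not decay, and the monotonicity step in your second bound reverses ($x\mapsto x^{1+p_0}$ is then decreasing). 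You cannot rescue it with H\"older against an integer moment of the increment either, because for $\gamma\ge\sqrt2$ the free-boundary area density $\asymp(\Im z)^{-\gamma^2/2}$ makes even $\E[\delta A_{r,i}]=\infty$. The paper's proof avoids any sign condition on $\tilde s$ precisely by the $\eps$-splitting $\E[(\wt A_{r+1}-\wt A_r)\wt A_{r+1}^{-\frac{\tilde s}\gamma-1}]\le\eps\,\E[\wt A_{r+1}^{-\frac{\tilde s}\gamma-1}]+\E[\wt A_{r+1}^{-\frac{\tilde s}\gamma p}]^{1/p}\,\P[\wt A_{r+1}-\wt A_r>\eps]^{1/q}$, with the probability killed by Markov applied to a \emph{small fractional} moment of the increment (the multifractal bound) and $\eps=e^{-r/C}$; this only needs the moment thresholds that $V$ guarantees.

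Second, your length estimate fails even in your restricted regime: with $t$ slightly above $\frac12$, the quantity $\delta L_{r,i}\,\delta A_{r,i}^{-t}$ scales, modulo the insertion, like $e^{-r\gamma(\frac12-t)Q}\,e^{\gamma(\frac12-t)h_{e^{-r}}(p_i)}$ times an order-one chaos factor. Since $\frac12-t<0$, the deterministic factor $e^{r\gamma(t-\frac12)Q}$ \emph{grows}, and the Gaussian moment $\E[e^{p\gamma(\frac12-t)h_{e^{-r}}(p_i)}]=e^{rp^2\gamma^2(\frac12-t)^2}$ adds further growth; you tracked only the insertion prefactor $e^{rp\gamma u_i(\frac12-t)}$ and dropped the base GMC scaling, so the "joint scaling moment" you flag as the main obstacle in fact increases exponentially in $r$ for your choice of $t$, and no positive $\vartheta$ exists there. (Decay would force $t<\frac12$, which is compatible with $t\ge-p_1$ only when $a>0$ — again excluding most of $V$.) A further slip: the threshold $(p_1+t)p'<\frac4{\gamma^2}$ you invoke for $\E[\wt A_\infty^{(p_1+t)p'}]$ is the boundary-\emph{length} threshold; for the area with free boundary and insertions the correct constraint is $(p_1+t)p'<\frac2{\gamma^2}\wedge\frac1\gamma\min_i(Q-u_i)$, exactly because of the enhanced boundary variance you yourself mention. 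The paper sidesteps all of this by never forming a negative power of a small annulus: it writes $\E[(\wt L_{r+1}-\wt L_r)\wt A_{r+1}^{-\frac{\tilde s}\gamma-\frac12}]\lesssim\eps+\E[(\wt L_{r+1}\wt A_{r+1}^{-\frac{\tilde s}\gamma-\frac12})^p]^{1/p}\P[\wt L_{r+1}-\wt L_r>\eps]^{1/q}$, bounds the mixed moment uniformly by H\"older with exponents below the respective thresholds, and again gets decay solely from a fractional moment of the length increment. Your first-quantity computation (the exponent $-ra(Q-a-u_i)$ matching the $V$-inequality $\max_iu_i<\frac12\sum_ju_j$) is correct as far as it goes, but only on the sub-Seiberg slice of $V$.
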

	By  Lemma~\ref{lem-exp-bound-holo}, we can choose  $O$ small enough and $C$ large enough so that $|f_{r+1}(\beta_1, \beta_2, \beta_3) -f_r(\beta_1, \beta_2, \beta_3)| < C e^{(\frac14 \sum v_i^2 - C^{-1})r}$ uniformly in $O$. By possibly shrinking $O$ further so that $\sum v_i^2 - 1/C < -1/2C$ for all $(\beta_1, \beta_2, \beta_3) \in O$, we see that the $f_r$ uniformly  converge in $O$ as desired. 

It remains to prove  $\lim_{r \to \infty} f_r(\beta_1, \beta_2, \beta_3) = f(\beta_1, \beta_2, \beta_3)$ for $(\beta_1, \beta_2, \beta_3) \in V$. 
In this case since $\beta_1,\beta_2,\beta_3$ are real we have
 \[|f(\beta_1, \beta_2, \beta_3) - f_r(\beta_1, \beta_2, \beta_3)| \leq C_0 \int_\R dc \, e^{(\frac12\sum\beta_i - Q)c} \E[| g(e^{\gamma c} \wt A_\infty, e^{\frac\gamma2 c} \wt L_\infty) -   g(e^{\gamma c} \wt A_r, e^{\frac\gamma2 c} \wt L_r)|].\] 
 Now by the same argument as before with the $\wt A_\infty,\wt L_\infty$ case of Lemma~\ref{lem-exp-bound-holo} being applied, we get $\lim_{r \to \infty} f_r(\beta_1, \beta_2, \beta_3) = f(\beta_1, \beta_2, \beta_3)$. 
 \end{proof}

\begin{proof}[Proof of Lemma~\ref{lem-exp-bound-holo}]
Fix a choice of $(\tilde \beta_1, \tilde \beta_2, \tilde \beta_3) \in V$; we will find a suitably small neighborhood $U$ of this point.
We will only explain the $A_{r+1},L_{r+1}$ case in detail since the  $A_{\infty},L_{\infty}$ follows from  the same argument. 
Moreover, we will focus on a fixed $(u_1,u_2, u_3) \in U$,
but all inputs in the argument vary continuously in $(u_1,u_2, u_3)$ which gives the desired uniform bounds in  $U$. 
We now prove the three bounds one by one. We write $x \lesssim y$ to denote that $x \leq k y$ for some constant $k$ not depending on $r$ (but which may depend on $U$).
\medskip 
	
	\noindent \textbf{First bound.}
For $\eps>0$, we have 
$\E[\wt A_{r+1}^{- \frac {\tilde s} \gamma - 1}  (\wt A_{r+1} - \wt A_r)]\leq \eps \E[\wt A_{r+1}^{- \frac {\tilde s} \gamma - 1}] + \E[1_{\wt A_{r+1} - \wt A_r>\eps}\wt A_{r+1}^{- \frac {\tilde s} \gamma}]$. 
By H{\"o}lder inequality,  $\E[1_{\wt A_{r+1} - \wt A_r>\eps}\wt A_{r+1}^{- \frac {\tilde s} \gamma}]\le \E[\wt A_{r+1}^{-\frac{\tilde s}\gamma p}]^{1/p} \P[\wt A_{r+1} - \wt A_r > \eps]^{1/q}$ 
for  $p,q>1$ satisfying $\frac1p+\frac1q = 1$. By the definition of $V$, we can choose $U$ and $p$ small enough such that both $-\frac{\tilde s}{\gamma} - 1$ and $  -\frac{\tilde s}\gamma p  $ lie in $ (-\infty, \frac2{\gamma^2} \wedge \frac1\gamma \min(Q-\tilde \beta_i) )$. By \cite[Corollary 6.11]{hrv-disk}, for $\lambda < \frac2{\gamma^2} \wedge \frac1\gamma \min(Q-\tilde \beta_i)$
we have $\E[\wt A_{r+1}^{\lambda}]< \E[\wt A_1^{\lambda}]+\E[\wt A_\infty^{\lambda}] <\infty$. Therefore $\E[\wt A_{r+1}^{- \frac {\tilde s} \gamma - 1}  (\wt A_{r+1} - \wt A_r)]\lesssim \eps  + \P[\wt A_{r+1} - \wt A_r > \eps]^{1/q}$.
 Now, for  $m>0$ small enough  so that $Q-\max_i \tilde \beta_i - m\gamma>0$, 
 by the multifractal spectrum of LQG (e.g.\ \cite[Theorem 3.23]{berestycki-lqg-notes} with  minor adaptation for the boundary setting) we have 
	\begin{equation}\label{eq-decay-A}
		\P[\wt A_{r+1} - \wt A_r > \eps]\leq \eps^{-m} \E[(\wt A_{r+1} - \wt A_r)^m]  \lesssim \eps^{-m} e^{-m\gamma(Q - \max_i \tilde \beta_i - m\gamma)r}
	\end{equation} 
	where the term $e^{m \frac\gamma2 r \max_i \tilde \beta_i}$ comes from the log singularities added to the field. Choosing $\eps = e^{-r/C}$ for large enough $C$  yields 
$\E[\wt A_{r+1}^{- \frac {s} \gamma - 1}  (\wt A_{r+1} - \wt A_r)]\leq  Ce^{-r/C}$.

	\medskip 
	
	\noindent \textbf{Second bound.} Choose  $U$ small enough so that $-\frac {\tilde s}\gamma - 1 < 0$. 
 For $a_{r+1}>a_r>0$ we have $a_r(a_r^{-\frac s\gamma - 1} - a_{r+1}^{-\frac s\gamma - 1}) \lesssim (a_{r+1}-a_r) a_r^{-\frac s\gamma - 1}$. As in the proof of the first equality,  for $\eps>0$ we have 
 \[\E[\wt A_r(\wt A_r^{-\frac s\gamma - 1} - \wt A_{r+1}^{-\frac s\gamma - 1})] \lesssim \eps \E[\wt A_r^{-\frac s\gamma - 1}] + \E[1_{\wt A_{r+1} - \wt A_r > \eps} \wt A_r^{-\frac s\gamma}] \lesssim \eps + \E[\wt A_{r}^{-\frac{s}\gamma p}]^{1/p} \P[\wt A_{r+1} - \wt A_r > \eps]^{1/q} . \]
Now the same argument   gives $\E[\wt A_r(\wt A_r^{-\frac s\gamma - 1} - \wt A_{r+1}^{-\frac s\gamma - 1})] \le Ce^{-r/C}$ for large enough $C$.

	\medskip 
	
	\noindent \textbf{Third bound.} Arguing as in the first case, we have
	\[\E[(\wt L_{r+1} - \wt L_r) \wt A_{r+1}^{-\frac s\gamma - \frac12}] \lesssim \eps + \E[(\wt L_{r+1} \wt A_{r+1}^{-\frac s\gamma - \frac12})^p]^{1/p} \P[\wt L_{r+1} - \wt L_r > \eps]^{1/q}.\]
	We now show $\E[(\wt L_{r+1} \wt A^{-\frac s\gamma - \frac12})^p]$ is uniformly bounded in $r$, from which the same argument gives $\E[(\wt L_{r+1} - \wt L_r) \wt A_{r+1}^{-\frac s\gamma - \frac12}]\le Ce^{-r/C}$ for large enough $C$. 
	
	Since $\frac12 + (-\frac s\gamma -\frac12) = -\frac s\gamma < \frac2{\gamma^2}\wedge \min_i  \frac1\gamma (Q-\beta_i)$ we can choose $\lambda_1, \lambda_2$ satisfying $\frac1{\lambda_1} + \frac1{\lambda_2}=1$ such that $\frac12 \lambda_1$ and $(-\frac s\gamma -\frac12)\lambda_2$ are each bounded above by $\frac2{\gamma^2}\wedge \frac1\gamma \min_i   (Q-\beta_i)$.  H\"older's inequality gives $\E[(\wt L_{r+1} \wt A^{-\frac s\gamma - \frac12})^p] \leq \E[\wt L_{r+1}^{\lambda_1 p}]^{1/\lambda_1} \E[(\wt A_{r+1}^{-\frac s\gamma - \frac12})^{p \lambda_2}]^{1/\lambda_2}$.
	Choose $p>1$ sufficiently small and recall 
	 our moment bound for $\wt A_{r+1}$ in the first case. By \cite[Corollary 3.10]{hrv-disk} the analogous bound  $\E[\wt L_{r+1}^\lambda] < \E[\wt L_{1}^\lambda] +\E[\wt L_{\infty}^\lambda]< \infty$ holds for $\lambda < \frac4{\gamma^2} \wedge \frac2\gamma \min (Q-\beta_i)$.  Therefore $\E[(\wt L_{r+1} \wt A_{r+1}^{-\frac s\gamma - \frac12})^p]$ is uniformly bounded as needed. 	
\end{proof}

\begin{proof}[Proof of Proposition~\ref{prop:H-def}]
The analyticity in $\mu$ follows from Morera's theorem. The meromorphicity  in $\beta$ follows from  	Proposition~\ref{prop:analytic-appendix}, where we proved the desired properties for each term in $\hat H$.
\end{proof}

\section{Shift equations for $H$ and $R$}\label{sec:reff}
Our goal is to  derive a set of shift equations (Theorems~\ref{full_shift_H} and~\ref{full_shift_R})  that will completely specify $H$ and $R$. 
We will always assume $\gamma \neq \sqrt2$ --- this is a technical limitation inherited from Theorem~\ref{th_bpz_boundary} from \cite{ang-lcft-zip}. In the next section we will use these shift equations to first derive the exact formulas for $H$ and $R$ when $\gamma^2$ is irrational, and then use continuity in $\gamma$ to remove this restriction. In particular the assumption $\gamma \neq \sqrt2$ can be removed from Theorems~\ref{full_shift_H} and~\ref{full_shift_R} once these exact formulas are proved. For concision we will not explicitly state the $\gamma \neq \sqrt2$ assumption in the lemmas and propositions of this section.

To state the shift equations we introduce the following two  notations:
\begin{equation}\label{eq:g-chi}
q = \frac{2Q-\beta_1-\beta_2-\beta_3 + \chi}{\gamma} \quad  \textrm{and}\quad g_\chi(\sigma) =  \left(\sin(\frac{\pi \gamma^2}{4})\right)^{-\chi/\gamma} \cos \left(2\pi\chi(\sigma-\frac{Q}{2}) \right).
\end{equation}
Recalling \eqref{eq:mu_to_sigma}, note that for $ \chi = \frac{\gamma}{2} $ one has $g_{\frac{\gamma}{2}}(\sigma) = \mu_B(\sigma) $. 
We will again refer to a function of several complex variables as being meromorphic on a domain if it can be expressed as the ratio of two holomorphic functions of several variables on the domain.

\begin{theorem}[Shift equations for $H$]\label{full_shift_H} Assume $\gamma \neq \sqrt2$,	let $\chi = \frac{\gamma}{2}$ or $\frac{2}{\gamma}$ and fix $\sigma_3 \in [-\frac{1}{2 \gamma} + \frac{Q}{2}, \frac{1}{2 \gamma} + \frac{Q}{2} - \frac{\gamma}{4}] \times \mathbb{R} $.
The function $H $ can be jointly meromorphically extended to a complex neighborhood of $\mathbb{R}^3$ in $(\beta_1, \beta_2, \beta_3)$ and to $\mathbb{C}^2$ in $(\sigma_1, \sigma_2)$. Under this extension, it obeys   
 
	\begin{align}\label{shift three point 1}
	&H
\begin{pmatrix}
\beta_1 , \beta_2 - \chi, \beta_3 \\
\sigma_1,  \sigma_2,   \sigma_3 
\end{pmatrix} = \frac{\Gamma(\chi(\beta_1-\chi)) \Gamma(1 - \chi\beta_2 + \chi^2)}{\Gamma(\chi(\beta_1-\chi+q\frac{\gamma}{2})) \Gamma(1 - \chi\beta_2 + \chi^2 -q\frac{\gamma\chi}{2}) } H
\begin{pmatrix}
\beta_1 -\chi, \beta_2, \beta_3 \\
\sigma_1,  \sigma_2 + \frac{\chi}{2},   \sigma_3 
\end{pmatrix} \\ \nonumber 
	& +   \frac{\chi^2 \pi^{\frac{2\chi}{\gamma}}  }{\Gamma(1-\frac{\gamma^2}{4})^{\frac{2\chi}{\gamma}} } \frac{\Gamma(1-\chi\beta_1) \Gamma(1 - \chi\beta_2 + \chi^2 ) \left(  g_{\chi}(\sigma_1)- g_{\chi}(\sigma_2+\frac{\beta_1}{2})   \right)}{\sin(\pi\chi(\chi-\beta_1)) \Gamma(1 + \frac{q \gamma\chi}{2}) \Gamma(2 - \chi(\beta_1 + \beta_2-2\chi+q\frac{\gamma}{2}))   } H
\begin{pmatrix}
\beta_1 +\chi, \beta_2, \beta_3 \\
\sigma_1,  \sigma_2 + \frac{\chi}{2},   \sigma_3 
\end{pmatrix},
	\end{align}
	and
	\begin{align}\label{shift three point 2}
	&  \frac{\chi^2 \pi^{\frac{2\chi}{\gamma} -1 }  }{\Gamma(1-\frac{\gamma^2}{4})^{\frac{2\chi}{\gamma}}}  \Gamma(1-\chi\beta_2)\left( g_{\chi}(\sigma_3) - g_{\chi}(\sigma_2+\frac{\beta_2}{2}) \right) 
 H
	\begin{pmatrix}
	\beta_1 , \beta_2 + \chi, \beta_3 \\
	\sigma_1,  \sigma_2 + \frac{\chi}{2},   \sigma_3 
	\end{pmatrix}\\
	&=\frac{\Gamma(\chi(\beta_1-\chi))}{\Gamma( - q \frac{\gamma\chi}{2} ) \Gamma(-1 + \chi(\beta_1 + \beta_2-2\chi+q\frac{\gamma}{2}) ) } H
	\begin{pmatrix}
	\beta_1 - \chi, \beta_2, \beta_3 \\
	\sigma_1,  \sigma_2,   \sigma_3 
	\end{pmatrix} \nonumber\\
	&+   \frac{\chi^2 \pi^{\frac{2\chi}{\gamma}} }{\Gamma(1-\frac{\gamma^2}{4})^{\frac{2\chi}{\gamma}}}  \frac{   \left(  g_{\chi}(\sigma_1) - g_{\chi}(\sigma_2-\frac{\beta_1}{2}+\frac{\chi}{2} ) \right) \Gamma(1-\chi\beta_1) }{\sin(\pi\chi(\chi-\beta_1))\Gamma(1- \chi(\beta_1-\chi+q\frac{\gamma}{2} )) \Gamma( \chi\beta_2 - \chi^2 +q\frac{\gamma\chi}{2}) }  H
	\begin{pmatrix}
	\beta_1 + \chi, \beta_2, \beta_3 \\
	\sigma_1,  \sigma_2,   \sigma_3 
	\end{pmatrix}. \nonumber
	\end{align}
\end{theorem}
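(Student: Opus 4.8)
The plan is to adapt the degenerate-field method of \cite{DOZZ_proof, rz-boundary} to the present setting where both the bulk and boundary Liouville potentials are active, taking the boundary BPZ equation of Theorem~\ref{th_bpz_boundary} as the starting input. First I would form the auxiliary correlation function obtained by inserting a degenerate boundary field $e^{-\frac\chi2\phi(t)}$ (with $\chi=\frac\gamma2$ or $\frac2\gamma$) at a variable boundary point $t\in(0,1)$, in addition to the three boundary insertions $e^{\frac{\beta_j}2\phi(s_j)}$ at $(s_1,s_2,s_3)=(0,1,\infty)$, where the degenerate point splits the arc $(0,1)$ carrying $\mu_2=\mu_B(\sigma_2)$ into two sub-arcs whose cosmological constants differ by the quantum shift dictated by the BPZ closure, namely $\mu_B(\sigma_2)$ and $\mu_B(\sigma_2+\tfrac\chi2)$. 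Applying Theorem~\ref{th_bpz_boundary} and stripping the conformal prefactor, this correlation function, viewed as a function of $t$, solves a second order Fuchsian ODE with regular singular points at $0,1,\infty$; because the degenerate weight has special indicial exponents this is a hypergeometric equation, so its solution space is two dimensional and is spanned at each singular point by the standard pair of ${}_2F_1$ conformal blocks. Crucially, the bulk potential introduces no new singularities in $t$ since $\mu\, e^{\gamma\phi}$ is exactly marginal.

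Second, I would establish the operator product expansions controlling the behavior of this four-point function as $t\to s_1$ and $t\to s_2$. The degenerate field fuses with $e^{\frac{\beta_i}2\phi(s_i)}$ into the two channels $\beta_i\pm\chi$, and the two leading powers of $t-s_i$ carry coefficients equal to an explicit degenerate structure constant times the three-point function $H$ with $\beta_i$ shifted to $\beta_i\pm\chi$ and the adjacent $\sigma_2$ either kept or shifted by $\frac\chi2$. These degenerate structure constants, together with their precise dependence on the cosmological constants of the sub-arcs adjacent to $t$ --- which is exactly what produces the combinations $g_\chi(\sigma_1)-g_\chi(\sigma_2+\tfrac{\beta_1}2)$ and $g_\chi(\sigma_3)-g_\chi(\sigma_2+\tfrac{\beta_2}2)$ appearing in \eqref{shift three point 1}--\eqref{shift three point 2} --- are computed by the asymptotic analysis of the area and boundary GMC measures carried out in Appendix~\ref{sec:ope}, using Girsanov's theorem and the scaling of $\cA_\phi$ and $\cL_\phi$.

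Third, the shift equations follow from the classical connection formulae for ${}_2F_1$. Expanding one fixed solution of the hypergeometric equation around $t=s_1$ and re-expanding it in the basis adapted to $t=s_2$, the connection coefficients --- explicit ratios of products of Euler $\Gamma$-functions --- relate the two families of OPE data, of the schematic form (one $\beta_2\pm\chi$ block) $=$ (combination of the two $\beta_1\pm\chi$ blocks). Substituting the OPE coefficients from the second step converts this matrix relation into the two three-term identities \eqref{shift three point 1} and \eqref{shift three point 2}, which arise as the two independent components corresponding to the two intermediate sub-arc configurations $\sigma_2$ and $\sigma_2+\frac\chi2$; taking $\chi=\frac\gamma2$ or $\frac2\gamma$ yields both cases, the restriction $\gamma\neq\sqrt2$ being inherited from Theorem~\ref{th_bpz_boundary}. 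The asserted joint meromorphic extension of $H$ to a neighborhood of $\mathbb R^3$ in $(\beta_1,\beta_2,\beta_3)$ and to $\mathbb C^2$ in $(\sigma_1,\sigma_2)$ is bootstrapped from the base analyticity of Proposition~\ref{prop:H-def-sigma} and Definition~\ref{def:H-general}: the shift relations, whose coefficients are meromorphic, express $H$ at shifted lattice points in terms of its values on the base region, and iterating the shifts $\beta_i\mapsto\beta_i\pm\chi$, $\sigma_2\mapsto\sigma_2+\frac\chi2$ together with the permutation symmetry of $H$ propagates the domain of definition and the meromorphicity.

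The main obstacle I anticipate is the second step, the rigorous OPE analysis. In the bulk-potential-free setting of \cite{rz-boundary} the correlation functions are pure GMC moments, so the $t\to s_i$ asymptotics and the degenerate structure constants can be read off directly; here the simultaneous presence of the area measure $\cA_\phi$ and the boundary length measures $\cL_\phi$, compounded by the truncation needed to define $H$ in \eqref{eq-H-general}, forces a delicate joint asymptotic expansion that isolates the contributions of area and boundary GMC near the fusing insertion. Pinning down the exact degenerate structure constants --- in particular their $g_\chi$-dependence encoding how the boundary cosmological constant changes as $t$ crosses an insertion, and fixing the overall normalization through an exactly solvable special value --- is the technical heart of the argument and the place where the interplay between the integrability of LCFT and the mating-of-trees inputs becomes essential.
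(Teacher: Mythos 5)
Your outline reproduces the paper's strategy faithfully for $\chi=\frac{\gamma}{2}$: degenerate boundary insertion, the BPZ input of Theorem~\ref{th_bpz_boundary}, reduction to the hypergeometric equation, identification of the expansion coefficients at $t=0,1$ via OPE, and the connection formulas, with the meromorphic extension then bootstrapped from the shift relations essentially as in Lemma~\ref{analycity_H}. For that value of $\chi$ your Step 2 is Lemma~\ref{lem:ope1} and your Step 3 is the proof of Lemma~\ref{Shift1_H_proba}, so that half of the theorem is in order.

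The genuine gap is that you treat $\chi=\frac{2}{\gamma}$ as symmetric to $\chi=\frac{\gamma}{2}$, and it is not. For $\chi=\frac{2}{\gamma}$ the coefficient of $t^{1-C}=t^{\frac{2}{\gamma}(Q-\beta_1)}$ in the expansion of $H_\chi(t)$ is \emph{not} an explicit degenerate structure constant times $H(\beta_1+\chi,\dots)$: that fractional power does not arise from a convergent Girsanov/Taylor term of the GMC functional (which is what makes the $\frac{\gamma}{2}$ computation work), but from the tail of the field near the insertion, and the direct asymptotics instead produce the OPE \emph{with reflection} of Lemma~\ref{lem_reflection_ope2}, namely $C_2^+ = R\bigl(\beta_1,\sigma_1-\tfrac1\gamma,\sigma_2-\tfrac1\gamma\bigr)\,H\bigl(2Q-\beta_1-\tfrac2\gamma,\beta_2,\beta_3;\sigma_1-\tfrac1\gamma,\sigma_2,\sigma_3\bigr)$, valid only for $\beta_1$ near $Q$. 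To convert this into the stated three-term identities one must eliminate $R$, and this forces the paper's carefully ordered detour which your proposal never identifies: (i) prove the $\frac{\gamma}{2}$-shift equations for $H$; (ii) pass to $R$ via the limits of Lemmas~\ref{lem:HlimR} and~\ref{lem:HlimR_ext} to get the $\frac{\gamma}{2}$-shift equations for $R$; (iii) prove the reflection principle \eqref{eq:reflection_id} by gluing the two expressions for $C_2^+$ across an overlap of analyticity in $\beta_1$; (iv) analytically continue $H$ and $R$; (v) derive the $\frac{2}{\gamma}$-shift equations for $R$ (Lemma~\ref{lem:sec4.5}), which itself requires the intermediate relation of Lemma~\ref{lem:R-relation} and, crucially, the exact mating-of-trees evaluation of $R(\gamma,\cdot,\cdot)$ (Lemmas~\ref{lem-mot-match} and~\ref{lem:R_gamma}), the substitute for the $\mu=0$ input available in the earlier boundary work; and only then (vi) substitute Theorem~\ref{full_shift_R} and \eqref{eq:reflection_id} back into the reflection-OPE coefficients to land on \eqref{shift three point 1}--\eqref{shift three point 2} for $\chi=\frac{2}{\gamma}$, as done in Section~\ref{subsec:shift2_H}. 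Your closing remark that mating of trees ``fixes the overall normalization'' gestures at this but misplaces it: the reflection detour is not a normalization of OPE constants, it is the structural reason the $\frac{2}{\gamma}$ case cannot be closed within your Step 2 as written. A smaller inaccuracy: the meromorphic extension cannot include $\sigma_3$ --- unlike the $\mu=0$ setting, shifting all $\sigma_i$ by a constant does not change $H$ by a phase --- which is exactly why the theorem keeps $\sigma_3$ confined to the band $[-\frac{1}{2\gamma}+\frac{Q}{2},\frac{1}{2\gamma}+\frac{Q}{2}-\frac{\gamma}{4}]\times\mathbb{R}$, whereas your bootstrapping sketch suggests unrestricted propagation in all variables.
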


\begin{theorem}[Shift equations for $R$]\label{full_shift_R} 
Assume $\gamma \neq\sqrt2$ and	let $\chi = \frac{\gamma}{2}$ or $\frac{2}{\gamma}$. The function $R(\beta, \sigma_1,\sigma_2)$ can be jointly meromorphically extended to a complex neighborhood of $\mathbb{R}$ in $\beta$ and to $\mathbb{C}^2$ in $(\sigma_1, \sigma_2)$. Under this extension, it obeys 
\begin{align}
\frac{R(\beta, \sigma_1,\sigma_2)}{R(\beta + \chi, \sigma_1- \frac{\chi}{2}, \sigma_2)} 
&= c_{\chi}(\gamma)
\Gamma(-1+ \chi \beta- \chi^2) 
	\Gamma(1-  \chi \beta)  
\left(   g_{\chi}( \sigma_2  ) - g_{\chi} (\sigma_1-\frac{\beta}2)       \right), \label{eq:R-shift1} \\
%\\\left(\frac{\mu}{\sin(\pi\frac{\gamma^2}{4})}\right)^{\frac{2}{\gamma^2}}  (\cos(\frac{4\pi}{\gamma}(\sigma_2-\frac{Q}{2})) -\cos(\frac{4\pi}{\gamma}(\sigma_1-\frac{\beta}{2}-\frac{Q}{2})) ).
\frac{R(\beta,\sigma_1,\sigma_2)}{R(\beta+ \chi, \sigma_1 + \frac{\chi}{2},\sigma_2)} 
&= c_{\chi}(\gamma)   
 \Gamma(-1+ \chi \beta - \chi^2) \Gamma(1- \chi \beta ) 
\left(   g_{\chi}( \sigma_2  ) - g_{\chi} (\sigma_1 + \frac{\beta}2)       \right), \label{eq:R-shift2}
\end{align}
where $c_{\frac{\gamma}{2}}(\gamma) = - \frac{1}{\Gamma(- \frac{\gamma^2}{4})}$ and $c_{\frac{2}{\gamma}}(\gamma) = \frac{4}{ \gamma^2} \pi^{\frac{4}{\gamma^2} -1}   \Gamma(1 - \frac{\gamma^2}{4})^{- \frac{4}{\gamma^2}}$.
\end{theorem}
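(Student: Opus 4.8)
The plan is to obtain both identities from the boundary BPZ equation of Theorem~\ref{th_bpz_boundary}, following Step~2 of Section~\ref{subsec:method} and adapting to the present setting the method used in \cite{rz-boundary} for the potential-free case. I would enrich the two-pointed quantum disk underlying $R$ with one \emph{degenerate} boundary insertion $e^{-\frac{\chi}{2}\phi(t)}$, with $\chi=\frac\gamma2$ or $\frac2\gamma$, located at a variable point $t$ on the boundary. By Theorem~\ref{th_bpz_boundary} the corresponding correlation function is annihilated by a second order differential operator in $t$; after sending the two $\beta$-insertions and $t$ to a standard configuration, this operator becomes the Gauss hypergeometric operator, whose three regular singular points are the two $\beta$-insertions and $\infty$. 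The degenerate fusion rule forces the characteristic exponents at each singular point to correspond to the two channels $e^{\frac{\beta\pm\chi}{2}\phi}$.

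The two local solutions of the hypergeometric equation at a given singular point are thus labelled by these two fusion channels. I would identify the leading coefficients of the correlation function in this local basis by combining the operator product expansion as $t$ approaches a marked point with the asymptotic reflection analysis of Gaussian multiplicative chaos: these coefficients are precisely $R(\beta,\sigma_1,\sigma_2)$ and $R(\beta+\chi,\sigma_1\mp\frac\chi2,\sigma_2)$ up to explicit constants. The shift $\sigma_1\mapsto\sigma_1\mp\frac\chi2$ and the two distinct factors $g_\chi(\sigma_1\mp\frac\beta2)$ distinguishing \eqref{eq:R-shift1} from \eqref{eq:R-shift2} arise because the degenerate insertion cuts the boundary arc carrying cosmological constant $\mu_B(\sigma_1)$ into pieces; the contribution of the boundary Liouville potential in each channel is read off from the corresponding boundary GMC reflection, producing the $g_\chi$ combinations from \eqref{eq:g-chi}. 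Equating the two local expansions through Gauss's connection formula then yields the functional equations, with the $\Gamma$-ratios coming directly from the hypergeometric connection coefficients and $c_\chi(\gamma)$ collecting the remaining normalizations.

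Several inputs require care beyond \cite{rz-boundary}. Because $R$ is defined by the truncation of Section~\ref{subsec:R-def} rather than as a bare moment, I cannot differentiate the probabilistic expression directly; instead I would run the BPZ and OPE analysis through $\hat R$ of \eqref{eq-hatR} and the meromorphic extension supplied by Proposition~\ref{prop:R-def-sigma}, transferring the relations back to $R$ via \eqref{eq-R-general}. Since the integrands now carry both the bulk area measure and the boundary length measure, the OPE asymptotics must control the joint behaviour of $\cA_\phi$ and $\cL_\phi$ near $t$; this is the analytic complication highlighted in Section~\ref{subsec:method} and is the step I expect to be the main obstacle. To pin down the overall normalization left free by the ratio structure of the connection formula, I would invoke the exact value $R(\gamma,\sigma_1,\sigma_2)$ coming from the mating-of-trees description of the quantum disk \cite{wedges,ag-disk}, which plays the role that \cite{RZ_interval} played in the potential-free case; this, together with the limit relating $R$ to $H$, will be established separately (Sections~\ref{sec:technical} and~\ref{app:lim_H_R}). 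Throughout, the argument runs under the standing assumption $\gamma\neq\sqrt2$ inherited from Theorem~\ref{th_bpz_boundary}, and the meromorphic continuation of $(\beta,\sigma_1,\sigma_2)\mapsto R$ from Proposition~\ref{prop:R-def-sigma} lets me extend the resulting identities from the probabilistic range to the full domain stated.
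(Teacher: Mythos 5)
There is a genuine structural gap at the heart of your plan. Adding a single degenerate insertion $e^{-\frac{\chi}{2}\phi(t)}$ to the two-pointed configuration underlying $R$ produces a boundary \emph{three}-point function, and three boundary points on $\bbH$ can all be fixed by a M\"obius map: there is no cross-ratio left, the $t$-dependence is a pure power dictated by conformal covariance, and hence no hypergeometric ODE with a nontrivial connection problem exists at that level. (Theorem~\ref{th_bpz_boundary} is in any case only stated for the four-point function with three spectator insertions obeying \eqref{eq:para_H_chi} or \eqref{eq:para_H_chi2}; note also that a hypothetical three-point function with weights $(\beta,-\chi,\beta)$ cannot meet even the extended Seiberg-type condition, since it would require $2\beta-\chi>2Q$ with $\beta<Q$.) This is exactly why the paper never runs BPZ ``at the level of $R$'': the hypergeometric analysis is carried out for the four-point deformations $H_\chi(t)$, $\tilde H_\chi(t)$, yielding shift equations for the three-point constant $H$ (Lemma~\ref{Shift1_H_proba}), and $R$ is reached only afterwards through the collapse limit $\lim_{\beta_3\downarrow\beta_1-\beta_2}(\beta_2+\beta_3-\beta_1)H = 2R$ of Lemmas~\ref{lem:HlimR} and~\ref{lem:HlimR_ext}, whose proof occupies Section~\ref{app:lim_H_R}. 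In your architecture this limit is mentioned only as a side input to be ``established separately,'' but it is the load-bearing mechanism; without it your OPE coefficients cannot be identified with $R(\beta,\sigma_1,\sigma_2)$ and $R(\beta+\chi,\sigma_1\mp\frac{\chi}{2},\sigma_2)$ in the first place.

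Even granting the four-point setup, the case $\chi=\frac{2}{\gamma}$ does not follow from one connection formula plus a normalization, which is the second place your plan would fail. For $\chi=\frac{2}{\gamma}$ the degenerate-channel OPE coefficient is not probabilistically accessible without reflection, so the paper works near $\beta_1$ close to $Q$ with the OPE \emph{with} reflection (Lemma~\ref{lem_reflection_ope2}), writes connection formulas on both $t\in(0,1)$ and $t<0$, eliminates $C_1$ to obtain \eqref{eq:lem_BCD}, then specializes $\beta_2=\frac{\gamma}{2}+\eta$, $\beta_3=Q-\beta$ and sends $\eta\to0$, producing the intermediate relation \eqref{equation R intermediary} in which the unknown ratio $\mathcal{R}(\beta,\sigma_1,\sigma_2)$ is tied to $\lim_{\eta\to0}\eta\,R(\frac{\gamma}{2}+\eta,\sigma_2,\sigma_3)$. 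The mating-of-trees input $R(\gamma,\sigma,\sigma')$ then enters not as an overall constant but to determine the functional form itself: one integrates $(\mu-\mu')R(\gamma,\mu,\mu')=h(\mu)-h(\mu')$, matches high derivatives against Lemma~\ref{lem-mot-match} (high order is needed because \eqref{eq-mot-dR} only extends holomorphically for $k>\frac{4}{\gamma^2}$, via Lemma~\ref{lem-hol-dR}), and kills the residual polynomial using the two incommensurate periods $\frac{\gamma}{2}$ and $\frac{2}{\gamma}$ when $\gamma^2\notin\mathbb{Q}$, with the rational case recovered by continuity. The correct repair of your proposal is therefore to route everything through $H$ exactly as in Sections~\ref{subsec:shift-H}--\ref{subsec:shift2_R}; your handling of the truncation via $\hat R$ and the meromorphic extensions (Propositions~\ref{prop:R-def} and~\ref{prop:R-def-sigma}) is the one ingredient that matches the paper as stated.
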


The proofs of Theorems \ref{full_shift_H} and \ref{full_shift_R} are based on the Belavin-Polyakov-Zamolodchikov (BPZ) equations and the operator product expansions (OPE), a strategy first used in~\cite{DOZZ_proof} and carried out for boundary LCFT in the absence of bulk Liouville potential in \cite{rz-boundary}.  In our case, the high level idea is to consider two special cases of four-point correlation functions as deformations of the $H$ function which satisfy the BPZ equations proved in~\cite{ang-lcft-zip}. They reduce to hypergeometric equations after a change of variable. The OPE is the asymptotic expansion by which we relate the coefficients of the solution space of hypergeometric equations to $H$. The shift equations on $H$ are then simply the connection formulas on the solution  space.  The shift equations on $R$ are obtained by taking a suitable limit of the equations on $H$.

Although the same shift equations as in Theorems~\ref{full_shift_H} and~\ref{full_shift_R} were derived in \cite{rz-boundary} except that the function $g_{\chi}$ is replaced by $e^{2\chi \pi \ii (\sigma-\frac{Q}{2})}$, there are two additional difficulties that we must overcome. 
The first is that in \cite{rz-boundary} $ H$ and $R$ (denoted by $\overline{H}$ and $\overline{R}$ in \cite{rz-boundary}) can reduce  to moments of GMC up to a prefactor. This is not possible in our case. We are forced to work with the more involved Definitions \ref{def:H-general} and \ref{def-R-general} which introduces extra constraints on the parameters. The second is that \cite{rz-boundary} used the value of $R(\beta,\sigma_1,\sigma_2)$ at $\mu_1=0$ as an input, which was known from \cite{RZ_interval}. For us this is not available. Instead we use the mating-of-trees theory to get the needed explicit formula for $R(\gamma,\sigma_1,\sigma_2)$; see Lemma~\ref{lem-mot-match}. 
Finally, we have to first prove Theorem~\ref{full_shift_R} based on some partial results towards  Theorem~\ref{full_shift_H}, and then completely prove  Theorem~\ref{full_shift_H}.

The rest of this section is organized as follows. In Section \ref{subsec:deform} we recall the BPZ equation from~\cite{ang-lcft-zip}. In Sections \ref{subsec:shift-H} and \ref{subsec:shift-R} we establish respectively Theorems \ref{full_shift_H} and \ref{full_shift_R} in the case $\chi = \frac{\gamma}{2}$ and in a limited range of parameters. In Section \ref{subsec:ext_H_R} we prove the reflection principle for $H$ and $R$ and show they can be meromorphically extended to the full range of parameters claimed in Theorems \ref{full_shift_H} and \ref{full_shift_R}. 
In Section~\ref{subsec:shift2_R} we prove  Theorem \ref{full_shift_R} in the case $\chi = \frac{2}{\gamma}$, which is the most technically involved and novel part of this section.
Finally in Section~\ref{subsec:shift2_H} we prove Theorem~\ref{full_shift_H} for $\chi = \frac{2}{\gamma}$. We provide in Appendix \ref{sec:special} the necessary background on hypergeometric equations and several facts on the Ponsot-Teschner formula $\HPT$. We defer the proofs of two facts on the function $R$ to Section~\ref{sec:technical} and the proofs of the OPE lemmas to Appendix~\ref{sec:ope}. 

\subsection{The BPZ equation for a deformation of $H$}\label{subsec:deform}
Before stating the BPZ differential equation, let us give a generic definition of the boundary four-point function, presented after having applied the Girsanov theorem to all the insertions. This definition was first proposed in \cite{hrv-disk}, modulo the fact that \cite{hrv-disk} does not consider the varying boundary cosmological constant.

\begin{definition} (Boundary four-point function) \label{def:corr-exp}
	Consider four points $s_1 < s_2 < s_3 < s_4 $ in $\mathbb{R}$ with associated weights $\beta_1, \beta_2, \beta_3, \beta_4 \in \mathbb{R}$ satisfying the conditions $\beta_i < Q$ and  $\sum_i \beta_i > 2Q$. Consider also $\mu_1, \mu_2, \mu_3, \mu_4$ satisfying the condition $\Re(\mu_i) \geq 0$. Then define
	\begin{align*}
&\lg \prod_{j=1}^4 B_{\beta_j}^{\sigma_{j},\sigma_{j+1}}(s_j) \rg = \prod_{j < j'}|s_j - s_{j'}|^{-\frac{\beta_j\beta_{j'}}{2}} \int_{\mathbb{R}} dc  \,e^{- \frac{\gamma \widetilde{q} c}{2}} \E \Bigg[ \exp\Bigg( - e^{\gamma c}\int_{\mathbb{H}} \frac{\hat g(x)^{\frac{\gamma^2}{4}(\widetilde{q} + 1)}}{ \prod_{j=1}^4 |x-s_j|^{\gamma\beta_j} } e^{\gamma h(x) }   d^2x\\
&-e^{\frac{\gamma c}{2}}\int_{\mathbb{R}} \frac{\hat g(r)^{\frac{\gamma^2 \widetilde{q}}{8}}}{\prod_{j=1}^4 |r-s_j|^{\frac{\gamma\beta_j}{2}}} e^{\frac{\gamma}{2}h(r) }   d\mu_{\partial} (r)  \Bigg) \Bigg],
\end{align*}
where $\widetilde{q} = \frac{2Q-\beta_1-\beta_2-\beta_3 - \beta_4}{\gamma}$, $\hat g(x) = |x|_+^{-4}$ and where the boundary measure is given by: 
	\begin{equation}
	d\mu_{\partial}(r)/dr = \sum_{j=1}^{3} \mu_{j+1} \mathbf{1}_{s_j<r < s_{j+1}} + \mu_1 \mathbf{1}_{r \notin (s_1, s_4)}.
	\end{equation}
\end{definition}

We now state parameter ranges for which the four-point function with the degenerate insertion will be well-defined. Recall \( \cB= (-\frac{1}{2 \gamma} + \frac{Q}{2}, \frac{1}{2 \gamma} + \frac{Q}{2} ) \times \mathbb{R}\) and  \( \overline{\cB}= [-\frac{1}{2 \gamma} + \frac{Q}{2}, \frac{1}{2 \gamma} + \frac{Q}{2} ]\times \mathbb{R}\). For $\chi = \frac{\gamma}{2}$ or $\frac{2}{\gamma}$, consider the following ranges of parameters
\begin{equation}\label{eq:para_H_chi}
\beta_i \in (-\infty, Q), \: \: \beta_1 + \beta_2 + \beta_3 > 2Q + \chi, \quad \sigma_1 - \frac{\chi}{2} \in \overline{\cB}, \: \: \sigma_2 - \frac{\chi}{2} \in \overline{\cB},  \: \:  \sigma_1 \in \overline{\cB}, \: \:  \sigma_2 \in \overline{\cB}, \: \: \sigma_3  \in \overline{\cB}, 
\end{equation}
and:
\begin{equation}\label{eq:para_H_chi2}
\beta_i \in ( -\infty, Q), \: \: \beta_1 + \beta_2 + \beta_3 > 2Q + \chi, \quad \sigma_1 + \frac{\chi}{2} \in \overline{\cB}, \: \: \sigma_2 + \frac{\chi}{2} \in \overline{\cB}, \: \:  \sigma_1 \in \overline{\cB}, \: \:  \sigma_2 \in \overline{\cB}, \: \: \sigma_3  \in \overline{\cB}. 
\end{equation}

Note that each set of constraints specifies a nonempty parameter range. Indeed, for \eqref{eq:para_H_chi}, if all three $\beta_i$ are close to $Q$ then $\beta_1 + \beta_2 + \beta_3 > 2Q + \chi$ is true. For the $\sigma_i$, the band with its boundary included $\overline{\cB}$ has width $\frac{1}{\gamma}$. Therefore it is possible to have both $\sigma_1 - \frac{\chi}{2}$ and $\sigma_1$ in $\overline{\cB}$, but in the case $\chi = \frac{2}{\gamma}$ this means that $\sigma_1 - \frac{\chi}{2}$ and $\sigma_1$ are exactly on the boundary of $\overline{\cB}$. The same is true for $\sigma_2$. Then the parameter range \eqref{eq:para_H_chi2} works the same. Under these constraints we have the following BPZ equation obtained in \cite{ang-lcft-zip}.

\begin{theorem}[\cite{ang-lcft-zip}]\label{th_bpz_boundary} Suppose $\gamma \neq \sqrt2$. Consider parameters $(\beta_i, \sigma_i)_{i=1,2,3}$ obeying \eqref{eq:para_H_chi} or \eqref{eq:para_H_chi2}. Consider disjoint points $s, s_1, s_2, s_3 \in \mathbb{R}$ with associated weights $-\chi, \beta_1, \beta_2, \beta_3$ where $\chi = \frac{\gamma}{2}$ or $\frac{2}{\gamma}$. Define the following differential operator:
\begin{equation}
\mathcal{D} := \frac{1}{\chi^2} \partial_{ss}  +   \sum_{j=1}^3 \frac{1}{s - s_j} \partial_{s_j} + \sum_{j=1}^3 \frac{\Delta_{\beta_j}}{(s - s_j)^2}.
\end{equation}
Then for $s_1 < s < s_2 < s_3$ we have
	\begin{align}
	\mathcal{D} 
	\left \lg B_{\beta_1}^{\sigma_{1} \pm \frac{\chi}{2}, \sigma_{2} \pm \frac{\chi}{2} }(s_1) B_{-\chi}^{\sigma_{2} \pm \frac{\chi}{2}, \sigma_{2} }(s) B_{\beta_2}^{\sigma_{2} , \sigma_{3} }(s_2) B_{\beta_3}^{\sigma_{3} , \sigma_{1} \pm \frac{\chi}{2} }(s_3)  \right \rg = 0,\label{eq:bpz-form}
	\end{align}
 and for $s < s_1 < s_2 < s_3$:
 \begin{align}
	\mathcal{D} 
	\left \lg B_{-\chi}^{\sigma_{1} \pm \frac{\chi}{2}, \sigma_{1} }(s) B_{\beta_1}^{\sigma_{1} , \sigma_{2} }(s_1) B_{\beta_2}^{\sigma_{2} , \sigma_{3} }(s_2) B_{\beta_3}^{\sigma_{3} , \sigma_{1} \pm \frac{\chi}{2} }(s_3)  \right \rg = 0.\label{eq:bpz-form}
	\end{align}
 Above when $\pm \frac{\chi}{2}$ equals $-\frac{\chi}{2}$ the parameters must obey \eqref{eq:para_H_chi} and  when it equals $  \frac{\chi}{2}$ must obey \eqref{eq:para_H_chi2}.
\end{theorem}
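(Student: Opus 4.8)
The plan is to follow the template of Kupiainen--Rhodes--Vargas on the sphere, as adapted to the boundary (without bulk potential) by Remy--Zhu, and to realize the operator $\mathcal{D}$ as the probabilistic incarnation of the level-two null-vector relation satisfied by the degenerate vertex operator $B_{-\chi}$. The weight $\Delta_{-\chi}=\frac{-\chi}{2}(Q+\frac{\chi}{2})$ is tuned so that for $\chi\in\{\frac\gamma2,\frac2\gamma\}$ the singular vector at level two is null; under the dictionary $L_{-1}\mapsto \partial_s$ and $L_{-2}\mapsto \sum_{j}\bigl(\frac1{s-s_j}\partial_{s_j}+\frac{\Delta_{\beta_j}}{(s-s_j)^2}\bigr)$ (the latter coming from the conformal Ward identity), this null relation becomes exactly $\mathcal{D}\langle\cdots\rangle=0$. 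The two orderings $s_1<s<s_2<s_3$ and $s<s_1<s_2<s_3$, and the two sign choices $\pm\frac\chi2$, are handled by one argument: the sign choice only records on which side of $s$ the boundary cosmological constant takes the value $\mu_B(\sigma_2\pm\frac\chi2)$, and selects which of the ranges \eqref{eq:para_H_chi}, \eqref{eq:para_H_chi2} is in force.

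The first step is regularity. Because the degenerate weight $-\chi$ is negative, applying Girsanov to the insertion at $s$ multiplies the bulk GMC integrand by $|x-s|^{+\gamma\chi}$ and the boundary one by $|r-s|^{+\gamma\chi/2}$, so the insertion creates a mild zero rather than a singularity and is harmless for integrability. Keeping $s$ strictly separated from $s_1,s_2,s_3$, I would show that $s\mapsto \langle B_{-\chi}(s)\prod_j B_{\beta_j}(s_j)\rangle$ is $C^2$ and that $\partial_s,\partial_s^2$ pass inside both the expectation and the $c$-integral of Definition~\ref{def:corr-exp}. The constraints \eqref{eq:para_H_chi}--\eqref{eq:para_H_chi2} (together with $\beta_1+\beta_2+\beta_3>2Q+\chi$, which makes the $c$-integral converge at $c\to-\infty$) are exactly what supply the fractional GMC moment bounds of the type proved in Section~\ref{subsec-anal-H}, now needed uniformly for $s$ in a neighborhood.

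The core is a Gaussian integration by parts. Differentiating the prefactor $\prod_j|s-s_j|^{-\chi\beta_j/2}$ and the interaction exponential in $s$ brings down extra GMC insertions weighted by $\partial_s\log|\cdot-s|$, and the second derivative produces a double insertion. I would organize these terms through the stress--energy tensor $T(u)$ and the conformal Ward identity
\[
\langle T(u)\, B_{-\chi}(s)\prod_j B_{\beta_j}(s_j)\rangle=\Bigl(\tfrac{\Delta_{-\chi}}{(u-s)^2}+\tfrac{\partial_s}{u-s}+\sum_j\Bigl[\tfrac{\Delta_{\beta_j}}{(u-s_j)^2}+\tfrac{\partial_{s_j}}{u-s_j}\Bigr]\Bigr)\langle B_{-\chi}(s)\prod_j B_{\beta_j}(s_j)\rangle,
\]
expanding as $u\to s$ and using the global Ward identities (translation $\partial_s+\sum_j\partial_{s_j}=0$ together with the dilation and special-conformal relations) to eliminate the self-term at $s$. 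The null-vector relation then forces the vanishing of $\frac1{\chi^2}\partial_s^2\langle\cdots\rangle+\sum_j\bigl(\frac1{s-s_j}\partial_{s_j}+\frac{\Delta_{\beta_j}}{(s-s_j)^2}\bigr)\langle\cdots\rangle$, which is precisely $\mathcal{D}\langle\cdots\rangle=0$.

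The hard part is the analytic control at the degenerate point. The delicate input is the operator product expansion between $B_{-\chi}$ and the two GMC measures at $s$, which now sits at the discontinuity where the boundary cosmological constant jumps from $\mu_B(\sigma_2\pm\frac\chi2)$ to $\mu_B(\sigma_2)$; one must show that the potentially singular $u\to s$ contributions in the Ward identity cancel against the null-vector coefficient. The simultaneous presence of the bulk and boundary Liouville potentials, the different scaling exponent $\frac\gamma2$ of the boundary GMC, and the jump in $\mu_B$ across $s$ make this substantially more involved than in the pure-bulk or potential-free settings, and force the fractional-moment estimates to be run separately for the area and for each boundary arc. A clean way to bypass the direct OPE analysis---and the route actually taken in \cite{ang-lcft-zip}---is to use the mating-of-trees and conformal-welding description of quantum triangles, in which the degenerate insertion marks the endpoint of an $\mathrm{SLE}_{\gamma^2}$-type interface and the BPZ equation emerges as the diffusion/ODE satisfied by the welding observable; I would fall back on this should the estimates at the discontinuity prove intractable.
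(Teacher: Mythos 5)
This statement is not proved in the paper at all: it is imported verbatim from \cite{ang-lcft-zip}, and the paper is explicit that the proof there goes through the mating-of-trees theorem and the LQG/SLE coupling (the degenerate insertion marking an SLE-type interface, exactly the route you relegate to a ``fallback'' in your last sentence). So your primary proposal --- a Kupiainen--Rhodes--Vargas-style derivation via Gaussian integration by parts, the Ward identity, and the level-two null vector --- is a genuinely different route from the cited proof. It is, in fact, essentially the route later carried out by Cercl\'e \cite{Cercle-BPZ}, which the paper mentions as a ``new proof \ldots without using mating of trees,'' together with the strategy suggested in \cite{bw-bpz}. One telltale sign that your sketch does not match the actual proof: the hypothesis $\gamma \neq \sqrt2$ has no natural origin in the KRV scheme --- the paper calls it ``a technical limitation inherited from Theorem~\ref{th_bpz_boundary} from \cite{ang-lcft-zip},'' i.e.\ a fingerprint of the SLE/mating-of-trees argument --- so a correct direct derivation along your lines would presumably not need it, and you should not expect to reproduce the theorem with that restriction via your method.

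As a standalone argument, your primary route also has a concrete gap at exactly the step you defer. In the presence of \emph{both} the bulk and boundary Liouville potentials, the integration by parts produces singular boundary contributions at the degenerate point $s$ that do \emph{not} cancel for arbitrary boundary cosmological constants: their cancellation is precisely what forces the quadratic constraint encoded in the parametrization $\mu_B(\sigma)=\sqrt{1/\sin(\pi\gamma^2/4)}\,\cos(\pi\gamma(\sigma-\tfrac{Q}{2}))$ of \eqref{eq:mu_to_sigma}, together with the jump $\sigma_2 \mapsto \sigma_2 \pm \tfrac{\chi}{2}$ across $s$ appearing in \eqref{eq:bpz-form}. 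You describe the sign choice as merely ``recording'' which side carries $\mu_B(\sigma_2\pm\tfrac\chi2)$ and treat the $u\to s$ cancellation as an estimate to be checked, but this relation between adjacent boundary constants is the mathematical heart of the statement, not bookkeeping: it is where the naive KRV adaptation was blocked (which is why \cite{rz-boundary} could only handle $\mu=0$ and why \cite{ang-lcft-zip} resorted to mating of trees), and deriving it requires identifying and cancelling an anomalous $\mu\mu_B$ cross term that has no analogue in the pure-bulk or potential-free settings. Without carrying out that computation --- or invoking the welding argument you mention only as a contingency --- the proposal does not constitute a proof.
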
 
We point out that very recently Cercl\'e~\cite{Cercle-BPZ} gave a new proof of Theorem~\ref{th_bpz_boundary} without using mating of trees. Moreover, Baverez and Wu~\cite{bw-bpz} suggested a proof strategy; see \cite[Theorem 1.3]{bw-bpz} and the discussion afterwards.

We now use conformal invariance to perform a change of variable that will place the boundary spectator insertions $s_1, s_2, s_3$ at $0, 1, \infty$ and the degenerate insertion at $s$ will be mapped to the cross ratio $t$ of $s_1, s_2, s_3, s$. The concrete conformal map we use for this change of variable is thus:
\begin{align}
\psi(u) = \frac{(u-s_1)(s_2 - s_3)}{(u-s_3)(s_2 - s_1)}.
\end{align}
By setting $t = \psi(s)$
we have  $t \in (0,1)$ when $s_1 < s < s_2$ and $t \in (-\infty, 0) $ when $s < s_1$. Now under the constraint of \eqref{eq:para_H_chi}, consider the functional
\begin{align}\label{eq:def_H_chi}
H_{\chi}(t) 
=  \int_{\mathbb{R}} dc  \,e^{- \frac{\gamma q c}{2} }\E \Bigg[ \exp\Bigg( - e^{\gamma c} &\int_{\mathbb{H}} \frac{|x-t|^{\gamma\chi} \hat g(x)^{\frac{\gamma^2}{4}(q+1)}}{ |x|^{\gamma\beta_1}|x-1|^{\gamma\beta_2}} e^{\gamma h(x)} d^2x\\
&-e^{\frac{\gamma c}{2}}\int_{\mathbb{R}}  \frac{|r-t|^{\frac{\gamma\chi}{2}} \hat g(r)^{\frac{\gamma^2 q}{8}}}{|r|^{\frac{\gamma\beta_1}{2}}|r-1|^{\frac{\gamma\beta_2}{2}}}  e^{\frac{\gamma}{2} h(r)}  d\mu_{\partial}^t (r)  \Bigg) \Bigg], \nonumber
\end{align}
where the boundary measure $d\mu_{\partial}^t(r)$ is defined by: 
\begin{align*}
d\mu_{\partial}^t(r)/dr =
\left\{
\begin{array}{ll}
g_{\frac{\gamma}{2}}(\sigma_1 - \frac{\chi}{2}) \mathbf{1}_{r < 0}  + g_{\frac{\gamma}{2}}(\sigma_2  - \frac{\chi}{2}) \mathbf{1}_{0<r<t} +  g_{\frac{\gamma}{2}}(\sigma_2) \mathbf{1}_{t<r<1} + g_{\frac{\gamma}{2}}(\sigma_3) \mathbf{1}_{r>1},  & \mbox{for } t \in (0,1), \\
g_{\frac{\gamma}{2}}(\sigma_1 - \frac{\chi}{2}) \mathbf{1}_{r < t}  + g_{\frac{\gamma}{2}}(\sigma_1) \mathbf{1}_{t<r<0} +  g_{\frac{\gamma}{2}}(\sigma_2) \mathbf{1}_{0<r<1} + g_{\frac{\gamma}{2}}(\sigma_3) \mathbf{1}_{r>1}, & \mbox{for } t < 0.
\end{array}
\right.
\end{align*}

Since the change of $\sigma$ on the left and right or $t$ can be either $+ \frac{\chi}{2}$ or $- \frac{\chi}{2}$, we similarly consider the function $\tilde{H}_{\chi}(t)$ with the other choice under the constraints of \eqref{eq:para_H_chi2}, which is given by the same expression as $H_{\chi}(t)$ except one needs to replace $d\mu_{\partial}^t(r)$ by the following measure
\begin{align*}
d \tilde{\mu}_{\partial}^t(r)/dr =
\left\{
\begin{array}{ll}
g_{\frac{\gamma}{2}}(\sigma_1 + \frac{\chi}{2}) \mathbf{1}_{r < 0}  + g_{\frac{\gamma}{2}}(\sigma_2  + \frac{\chi}{2}) \mathbf{1}_{0<r<t} +  g_{\frac{\gamma}{2}}(\sigma_2) \mathbf{1}_{t<r<1} + g_{\frac{\gamma}{2}}(\sigma_3) \mathbf{1}_{r>1},  & \mbox{for } t \in (0,1), \\
g_{\frac{\gamma}{2}}(\sigma_1 + \frac{\chi}{2}) \mathbf{1}_{r < t}  + g_{\frac{\gamma}{2}}(\sigma_1) \mathbf{1}_{t<r<0} +  g_{\frac{\gamma}{2}}(\sigma_2) \mathbf{1}_{0<r<1} + g_{\frac{\gamma}{2}}(\sigma_3) \mathbf{1}_{r>1}, & \mbox{for } t < 0.
\end{array}
\right.
\end{align*}

We could also of course define in a similar manner $H_{\chi}(t)$, $\tilde{H}_{\chi}(t)$  for $t\in (1,\infty)$, but this will not be needed.
By a direct change of variable applied to the differential operator of Theorem \ref{th_bpz_boundary}, $H_{\chi}(t)$ and $\tilde{H}_{\chi}(t)$ obey the hypergeometric equation. We state this in the next proposition.

\begin{proposition}\label{prop:hyper} Under the parameter constraint of \eqref{eq:para_H_chi}, the function $H_{\chi}$ obeys
	\begin{equation}
	t(1-t)\partial_t^2 H_{\chi} + (C-(A+B+1)t)\partial_t H_{\chi}-ABH_{\chi} = 0 \quad \textrm{for } t\in (-\infty,0)\cup (0,1),
	\end{equation}
	with $A,B,C$ given by:
	\begin{equation}
	A =-q\frac{\gamma\chi}{2}, \quad  B = -1+ \chi(\beta_1+\beta_2-2\chi + q\frac{\gamma}{2}), \quad C = \chi(\beta_1 - \chi).
	\end{equation}
	The exact same result holds for $\tilde{H}_{\chi}$ under the parameter constraints \eqref{eq:para_H_chi2}.
\end{proposition}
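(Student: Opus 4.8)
The plan is to reduce the partial differential equation $\mathcal{D}\langle\cdots\rangle=0$ of Theorem~\ref{th_bpz_boundary} to an ordinary differential equation in the single cross-ratio variable $t=\psi(s)$, by exploiting the global conformal covariance of the boundary four-point function. First I would record that, under the M\"obius map $\psi$ (which preserves $\partial\bbH$ and sends $s_1,s_2,s_3\mapsto 0,1,\infty$), the correlation function with insertions $\beta_1,-\chi,\beta_2,\beta_3$ at $s_1,s,s_2,s_3$ equals an explicit conformal prefactor — a product of powers of the pairwise distances $|s_i-s_j|$ and $|s-s_j|$ with exponents built from the weights $\Delta_{\beta_i}$ and $\Delta_{-\chi}$ — times $H_\chi(t)$. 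Concretely this is exactly the change of variables $x\mapsto\psi(x)$, $r\mapsto\psi(r)$ in the integrals of Definition~\ref{def:corr-exp} that produces the integral \eqref{eq:def_H_chi} defining $H_\chi$: the Jacobian together with the transformation law of the field account for all the prefactors, and the piecewise-constant boundary cosmological constant is transported into the measure $d\mu_\partial^t$ appearing in \eqref{eq:def_H_chi}.

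Second, I would substitute this factorized form into $\mathcal{D}\langle\cdots\rangle=0$ and carry out the change of variables in the differential operator itself. Writing the correlation function as $\Phi\cdot H_\chi(t)$ and applying the chain rule to $\frac{1}{\chi^2}\partial_{ss}$ and to the three $\partial_{s_j}$, the key simplification is that global M\"obius invariance allows me to evaluate the transformed operator at the reference configuration $(s_1,s_2,s_3)=(0,1,\infty)$; there the three first-order derivatives $\partial_{s_j}$ combine with the second-order term and with the weight pieces $\Delta_{\beta_j}/(s-s_j)^2$ so that all dependence on the $s_j$ cancels and only $t$-derivatives of $H_\chi$ survive. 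After clearing the common conformal prefactor, the equation collapses to a second-order ODE of the Fuchsian form $t(1-t)\partial_t^2 H_\chi+(C-(A+B+1)t)\partial_t H_\chi-AB\,H_\chi=0$.

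Third, I would identify the constants by reading off the Riemann scheme. The coefficient $\tfrac1{\chi^2}$ reflects the level-two degeneracy of $B_{-\chi}$, equivalently the null-vector relation $\Delta_{-\chi}=-\tfrac12-\tfrac{3\chi^2}{4}$ (using $\chi Q=1+\chi^2$ for $\chi=\tfrac\gamma2,\tfrac2\gamma$), and it is precisely this relation that forces a genuinely hypergeometric rather than a generic Fuchsian equation. The singular behaviour at $t=0$ (fusion $B_{-\chi}\times B_{\beta_1}$) gives the exponents $\{0,1-C\}$ and hence $C=\chi(\beta_1-\chi)$; the behaviour at $t=\infty$, governed by the $c$-integration exponent $e^{-\gamma qc/2}$ with $q=\frac{2Q-\beta_1-\beta_2-\beta_3+\chi}{\gamma}$ and the fusion with $\beta_3$, yields the pair $\{A,B\}$ with $A=-q\tfrac{\gamma\chi}2$ and $B=-1+\chi(\beta_1+\beta_2-2\chi+q\tfrac\gamma2)$. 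As an internal check, one verifies that the exponents $\{0,C-A-B\}$ at $t=1$ reproduce the fusion channels $\beta_2\mapsto\beta_2\pm\chi$ predicted by the operator product expansion of the degenerate field. The argument for $\tilde H_\chi$ under \eqref{eq:para_H_chi2} is identical: the only change is the sign $+\tfrac\chi2$ rather than $-\tfrac\chi2$ in the transported boundary measure, which leaves the differential operator untouched.

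The main obstacle I anticipate is bookkeeping rather than concept. When differentiating in $s$ one must account for the $t$-dependent integrand — the factors $|x-t|^{\gamma\chi}$ and $|r-t|^{\gamma\chi/2}$ as well as the jump of the boundary cosmological constant in $d\mu_\partial^t$ across the point $r=t$ — and verify that these assemble into exactly the hypergeometric coefficients with no spurious lower-order terms or boundary defect at $r=t$. Controlling the signs and the correct value of $\Delta_{-\chi}$ uniformly across the two cases $\chi=\tfrac\gamma2$ and $\chi=\tfrac2\gamma$ is where errors are most likely, so I would cross-check the final triple $(A,B,C)$ both against the degenerate-field indicial exponents above and against the analogous reduction carried out in~\cite{rz-boundary}.
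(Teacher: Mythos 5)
Your proposal is correct and follows essentially the same route as the paper, which proves Proposition~\ref{prop:hyper} precisely by the direct change of variable $t=\psi(s)$ applied to the BPZ operator $\mathcal{D}$ of Theorem~\ref{th_bpz_boundary}, with the conformal prefactor and the transported boundary measure $d\mu_\partial^t$ accounting for the reduction to \eqref{eq:def_H_chi}. Your identification of $(A,B,C)$ via the Riemann scheme (using $\chi Q=1+\chi^2$ and the exponents $\{0,1-C\}$ at $t=0$, which match the $t^{1-\chi\beta_1+\chi^2}$ behaviour appearing later in the OPE lemmas) is a legitimate cross-check of the constants that the paper's chain-rule computation produces directly.
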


The following analyticity result for $H_{\frac{\gamma}{2}}$ and $\tilde{H}_{\frac{\gamma}{2}}$ will be used in the proof of Lemma \ref{reflection_H} below.

\begin{lemma}\label{lem:ext-H-chi}
Set $\chi = \frac{\gamma}{2}$. Fix the $\sigma_i$ in the parameter range \eqref{eq:para_H_chi} but with the boundary of the band being excluded, namely $\sigma_1 - \frac{\gamma}{4} \in \cB$,  $\sigma_2 - \frac{\gamma}{4} \in \cB$, $ \sigma_1, \sigma_2, \sigma_3 \in \cB$. Then the function $H_{\chi}$ is meromorphic in all three $\beta_1, \beta_2, \beta_3$ in a complex neighborhood of the subdomain of $\mathbb{R}^3$ given by the constraints $\beta_i \in (-\infty, Q), \: \: \beta_1 + \beta_2 + \beta_3 > 2Q + \chi$. The same claim holds for $\tilde{H}_{\chi}$, except this time the $\sigma_i$ need to obey $\sigma_1 + \frac{\gamma}{4} \in \cB$,  $\sigma_2 + \frac{\gamma}{4} \in \cB$, $ \sigma_1, \sigma_2, \sigma_3 \in \cB$.
\end{lemma}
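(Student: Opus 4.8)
The plan is to adapt the analytic-continuation argument of Proposition~\ref{prop:analytic-appendix} (and its supporting Lemmas~\ref{lem:claim1} and~\ref{lem-exp-bound-holo}) to the four-point functional $H_\chi(t)$ of~\eqref{eq:def_H_chi}, the essential difference being the parameter regime. Here the constraint $\beta_1+\beta_2+\beta_3>2Q+\chi$ is exactly the Seiberg bound for the correlation $\lg B_{\beta_1}B_{-\chi}B_{\beta_2}B_{\beta_3}\rg$ of Definition~\ref{def:corr-exp}, since the degenerate insertion carries weight $-\chi$ and hence $\widetilde q=q<0$. As a result the integral in~\eqref{eq:def_H_chi} is \emph{directly} convergent for real $\beta_i$ in the stated range, with no integration-by-parts truncation of $e^{-x}$ as in Lemma~\ref{lem:def_H_trunc} being needed: the weight $-\chi$ produces the positive power $|x-t|^{\gamma\chi}$ (resp.\ $|r-t|^{\gamma\chi/2}$), so there is no new singularity at $t$; the singularities at $0,1,\infty$ are integrable because $\beta_i<Q$; the $c$-integral converges at $+\infty$ since $\E[\exp(-e^{\gamma c}A)]$ decays faster than any power of $e^{-\gamma c}$ via negative moments of GMC, and at $-\infty$ since $q<0$ makes $e^{-\gamma qc/2}\to0$ while the expectation stays bounded by $1$. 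Thus the only truncation I will use is of $\bbH$ near the insertions.

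First I would fix $t$ and the $\sigma_i$ as in the statement. The point of excluding the band boundary is that for $\chi=\frac\gamma2$ the hypotheses $\sigma_1-\frac\gamma4,\sigma_2-\frac\gamma4\in\cB$ and $\sigma_1,\sigma_2,\sigma_3\in\cB$ place every argument occurring in the density of $d\mu_\partial^t$ inside the open band, so that all effective boundary cosmological constants $g_{\frac\gamma2}(\sigma_i\mp\frac\chi2)=\mu_B(\sigma_i\mp\frac\chi2)$ have strictly positive real part. Then, following Lemma~\ref{lem:claim1}, I would introduce truncated approximants $H_\chi^{(r)}(t)$ obtained by pulling the insertions at $0,1,\infty$ out of the field by Girsanov into explicit Gaussian factors $\prod_j e^{\frac{\beta_j}2 h_r(p_j)-\frac{\beta_j^2}4 r}$, and by restricting the bulk and boundary GMC to the region cut away from $e^{-r}$-neighborhoods of $0,1,\infty$. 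The crucial feature of this formulation is that the resulting truncated area and length $\wt A_r,\wt L_r$ are built from the field shifted only by the \emph{real} parts $u_j=\Re\beta_j$, hence are real and nonnegative, while the imaginary parts $v_j=\Im\beta_j$ enter only through the Gaussian prefactors. Consequently $\exp(-e^{\gamma c}\wt A_r-e^{\gamma c/2}\wt L_r)$ has modulus at most $1$ (here the strict positivity of $\Re\mu_B$ is what keeps $\Re\wt L_r\ge0$), and each $H_\chi^{(r)}$ is holomorphic in $(\beta_1,\beta_2,\beta_3)\in\C^3$.

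The heart of the matter, which I expect to be the main obstacle, is to prove that $H_\chi^{(r)}\to H_\chi$ uniformly on a small complex neighborhood $O$ of each point of the real domain, whence the limit is holomorphic by Morera's theorem. Exactly as in~\eqref{eq-C0}, taking moduli and applying Girsanov in the real directions bounds $|H_\chi^{(r+1)}-H_\chi^{(r)}|$ by $C_0\,e^{\frac{r+1}4\sum_j v_j^2}$ times an expectation of differences of $\exp(-e^{\gamma c}\wt A_r-e^{\gamma c/2}\wt L_r)$. Using that $x\mapsto e^{-x}$ is $1$-Lipschitz and bounded on $[0,\infty)$ together with $\int e^{\tau c}e^{-e^{\gamma c}a}\,dc=\frac1\gamma\Gamma(\tfrac\tau\gamma)a^{-\tau/\gamma}$ (valid for $\tau>0$, and $\tau=-\frac{\gamma q}2+\gamma$ or $-\frac{\gamma q}2+\frac\gamma2$ are positive since $q<0$), the $c$-integral reduces these differences to negative GMC moments of the form $\E[\wt A_{r+1}^{q/2-1}(\wt A_{r+1}-\wt A_r)]$, $\E[\wt A_r(\wt A_r^{q/2-1}-\wt A_{r+1}^{q/2-1})]$ and $\E[(\wt L_{r+1}-\wt L_r)\wt A_{r+1}^{q/2-1/2}]$, whose exponents are negative. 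As in Lemma~\ref{lem-exp-bound-holo}, I would bound each by $Ce^{-r/C}$, combining the moment finiteness from~\cite[Corollaries 3.10 and 6.11]{hrv-disk} (automatic here for the negative exponents, and controlled by a H\"older split for the mixed length term) with the multifractal decay of the increments $\wt A_{r+1}-\wt A_r$ and $\wt L_{r+1}-\wt L_r$ as in~\eqref{eq-decay-A}, which uses $\beta_i<Q$. Shrinking $O$ so that $\frac14\sum_j v_j^2<\frac1{2C}$ then defeats the factor $e^{\frac{r+1}4\sum v_j^2}$ and gives geometric uniform convergence.

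The only genuinely new bookkeeping relative to Proposition~\ref{prop:analytic-appendix} is the bounded multiplier $|x-t|^{\gamma\chi}$ (resp.\ $|r-t|^{\gamma\chi/2}$) from the degenerate insertion, which only improves the estimates, and the complex boundary weights $\mu_B(\sigma_i\mp\frac\chi2)$, handled by the strict positivity of their real parts secured at the outset; note that the argument in fact yields holomorphy, which is the asserted meromorphy. The statement for $\tilde H_\chi$ is identical, with $\sigma_i-\frac\chi2$ everywhere replaced by $\sigma_i+\frac\chi2$ so that the relevant arguments $\sigma_1+\frac\gamma4,\sigma_2+\frac\gamma4,\sigma_1,\sigma_2,\sigma_3\in\cB$ again keep all effective boundary constants in the open band.
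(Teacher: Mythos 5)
Your proposal is correct and takes essentially the same route as the paper: the paper's proof of Lemma~\ref{lem:ext-H-chi} is a one-line reference to a ``direct adaptation'' of the analyticity argument for $H$ in Proposition~\ref{prop:H-def} (i.e., Proposition~\ref{prop:analytic-appendix} with Lemmas~\ref{lem:claim1} and~\ref{lem-exp-bound-holo}), noting precisely the two points you identify, namely that the Seiberg bound $\beta_1+\beta_2+\beta_3>2Q+\chi$ puts one in the range where no truncation of $e^{-x}$ is needed and that the extra degenerate insertion poses no additional difficulty. Your further bookkeeping (the strict band interior guaranteeing $\Re\,\mu_B(\sigma_i\mp\frac{\chi}{2})>0$, negative GMC moments being automatically finite, and the H\"older split for the mixed area--length term) is exactly the content of the adaptation the paper invokes.
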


\begin{proof}
This result follows from a direct adaptation of the proof of the claim of analyticity in the $\beta_i$ for $H$ of Proposition \ref{prop:H-def}. The fact that we have an extra insertion on the boundary poses no additional difficulty. Here we are also in the range of parameters where the Seiberg bounds are satisfied and thus no truncation procedure is required.
\end{proof}

Before further analysis,  we recall here the range of parameters where the functions $H$ and $R$ have been defined in Section \ref{sec:precise}. For $H$ the range on the $\beta_i$ and $\sigma_i$ is given in Definition~\ref{def:H-general}:
\begin{equation}\label{sec4_proba_H}
\left\{ (\beta_1, \beta_2, \beta_3, \sigma_1, \sigma_2, \sigma_3) \: : \:  \beta_i < Q, \: \: Q - \frac{1}{2} \sum \beta_i < \gamma \wedge \frac{2}{\gamma} \wedge \min_i (Q -\beta_i), \: \: \sigma_i \in \overline{\cB} \right \}.
\end{equation}
For $R$ the range of $\beta$ and $\sigma_i$ is given in  Definition~\ref{def-R-general}:
\begin{equation}\label{sec4_proba_R}
\left\{ (\beta, \sigma_1, \sigma_2) \: : \: \frac{\gamma}{2} \vee (\frac{2}{\gamma} - \frac{\gamma}{2}) < \beta < Q, \: \: \sigma_i \in \overline{\cB} \right \}.
\end{equation}

We have kept the $\beta_i$ to be real parameters above but by the results of Section \ref{sec:precise} we have shown analytic continuation of $H$ and $R$ in the $\beta_i$ to a complex neighborhood of the real interval where they are defined.

\subsection{The $\frac{\gamma}{2}$-shift equations for $H$}\label{subsec:shift-H}

Using the functions $H_{\frac{\gamma}{2}}(t)$ and $\tilde{H}_{\frac{\gamma}{2}}(t)$, we prove the shift equations on $H$ in Theorem~\ref{full_shift_H}  in the case when $\chi = \frac{\gamma}{2}$ and the parameter range is chosen such that the $\beta_i$ and $\sigma_i$ parameters of each $H$ in the shift equations belong to the domain \eqref{sec4_proba_H}.

\begin{lemma}\label{Shift1_H_proba} 
Set $\chi = \frac{\gamma}{2}$. The shift equation~\eqref{shift three point 1} holds in the parameter range
	\begin{equation}\label{eq:lem_shift_H_1}
	\beta_1 <  \frac{2}{\gamma}, \quad \beta_2, \beta_3 < Q,  \quad Q - \frac{1}{2} (\beta_1 + \beta_2  + \beta_3 -\frac{\gamma}{2}) < \gamma \wedge \frac{2}{\gamma} \wedge \min_i (Q - \beta_i), \quad \sigma_1, \sigma_2, \sigma_3 \in \cB, \quad \sigma_2 + \frac{\gamma}{4} \in \cB,
	\end{equation}	
and the shift equation ~\eqref{shift three point 2} holds in parameter range:
\begin{equation}\label{eq:lem_shift_H_2}
\beta_1, \beta_2 <  \frac{2}{\gamma}, \quad \beta_3 <Q, \quad Q - \frac{1}{2} (\beta_1 + \beta_2 + \beta_3  -\frac{\gamma}{2}) < \gamma \wedge \frac{2}{\gamma} \wedge \min_i (Q - \beta_i),  \quad \sigma_1, \sigma_2, \sigma_3 \in \cB, \quad \sigma_2 + \frac{\gamma}{4} \in \cB.
\end{equation}
\end{lemma}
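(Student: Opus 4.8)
The plan is to follow the BPZ/OPE strategy of \cite{DOZZ_proof,rz-boundary}, now applied to the deformed four-point functions $H_{\frac\gamma2}(t)$ and $\tilde H_{\frac\gamma2}(t)$ from \eqref{eq:def_H_chi}. By Proposition~\ref{prop:hyper} each of these satisfies the hypergeometric equation on $(0,1)$ with $A=-q\frac{\gamma\chi}{2}$, $B=-1+\chi(\beta_1+\beta_2-2\chi+q\frac\gamma2)$, $C=\chi(\beta_1-\chi)$ and $\chi=\frac\gamma2$. The two exponents of this ODE at $t=0$ are $0$ and $1-C$, and at $t=1$ they are $0$ and $C-A-B$; these four local exponents are exactly the four fusion channels of the degenerate insertion $-\chi$ with the spectator weights $\beta_1$ (at $0$) and $\beta_2$ (at $1$). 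First I would fix the $\sigma_i$ in the open range of Lemma~\ref{lem:ext-H-chi} so that $H_{\frac\gamma2}$ and $\tilde H_{\frac\gamma2}$ are holomorphic in $(\beta_1,\beta_2,\beta_3)$ on the relevant domain, which lets me treat $t\mapsto H_{\frac\gamma2}(t)$ as an honest solution of the hypergeometric ODE and expand it in the standard local Frobenius bases at $0$ and at $1$.

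The core input is the operator product expansion: as $t\to 0^+$ the insertion $-\chi$ fuses with $\beta_1$, producing the two channels $\beta_1-\chi$ (exponent $0$) and $\beta_1+\chi$ (exponent $1-C$), while as $t\to 1^-$ it fuses with $\beta_2$ into $\beta_2-\chi$ (exponent $0$) and $\beta_2+\chi$ (exponent $C-A-B$). In each case the leading coefficient is, up to an explicit constant involving $\Gamma$-factors and a reflection/GMC prefactor, the three-point structure constant $H$ with the correspondingly shifted $\beta$ and with the boundary parameter $\sigma_2$ shifted by $\pm\frac\chi2$ according to how $d\mu_\partial^t$ (resp. $d\tilde\mu_\partial^t$) assigns the boundary cosmological constant on the two sides of the degenerate point. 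I would record these four asymptotic coefficients as the OPE lemmas (deferred to Appendix~\ref{sec:ope}); the $+\frac\chi2$ versus $-\frac\chi2$ assignment is precisely what makes $H_{\frac\gamma2}$ responsible for \eqref{shift three point 1} and $\tilde H_{\frac\gamma2}$ for \eqref{shift three point 2}.

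With the asymptotics in hand, I would invoke the standard connection formula for ${}_2F_1$ (recalled in Appendix~\ref{sec:special}), which expresses each local solution at $t=0$ as a linear combination of the two local solutions at $t=1$ with coefficients that are explicit ratios of $\Gamma$-functions in $A,B,C$. Writing $H_{\frac\gamma2}(t)$ in the $t=0$ basis (coefficients $\propto H$ with $\beta_1\mp\chi$) and re-expanding in the $t=1$ basis, then matching against the $t=1$ OPE (coefficients $\propto H$ with $\beta_2\mp\chi$), produces the two scalar relations. Substituting the values of $A,B,C$ for $\chi=\frac\gamma2$, simplifying the resulting $\Gamma$-quotients, and inserting the $g_\chi$ factors that arise from the boundary cosmological constants via \eqref{eq:g-chi}, yields exactly \eqref{shift three point 1} and \eqref{shift three point 2}. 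I would close by checking that the parameter ranges \eqref{eq:lem_shift_H_1} and \eqref{eq:lem_shift_H_2} are self-consistent: the constraints there force every shifted three-point constant appearing, in particular $H$ with $\beta_1+\chi$ which requires $\beta_1<\frac2\gamma$ so that $\beta_1+\frac\gamma2<Q$, to lie in the Seiberg-bound domain \eqref{sec4_proba_H} where it is already defined, and they guarantee convergence of the four-point integral defining $H_{\frac\gamma2}(t)$.

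The main obstacle will be the OPE step rather than the algebra of connection coefficients. In \cite{rz-boundary} the correlations reduced to moments of a single GMC measure, so the fusion asymptotics followed from known one-dimensional GMC tail and reflection estimates; here the integrand carries both a bulk (area) GMC and a boundary (length) GMC simultaneously, and extracting the exact $t\to 0$ and $t\to 1$ coefficients, including the precise constants and the emergence of the reflection coefficient, requires a genuinely two-measure asymptotic analysis. This is the delicate part, which is why those estimates are isolated in Appendix~\ref{sec:ope}; the present lemma then amounts to assembling the hypergeometric connection formula with those OPE coefficients and simplifying.
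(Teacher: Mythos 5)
Your proposal matches the paper's own proof of Lemma \ref{Shift1_H_proba} essentially step for step: expand $H_{\frac{\gamma}{2}}$ and $\tilde{H}_{\frac{\gamma}{2}}$ in the Frobenius bases at $t=0$ and $t=1$ of the hypergeometric equation from Proposition \ref{prop:hyper}, identify $C_1, B_1$ (resp.\ $\tilde{C}_1$) by taking $t\to 0,1$ and $C_2^+$ (resp.\ $\tilde{C}_2^+,\tilde{B}_2^-$) by the OPE asymptotics of Appendix \ref{sec:ope}, apply the connection formula \eqref{connection1}, and then relax the interim constraints (including $\beta_1\in(\frac{\gamma}{2},\frac{2}{\gamma})$ and \eqref{eq:para_H_chi}) to \eqref{eq:lem_shift_H_1}--\eqref{eq:lem_shift_H_2} via the analyticity of $H$ from Proposition \ref{prop:H-def-sigma}. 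One small correction: in this regime the relevant fusion coefficients come from the OPE \emph{without} reflection (Lemma \ref{lem:ope1}), so no reflection coefficient enters these particular shift equations --- the reflection OPE is needed only later, for the reflection principle and the $\frac{2}{\gamma}$-shift equations.
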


\begin{proof} 
Here we are always assuming $\chi = \frac{\gamma}{2}$.
We first take a look at the parameter ranges given by \eqref{eq:lem_shift_H_1} and \eqref{eq:lem_shift_H_2}. For $\chi =\frac{\gamma}{2}$, the shift equation \eqref{shift three point 1} contains the following three $H$ functions:
\begin{align*}
H
	\begin{pmatrix}
	\beta_1 , \beta_2 - \frac{\gamma}{2}, \beta_3 \\
	\sigma_1,  \sigma_2,   \sigma_3 
	\end{pmatrix}, \quad  H
	\begin{pmatrix}
	\beta_1 - \frac{\gamma}{2}, \beta_2, \beta_3 \\
	\sigma_1,  \sigma_2 + \frac{\gamma}{4},   \sigma_3 
	\end{pmatrix}, \quad H
	\begin{pmatrix}
	\beta_1 + \frac{\gamma}{2}, \beta_2, \beta_3 \\
	\sigma_1,  \sigma_2 + \frac{\gamma}{4},   \sigma_3 
	\end{pmatrix}.
\end{align*}
The $\beta_i$ and $\sigma_i$ parameters of each $H$ must be in the range \eqref{sec4_proba_H}. This gives \eqref{eq:lem_shift_H_1}. Similarly, the second shift equations \eqref{shift three point 2} contains the $H$ functions
\begin{align*}
 H
	\begin{pmatrix}
	\beta_1 , \beta_2 + \frac{\gamma}{2}, \beta_3 \\
	\sigma_1,  \sigma_2 + \frac{\gamma}{4},   \sigma_3 
	\end{pmatrix}, \quad H
	\begin{pmatrix}
	\beta_1 - \frac{\gamma}{2}, \beta_2, \beta_3 \\
	\sigma_1,  \sigma_2,   \sigma_3 
	\end{pmatrix}, \quad   H
	\begin{pmatrix}
	\beta_1 + \frac{\gamma}{2}, \beta_2, \beta_3 \\
	\sigma_1,  \sigma_2,   \sigma_3 
	\end{pmatrix},
\end{align*}
and the $\beta_i$ and $\sigma_i$ then need to obey the constraint \eqref{eq:lem_shift_H_2}. It is easy to check that these parameter constraints are non-empty. The condition of \eqref{eq:lem_shift_H_1} is weaker than \eqref{eq:lem_shift_H_2}, and \eqref{eq:lem_shift_H_2} can be satisfied if all the $\beta_i$ parameters are chosen in the interval $(\frac{2}{\gamma} -\epsilon, \frac{2}{\gamma})$ for a small $\epsilon >0$.

Let us now derive the first shift equation \eqref{shift three point 1}.  We assume $\beta_i, \sigma_i$ obey \eqref{eq:para_H_chi} in order for $H_{\frac{\gamma}{2}}$ to be well-defined, plus the following extra constraint on $\beta_1$:
\begin{equation}\label{para_range_301}
\frac{\gamma}{2} <  \beta_1 < \frac{2}{\gamma}.
\end{equation}
By Proposition \ref{prop:hyper}, the function $t \mapsto H_{\frac{\gamma}{2}}(t)$ obeys the hypergeometric equation for $ t \in (0,1)$. Using the basis of solutions of the hypergeometric equation recalled in Section \ref{sec:hyp_eq}, we can write the following solutions around $t=0$ and $t= 1$, under the assumption that neither $C$, $C-A-B$, or $A-B$ are integers.\footnote{The values excluded here are recovered by a simple continuity argument in $\gamma$.}
 For $t \in (0,1)$:
\begin{align}
H_{\frac{\gamma}{2}}(t) &= C_1 F(A,B,C,t) + C_2^+ t^{1 - C} F(1 + A-C, 1 +B - C, 2 -C, t) \\
 &= B_1 F(A,B,1+A+B- C, 1 -t) + B_2^- (1-t)^{C -A -B} F(C- A, C- B, 1 + C - A - B , 1 -t). \nonumber 
 %&= D_1 e^{i\pi A} t^{-A} F(A,1+A-C,1+A-B,t^{-1}) + D_2 e^{i\pi B}t^{-B} F(B, 1 +B - C, 1 +B - A, t^{-1}).
\end{align}
Here, $C_1, C_2^+, B_1, B_2^-$ are the real constants that parametrize the solution space around $t=0$ and $t=1$. 
We now express $C_1, C_2^+, B_1$  using $H\begin{pmatrix}
\cdot, \cdot, \cdot \\
\cdot,  \cdot,  \cdot 
\end{pmatrix}$ and then relate them.

First,  by sending $t\to 0$ and inspecting the definition~\eqref{eq:def_H_chi} of $H_{\chi}(t)$, we have:
\begin{equation}\label{eq:C1}
C_1 =\lim_{t\to 0} H_{\chi}(t) = H
\begin{pmatrix}
\beta_1 - \chi, \beta_2, \beta_3 \\
\sigma_1 - \frac{\chi}{2},  \sigma_2,   \sigma_3 
\end{pmatrix}.
\end{equation} 
Next to find $C_2^+$ we go at higher order in the $t \rightarrow 0_+$ limit. For this we use the asymptotic expansion of $H_{\frac{\gamma}{2}}(t) - H_{\frac{\gamma}{2}}(0)$ given by the operator product expansion (OPE) from Lemma \ref{lem:ope1}. Under the parameter constraints  \eqref{eq:para_H_chi} and \eqref{para_range_301}, Lemma \ref{lem:ope1} gives 
\begin{align}
H_{\frac{\gamma}{2}}(t) - H_{\frac{\gamma}{2}}(0)
= C_2^+ t^{1-C} + o(t^{1-C}), 
\end{align}
where  
\begin{align}\label{eq_ope_gluing2}
C_2^+  =  & -\frac{\Gamma(-1+\frac{\gamma \beta_1}{2} - \frac{\gamma^2}{4} ) \Gamma(1 - \frac{\gamma \beta_1}{2})}{\Gamma(- \frac{\gamma^2}{4})}  \left( g_{\frac{\gamma}{2}}(\sigma_1-\frac{\gamma}{4}) - g_{\frac{\gamma}{2}}(\sigma_2+\frac{\beta_1}{2}-\frac{\gamma}{4}) \right)  H
\begin{pmatrix}
\beta_1 + \frac{\gamma}{2}, \beta_2, \beta_3 \\
\sigma_1 - \frac{\gamma}{4},  \sigma_2,   \sigma_3 
\end{pmatrix}.
\end{align}
To align with \eqref{shift three point 1}, we can write~\eqref{eq_ope_gluing2} as
\begin{equation}\label{eq:C2+}
C_2^+ = \chi^2 \pi^{\frac{2\chi}{\gamma}-1}   \frac{\Gamma(-1+\chi\beta_1-\chi^2) \Gamma(1-\chi\beta_1)}{\Gamma(1-\frac{\gamma^2}{4})^{\frac{2\chi}{\gamma}}} \left( g_{\chi}(\sigma_1-\frac{\chi}{2})-  g_{\chi}(\sigma_2+\frac{\beta_1}{2}-\frac{\chi}{2})   \right) H
\begin{pmatrix}
\beta_1 + \chi, \beta_2, \beta_3 \\
\sigma_1 - \frac{\chi}{2},  \sigma_2,   \sigma_3 
\end{pmatrix}.    
\end{equation}

Similarly as for~\eqref{eq:C1}, by sending $t\to 1$ we get:
\begin{align}
B_1  =\lim_{t\to 1} H_{\chi}(t)= H
\begin{pmatrix}
\beta_1 , \beta_2 - \chi, \beta_3 \\
\sigma_1 - \frac{\chi}{2},  \sigma_2 - \frac{\chi}{2},   \sigma_3 
\end{pmatrix}.
\end{align}
By the connection formula \eqref{connection1}, with $\chi = \frac{\gamma}{2}$ we have
	\begin{align}\label{eq:final_shift_H}
	B_1 &= \frac{\Gamma(\chi(\beta_1-\chi)) \Gamma(1 - \chi\beta_2 + \chi^2)}{\Gamma(\chi(\beta_1-\chi+q\frac{\gamma}{2}) \Gamma(1 - \chi\beta_2 + \chi^2 -q\frac{\gamma\chi}{2}) } C_1 + \frac{\Gamma(2  - \chi\beta_1+\chi^2) \Gamma(1 - \chi\beta_2 + \chi^2)}{\Gamma(1 + \frac{q \gamma\chi}{2}) \Gamma(2 - \chi(\beta_1 + \beta_2-2\chi+q\frac{\gamma}{2}))   } C_2^+,
	\end{align}
This implies the shift equation 
\eqref{shift three point 1} for $\chi=\frac{\gamma}2$
 in the range of parameters constraint by \eqref{eq:para_H_chi} and \eqref{para_range_301}, after performing furthermore the replacement $\sigma_1 \rightarrow \sigma_1 + \frac{\gamma}{4} $ and $\sigma_2 \rightarrow \sigma_2 + \frac{\gamma}{4} $ (which also shifts the domain where $\sigma_1, \sigma_2$ belong).  To lift these constraints we then invoke the analyticity of $H$ as a function of its parameters given by Proposition \ref{prop:H-def-sigma}. We have thus shown that 
 \eqref{shift three point 1} holds for $\chi=\frac{\gamma}2$ in the parameter range given by \eqref{eq:lem_shift_H_1}.

Now we repeat these steps with $\tilde{H}_{\frac{\gamma}{2}}$ to obtain the shift equation with the opposite phase.  For $t \in (0,1)$ we have 
\begin{align}
\tilde{H}_{\frac{\gamma}{2}}(t) &= \tilde{C}_1 F(A,B,C,t) + \tilde{C}_2^+ t^{1 - C} F(1 + A-C, 1 +B - C, 2 -C, t) \\ \nonumber
 &= \tilde{B}_1 F(A,B,1+A+B- C, 1 -t) + \tilde{B}_2^- (1-t)^{C -A -B} F(C- A, C- B, 1 + C - A - B , 1 -t).
\end{align}
Similarly as for  $C_1, C_2^+, B_1$, we get the following expressions for $ \tilde{C}_1, \tilde{C}_2^+,  \tilde{B}_2^-$ in terms of $H$: 
\begin{equation}\label{eq:C1tilde}
 \tilde{C}_1 =\lim_{t\to 0}\tilde{H}_{\chi}(t)= H
\begin{pmatrix}
\beta_1 - \chi, \beta_2, \beta_3 \\
\sigma_1 + \frac{\chi}{2},  \sigma_2,   \sigma_3 
\end{pmatrix}.
\end{equation}
\begin{equation}\label{C2+tilde}
	\tilde{C}_2^+ = \chi^2 \pi^{\frac{2\chi}{\gamma}-1}  \frac{\Gamma(-1+\chi\beta_1-\chi^2) \Gamma(1-\chi\beta_1)}{\Gamma(1-\frac{\gamma^2}{4})^{\frac{2\chi}{\gamma}}} \left( g_{\chi}(\sigma_1+\frac{\chi}{2} ) - g_{\chi}(\sigma_2-\frac{\beta_1}{2}+\frac{\chi}{2}) \right) H \begin{pmatrix}
\beta_1 + \chi, \beta_2, \beta_3 \\
\sigma_1 + \frac{\chi}{2},  \sigma_2,   \sigma_3 
\end{pmatrix}.    
\end{equation}
\begin{equation}\label{eq:B2-tilde}
    \tilde{B}_2^- = \chi^2 \pi^{\frac{2\chi}{\gamma}-1}  \frac{\Gamma(-1+\chi\beta_2-\chi^2) \Gamma(1-\chi\beta_2)}{\Gamma(1-\frac{\gamma^2}{4})^{\frac{2\chi}{\gamma}}} \left( g_{\chi}(\sigma_3) - g_{\chi}(\sigma_2+\frac{\beta_2}{2}) \right) H \begin{pmatrix}
\beta_1 , \beta_2 + \chi, \beta_3 \\
\sigma_1 + \frac{\chi}{2},  \sigma_2 + \frac{\chi}{2} ,    \sigma_3 
\end{pmatrix}.
\end{equation}

Again by the connection formula \eqref{connection1}, with $\chi = \frac{\gamma}{2}$ we have
	\begin{align}\label{eq:final_shift_H2}
	\tilde{B}_2^- = \frac{\Gamma(\chi(\beta_1-\chi)) \Gamma(-1 + \chi\beta_2 - \chi^2)}{\Gamma( - q \frac{\gamma\chi}{2} ) \Gamma(-1 + \chi(\beta_1 + \beta_2-2\chi+q\frac{\gamma}{2}) ) } \tilde{C}_1 + \frac{\Gamma(2  - \chi \beta_1+\chi^2)) \Gamma(-1 + \chi\beta_2 - \chi^2)}{\Gamma(1- \chi(\beta_1-\chi+q\frac{\gamma}{2} ) \Gamma( \chi\beta_2 - \chi^2 +q\frac{\gamma\chi}{2}) } \tilde{C}_2^+.
	\end{align}
Therefore \eqref{shift three point 2} holds for $\chi=\frac{\gamma}2$ in the parameter range given by \eqref{eq:lem_shift_H_2} as before.
\end{proof}

\begin{remark}
The two shift equations we derive using respectively $H_{\chi}$ and $\tilde{H}_{\chi}$ do not have exactly the same shape. The reason is that in the proof above we used the connection formula between the coefficients $C_1, C_2^+, B_1$ to derive \eqref{shift three point 1}, and used the connection formula between $ \tilde{C}_1, \tilde{C}_2^+, \tilde{B}_2^-$ to derive \eqref{shift three point 2}. One may wonder why we did not use the connection formula between $ \tilde{C}_1, \tilde{C}_2^+, \tilde{B}_1$ in the case of $\tilde{H}_{\chi}$ as well. The main reason is that with our choice it is possible to perform a linear combination of \eqref{shift three point 1} and \eqref{shift three point 2} to obtain a shift equation of $H$ that only shifts the parameter $\beta_1$. This will be crucial to establish a uniqueness statement for the solutions to \eqref{shift three point 1} and \eqref{shift three point 2}, see the proof of Theorem \ref{thm:H} in Section~\ref{sec:proof_H}.
\end{remark}

\subsection{The $\frac{\gamma}{2}$-shift equations for $R$}\label{subsec:shift-R} The next step is to derive the $\frac{\gamma}{2}$-shift equation for the reflection coefficient $R$.
	The key idea is to take a suitable limit of the $\frac{\gamma}{2}$-shift equations for $H$ that we have established in Lemma \ref{Shift1_H_proba} to make $R$ appear from $H$. For this purpose we give the following result expressing $R$ as a limit of $H$.	
	\begin{lemma}\label{lem:HlimR} 
		Suppose the $\beta_i$ are in the range given by \eqref{sec4_proba_H} and the $\sigma_i$ in $\cB$. Suppose $Q> \beta_1 > \beta_2 \vee \frac{\gamma}{2}$, $\beta_2>0$ and $\beta_1 - \beta_2 < \beta_3 < Q$. Then the following limit holds: 
		\begin{align*}
		\lim_{\beta_3 \downarrow \beta_1 - \beta_2 } (\beta_2 + \beta_3 - \beta_1) H
\begin{pmatrix}
\beta_1 , \beta_2, \beta_3 \\
\sigma_1,  \sigma_2,   \sigma_3 
\end{pmatrix}  = 2 R(\beta_1, \sigma_1, \sigma_2).
		\end{align*}
	\end{lemma}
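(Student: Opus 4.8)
The plan is to read the limit as the extraction of a residue. As $\beta_3\downarrow\beta_1-\beta_2$ the quantity $\delta:=\beta_2+\beta_3-\beta_1$ tends to $0$, and the inequality $\beta_1<\beta_2+\beta_3$ is exactly the triangle inequality $\beta_i<\sum_{j\neq i}\beta_j$ for the \emph{largest} weight $\beta_1$; the prefactor $\delta$ in front of $H$ is there to pick out the residue of $H$ at the saturation of this inequality. First I would record that along the limit $\sum_i\beta_i\to 2\beta_1$, so $s=\tfrac12\sum_i\beta_i-Q\to\beta_1-Q$, which is precisely the value of $s$ attached to $R(\beta_1,\sigma_1,\sigma_2)$ in Lemma~\ref{lem:def_R_trunc}. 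Since $s(s+\tfrac\gamma2)$ is continuous and nonvanishing at $s=\beta_1-Q$ (as $\beta_1\in(\tfrac2\gamma,Q)$), Definitions~\ref{def:H-general} and~\ref{def-R-general} reduce the claim to the analogous statement for $\hat H$ and $\hat R$; equivalently one may work directly with the truncated representation of Remark~\ref{rem-H}, in which $H=\int(e^{-A-\sum_i\mu_iL_i}-1)\,\LF_\bbH^{(\beta_1,0),(\beta_2,1),(\beta_3,\infty)}(d\phi)$ for $(\beta_1,\beta_2,\beta_3)$ near the limiting locus.

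Next I would make the pole explicit through the zero mode. Writing $\phi=\psi+\mathbf c$ with $\mathbf c$ of law $e^{sc}\,dc$ and carrying out the $c$-integral as in the proof of Lemma~\ref{lem:def_H_trunc}, $H$ becomes, up to explicit Gamma prefactors and lower-order terms produced by the boundary length, a critical moment of the form $\E[\tilde A^{-s/\gamma}\cdots]$ of the Gaussian multiplicative chaos carrying the three insertions, where $\tilde A$ is the total area of the field without the constant mode. The exponent satisfies $-s/\gamma\to\tfrac1\gamma(Q-\beta_1)=\tfrac1\gamma\min_i(Q-\beta_i)$ as $\delta\to0$, i.e.\ it approaches exactly the finiteness threshold for positive GMC moments dictated by the heaviest insertion, the one at $0$ (see \cite[Corollary 6.11, Corollary 3.10]{hrv-disk}). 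This is the origin of the blow-up: the critical moment diverges like $(\lambda_c-\lambda)^{-1}$ with $\lambda=-s/\gamma$ and $\lambda_c-\lambda=\tfrac{\delta}{2\gamma}$, which yields a simple pole in $\delta$, and it is the factor $\tfrac12$ in $\lambda_c-\lambda=\tfrac{\delta}{2\gamma}$ that accounts for the $2$ on the right-hand side.

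The heart of the argument is to identify this residue with the two-pointed quantum disk. The divergence of the critical moment is carried by the rare event that a macroscopic amount of mass concentrates at the heavy insertion $0$, that is, by a large quantum-disk bubble growing off $0$. I would localize near $0$ using the Markov decomposition of the free-boundary GFF into its restriction to a neighborhood of $0$ and a harmonic remainder, and show that, conditioned to produce large mass, the field near $0$ converges in the strip coordinate to the profile of Definition~\ref{def-thick-disk}: the radial part becomes the two-sided Brownian motion with drift $-(Q-\beta_1)$ conditioned negative, and the lateral part the lateral free-boundary GFF, so the limiting surface is exactly $\cMtwo(\beta_1)$, with a $\beta_1$-insertion at $0$ and a second $\beta_1$-insertion at the pinch where the complementary piece attaches. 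Here the identity $\beta_2+\beta_3=\beta_1$ is precisely what forces the fused weight at the pinch to equal $\beta_1$. Under this localization the two boundary arcs of the bubble inherit $\mu_1$ (on $(-\infty,0)$) and $\mu_2$ (on $(0,1)$), while the arc $(1,\infty)$ carrying $\mu_3$ sits on the degenerating complementary piece and drops out, explaining why the limit depends only on $\sigma_1,\sigma_2$. Matching the truncation $(e^{-A-\sum_i\mu_iL_i}-1)$ with the truncated integrand defining $R$ in~\eqref{eq:2-point} then produces the constant $\tfrac{2(Q-\beta_1)}{\gamma}$, completing the identification $\lim\delta\,H=2R$.

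I expect the main obstacle to be this last identification: making the localization rigorous while simultaneously tracking the area measure $A$ and the boundary-length measures $L_1,L_2$ together with the truncation. The classical reflection/critical-moment asymptotics control the area alone, whereas the genuinely new difficulty here—exactly the complication flagged in Section~\ref{subsec:method}—is to obtain the \emph{joint} area-and-boundary-length tail, uniformly in the field on the complement of the neighborhood of $0$, and to verify that the boundary-length contributions neither displace the location of the pole nor change its residue. I would control these error terms by the same kind of Hölder-inequality moment estimates used in Lemma~\ref{lem-exp-bound-holo}, combined with dominated convergence for the truncated exponential, thereby upgrading the area-only localization to the joint statement and pinning down the residue as the $\cMtwo(\beta_1)$ integral.
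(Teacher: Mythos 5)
Your proposal has the same architecture as the paper's actual proof (Section~\ref{app:lim_H_R}): the pole is produced by a macroscopic quantum-disk bubble growing at the $\beta_1$-insertion, one passes to strip coordinates and the radial decomposition, the limiting field is identified with the two-pointed disk of Definition~\ref{def-thick-disk} (drifted Brownian motion conditioned to stay negative plus the lateral GFF), and the arcs carrying $\mu_1,\mu_2$ survive while the $\mu_3$-arc degenerates with the complementary piece. Two of your heuristics should be adjusted, though: in the rigorous version the second $\beta_1$-insertion arises from the Brownian motion's behavior near its running maximum (built into $Y_t$ in Definition~\ref{def-thick-disk}), not from literally fusing $\beta_2+\beta_3=\beta_1$; and your preliminary reduction is range-restricted, since the representation of Remark~\ref{rem-H} that you invoke requires $Q-\frac12\sum\beta_i<\frac\gamma2$ (so effectively $\beta_1>\frac2\gamma$ near the limiting locus), while the lemma's hypotheses allow $\beta_1$ down to $(\frac2\gamma-\frac\gamma2)\vee\frac\gamma2$, where moreover $s+\frac\gamma2=\beta_1-\frac2\gamma$ can vanish, contrary to your nonvanishing claim.

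The genuine gap is in how you extract the simple pole and its exact residue. Your step~2 asserts that the critical moment diverges like $(\lambda_c-\lambda)^{-1}$ with residue given by a reflection coefficient; for the joint area-plus-boundary-length functional with three boundary cosmological constants, this tail asymptotic is not an available input --- it is essentially equivalent to the lemma itself (indeed Section~\ref{subsec:outlook} lists the joint tail as a \emph{consequence} of these results). You acknowledge this and propose to establish the joint localization via H\"older estimates as in Lemma~\ref{lem-exp-bound-holo} together with dominated convergence, but these tools do not reach the actual difficulty: the divergent direction is the zero mode $c\to-\infty$ (equivalently, the field-average maximum tending to $+\infty$), which is precisely where the limit object lives, so there is no dominating integrable majorant, and one must prove convergence \emph{uniformly} in $\delta=\frac12(\beta_2+\beta_3-\beta_1)$ along that direction. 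The paper's missing device is to condition on the maximum $M_\phi$ of the horizontal field average: Lemma~\ref{lem:Mphi} computes its law exactly, $\LF_\cS^{\beta_1,\beta_2,\beta_3}[M_\phi\in dm]=\frac{2(Q-\beta_1)}{\beta_2+\beta_3-\beta_1}\,e^{(\frac12\sum\beta_i-Q)m}\,dm$ as in \eqref{eq-M-law}, which is where the simple pole and the constant $2$ actually come from (your count $\lambda_c-\lambda=\delta/(2\gamma)$ is consistent with this but remains heuristic unless the tail constant is identified); the conditional zero-mode law $1_{c<m}\,\delta e^{\delta(c-m)}dc$ then turns the $\delta\downarrow0$ limit into a genuine weak limit handled by the coupling of Lemma~\ref{lem-pinch-h} (Proposition~\ref{prop-phi-cond-max}), and the region $|M_\phi|>C$ is discarded by Lemma~\ref{lem-noncompact-bound}, whose moment inputs are GMC moments \emph{conditioned on the running maximum} (Lemma~\ref{lem-AL-mixed}) rather than unconditional H\"older bounds. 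Without this conditioning mechanism, or an equivalent substitute, your sketch identifies the correct limit object but does not deliver the simple-pole structure, the uniformity in $\beta_3$, or the exact constant $2$.
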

\noindent We defer the proof of Lemma~\ref{lem:HlimR} to Section \ref{app:lim_H_R} and proceed to  prove the $\frac{\gamma}{2}$-shift equation for $R$.

	\begin{lemma}\label{lem_shift_eq_R} Consider $\beta_1 \in (\frac{\gamma}{2} \vee (\frac{2}{\gamma} - \frac{\gamma}{2}), \frac{2}{\gamma})$ and  $\sigma_1, \sigma_2 \in \mathbb{C}$ such that  $\sigma_1, \sigma_2, \sigma_2 + \frac{\gamma}{4}$ all belong to $\cB$. Then $R(\beta_1, \sigma_1, \sigma_2)$ obeys
	\begin{align}\label{equation R1}
	R(\beta_1, \sigma_1, \sigma_2) =-\frac{\Gamma(-1+\frac{\gamma \beta_1}{2} - \frac{\gamma^2}{4} ) \Gamma(1 - \frac{\gamma \beta_1}{2}) }{ \Gamma(- \frac{\gamma^2}{4}) }
	  	\left(	g_{\frac{\gamma}2}(\sigma_1)   -g_{\frac{\gamma}2}(\sigma_2+\frac{\beta_1}{2} ) \right)
%	 \sqrt{\frac{\mu}{\sin(\pi\frac{\gamma^2}{4})}}\left( \cos(\pi\gamma(\sigma_1 - \frac{Q}{2})) -\cos(\pi\gamma(\sigma_2+\frac{\beta_1}{2}-\frac{Q}{2})) \right)  
	 R(\beta_1 + \frac{\gamma}{2}, \sigma_1,   \sigma_2 + \frac{\gamma}{4}).%\nonumber
	\end{align}
	Similarly for $\beta_1 \in (0 \vee (\frac{2}{\gamma} - \gamma), \frac{2}{\gamma} - \frac{\gamma}{2})$ and the same constraint on $\sigma_1, \sigma_2$ as previously,
	\begin{align}\label{equation R2}
	R(\beta_1+\frac{\gamma}{2}, \sigma_1, \sigma_2 + \frac{\gamma}{4}) =-\frac{\Gamma(-1+\frac{\gamma \beta_1}{2} ) \Gamma(1 - \frac{\gamma \beta_1}{2}-\frac{\gamma^2}{4}) }{ \Gamma(- \frac{\gamma^2}{4}) } 	  	\left(	g_{\frac{\gamma}2}(\sigma_1)   -g_{\frac{\gamma}2}(\sigma_2-\frac{\beta_1}{2} ) \right)
%	 \sqrt{\frac{\mu}{\sin(\pi\frac{\gamma^2}{4})}}\left( \cos(\pi\gamma(\sigma_1 - \frac{Q}{2})) -\cos(\pi\gamma(\sigma_2-\frac{\beta_1}{2}-\frac{Q}{2})) \right)  
	 R(\beta_1 + \gamma, \sigma_1, \sigma_2). 
	\end{align}
\end{lemma}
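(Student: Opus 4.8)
The plan is to obtain each of \eqref{equation R1} and \eqref{equation R2} by driving one boundary weight of a $\frac{\gamma}{2}$-shift equation for $H$ to a reflection threshold and reading off residues via Lemma~\ref{lem:HlimR}. Throughout I fix $\chi=\frac{\gamma}{2}$ and keep $(\beta_i,\sigma_i)$ in a sub-range of \eqref{eq:lem_shift_H_1} small enough that all the $H$-values involved and the reflection limit make sense; the delicate ranges of $\beta_1$ in \eqref{equation R1} and \eqref{equation R2} are dictated precisely by this requirement, and the equations are then propagated to the full claimed range using the analyticity of Proposition~\ref{prop:R-def-sigma}.

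First I would derive \eqref{equation R1} from \eqref{shift three point 1}. Applying Lemma~\ref{lem:HlimR} to the first weight of each term, the three $H$-terms have reflection thresholds (values of $\beta_3$) equal to $\beta_1-\beta_2+\chi$, $\beta_1-\beta_2-\chi$, and $\beta_1-\beta_2+\chi$ respectively. Hence, multiplying \eqref{shift three point 1} by $\beta_2+\beta_3-\beta_1-\chi$ and letting $\beta_3\downarrow \beta_1-\beta_2+\chi$, the first and third terms converge to $2R(\beta_1,\sigma_1,\sigma_2)$ and $2R(\beta_1+\chi,\sigma_1,\sigma_2+\frac{\chi}{2})$, while the middle term stays bounded and is killed by the vanishing prefactor. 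This gives $R(\beta_1,\sigma_1,\sigma_2)=\mathfrak{c}\,R(\beta_1+\chi,\sigma_1,\sigma_2+\frac{\chi}{2})$, where $\mathfrak{c}$ is the third coefficient of \eqref{shift three point 1} evaluated at $q=\frac{2(Q-\beta_1)}{\gamma}$. Simplifying $\mathfrak{c}$ with $\chi Q=1+\chi^2$ shows the $\beta_2$-dependent $\Gamma$-factors cancel, and the reflection formula $\Gamma(z)\Gamma(1-z)=\pi/\sin(\pi z)$ together with $\chi^2=\frac{\gamma^2}{4}$ turns $\mathfrak{c}$ into $-\frac{\Gamma(-1+\chi\beta_1-\chi^2)\Gamma(1-\chi\beta_1)}{\Gamma(-\frac{\gamma^2}{4})}\bigl(g_\chi(\sigma_1)-g_\chi(\sigma_2+\frac{\beta_1}{2})\bigr)$, which is exactly \eqref{equation R1}.

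For \eqref{equation R2} I would run the same limit on the companion equation built from $\tilde H_\chi$. Here I would \emph{not} use \eqref{shift three point 2}, whose left coefficient contains $g_\chi(\sigma_3)$ and so cannot survive a reflection limit, but instead the connection formula relating $\tilde C_1,\tilde C_2^+$ and $\tilde B_1=\lim_{t\to 1}\tilde H_\chi(t)=H\begin{pmatrix}\beta_1,\beta_2-\chi,\beta_3\\ \sigma_1+\frac{\chi}{2},\sigma_2+\frac{\chi}{2},\sigma_3\end{pmatrix}$ — the alternative connection noted in the remark after Lemma~\ref{Shift1_H_proba}. After the normalization $\sigma_1\mapsto\sigma_1-\frac{\chi}{2}$ this reads $H\begin{pmatrix}\beta_1,\beta_2-\chi,\beta_3\\ \sigma_1,\sigma_2+\frac{\chi}{2},\sigma_3\end{pmatrix}=(\ast)\,H\begin{pmatrix}\beta_1-\chi,\beta_2,\beta_3\\ \sigma_1,\sigma_2,\sigma_3\end{pmatrix}+(\ast\ast)\,H\begin{pmatrix}\beta_1+\chi,\beta_2,\beta_3\\ \sigma_1,\sigma_2,\sigma_3\end{pmatrix}$, whose first and third terms again share the threshold $\beta_1-\beta_2+\chi$. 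The identical limiting procedure yields $R(\beta_1,\sigma_1,\sigma_2+\frac{\chi}{2})=(\ast\ast)|_{q=2(Q-\beta_1)/\gamma}\,R(\beta_1+\chi,\sigma_1,\sigma_2)$; relabeling $\beta_1\mapsto\beta_1+\chi$ and simplifying the coefficient as before — now the surviving $g_\chi$-factor of $\tilde C_2^+$ in \eqref{C2+tilde} carries the opposite phase $g_\chi(\sigma_2-\frac{\beta_1}{2}+\frac{\chi}{2})$ — produces \eqref{equation R2}.

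The main obstacle is the bookkeeping of the limit rather than the algebra. One must verify that in each shift equation exactly two terms develop the clean simple-pole residues of Lemma~\ref{lem:HlimR} at $\beta_3=\beta_1-\beta_2+\chi$, that the spectator term is genuinely bounded there (so its product with $\beta_2+\beta_3-\beta_1-\chi$ vanishes), and that the coefficients of the surviving terms are finite and nonzero at $q=\frac{2(Q-\beta_1)}{\gamma}$ — which is what forces the constraints on $\beta_1$ and on $\sigma_2+\frac{\gamma}{4}\in\cB$ in the statement. Confirming these, and that the $\beta_2$-dependence of the surviving coefficient cancels through $\chi Q=1+\chi^2$, is the crux; the resulting coefficient identities are then elementary consequences of the $\Gamma$-function reflection and recursion relations.
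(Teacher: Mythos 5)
Your proposal is correct and takes essentially the same route as the paper: for \eqref{equation R1} the paper likewise multiplies \eqref{shift three point 1} by $(\beta_2+\beta_3-\beta_1-\frac{\gamma}{2})$ (with the concrete choice $\beta_2=\beta_1-\epsilon$, $\beta_3=\frac{\gamma}{2}+\epsilon+\eta$) and sends $\beta_3\downarrow\beta_1-\beta_2+\frac{\gamma}{2}$, extracting $2R(\beta_1,\sigma_1,\sigma_2)$ and $2R(\beta_1+\frac{\gamma}{2},\sigma_1,\sigma_2+\frac{\gamma}{4})$ via Lemma~\ref{lem:HlimR} while the middle term, whose reflection threshold sits at $\beta_3=\beta_1-\beta_2-\frac{\gamma}{2}$, is killed by the vanishing prefactor, and the coefficient is evaluated at $q=\frac{2(Q-\beta_1)}{\gamma}$ exactly as you describe. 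For \eqref{equation R2} the paper only says to repeat the argument with $\tilde H_{\frac{\gamma}{2}}$ ``along the same lines,'' which is precisely your $\tilde B_1$--$\tilde C_1$--$\tilde C_2^+$ connection-formula route (rather than \eqref{shift three point 2}), so your reconstruction, including the surviving $g_{\frac{\gamma}{2}}\bigl(\sigma_2-\frac{\beta_1}{2}+\frac{\chi}{2}\bigr)$ factor from \eqref{C2+tilde} and the relabeling $\beta_1\mapsto\beta_1+\chi$, fills in the paper's terse step correctly.
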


\begin{proof}
We derive the first shift equation \eqref{equation R1} by taking a limit of \eqref{shift three point 1} established in Lemma~\ref{Shift1_H_proba}. Fix a $\beta_1 \in (\frac{\gamma}{2} \vee (\frac{2}{\gamma} - \frac{\gamma}{2}), \frac{2}{\gamma}) $. Consider two parameters $\epsilon, \eta >0$ chosen small enough and set $ \beta_2 = \beta_1  - \epsilon $, $\beta_3 = \beta_1 - \beta_2 +\frac{\gamma}{2} + \eta = \frac{\gamma}{2} + \epsilon +\eta$. For this parameter choice the condition \eqref{eq:lem_shift_H_1} required in Lemma \ref{Shift1_H_proba} is satisfied. By  Lemma \ref{lem:HlimR} we get:

	\begin{align*}
	&\lim_{\beta_3 \downarrow \beta_1 - \beta_2 + \frac{\gamma}{2}} (\beta_2 + \beta_3 - \beta_1 - \frac{\gamma}{2}) H
\begin{pmatrix}
\beta_1 , \beta_2 - \frac{\gamma}{2}, \beta_3 \\
\sigma_1,  \sigma_2,   \sigma_3 
\end{pmatrix}  = 2 R(\beta_1, \sigma_1, \sigma_2), \\
	&\lim_{\beta_3 \downarrow \beta_1 - \beta_2 + \frac{\gamma}{2}} (\beta_2 + \beta_3 - \beta_1 - \frac{\gamma}{2})  \frac{\Gamma(\frac{\gamma \beta_1}{2} - \frac{\gamma^2}{4}) \Gamma(1 - \frac{\gamma \beta_2}{2} + \frac{\gamma^2}{4})}{\Gamma(\frac{\gamma \beta_1}{2} +(q-1) \frac{\gamma^2}{4}) \Gamma(1 - \frac{\gamma}{2}\beta_2 - (q-1)\frac{\gamma^2}{4})) } H
\begin{pmatrix}
\beta_1 -\frac{\gamma}{2}, \beta_2, \beta_3 \\
\sigma_1,  \sigma_2 + \frac{\gamma}{4},   \sigma_3 
\end{pmatrix} = 0,  \\
	& \lim_{\beta_3 \downarrow \beta_1 - \beta_2 + \frac{\gamma}{2}} (\beta_2 + \beta_3 - \beta_1 - \frac{\gamma}{2})  \Bigg[ \frac{ \Gamma(2 + \frac{\gamma^2}{4} - \frac{\gamma \beta_1}{2}) \Gamma(1 - \frac{\gamma \beta_2}{2} + \frac{\gamma^2}{4})\Gamma(-1+\frac{\gamma \beta_1}{2} - \frac{\gamma^2}{4} ) \Gamma(1 - \frac{\gamma \beta_1}{2}) }{\Gamma(1 + \frac{\gamma^2}{4}) \Gamma(2 - \frac{\gamma}{2}(\beta_1 + \beta_2)  - (q-2)\frac{\gamma^2}{4} )  ) \Gamma(- \frac{\gamma^2}{4}) } \\
	& \quad \quad \quad  \quad \quad \quad  \quad \quad \quad \times 	  	\left(	g_{\frac{\gamma}2}(\sigma_1)   -g_{\frac{\gamma}2}(\sigma_2+\frac{\beta_1}{2} ) \right)
	%\sqrt{\frac{\mu}{\sin(\pi\frac{\gamma^2}{4})}} \left( \cos(\pi\gamma(\sigma_1 - \frac{Q}{2})) -\cos(\pi\gamma(\sigma_2+\frac{\beta_1}{2}-\frac{Q}{2})) \right)  
H
\begin{pmatrix}
\beta_1 + \frac{\gamma}{2}, \beta_2, \beta_3 \\
\sigma_1,  \sigma_2 + \frac{\gamma}{4},   \sigma_3 
\end{pmatrix}  
	 \Bigg] \\
	& =  2 \frac{ \Gamma(-1+\frac{\gamma \beta_1}{2} - \frac{\gamma^2}{4} ) \Gamma(1 - \frac{\gamma \beta_1}{2}) }{ \Gamma(- \frac{\gamma^2}{4}) } %\sqrt{\frac{\mu}{\sin(\pi\frac{\gamma^2}{4})}} \left( \cos(\pi\gamma(\sigma_1 - \frac{Q}{2})) -\cos(\pi\gamma(\sigma_2+\frac{\beta_1}{2}-\frac{Q}{2})) \right)  
		  	\left(	g_{\frac{\gamma}2}(\sigma_1)   -g_{\frac{\gamma}2}(\sigma_2+\frac{\beta_1}{2} ) \right) R(\beta_1 + \frac{\gamma}{2}, \sigma_1,   \sigma_2 + \frac{\gamma}{4}).\\
	\end{align*}
	In the above three limits the second one is actually trivial, only the first and third involve using Lemma \ref{lem:HlimR}. Putting these limits together using \eqref{shift three point 1} leads to~\eqref{equation R1}.
	By using the alternative function $\tilde{H}_{\frac{\gamma}{2}}(t)$ along the same lines we obtain the desired relation \eqref{equation R2} between $R(\beta_1+\frac{\gamma}{2}, \sigma_1,  \sigma_2 + \frac{\gamma}{4})$ and $R(\beta_1 + \gamma, \sigma_1,   \sigma_2)$. 
 \end{proof}

\subsection{Analytic continuation for $H$ and $R$}\label{subsec:ext_H_R}

In this subsection we will analytically extend the functions $H$ and $R$ to a larger domain of parameters than the one we are currently working with. For this purpose, we must first derive the so-called reflection principle for $H$ and $R$ which will be obtain by performing the operator product expansion (OPE) with reflection in the case $\chi = \frac{\gamma}{2}$.  We rely extensively on Lemma \ref{lem_reflection_ope2} giving the Taylor expansion using the reflection coefficient. Finally we also extend the validity of Lemma \ref{lem:HlimR} to a larger range of parameters.

\begin{lemma}[Reflection principle for $H$] \label{reflection_H} Consider parameters $\sigma_1, \sigma_2, \sigma_3 $, $\beta_1, \beta_2, \beta_3$ satisfying  \eqref{sec4_proba_H} and \eqref{sec4_proba_R} so that $H$ and $R$ are well-defined. One can meromorphically extend $\beta_1 \mapsto  H$ beyond the point $\beta_1 = Q$ by the following relation:
	\begin{align}\label{eq:reflection_id}
	 H
\begin{pmatrix}
\beta_1 , \beta_2, \beta_3 \\
\sigma_1,  \sigma_2,   \sigma_3 
\end{pmatrix} = R(\beta_1,\sigma_1,\sigma_2)  H
\begin{pmatrix}
2Q - \beta_1 , \beta_2, \beta_3 \\
\sigma_1,  \sigma_2,   \sigma_3 
\end{pmatrix}.
	\end{align}
	The quantity $ H
\begin{pmatrix}
2Q - \beta_1 , \beta_2, \beta_3 \\
\sigma_1,  \sigma_2,   \sigma_3 
\end{pmatrix}$ is thus well-defined as long as the parameters of $ H
\begin{pmatrix}
\beta_1 , \beta_2, \beta_3 \\
\sigma_1,  \sigma_2,   \sigma_3 
\end{pmatrix}  $ and $R(\beta_1,\sigma_1,\sigma_2)$ respectively obey the constraints of \eqref{sec4_proba_H} and \eqref{sec4_proba_R}.
	Similarly, for $\sigma_1, \sigma_2 \in \cB $, we can analytically extend $\beta_1 \mapsto R(\beta_1,\sigma_1,\sigma_2)$ to the range $((\frac2\gamma - \frac\gamma2) \vee \frac{\gamma}{2}, Q + (\frac{2}{\gamma} \wedge \gamma))$  thanks to the relation:
	\begin{align}\label{equation R reflect}
	R(\beta_1,\sigma_1,\sigma_2) R(2Q-\beta_1,\sigma_1,\sigma_2) = 1.
	\end{align}
\end{lemma}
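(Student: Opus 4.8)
The plan is to prove the two identities in succession: first the reflection principle \eqref{eq:reflection_id} for $H$ via the operator product expansion with reflection, and then \eqref{equation R reflect} for $R$ by a residue computation built on \eqref{eq:reflection_id}. I work throughout with $\chi=\frac\gamma2$ and in the parameter range \eqref{sec4_proba_H}--\eqref{sec4_proba_R} where $H$ and $R$ are defined and meromorphic. For the $H$-identity I would reuse the degenerate four-point function $H_{\frac\gamma2}(t)$ of Lemma \ref{Shift1_H_proba} and analyse its $t\to 0$ expansion through the two channels $t^{0}$ and $t^{1-C}$ of the hypergeometric equation, whose coefficients are meromorphic in $\beta_1$ by Lemma \ref{lem:ext-H-chi}. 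In the physical range these coefficients were already identified in Lemma \ref{Shift1_H_proba} with shifts of $H$ through the regular OPE (Lemma \ref{lem:ope1}). The new ingredient is the OPE with reflection (Lemma \ref{lem_reflection_ope2}): when $\beta_1$ is continued across the reflection threshold so that the fused weight reaches $Q$, the probabilistic expansion of the relevant channel is instead governed by the boundary reflection coefficient and expresses the same meromorphic coefficient as an explicit prefactor times $R(\cdot,\cdot,\cdot)$ multiplying the reflected correlation $H(2Q-\beta_1,\beta_2,\beta_3;\,\cdots)$. Since both formulas compute one and the same meromorphic function of $\beta_1$, they must agree; cancelling the explicit $\Gamma$- and $g_\chi$-prefactors exactly as in the passage \eqref{eq_ope_gluing2}--\eqref{eq:C2+} and relabelling the shifted weight then yields \eqref{eq:reflection_id}.

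For the $R$-identity I would first extend Lemma \ref{lem:HlimR} so that the degeneration $\lim_{\beta_3\downarrow \beta_1'-\beta_2}(\beta_2+\beta_3-\beta_1')\,H = 2R(\beta_1',\sigma_1,\sigma_2)$ remains valid when the first weight $\beta_1'$ lies in the reflected range $(Q,\,Q+(\tfrac2\gamma\wedge\gamma))$; this is the same Gaussian multiplicative chaos computation carried one reflection further and presents no new conceptual difficulty. The key observation is then that, for the configuration $(\beta_1,\beta_2,\beta_3)$, the Seiberg resonance $\beta_3 = 2Q-\beta_1-\beta_2$ of $H(\beta_1,\beta_2,\beta_3)$ is exactly the fusion pole of the reflected function $H(2Q-\beta_1,\beta_2,\beta_3)$, while the companion point $\beta_3=\beta_1-\beta_2$ is the fusion pole of $H(\beta_1,\beta_2,\beta_3)$ and simultaneously the Seiberg resonance of $H(2Q-\beta_1,\beta_2,\beta_3)$. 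Taking the residue in $\beta_3$ on both sides of \eqref{eq:reflection_id} at $\beta_3=\beta_1-\beta_2$ and using Lemma \ref{lem:HlimR} (no extension needed) pins the residue of $H$ at its Seiberg resonance to the universal constant $2$, first for reflected weights and then, by analyticity, for all weights. Taking the residue instead at $\beta_3 = 2Q-\beta_1-\beta_2$, the left-hand side contributes this universal constant $2$ while the right-hand side contributes $R(\beta_1,\sigma_1,\sigma_2)\cdot 2R(2Q-\beta_1,\sigma_1,\sigma_2)$ by the extended Lemma \ref{lem:HlimR}; equating gives $R(\beta_1)R(2Q-\beta_1)=1$, which is \eqref{equation R reflect} and at the same time furnishes the analytic continuation of $\beta_1\mapsto R$ past $Q$.

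The genuine obstacle is the reflection OPE underlying the first step. Isolating the reflection contribution to the near-diagonal expansion of $H_{\frac\gamma2}(t)$ amounts to controlling the joint tail of the bulk area measure $\cA_\phi$ and the boundary length measure $\cL_\phi$ near the merging insertion, and matching it against the prefactors of the hypergeometric connection coefficients; it is precisely here that the simultaneous presence of the bulk and boundary Liouville potentials makes the analysis strictly harder than in the potential-free setting of \cite{rz-boundary}, where the correlations reduce to pure GMC moments. By comparison the residue bookkeeping of the second step is routine once Lemma \ref{lem:HlimR} has been extended, the only care being to verify that the two coinciding poles at each resonance are simple and that no pole of the explicit $\Gamma$-prefactors collides with them.
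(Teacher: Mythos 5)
Your treatment of \eqref{eq:reflection_id} is essentially the paper's proof. Both arguments identify the hypergeometric connection coefficient $C_2^+$ of $H_{\frac{\gamma}{2}}(t)$ twice: once via the OPE without reflection, i.e.\ \eqref{eq_ope_gluing2}, valid for $\beta_1 \in (\frac{\gamma}{2}, \frac{2}{\gamma})$, and once via the reflection OPE of Lemma \ref{lem_reflection_ope2}, valid for $\beta_1 \in (Q-\beta_0, Q)$ and then pushed to $(\frac{2}{\gamma}, Q)$ using the analyticity of $R$ and $H$; the two expressions are then glued across $\beta_1 = \frac{2}{\gamma}$ using the analyticity of $\beta_1 \mapsto H_{\frac{\gamma}{2}}(t)$, hence of $C_2^+$, from Lemma \ref{lem:ext-H-chi}. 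This is exactly what the paper does. One bookkeeping point you elide: after equating the two expressions and replacing $\beta_1 \to \beta_1 - \frac{\gamma}{2}$, the factor $R(\beta_1 - \frac{\gamma}{2}, \sigma_1, \sigma_2)$ must be converted to $R(\beta_1, \sigma_1, \sigma_2 + \frac{\gamma}{4})$ via the $\frac{\gamma}{2}$-shift equation \eqref{equation R1} before relabelling the $\sigma$'s; this is more than "relabelling the shifted weight", though it is routine.

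The genuine gap is in your derivation of \eqref{equation R reflect}. You propose to extend Lemma \ref{lem:HlimR} so the fusion limit holds for a first weight $\beta_1' \in (Q, Q + (\frac{2}{\gamma} \wedge \gamma))$, asserting this is "the same GMC computation carried one reflection further". It is not: for $\beta_1' > Q$ the Seiberg bound fails, there is no probabilistic definition of $H$ with such an insertion, and $\cMtwo(\beta_1')$ in Definition \ref{def-thick-disk} does not exist (the drift $Q - \beta_1'$ has the wrong sign for the conditioning), so neither side of the proposed limit has a GMC meaning. Moreover, at the stage of Lemma \ref{reflection_H} the quantity $R(2Q-\beta_1)$ is not yet defined — the continuation of $R$ past $Q$ in Lemma \ref{analycity_R} is itself built on \eqref{equation R reflect} — so your residue identity at $\beta_3 = 2Q - \beta_1 - \beta_2$, whose right-hand side reads $R(\beta_1)\cdot 2R(2Q-\beta_1)$, is circular as stated. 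The paper avoids all of this: once \eqref{eq:reflection_id} holds as an identity of meromorphic functions, \eqref{equation R reflect} follows immediately, either by applying \eqref{eq:reflection_id} a second time with $\beta_1$ replaced by $2Q - \beta_1$ and cancelling $H$, or equivalently by taking \emph{your} second residue but recognizing that it serves to \emph{define} $R(2Q-\beta_1)$ as the fusion limit of the continued $H(2Q-\beta_1,\cdot)$, which by \eqref{eq:reflection_id} together with the residue $\lim(\frac{\bar\beta}{2}-Q)H = 1$ at the Seiberg resonance (the first limit of Lemma \ref{lem:res_H}, available probabilistically in the needed range) equals $1/R(\beta_1,\sigma_1,\sigma_2)$. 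Your residue bookkeeping is salvageable along these lines, but not as an independent GMC statement about reflected weights; also note that your residue extraction at $\beta_3 = \beta_1 - \beta_2$ is redundant, since the residue there is already pinned by Lemma \ref{lem:HlimR} without any input from \eqref{eq:reflection_id}.
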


\begin{proof}
 Throughout the proof we keep the same notations as used in the proof of Lemma \ref{Shift1_H_proba} for the solution space of the hypergeometric equation satisfied by $H_{\frac{\gamma}{2}}(t)$ for $t \in(0,1)$. We assume the parameters $\beta_i$, $\sigma_i$ obey the condition \eqref{eq:para_H_chi} in order for $H_{\frac{\gamma}{2}}(t)$ to be well-defined. We also add the condition $\beta_1 \in (Q - \beta_0, Q )$ so that we can apply the result of Lemma \ref{lem_reflection_ope2} to get:
\begin{equation}\label{eq_ope_gluing}
C_2^+ =  R(\beta_1, \sigma_1- \frac{\gamma}{4}, \sigma_2 - \frac{\gamma}{4}) H
\begin{pmatrix}
2Q - \beta_1 - \frac{\gamma}{2} , \beta_2, \beta_3 \\
\sigma_1 - \frac{\gamma}{4},  \sigma_2,   \sigma_3 
\end{pmatrix}.
\end{equation}
The key argument is to observe that since by Lemma \ref{lem:ext-H-chi} $ \beta_1 \mapsto H_{\frac{\gamma}{2}}(t)$ is complex analytic in $\beta_1$ so is the coefficient $C_2^+$. 
By using this combined with the analyticity of $R$ and $H$, we can extend the range of validity of equation \eqref{eq_ope_gluing} from $\beta_1 \in (Q - \beta_0, Q )$ to $\beta_1 \in (\frac{2}{\gamma}, Q )$, still under the constraint of  \eqref{eq:para_H_chi}. Now equation \eqref{eq_ope_gluing2} derived in the the proof of Lemma \ref{Shift1_H_proba} gives us an alternative expression for $C_2^+$, which is valid for $\beta_1 \in (\frac{\gamma}{2}, \frac{2}{\gamma})$. The analyticity of $\beta_1 \mapsto C_2^+$ in a complex neighborhood of $\frac{2}{\gamma}$ then implies that one can ``glue" together the two expressions for $C_2^+$. More precisely,  by setting equal the right hand sides of \eqref{eq_ope_gluing2} and \eqref{eq_ope_gluing} and after performing the parameter replacement $\sigma_i \rightarrow \sigma_i + \frac{\gamma}{4}$ for $i=1,2,3$, one obtains the equality
\begin{align}
 &R(\beta_1, \sigma_1, \sigma_2) H
\begin{pmatrix}
2Q - \beta_1 - \frac{\gamma}{2}, \beta_2, \beta_3 \\
\sigma_1,  \sigma_2 + \frac{\gamma}{4},   \sigma_3 + \frac{\gamma}{4} 
\end{pmatrix}  \\
 &=\frac{\Gamma(-1+\frac{\gamma \beta_1}{2} - \frac{\gamma^2}{4} ) \Gamma(1 - \frac{\gamma \beta_1}{2})}{\Gamma(- \frac{\gamma^2}{4})} 
	  	\left(	g_{\frac{\gamma}2}(\sigma_1)   -g_{\frac{\gamma}2}(\sigma_2+\frac{\beta_1}{2} ) \right)
%\sqrt{\frac{\mu}{\sin(\pi \frac{\gamma^2}{4})}} \left( \cos(\pi\gamma(\sigma_1 - \frac{Q}{2})) -\cos(\pi\gamma(\sigma_2+\frac{\beta_1}{2}-\frac{Q}{2})) \right)   
H
\begin{pmatrix}
\beta_1 + \frac{\gamma}{2}, \beta_2, \beta_3 \\
\sigma_1,  \sigma_2 + \frac{\gamma}{4},   \sigma_3 + \frac{\gamma}{4} 
\end{pmatrix}, \nonumber
\end{align} 
which provides the desired analytic continuation of $H$. To land on the form of the reflection equation given in the lemma one needs to replace $\beta_1$ by $\beta_1 - \frac{\gamma}{2}$. This transforms $R(\beta_1, \sigma_1, \sigma_2)$ into $R(\beta_1 - \frac{\gamma}{2}, \sigma_1, \sigma_2)$ which we can shift back to $R(\beta_1, \sigma_1, \sigma_2 + \frac{\gamma}{4})$ using the shift equation \eqref{equation R1}. Lastly we perform the parameter replacement $ \sigma_2 +\frac{\gamma}{4}$ to $\sigma_2$ and $ \sigma_3 + \frac{\gamma}{4}$ to $\sigma_3$. This implies the reflection principle~\eqref{eq:reflection_id} for $H$. The relation~\eqref{equation R reflect} for $R$ follows as an immediate consequence. 
\end{proof} 

We now use the shift equations we have derived to analytically continue $H$ and $R$ both in the parameters $\beta_i$ and $\sigma_i$. The analytic continuations will be defined in a larger range of parameters than \eqref{sec4_proba_H} and \eqref{sec4_proba_R} required for the GMC expressions to be well-defined. The proofs follow closely the ones of \cite{rz-boundary}. We start with the case of $R$ which is very straightforward.

\begin{lemma}\label{analycity_R} (Analytic continuation of $ R(\beta_1, \sigma_1, \sigma_2)$)
	For all $ \sigma_1, \sigma_2 \in \cB $, the meromorphic function $\beta_1 \mapsto R(\beta_1, \sigma_1, \sigma_2)$ originally defined on the interval $((\frac2\gamma - \frac\gamma2) \vee \frac{\gamma}{2}, Q)$ extends to a meromorphic function defined in a complex neighborhood of $\mathbb{R}$ and satisfying the shift equation:
	\begin{align}\label{R_first_shift} 
	&\frac{R(\beta_1, \sigma_1, \sigma_2)}{R(\beta_1 + \gamma, \sigma_1, \sigma_2)} = (\frac{\gamma}{2})^4  \frac{\Gamma(-1+\frac{\gamma\beta_1}{2}-\frac{\gamma^2}{4}) \Gamma(1-\frac{\gamma\beta_1}{2}-\frac{\gamma^2}{4}) \Gamma(1-\frac{\gamma\beta_1}{2}) \Gamma(-1+\frac{\gamma\beta_1}{2})}{\sin(\pi\frac{\gamma^2}{4}) \Gamma(1-\frac{\gamma^2}{4})^2}  \\
	&\times 4\sin(\frac{\gamma\pi}{2}(\frac{\beta}{2}-\sigma_1-\sigma_2+Q)) \sin(\frac{\gamma\pi}{2}(\frac{\beta}{2}+\sigma_1+\sigma_2-Q)) \sin(\frac{\gamma\pi}{2}(\frac{\beta}{2}+\sigma_2-\sigma_1)) \sin(\frac{\gamma\pi}{2}(\frac{\beta}{2}+\sigma_1-\sigma_2)) . \nonumber
	\end{align}
	Furthermore, for a fixed $\beta_1$ in the above complex neighborhood of $\mathbb{R}$, the function $ R(\beta_1, \sigma_1, \sigma_2)$ extends to a meromorphic function of $(\sigma_1, \sigma_2)$ on $\mathbb{C}^2$.
\end{lemma}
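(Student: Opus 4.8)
The plan is to produce the pure $\gamma$-shift relation \eqref{R_first_shift} by composing the two $\frac{\gamma}{2}$-shift equations \eqref{equation R1} and \eqref{equation R2}, to use \eqref{R_first_shift} to propagate the meromorphic function $\beta_1\mapsto R(\beta_1,\sigma_1,\sigma_2)$ across all of $\mathbb{R}$, and then to obtain the extension in the $\sigma$-variables from the $\frac{\gamma}{2}$-shift equations together with the symmetry $R(\beta,\sigma_1,\sigma_2)=R(\beta,\sigma_2,\sigma_1)$.

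First I would chain \eqref{equation R1} and \eqref{equation R2}. Reading \eqref{equation R1} at $(\beta_1,\sigma_1,\sigma_2)$ and \eqref{equation R2} at the same $\beta_1$, the factor $R(\beta_1+\frac{\gamma}{2},\sigma_1,\sigma_2+\frac{\gamma}{4})$ is common to both, so multiplying and cancelling it yields a relation between $R(\beta_1,\sigma_1,\sigma_2)$ and $R(\beta_1+\gamma,\sigma_1,\sigma_2)$. Although \eqref{equation R1} and \eqref{equation R2} are stated on different $\beta_1$-intervals, both are identities between functions meromorphic in $\beta_1$, hence by the identity theorem they may be chained wherever all terms are defined, the arguments $\beta_1+\frac{\gamma}{2},\beta_1+\gamma$ remaining inside the reflection-extended range of Lemma~\ref{reflection_H}. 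It then remains to check the resulting prefactor equals that of \eqref{R_first_shift}: the two $g_{\frac{\gamma}{2}}$-differences combine via the product-to-sum identity $\cos A-\cos B=-2\sin\frac{A+B}{2}\sin\frac{A-B}{2}$ into the four sine factors (the two resulting minus signs cancelling, and the two normalizing factors $(\sin\frac{\pi\gamma^2}{4})^{-1/2}$ of $g_{\frac{\gamma}{2}}$ combining to the $\sin(\pi\frac{\gamma^2}{4})^{-1}$ of \eqref{R_first_shift}), while the four Gamma factors match on the nose and the residual ratio $\Gamma(-\frac{\gamma^2}{4})^{-2}$ versus $(\frac{\gamma}{2})^4\Gamma(1-\frac{\gamma^2}{4})^{-2}$ is reconciled by $\Gamma(1-\tfrac{\gamma^2}{4})=-\tfrac{\gamma^2}{4}\Gamma(-\tfrac{\gamma^2}{4})$.

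With \eqref{R_first_shift} in hand I would continue $\beta_1\mapsto R$ to a complex neighborhood of $\mathbb{R}$. By Proposition~\ref{prop:R-def-sigma} and the reflection identity \eqref{equation R reflect}, the function is already meromorphic on a neighborhood of the real interval $((\tfrac2\gamma-\tfrac\gamma2)\vee\tfrac\gamma2,\,Q+(\tfrac2\gamma\wedge\gamma))$, which one checks has length greater than $\gamma$. Reading \eqref{R_first_shift} as $R(\beta_1+\gamma,\cdot)=R(\beta_1,\cdot)/[\text{explicit meromorphic prefactor}]$ and as its inverse, I translate the domain up and down by $\gamma$ repeatedly; the overlap of consecutive translates (forced by the base length exceeding $\gamma$) guarantees consistency via the identity theorem. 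Since the prefactor in \eqref{R_first_shift} is meromorphic in $\beta_1$ and entire in $(\sigma_1,\sigma_2)$, the continuation is meromorphic in $\beta_1$ on a neighborhood of $\mathbb{R}$ and, for each such $\beta_1$, remains meromorphic in $(\sigma_1,\sigma_2)$ on $\mathcal{B}^2$.

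Finally, for fixed $\beta_1$ I would extend in the $\sigma$-variables. Solving \eqref{equation R1} and \eqref{equation R2} for their common term expresses $R(\beta_1,\sigma_1,\sigma_2)$ at a $\sigma_2$ lying one $\frac{\gamma}{4}$-step above (via \eqref{equation R1}) or below (via \eqref{equation R2}) the band $\mathcal{B}$ in terms of $R(\beta_1-\frac{\gamma}{2},\sigma_1,\cdot)$ with $\sigma_2\in\mathcal{B}$, times a prefactor entire in $(\sigma_1,\sigma_2)$. Since the previous step makes $R(\beta',\sigma_1,\cdot)$ meromorphic on $\mathcal{B}$ for every $\beta'$ in the neighborhood of $\mathbb{R}$, an induction enlarging the strip of $\sigma_2$-meromorphy by $\frac{\gamma}{4}$ on each side at every step (uniformly in $\beta'$) covers all of $\mathbb{C}$ in $\sigma_2$. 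The two-pointed quantum disk $\cMtwo(\beta)$ is invariant under the reflection of $\cS$ exchanging its two boundary arcs, giving $R(\beta,\sigma_1,\sigma_2)=R(\beta,\sigma_2,\sigma_1)$; this symmetry converts the $\sigma_2$-extension into an $\sigma_1$-extension and yields joint meromorphy on $\mathbb{C}^2$. The main obstacle I anticipate is the first step: reconciling the disjoint validity ranges of \eqref{equation R1} and \eqref{equation R2} before chaining, and verifying that the trigonometric and Gamma-function bookkeeping reproduces \eqref{R_first_shift} exactly, with all signs and constants accounted for.
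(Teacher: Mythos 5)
Your proposal is correct and follows essentially the same route as the paper's proof: chain \eqref{equation R1} and \eqref{equation R2} through the common term $R(\beta_1+\frac{\gamma}{2},\sigma_1,\sigma_2+\frac{\gamma}{4})$ to obtain \eqref{R_first_shift}, use the reflection-extended base interval $((\frac2\gamma - \frac\gamma2) \vee \frac{\gamma}{2},\, Q + (\frac{2}{\gamma} \wedge \gamma))$ of length strictly greater than $\gamma$ to continue in $\beta_1$ across a neighborhood of $\mathbb{R}$, and then apply the $\frac{\gamma}{2}$-shift equations (band width exceeding $\frac{\gamma}{4}$) to extend in $(\sigma_1,\sigma_2)$ to $\mathbb{C}^2$. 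Your added details — the identity-theorem argument reconciling the disjoint $\beta_1$-ranges of the two shift equations, the explicit trigonometric and Gamma bookkeeping, and the symmetry $R(\beta,\sigma_1,\sigma_2)=R(\beta,\sigma_2,\sigma_1)$ of the two-pointed quantum disk used for the $\sigma_1$-extension — are all sound and merely make explicit what the paper leaves implicit via its citation of the $\mu=0$ case.
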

\begin{proof} This proof is performed in \cite{rz-boundary} but we sketch it below as it is quite short.
Fix first the parameters $\sigma_1, \sigma_2$ in $\cB$. Thanks to Proposition \ref{prop:R-def}, $\beta_1 \mapsto R(\beta_1, \sigma_1, \sigma_2) $ is meromorphic in a complex neighborhood of the real interval $((\frac2\gamma - \frac\gamma2) \vee \frac{\gamma}{2}, Q)$, and thanks to the reflection principle given by Lemma \ref{reflection_H}, it is meromorphic in a complex neighborhood of $((\frac2\gamma - \frac\gamma2) \vee \frac{\gamma}{2}, Q + (\frac{2}{\gamma} \wedge \gamma) )$. Note that the length of this interval is strictly greater than $ \gamma$. 

Combining the two shift equations of Lemma \ref{lem_shift_eq_R}, we obtain the shift equation \eqref{R_first_shift} relating $R(\beta_1 + \gamma, \sigma_1, \sigma_2) $ and $R(\beta_1, \sigma_1, \sigma_2) $, which does not shift the $\sigma_1, \sigma_2$ parameters. Since $R(\beta_1, \sigma_1, \sigma_2) $ is defined in a complex neighborhood of a real interval of length strictly bigger than $\gamma$, the shift equation \eqref{R_first_shift} can be used to meromorphically extend $ \beta_1 \mapsto R(\beta_1, \sigma_1, \sigma_2) $ to a complex neighborhood of the whole real line.

Now for any $\beta_1$ fixed in this complex neighborhood of $\mathbb{R}$, we perform the analytic continuation to $\mathbb{C}^2$ in the variables $\sigma_1, \sigma_2$. For this one can simply apply either of the shift equation of Lemma \ref{lem_shift_eq_R}. This is possible since the band $\cB$ has width strictly greater than $\frac{\gamma}{4}$. Hence the result.
\end{proof}

\begin{lemma}\label{analycity_H}(Analytic continuation of $H$)  Fix $\sigma_3 \in [-\frac{1}{2 \gamma} + \frac{Q}{2}, \frac{1}{2 \gamma} + \frac{Q}{2} - \frac{\gamma}{4}] \times \mathbb{R} $ and fix $\sigma_1, \sigma_2 \in \cB $. Then the function $(\beta_1, \beta_2, \beta_3) \mapsto H
$ originally defined in the parameter range given by \eqref{sec4_proba_H} extends to a meromorphic function of the three variables in a small complex neighborhood of $\mathbb{R}^3$. Now fix $\beta_1, \beta_2, \beta_3$ in this complex neighborhood of $\mathbb{R}^3$, keeping $\sigma_3$ still fixed in $[-\frac{1}{2 \gamma} + \frac{Q}{2}, \frac{1}{2 \gamma} + \frac{Q}{2} - \frac{\gamma}{4}] \times \mathbb{R} $. The function $(\sigma_1, \sigma_2) \mapsto H
$ then extends to a meromorphic function of $\mathbb{C}^2$.
\end{lemma}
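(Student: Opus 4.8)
The plan is to bootstrap from the meromorphy of $H$ on its defining domain \eqref{sec4_proba_H} (Proposition~\ref{prop:H-def-sigma}) using three ingredients assembled above: the $\frac{\gamma}{2}$-shift equations of Lemma~\ref{Shift1_H_proba} (the special case of \eqref{shift three point 1}--\eqref{shift three point 2}), the reflection principle \eqref{eq:reflection_id} of Lemma~\ref{reflection_H}, and the permutation symmetry of $H$ under relabeling the three boundary insertions at $0,1,\infty$. The argument runs in two phases exactly as for $R$ in Lemma~\ref{analycity_R}: first continue in the $\beta_i$ with the $\sigma_i$ frozen, then freeze the $\beta_i$ and continue in $(\sigma_1,\sigma_2)$. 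The only genuinely new feature compared with the $R$ case is that there are three weight variables, so each must be moved in turn.

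I would begin with $\beta_1$. As recorded in the Remark following Lemma~\ref{Shift1_H_proba}, a suitable linear combination of \eqref{shift three point 1} and \eqref{shift three point 2} yields a relation with meromorphic coefficients that shifts $\beta_1$ by $\gamma$ while leaving $\beta_2,\beta_3$ and all three $\sigma_j$ unchanged --- the $H$-analogue of \eqref{R_first_shift}. On the other hand, the reflection principle \eqref{eq:reflection_id}, combined with the continuation of $R$ from Lemma~\ref{analycity_R}, expresses $H$ at $\beta_1>Q$ through its value at $2Q-\beta_1$ and so enlarges the window of definition in $\beta_1$ to an interval of length exceeding $\gamma$. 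Feeding this wider window into the shift-by-$\gamma$ relation then propagates the continuation to a complex neighborhood of all of $\mathbb{R}$ in $\beta_1$; single-valuedness is automatic from the identity theorem, since on overlaps the shift- and reflection-continuations agree.

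To reach a neighborhood of $\mathbb{R}^3$ I would use the cyclic symmetry
\[
H\begin{pmatrix}\beta_1 , \beta_2, \beta_3 \\ \sigma_1, \sigma_2, \sigma_3\end{pmatrix}
= H\begin{pmatrix}\beta_3 , \beta_1, \beta_2 \\ \sigma_3, \sigma_1, \sigma_2\end{pmatrix},
\]
which follows from conformal covariance of the correlation function under the M\"obius maps of $\bbH$ cyclically permuting $0,1,\infty$. This recasts a continuation in $\beta_2$ or $\beta_3$ as the $\beta_1$-continuation already obtained, after relabeling; the cost is that the $\sigma_j$ get permuted, so the relabeled ``third'' $\sigma$-slot --- which Theorem~\ref{full_shift_H} requires to sit in the band shrunk by $\frac{\gamma}{4}$ --- is now played by $\sigma_1$ or $\sigma_2$. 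Keeping $\sigma_3$ in the $\frac{\gamma}{4}$-shrunk band is precisely what leaves enough room for these relabelings to stay admissible. For the second phase, with the $\beta_i$ frozen, equations \eqref{shift three point 1}--\eqref{shift three point 2} relate the value at $\sigma_2$ to that at $\sigma_2+\frac{\gamma}{4}$ (at the price of a $\pm\frac{\gamma}{2}$-shift of $\beta_1$ or $\beta_2$, which stays in the $\beta$-neighborhood just constructed); since $\cB$ has width $\frac1\gamma>\frac{\gamma}{4}$, iterating in both directions continues $\sigma_2$ to all of $\mathbb{C}$. The $\sigma_1$-direction is then supplied by the same cyclic symmetry, which places $\sigma_1$ in the shiftable middle slot, after which one re-applies the (now meromorphically valid) $\sigma_2$-shift to sweep out $\mathbb{C}^2$.

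The main obstacle is the domain bookkeeping. Each shift equation couples a $\beta$-shift to a $\sigma_2$-shift and carries prefactors built from $\Gamma$- and $\sin$-factors that vanish or blow up on various hyperplanes, so one must check that the relations genuinely propagate the continuation (rather than degenerating) and that every spectator variable stays in an admissible range through the symmetry relabelings --- the role of the $\frac{\gamma}{4}$-shrinking of the $\sigma_3$-window. Once the domain has been enlarged, the shift equations and the symmetry hold as identities between meromorphic functions by the identity theorem, which both removes the auxiliary range restrictions used along the way and guarantees that the final extension is well defined and independent of the order in which the variables were moved.
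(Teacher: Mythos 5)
Your proposal is correct and follows essentially the same route as the paper, whose proof simply runs the staged-extension scheme of \cite[Lemma 3.6]{rz-boundary}: iterate the $\frac{\gamma}{2}$-shift equations of Lemma~\ref{Shift1_H_proba}, use the reflection principle \eqref{eq:reflection_id} together with the already-continued $R$ from Lemma~\ref{analycity_R} to enlarge the $\beta$-window past $Q$, and exploit the cyclic relabeling of the three insertions (which the paper invokes implicitly when it notes the cyclic permutation of indices relative to \cite{rz-boundary}), first continuing in the $\beta_i$ and then in $(\sigma_1,\sigma_2)$. Your identification of the $\frac{\gamma}{4}$-shrunk $\sigma_3$-band as exactly the slack needed for the relabeled shift equations matches the paper's reasoning, including its remark that the $\sigma_3$-constraint cannot be lifted here (unlike in the $\mu=0$ case).
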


\begin{proof} 
This proof also follows exactly the proof of \cite[Lemma 3.6]{rz-boundary}, with one notable difference highlighted below that prevents us from performing an analytic continuation in $\sigma_3$. The main idea of \cite{rz-boundary} is to use the shift equations on $H$ to successively extend the domain where $H$ is defined to larger and larger regions until it has been extended to the region claimed in the lemma. Keeping the notations of \cite{rz-boundary}, we choose as the starting domain the region given by:

\begin{equation}
\mathcal{E}_1 := \left \{  \beta_i \in (\frac{2}{\gamma} - \delta, Q) \times [-\nu, \nu], \: \sigma_i \in (-\frac{1}{2\gamma} + \frac{Q}{2}, \frac{1}{2\gamma} + \frac{Q}{2}) \times \mathbb{R} \right \}.
\end{equation}

By $\beta_i \in (\frac{2}{\gamma} - \delta, Q)\times [-\nu, \nu]$ we mean $\Re(\beta_i) \in (\frac{2}{\gamma} - \delta, Q) $ and $\Im(\beta_i) \in [-\nu, \nu] $, the same convention is used for the domain of $\sigma_i$. We have introduced $\delta, \nu >0$ chosen small enough so that \eqref{sec4_proba_H} holds for $\beta_i \in (\frac{2}{\gamma} - \delta, Q) $ and one can apply Proposition \ref{prop:H-def} to show analyticity of $H$ in all of its variables on $\mathcal{E}_1$. The condition on $\delta$ is $\delta < \frac{1}{\gamma} - \frac{\gamma}{4}$. By applying the shift equations on $H$ as performed in the proof of \cite[Lemma 3.6]{rz-boundary}, one can meromorphically extend $H$ to the region:

\begin{equation}
\mathcal{E}_7 := \left \{ \beta_i \in \mathbb{R} \times [-\nu, \nu],  \: (\sigma_1,\sigma_2) \in \mathbb{C}^2, \: \sigma_3 \in (-\frac{1}{2\gamma} + \frac{Q}{2}, \frac{1}{2\gamma} + \frac{Q}{2} -\frac{\gamma}{4} ) \times \mathbb{R} \right \}.
\end{equation}

Note here that we have performed a cyclic permutation with respect to the indices $1,2,3$ of the parameters in comparison to the domain $\mathcal{E}_7$ written in \cite{rz-boundary}. This thus gives the desired domain of analytic continuation of $H$ claimed in the statement of the lemma. 

Contrarily to what is done in \cite{rz-boundary}, we are not able to lift the constraint on $\sigma_3$. Indeed, in the case $\mu= 0$ when $H$ has an expression reducing to a moment of a boundary GMC measure, if one adds a global constant to each of the $\sigma_i$ the function $H$ is simply changed by a global phase. In our case this property does not hold which is why we are not able to perform the extension in the variable $\sigma_3$. Notice though that once Theorem \ref{thm:H} is established, the exact formula $\HPT$ implies that $H$ extends meromorphically in all its parameters to $\mathbb{C}^6$.
\end{proof}

We finish this subsection by extending Lemma \ref{lem:HlimR} to a larger range of parameters that will be required for the next subsection.
This is a novel difficulty that was not present in \cite{rz-boundary} because in our case we had to perform a truncation procedure to define $H$ and $R$ while in \cite{rz-boundary} they simply reduce to moments of GMC.

\begin{lemma}\label{lem:HlimR_ext}
Suppose $\beta_1, \beta_2, \beta_3$ satisfy $Q - \frac{1}{2} \sum_{i} \beta_i < \frac{2}{\gamma} \wedge \min_i (Q -\beta_i)$,  $Q> \beta_1 > \beta_2 \vee \frac{\gamma}{2}$ and $\beta_1 - \beta_2 < \beta_3 < Q$. Let also $\sigma_1 \in \overline{\cB}$ and $\sigma_2, \sigma_3 \in \cB$. Then the following limit holds
\begin{align*}
	&\lim_{\beta_3 \rightarrow \beta_1 - \beta_2 } (\beta_2 + \beta_3 - \beta_1) H
\begin{pmatrix}
\beta_1 , \beta_2, \beta_3 \\
\sigma_1,  \sigma_2,   \sigma_3 
\end{pmatrix} = 2 R(\beta_1, \sigma_1, \sigma_2),
	\end{align*}
where the function $R$ should be understood by the analytic continuation given in Lemma \ref{analycity_R}.
\end{lemma}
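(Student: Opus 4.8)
The plan is to derive the enlarged range from Lemma~\ref{lem:HlimR} purely by analytic continuation, replacing the GMC-moment representation used in~\cite{rz-boundary} by the meromorphic-extension results of Section~\ref{subsec:ext_H_R}. Fix $\sigma_2,\sigma_3\in\cB$ and first take $\sigma_1\in\cB$. By Lemma~\ref{analycity_H} (together with Proposition~\ref{prop:H-def-sigma}) the map $(\beta_1,\beta_2,\beta_3)\mapsto H$ is meromorphic on a complex neighborhood of $\R^3$, so
\[
\wt H(\beta_1,\beta_2,\beta_3):=(\beta_2+\beta_3-\beta_1)\,H\begin{pmatrix}\beta_1,\beta_2,\beta_3\\ \sigma_1,\sigma_2,\sigma_3\end{pmatrix}
\]
is meromorphic there, and its restriction $\Phi(\beta_1,\beta_2):=\wt H(\beta_1,\beta_2,\beta_1-\beta_2)$ to the hyperplane $\{\beta_3=\beta_1-\beta_2\}$ is meromorphic in $(\beta_1,\beta_2)$. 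On the other side, by Lemma~\ref{analycity_R} the function $\beta_1\mapsto 2R(\beta_1,\sigma_1,\sigma_2)$ is meromorphic in a complex neighborhood of $\R$, and we view it as a ($\beta_2$-independent) meromorphic function of $(\beta_1,\beta_2)$.

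On the open parameter set where the hypotheses of Lemma~\ref{lem:HlimR} hold, that lemma gives $\Phi=2R$; in particular $\Phi$ is finite there, so the hyperplane is not contained in the polar locus of $\wt H$ and the restriction $\Phi$ is genuinely meromorphic. The original and the enlarged parameter regions both lie in a connected domain on which $\Phi$ and $R$ are meromorphic (removing the polar locus, a complex hypersurface, leaves the complement connected), so the identity theorem for meromorphic functions of several variables propagates $\Phi\equiv 2R(\beta_1,\sigma_1,\sigma_2)$ to the whole enlarged region. At every point of the enlarged region where $R$ is finite, $\Phi$ is then finite, which forces the pole of $H$ at $\beta_3=\beta_1-\beta_2$ to be at worst simple; hence the limit $\lim_{\beta_3\to\beta_1-\beta_2}(\beta_2+\beta_3-\beta_1)H$ exists and equals $2R(\beta_1,\sigma_1,\sigma_2)$. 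This proves the claim for $\sigma_1\in\cB$ and the relaxed $\beta$-constraints (no positivity of $\beta_2$, and $Q-\tfrac12\sum_i\beta_i<\tfrac2\gamma\wedge\min_i(Q-\beta_i)$). Finally, to reach $\sigma_1\in\overline{\cB}$ one uses that both $H$ (hence $\Phi$) and $R$ are continuous in $\sigma_1$ up to the boundary $\partial\cB$ by Propositions~\ref{prop:H-def-sigma} and~\ref{prop:R-def-sigma}; letting $\sigma_1\to\partial\cB$ through $\cB$ and passing to the limit in the identity $\Phi=2R$ gives the result for $\sigma_1\in\overline{\cB}$.

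The main obstacle is precisely what makes this a genuine difficulty beyond~\cite{rz-boundary}: because $H$ and $R$ are here defined through truncations rather than as GMC moments, one cannot read off their meromorphic continuations directly, and must instead rely on Lemmas~\ref{analycity_H} and~\ref{analycity_R}. The crux is then to argue that the residue/hyperplane-restriction $\Phi$ inherits meromorphicity and that the simple-pole structure of $H$ at $\beta_3=\beta_1-\beta_2$ persists across the enlarged region, both of which follow once the identity $\Phi\equiv 2R$ has been continued out from the small region supplied by Lemma~\ref{lem:HlimR}. A minor additional point, absent for $\gamma>\sqrt2$ where the enlarged $\beta$-range already lies in the domain $V$ of Proposition~\ref{prop:H-def-sigma}, is that for $\gamma<\sqrt2$ one must invoke the stronger continuation of Lemma~\ref{analycity_H}, which forces $\sigma_3$ into a slightly smaller band; one then removes this restriction by analytically continuing in $\sigma_3$, using that $R$ does not depend on $\sigma_3$ while $H$ remains meromorphic in it.
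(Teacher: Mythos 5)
Your overall strategy---propagating the identity of Lemma~\ref{lem:HlimR} to the enlarged range by analytic continuation, using the meromorphic extensions of Lemmas~\ref{analycity_H} and~\ref{analycity_R}---is exactly the paper's, but your implementation of the residue step contains a genuine gap. The paper represents the limit as the Cauchy integral $\frac{1}{2\pi \ii}\int_{\mathcal C_\eps(\beta_1-\beta_2)} H\, d\beta_3$ over a fixed small circle around $\beta_1-\beta_2$; this quantity is manifestly analytic in the remaining parameters (so long as no pole of $\beta_3\mapsto H$ crosses the circle), so the identity ``contour integral $=2R$'' continues directly and one recovers the limit at the end. You instead restrict $\wt H=(\beta_2+\beta_3-\beta_1)H$ to the hyperplane $\Pi=\{\beta_3=\beta_1-\beta_2\}$ and propagate $\Phi\equiv 2R$ by the identity theorem. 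The flaw is the final inference that finiteness of $\Phi$ ``forces the pole of $H$ at $\beta_3=\beta_1-\beta_2$ to be at worst simple; hence the limit exists and equals $2R$.'' This implication is false in general: the meromorphic restriction to $\Pi$ can differ from the directional limit at points where another component of the polar set of $H$ meets $\Pi$. Concretely, set $u=\beta_2+\beta_3-\beta_1$ and take $H_0=\bigl(u+(\beta_1-a)^2\bigr)^{-1}$; then $\Phi_0=(uH_0)|_{u=0}\equiv 0$ is finite everywhere, the pole of $H_0$ at $u=0$ when $\beta_1=a$ \emph{is} simple, and yet $\lim_{u\to 0}uH_0=1\neq\Phi_0(a)$. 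So after the identity-theorem step you still owe an argument that at each real point of the enlarged region the point $\beta_1-\beta_2$ is an isolated, at-most-simple pole of $\beta_3\mapsto H$ with no other singularity colliding with it---which is precisely what the contour-integral representation packages for you, since the same circle $\mathcal C_\eps$ can be used throughout a neighborhood and the residue then coincides with the limit.

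A secondary problem is your closing patch for $\gamma<\sqrt2$: ``removing the $\sigma_3$ restriction by analytically continuing in $\sigma_3$'' is not available at this stage. The proof of Lemma~\ref{analycity_H} states explicitly that, unlike in \cite{rz-boundary}, the extension in $\sigma_3$ \emph{cannot} be performed before Theorem~\ref{thm:H} is established (adding a constant to all $\sigma_i$ no longer produces a global phase when $\mu>0$), which is why $\sigma_3$ is pinned to the band $[-\frac{1}{2\gamma}+\frac Q2,\frac{1}{2\gamma}+\frac Q2-\frac\gamma4]\times\R$ throughout this part of the argument. The constraint on $\sigma_3$ coming from the $\beta$-continuation is therefore inherited, not removable by continuation in $\sigma_3$; one must work within the range supplied by Lemma~\ref{analycity_H} (together with Proposition~\ref{prop:H-def-sigma} when the $\beta$'s lie in $V$), rather than appeal to a meromorphicity of $H$ in $\sigma_3$ that has not been proved. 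Your treatment of $\sigma_1\in\overline{\cB}$ by continuity via Propositions~\ref{prop:H-def-sigma} and~\ref{prop:R-def-sigma} is fine.
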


\begin{proof}  
By the meromorphicity  of $H$ function from Lemma \ref{analycity_H} and the residue theorem, we have \begin{align*}
\lim_{\beta_3 \rightarrow \beta_1 - \beta_2 } (\beta_2 + \beta_3 - \beta_1) H
\begin{pmatrix}
\beta_1 , \beta_2, \beta_3 \\
\sigma_1,  \sigma_2,   \sigma_3 
\end{pmatrix} =  \frac{1}{2 \pi \ii} \int_{\mathcal{C}_{\epsilon}(\beta_1- \beta_2) }
 H
\begin{pmatrix}
\beta_1 , \beta_2, \beta_3 \\
\sigma_1,  \sigma_2,   \sigma_3 
\end{pmatrix} d \beta_3  
\end{align*}
where above $\mathcal{C}_{\epsilon}(\beta_1- \beta_2)$ is a circle of radius $\epsilon$ around $\beta_1 -\beta_2$. By Lemma \ref{lem:HlimR}, the equality
\begin{align*}
\frac{1}{2 \pi \ii} \int_{\mathcal{C}_{\epsilon}(\beta_1- \beta_2) }
 H
\begin{pmatrix}
\beta_1 , \beta_2, \beta_3 \\
\sigma_1,  \sigma_2,   \sigma_3 
\end{pmatrix} d \beta_3 =  2 R (\beta_1, \sigma_1, \sigma_2)
\end{align*}
holds in the parameter range given by Lemma \ref{lem:HlimR}. 
By the analytic property of $H$ and $R$, this equality  extends to the larger parameter range in Lemma~\ref{lem:HlimR_ext}. This concludes the proof.
\end{proof}

\subsection{The $\frac2{\gamma}$-shift equations for $R$}\label{subsec:shift2_R}
We now derive the $\frac{2}{\gamma}$-shift equations on $ R(\beta_1, \sigma_1, \sigma_2)$.
\begin{lemma}\label{lem:sec4.5}  For all $ \sigma_1, \sigma_2 \in \mathbb{C}$, the meromorphic function $\beta_1 \mapsto R(\beta_1, \sigma_1, \sigma_2)$ defined in a complex neighborhood of $\mathbb{R}$ satisfies the  shift equations in Theorem~\ref{full_shift_R}  with $\chi= \frac2{\gamma}$.
\end{lemma}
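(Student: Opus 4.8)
The plan is to rerun the BPZ--hypergeometric--OPE scheme of Lemmas~\ref{Shift1_H_proba} and~\ref{lem_shift_eq_R}, but with the degenerate weight $-\frac2\gamma$ in place of $-\frac\gamma2$, and to pass to $R$ through the limit of Lemma~\ref{lem:HlimR_ext} rather than through a previously established $H$-shift equation (which, for $\chi=\frac2\gamma$, is only proved afterwards in Section~\ref{subsec:shift2_H}). Concretely, I would fix $\beta_i,\sigma_i$ in the range~\eqref{eq:para_H_chi} for $\chi=\frac2\gamma$ and consider the deformed functions $H_{\frac2\gamma}(t)$ and $\tilde H_{\frac2\gamma}(t)$ from~\eqref{eq:def_H_chi}. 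By Proposition~\ref{prop:hyper} each satisfies the hypergeometric equation with $A,B,C$ evaluated at $\chi=\frac2\gamma$, so on $(0,1)$ it expands in the standard hypergeometric bases at $t=0$ and $t=1$, related by the connection formula~\eqref{connection1}. As in the proof of Lemma~\ref{Shift1_H_proba}, letting $t\to0$ and $t\to1$ identifies the leading coefficients $C_1$ and $B_1$ with the values of $H$ at $\beta_1-\frac2\gamma$ and $\beta_2-\frac2\gamma$, respectively.

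The delicate input is the subleading coefficient $C_2^+$ --- the coefficient of $t^{1-C}$ as $t\to0^+$ --- which now requires the $\frac2\gamma$-OPE instead of the $\frac\gamma2$-OPE of Lemma~\ref{lem:ope1}. I would invoke the corresponding $\frac2\gamma$ operator product expansion from Appendix~\ref{sec:ope}, which writes $C_2^+$ as an explicit constant times the value of $H$ at $\beta_1+\frac2\gamma$, the constant carrying the dual boundary data: the function $g_{\frac2\gamma}$ and the prefactor $c_{\frac2\gamma}(\gamma)=\frac4{\gamma^2}\pi^{\frac4{\gamma^2}-1}\Gamma(1-\frac{\gamma^2}{4})^{-\frac4{\gamma^2}}$. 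This is where the main difficulty lies: for $\chi=\frac2\gamma$ one has $\gamma\chi=2$, so the weight $|x-t|^{\gamma\chi}=|x-t|^2$ (and its boundary analogue $|r-t|$) contributes genuinely analytic powers of $t$ that must be separated from the non-analytic $t^{1-C}$ term, and the ``dual'' Gaussian multiplicative chaos that would appear in a naive expansion does not converge. The coefficient must therefore be extracted through the reflection/analytic-continuation mechanism packaged into the OPE lemma rather than by a direct moment computation; this is the novel and most technical step.

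Having expressed $C_1,C_2^+,B_1$ (and the tilde analogues coming from $\tilde H_{\frac2\gamma}$) in terms of $H$, I would substitute them into~\eqref{connection1} to obtain a three-term relation among values of $H$ shifting $\beta_1$ by $\pm\frac2\gamma$ and $\beta_2$ by $-\frac2\gamma$. Then, exactly as in Lemma~\ref{lem_shift_eq_R}, I take the limit $\beta_3\downarrow\beta_1-\beta_2+\frac2\gamma$: by Lemma~\ref{lem:HlimR_ext} the two terms whose residue pole sits at this location converge (after multiplication by $\beta_2+\beta_3-\beta_1-\frac2\gamma$) to $2R(\beta_1,\sigma_1,\sigma_2)$ and to a multiple of $R(\beta_1+\frac2\gamma,\sigma_1,\sigma_2+\frac1\gamma)$, while the remaining term (with pole at $\beta_1-\beta_2-\frac2\gamma$) is regular there and drops out. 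The $\sigma_3$--dependence disappears in the limit because $R$ does not depend on $\sigma_3$. Collecting the limits yields a two-term relation between $R(\beta_1,\sigma_1,\sigma_2)$ and $R(\beta_1+\frac2\gamma,\dots)$ whose coefficient is precisely the $\chi=\frac2\gamma$ right-hand side of Theorem~\ref{full_shift_R}.

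Finally there is a bookkeeping step forced by the geometry of the band. For $\chi=\frac2\gamma$ the constraints~\eqref{eq:para_H_chi} pin $\sigma_1,\sigma_2$ to the boundary of $\cB$, since $\frac\chi2=\frac1\gamma$ equals the width of $\cB$; accordingly I first obtain the relation only on this boundary locus, applying Lemma~\ref{lem:HlimR_ext} there through the continuity of $H$ and $R$ on $\overline{\cB}$ (Propositions~\ref{prop:H-def-sigma} and~\ref{prop:R-def-sigma}). I then upgrade it to all $(\sigma_1,\sigma_2)\in\mathbb{C}^2$ using the meromorphicity of $R$ in $(\sigma_1,\sigma_2)$ from Lemma~\ref{analycity_R} together with the identity theorem (agreement on the full vertical line $\{\Re\sigma_i=\frac1{2\gamma}+\frac Q2\}$ suffices), and extend in $\beta_1$ to a neighborhood of $\R$ by the meromorphicity of $\beta_1\mapsto R$ from the same lemma. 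Running the argument with $\tilde H_{\frac2\gamma}$ in place of $H_{\frac2\gamma}$, and using the $\sigma_1\leftrightarrow\sigma_2$ symmetry of $R$ to relabel the shifted variable, then produces both equations~\eqref{eq:R-shift1} and~\eqref{eq:R-shift2} for $\chi=\frac2\gamma$.
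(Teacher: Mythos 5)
There is a genuine gap, and it sits exactly where you locate the ``novel and most technical step.'' Your plan hinges on a $\frac2\gamma$-analogue of the OPE without reflection, i.e.\ a lemma writing $C_2^+$ as an explicit constant (carrying $c_{\frac2\gamma}(\gamma)$ and $g_{\frac2\gamma}$) times $H$ evaluated at $\beta_1+\frac2\gamma$. No such lemma exists in the paper, and you correctly diagnose why it cannot be proved directly (the dual GMC does not converge): Lemma~\ref{lem:ope1} is stated and valid only for $\chi=\frac\gamma2$, and the only OPE available at $\chi=\frac2\gamma$ is the reflection OPE, Lemma~\ref{lem_reflection_ope2}, which instead gives $C_2^+ = R(\beta_1,\sigma_1-\frac1\gamma,\sigma_2-\frac1\gamma)\,H\bigl(2Q-\beta_1-\tfrac2\gamma,\ldots\bigr)$. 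The formula you want to invoke --- \eqref{eq:C2+} with $\chi=\frac2\gamma$ --- is \emph{derived} in Section~\ref{subsec:shift2_H} by combining the reflection OPE with the already-established $\frac2\gamma$-shift equation for $R$ (Theorem~\ref{full_shift_R}) and the reflection principle~\eqref{eq:reflection_id}. Using it as an input to prove Lemma~\ref{lem:sec4.5} is therefore circular: the ``reflection/analytic-continuation mechanism packaged into the OPE lemma'' that you appeal to is precisely the statement being proved.

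Once every subleading coefficient is only available in the form $R\cdot H$ at reflected arguments, a single connection formula on $(0,1)$ plus your limit $\beta_3\downarrow\beta_1-\beta_2+\frac2\gamma$ does not close into a two-term relation for $R$; the paper needs three further ingredients, all absent from your proposal. First, it expands $H_{\frac2\gamma}$ on $(-\infty,0)$ as well and eliminates $C_1$ between the two connection formulas \eqref{connection1} and \eqref{hpy1}, producing the four-term identity \eqref{eq:lem_BCD} among products $R\cdot H$ only. Second, it specializes to the tuned parameters $\beta_1=\beta$, $\beta_2=\frac\gamma2+\eta$, $\beta_3=Q-\beta$ and sends $\eta\to0$, so that Lemma~\ref{lem:HlimR_ext} together with $R(\beta)R(2Q-\beta)=1$ (equation~\eqref{equation R reflect}) turns each product into the ratio $\mathcal R(\beta,\sigma_1,\sigma_2)=R(\beta,\sigma_1,\sigma_2)/R(\beta+\frac2\gamma,\sigma_1-\frac1\gamma,\sigma_2)$, yielding the intermediate relation \eqref{equation R intermediary}; note the $B_2^-$ term enters through a \emph{double} pole, $\eta^2 B_2^-$, which a generic limit at a single pole location would miss. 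Third, and crucially, \eqref{equation R intermediary} determines $\mathcal R$ only up to the quantity $\lim_{\eta\to0}\eta R(\frac\gamma2+\eta,\sigma_2,\sigma_3)$, equivalently up to $R(\gamma,\cdot,\cdot)$, which no BPZ/OPE argument supplies: it is computed from mating-of-trees via Lemmas~\ref{lem-mot-match} and~\ref{lem:R_gamma} (including the two-period/irrationality argument), and the remaining $\sigma_2$-independent function $u(\sigma_1,\beta,\gamma)$ is pinned down using the zero $\sigma_2=\sigma_1-\frac\beta2$ of the $\frac\gamma2$-shift \eqref{equation R1}. Without the elimination of $C_1$, the tuned $\beta_2\to\frac\gamma2$ limit, and the mating-of-trees evaluation of $R(\gamma,\cdot,\cdot)$, your argument cannot be completed.
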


To prove Lemma~\ref{lem:sec4.5}  we work exclusively with $\chi = \frac{2}{\gamma}$.  There are two steps that will each require their own range of parameters. In the first step, we place ourselves in the following range that will allow us to apply the OPE with reflection in Lemma \ref{lem_reflection_ope2} around the $\beta_1$ insertion at $0$: 
\begin{equation}
t \in (0,1), \: \: \beta_1, \beta_2, \beta_3 \in (Q -\epsilon, Q),   \: \: \sigma_1, \sigma_2 \in \ii \mathbb{R} + \frac{1}{2\gamma} + \frac{Q}{2} , \:\:  \sigma_3 \in \cB.
\end{equation}

In the above $\epsilon $ is chosen small enough, smaller than the constant $\beta_0$ required to apply Lemma \ref{lem_reflection_ope2}. We can expand $H_{\frac{2}{\gamma}}(t)$ on the basis, for $t \in (0,1)$
\begin{align}
H_{\frac{2}{\gamma}}(t) &= C_1 F(A,B,C,t) + C_2^+ t^{1 - C} F(1 + A-C, 1 +B - C, 2 -C, t) \\ \nonumber
 &= B_1 F(A,B,1+A+B- C, 1 -t) + B_2^- (1-t)^{C -A -B} F(C- A, C- B, 1 + C - A - B , 1 -t),
% &= D_1 e^{i\pi A} t^{-A} F(A,1+A-C,1+A-B,t^{-1}) + D_2 e^{i\pi B}t^{-B} F(B, 1 +B - C, 1 +B - A, t^{-1}),
\end{align}
where again $C_1, C_2^+, B_1, B_2^-$ are parametrizing the solution space around the points $0$ and $1$. As before by sending $t$ to $0$ and to $1$ one obtains:
\begin{equation}
C_1 = H_{\frac{2}{\gamma}}(0) = H
\begin{pmatrix}
\beta_1 - \frac{2}{\gamma}, \beta_2, \beta_3 \\
\sigma_1 - \frac{1}{\gamma},  \sigma_2,   \sigma_3 
\end{pmatrix}, \quad B_1 = H_{\frac{2}{\gamma}}(1) = H
\begin{pmatrix}
\beta_1 , \beta_2 - \frac{2}{\gamma}, \beta_3 \\
\sigma_1 - \frac{1}{\gamma},  \sigma_2 - \frac{1}{\gamma},   \sigma_3 
\end{pmatrix}.
\end{equation}
Since the condition required for Lemma \ref{lem_reflection_ope2}, $\beta_1 \in (Q-\beta_0, Q)$, is satisfied one then derives:
\begin{equation}
 C_2^+ = R(\beta_1, \sigma_1 - \frac{1}{\gamma}, \sigma_2-\frac{1}{\gamma}) H
\begin{pmatrix}
2Q- \beta_1 - \frac{2}{\gamma}, \beta_2, \beta_3 \\
\sigma_1 - \frac{1}{\gamma},  \sigma_2,   \sigma_3 
\end{pmatrix}.
\end{equation}
Similarly, we can apply Lemma \ref{lem_reflection_ope2} around $t=1$ and get:
\begin{equation}
 B_2^- = R(\beta_2, \sigma_2,\sigma_3) H
\begin{pmatrix}
\beta_1 , 2Q - \beta_2 - \frac{2}{\gamma}, \beta_3 \\
\sigma_1 - \frac{1}{\gamma},  \sigma_2 - \frac{1}{\gamma},   \sigma_3 
\end{pmatrix}.
\end{equation}
The quantities $C_1, B_2^-, C_2^+$ identified above are then related by the connection formula \eqref{connection1}:

\begin{equation}
B_2^- = \frac{\Gamma(C)\Gamma(A+B-C)}{\Gamma(A)\Gamma(B)} C_1 + \frac{\Gamma(2-C)\Gamma(A+B-C)}{\Gamma(A-C+1)\Gamma(B-C+1)} C_2^+.
\end{equation}

We repeat the same procedure to identify the coefficients $D_2^+, C_2^-$ using the same parameter ranges as before except with $t \in (- \infty,0)$:
\begin{equation}
t \in (-\infty, 0), \: \: \beta_1, \beta_2, \beta_3 \in (Q -\epsilon, Q),   \: \: \sigma_1, \sigma_2 \in \ii \mathbb{R} + \frac{1}{2\gamma} + \frac{Q}{2} , \:\:  \sigma_3 \in \cB.
\end{equation}

For $t \in (-\infty,0)$, $H_{\frac{2}{\gamma}}(t)$ can be expanded on the basis:
\begin{align}
H_{\frac{2}{\gamma}}(t) &= C_1 F(A,B,C,t) + C_2^- t^{1 - C} F(1 + A-C, 1 +B - C, 2 -C, t) \\ \nonumber
 &= D_1 e^{\ii \pi A} t^{-A} F(A,1+A-C,1+A-B,t^{-1}) + D_2^+ e^{\ii \pi B}t^{-B} F(B, 1 +B - C, 1 +B - A, t^{-1}).
\end{align}
The coefficients $C_2^-, D_2^+$ have expressions given by:
\begin{align}
C_2^- &= e^{- \ii \pi(1-\frac{2\beta_1}{\gamma}+\frac{4}{\gamma^2})} R(\beta_1, \sigma_1, \sigma_2) H
\begin{pmatrix}
2Q - \beta_1 - \frac{2}{\gamma}, \beta_2, \beta_3 \\
\sigma_1 - \frac{1}{\gamma},  \sigma_2,   \sigma_3 
\end{pmatrix},\\
%D_1 &=  H^{(\beta_1  , \beta_2, \beta_3 - \frac{2}{\gamma})}_{( \sigma_1, \sigma_2,  \sigma_3)}, \\
% B_2^- &= R(\beta_2, \sigma_2,\sigma_3) H^{(\beta_1,2 Q - \beta_2 - \frac{2}{\gamma} , \beta_3)}_{( \sigma_1-\frac{1}{\gamma}, \sigma_2-\frac{1}{\gamma},  \sigma_3)},\\
 D_2^+ &= R(\beta_3, \sigma_1 - \frac{1}{\gamma}, \sigma_3) H
\begin{pmatrix}
\beta_1 , \beta_2, 2Q - \beta_3 - \frac{2}{\gamma} \\
\sigma_1,  \sigma_2,   \sigma_3 
\end{pmatrix}.
\end{align}
Using the connection formula \eqref{hpy1} we can write this time: 
\begin{align}
D_2^+ = \frac{\Gamma(C)\Gamma(A-B)}{\Gamma(A)\Gamma(C-B)}C_1 + e^{ \ii \pi (1-C)}\frac{\Gamma(2-C)\Gamma(A-B)}{\Gamma(1-B)\Gamma(A-C+1)}C_2^-.
\end{align}
By eliminating the coefficient $C_1$ we obtain the relation:
\begin{align}\label{equation 3.21}
\frac{\Gamma(B)}{\Gamma(A+B-C)}B_2^- -\frac{\Gamma(C-B)}{\Gamma(A-B)}D_2^+ = \frac{\Gamma(2-C)}{\Gamma(A-C+1)}\left(  \frac{\Gamma(B)}{\Gamma(B-C+1)} C_2^+ -\frac{e^{\ii \pi (1-C)}\Gamma(C-B)}{\Gamma(1-B)} C_2^- \right).
\end{align}
We state~\eqref{equation 3.21} as a lemma, where the constants $B_2^-$, $D_2^+$, $C_2^+$, and $C_2^-$ are replaced by their explicit expressions in terms of $H$ and $R$.

\begin{lemma}
The following identity holds:
\begin{align}\label{eq:lem_BCD}
&\frac{\Gamma(B)}{\Gamma(A+B-C)}R(\beta_2, \sigma_2,\sigma_3) H
\begin{pmatrix}
\beta_1 , 2Q - \beta_2 - \frac{2}{\gamma}, \beta_3 \\
\sigma_1 - \frac{1}{\gamma},  \sigma_2 - \frac{1}{\gamma},   \sigma_3 
\end{pmatrix}    -\frac{\Gamma(C-B)}{\Gamma(A-B)} R(\beta_3, \sigma_1 - \frac{1}{\gamma}, \sigma_3) H
\begin{pmatrix}
\beta_1 , \beta_2, 2Q - \beta_3 - \frac{2}{\gamma} \\
\sigma_1,  \sigma_2,   \sigma_3 
\end{pmatrix}  \\
 &= \frac{\Gamma(2-C)}{\Gamma(A-C+1)} H
\begin{pmatrix}
2Q - \beta_1 - \frac{2}{\gamma}, \beta_2, \beta_3 \\
\sigma_1 - \frac{1}{\gamma},  \sigma_2,   \sigma_3 
\end{pmatrix}   \Bigg(  \frac{\Gamma(B)}{\Gamma(B-C+1)} R(\beta_1, \sigma_1 - \frac{1}{\gamma}, \sigma_2-\frac{1}{\gamma}) \nonumber \\
 & \quad \quad \quad  -\frac{e^{\ii \pi (1-C)}\Gamma(C-B)}{\Gamma(1-B)} e^{- \ii \pi(1-\frac{2\beta_1}{\gamma}+\frac{4}{\gamma^2})} R(\beta_1, \sigma_1, \sigma_2)  \Bigg). \nonumber
\end{align}

This identity is originally derived in the range of parameters
\begin{equation}
\beta_1, \beta_2, \beta_3 \in (Q -\epsilon, Q),  \: \: \sigma_1, \sigma_2 \in \ii \mathbb{R} + \frac{1}{2\gamma} + \frac{Q}{2} , \:\:  \sigma_3 \in \cB,
\end{equation}
but it can be viewed as an identity for $H$ and $R$ under the meromorphic extension  provided by Lemmas \ref{analycity_H} and \ref{analycity_R}.
\end{lemma}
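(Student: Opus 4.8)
The plan is to obtain \eqref{eq:lem_BCD} by a direct substitution into the already-established relation \eqref{equation 3.21}; the content of the lemma is essentially bookkeeping, since all the analytic input has been assembled in setting up the two hypergeometric expansions of $H_{\frac2\gamma}(t)$ on $(0,1)$ and on $(-\infty,0)$ and in identifying the six connection coefficients. First I would fix $\chi=\frac2\gamma$ and recall from Proposition~\ref{prop:hyper} the values $A=-q\frac{\gamma\chi}2$, $B=-1+\chi(\beta_1+\beta_2-2\chi+q\frac\gamma2)$, $C=\chi(\beta_1-\chi)$, so that every Gamma-factor appearing in \eqref{equation 3.21} is an explicit function of $(\beta_1,\beta_2,\beta_3)$. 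Working in the stated range $\beta_i\in(Q-\eps,Q)$, $\sigma_1,\sigma_2\in\ii\R+\frac1{2\gamma}+\frac Q2$, $\sigma_3\in\cB$, the reflection OPE of Lemma~\ref{lem_reflection_ope2} applies around each of the three boundary insertions and supplies the closed forms for $C_2^+$ (expansion at $t=0$ on $(0,1)$), $B_2^-$ (expansion at $t=1$), and $C_2^-,D_2^+$ (expansion at $t\to-\infty$), together with $C_1,B_1$ from the plain limits $t\to0,1$. These were recorded immediately above the lemma statement.

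Next I would simply insert these expressions into \eqref{equation 3.21}. Once $B_2^-$ and $D_2^+$ are replaced, the left-hand side of \eqref{equation 3.21} becomes verbatim the left-hand side of \eqref{eq:lem_BCD}. The one non-trivial observation is that $C_2^+$ and $C_2^-$ carry the \emph{same} correlation factor $H\begin{pmatrix}2Q-\beta_1-\frac2\gamma,\beta_2,\beta_3\\ \sigma_1-\frac1\gamma,\sigma_2,\sigma_3\end{pmatrix}$; they differ only by the scalar prefactors $R(\beta_1,\sigma_1-\frac1\gamma,\sigma_2-\frac1\gamma)$ and $e^{-\ii\pi(1-\frac{2\beta_1}\gamma+\frac4{\gamma^2})}R(\beta_1,\sigma_1,\sigma_2)$. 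Hence this common $H$-factor can be pulled out of the bracket on the right-hand side of \eqref{equation 3.21}, producing exactly the grouped expression displayed in \eqref{eq:lem_BCD}. No cancellation or nontrivial identity of special functions is needed at this stage.

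Finally I would address the range of validity. The identity is first obtained in the restricted region above, which is precisely where the two OPE expansions and the hypergeometric connection formulas \eqref{connection1}, \eqref{hpy1} are simultaneously valid and the solution space has the expected basis, i.e.\ none of $A,B,C$ and the associated combinations is an exceptional integer (such values are recovered by continuity in $\gamma$, as in the footnote used earlier). To promote the relation to a genuine identity of meromorphic functions I invoke the meromorphic continuations of $H$ and $R$ from Lemmas~\ref{analycity_H} and~\ref{analycity_R}: each term in \eqref{eq:lem_BCD} is meromorphic in $(\beta_1,\beta_2,\beta_3,\sigma_1,\sigma_2)$ on a joint domain containing the small region where the equality was proved, so by the identity principle for meromorphic functions of several variables the equality persists throughout that domain.

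The nearest thing to an obstacle is not the algebra but the synchronization of domains: one must check that the region where all three reflection OPEs and both connection formulas hold simultaneously is nonempty, and that the shifted arguments occurring in $C_2^\pm,B_2^-,D_2^+$ (such as $2Q-\beta_1-\frac2\gamma$ and $\sigma_1-\frac1\gamma$) remain inside the domains on which the continuations of $H$ and $R$ are defined. This is exactly what the width $\frac1\gamma$ of the band $\cB$ and the reflection principle of Lemma~\ref{reflection_H} are designed to guarantee, so I would verify these membership conditions explicitly before appealing to continuation.
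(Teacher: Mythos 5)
Your proposal matches the paper's own proof: the lemma is obtained exactly by substituting the expressions for $C_1, C_2^{\pm}, B_2^-, D_2^+$ (from the $t\to 0,1,-\infty$ limits and the reflection OPE of Lemma \ref{lem_reflection_ope2}) into \eqref{equation 3.21}, pulling out the common factor $H\begin{pmatrix} 2Q-\beta_1-\frac{2}{\gamma}, \beta_2, \beta_3 \\ \sigma_1-\frac{1}{\gamma}, \sigma_2, \sigma_3\end{pmatrix}$, and then extending by the meromorphic continuations of Lemmas \ref{analycity_H} and \ref{analycity_R}. Your additional care about synchronizing parameter domains is consistent with, and slightly more explicit than, the paper's treatment.
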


In the second step we derive the following crucial intermediate relation on $R$ from~\eqref{eq:lem_BCD}.
\begin{lemma}\label{lem:R-relation}
Set \(\beta \in (\frac{\gamma}{2}, \frac{2}{\gamma})\), \(\sigma_1 \in \ii \mathbb{R} + \frac{1}{2\gamma} + \frac{Q}{2}\), and \(\sigma_2, \sigma_3 \in \cB.\)
Let $\mathcal{R}(\beta,\sigma_1,\sigma_2) = \frac{R(\beta,\sigma_1,\sigma_2)}{R(\beta+\frac{2}{\gamma}, \sigma_1-\frac{1}{\gamma}, \sigma_2)}$. Then
    \begin{align}\label{equation R intermediary}
\frac{\mathcal{R}(\beta,\sigma_1,\sigma_2) - \mathcal{R}(\beta,\sigma_1,\sigma_3 - \frac{\gamma}{4})}{\Gamma(-1+\frac{2\beta}{\gamma}-\frac{4}{\gamma^2})\Gamma(1-\frac{2\beta}{\gamma})}
=\frac{2}{\gamma\Gamma(-\frac{4}{\gamma^2})}\lim_{\eta \to 0}\eta  R(\frac{\gamma}{2}+\eta, \sigma_2, \sigma_3). 
\end{align}
\end{lemma}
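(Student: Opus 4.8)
The plan is to deduce \eqref{equation R intermediary} from the master identity \eqref{eq:lem_BCD} by taking the limit $\beta_2 \to \frac{\gamma}{2}$ (with $\beta_1 = \beta$), working throughout with the meromorphic continuations of $H$ and $R$ furnished by Lemmas~\ref{analycity_H} and~\ref{analycity_R} so that one may legitimately move $\beta_2$ out of the GMC range down to $\frac{\gamma}{2}$. The guiding principle is the same degeneration mechanism used for the $\frac{\gamma}{2}$-shift equations in Lemma~\ref{lem_shift_eq_R}, namely that a reflection coefficient develops a pole exactly where a neighbouring $H$ develops a compensating zero, so that the singular product survives in the limit as a residue of $R$.

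Concretely, I would first record two arithmetic facts. With $\chi = \frac{2}{\gamma}$, the hypergeometric parameters of Proposition~\ref{prop:hyper} satisfy $A+B-C = -\frac{4}{\gamma^2}$ precisely when $\beta_2 = \frac{\gamma}{2}$; hence the prefactor $\frac{\Gamma(B)}{\Gamma(A+B-C)}$ of the first term of \eqref{eq:lem_BCD} tends to $\frac{\Gamma(B)}{\Gamma(-4/\gamma^2)}$, which is the origin of the factor $\Gamma(-\frac{4}{\gamma^2})^{-1}$ on the right of \eqref{equation R intermediary}. Second, the factor $R(\beta_2,\sigma_2,\sigma_3)$ in that first term has a simple pole at $\beta_2 = \frac{\gamma}{2}$ (coming from $\Gamma_{\frac{\gamma}{2}}(\beta_2-Q)$, whose first pole sits at $\beta_2 - Q = -\frac{2}{\gamma}$). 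Using the reflection principle in the \emph{second} weight (the cyclic analogue of Lemma~\ref{reflection_H}), this term equals $\frac{\Gamma(B)}{\Gamma(A+B-C)}\,\mathcal{R}(\beta_2,\sigma_2,\sigma_3)\,H\begin{pmatrix}\beta_1,\beta_2+\frac{2}{\gamma},\beta_3\\ \sigma_1-\frac1\gamma,\sigma_2-\frac1\gamma,\sigma_3\end{pmatrix}$, where now the companion $H$ has second weight $\beta_2+\frac{2}{\gamma}\to Q$. Since the two remaining terms of \eqref{eq:lem_BCD} are manifestly finite at $\beta_2 = \frac{\gamma}{2}$, the identity forces this $H$ to vanish linearly, cancelling the pole of $\mathcal{R}(\beta_2,\sigma_2,\sigma_3)$; the surviving limit is proportional to $\lim_{\eta\to0}\eta\,R(\frac{\gamma}{2}+\eta,\sigma_2,\sigma_3)$, and tracking the residue of $\Gamma_{\frac{\gamma}{2}}$ at $0$ yields the remaining $\frac{2}{\gamma}$.

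It then remains to show that the other two terms of \eqref{eq:lem_BCD} reorganise, at $\beta_2 = \frac{\gamma}{2}$, into $\mathcal{R}(\beta,\sigma_1,\sigma_2) - \mathcal{R}(\beta,\sigma_1,\sigma_3-\frac{\gamma}{4})$ up to the prefactor $\Gamma(-1+\frac{2\beta}{\gamma}-\frac{4}{\gamma^2})\Gamma(1-\frac{2\beta}{\gamma})$. For this I would apply the reflection principle (Lemma~\ref{reflection_H}) to the reflected insertion $2Q-\beta_1-\frac{2}{\gamma}$ appearing on the right of \eqref{eq:lem_BCD}, turning the combination $R(\beta_1,\sigma_1,\sigma_2)\,H(2Q-\beta_1-\frac{2}{\gamma},\ldots)$ into $\mathcal{R}(\beta,\sigma_1,\sigma_2)\,H(\beta_1+\frac{2}{\gamma},\ldots)$, and then convert the remaining $H$'s into reflection coefficients through a further resonant limit in $\beta_3$ governed by the $H\to R$ degeneration of Lemma~\ref{lem:HlimR_ext}. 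The mismatch between the two reflection-coefficient factors $R(\beta_1,\sigma_1-\frac1\gamma,\sigma_2-\frac1\gamma)$ and $R(\beta_1,\sigma_1,\sigma_2)$ on the right of \eqref{eq:lem_BCD} is reconciled using the $\frac{\gamma}{2}$-shift equations of Lemma~\ref{lem_shift_eq_R}, whose $\frac{\gamma}{4}$-shifts in the $\sigma$-variables are exactly what converts one third argument into $\sigma_3-\frac{\gamma}{4}$ and produces the second term $\mathcal{R}(\beta,\sigma_1,\sigma_3-\frac{\gamma}{4})$.

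The main obstacle is the bookkeeping around $\beta_2 = \frac{\gamma}{2}$: one must confirm that the pole of $\mathcal{R}(\beta_2,\sigma_2,\sigma_3)$ and the zero of the companion $H$ are each simple, so that the product has a genuine finite limit and the normalisation is correct, and one must then reduce the accumulated $\Gamma$-factors ($\Gamma(B),\Gamma(C-B),\Gamma(2-C)$, the phases $e^{\pm\ii\pi(\cdots)}$, and the $g_{\chi}$-terms introduced by the shift equations) to the single coefficient $\frac{2}{\gamma\Gamma(-4/\gamma^2)}$. Each passage to the limit must also be justified via the meromorphicity established in Section~\ref{subsec:ext_H_R}; this, rather than any single estimate, is where the bulk of the effort lies.
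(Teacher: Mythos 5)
Your outline is aligned with the paper's actual strategy at the level of ingredients: the paper also proves \eqref{equation R intermediary} by degenerating \eqref{eq:lem_BCD} at $\beta_2 = \frac{\gamma}{2}+\eta$, invoking the $H\to R$ limit of Lemma~\ref{lem:HlimR_ext}, the reflection identity \eqref{equation R reflect}, and the $\frac{\gamma}{2}$-shift equations of Lemma~\ref{lem_shift_eq_R} to manufacture the argument $\sigma_3-\frac{\gamma}{4}$; your two arithmetic facts ($A+B-C=\frac{2}{\gamma}(\beta_2-Q)\to-\frac{4}{\gamma^2}$, and the simple pole of $R$ at $\beta_2=\frac{\gamma}{2}$ coming from $\Gamma_{\frac{\gamma}{2}}(\beta_2-Q)$) are both correct. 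The genuine gap is in the singular bookkeeping, and it is structural. The paper fixes $\beta_3=Q-\beta$ \emph{from the outset}, so that the single parameter $\eta$ simultaneously drives the pole of $R(\frac{\gamma}{2}+\eta,\sigma_2,\sigma_3)$, the resonance poles of all the $H$-factors, and the blow-up of $\Gamma(C-B)=\Gamma(-\frac{\eta}{\gamma})$; in this coupled regime \emph{every} term of \eqref{eq:lem_BCD} diverges like $\eta^{-2}$, and \eqref{equation R intermediary} is the equality of the leading Laurent coefficients, extracted by computing $\lim\eta^2 B_2^-$, $\lim\eta D_2^+$ and $\lim\eta C_2^-$. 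Your claims that ``the two remaining terms are manifestly finite at $\beta_2=\frac{\gamma}{2}$'' and that ``the identity forces this $H$ to vanish linearly, cancelling the pole of $\mathcal{R}$'' are true only for \emph{generic} $\beta_3$ --- and in that regime the finite limit of the $B_2^-$ term is $\bigl(\lim_{\eta\to0}\eta R(\frac{\gamma}{2}+\eta,\sigma_2,\sigma_3)\bigr)$ times the \emph{slope} of $H$ at second weight $Q$, an unknown derivative rather than the explicit constant $\frac{2}{\gamma\Gamma(-4/\gamma^2)}$. To make that factor computable you must, as you later propose, pass to the resonant $\beta_3$ via Lemma~\ref{lem:HlimR_ext}; but there your earlier finiteness claims collapse: at $\beta_3=Q-\beta$ the companion $H(\beta,\,Q-\eta,\,Q-\beta)$ has a simple \emph{pole} (its residue is evaluated by the $H\to R$ limit together with $R(Q,\cdot)=-1$), not a linear zero, and $\Gamma(C-B)$ diverges as well. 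The zero at second weight $Q$ and the resonance pole are limits of the same meromorphic function along different directions through the point $(\beta,Q,Q-\beta)$ of its polar divisor; your proposal conflates the two regimes, which is exactly the non-commuting-limits trap the paper's coupled parametrization is designed to avoid.

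Two further points. First, the $C_2^+$ contribution carrying $R(\beta,\sigma_1-\frac{1}{\gamma},\sigma_2-\frac{1}{\gamma})$, whose mismatch with $R(\beta,\sigma_1,\sigma_2)$ you plan to reconcile by shift equations, needs no reconciliation: in the coupled limit its coefficient $\Gamma(B-C+1)=\Gamma(1+\frac{\eta}{\gamma})$ stays finite while only the common $H$-factor contributes a single power $\eta^{-1}$, so this term is subleading against the $\eta^{-2}$ order and drops out entirely; only the $C_2^-$ term survives on the right-hand side (its phases $e^{\ii\pi(1-C)}e^{-\ii\pi(1-\frac{2\beta}{\gamma}+\frac{4}{\gamma^2})}=1$ cancel), producing $\mathcal{R}(\beta,\sigma_1,\sigma_2)$. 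Second, your endgame for the $D_2^+$ term does match the paper: after Lemma~\ref{lem:HlimR_ext} and \eqref{equation R reflect} it becomes $2R(\beta+\frac{\gamma}{2},\sigma_1,\sigma_3)/R(\beta+Q,\sigma_1-\frac{1}{\gamma},\sigma_3)$, and applying \eqref{equation R1} to numerator and denominator (the $g_{\frac{\gamma}{2}}$-differences cancel up to sign since $g_{\frac{\gamma}{2}}(\sigma\pm\frac{1}{\gamma})=-g_{\frac{\gamma}{2}}(\sigma)$) converts this into $\mathcal{R}(\beta,\sigma_1,\sigma_3-\frac{\gamma}{4})$ times explicit $\Gamma$-factors. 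So the toolkit is right, but as written the proof does not go through: you must couple the degenerations (fix $\beta_3=Q-\beta$, expand \eqref{eq:lem_BCD} to order $\eta^{-2}$, and match leading coefficients), since the sequential ``finite limit at $\beta_2=\frac{\gamma}{2}$, then resonate in $\beta_3$'' procedure you describe cannot produce the constant on the right of \eqref{equation R intermediary}.
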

\begin{proof}
We prove~\eqref{equation R intermediary} by applying the $H\to R$ limit in Lemma \ref{lem:HlimR_ext} to~\eqref{eq:lem_BCD}. For this purpose, set
\(\beta_1 = \beta \in (\frac{\gamma}{2}, \frac{2}{\gamma}), \quad \beta_2 = \frac{\gamma}{2}+\eta, \quad \beta_3 = Q-\beta\).
With this choice of parameters we have
\begin{equation}
q = \frac{4}{\gamma^2}-\frac{\eta}{\gamma}, \quad A = -\frac{4}{\gamma^2}+\frac{\eta}{\gamma}, \quad  B = \frac{2\beta}{\gamma}-\frac{4}{\gamma^2}+\frac{\eta}{\gamma}, \quad C = \frac{2\beta}{\gamma}-\frac{4}{\gamma^2},
\end{equation}
and the two $H$ functions that we are going to apply the Lemma \ref{lem:HlimR_ext} to are
\begin{align*}
&
H
\begin{pmatrix}
2Q - \beta - \frac{2}{\gamma}, \frac{\gamma}{2} + \eta, Q - \beta \\
\sigma_1 - \frac{1}{\gamma},  \sigma_2,   \sigma_3 
\end{pmatrix}\quad \text{and} \quad  H
\begin{pmatrix}
\beta , \frac{\gamma}{2} + \eta, \frac{\gamma}{2} + \beta \\
\sigma_1,  \sigma_2,   \sigma_3 
\end{pmatrix}.
\end{align*}

We compute the following limits, the last one is trivial and does not require using Lemma \ref{lem:HlimR_ext}.
\begin{align*}
\lim_{\eta \to 0} \eta D_2^+ &=   2 R(Q-\beta, \sigma_1-\frac{1}{\gamma}, \sigma_3) R(\beta+\frac{\gamma}{2}, \sigma_1, \sigma_3) = \frac{2R(\beta+\frac{\gamma}{2},\sigma_1,\sigma_3)}{R(\beta+Q, \sigma_1-\frac{1}{\gamma}, \sigma_3)}, \\
\lim_{\eta \to 0} \eta  C_2^- &= 2e^{- \ii \pi(1-\frac{2\beta}{\gamma}+\frac{4}{\gamma^2})}   R(\beta, \sigma_1, \sigma_2) R(2Q-\beta-\frac{2}{\gamma}, \sigma_1-\frac{1}{\gamma}, \sigma_2) =  \frac{2e^{- \ii \pi(1-\frac{2\beta}{\gamma}+\frac{4}{\gamma^2})} R(\beta, \sigma_1, \sigma_2)}{ R(\beta+\frac{2}{\gamma}, \sigma_1-\frac{1}{\gamma}, \sigma_2)} ,\\
\lim_{\eta \to 0}\eta^2   B_2^-  &= -4 \lim_{\eta \to 0}\eta   R(\frac{\gamma}{2}+\eta, \sigma_2, \sigma_3).
\end{align*}
Putting all these into \eqref{eq:lem_BCD}, we get: 
\begin{align*}
%-4\frac{\Gamma(\frac{2\beta}{\gamma}-\frac{4}{\gamma^2})}{\Gamma(-\frac{4}{\gamma^2})}\lim_{\eta \to 0}\eta \partial_{\mu_2}^k \left(  R(\frac{\gamma}{2}+\eta, \sigma_2, \sigma_3) \right) =  \frac{2\gamma (1-\frac{2\beta}{\gamma} + \frac{4}{\gamma^2})}{\Gamma(1-\frac{2\beta}{\gamma})} \partial_{\mu_2}^k \left( \frac{R(\beta, \sigma_1, \sigma_2)}{ R(\beta+\frac{2}{\gamma}, \sigma_1-\frac{1}{\gamma}, \sigma_2)} \right).
-4\frac{\Gamma(\frac{2\beta}{\gamma}-\frac{4}{\gamma^2})}{\Gamma(-\frac{4}{\gamma^2})}\lim_{\eta \to 0}\eta  R(\frac{\gamma}{2}+\eta, \sigma_2, \sigma_3) + \frac{2\gamma}{\Gamma(-\frac{2\beta}{\gamma})}\frac{(1-\frac{\gamma\beta}{2}+\frac{\gamma^2}{4})}{-\frac{\gamma\beta}{2}}\frac{R(\beta,\sigma_1,\sigma_3-\frac{\gamma}{4})}{R(\beta+\frac{2}{\gamma},\sigma_1-\frac{1}{\gamma}, \sigma_3-\frac{\gamma}{4})}\\
=  \frac{2\gamma (1-\frac{2\beta}{\gamma} + \frac{4}{\gamma^2})}{\Gamma(1-\frac{2\beta}{\gamma})} \frac{R(\beta, \sigma_1, \sigma_2)}{ R(\beta+\frac{2}{\gamma}, \sigma_1-\frac{1}{\gamma}, \sigma_2)}.
\end{align*}
After simplifications one obtains~\eqref{equation R intermediary}.
\end{proof}

We now determine the right hand side of~\eqref{equation R intermediary}. By the $\frac{\gamma}{2}$-shift equation \eqref{equation R2} on $R$ we have:
\begin{align*}
&R(\frac{\gamma}{2} + \eta, \sigma_2, \sigma_3) = -\frac{\Gamma(-1+\frac{\gamma \eta}{2}  ) \Gamma(1 - \frac{\gamma^2}{4} - \frac{\gamma \eta}{2}) }{ \Gamma(- \frac{\gamma^2}{4}) } \left( g_{\frac{\gamma}{2}}(\sigma_2 ) - g_{\frac{\gamma}{2}}(\sigma_3 - \frac{\gamma}{4} + \frac{\eta}{2}) \right)  R(\gamma +\eta, \sigma_2,   \sigma_3 -  \frac{\gamma}{4}).
%&R(\frac{\gamma}{2} + \eta, \sigma_2, \sigma_3) \\
%&= -\frac{\Gamma(-1+\frac{\gamma \eta}{2}  ) \Gamma(1 - \frac{\gamma^2}{4} - \frac{\gamma \eta}{2}) }{ \Gamma(- \frac{\gamma^2}{4}) } \sqrt{\frac{1}{\sin(\pi\frac{\gamma^2}{4})}}\left( \cos(\pi\gamma(\sigma_2 - \frac{Q}{2})) -\cos(\pi\gamma(\sigma_3+\frac{\gamma}{4} + \frac{\eta}{2}-\frac{Q}{2})) \right)  R(\gamma +\eta, \sigma_2,   \sigma_3 + \frac{\gamma}{4})
\end{align*}
Simplifying the limit, one gets:

\begin{align}\label{equation R intermediary2}
&\lim_{\eta\to 0} \eta    R(\frac{\gamma}{2}+\eta, \sigma_2, \sigma_3)  = \frac{2}{\gamma} \frac{  \Gamma(1 - \frac{\gamma^2}{4} ) }{ \Gamma(- \frac{\gamma^2}{4}) }   \left( g_{\frac{\gamma}{2}}(\sigma_2) - g_{\frac{\gamma}{2}}(\sigma_3 - \frac{\gamma}{4}) \right)  R(\gamma, \sigma_2,   \sigma_3 -  \frac{\gamma}{4}) .
%&\lim_{\eta\to 0} \eta    R(\frac{\gamma}{2}+\eta, \sigma_2, \sigma_3)  \\
%&= \frac{2}{\gamma} \frac{  \Gamma(1 - \frac{\gamma^2}{4} ) }{ \Gamma(- \frac{\gamma^2}{4}) } \sqrt{\frac{1}{\sin(\pi\frac{\gamma^2}{4})}}  \left( \cos(\pi\gamma(\sigma_2 - \frac{Q}{2})) -\cos(\pi\gamma(\sigma_3+\frac{\gamma}{4} -\frac{Q}{2})) \right)  R(\gamma, \sigma_2,   \sigma_3 + \frac{\gamma}{4}) .
\end{align}

We use an extra input form the mating-of-trees framework to evaluate $R(\gamma, \sigma_2,   \sigma_3 -  \frac{\gamma}{4})$.
\begin{lemma}\label{lem-mot-match}
	Suppose $\frac{4}{\gamma^2} \not \in \mathbb Z$ and set $c = \sqrt{1/\sin(\frac{\pi \gamma^2}4)}$. Define $f: (-c, c) \to \mathbb R$ by $f(\mu) = \cos(\frac4{\gamma^2} \arccos( \mu/c))$; that is, if $\sigma \in \mathcal B$ satisfies $\mu = c \cos (\pi \gamma(\sigma - \frac Q2))$, then $f(\mu) = \cos(\frac{4\pi}\gamma(\sigma - \frac Q2))$. 
For a sample from $\cM_2^\disk(\gamma)$, let $(L_1, L_2)$ be the quantum lengths and $A$ the quantum area. Then for all $k> \frac4{\gamma^2}$ we have 
	\[f^{(k)}(\mu) = \mathcal{C}_1 \cM_2^\disk (\gamma)[ (\mu (-L_1)^k + k (-L_1)^{k-1}) e^{-A - \mu L_1} ] \quad\text{ for all } \mu \in (-c, c).\]
 where $\mathcal{C}_1 = \frac{4}{\gamma^2} \sin( \frac{4 \pi}{\gamma^2}) \Gamma( \frac{4}{\gamma^2}) ( 1 - \frac{\gamma^2}{4} ) \left( \Gamma(\frac{\gamma^2}{4})^2 \sin(\frac{\pi \gamma^2}{4} ) \right)^{ - \frac{2}{\gamma^2} } $.
\end{lemma}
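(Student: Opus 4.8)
The plan is to recognize the right-hand side as the $k$-th $\mu$-derivative of a single boundary-length Laplace transform, and then to identify that transform with $f$ up to an irrelevant polynomial using the mating-of-trees description of the weight-$\gamma$ quantum disk. Concretely, for $j>\frac{4}{\gamma^2}-1$ set
\[
M_j(\mu) := \cM_2^\disk(\gamma)\big[(-L_1)^j e^{-A-\mu L_1}\big].
\]
Writing $\cM_2^\disk(\gamma)$ through Definition~\ref{def-thick-disk}, the $\mathbf c$-integral has the form $\int e^{(\gamma-Q)c}(\cdots)\,dc$, and near $c\to-\infty$ the factor $(-L_1)^j=(e^{\gamma c/2}\wt L_1)^j$ makes the integrand behave like $e^{(\frac{\gamma(j+1)}2-\frac2\gamma)c}$; hence $M_j(\mu)$ is finite exactly when $j>\frac4{\gamma^2}-1$, and for $j>\frac4{\gamma^2}$ one may differentiate under the integral sign to get $M_j'=M_{j+1}$. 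By the Leibniz rule $\frac{d^k}{d\mu^k}\big(\mu e^{-\mu L_1}\big)=\mu(-L_1)^k e^{-\mu L_1}+k(-L_1)^{k-1}e^{-\mu L_1}$, the claimed right-hand side is precisely $\mathcal C_1\,\frac{d^k}{d\mu^k}\big[\mu\,\Phi(\mu)\big]$, where $\Phi(\mu):=\cM_2^\disk(\gamma)[e^{-A-\mu L_1}]$ is interpreted through its derivatives of order $>\frac4{\gamma^2}-1$ (the object $\Phi$ itself being infinite, since the $\mathbf c$-integral diverges at $-\infty$). Thus the lemma is equivalent to the statement that $f(\mu)-\mathcal C_1\,\mu\,\Phi(\mu)$ is a polynomial of degree $\le\lfloor 4/\gamma^2\rfloor$, because the operator $\frac{d^k}{d\mu^k}$ with $k>\frac4{\gamma^2}$ annihilates any such polynomial.

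First I would carry out this reduction rigorously: justify finiteness of $M_j$ and the exchange of $\frac{d}{d\mu}$ with the integral for $j>\frac4{\gamma^2}$ by dominated convergence, using the boundary and bulk GMC moment bounds already invoked in Section~\ref{subsec-anal-H}, and record $M_j=\Phi^{(j)}$ in this range. This turns the entire problem into computing the single function $\Phi$ modulo polynomials and pinning the constant $\mathcal C_1$.

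The heart of the matter is the explicit computation of $\Phi$ from mating of trees. Here I would use the description of the two-pointed quantum disk $\cM_2^\disk(\gamma)$ from~\cite{wedges,ag-disk}: at the special weight corresponding to $\beta=\gamma$, the joint law of the area $A$ and one boundary arc length $L_1$ is explicit, and this yields a closed integral representation for $\Phi(\mu)$. Evaluating that integral with the standard $\Gamma$-function GMC moment identities should produce $\cos\!\big(\tfrac4{\gamma^2}\arccos(\mu/c)\big)$ up to a polynomial and fix the prefactor $\mathcal C_1$. An alternative, possibly cleaner, route to the same end would avoid the direct integral: using the scaling relations $A=e^{\gamma c}\wt A$, $L_1=e^{\gamma c/2}\wt L_1$ together with the independence structure of $\cM_2^\disk(\gamma)$, one derives a second-order linear equation for $\mu\Phi(\mu)$ and checks it is the generalized Chebyshev equation $(c^2-\mu^2)y''-\mu y'+\tfrac{16}{\gamma^4}y=(\text{polynomial})$ solved by $f$, matching the finitely many pieces of initial data at $\mu=0$ through the explicitly computable moments $\cM_2^\disk(\gamma)[L_1^m e^{-A}]$.

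I expect the main obstacle to be this third step. Producing the exact special function $\cos(\tfrac4{\gamma^2}\arccos(\mu/c))$, and in particular nailing the constant $\mathcal C_1$, requires invoking the correct mating-of-trees law at the special weight $\beta=\gamma$ and then tracking every normalization: the $\frac\gamma2 e^{(\beta-Q)c}\,dc$ weighting of $\mathbf c$, the normalizing constants in the GMC measures $\cA_\phi,\cL_\phi$, and the normalization of the conditioned Brownian motion in Definition~\ref{def-thick-disk}. The reduction in the first two paragraphs is routine bookkeeping, but this constant-chasing integral (or ODE-matching) computation is where essentially all the difficulty lies.
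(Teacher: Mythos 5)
Your reduction is correct and your first proposed route is essentially the paper's: the paper also works from the explicit mating-of-trees law of the weight-$\gamma$ quantum disk, evaluates the resulting integral with Gamma-function identities, and matches the answer to $\cos(\frac4{\gamma^2}\arccos(\mu/c))$ term by term. The bookkeeping you set up is sound — the finiteness threshold $j>\frac4{\gamma^2}-1$ for $M_j$, the Leibniz identity showing the right-hand side is $\mathcal C_1\,\frac{d^k}{d\mu^k}[\mu\,\Phi(\mu)]$, and the observation that the statement only sees $\mu\Phi$ modulo polynomials of degree $\le \lfloor 4/\gamma^2\rfloor$ are all right (the modulo-polynomial reasoning is in fact used in the paper, though one level up, in the proof of Lemma~\ref{lem:R_gamma}, where the polynomial ambiguity is killed by double periodicity).

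However, you stop exactly at the step that constitutes the entire content of the lemma, writing that the integral ``should produce'' the cosine and fix $\mathcal C_1$; as a proof this is a genuine gap. For the record, the paper executes it without ever touching the divergent $\Phi$ or an ODE: (i) the density of $(L_1,L_2)$ under $\cM_2^\disk(\gamma)$ from \cite[Proposition 5.2]{AHS-SLE-integrability}, combined with the conditional law of $A$ given the boundary lengths — a Bessel-$K_{4/\gamma^2}$ kernel from \cite[Theorem 1.2]{ag-disk}, with the mating-of-trees variance constant supplied by \cite[Theorem 1.3]{ARS-FZZ} — gives a closed integral for $\cM_2^\disk(\gamma)[L_1^m e^{-A}]$ (Lemma~\ref{lem-MOT-input-0}); (ii) the Mellin integral \cite[(10.43.19)]{NIST:DLMF} evaluates it, for $m+1>\frac4{\gamma^2}$, as an explicit product $\Gamma(\frac12(m+1-\frac4{\gamma^2}))\Gamma(\frac12(m+1+\frac4{\gamma^2}))$ times elementary factors; (iii) Taylor-expanding $e^{-\mu L_1}$ and regrouping powers of $\mu$ (Lemma~\ref{lem-mot-input}) yields a power series which is then matched, coefficient by coefficient, against the classical expansion $\cos(a\cos^{-1}x)=\frac{a\sin(\pi a)}{2\pi}\sum_{n\ge0}\frac{(-2)^{n-1}}{n!}\Gamma(\frac12(n+a))\Gamma(\frac12(n-a))x^n$ with $a=\frac4{\gamma^2}$ (\cite[Lemma 4.15]{ARS-FZZ}), which is what produces $f^{(k)}$ exactly and pins $\mathcal C_1$. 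Your alternative ODE route would also need the same explicit moments $\cM_2^\disk(\gamma)[L_1^m e^{-A}]$ to match initial data, so it does not avoid the computation; in practice the series-matching is shorter because the Chebyshev-type identity is available off the shelf. If you carry out steps (i)–(iii), your argument closes and coincides with the paper's.
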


We postpone the proof of Lemma~\ref{lem-mot-match} to Section~\ref{sec:Rgamma} and proceed to solve $R(\gamma, \sigma, \sigma')$ using it.
\begin{lemma}\label{lem:R_gamma}
As equality of meromorphic functions of $(\sigma, \sigma') \in \mathbb{C}^2$:
\[R(\gamma, \sigma, \sigma') = \left(\frac{\pi  \Gamma(\frac{\gamma^2}{4})}{\Gamma(1-\frac{\gamma^2}{4})} \right)^{\frac{2}{\gamma^2} - \frac{1}{2} }  \frac{\Gamma(1 - \frac{4}{\gamma^2})}{\Gamma(1 -\frac{\gamma^2}{4})} \frac{\cos(\frac{4\pi}\gamma (\sigma - \frac Q2)) - \cos(\frac{4\pi}\gamma (\sigma' - \frac Q2))}{\cos(\gamma\pi (\sigma - \frac Q2)) - \cos(\gamma\pi (\sigma' - \frac Q2))}.\]
\end{lemma}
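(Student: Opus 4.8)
The plan is to evaluate $R(\gamma,\sigma,\sigma')$ directly from the mating-of-trees input of Lemma~\ref{lem-mot-match}. Recall from Definition~\ref{def-R-general} and \eqref{eq:2-point-sec2} that, writing $\mu_1=\mu_B(\sigma)$, $\mu_2=\mu_B(\sigma')$ and $c_0=\frac{2(Q-\gamma)}\gamma=\frac4{\gamma^2}-1$, we have $R(\gamma,\sigma,\sigma')=c_0\,\cM_2^\disk(\gamma)[e^{-A-\mu_1L_1-\mu_2L_2}-1]$ on the real domain $\sigma,\sigma'\in\cB$, and that this extends meromorphically to $(\sigma,\sigma')\in\mathbb{C}^2$ by Lemma~\ref{analycity_R}. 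Since the top and bottom boundary arcs of the strip $\cS$ are exchangeable under $\cM_2^\disk(\gamma)$, the right-hand side is symmetric in $(\mu_1,\mu_2)$ and depends on $(\sigma,\sigma')$ only through $(\mu_1,\mu_2)$. It therefore suffices to identify it as a symmetric function of the two cosmological constants.

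The core input is Lemma~\ref{lem-mot-match}. Setting $m_0(\mu):=\cM_2^\disk(\gamma)[e^{-A-\mu L_1}]$ and using Leibniz' rule, the combination appearing there is exactly $\frac{d^k}{d\mu^k}\big(\mu\,m_0(\mu)\big)$, so that $f^{(k)}(\mu)=\mathcal{C}_1\frac{d^k}{d\mu^k}\big(\mu\,m_0(\mu)\big)$ for every $k>\frac4{\gamma^2}$ and $\mu\in(-c,c)$ (with $c=\sqrt{1/\sin(\pi\gamma^2/4)}$). Integrating gives $\mu\,m_0(\mu)=\mathcal{C}_1^{-1}\big(f(\mu)-P(\mu)\big)$ for a polynomial $P$ of degree at most $\lfloor 4/\gamma^2\rfloor$, with $P(0)=f(0)$ since the left side vanishes at $\mu=0$. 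To pass to two variables I would invoke the explicit joint law of $(A,L_1,L_2)$ under $\cM_2^\disk(\gamma)$ from the mating-of-trees description \cite{wedges,ag-disk}: the conditional split of the boundary between the two arcs, given area and total length, yields a divided-difference representation of the joint Laplace transform in terms of the one-arc transform. After reinstating the truncation, this gives
\[ R_{\mu_1,\mu_2}(\gamma)=\frac{c_0}{\mathcal{C}_1}\,\frac{f(\mu_1)-f(\mu_2)}{\mu_1-\mu_2}\;-\;\frac{c_0}{\mathcal{C}_1}\,\frac{P(\mu_1)-P(\mu_2)}{\mu_1-\mu_2}+\kappa, \]
where $\kappa$ is the constant produced by the $-1$ truncation.

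The hard part is to show that the last two terms cancel, i.e.\ that $P$ is affine and that the induced constant equals $-\kappa$. For $\tfrac4{\gamma^2}<2$ one has $\deg P\le 1$, the middle term is constant, and only the normalization must be checked; but when $\tfrac4{\gamma^2}\ge2$ the term $\frac{P(\mu_1)-P(\mu_2)}{\mu_1-\mu_2}$ is genuinely non-constant and its higher coefficients must be shown to vanish. I expect to fix $P$ by analyzing $m_0(\mu)$ as $\mu\to(-c)^+$: the interval $(-c,c)$ is precisely the convergence strip of $m_0$, so this limit detects the exponential decay rate of the law of $L_1$ under $e^{-A}\cM_2^\disk(\gamma)$, whose subexponential prefactor is controlled by the reflection coefficient and forces the higher coefficients of $P$ to vanish, leaving $P$ affine. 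The remaining slope together with $\kappa$ can then be fixed from a single special value---for instance by matching the $\mu_2=0$ slice (where $\sigma'\in\partial\cB$) computed from the same formula, or by consistency with \eqref{equation R intermediary2}. This endpoint/tail analysis is the main obstacle, since the single-arc data of Lemma~\ref{lem-mot-match} determines $R$ only up to this polynomial ambiguity.

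Finally I would return to the $\sigma$-variables using $\mu_B(\sigma)=c\cos(\gamma\pi(\sigma-\tfrac Q2))$ and the identity $f(\mu_B(\sigma))=\cos(\tfrac{4\pi}\gamma(\sigma-\tfrac Q2))$ built into Lemma~\ref{lem-mot-match}, so that $\frac{f(\mu_1)-f(\mu_2)}{\mu_1-\mu_2}=\frac1c\,\frac{\cos(\frac{4\pi}\gamma(\sigma-\frac Q2))-\cos(\frac{4\pi}\gamma(\sigma'-\frac Q2))}{\cos(\gamma\pi(\sigma-\frac Q2))-\cos(\gamma\pi(\sigma'-\frac Q2))}$. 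The prefactor then equals $K=\frac{c_0}{\mathcal{C}_1 c}$, and I would verify $K=\big(\frac{\pi\Gamma(\gamma^2/4)}{\Gamma(1-\gamma^2/4)}\big)^{2/\gamma^2-1/2}\frac{\Gamma(1-4/\gamma^2)}{\Gamma(1-\gamma^2/4)}$ by a routine reduction of $\mathcal{C}_1$ using the reflection identity $\sin(\tfrac{4\pi}{\gamma^2})\Gamma(\tfrac4{\gamma^2})=\pi/\Gamma(1-\tfrac4{\gamma^2})$ and $\tfrac4{\gamma^2}\Gamma(\tfrac4{\gamma^2})=\Gamma(1+\tfrac4{\gamma^2})$. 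Both sides being meromorphic in $(\sigma,\sigma')$ and agreeing on the real domain, the identity extends to all of $\mathbb{C}^2$.
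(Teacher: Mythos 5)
Your overall architecture differs from the paper's in one structural respect: you try to obtain the divided-difference form of $R(\gamma,\cdot,\cdot)$ directly from the mating-of-trees joint law (the density of $(L_1,L_2)$ depends only on $\ell_1+\ell_2$, so conditionally $L_1$ is uniform and the joint transform is a divided difference of a one-variable transform --- this observation is correct, and is implicit in Lemma~\ref{lem-MOT-input-0}), whereas the paper obtains the identity $(\mu-\mu')R(\gamma,\mu,\mu')=h(\mu)-h(\mu')$ from the shift-equation machinery, namely by combining Lemma~\ref{lem:R-relation} (equation~\eqref{equation R intermediary}) with~\eqref{equation R intermediary2}. Both routes then face the same problem: the one-variable function is pinned down by Lemma~\ref{lem-mot-match} only through its derivatives of order $k>\frac4{\gamma^2}$, hence only up to a polynomial $P$ of degree up to $\lfloor 4/\gamma^2\rfloor$ (unbounded as $\gamma\to0$). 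Your constant bookkeeping at the end ($K=c_0/(\mathcal C_1 c)$) agrees with the paper's $c_\gamma$, so the finish is fine \emph{if} $P$ is killed.

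The genuine gap is your mechanism for killing $P$, and it cannot work as proposed. First, a factual error: $m_0(\mu)=\cMtwo(\gamma)[e^{-A-\mu L_1}]$ is $+\infty$ for \emph{every} $\mu$, because the divergence comes from the small-length behavior of the density $\propto(\ell_1+\ell_2)^{-4/\gamma^2-1}$, uniformly in $\mu$; so $(-c,c)$ is not a ``convergence strip'' of $m_0$, and statements like $P(0)=f(0)$ ``since the left side vanishes at $\mu=0$'' are ill-posed before renormalization. More fundamentally, even for the correctly truncated transform, an asymptotic or singularity analysis at the finite endpoint $\mu\to(-c)^+$ is structurally blind to $P$: polynomials are entire, so they leave the singular part at $\mu=-c$ untouched, and matching singular behavior there can only confirm what the matched high-order derivatives already give. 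To resolve $P$ by growth analysis at $\mu\to+\infty$ instead, you would need the full renormalized expansion of the truncated transform to all polynomial orders --- i.e., you would be redoing exactly the renormalized low-moment computation that the truncation was introduced to avoid. The paper's resolution is different and is the step your proposal is missing: from~\eqref{equation R intermediary} with $\sigma_3=\sigma_2-\frac\gamma4$ one sees that $h$ is $\frac\gamma2$-periodic in $\sigma$, while $p(\sigma)=P(\mu_B(\sigma))$ is automatically $\frac2\gamma$-periodic as a polynomial in $\cos(\pi\gamma(\sigma-\frac Q2))$; for irrational $\gamma^2$ the two incommensurate periods force $p$ to be constant, and constants cancel in $p(\sigma)-p(\sigma')$. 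Note this periodicity input comes precisely from the shift-equation relation you were trying to bypass, so your route, as it stands, lacks any substitute for it.
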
 

\begin{proof}
Fix arbitrary $\sigma_1$ such that $\sigma_1, \sigma_1 - \frac1\gamma \in \overline{\mathcal B}$.
Let us introduce the notations $\mu = g_{\frac{\gamma}{2}}(\sigma_2)$, $\mu' = g_{\frac{\gamma}{2}}(\sigma_3 - \frac{\gamma}{4})$, and $h(\mu) = - \frac{1}{\Gamma(-1+\frac{2\beta}{\gamma}-\frac{4}{\gamma^2})\Gamma(1-\frac{2\beta}{\gamma})} \mathcal{R}(\beta,\sigma_1,\sigma_2)$. 
Since $\lim_{\eta\to 0} \eta R(\frac{\gamma}{2}+\eta, \sigma_2, \sigma_2 - \frac{\gamma}{4})  =0$,  taking $\sigma_3 = \sigma_2 -\frac{\gamma}{4}$ in \eqref{equation R intermediary} yields that $\mathcal{R}(\beta,\sigma_1,\sigma_2)$ is $\frac{\gamma}{2}$-periodic in $\sigma_2$. Thus $h$ is $\frac{\gamma}{2}$-periodic in $\sigma_2$.
Combining   \eqref{equation R intermediary} and \eqref{equation R intermediary2} we get $(\mu - \mu')R(\gamma, \mu, \mu') = h(\mu) - h(\mu')$ for $\mu, \mu'$ in the domain $\{ z \in \C \: :\: \Re z > 0\}$ on which $h$ is holomorphic. Taking derivatives with respect to $\mu$, we get:
	\[h^{(k)}(\mu) = \mu \frac{\partial^k}{\partial\mu^k} R(\gamma, \mu, \mu')  + k \frac{\partial^{k-1}}{\partial\mu^{k-1}} R(\gamma, \mu, \mu') \quad \textrm{for }k\ge 1. \]

 For $\beta \in (\frac2\gamma, Q)$ and $k \geq 1$, using the probabilistic definition of $R$ we have 
 \begin{equation}\label{eq-mot-dR}
     \frac{{\partial^k}}{{\partial \mu^k}}R(\beta, \mu, \mu') = \frac{2(Q-\beta)}{\gamma} \cMtwo(\beta)[(-L_1)^k e^{-A - \mu L_1 - \mu' L_2}].
 \end{equation}
 The right hand side is well defined for $\beta \in (\frac\gamma2, Q)$, and when $k > \frac4{\gamma^2}$, as a function of $\beta$ it  extends holomorphically to a neighborhood of $(\frac\gamma2, Q)$ in $\C$ by Lemma~\ref{lem-hol-dR}. Since the left hand side is meromorphic in $\beta$, we conclude~\eqref{eq-mot-dR} holds for $k > \frac4{\gamma^2}$, $\beta \in (\frac\gamma2, Q)$. Choosing $\beta = \gamma$ and sending $\mu' \to 0$, we get for large $k$ that 	
 \[h^{(k)}(\mu) = \frac{2(Q - \beta)}{\gamma} \cM_2^\disk (\gamma)[ (\mu (-L_1)^k + k (-L_1)^{k-1}) e^{-A - \mu L_1} ].\]
 
 By Lemma~\ref{lem-mot-match}, using the notation $f(\mu) = \cos(\frac4{\gamma^2} \arccos( \mu/c))$ with $c = \sqrt{1/\sin(\frac{\pi \gamma^2}4)}$ and $\mathcal{C}_1$ as in Lemma~\ref{lem-mot-match}, we conclude $f^{(k)}(\mu) = \frac{\gamma}{2(Q - \gamma)} \mathcal{C}_1 h^{(k)}(\mu)$. Since $f, h$ are both analytic in a suitable domain, for some $a_0,\cdots, a_{k-1}\in \mathbb C$   we have:
	\[ \frac{\gamma}{2(Q - \gamma)} \mathcal{C}_1 h(\mu) = f(\mu) + \sum_{i=0}^{k-1} a_i \mu^i.\]
	Thus with $c_{\gamma} = ( \frac{4}{\gamma^2} - 1) \frac{1}{\mathcal{C}_1} \sqrt{\sin(\frac{\pi \gamma^2}{4})}$
and $p(\sigma) = \sum_{i=1}^{k -1} a_i(c \cos(\pi \gamma(\sigma - \frac Q2)))^i$ we have 
 \[R(\gamma, \sigma, \sigma') = c_\gamma \frac{\cos(\frac{4\pi}\gamma (\sigma - \frac Q2)) - \cos(\frac{4\pi}\gamma (\sigma' - \frac Q2)) + p(\sigma) - p(\sigma')}{\cos(\gamma\pi (\sigma - \frac Q2)) - \cos(\gamma\pi (\sigma' - \frac Q2))}.\]
	Since $f$ and $h$ are $\frac\gamma2$-periodic, we conclude that $p$ is $\frac\gamma2$-periodic. By its definition, it's also clear that $p$ is $\frac2\gamma$-periodic. Thus, when $\gamma^2$ is irrational, then the function has two periods which differ by an irrational factor, hence $p$ must be constant (and thus $p(\sigma) - p(\sigma') =0$). Lastly we compute 
 %$( \frac{4}{\gamma^2} - 1) \frac{1}{\mathcal{C}_1} \sqrt{\sin(\frac{\pi \gamma^2}{4})} $  as:
\begin{align*}
c_{\gamma} = \frac{1}{\pi} \Gamma(1 - \frac{4}{\gamma^2})  \sqrt{\sin(\frac{\pi \gamma^2}{4})} \left( \frac{\pi \Gamma( \frac{\gamma^2}{4} ) }{\Gamma( 1 - \frac{\gamma^2}{4} )} \right)^{\frac{2}{\gamma^2}}= \frac{ \Gamma(1 - \frac{4}{\gamma^2}) }{\Gamma(1 - \frac{\gamma^2}{4})}  \left( \frac{\pi \Gamma( \frac{\gamma^2}{4} ) }{\Gamma( 1 - \frac{\gamma^2}{4} )} \right)^{\frac{2}{\gamma^2} - \frac{1}{2}} \textrm{ as desired}.\quad \quad \qedhere
\end{align*}
\end{proof}

\begin{proof}[Proof of Lemma \ref{lem:sec4.5}] 
By Lemma \ref{lem:R_gamma}, we  have
\begin{align*}
&\lim_{\eta\to 0} \eta R(\frac{\gamma}{2}+\eta, \sigma_2, \sigma_3) = \frac{2}{\gamma} \frac{  \Gamma(1 - \frac{\gamma^2}{4} ) }{ \Gamma(- \frac{\gamma^2}{4}) \sqrt{\sin(\frac{\pi \gamma^2}{4})} }   \left( \cos(\pi\gamma(\sigma_2 - \frac{Q}{2})) -\cos(\pi\gamma(\sigma_3 - \frac{\gamma}{4} -\frac{Q}{2})) \right)  R(\gamma, \sigma_2,   \sigma_3 - \frac{\gamma}{4})\\
& =  \frac{\gamma}{2 \pi}  \Gamma(1 - \frac{4}{\gamma^2} )  \left( \frac{\pi \Gamma( \frac{\gamma^2}{4} ) }{\Gamma( 1 - \frac{\gamma^2}{4} )} \right)^{\frac{2}{\gamma^2}}  \left( \cos( \frac{4 \pi}{\gamma} (\sigma_2 - \frac{Q}{2})) -\cos( \frac{4 \pi }{\gamma} (\sigma_3 - \frac{\gamma}{4} -\frac{Q}{2})) \right).
\end{align*} 
Plugging this to \eqref{equation R intermediary}, we get
\begin{align*}
&\mathcal{R}(\beta,\sigma_1,\sigma_2) - \mathcal{R}(\beta,\sigma_1,\sigma_3 - \frac{\gamma}{4})\\ \nonumber & \quad = - \frac{4}{\pi \gamma^2} \left( \frac{\pi \Gamma( \frac{\gamma^2}{4} ) }{\Gamma( 1 - \frac{\gamma^2}{4} )} \right)^{\frac{2}{\gamma^2}}  \Gamma(-1+\frac{2\beta}{\gamma}-\frac{4}{\gamma^2})\Gamma(1-\frac{2\beta}{\gamma})  \left( \cos(\frac{2\pi}{\gamma} (2 \sigma_2  - \frac{2}{\gamma})) - \cos(\frac{2 \pi}{\gamma} (2 \sigma_3 - \frac{\gamma}{2} - \frac{2}{\gamma}) ) \right).
\end{align*}
By setting $\sigma_3$ to any fixed value, the above equation implies the following claim
\begin{equation*}
\mathcal{R}(\beta,\sigma_1,\sigma_2) = - \frac{4}{\pi \gamma^2} \left( \frac{\pi \Gamma( \frac{\gamma^2}{4} ) }{\Gamma( 1 - \frac{\gamma^2}{4} )} \right)^{\frac{2}{\gamma^2}}  \Gamma(-1+\frac{2\beta}{\gamma}-\frac{4}{\gamma^2})\Gamma(1-\frac{2\beta}{\gamma})   \cos(\frac{2\pi}{\gamma} (2 \sigma_2  - \frac{2}{\gamma})) + u(\sigma_1, \beta, \gamma),
\end{equation*}
where $u(\sigma_1, \beta, \gamma)$ is a  function   not depending on $\sigma_2$. We  evaluate $u$ using the shift equation \eqref{equation R1}. Since the right side of \eqref{equation R1} is zero when $\sigma_2 = \sigma_1 - \frac{\beta}{2}$
we get  $\mathcal{R}(\beta,\sigma_1, \sigma_1 - \frac{\beta}{2}) =0$, hence
\begin{align*}
u(\sigma_1, \beta, \gamma) = \frac{4}{\pi \gamma^2} \left( \frac{\pi \Gamma( \frac{\gamma^2}{4} ) }{\Gamma( 1 - \frac{\gamma^2}{4} )} \right)^{\frac{2}{\gamma^2}}  \Gamma(-1+\frac{2\beta}{\gamma}-\frac{4}{\gamma^2})\Gamma(1-\frac{2\beta}{\gamma})   \cos(\frac{2\pi}{\gamma} (2 \sigma_1 - \beta  - \frac{2}{\gamma})).
\end{align*}
Setting now $c_{\frac{2}{\gamma}}(\gamma) = \frac{4}{ \gamma^2} \pi^{\frac{4}{\gamma^2} -1}   \Gamma(1 - \frac{\gamma^2}{4})^{- \frac{4}{\gamma^2}}$, we have thus shown that:
\begin{align}
\frac{R(\beta, \sigma_1,\sigma_2)}{R(\beta+\frac{2}{\gamma}, \sigma_1-\frac{1}{\gamma}, \sigma_2)} = c_{\frac{2}{\gamma}}(\gamma)  \Gamma(-1+\frac{2\beta}{\gamma}-\frac{4}{\gamma^2}) \Gamma(1-\frac{2\beta}{\gamma}) \left(  g_{\frac{2}{\gamma}}(\sigma_2) - g_{\frac{2}{\gamma}}( \sigma_1-\frac{\beta}{2} ) \right).
\end{align}
Similarly, by working with auxiliary function $\tilde{H}_{\chi}(t)$ yields the other desired shift equation:
\begin{align*}
\frac{R(\beta+\frac{2}{\gamma},\sigma_1-\frac{1}{\gamma},\sigma_2)}{R(\beta+\frac{4}{\gamma}, \sigma_1,\sigma_2)} = c_{\frac{2}{\gamma}}(\gamma)  \Gamma(-1+\frac{2\beta}{\gamma}) \Gamma(1-\frac{2\beta}{\gamma}-\frac{4}{\gamma^2}) \left( g_{\frac{2}{\gamma}}(\sigma_2) - g_{\frac{2}{\gamma}}(\sigma_1+\frac{\beta}{2}) \right). \quad \quad  \qedhere 
\end{align*} 
\end{proof}

\begin{proof}[Proof of Theorem~\ref{full_shift_R}]
The statement on the meromorphic property of $R$ is proved in Lemma \ref{analycity_R}. The shift equations for $\chi = \frac{\gamma}{2}$ is proved in Lemma~\ref{Shift1_H_proba}. Here~\eqref{equation R2} is equivalent to~\eqref{eq:R-shift2} after a parameter change and  the same holds for ~\eqref{equation R1} and~\eqref{eq:R-shift1}. The shift equations for $\chi = \frac{2}{\gamma}$ is proved in Lemma \ref{lem:sec4.5}.
\end{proof}
 
\subsection{The $\frac{2}{\gamma}$-shift equations for $H$}\label{subsec:shift2_H}
\begin{proof}[Proof of Theorem~\ref{full_shift_H}] The statement on the meromorphic property of $H$ is proved in Lemma \ref{analycity_H} and the shift equations for $\chi = \frac{\gamma}{2}$ is proved in Lemma~\ref{Shift1_H_proba}. To treat the case $\chi = \frac2{\gamma}$, we recall the connection formulas~\eqref{eq:final_shift_H} and~\eqref{eq:final_shift_H2} in the proof of Lemma~\ref{Shift1_H_proba}.  For $\chi = \frac{2}{\gamma}$, we identify $B_1, C_1, C_2^+, \tilde{B}_2^-, \tilde{C}_1, \tilde{C}_2^+ $ as in the proof of Lemma~\ref{Shift1_H_proba}. By the OPE Lemma \ref{lem_reflection_ope2}, we get  expressions for $C_2^+, \tilde{B}_2^-, \tilde{C}_2^+ $  with both $H$ and $R$ involved.  For instance:
$$C_2^+ = R(\beta_1, \sigma_1 - \frac{1}{\gamma}, \sigma_2-\frac{1}{\gamma}) H
\begin{pmatrix}
2Q - \beta_1 -\frac{2}{\gamma}, \beta_2, \beta_3 \\
\sigma_1 - \frac{1}{\gamma},  \sigma_2,   \sigma_3 
\end{pmatrix}.$$
The goal is to get expressions for $C_2^+, \tilde{B}_2^-, \tilde{C}_2^+$ that only involve $ H$ without $R$. To do this for $C_2^+$, we apply the shift equation on $R$ in Theorem \ref{full_shift_R} to simplify the ratio $\frac{R(\beta_1, \sigma_1-\frac{1}{\gamma}, \sigma_2-\frac{1}{\gamma})}{R(\beta_1+\frac{2}{\gamma}, \sigma_1-\frac{1}{\gamma}, \sigma_2)}$ and then the reflection principle given by equation \eqref{eq:reflection_id}. The same strategy can be applied to $\tilde{C}_2^+$ and $\tilde{B}_2^-$. This allows us to write $C_2^+,	\tilde{C}_2^+$, and $ \tilde{B}_2^- $   respectively as in~\eqref{eq:C2+}, \eqref{C2+tilde}, and ~\eqref{eq:B2-tilde} with $\chi=\frac{2}{\gamma}$.  Putting all these into \eqref{eq:final_shift_H} and \eqref{eq:final_shift_H2} proves the desired shift equations as in the  $\chi = \frac{\gamma}{2}$ case.
\end{proof}

\section{Proof of the main theorems}\label{sec:main}

In this section we successively prove Theorem~\ref{main_th2}, Theorem~\ref{thm:H} and Theorem~\ref{thm:G}. The first two rely on the shift equations of Theorems~\ref{full_shift_H} and~\ref{full_shift_R}. The last proof uses Theorem~\ref{main_th2} combined with an expression of $G_{\mu_B}(\alpha, \beta)$ from \cite{wu2023sle} in terms of  the two pointed quantum disk, whose area and boundary length distribution is prescribed by Theorem~\ref{main_th2}.

\subsection{Proof of $R = R_{\mathrm{FZZ}}$}

\begin{proof}[Proof of Theorem~\ref{main_th2}] 
First assume $\gamma^2 \not \in \mathbb Q$.
By combining both shift equations of Theorem~\ref{full_shift_R} we obtain for $ \chi \in \{ \frac{\gamma}{2}, \frac{2}{\gamma} \} $
%\begin{align}
%\frac{R(\beta,\sigma_1,\sigma_2)}{R(\beta+\frac{4}{\gamma}, \sigma_1,\sigma_2)} = (\frac{2}{\gamma})^4 \pi^{\frac{8}{\gamma^2}-2}\left(\frac{1}{\sin(\pi\frac{\gamma^2}{4})}\right)^{\frac{4}{\gamma^2}}\frac{\Gamma(-1+\frac{2\beta}{\gamma}-\frac{4}{\gamma^2})\Gamma(1-\frac{2\beta}{\gamma}-\frac{4}{\gamma^2}) \Gamma(1-\frac{2\beta}{\gamma})\Gamma(-1+\frac{2\beta}{\gamma}) }{\Gamma(1-\frac{\gamma^2}{4})^{\frac{8}{\gamma^2}}} \\
%4 \sin(\frac{2\pi}{\gamma}(\frac{\beta}{2}-\sigma_1-\sigma_2+Q))\sin(\frac{2\pi}{\gamma}(\frac{\beta}{2}+\sigma_1+\sigma_2-Q)) \sin(\frac{2\pi}{\gamma}(\frac{\beta}{2}+\sigma_2-\sigma_1)) \sin(\frac{2\pi}{\gamma}(\frac{\beta}{2}+\sigma_1-\sigma_2)). \nonumber
%\end{align}
\begin{align}
	&\frac{R(\beta_1, \sigma_1, \sigma_2)}{R(\beta_1 + 2 \chi, \sigma_1, \sigma_2)} =  \tilde{c}_{\chi}(\gamma) \Gamma(-1+ \chi \beta_1 - \chi^2 ) \Gamma(1- \chi \beta_1- \chi^2) \Gamma(1- \chi \beta_1 ) \Gamma(-1+ \chi \beta_1 )  \\
	&\times 4\sin(  \pi \chi (\frac{\beta}{2}-\sigma_1-\sigma_2+Q)) \sin( \pi \chi (\frac{\beta}{2}+\sigma_1+\sigma_2-Q)) \sin( \pi \chi (\frac{\beta}{2}+\sigma_2-\sigma_1)) \sin( \pi \chi  (\frac{\beta}{2}+\sigma_1-\sigma_2)), \nonumber
	\end{align}
where $\tilde{c}_{\frac{\gamma}{2}}(\gamma) = \frac{1}{\sin(\pi \frac{\gamma^2}{4}) \Gamma(- \frac{\gamma^2}{4})^2 }$ and where $\tilde{c}_{\frac{2}{\gamma}}(\gamma) = \frac{16}{ \gamma^4} \pi^{\frac{8}{\gamma^2} -2}   \Gamma(1 - \frac{\gamma^2}{4})^{- \frac{8}{\gamma^2}} \sin(\frac{\pi \gamma^2}{4})^{- \frac{4}{\gamma^2}}$.

Consider now the ratio $f(\beta) = \frac{R(\beta)}{R_{\mathrm{FZZ}}(\beta)}$. By using the shift equations \eqref{eq:shift_G1}, \eqref{eq:shift_G2}, \eqref{eq:shift_S1} satisfied by $\Gamma_{\frac{\gamma}{2}}$ and $S_{\frac{\gamma}{2}}$, one can show that $R_{\mathrm{FZZ}}(\beta)$ satisfies the same shift equations for both values of $\chi$ satisfied by $R$. From this we obtain that:
\begin{align}\label{eq:f-shift}
f(\beta + \gamma) = f(\beta) \quad \textrm{and}\quad f(\beta + \frac{4}{\gamma}) = f(\beta).
\end{align} 
Since $\beta \rightarrow f(\beta)$ is meromorphic and $\gamma^2 \notin \mathbb{Q}$, 
\eqref{eq:f-shift} implies $f$ is constant on a dense subset of $\mathbb R$ and thus is constant everywhere.
One can then show $f(\beta) = 1$ by using the fact that $R (Q) = R_{\mathrm{FZZ}}(Q) = -1$. The fact that $ R_{\mathrm{FZZ}}(Q) = -1$ can be checked directly on the exact formula simply by using the relation $S_{\frac{\gamma}{2}}(Q -x) = (S_{\frac{\gamma}{2}}(x))^{-1}$. To see that $R(Q) = -1 $, one can  combine the result of Lemma \ref{lem:R_gamma} with the shift equations on $R$ given by Theorem \ref{full_shift_R}.  Lastly, the case $\gamma^2 \in \mathbb{Q}$ is recovered by a  continuity argument (as in \cite{DOZZ_proof}).
\end{proof}

\subsection{Proof of $H = H_{\mathrm{PT}}$}\label{sec:proof_H}

We first perform a residue computation for $H$ and $\HPT$ in the following lemma, and then use it in a uniqueness argument to prove Theorem \ref{thm:H}.

\begin{lemma}\label{lem:res_H}
Set $\overline{\beta} = \beta_1 + \beta_2 + \beta_3$. For $\beta_i, \sigma_i$ in the domain of Lemma \ref{analycity_H} where $H$ has been analytically extended, $H$ satisfies the following properties:
\begin{align*}
&\lim_{\beta_1\to 2Q-\beta_2-\beta_3}(\frac{\bar{\beta}}{2}-Q) H
\begin{pmatrix}
\beta_1 , \beta_2, \beta_3 \\
\sigma_1,  \sigma_2,   \sigma_3 
\end{pmatrix} = 1,\\
&\lim_{\beta_1\to 2Q-\beta_2-\beta_3 - \gamma}(\frac{\bar{\beta}}{2}-Q + \frac{\gamma}{2}) H
\begin{pmatrix}
\beta_1 , \beta_2, \beta_3 \\
\sigma_1,  \sigma_2,   \sigma_3 
\end{pmatrix} =  - \frac{1}{\pi} \sqrt{ \frac{1}{\sin \frac{\pi \gamma^2}{4}}}\Gamma(1 - \frac{\gamma \beta_2}{2}) \Gamma(1 - \frac{\gamma \beta_3}{2}) \Gamma(\frac{\gamma \beta_2}{2} + \frac{\gamma \beta_3}{2} - 1)   \\
& \times \left( \cos(\pi \gamma( \sigma_1 - \frac{Q}{2})) \sin(\frac{\pi \gamma \beta_2}{2}) + \cos(\pi \gamma( \sigma_2 - \frac{Q}{2})) \sin(\frac{\pi \gamma \beta_3}{2}) - \cos(\pi \gamma ( \sigma_3 - \frac{Q}{2})) \sin(\frac{\pi \gamma (\beta_2 + \beta_3)}{2}) \right).
\end{align*}
The same limits hold with $H$ replaced with $\HPT$.
\end{lemma}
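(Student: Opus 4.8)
The plan is to recognize both displayed quantities as residues of the meromorphic function $H$ at the two poles it inherits from the factor $\frac{1}{s(s+\gamma/2)}$ in Definition~\ref{def:H-general}, where $s=\frac{\overline\beta}{2}-Q$. Indeed $\beta_1\to 2Q-\beta_2-\beta_3$ means $s\to0$ and $\beta_1\to 2Q-\beta_2-\beta_3-\gamma$ means $s\to-\frac{\gamma}{2}$, so the two statements are $\mathrm{Res}_{s=0}H=1$ and $\mathrm{Res}_{s=-\gamma/2}H=(\text{the stated expression})$. I would compute these on the probabilistic side via the truncated representations of Remark~\ref{rem-H}, and separately check that $\HPT$ produces the same residues from the pole structure of $\mathcal J_{\mathrm{PT}}$ in Lemma~\ref{lem:J}.

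For the first limit I work in the range $-\frac{\gamma}{2}<s<0$ where \eqref{def:H-rig-1} applies. Unfolding Definition~\ref{def:LCFT-infty} and writing $\wt A=\cA_{\wt\phi}(\bbH)$, $\wt L_i=\cL_{\wt\phi}(\cdot)$ for $\wt\phi=h-2Q\log|\cdot|_++\sum_j\tfrac{\beta_j}2 G_\bbH(\cdot,s_j)$ and $\wt\ell=\sum_i\mu_i\wt L_i$ (with $\mu_i=\mu_B(\sigma_i)$), the representation becomes
\[
H=\E_{P_\bbH}\Big[\int_\R e^{sc}\big(e^{-e^{\gamma c}\wt A-e^{\gamma c/2}\wt\ell}-1\big)\,dc\Big].
\]
The integrand is bounded, decays like $-e^{\gamma c}\wt A$ as $c\to-\infty$, and tends to $-1$ as $c\to+\infty$, so the only source of an $s$-pole is the tail at $+\infty$: splitting off $\int_0^\infty e^{sc}(-1)\,dc=\frac1s$ and observing the remainder stays bounded as $s\to0$, I obtain $\lim_{s\to0}sH=\E_{P_\bbH}[1]=1$. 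The normalization is exactly $1$ because Definition~\ref{def:LCFT-infty} carries no $C_\bbH$ prefactor.

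For the second limit I move to $-\gamma<s<-\frac{\gamma}{2}$, where \eqref{def:H-rig-2} adds the term $+e^{\gamma c/2}\wt\ell$ to the integrand. Now the pole at $s=-\frac{\gamma}{2}$ comes solely from this linear term, whose $c\to+\infty$ tail gives $\int_0^\infty e^{(s+\gamma/2)c}\wt\ell\,dc=-\frac{\wt\ell}{s+\gamma/2}$, hence $\lim_{s\to-\gamma/2}(s+\tfrac{\gamma}{2})H=-\E_{P_\bbH}[\wt\ell]=-\sum_i\mu_i\,\E_{P_\bbH}[\wt L_i]$. At the resonance the net power of $|r|_+$ in the deterministic density (from $-2Q\log|\cdot|_+$, the insertions, and the GMC self-energy $|r|_+^{\gamma^2/2}$) equals $\gamma s+\frac{\gamma^2}{2}$, which vanishes; writing $a=\frac{\gamma\beta_2}{2}$, $b=\frac{\gamma\beta_3}{2}$ so that $\frac{\gamma\beta_1}{2}=2-a-b$, the boundary GMC first moments reduce to Beta integrals,
\[
\E_{P_\bbH}[\wt L_1]=\tfrac{\Gamma(a+b-1)\Gamma(1-b)}{\Gamma(a)},\quad \E_{P_\bbH}[\wt L_2]=\tfrac{\Gamma(a+b-1)\Gamma(1-a)}{\Gamma(b)},\quad \E_{P_\bbH}[\wt L_3]=\tfrac{\Gamma(1-a)\Gamma(1-b)}{\Gamma(2-a-b)}.
\]
Factoring out $\Gamma(a+b-1)\Gamma(1-a)\Gamma(1-b)$ and applying $\Gamma(x)\Gamma(1-x)=\pi/\sin\pi x$ converts the three coefficients into $\frac{\sin\pi a}{\pi}$, $\frac{\sin\pi b}{\pi}$, $-\frac{\sin\pi(a+b)}{\pi}$; substituting $\mu_i=\sqrt{1/\sin(\pi\gamma^2/4)}\cos(\pi\gamma(\sigma_i-\frac Q2))$ from \eqref{eq:mu_to_sigma} reproduces the claimed formula exactly.

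Finally, the identical limits for $\HPT$ follow from a residue computation on \eqref{formule_PT}. At $s=0$ and $s=-\frac{\gamma}{2}$ the explicit prefactor of $\HPT$ is regular, so the pole is carried by $\mathcal J_{\mathrm{PT}}$; these are precisely the entries $Q-\frac{\overline\beta}{2}\in\{0,\frac{\gamma}{2}\}$ of the pole list in Lemma~\ref{lem:J}. I would extract the residue by deforming the contour $\mathcal C$ across the colliding pair of poles of $P^{-}_{\mathrm{PT}}$ and $P^{+}_{\mathrm{PT}}$ (as in the proof of Lemma~\ref{lem:J}), evaluate the resulting $S_{\frac{\gamma}{2}}$ residue, and collapse the product with the prefactor using the double-sine shift relations and special values in Appendix~\ref{sec:special}. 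I expect the two genuine difficulties to be (i) justifying the interchange of the $s\to$ resonance limit with the $P_\bbH$-expectation and the $c$-integral in the simultaneous presence of area and boundary GMC, which I would handle by dominated convergence using the moment bounds of Section~\ref{subsec-anal-H}, and (ii) the $\HPT$ contour-pinching residue together with the double-sine simplification; by contrast the Beta-integral algebra on the probabilistic side is routine.
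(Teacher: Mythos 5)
Your proposal is correct and follows essentially the paper's own route: both arguments extract the two limits as residues of the zero-mode integral of a truncated probabilistic representation of $H$, reduce the $s=-\frac{\gamma}{2}$ residue to $-\E[\wt \ell]$ with $\wt\ell=\sum_i \mu_i \wt L_i$, evaluate the same three Beta integrals (with the same Gamma-to-sine reflection algebra and the substitution $\mu_i=\mu_B(\sigma_i)$), and defer the $\HPT$ side to the contour-pinching residue computation carried out in Lemma~\ref{lem:secH2}. The only cosmetic differences are that the paper works from the doubly integrated-by-parts $\hat H$ of Lemma~\ref{lem:def_H_trunc} via the change of variables $u=e^{\gamma c}\wt A+e^{\gamma c/2}\wt L$ while you read the pole off the $c\to+\infty$ tail of the Remark~\ref{rem-H} truncations (equivalent rearrangements), and that, like the paper, you should first restrict to $\beta_i<\frac{2}{\gamma}$ so that $\E[\wt L_i]<\infty$ (your dominated-convergence step needs this) and then relax the parameter range by meromorphicity through the Cauchy residue formula, as the paper does following Lemma~\ref{lem:HlimR_ext}.
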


\begin{proof}
We will focus on checking the second limit, the first one following the same steps but with much simpler computations.
To study the limit $\beta_1\to 2Q-\beta_2-\beta_3 - \gamma$, we assume that 
\begin{equation}\label{eq:para-range}
    \textrm{$\beta_i < \frac{2}{\gamma} -\eta $ for $i = 1,2,3$ and $Q - \frac{1}{2} \sum_i \beta_i < \frac{\gamma}{2} +\eta$ for a small $\eta>0$.}
\end{equation}
For $\eta$ small enough, \eqref{eq:para-range} implies the condition of  Definition~\ref{def:H-general}: $\beta_i < Q$ and $Q - \frac{1}{2} \sum \beta_i < \gamma \wedge \frac{2}{\gamma} \wedge \min_i (Q -\beta_i)$. 
Hence we can represent $H$ using the probabilistic expression. 

Now recall from Lemma \ref{lem:def_H_trunc} that $s(s+\frac\gamma2)H^{(\beta_1 , \beta_2, \beta_3)}_{(\mu_1,\mu_2, \mu_3)} = 	\hat H^{(\beta_1 , \beta_2, \beta_3)}_{(\mu_1,\mu_2, \mu_3)}$ where
\begin{equation*}
		\hat H^{(\beta_1 , \beta_2, \beta_3)}_{(\mu_1,\mu_2, \mu_3)} = \int (\gamma (s+\frac\gamma2) A + \frac{\gamma^2}{2} A (\sum_i \mu_i L_i) + \frac{\gamma^2}{4}  (\sum_i \mu_i L_i)^2 ) e^{-A - \sum_i \mu_i L_i} \, \LF_\bbH^{(\beta_1,0), (\beta_2,1), (\beta_3, \infty)} (\mathrm d \phi),
	\end{equation*}
and where $s = \frac{1}{2} \sum_i \beta_i - Q$,  $A = \cA_\phi(\bbH), L_1 = \cL_\phi(-\infty,0), L_2 = \cL_\phi(0,1)$ and $L_3 = \cL_\phi(1,\infty)$. Note the limit we are interested in corresponds to $s \rightarrow - \frac{\gamma}{2}$. We can now write out explicitly the integration over the zero mode. For this let $h$ be a Gaussian free field and $\wt h := h - 2Q \log |\cdot|_+ + \sum \frac{\beta_i}2 G_\bbH(\cdot, s_i)$, where $(s_1,s_2,s_3) = (0,1,\infty)$. Write $\wt A = \cA_{\wt h}(\bbH)$ and $\wt L_1 = \mu\cL_{\wt h}(-\infty, 0), \wt L_2 = \cL_{\wt h}(0,1), \wt L_3 = \cL_{\wt h}(1, +\infty)$, $\wt L = \sum_i \wt L_i$. Since $A = e^{\gamma c} \wt A$ and $L_i = e^{\gamma c/2} \wt L_i$:
\begin{align*}
\hat H^{(\beta_1 , \beta_2, \beta_3)}_{(\mu_1,\mu_2, \mu_3)} &= \int_{\mathbb{R}} dc e^{s c} \mathbb{E} \left[ \left(\gamma (s+\frac\gamma2)e^{\gamma c} \wt A + \frac{\gamma^2}{2} e^{\frac{3\gamma c}{2} } \wt A \wt L + \frac{\gamma^2}{4} e^{\gamma c}  \wt L^2 \right) e^{- e^{\gamma c} \wt A - e^{\frac{\gamma c}{2}} \wt L} \right] \\
&\underset{s \rightarrow -\frac{\gamma}{2}}{\rightarrow} \frac{\gamma}{2} \int_{\mathbb{R}} dc \mathbb{E} \left[ \wt L \left( \gamma e^{\gamma c} \wt A + \frac{\gamma}{2} e^{\frac{\gamma c}{2}}  \wt L \right) e^{- e^{\gamma c} \wt A - e^{\frac{\gamma c}{2}} \wt L} \right] = \frac{\gamma}{2} \int_0^{+ \infty} du \mathbb{E}[\wt L e^{-u}]  =  \frac{\gamma}{2} \mathbb{E}[\wt L].
\end{align*}
In the last line we have used the change of variable $u = e^{\gamma c} \wt A +  e^{\frac{\gamma c}{2}} \wt L$. The expectation $\mathbb{E}[\wt L]$ of the boundary GMC measure simply reduces to an integral over $\mathbb{R}$:
\begin{align*}
& \mathbb{E}[\wt L] = \mu_1 \int_{-\infty}^0 \frac{dx}{|x|^{\frac{\gamma \beta_1}{2}} |x-1|^{\frac{\gamma \beta_2}{2}} } +  \mu_2 \int_0^1 \frac{dx}{|x|^{\frac{\gamma \beta_1}{2}} |x-1|^{\frac{\gamma \beta_2}{2}} }
+ \mu_3 \int_1^{\infty} \frac{dx}{|x|^{\frac{\gamma \beta_1}{2}} |x-1|^{\frac{\gamma \beta_2}{2}} } \\
& = \mu_1 \frac{\Gamma(\frac{\gamma \beta_1}{2} + \frac{\gamma \beta_2}{2} -1) \Gamma(1 - \frac{\gamma \beta_1}{2})}{\Gamma( \frac{\gamma \beta_2}{2})}
+
\mu_2 \frac{\Gamma(1 - \frac{\gamma \beta_1}{2}) \Gamma(1 - \frac{\gamma \beta_2}{2})}{\Gamma(2 - \frac{\gamma \beta_1}{2} - \frac{\gamma \beta_2}{2})} + \mu_3 \frac{\Gamma(\frac{\gamma \beta_1}{2} + \frac{\gamma \beta_2}{2} -1) \Gamma(1 - \frac{\gamma \beta_2}{2})}{\Gamma( \frac{\gamma \beta_1}{2})}\\
& =\frac{1}{\pi} \sqrt{ \frac{1}{\sin \frac{\pi \gamma^2}{4}}} \Gamma(1 - \frac{\gamma \beta_2}{2}) \Gamma(1 - \frac{\gamma \beta_3}{2}) \Gamma(\frac{\gamma \beta_2}{2} + \frac{\gamma \beta_3}{2} - 1)\\
& \times \left( \cos(\pi \gamma( \sigma_1 - \frac{Q}{2})) \sin(\frac{\pi \gamma \beta_2}{2}) + \cos(\pi \gamma( \sigma_2 - \frac{Q}{2})) \sin(\frac{\pi \gamma \beta_3}{2}) - \cos(\pi \gamma ( \sigma_3 - \frac{Q}{2})) \sin(\frac{\pi \gamma (\beta_2 + \beta_3)}{2}) \right).
\end{align*}
Note that the integral above converges provided $\beta_1, \beta_2 < \frac{2}{\gamma}$, $\beta_1 + \beta_2 > \frac{2}{\gamma}$, which are implied by~\eqref{eq:para-range}. Since $\lim_{s \to -\frac\gamma2} (s + \frac\gamma2) H^{(\beta_1 , \beta_2, \beta_3)}_{(\mu_1,\mu_2, \mu_3)} = \lim_{s \to -\frac\gamma2} s^{-1} \hat H^{(\beta_1 , \beta_2, \beta_3)}_{(\mu_1,\mu_2, \mu_3)} = - \mathbb E[\wt L]$, we conclude that the second limit holds under the condition~\eqref{eq:para-range}.
Using the Cauchy residue formula as in the proof of Lemma~\ref{lem:HlimR_ext}, the condition~\eqref{eq:para-range} can be relaxed to the domain in Lemma \ref{analycity_H} where $H$ is analytically extended.
The first limit is simpler, which uses the GMC expression for $H$ with one less truncation. 

Lastly the claim for $\HPT$ can be deduced from Lemma \ref{lem:secH2} which computes the desired limits for the contour integral appearing in $\HPT$, namely the function  $\mathcal{J}_{\mathrm{PT}}$ defined by equation \eqref{J_PT_app}. Computing the limit of the explicit prefactor relating  $\HPT$ and $\mathcal{J}_{\mathrm{PT}}$ allows to deduce the desired claim for $\HPT$.
\end{proof}

\begin{proof}[Proof of Theorem \ref{thm:H}] First assume $\gamma^2 \not \in \mathbb Q$. It is less straightforward to see than in the case of $R$ that the shift equations on $H$ completely determine its value, since they contain three terms instead of two. Instead of working directly with $H, \HPT$, we will match the following two functions:
\begin{align*}
&\mathcal{J}_{\mathrm{PT}}
\begin{pmatrix}
\beta_1 , \beta_2, \beta_3 \\
\sigma_1,  \sigma_2,   \sigma_3 
\end{pmatrix} := \int_{\mathcal{C}} \frac{S_{\frac{\gamma}{2}}(\frac{Q - \beta_2}{2} + \sigma_3 \pm ( \frac{Q}{2} - \sigma_2) +r)  S_{\frac{\gamma}{2}}(\frac{Q}{2} \pm \frac{Q - \beta_3}{2}  +\sigma_3-\sigma_1+r)}{S_{\frac{\gamma}{2}}(\frac{3Q}{2} \pm \frac{Q - \beta_1}{2}-\frac{\beta_2}{2}+\sigma_3-\sigma_1+r) S_{\frac{\gamma}{2}}(2\sigma_3 + r) S_{\frac{\gamma}{2}}(Q+r)} \frac{dr}{\ii},\\
& \mathcal{J}_{H}
\begin{pmatrix}
\beta_1 , \beta_2, \beta_3 \\
\sigma_1,  \sigma_2,   \sigma_3 
\end{pmatrix} := H
\begin{pmatrix}
\beta_1 , \beta_2, \beta_3 \\
\sigma_1,  \sigma_2,   \sigma_3 
\end{pmatrix} \times \frac{1}{2\pi}  \left(\frac{\pi  (\frac{\gamma}{2})^{2-\frac{\gamma^2}{2}} \Gamma(\frac{\gamma^2}{4})}{\Gamma(1-\frac{\gamma^2}{4})} \right)^{\frac{ (\beta_1 + \beta_2 + \beta_3)- 2 Q}{2\gamma}}\\
& \times
  \frac{S_{\frac{\gamma}{2}}(\frac{\beta_3}{2} - \sigma_1 + \frac{Q}{2} \pm ( \frac{Q}{2} - \sigma_3 ))S_{\frac{\gamma}{2}}(\frac{\beta_1}{2} + \sigma_1 - \frac{Q}{2} \pm ( \frac{Q}{2} - \sigma_2 )) \Gamma_{\frac{\gamma}{2}}(Q) \prod_{i=1}^3\Gamma_{\frac{\gamma}{2}}(Q-\beta_i)}{\Gamma_{\frac{\gamma}{2}}(Q - \frac{\beta_2}{2} \pm \frac{Q - \beta_1}{2} \pm \frac{Q - \beta_3}{2} ) }.
\end{align*}
Namely we have removed the prefactor in front of the contour integral of $\HPT$ and done the same multiplication for $H$. Now thanks to Theorem~\ref{full_shift_H} and Lemma~\ref{lem:secH1}, both $\mathcal{J}_{H}$ and $\mathcal{J}_{\mathrm{PT}}$ obey the same shift equations which have the following form:  for $\chi = \frac{\gamma}{2}$ or $\frac{2}{\gamma}$,
\begin{align}
& \mathcal{J}
\begin{pmatrix}
\beta_1 , \beta_2 - \chi, \beta_3 \\
\sigma_1,  \sigma_2,   \sigma_3 
\end{pmatrix} = f_1(\beta_1, \beta_2) \mathcal{J}
\begin{pmatrix}
\beta_1 - \chi , \beta_2, \beta_3 \\
\sigma_1,  \sigma_2 + \frac{\chi}{2},   \sigma_3 
\end{pmatrix} + f_2(\beta_1, \beta_2) \mathcal{J}
\begin{pmatrix}
\beta_1 +\chi, \beta_2, \beta_3 \\
\sigma_1,  \sigma_2 + \frac{\chi}{2},   \sigma_3 
\end{pmatrix}, \label{shift_3pt_sec4_1} \\ 
 & \mathcal{J}
\begin{pmatrix}
\beta_1 , \beta_2 +\chi, \beta_3 \\
\sigma_1,  \sigma_2 + \frac{\chi}{2},   \sigma_3 
\end{pmatrix} = f_3(\beta_1, \beta_2) \mathcal{J}
\begin{pmatrix}
\beta_1 - \chi, \beta_2, \beta_3 \\
\sigma_1,  \sigma_2,   \sigma_3 
\end{pmatrix}   + f_4(\beta_1, \beta_2) \mathcal{J}
\begin{pmatrix}
\beta_1 +\chi, \beta_2, \beta_3 \\
\sigma_1,  \sigma_2,   \sigma_3 
\end{pmatrix}, \label{shift_3pt_sec4_2}
\end{align}
where $\mathcal{J} = \mathcal{J}_{H}$ or $\mathcal{J}_{\mathrm{PT}}$ and the functions $f_1, f_2, f_3, f_4$ are given by:
\begin{align*}
&f_{1}(\beta_1, \beta_2) = \frac{1}{2 \sin(\pi \chi(\beta_1 - \chi))},\\
&f_{2}(\beta_1, \beta_2) = \frac{ 2 \sin(\pi\chi(\frac{\beta_1}{2}+\sigma_1+\sigma_2-Q)) \sin(\pi\chi(\frac{\beta_1}{2}-\sigma_1+\sigma_2))}{\sin(\pi\chi(\chi-\beta_1)) },\\
& f_{3}(\beta_1, \beta_2) = \frac{\sin(\pi\chi( \frac{3\chi}{2} - \frac{\bar{\beta}}{2})) \sin(\frac{\pi\chi}{2}(\beta_1 - \chi+\beta_2-\beta_3))}{2 \sin(\pi \chi (\beta_1 -\chi)) \sin(\pi\chi(\frac{\beta_2}{2}+\sigma_2+\sigma_3-Q)) \sin(\pi\chi(\frac{\beta_2}{2}+\sigma_2-\sigma_3)) },\\
& f_{4}(\beta_1, \beta_2) = \frac{2 \sin(\frac{\pi\chi}{2}(\beta_3 - \chi \pm( \beta_1-\beta_2)  ))  \sin(\pi\chi(\frac{\beta_1 - \chi}{2}-\sigma_1-\sigma_2+Q)) \sin(\pi\chi(\frac{\beta_1- \chi}{2}+\sigma_1-\sigma_2))  }{\sin(\pi \chi (\beta_1-\chi)) \sin(\pi\chi(\frac{\beta_2}{2}+\sigma_2+\sigma_3-Q)) \sin(\pi\chi(\frac{\beta_2}{2}+\sigma_2-\sigma_3)) }.
\end{align*}

\begin{comment}
\begin{align*}
&f_{1}(\beta_1, \beta_2 + \chi) = \frac{1}{2 \sin(\pi \chi(\beta_1 - \chi))},\\
&f_{2}(\beta_1, \beta_2 + \chi) = \frac{ 2 \sin(\pi\chi(\frac{\beta_1}{2}+\sigma_1+\sigma_2-Q)) \sin(\pi\chi(\frac{\beta_1}{2}-\sigma_1+\sigma_2))}{\sin(\pi\chi(\chi-\beta_1)) },\\
& f_{3}(\beta_1 + \chi, \beta_2) = \frac{\sin(\pi\chi( \chi - \frac{\bar{\beta}}{2})) \sin(\frac{\pi\chi}{2}(\beta_1+\beta_2-\beta_3))}{2 \sin(\pi \chi \beta_1) \sin(\pi\chi(\frac{\beta_2}{2}+\sigma_2+\sigma_3-Q)) \sin(\pi\chi(\frac{\beta_2}{2}+\sigma_2-\sigma_3)) }\\
& f_{4}(\beta_1 + \chi, \beta_2) = \frac{2 \sin(\frac{\pi\chi}{2}(\beta_1+\beta_3-\beta_2)) \sin(\frac{\pi\chi}{2}(\beta_2+\beta_3-\beta_1-2\chi)) \sin(\pi\chi(\frac{\beta_1}{2}-\sigma_1-\sigma_2+Q)) \sin(\pi\chi(\frac{\beta_1}{2}+\sigma_1-\sigma_2))  }{\sin(\pi \chi \beta_1) \sin(\pi\chi(\frac{\beta_2}{2}+\sigma_2+\sigma_3-Q)) \sin(\pi\chi(\frac{\beta_2}{2}+\sigma_2-\sigma_3)) }
\end{align*}
\end{comment}

The advantage of this form of the shift equations is that the four functions $f_i$ all contain only sine functions (and no more gamma functions) and therefore will have a periodicity property which will be useful.
Now consider the shift equation \eqref{shift_3pt_sec4_1} with the parameter replacement $\beta_1 \rightarrow \beta_1 +  \chi $,  $\beta_2 \rightarrow \beta_2 + \chi $:
\begin{align*}
&\mathcal{J}
\begin{pmatrix}
\beta_1 + \chi, \beta_2, \beta_3 \\
\sigma_1,  \sigma_2,   \sigma_3 
\end{pmatrix} = f_1(\beta_1 + \chi, \beta_2 + \chi) \mathcal{J}
\begin{pmatrix}
\beta_1 , \beta_2 +\chi, \beta_3 \\
\sigma_1,  \sigma_2 + \frac{\chi}{2},   \sigma_3 
\end{pmatrix} + f_2(\beta_1 + \chi, \beta_2 + \chi) \mathcal{J}
\begin{pmatrix}
\beta_1 +2 \chi, \beta_2 +\chi, \beta_3 \\
\sigma_1,  \sigma_2 + \frac{\chi}{2},   \sigma_3 
\end{pmatrix}.
\end{align*}
In this equation the two $\mathcal{J}$ functions appearing on the right hand side can be expressed in terms of $\mathcal{J}$ functions involving only shifts on $\beta_1$ using twice equation \eqref{shift_3pt_sec4_2}, once as it is and once with the parameter replacement $\beta_1 \rightarrow \beta_1 + 2 \chi $. Performing one more global parameter replacement $\beta_1$ to $\beta_1 + \chi$, we get the following shift equation which is only on the variable $\beta_1$:
\begin{align}\label{eq:key-shift}
0 &=   f_1(\beta_1 + 2 \chi, \beta_2 + \chi )  f_3(\beta_1 + \chi, \beta_2) \mathcal{J}
\begin{pmatrix}
\beta_1 , \beta_2, \beta_3 \\
\sigma_1,  \sigma_2,   \sigma_3 
\end{pmatrix} \\ \nonumber
&  + \left( -1+  f_1(\beta_1 + 2 \chi, \beta_2 + \chi )  f_4(\beta_1 + \chi, \beta_2) +  f_2(\beta_1 + 2 \chi, \beta_2 + \chi )  f_3(\beta_1 + 3 \chi, \beta_2) \right) \mathcal{J}
\begin{pmatrix}
\beta_1 +2 \chi, \beta_2, \beta_3 \\
\sigma_1,  \sigma_2,   \sigma_3 
\end{pmatrix}   \\ \nonumber
& + f_2(\beta_1 + 2 \chi , \beta_2 + \chi )  f_4(\beta_1 + 3 \chi, \beta_2) \mathcal{J}
\begin{pmatrix}
\beta_1 +4 \chi, \beta_2, \beta_3 \\
\sigma_1,  \sigma_2,   \sigma_3 
\end{pmatrix}.
\end{align}
Now we fix the five parameters $\beta_2, \beta_3, \sigma_1, \sigma_2, \sigma_3$ to some generic values and view all of the functions below as functions of the single parameter $\beta_1$. To lighten notations we write simply $\mathcal{J}(\beta_1)$ . 
The above shift equations~\eqref{eq:key-shift} can be put into the form:
\begin{align}\label{eq:shift_J}
 \mathcal{J}(\beta_1 + 4 \chi) + a_{\chi}(\beta_1) \mathcal{J}(\beta_1 + 2\chi) + b_{\chi}(\beta_1)  \mathcal{J}(\beta_1) =0,
\end{align}
where %$a_{\chi}(\beta_1), b_{\chi}(\beta_1)$ are functions that have the following expressions:
\begin{align*}
&a_{\chi}(\beta_1) = \frac{ - 1+  f_1(\beta_1 + 2 \chi, \beta_2 + \chi )  f_4(\beta_1 + \chi, \beta_2) +  f_2(\beta_1 + 2 \chi, \beta_2 + \chi )  f_3(\beta_1 + 3 \chi, \beta_2) }{f_2(\beta_1 + 2 \chi , \beta_2 + \chi )  f_4(\beta_1 + 3 \chi, \beta_2)},\\
& b_{\chi}(\beta_1) = \frac{f_1(\beta_1 + 2 \chi, \beta_2 + \chi )  f_3(\beta_1 + \chi, \beta_2)}{f_2(\beta_1 + 2 \chi , \beta_2 + \chi )  f_4(\beta_1 + 3 \chi, \beta_2)}.
\end{align*}
To complete the proof, we will first show that $\mathcal J_{H}(\beta_1)=\mathcal J_{\mathrm{PT}}(\beta_1)$ agree for four special values of $\beta_1$ (as in~\eqref{eq:match_res}), then use~\eqref{eq:shift_J} to extend to all values of $\beta_1$. Consider the matrices:
\begin{align*}
M_{1}(\beta_1) = \begin{pmatrix}
\mathcal{J}_H(\beta_1) & \mathcal{J}_H(\beta_1 - \frac{4}{\gamma})  \\
\mathcal{J}_H(\beta_1 - \gamma )& \mathcal{J}_H(\beta_1 - 2Q)
\end{pmatrix}, \quad \textrm{and}\quad M_{2}(\beta_1) = \begin{pmatrix}
\mathcal{J}_{\mathrm{PT}}(\beta_1) & \mathcal{J}_{\mathrm{PT}}(\beta_1 - \frac{4}{\gamma} )  \\
\mathcal{J}_{\mathrm{PT}}(\beta_1 - \gamma )& \mathcal{J}_{\mathrm{PT}}(\beta_1 - 2Q)
\end{pmatrix}. 
\end{align*}
Set $\beta_0 := 2Q - \beta_2 - \beta_3$. We will derive the consequence of Lemma \ref{lem:res_H} for these matrices. Note that both functions $\mathcal{J}_H, \mathcal{J}_{\mathrm{PT}}$ obey an exact reflection identity, namely $\mathcal{J}_H(\beta_1) = \mathcal{J}_H(2Q - \beta_1)$ and $\mathcal{J}_{\mathrm{PT}}(\beta_1) = \mathcal{J}_{\mathrm{PT}}(2Q - \beta_1)$. For $\mathcal{J}_{\mathrm{PT}}$ this fact is obvious from the definition. For $\mathcal{J}_H$ it is a consequence of Lemma \ref{reflection_H}. Now Lemma \ref{lem:res_H} implies that:
\begin{align*}
&\lim_{\beta_1\to \beta_0}(\beta_1 - \beta_0)\mathcal{J}_H(\beta_1) = \lim_{\beta_1\to \beta_0}(\beta_1 - \beta_0)\mathcal{J}_{\mathrm{PT}}(\beta_1),\\
&\lim_{\beta_1\to \beta_0}(\beta_1 - \beta_0)\mathcal{J}_H(\beta_1 - \gamma) = \lim_{\beta_1\to \beta_0}(\beta_1 - \beta_0)\mathcal{J}_{\mathrm{PT}}(\beta_1 - \gamma).
\end{align*}
Since $\mathcal{J}_H(\beta_1) = \mathcal{J}_H(2Q - \beta_1)$ we can reformulate the first limit as:
\begin{align*}
\lim_{\beta_1\to \beta_0}(\beta_1 - \beta_0)\mathcal{J}_H(\beta_1) = \lim_{\beta_1\to \beta_0}(\beta_1 - \beta_0)\mathcal{J}_H(2Q - \beta_1) = \lim_{\beta_1\to - \beta_2 - \beta_3}(\beta_1 + \beta_2 + \beta_3)\mathcal{J}_H(- \beta_1).  
\end{align*}
Since everything we derive works for generic values of $\beta_2, \beta_3$, we can replace $(\beta_2, \beta_3)$ by $(- \beta_2, - \beta_3)$. Therefore we obtain the equality of limits:
\begin{align*}
&\lim_{\beta_1\to \beta_2 + \beta_3}(\beta_1 - \beta_2 - \beta_3)\mathcal{J}_H(-\beta_1) = \lim_{\beta_1\to \beta_2 + \beta_3}(\beta_1 - \beta_2 - \beta_3)\mathcal{J}_{\mathrm{PT}}(-\beta_1)\\
\Leftrightarrow &\lim_{\beta_1\to \beta_0}(\beta_1 - \beta_0)\mathcal{J}_H(\beta_1 - 2Q) = \lim_{\beta_1\to \beta_0}(\beta_1 - \beta_0)\mathcal{J}_{\mathrm{PT}}(\beta_1 - 2Q).
\end{align*}
By applying a similar argument to $\mathcal{J}_H(\beta_1 - \gamma)$ we can show that the limits also match for $\mathcal{J}_H(\beta_1 - \frac{4}{\gamma})$. Therefore the conclusion of the above is that:
\begin{equation}\label{eq:match_res}
\lim_{\beta_1\to \beta_0}(\beta_1 - \beta_0) M_1(\beta_1) = \lim_{\beta_1\to \beta_0}(\beta_1 - \beta_0) M_2(\beta_1).
\end{equation}
Let us write down the shift equations satisfied by the matrices $M_1, M_2$. One has
\begin{align*}
M_1(\beta_1 + \gamma) = A_{\frac{\gamma}{2}}(\beta_1) M_1(\beta_1), \quad M_1(\beta_1 + \frac{4}{\gamma}) =  M_1(\beta_1) A_{\frac{2}{\gamma}}(\beta_1)^\top, \quad A_{\chi}(\beta_1) := \begin{pmatrix}
- a_{\chi}(\beta_1) & - b_{\chi}(\beta_1)  \\
1 & 0
\end{pmatrix},
\end{align*}
and the same relations for $M_2$.
The fact that these first order shift equations on the matrices $M_1, M_2$ hold uses \eqref{eq:shift_J} combined with the fact that the function $a_{\chi}(\beta_1), b_{\chi}(\beta_1)$ are both $2/\chi$-periodic for both values of $\chi$.
Consider the determinant: $$D_2(\beta_1) = \det M_2(\beta_1) = \mathcal{J}_{\mathrm{PT}}(\beta_1) \mathcal{J}_{\mathrm{PT}}(\beta_1- 2 Q)- \mathcal{J}_{\mathrm{PT}}(\beta_1 -\gamma) \mathcal{J}_{\mathrm{PT}}(\beta_1 - \frac{4}{\gamma}).$$
From the result of Lemma \ref{lem:secH2} it is easy to see that the residue of $D_2(\beta_1)$ at $\beta_1 = \beta_0$ is not zero. More precisely the residues of $\mathcal{J}_{\mathrm{PT}}(\beta_1)$ and  $\mathcal{J}_{\mathrm{PT}}(\beta_1 -\gamma)$ are related by an explicit meromorphic function written in Lemma \ref{lem:secH2}, and the residues of $\mathcal{J}_{\mathrm{PT}}(\beta_1 - 2Q)$ and  $\mathcal{J}_{\mathrm{PT}}(\beta_1 -\frac{4}{\gamma})$ are related by the same function with $\beta_2, \beta_3$ replaced by $-\beta_2, - \beta_3$.

Since $D_2(\beta_1)$ is a nonzero meromorphic function, it must have isolated and at most countably many zeros. Therefore the matrix inverse $M_2(\beta_1)^{-1}$ is a well-defined 2-by-2 matrix whose entries are meromorphic functions of $\beta_1$.
Now consider $M(\beta_1) := M_1(\beta_1) M_2(\beta_1)^{-1}$. Then we have:
\begin{equation}\label{eq:M-shift}
    M(\beta_1 + \gamma ) = A_{\frac{\gamma}{2}}(\beta_1) M(\beta_1) A_{\frac{\gamma}{2}}(\beta_1)^{-1}, \quad \textrm{and}\quad  M(\beta_1 + \frac{4}{\gamma}) =  M(\beta_1).
\end{equation}
Similarly as for $M_2$, since $\det A_{\frac{\gamma}{2}}(\beta_1) = b_{\frac{\gamma}{2}}(\beta_1)$   is nonzero  meromorphic function,  the matrix $A_{\frac{\gamma}{2}}(\beta_1)^{-1}$ is a well-defined 2-by-2 matrix with meromorphic entries. Thanks to equation \eqref{eq:match_res}, we know that $M(\beta_1)$ is the identity matrix for $\beta_1 = \beta_0$. By the same standard argument as below~\eqref{eq:f-shift}, when $ \gamma^2 \notin \mathbb{Q}$, the two shift equations in~\eqref{eq:M-shift} imply $M(\beta_1)$ is the identity matrix for all $\beta_1$. The same holds for $ \gamma^2 \in \mathbb{Q}$ by continuity. Therefore $\mathcal{J}_H = \mathcal{J}_{\mathrm{PT}}$ and hence $H = \HPT$.
\end{proof}

\subsection{Proof of $G = G_{\mathrm{Hos}}$}\label{sec:proof_G}

We first perform all the computations up to a global constant  depending on $\gamma, \beta$ but not on $\alpha$ or $\mu_B$.  This will determine $G_{\mu_B}(\alpha,\beta)$ up to a $(\gamma, \beta)$-dependent factor, which we pin down at the end. We use $c_{\gamma, \beta}$ and $c_{\gamma}$ to denote $(\gamma, \beta)$-dependent and $\gamma$-dependent constants, respectively, which can vary from line to line.

We first recall three formulas from \cite{rz-boundary}. We add a $0$ subscript to all the formulas to indicate that they all correspond to having the bulk cosmological constant set to zero, namely $\mu=0$. 
\begin{equation}
\overline{U}_0(\alpha)  =  \left(\frac{2^{-\frac{\gamma\alpha}{2}} 2\pi}{\Gamma(1-\frac{\gamma^2}{4})} \right)^{\frac{2}{\gamma}(Q-\alpha)}\Gamma(\frac{\gamma \alpha}{2} - \frac{\gamma^2}{4}).
\end{equation}

\begin{equation}
\overline{G}_0(\alpha, \beta)= c_{\gamma, \beta} \left( \frac{2^{\frac{\gamma}{2}(\frac{\beta}{2} - \alpha)} 2\pi}{\Gamma(1-\frac{\gamma^2}{4})} \right)^{\frac{2}{\gamma}(Q-\alpha-\frac{\beta}{2})} \frac{ \Gamma(\frac{\gamma\alpha}{2}+\frac{\gamma\beta}{4}-\frac{\gamma^2}{4}) \Gamma_{\frac{\gamma}{2}}(\alpha \pm \frac{\beta}{2} )  }{  \Gamma_{\frac{\gamma}{2}}(\alpha )^2 }.
\end{equation}

\begin{align}
\overline{R}_0(\beta, \mu_1, \mu_2) = c_{\gamma, \beta}  \frac{  e^{\ii \pi(\sigma_1^0+\sigma_2^0 -Q)(Q-\beta)}}{  S_{\frac{\gamma}{2}}(\frac{\beta}{2} + \sigma_2^0- \sigma_1^0)S_{\frac{\gamma}{2}}(\frac{\beta}{2} + \sigma_1^0- \sigma_2^0) }, \quad \text{where} \quad \mu_i = e^{\ii \pi \gamma (\sigma_i^0 -\frac{Q}{2})}.
\end{align} 

We prove $G = G_{\mathrm{Hos}}$ using \cite[Proposition 1.7]{wu2023sle}. To recall it properly, for a fixed  $\beta \in (0, \gamma)$, we let $\Psi_{\beta}$ be a random variable whose moment generating function satisfies for each $\alpha\in (Q-\frac{\beta}2, Q )$
\begin{align}\label{eq:CR}
& \E[\Psi_\beta^{2 \Delta_{\alpha} -2}  ]   =  \frac{\overline{G}_0(\alpha, \gamma) \overline{G}_0(\gamma, \beta)}{\overline{G}_0(\alpha, \beta) \overline{G}_0(\gamma, \gamma)} \frac{ \Gamma( \frac{4}{\gamma^2} ) \Gamma( \frac{2}{\gamma}( \frac{\beta}{2} +\gamma - Q ) )}{  \Gamma( \frac{2}{\gamma}(Q -\alpha) +1 ) \Gamma( \frac{2}{\gamma}( \frac{\beta}{2} +\alpha - Q ) )} \\ \nonumber
& \quad  \times \left( \int_0^{\infty} \mu_2^{ \frac{2}{\gamma} (Q -\alpha)  } \frac{\partial}{\partial \mu_2}  R_0( Q + \frac{\beta}{2}, \mu_2, 1)  d \mu_2 \right) \left( \int_0^{\infty} \mu_2^{ \frac{2}{\gamma} (Q -\gamma) } \frac{\partial}{\partial \mu_2} R_0( Q + \frac{\beta}{2}, \mu_2, 1)  d \mu_2 \right)^{-1}.
\end{align}

By~\cite[Proposition 7.13]{wu2023sle},  the law of $\Psi_{\beta}$ describes a quantity 
related to the so-called SLE$_\kappa(\rho)$ bubble measure (see~\cite{Zhan-bubble,wu2023sle}).  We do not recall its definition since it is not needed for the proof of $G=G_{\mathrm {Hos}}$.

For $\mu_B, \mu_2 \geq 0$, let $\mu_B = \mu_B(\sigma), \mu_2 = \mu_B(\sigma_2)$ as in \eqref{eq:mu_to_sigma}.
We also need the following measure $\mathcal M$ defined on a measurable space $(\Omega,\mathcal F)$ with three positive measurable functions $L_1,L_2,A$ satisfying 
\begin{align}\label{eq:M2W}
\mathcal M \left[ e^{- \mu_B L_1 - \mu_2 L_2 -A}  \right] = - \frac{\gamma}{\beta} R(Q -\frac{\beta}{2}, \sigma, \sigma_2)^{-1}.
\end{align} 
The measure $\mathcal M $ in~\cite{wu2023sle} is realized by the so-called thin quantum disk with weight $\frac\gamma2(\gamma - \beta)$, and   $L_1,L_2,A$ are its left/right boundary lengths and its area, respectively. Again we do not recall its definition since it is not needed; 
see~\cite[Remark 7.19]{wu2023sle} for more detail. We are now ready to state  \cite[Proposition 1.7]{wu2023sle}.
\begin{proposition} (\cite[Proposition 1.7]{wu2023sle}) \label{prop:Wu1}
Assume $\alpha, \beta$ obey the Seiberg bounds $\alpha + \frac{\beta}{2} > Q$, $\alpha < Q$, $\beta < Q$. Assume further that $\beta \in (0, \gamma)$. Then 
\begin{align}\label{eq:weldG}
G_{\mu_B}(\alpha, \beta) &= \frac{c_{\gamma, \beta}}{\E[ \Psi_{\beta}^{2 \Delta_{\alpha} -2}]}      \times \frac{2^{- \frac{\alpha^2}{2}} \overline{U}_0(\alpha)}{\Gamma(\frac{2}{\gamma}(Q -\alpha))} \left( \frac{1}{2} \sqrt{\frac{1}{\sin(\pi  \frac{\gamma^2}{4})}} \right)^{ \frac{2}{\gamma}(Q -\alpha) } \\
&\times \mathcal M \left[ e^{- \mu_B L_1 -  A} K_{\frac{2}{\gamma}(Q -\alpha)} \left( L_2 \sqrt{\frac{1}{\sin \frac{\pi \gamma^2}{4}}} \right) \right], \nonumber
\end{align}
where $K_{\nu}(x) = \int_0^{\infty} e^{-x \cosh t} \cosh(\nu t) dt$ is the modified Bessel function, $c_{\gamma, \beta}$ a constant of $\gamma, \beta$. 
\end{proposition}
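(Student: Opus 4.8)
The plan is to establish \eqref{eq:weldG} by a conformal welding argument within the LQG/SLE coupling theory, in the spirit of \cite{ARS-FZZ,AHS-SLE-integrability}. The starting point is to read the probabilistic definition \eqref{def:G-rig} of $G_{\mu_B}(\alpha,\beta)$ (with bulk constant $\mu=1$) as the partition function of a quantum disk carrying a bulk insertion of weight $\alpha$ at $\ii$ and a boundary insertion of weight $\beta$ at $0$, weighted by $e^{-A-\mu_B\cL_\phi(\R)}$. Because $\beta\in(0,\gamma)$, the $\beta$-insertion places the surface in the thin phase, which is exactly the regime governed by the thin quantum disk measure $\mathcal M$ of weight $\frac\gamma2(\gamma-\beta)$ appearing on the right-hand side.

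The central step is to decompose this marked disk along an SLE$_\kappa(\rho)$ bubble rooted at the boundary insertion $0$. The welding theorem underlying \cite{wu2023sle} asserts that cutting along such a bubble splits the surface into a thin quantum disk distributed as $\mathcal M$, with boundary arcs of lengths $L_1,L_2$ and area $A$, welded along the $L_2$-arc to a quantum disk that carries the bulk $\alpha$-insertion; here $L_1$ is the outer boundary, weighted by $\mu_B$, and $L_2$ is the welding interface. I would first justify this decomposition and track all additive and multiplicative normalizations, so that integrating against $\mathcal M$ reconstructs $\LF_\bbH^{(\alpha,\ii),(\beta,0)}$ up to explicit constants. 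Inserting the bulk operator, which has conformal dimension $\Delta_\alpha$, relative to this decomposition contributes a Radon--Nikodym factor equal to a negative-power moment of the conformal-radius-type functional $\Psi_\beta$ of the bubble; this produces the factor $\E[\Psi_\beta^{2\Delta_\alpha-2}]^{-1}$, whose value is supplied by \eqref{eq:CR} in terms of the $\mu=0$ structure constants $\overline{G}_0,R_0$.

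It then remains to compute the partition function of the $\alpha$-piece as a function of the welded boundary length $L_2$. This is a quantum disk with a single bulk $\alpha$-insertion, and integrating out its scaling zero mode $\mathbf c$ against the area weight $e^{-A}$ (with $\mu=1$) factorizes it into the $\mu=\mu_B=0$ bulk one-point constant $\overline{U}_0(\alpha)$ times a function of $L_2$. The $\mathbf c$-integral is of the form $\int_\R e^{sc}\exp(-a\,e^{\gamma c}-b\,e^{-\gamma c})\,dc$, which by the identity $\int_0^\infty u^{\nu-1}e^{-au-b/u}\,du=2(b/a)^{\nu/2}K_\nu(2\sqrt{ab})$ yields precisely $K_{\frac2\gamma(Q-\alpha)}\!\big(L_2\sqrt{1/\sin(\pi\gamma^2/4)}\big)$; the index $\frac2\gamma(Q-\alpha)$ records the insertion weight, and the remaining prefactors $2^{-\alpha^2/2}$, $\Gamma(\frac2\gamma(Q-\alpha))^{-1}$ and $\big(\tfrac12\sqrt{1/\sin(\pi\gamma^2/4)}\big)^{\frac2\gamma(Q-\alpha)}$ arise as the normalizations of the one-point GMC computation and of the change of variables. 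Multiplying the three factors and integrating the $L_2$-dependent Bessel function against $\mathcal M[e^{-\mu_B L_1-A}\,\cdot\,]$ produces \eqref{eq:weldG}.

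The hard part will be the welding and decomposition step together with the identification of the bulk-insertion reweighting: rigorously realizing the SLE$_\kappa(\rho)$ bubble as the interface welding $\mathcal M$ to the bulk-$\alpha$ disk, and matching the conformal-radius functional $\Psi_\beta$ to the exact moment exponent $2\Delta_\alpha-2$, requires the full SLE-bubble-measure machinery of \cite{Zhan-bubble,wu2023sle} and delicate bookkeeping of the additive constant $\mathbf c$. By contrast, the final extraction of the modified Bessel function is a self-contained zero-mode computation once the disk partition function has been set up.
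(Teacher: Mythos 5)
The paper does not actually prove this proposition: it is imported verbatim as \cite[Proposition 1.7]{wu2023sle}, the paper remarking only that it follows from the conformal welding theorem \cite[Theorem 1.1]{wu2023sle} (which builds on \cite{ARS-FZZ}) and that the proof for this range of $(\alpha,\beta)$ is reviewed in \cite[Appendix C.1]{backbone}. Your sketch reconstructs precisely that external argument — the SLE$_\kappa(\rho)$ bubble decomposition rooted at the $\beta$-insertion, the thin quantum disk $\mathcal M$ of weight $\frac\gamma2(\gamma-\beta)$ welded along the $L_2$-arc to the disk carrying the bulk $\alpha$-insertion, the normalization by the conformal-radius-type moment $\E[\Psi_\beta^{2\Delta_\alpha-2}]$ from the bubble measure of \cite{Zhan-bubble,wu2023sle}, and the Bessel factor from the $\alpha$-disk — so in architecture your proposal matches the proof the paper relies on, and you correctly identify the welding step as the place where the real work (done in \cite{wu2023sle}) lives.

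One local correction to your third paragraph: the Bessel function does not come from the Liouville zero mode of the $\alpha$-piece. With only the area weight $e^{-A}$ present, the zero-mode integral $\int_\R e^{sc} e^{-e^{\gamma c}\wt A}\,dc$ produces a Gamma function (as in the computations around Lemma~\ref{lem:def_H_trunc}), and nothing in the setup generates your $e^{-\gamma c}$ term; moreover, once the welding-interface length $L_2=\ell$ is fixed, the zero mode is eliminated by that constraint rather than integrated. The factor $K_{\frac2\gamma(Q-\alpha)}$ arises instead because, for the one-bulk-insertion disk conditioned on boundary length $\ell$, the quantum area is a multiple of an inverse-gamma variable of shape $\frac2\gamma(Q-\alpha)$ (by \cite{ag-disk} together with the mating-of-trees variance from \cite{ARS-FZZ}, exactly the mechanism used in Lemma~\ref{lem-MOT-input-0} for the weight-$2$ disk), so that $\E[e^{-A}\mid L_2=\ell]$ is the gamma-type integral $\int_0^\infty u^{\nu-1}e^{-au-b/u}\,du = 2(b/a)^{\nu/2}K_\nu(2\sqrt{ab})$ — the identity you quote, but with the gamma variable, not $\mathbf c$, as the variable of integration. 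This computation is also what produces the prefactors $\Gamma(\frac2\gamma(Q-\alpha))^{-1}$ and $\bigl(\frac12\sqrt{1/\sin(\pi\gamma^2/4)}\bigr)^{\frac2\gamma(Q-\alpha)}$ in \eqref{eq:weldG}. Since you defer the welding and the $\Psi_\beta$-identification to the machinery of \cite{wu2023sle} anyway, this slip does not change the route, but as stated that step of your sketch would yield a Gamma function rather than $K_\nu$.
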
 

Proposition~\ref{prop:Wu1} was proved using the conformal welding result \cite[Theorem 1.1]{wu2023sle}, which itself builds on our earlier work \cite{ARS-FZZ}. For the range of $(\alpha,\beta)$ specified in Proposition~\ref{prop:Wu1}, the proof of the conformal welding result was also reviewed in~\cite[Appendix C.1]{backbone}.

We now derive $G = G_{\mathrm{Hos}}$ from Proposition~\ref{prop:Wu1}. We first  simplify the expression of
$\E[\Psi_{\beta}^{2 \Delta_{\alpha} -2}]$.

\begin{lemma}\label{lem:CR}
In the setting of Proposition~\ref{prop:Wu1}, we have
\begin{align*}%\label{eq:CR} 
\E[\Psi_{\beta}^{2 \Delta_{\alpha} -2}  ] =   \frac{ c_{\gamma, \beta} \Gamma(\frac{\gamma \alpha}{2} - \frac{\gamma^2}{4}) \Gamma_{\frac{\gamma}{2}}(\alpha ) }{ \Gamma(\frac{\gamma\alpha}{2}+\frac{\gamma\beta}{4}-\frac{\gamma^2}{4}) \Gamma_{\frac{\gamma}{2}}(\alpha \pm \frac{\beta}{2} )   }  \frac{ (\alpha - Q)   }{  \Gamma( \frac{2}{\gamma}(Q -\alpha) +1 ) \Gamma( \frac{2}{\gamma}( \frac{\beta}{2} +\alpha - Q ) )      }   \frac{ \Gamma_{\frac{\gamma}{2}}(Q -\alpha )}{S_{\frac{\gamma}{2}}(2Q - \frac{\beta}{2}  -\alpha )}.
\end{align*}
\end{lemma}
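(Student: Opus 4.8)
The plan is to evaluate the right-hand side of \eqref{eq:CR} directly, splitting it into two pieces whose $\alpha$- and $\mu_B$-dependence I track separately: the ratio of $\overline{G}_0$-factors, and the ratio of the two Mellin-type integrals of $\overline{R}_0$. Throughout I absorb every factor depending only on $\gamma$ and $\beta$ into $c_{\gamma,\beta}$; in particular the prefactor $\Gamma(\tfrac{4}{\gamma^2})\Gamma(\tfrac2\gamma(\tfrac\beta2+\gamma-Q))$ and the whole $\alpha$-free denominator integral $\int_0^\infty \mu_2^{\frac2\gamma(Q-\gamma)}\partial_{\mu_2}R_0(Q+\tfrac\beta2,\mu_2,1)\,d\mu_2$ become part of $c_{\gamma,\beta}$, while the two Gamma factors $\Gamma(\tfrac2\gamma(Q-\alpha)+1)\Gamma(\tfrac2\gamma(\tfrac\beta2+\alpha-Q))$ in \eqref{eq:CR} carry through verbatim into the claimed formula.

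\textbf{Step 1: the $\overline G_0$ ratio.} Substituting the explicit formula for $\overline{G}_0$ into $\frac{\overline{G}_0(\alpha,\gamma)\,\overline{G}_0(\gamma,\beta)}{\overline{G}_0(\alpha,\beta)\,\overline{G}_0(\gamma,\gamma)}$, the factors $\overline{G}_0(\gamma,\beta)$ and $\overline{G}_0(\gamma,\gamma)$ are $\alpha$-free and go into $c_{\gamma,\beta}$, leaving $\overline{G}_0(\alpha,\gamma)/\overline{G}_0(\alpha,\beta)$. A short computation shows that the exponents of the power prefactors $2^{\ldots}$ and $\bigl(\tfrac{2\pi}{\Gamma(1-\gamma^2/4)}\bigr)^{\ldots}$ are in fact $\alpha$-independent (the quadratic-in-$\alpha$ terms cancel, using $(Q-\alpha)+\alpha=Q$), so these too are absorbed into $c_{\gamma,\beta}$. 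The only surviving $\alpha$-dependence is therefore $\frac{\Gamma(\tfrac{\gamma\alpha}{2})\,\Gamma_{\frac{\gamma}{2}}(\alpha\pm\tfrac\gamma2)}{\Gamma(\tfrac{\gamma\alpha}{2}+\tfrac{\gamma\beta}{4}-\tfrac{\gamma^2}{4})\,\Gamma_{\frac{\gamma}{2}}(\alpha\pm\tfrac\beta2)}$, where the $\Gamma_{\frac{\gamma}{2}}(\alpha)^2$ from the two denominators cancels. Applying the shift relations \eqref{eq:shift_G1}, \eqref{eq:shift_G2} to rewrite $\Gamma_{\frac{\gamma}{2}}(\alpha-\tfrac\gamma2)$ and $\Gamma_{\frac{\gamma}{2}}(\alpha+\tfrac\gamma2)$ in terms of $\Gamma_{\frac{\gamma}{2}}(\alpha)$ and ordinary Gammas, this simplifies to $c_{\gamma,\beta}\,\frac{\Gamma(\tfrac{\gamma\alpha}{2}-\tfrac{\gamma^2}{4})\,\Gamma_{\frac{\gamma}{2}}(\alpha)^2}{\Gamma(\tfrac{\gamma\alpha}{2}+\tfrac{\gamma\beta}{4}-\tfrac{\gamma^2}{4})\,\Gamma_{\frac{\gamma}{2}}(\alpha\pm\tfrac\beta2)}$.

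\textbf{Step 2: the Mellin integral (the crux).} It remains to evaluate $I(s):=\int_0^\infty \mu_2^{s}\,\partial_{\mu_2}R_0(Q+\tfrac\beta2,\mu_2,1)\,d\mu_2$ with $s=\tfrac2\gamma(Q-\alpha)$. After checking from the $S_{\frac{\gamma}{2}}$-asymptotics that the boundary terms vanish, integration by parts gives $I(s)=-s\int_0^\infty \mu_2^{s-1}R_0(Q+\tfrac\beta2,\mu_2,1)\,d\mu_2$, which already furnishes the factor $(\alpha-Q)$ up to a $\gamma$-constant. Inserting the explicit $\overline{R}_0$-formula with $\mu_1=1$ (so $\sigma_1^0=\tfrac Q2$) and substituting $\mu_2=e^{\ii\pi\gamma(\sigma_2^0-Q/2)}$ turns the Mellin transform into a contour integral along the vertical line $\tfrac Q2+\ii\R$, of the shape $\int \frac{e^{\ii\pi(2Q-2\alpha-\beta/2)(\sigma_2^0-Q/2)}}{S_{\frac{\gamma}{2}}(\tfrac\beta4+\sigma_2^0)\,S_{\frac{\gamma}{2}}(Q+\tfrac\beta4-\sigma_2^0)}\,d\sigma_2^0$. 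The hard part is evaluating this integral in closed form; I would invoke the Barnes-type integral identity for the double sine function (of the type collected in Appendix~\ref{sec:special}), which converts such a Fourier-type integral of $1/(S_{\frac{\gamma}{2}}S_{\frac{\gamma}{2}})$ into a ratio of $\Gamma_{\frac{\gamma}{2}}$- and $S_{\frac{\gamma}{2}}$-values. The outcome is $I(\tfrac2\gamma(Q-\alpha))=c_{\gamma,\beta}\,(\alpha-Q)\,\frac{\Gamma_{\frac{\gamma}{2}}(Q-\alpha)}{S_{\frac{\gamma}{2}}(2Q-\tfrac\beta2-\alpha)\,\Gamma_{\frac{\gamma}{2}}(\alpha)}$, the extra $\Gamma_{\frac{\gamma}{2}}(\alpha)^{-1}$ being exactly what reduces the $\Gamma_{\frac{\gamma}{2}}(\alpha)^2$ of Step 1 to the single $\Gamma_{\frac{\gamma}{2}}(\alpha)$ in the claim.

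\textbf{Assembling.} Multiplying the Step 1 factor, the surviving prefactor $\frac{1}{\Gamma(\frac2\gamma(Q-\alpha)+1)\Gamma(\frac2\gamma(\frac\beta2+\alpha-Q))}$ from \eqref{eq:CR}, and the Step 2 value $I(\tfrac2\gamma(Q-\alpha))$ (divided by the $\alpha$-free $I(\tfrac2\gamma(Q-\gamma))$ hidden in $c_{\gamma,\beta}$), then collecting all $\gamma,\beta$-only constants into a single $c_{\gamma,\beta}$, yields precisely the stated formula for $\E[\Psi_\beta^{2\Delta_\alpha-2}]$. The main obstacle is Step 2: pinning down the exact double-sine integral identity and verifying its hypotheses — contour placement, convergence at $\pm\ii\infty$, and the vanishing of the boundary terms in the integration by parts — in the relevant range of $(\alpha,\beta)$.
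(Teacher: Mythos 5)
Your proposal is correct and follows essentially the same route as the paper: the paper likewise reduces the $\overline G_0$-ratio (via $\overline G_0(\alpha,\gamma)=\tfrac1\pi \overline U_0(\alpha)$ and the shift equations) to $c_{\gamma,\beta}\,\Gamma(\tfrac{\gamma\alpha}2-\tfrac{\gamma^2}4)\Gamma_{\frac\gamma2}(\alpha)^2\big/\big(\Gamma(\tfrac{\gamma\alpha}2+\tfrac{\gamma\beta}4-\tfrac{\gamma^2}4)\Gamma_{\frac\gamma2}(\alpha\pm\tfrac\beta2)\big)$, integrates by parts to produce the $(\alpha-Q)$ factor, and converts the Mellin transform into exactly the contour integral over $\tfrac Q2+\ii\R$ that you display. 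The one identity you leave unspecified is not actually in Appendix~\ref{sec:special}: the paper invokes \cite[Lemma 15]{PT-CMP}, namely $\int_{\ii\R}d\tau\, e^{2\pi\ii\tau\beta'}e^{\ii\pi\tau(\alpha'-Q)}\frac{S_{\frac\gamma2}(\tau+\alpha')}{S_{\frac\gamma2}(\tau+Q)}=\ii\, e^{\frac{\ii\pi}2\alpha'(Q-\alpha')}e^{-\ii\pi\alpha'\beta'}\frac{S_{\frac\gamma2}(\alpha')S_{\frac\gamma2}(\beta')}{S_{\frac\gamma2}(\alpha'+\beta')}$ with $\alpha'=Q-\tfrac\beta2$, $\beta'=Q-\alpha$, applied after shifting the contour to $\ii\R$ and rewriting $S_{\frac\gamma2}(Q+\tfrac\beta4-\sigma_2^0)^{-1}=S_{\frac\gamma2}(\sigma_2^0-\tfrac\beta4)$ --- and since your claimed closed form $I(\tfrac2\gamma(Q-\alpha))=c_{\gamma,\beta}(\alpha-Q)\Gamma_{\frac\gamma2}(Q-\alpha)/\big(S_{\frac\gamma2}(2Q-\tfrac\beta2-\alpha)\Gamma_{\frac\gamma2}(\alpha)\big)$ coincides with the paper's output $\pi\gamma\, S_{\frac\gamma2}(Q-\tfrac\beta2)S_{\frac\gamma2}(Q-\alpha)/S_{\frac\gamma2}(2Q-\tfrac\beta2-\alpha)$ (using $S_{\frac\gamma2}(Q-\alpha)=\Gamma_{\frac\gamma2}(Q-\alpha)/\Gamma_{\frac\gamma2}(\alpha)$), your assembly goes through verbatim.
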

\begin{proof}
Since $\overline{G}_0(\alpha, \gamma) = \frac{1}{\pi} \overline{U}_0(\alpha)$ and $R_0(\beta, \mu_1, \mu_2) = - \Gamma(1 - \frac{2}{\gamma}(Q -\beta)) \overline{R}_0(\beta, \mu_1, \mu_2)$, we have
%Therefore we can compute:
\begin{align*}
\frac{\overline{G}_0(\alpha, \gamma) \overline{G}_0(\gamma, \beta)}{\overline{G}_0(\alpha, \beta) \overline{G}_0(\gamma, \gamma)} =  c_{\gamma, \beta}  \frac{ \Gamma(\frac{\gamma \alpha}{2} - \frac{\gamma^2}{4})   \Gamma_{\frac{\gamma}{2}}(\alpha )^2  }{  \Gamma(\frac{\gamma\alpha}{2}+\frac{\gamma\beta}{4}-\frac{\gamma^2}{4})  \Gamma_{\frac{\gamma}{2}}(\alpha \pm \frac{\beta}{2} )  }.
\end{align*}
Now we look at the integral containing $R_0$ in~\eqref{eq:CR}. We first write:
\begin{align*}
&  \int_0^{\infty} \mu_2^{ \frac{2}{\gamma} (Q -\alpha)  } \frac{\partial}{\partial \mu_2}  R_0( Q + \frac{\beta}{2}, \mu_2, 1)  d \mu_2 = \frac{2}{\gamma} (Q -\alpha) \Gamma(1 + \frac{\beta}{\gamma}) \int_0^{\infty} \mu_2^{ \frac{2}{\gamma} (Q -\alpha) -1 }  \overline{R}_0( Q + \frac{\beta}{2}, \mu_2, 1)  d \mu_2\\
& = c_{\gamma, \beta}  ( Q -\alpha)  \int_0^{\infty}  d \mu_2  
\,\mu_2^{ \frac{2}{\gamma} (Q -\alpha) - 1 } \frac{  e^{ \ii \pi(\sigma_2^0 - \frac{Q}{2})(- \frac{\beta}{2})}}{S_{\frac{\gamma}{2}}( \frac{\beta}{4} + \sigma_2^0 )S_{\frac{\gamma}{2}}( Q + \frac{\beta}{4} - \sigma_2^0) }. 
\end{align*}
We now compute this last integral, the relation between $\mu_2$ and $\sigma_2^0$ is $\mu_2 = e^{ \ii \pi \gamma(\sigma_2^0 - \frac{Q}{2})}$. We get:
\begin{align*}
&\int_0^{+ \infty} d \mu_2 \,\mu_{2}^{\frac{2(Q-\alpha)}{\gamma} -1 } \frac{  e^{ \ii \pi(\sigma_2^0 - \frac{Q}{2} )(- \frac{\beta}{2})}}{ S_{\frac{\gamma}{2}}( \frac{\beta}{4} + \sigma_2^0)S_{\frac{\gamma}{2}}( Q + \frac{\beta}{4} - \sigma_2^0) }  =  \pi \gamma \int_{\frac{Q}{2} + \ii \mathbb{R}} \frac{d \sigma_2^0}{\ii}  \frac{ e^{2 \ii \pi(Q -\alpha ) (\sigma_2^0 -\frac{Q}{2}) }  e^{ \ii \pi(\sigma_2^0 - \frac{Q}{2})(- \frac{\beta}{2})}  }{ S_{\frac{\gamma}{2}}( \frac{\beta}{4} + \sigma_2^0)S_{\frac{\gamma}{2}}( Q+ \frac{\beta}{4} - \sigma_2^0) }\\
%& =  \pi \gamma  \int_{ \ii \mathbb{R}} \frac{d \sigma_2}{\ii} e^{2 \ii \pi(Q -\alpha  - \frac{\beta}{4})\sigma_2  }  \frac{S_{\frac{\gamma}{2}}(\frac{Q}{2} -  \frac{\beta}{4}  + \sigma_2) }{ S_{\frac{\gamma}{2}}(\frac{Q}{2} + \frac{\beta}{4} + \sigma_2) }\\
& =  \pi \gamma e^{2 \ii \pi(Q -\alpha  - \frac{\beta}{4})( \frac{Q}{2} - \frac{\beta}{4} ) } \int_{ \ii \mathbb{R}} \frac{d \sigma_2^0}{ \ii} e^{2 \ii \pi(Q -\alpha  - \frac{\beta}{4})\sigma_2^0  }  \frac{S_{\frac{\gamma}{2}}(Q -  \frac{\beta}{2}  + \sigma_2^0) }{ S_{\frac{\gamma}{2}}(Q + \sigma_2^0) }.
\end{align*}
We will now use the following identity derived in \cite[Lemma 15]{PT-CMP},
\begin{align*}
 \int_{ \ii \mathbb{R}} d \tau e^{2 \pi \ii \tau \beta'} e^{ \ii \pi \tau ( \alpha'  -  Q )} \frac{S_{\frac{\gamma}{2}}(\tau + \alpha')}{S_{\frac{\gamma}{2}}(\tau +Q)}  = \ii e^{\frac{ \ii \pi}{2} \alpha'(Q -\alpha')} e^{- \ii \pi \alpha' \beta'} \frac{S_{\frac{\gamma}{2}}(\alpha') S_{\frac{\gamma}{2}}(\beta')}{S_{\frac{\gamma}{2}}(\alpha' + \beta')},
\end{align*}
with $\alpha' = Q - \frac{\beta}{2}$, $\beta' = Q -\alpha $.
This gives us:
\begin{align*}
&\int_0^{+ \infty} d \mu_2 \mu_{2}^{\frac{2(Q-\alpha)}{\gamma} -1} \frac{  e^{ \ii \pi(\sigma_2^0 - \frac{Q}{2})(- \frac{\beta}{2})}}{ S_{\frac{\gamma}{2}}( \frac{\beta}{4} + \sigma_2^0)S_{\frac{\gamma}{2}}( Q + \frac{\beta}{4} - \sigma_2^0) } \\
& = \pi \gamma  e^{2 \ii \pi(Q -\alpha  - \frac{\beta}{4})( \frac{Q}{2} - \frac{\beta}{4} ) }  e^{\frac{\ii \pi}{2} \frac{\beta}{2} (Q - \frac{\beta}{2})} e^{- \ii \pi (Q - \frac{\beta}{2}) (Q -\alpha )} \frac{S_{\frac{\gamma}{2}}(Q - \frac{\beta}{2}) S_{\frac{\gamma}{2}}(Q -\alpha )}{S_{\frac{\gamma}{2}}(2Q - \frac{\beta}{2}  -\alpha )}  = \pi \gamma  \frac{S_{\frac{\gamma}{2}}(Q - \frac{\beta}{2}) S_{\frac{\gamma}{2}}(Q -\alpha )}{S_{\frac{\gamma}{2}}(2Q - \frac{\beta}{2}  -\alpha )}.
\end{align*}
Putting everything together we conclude the proof.
\end{proof} 

Coming now back to the  equation \eqref{eq:weldG}, we first compute:
\begin{align}\label{eq:mid}
\frac{2^{- \frac{\alpha^2}{2}} \overline{U}_0(\alpha)}{\Gamma(\frac{2}{\gamma}(Q -\alpha))} \left( \frac{1}{2} \sqrt{\frac{1}{\sin(\pi  \frac{\gamma^2}{4})}} \right)^{ \frac{2}{\gamma}(Q -\alpha) }
= c_{\gamma} \; 2^{ \frac{\alpha^2}{2} - \alpha Q} \frac{\Gamma(\frac{\gamma \alpha}{2} - \frac{\gamma^2}{4}) }{\Gamma( \frac{2}{\gamma}(Q -\alpha) )} \left( \frac{\pi \Gamma( \frac{\gamma^2}{4} )}{\Gamma(1 - \frac{\gamma^2}{4})} \right)^{ \frac{1}{\gamma}(Q -\alpha)}.
\end{align}

We finally turn to the part with the expectation over $\mathcal M$.
\begin{lemma}\label{lem:M}
In the setting of Proposition~\ref{prop:Wu1}, we have
\begin{align*}
\mathcal M \left[ e^{- \mu_B L_1 -  A} K_{\frac{2}{\gamma}(Q -\alpha)} \left( L_2 \sqrt{\frac{1}{\sin \frac{\pi \gamma^2}{4}}} \right) \right]  = c_{\gamma, \beta} \int_{ \ii \mathbb{R}} \frac{d \sigma_2}{\ii} e^{2 \ii \pi (Q - 2 \sigma_1) \sigma_2} \frac{   S_{\frac{\gamma}{2}}( \frac{1}{2}(\alpha + \frac{\beta}{2} - Q) \pm \sigma_2 )  }{  S_{\frac{\gamma}{2}}( \frac{1}{2}(\alpha - \frac{\beta}{2} + Q) \pm \sigma_2 )  }.
\end{align*}
\end{lemma}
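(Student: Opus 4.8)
The plan is to feed the explicit reflection coefficient into the input relation~\eqref{eq:M2W} and then convert the modified Bessel factor into a contour integral in the $\sigma_2$-variable. By Theorem~\ref{main_th2} we may replace $R$ by $R_{\mathrm{FZZ}}$, so that with $\mu_B=\mu_B(\sigma_1)$ and $\mu_2=\mu_B(\sigma_2)$ one has $\mathcal{M}[e^{-\mu_B L_1-\mu_2 L_2-A}]=-\frac{\gamma}{\beta}R_{\mathrm{FZZ}}(Q-\tfrac{\beta}{2},\sigma_1,\sigma_2)^{-1}$. Writing out $R_{\mathrm{FZZ}}$ from~\eqref{eq:R-FZZ} with $\beta'=Q-\frac{\beta}{2}$ and repeatedly using the reflection relation $S_{\frac{\gamma}{2}}(x)S_{\frac{\gamma}{2}}(Q-x)=1$, I would simplify the $\sigma_2$-dependent part of $R_{\mathrm{FZZ}}^{-1}$ to
$$S_{\frac{\gamma}{2}}\!\left(\tfrac{Q}{2}-\tfrac{\beta}{4}\pm(\sigma_2-\sigma_1)\right)\frac{S_{\frac{\gamma}{2}}(\sigma_1+\sigma_2-\frac{Q}{2}-\frac{\beta}{4})}{S_{\frac{\gamma}{2}}(\sigma_1+\sigma_2-\frac{Q}{2}+\frac{\beta}{4})},$$
all the $(\gamma,\beta)$-dependent prefactors being absorbed into $c_{\gamma,\beta}$.

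Next I would insert the representation $K_\nu(x)=\int_0^\infty e^{-x\cosh t}\cosh(\nu t)\,dt$ with $\nu=\frac{2}{\gamma}(Q-\alpha)$ and $x=cL_2$, where $c=\sqrt{1/\sin(\pi\gamma^2/4)}$. The key point is that on the contour $\sigma_2\in\frac{Q}{2}+\ii\mathbb{R}$, writing $\sigma_2=\frac{Q}{2}+\ii y$, one has $\mu_2=\mu_B(\sigma_2)=c\cosh(\pi\gamma y)\ge c>0$, so $\mu_2 L_2=cL_2\cosh(\pi\gamma y)$ realizes exactly the argument $cL_2\cosh t$ under $t=\pi\gamma y$; by evenness of $\cosh$ this gives
$$K_\nu(cL_2)=\frac{\pi\gamma}{2}\int_{\frac{Q}{2}+\ii\mathbb{R}}e^{-\mu_2 L_2}\cosh\!\big(2\pi(Q-\alpha)y\big)\,\frac{d\sigma_2}{\ii},\qquad y=\tfrac{1}{\ii}\big(\sigma_2-\tfrac{Q}{2}\big).$$
After checking the integrability needed for Fubini (this is precisely where the stated range $\frac{\Re\beta}{2}<Q$, $\Re\sigma_1\in[\frac{\Re\beta}{4},Q-\frac{\Re\beta}{4}]$ enters, controlling the growth of $R_{\mathrm{FZZ}}^{-1}$ against $\cosh(2\pi(Q-\alpha)y)$ as $y\to\pm\infty$), I would interchange the $\mathcal{M}$-integral with the $\sigma_2$-integral and substitute the explicit formula above. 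This exhibits $\mathcal{M}[e^{-\mu_B L_1-A}K_\nu(cL_2)]$ as $c_{\gamma,\beta}$ times a single contour integral over $\frac{Q}{2}+\ii\mathbb{R}$ whose integrand is $\cosh(2\pi(Q-\alpha)y)$ times the $S_{\frac{\gamma}{2}}$-ratio displayed above.

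It then remains to match this with the target integral over $\ii\mathbb{R}$, which is the crux and the main obstacle. On our side $\alpha$ appears only inside the exponential weight $\cosh(2\pi(Q-\alpha)y)$, whereas in the target it appears inside the double-sine arguments $\frac12(\alpha+\frac{\beta}{2}-Q)\pm\sigma_2$ and $\frac12(\alpha-\frac{\beta}{2}+Q)\pm\sigma_2$ \emph{with the same sign in each $\pm$ pair}. Since a contour shift would change the two members of a $\pm$ pair oppositely, no mere shift can produce the identity: one genuinely needs a Fourier-type integral identity for the double sine that migrates the exponential weight into double-sine factors, matching the two integral representations. I would expand $\cosh$ into its two exponentials, rewrite the $(\sigma_1+\sigma_2)$-ratio by reflection into a form $\frac{S_{\frac{\gamma}{2}}(\tau+\alpha')}{S_{\frac{\gamma}{2}}(\tau+Q)}$, and invoke the double-sine integral calculus of \cite{PT-CMP} (the same building block, \cite[Lemma 15]{PT-CMP}, used in the proof of Lemma~\ref{lem:CR}), with the Fourier variable identified with $Q-\alpha$ so that $\alpha$ is injected into the $S_{\frac{\gamma}{2}}$-arguments; a final deformation of the contour from $\frac{Q}{2}+\ii\mathbb{R}$ to $\ii\mathbb{R}$ should recombine the two exponential contributions into the single factor $e^{2\ii\pi(Q-2\sigma_1)\sigma_2}$ and produce the claimed ratio of double sines. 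The bulk of the work is the contour- and pole-bookkeeping in this last step and the careful collection of all $(\gamma,\beta)$-dependent constants into $c_{\gamma,\beta}$; the probabilistic content is confined to the first two steps.
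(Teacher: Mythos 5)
Your first two steps coincide with the paper's own proof. The input relation \eqref{eq:M2W} combined with the reflection identity \eqref{equation R reflect} (inverting $R_{\mathrm{FZZ}}(Q-\tfrac\beta2,\sigma_1,\sigma_2)$ directly, as you do, is the same computation as replacing it by $R_{\mathrm{FZZ}}(Q+\tfrac\beta2,\sigma_1,\sigma_2)$), your simplified $\sigma_2$-dependent part agrees after reflections with the paper's integrand $S_{\frac{\gamma}{2}}(\tfrac Q2-\tfrac\beta4\pm(Q-\sigma_1-\sigma_2))/S_{\frac{\gamma}{2}}(\tfrac Q2+\tfrac\beta4\pm(\sigma_2-\sigma_1))$, and the Bessel-to-contour conversion with $\sigma_2=\tfrac Q2+\ii y$, $t=\pi\gamma y$ is verbatim the paper's step. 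One small conflation: after shifting the contour to $\ii\mathbb R$ and using the $\sigma_2\to-\sigma_2$ evenness, the two exponentials recombine into $e^{2\ii\pi(Q-\alpha)\sigma_2}$, \emph{not} $e^{2\ii\pi(Q-2\sigma_1)\sigma_2}$; the latter factor only appears as output of the key identity below, so no contour deformation can produce it directly — a point you half-acknowledge.

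The genuine gap is in your final step. \cite[Lemma 15]{PT-CMP} evaluates in closed form the Fourier transform of the \emph{two-factor} ratio $S_{\frac{\gamma}{2}}(\tau+\alpha')/S_{\frac{\gamma}{2}}(\tau+Q)$, whereas your integrand contains \emph{four} $\sigma_2$-dependent double-sine factors: even after rewriting the $(\sigma_1+\sigma_2)$-ratio in the form $S_{\frac{\gamma}{2}}(\tau+\alpha')/S_{\frac{\gamma}{2}}(\tau+Q)$, the two factors $S_{\frac{\gamma}{2}}(\tfrac Q2-\tfrac\beta4\pm(\sigma_2-\sigma_1))$ remain, and reflection $S_{\frac{\gamma}{2}}(x)S_{\frac{\gamma}{2}}(Q-x)=1$ can only shuffle factors between numerator and denominator, never reduce their count from four to two. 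So a single application of Lemma 15 fails. What is actually needed — and what the paper invokes — is the four-factor transformation identity \cite[Equation (D.34a)]{TesVar2013},
\begin{equation*}
\int_{\ii\mathbb R} d\sigma_2\, e^{2\ii\pi(Q-\alpha)\sigma_2}\, \frac{S_{\frac{\gamma}{2}}(-\tfrac\beta4+\sigma_1\pm\sigma_2)}{S_{\frac{\gamma}{2}}(\tfrac\beta4+\sigma_1\pm\sigma_2)} = \frac{S_{\frac{\gamma}{2}}(Q-\tfrac\beta2)}{S_{\frac{\gamma}{2}}(\tfrac\beta2)} \int_{\ii\mathbb R} d\sigma_2\, e^{2\ii\pi(Q-2\sigma_1)\sigma_2}\, \frac{S_{\frac{\gamma}{2}}(\tfrac12(\alpha+\tfrac\beta2-Q)\pm\sigma_2)}{S_{\frac{\gamma}{2}}(\tfrac12(\alpha-\tfrac\beta2+Q)\pm\sigma_2)},
\end{equation*}
which is an identity \emph{between two integrals} (a duality exchanging $Q-\alpha\leftrightarrow Q-2\sigma_1$), not a closed-form evaluation. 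One could in principle derive it from Lemma-15-type building blocks via a convolution argument, but that is a nontrivial separate derivation, not the "contour- and pole-bookkeeping" you defer to; without quoting (D.34a) or proving its equivalent, your argument does not close.
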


\begin{proof}
Record following parameters and relations:
\begin{align*}
\mu_B = \frac{1}{\sqrt{\sin \frac{\pi \gamma^2}{4}}} \cos (\pi \gamma (\sigma - \frac{Q}{2})), \: \hat{\mu}_{B}(t) = \frac{1}{\sqrt{\sin \frac{\pi \gamma^2}{4}}} \cosh(t), \:  t = \ii \pi \gamma (\sigma_2 - \frac{Q}{2}), \: \cosh t = \cos (\pi \gamma (\sigma_2 - \frac{Q}{2})).
\end{align*}
Recall from \eqref{equation R reflect} that $R(Q + \frac{\beta}{2}, \sigma, \sigma_2)R(Q -\frac{\beta}{2}, \sigma, \sigma_2)=1$. By~\eqref{eq:M2W} and the expression of $K_{\nu}$,
\begin{align*}
& \mathcal M \left[ e^{- \mu_B L_1 -  A} K_{\frac{2}{\gamma}(Q -\alpha)} \left( L_2 \sqrt{\frac{1}{\sin \frac{\pi \gamma^2}{4}}} \right) \right]  = \frac{1}{2} \int_{\mathbb{R}} dt \cosh( \frac{2}{\gamma}(Q -\alpha)t) \mathcal M \left[ e^{- \mu_B L_1 - \hat{\mu}_{B}(t)   L_2 -A}  \right]\\
& = - \frac{\pi \gamma}{2} \frac{\gamma}{\beta} \int_{\frac{Q}{2} + \ii \mathbb{R}} \frac{d \sigma_2}{\ii} \cos( 2 \pi (Q -\alpha)(\sigma_2 - \frac{Q}{2}) ) R(Q + \frac{\beta}{2}, \sigma, \sigma_2).
\end{align*}
Plugging in the exact formula $R_{\mathrm{FZZ}}$ we have for $R$, 
the above integral then becomes:
\begin{align*}
& - \frac{\pi \gamma}{2  } \frac{\gamma}{\beta} \int_{\frac{Q}{2} + \ii \mathbb{R}} \frac{d \sigma_2}{\ii} \cos( 2 \pi (Q -\alpha)(\sigma_2 - \frac{Q}{2}) ) R(Q + \frac{\beta}{2}, \sigma, \sigma_2)\\
& = - \frac{\pi \gamma^2}{2  \beta} \left(\frac{\pi  (\frac{\gamma}{2})^{2-\frac{\gamma^2}{2}} \Gamma(\frac{\gamma^2}{4})}{\Gamma(1-\frac{\gamma^2}{4})} \right)^{- \frac{\beta}{2 \gamma}} \frac{\Gamma_{\frac{\gamma}{2}}( \frac{\beta}{2}) }{\Gamma_{\frac{\gamma}{2}}(- \frac{\beta}{2}) }  \int_{\frac{Q}{2} + \ii \mathbb{R}} \frac{d \sigma_2}{\ii} \cos( 2 \pi (Q -\alpha)(\sigma_2 - \frac{Q}{2}) ) \frac{  S_{\frac{\gamma}{2}}(\frac{Q}{2} - \frac{\beta}{4} \pm (Q-\sigma -\sigma_2) ) }{        S_{\frac{\gamma}{2}}(\frac{Q}{2} + \frac{\beta}{4} \pm (\sigma_2- \sigma))}.
\end{align*}
Let us now take only the contour integral above and write the contour integral over the line $\ii \mathbb{R}$:
\begin{align*}
 &\int_{ \ii \mathbb{R}} d \sigma_2 \cos( 2 \pi (Q -\alpha)\sigma_2  ) \frac{  S_{\frac{\gamma}{2}}(Q  - \frac{\beta}{4}   -\sigma -\sigma_2 ) S_{\frac{\gamma}{2}}( - \frac{\beta}{4} + \sigma + \sigma_2 ) }{        S_{\frac{\gamma}{2}}(Q + \frac{\beta}{4} + \sigma_2- \sigma) S_{\frac{\gamma}{2}}(\frac{\beta}{4}  + \sigma - \sigma_2)} = \int_{ \ii \mathbb{R}} d \sigma_2 e^{2 \ii \pi (Q -\alpha) \sigma_2} \frac{   S_{\frac{\gamma}{2}}( - \frac{\beta}{4}  +  \sigma \pm \sigma_2 )  }{  S_{\frac{\gamma}{2}}( \frac{\beta}{4}   + \sigma \pm \sigma_2 )     }.
\end{align*}
In this last line we have used the symmetry $\sigma_2 \rightarrow - \sigma_2$ of the integrand coming from the identity $S_{\frac{\gamma}{2}}(x) = \frac{1}{S_{\frac{\gamma}{2}}(Q -x)} $.
We apply the following identity coming from \cite[Equation (D.34a)]{TesVar2013}:
\begin{align*}
 & \int_{ \ii \mathbb{R}} d \sigma_2 e^{2 \ii \pi (Q -\alpha) \sigma_2} \frac{   S_{\frac{\gamma}{2}}( - \frac{\beta}{4}  +  \sigma \pm \sigma_2 )  }{  S_{\frac{\gamma}{2}}( \frac{\beta}{4}   + \sigma \pm \sigma_2 )     } = \frac{S_{\frac{\gamma}{2}}( Q - \frac{\beta}{2})}{S_{\frac{\gamma}{2}}(  \frac{\beta}{2})} \int_{ \ii \mathbb{R}} d \sigma_2 e^{2 \ii \pi (Q - 2 \sigma) \sigma_2} \frac{   S_{\frac{\gamma}{2}}( \frac{1}{2}(\alpha + \frac{\beta}{2} - Q) \pm \sigma_2 )  }{  S_{\frac{\gamma}{2}}( \frac{1}{2}(\alpha - \frac{\beta}{2} + Q) \pm \sigma_2 )  }.
\end{align*}
Putting everything together  we conclude the proof.
\end{proof}
\begin{proof}[Proof of Theorem~\ref{thm:G}]
We first assume that $\alpha,\beta$ are in the range specified by Proposition~\ref{prop:Wu1}. In the formula~\eqref{eq:weldG} for $G_{\mu_B}(\alpha, \beta)$, we substitute the identities  from Lemma~\ref{lem:CR}, \eqref{eq:mid} and Lemma~\ref{lem:M} to conclude that
for some $\tilde c_{\gamma, \beta}$ to be determined later, we have:
\begin{align*}
&G_{\mu_B}(\alpha, \beta) = \tilde c_{\gamma, \beta} 2^{ \frac{\alpha^2}{2} - \alpha Q} \frac{\Gamma(\frac{\gamma \alpha}{2} - \frac{\gamma^2}{4}) }{\Gamma( \frac{2}{\gamma}(Q -\alpha) )} \left( \frac{\pi \Gamma( \frac{\gamma^2}{4} )}{\Gamma(1 - \frac{\gamma^2}{4})} \right)^{ \frac{1}{\gamma}(Q -\alpha)}  \frac{ \Gamma(\frac{\gamma\alpha}{2}+\frac{\gamma\beta}{4}-\frac{\gamma^2}{4}) \Gamma_{\frac{\gamma}{2}}(\alpha \pm \frac{\beta}{2} )    }{ \Gamma(\frac{\gamma \alpha}{2} - \frac{\gamma^2}{4}) \Gamma_{\frac{\gamma}{2}}(\alpha ) } \\
& \times  \frac{  \Gamma( \frac{2}{\gamma}(Q -\alpha) +1 ) \Gamma( \frac{2}{\gamma}( \frac{\beta}{2} +\alpha - Q ) )      }{ (\alpha - Q)   }   \frac{S_{\frac{\gamma}{2}}(2Q - \frac{\beta}{2}  -\alpha )}{ \Gamma_{\frac{\gamma}{2}}(Q -\alpha )}  \int_{ \ii \mathbb{R}} \frac{d \sigma_2}{\ii} e^{2 \ii \pi (Q - 2 \sigma) \sigma_2} \frac{   S_{\frac{\gamma}{2}}( \frac{1}{2}(\alpha + \frac{\beta}{2} - Q) \pm \sigma_2 )  }{  S_{\frac{\gamma}{2}}( \frac{1}{2}(\alpha - \frac{\beta}{2} + Q) \pm \sigma_2 )  }\\
&= \tilde c_{\gamma, \beta} 2^{ \frac{\alpha^2}{2} - \alpha Q}  \left( \frac{\pi \Gamma( \frac{\gamma^2}{4} )}{\Gamma(1 - \frac{\gamma^2}{4})} \right)^{ \frac{1}{\gamma}(Q -\alpha)}   \frac{ \Gamma(\frac{\gamma\alpha}{2}+\frac{\gamma\beta}{4}-\frac{\gamma^2}{4}) \Gamma_{\frac{\gamma}{2}}(\alpha \pm \frac{\beta}{2} )    }{   \Gamma_{\frac{\gamma}{2}}(\alpha ) }    \Gamma( \frac{2}{\gamma}( \frac{\beta}{2} +\alpha - Q ) )          \\
&\times \frac{S_{\frac{\gamma}{2}}(2Q - \frac{\beta}{2}  -\alpha )}{ \Gamma_{\frac{\gamma}{2}}(Q -\alpha )} \int_{ \ii \mathbb{R}} \frac{d \sigma_2}{\ii} e^{2 \ii \pi (Q - 2 \sigma) \sigma_2} \frac{   S_{\frac{\gamma}{2}}( \frac{1}{2}(\alpha + \frac{\beta}{2} - Q) - \sigma_2 ) S_{\frac{\gamma}{2}}( \frac{1}{2}(\alpha + \frac{\beta}{2} - Q) + \sigma_2 ) }{  S_{\frac{\gamma}{2}}( \frac{1}{2}(\alpha - \frac{\beta}{2} + Q) + \sigma_2 )       S_{\frac{\gamma}{2}}(\frac{1}{2}(\alpha - \frac{\beta}{2} + Q) - \sigma_2)}.
\end{align*}

To pin down the value of $\tilde c_{\gamma, \beta}$, we first note that
\begin{align}\label{eq:limit-G-beta}
\underset{\alpha \rightarrow Q - \frac{\beta}{2}}{\lim} (\alpha + \frac{\beta}{2} - Q) G_{\mu_B}(\alpha, \beta)= 2^{-\frac{1}{2}(Q - \frac{\beta}{2})^2},
\end{align} 
which can be deduced by following the exact same steps as for the residues of $H$ given in Lemma \ref{lem:res_H}.
The only difference is the power of $2$ above which comes from the way we normalize the $\alpha$ insertion of $G$ at $\ii$. Namely the $2^{- \frac{\alpha^2}{2}}$ of Definition \ref{def:LF_G_function} becomes $2^{- \frac{1}{2}(Q -\frac{\beta}{2})^2 }$ in the limit.
Coming back to the expression above for $G$ involving $\tilde c_{\gamma, \beta}$, we now evaluate its $\alpha \rightarrow Q - \frac{\beta}{2}$ limit. First, the limit of the prefactor in front of the contour integral is given by:
\begin{align*}
 &\tilde c_{\gamma, \beta} 2^{ - \frac{1}{2}(Q - \frac{\beta}{2})(Q + \frac{\beta}{2}) }  \left( \frac{\pi \Gamma( \frac{\gamma^2}{4} )}{\Gamma(1 - \frac{\gamma^2}{4})} \right)^{ \frac{\beta}{2 \gamma}} \frac{ \Gamma_{\frac{\gamma}{2}}(Q - \beta )  \Gamma_{\frac{\gamma}{2}}(Q  )^2  }{   \Gamma_{\frac{\gamma}{2}}(Q - \frac{\beta}{2} ) \Gamma_{\frac{\gamma}{2}}( \frac{\beta}{2} ) }    \underset{\alpha \rightarrow Q - \frac{\beta}{2}}{\lim}   \frac{\Gamma( \frac{2}{\gamma}( \frac{\beta}{2} +\alpha - Q ) ) }{ \Gamma_{\frac{\gamma}{2}}(\alpha + \frac{\beta}{2} - Q )}\\
& = \tilde c_{\gamma, \beta}  2^{ - \frac{1}{2}(Q - \frac{\beta}{2})(Q + \frac{\beta}{2}) }  \left( \frac{\pi \Gamma( \frac{\gamma^2}{4} )}{\Gamma(1 - \frac{\gamma^2}{4})} \right)^{ \frac{\beta}{2 \gamma}} \frac{ \Gamma_{\frac{\gamma}{2}}(Q - \beta )  \Gamma_{\frac{\gamma}{2}}(Q  )^2  }{   \Gamma_{\frac{\gamma}{2}}(Q - \frac{\beta}{2} ) \Gamma_{\frac{\gamma}{2}}( \frac{\beta}{2} ) } \frac{\sqrt{2 \pi}}{\Gamma_{\frac{\gamma}{2}}(\frac{2}{\gamma}) } (\frac{\gamma}{2})^{1/2}.
\end{align*}
The limit of the contour integral  is given by Lemma \ref{lem:comp1_G}, which is $(2 \pi      S_{\frac{\gamma}{2}}(Q - \frac{\beta}{2} )^2)^{-1} $.
Therefore \begin{align*}
&\underset{\alpha \rightarrow Q - \frac{\beta}{2}}{\lim} (\alpha + \frac{\beta}{2} - Q) G_{\mu_B}(\alpha, \beta) = \tilde c_{\gamma, \beta}   2^{ - \frac{1}{2}(Q - \frac{\beta}{2})(Q + \frac{\beta}{2}) } \frac{\gamma}{2} \left( \frac{\pi \Gamma( \frac{\gamma^2}{4} )}{\Gamma(1 - \frac{\gamma^2}{4})} \right)^{ \frac{\beta}{2 \gamma}} \frac{ \Gamma_{\frac{\gamma}{2}}(Q - \beta )  \Gamma_{\frac{\gamma}{2}}(Q  ) \Gamma_{\frac{\gamma}{2}}( \frac{\beta}{2} ) }{\Gamma_{\frac{\gamma}{2}}(Q - \frac{\beta}{2} )^3},
\end{align*} where we have used $ \Gamma_{\frac{\gamma}{2}}(Q  ) (\Gamma_{\frac{\gamma}{2}}(\frac{2}{\gamma} ))^{-1} = \sqrt{2 \pi} (\frac{\gamma}{2})^{1/2} $ to simplify.
Comparing with~\eqref{eq:limit-G-beta}, we get
\begin{align*}
\tilde c_{\gamma, \beta} = 2^{\frac{\beta}{2}(Q - \frac{\beta}{2})}  \frac{2}{\gamma} \left( \frac{\pi \Gamma( \frac{\gamma^2}{4} )}{\Gamma(1 - \frac{\gamma^2}{4})} \right)^{ -\frac{\beta}{2 \gamma}} \frac{   \Gamma_{\frac{\gamma}{2}}(Q - \frac{\beta}{2} )^3  }{ \Gamma_{\frac{\gamma}{2}}(Q - \beta )  \Gamma_{\frac{\gamma}{2}}(Q  ) \Gamma_{\frac{\gamma}{2}}( \frac{\beta}{2} ) }.
\end{align*}
Substituting this formula for $\tilde c_{\gamma, \beta}$ into the first equation, we obtain
%We thus arrive at:
\begin{align*}
&G_{\mu_B}(\alpha, \beta) =  2^{ \frac{\alpha^2}{2} - \alpha Q} 2^{\frac{\beta}{2}(Q - \frac{\beta}{2})}  \frac{2}{\gamma}  \left( \frac{\pi \Gamma( \frac{\gamma^2}{4} )}{\Gamma(1 - \frac{\gamma^2}{4})} \right)^{ \frac{1}{\gamma}(Q -\alpha - \frac{\beta}{2})} \frac{ \Gamma(\frac{\gamma\alpha}{2}+\frac{\gamma\beta}{4}-\frac{\gamma^2}{4}) \Gamma_{\frac{\gamma}{2}}(\alpha \pm \frac{\beta}{2} )  \Gamma_{\frac{\gamma}{2}}(Q - \frac{\beta}{2} )^3 }{   \Gamma_{\frac{\gamma}{2}}(\alpha ) \Gamma_{\frac{\gamma}{2}}(Q - \beta )  \Gamma_{\frac{\gamma}{2}}(Q  ) \Gamma_{\frac{\gamma}{2}}( \frac{\beta}{2} ) }        \\
&\times   \Gamma( \frac{2}{\gamma}( \frac{\beta}{2} +\alpha - Q ) )     \frac{S_{\frac{\gamma}{2}}(2Q - \frac{\beta}{2}  -\alpha )}{ \Gamma_{\frac{\gamma}{2}}(Q -\alpha )} \int_{ \ii \mathbb{R}} \frac{d \sigma_2}{\ii} e^{2 \ii \pi (Q - 2 \sigma) \sigma_2} \frac{   S_{\frac{\gamma}{2}}( \frac{1}{2}(\alpha + \frac{\beta}{2} - Q) \pm \sigma_2 )  }{  S_{\frac{\gamma}{2}}( \frac{1}{2}(\alpha - \frac{\beta}{2} + Q) \pm \sigma_2 )    }.
\end{align*}
Using equation \eqref{eq:shift_G1} and \eqref{eq:shift_G2} on the double gamma function we can simplify:
\begin{align*}
\frac{\Gamma( \frac{2}{\gamma}( \frac{\beta}{2} +\alpha - Q ) ) \Gamma(\frac{\gamma\alpha}{2}+\frac{\gamma\beta}{4}-\frac{\gamma^2}{4})   \Gamma_{\frac{\gamma}{2}}(\alpha+\frac{\beta}{2}  )  }{    \Gamma_{\frac{\gamma}{2}}( \frac{\beta}{2}  +\alpha - Q )} = 2 \pi \left( \frac{\gamma}{2} \right)^{( \frac{\gamma}{2} - \frac{2}{\gamma})(\alpha + \frac{\beta}{2} - Q) + 1}.
\end{align*}
This gives the  desired expression  $G_{\mathrm{Hos} }$ of $G_{\mu_B}(\alpha, \beta) $:
\begin{align*}
   & 2 \pi 2^{- \alpha Q + \frac{\alpha^2}{2}}  2^{\frac{\beta}{2}(Q - \frac{\beta}{2})}         \left(\frac{\pi  (\frac{\gamma}{2})^{2-\frac{\gamma^2}{2}} \Gamma(\frac{\gamma^2}{4})}{\Gamma(1-\frac{\gamma^2}{4})} \right)^{ \frac{1}{ \gamma}(Q -\alpha - \frac{\beta}{2})}  \frac{\Gamma_{\frac{\gamma}{2}}(2Q - \frac{\beta}{2}  -\alpha ) \Gamma_{\frac{\gamma}{2}}(\alpha-\frac{\beta}{2} )   \Gamma_{\frac{\gamma}{2}}(Q - \frac{\beta}{2}  )^3 }{  \Gamma_{\frac{\gamma}{2}}(Q -\alpha ) \Gamma_{\frac{\gamma}{2}}(Q - \beta ) \Gamma_{\frac{\gamma}{2}}(\alpha )\Gamma_{\frac{\gamma}{2}}(Q) \Gamma_{\frac{\gamma}{2}}(  \frac{\beta}{2}) } \\
& \times \int_{ \ii \mathbb{R}} \frac{d \sigma_2}{\ii} e^{2 \ii \pi (Q - 2 \sigma) \sigma_2} \frac{   S_{\frac{\gamma}{2}}( \frac{1}{2}(\alpha + \frac{\beta}{2} - Q) \pm \sigma_2 ) }{  S_{\frac{\gamma}{2}}( \frac{1}{2}(\alpha - \frac{\beta}{2} + Q) \pm \sigma_2 )},
\end{align*}
where we note that since 
$\alpha$ and $\beta$ are in the range specified by Proposition~\ref{prop:Wu1}, the contour $\ii\mathbb R$ in the above integral satisfies the requirement for $G_{\mathrm{Hos} }$  described above~\eqref{eq:G-range1} and~\eqref{eq:G-range2}. Thus, we have shown $G_{\mu_B}= G_{\mathrm{Hos}}$ when $\alpha,\beta$ are in the range specified by Proposition~\ref{prop:Wu1}. 

To extend this identity to the Seiberg bound $\alpha + \frac{\beta}{2} > Q$, $ \alpha < Q$, $ \beta < Q$ as in Theorem~\ref{thm:G}, it suffices to note that for $\alpha\in (\frac{Q}{2},Q)$, both $G_{\mathrm{Hos}}$ and $G_{\mu_B}$ are meromorphic in $\beta$ in a complex neighborhood of $(2Q-2\alpha, Q)$. The meromorphicity of $G_{\mathrm{Hos}}$ follows from the same consideration as for $H_{\mathrm{PT}}$ in Lemma~\ref{lem:mero_HPT}. The meromorphicity of  $G_{\mu_B}$  follows from the same consideration as for $H$ in Section~\ref{subsec-anal-H}.
\end{proof}
\begin{remark}
One can check that $G_{\mathrm{Hos} }(\alpha, 0)$  recovers the FZZ formula of \cite[Equation (1.4)]{ARS-FZZ}; see Remark 4.4 from the first arXiv version of this paper for the computation.
\end{remark}

\section{Two lemmas on the reflections coefficients}\label{sec:technical}
We prove Lemmas~\ref{lem:HlimR} and~\ref{lem-mot-match} on the reflections coefficients in Sections~\ref{app:lim_H_R} and~\ref{sec:Rgamma}, respectively.

\subsection{Reflection coefficient as the limit of $H$: proof of Lemma~\ref{lem:HlimR}}\label{app:lim_H_R}
The proof is analogous to that of \cite[Proposition 8.1]{DOZZ_proof}, which obtains the LCFT sphere reflection coefficient from the three-point function.
%by identifying the three-point function with a GMC moment and using GMC tail estimates. 
Our high level argument is similar, but with the additional difficulty that correlation functions are not GMC moments due to the presence of both bulk and boundary Liouville potentials. 

Throughout this section we consider $\beta_i,\sigma_i,\mu_i$ with $i=1,2,3$ as given in Lemma~\ref{lem:HlimR}.
Similarly as in Definition~\ref{lem:def_R_trunc}, it will be more convenient to work with the horizontal strip $\cS= \R \times (0,\pi)$. 
Let $P_\cS$ be the law of a free-boundary GFF on $\cS$. Namley, $P_\cS$ is the law of 
 $h_\cS=h_\bbH \circ \exp$ where $h_\bbH$ is sampled from $P_\bbH$. 
Let $\psi$ be the field from Definition~\ref{def-thick-disk} with $\beta$ replaced by $\beta_1$.
In particular, the lateral component of $\psi$ has the same law as that of  $h_\cS$.
For $c\in \R$, write  $A' = \cA_{\psi+c}(\cS), L_1' = \cL_{\psi+c}(\R)$, $ L_2' = \cL_{\psi+c}(\R\times \{\pi\})$. Define
\begin{equation}\label{eq:Rpsi}
 \mathfrak R_{\psi+c}:=  (Q-{\beta_1}) \left(\gamma ({\beta_1}-Q+\frac\gamma2) A' + \frac{\gamma^2}2 A' (\sum_{i=1}^2 
\mu_i L_i') + \frac{\gamma^2}4 (\sum_{i=1}^2 \mu_i L_i')^2 \right) e^{-A' - \sum_{i=1}^2 \mu_i L_i'}.   
\end{equation}
Recall $\hat R(\beta_1, \sigma_1, \sigma_2)$ from  Definition~\ref{lem:def_R_trunc}. We have 
\(\hat R(\beta_1, \sigma_1, \sigma_2)=\int_{-\infty}^{\infty} e^{(\beta_1-Q)c} \E[\mathfrak R_{\psi + c}] \, dc\).

We now express $H$ in terms of a field on $\cS$. Sample  $(h, \mathbf c)$ from $P_\cS \times [ e^{(\frac12\sum \beta_i - Q)c}\,dc]$, and let 
	\begin{equation}\label{eq-wth}
	\hat h(z)= h(z) - (Q-\beta_1) (0 \vee \Re z) - (Q-\beta_2) (0 \vee (-\Re z)) + \frac{\beta_3}2 G_\cS(z, \ii \pi),
	\end{equation} where $G_\cS(z, w)= G_\bbH (e^z,e^w)$.
Let $\phi = \hat h + \mathbf c$ and  $\LF_\cS^{\beta_1, \beta_2, \beta_3}$ be the law of $\phi$.
Let  $\hat A = \cA_{\phi}(\cS)$, $ \hat  L_1 = \cL_{\phi}(0,\infty)$, $L_2 = \cL_{\phi}(\R \times \{\pi\}), \hat L_3 = \cL_\phi(-\infty, 0)$. 
In light of $\hat H^{(\beta_1 , \beta_2, \beta_3)}_{(\mu_1,\mu_2, \mu_3)}$ from \eqref{eq-hatH} and  Definition~\ref{def:H-general}, we define 
\begin{equation}\label{eq-Hphi}
\mathfrak H_\phi := (\gamma (s+\frac\gamma2) \hat A + \frac{\gamma^2}2 \hat A (\sum_{i=1}^3 \mu_i \hat L_i) + \frac{\gamma^2}4 (\sum_{i=1}^3 \mu_i \hat L_i)^2)e^{-\hat A - \sum_{i=1}^3 \mu_i \hat L_i} \quad \textrm{with }s = \frac12\sum\beta_i - Q.    
\end{equation}
By  \cite[Lemma 2.9]{ASY-triangle}, the LQG observables of $\LF_\cS^{\beta_1, \beta_2, \beta_3}$ and $\LF_\bbH^{(\beta_1,0), (\beta_2, 1), (\beta_3, \infty)}$ agree in law, under the coordinate change $z\mapsto e^{-z}$.  In particular,  $\hat H^{(\beta_1, \beta_2, \beta_3)}_{(\sigma_1, \sigma_2, \sigma_3)}  = \LF_\cS^{\beta_1,\beta_2,\beta_3}[\mathfrak H_\phi] $.
Therefore Lemma~\ref{lem:HlimR} can be rephrased as 
\begin{equation}\label{eq-limit-HR-strip}
		\lim_{\beta_3 \downarrow \beta_1-\beta_2} (\beta_2+\beta_3-\beta_1) \LF_\cS^{\beta_1,\beta_2,\beta_3}[\mathfrak H_\phi] = 2\int_{-\infty}^{\infty} e^{(\beta_1-Q)c} \E[\mathfrak R_{\psi + c}] \, dc .
\end{equation}
We first prove the following variant of~\eqref{eq-limit-HR-strip} where we condition on the field average maximum.

\begin{proposition}\label{prop-phi-cond-max}
For $\beta_1 \in (\frac\gamma2 \vee (Q-\gamma),  Q)$, $\beta_2 \in (0 ,\beta_1)$ and $\beta_3 > \beta_1 - \beta_2$,  
 sample $\phi$ from $\LF_\cS^{\beta_1, \beta_2, \beta_3}$. 
Let $M_\phi$ be the supremum over $t>0$ of the average of $\phi$ on $\{t \} \times (0,\pi)$.
Fix $m \in \R$. Write $T_{\phi - m}=(\cA_{\phi - m} (\cS), \cL_{\phi - m}((0,\infty) \times \{0\}), \cL_{\phi - m}(\R \times \{\pi\}), \cL_{\phi - m}((-\infty,0) \times \{0\}))$.
Let $f:\R^4 \to \R$ be a bounded continuous function. Then \begin{equation}\label{eq:weak}
  \lim_{\beta_3 \downarrow \beta_1-\beta_2} \LF_\cS^{\beta_1,\beta_2,\beta_3}[f(T_{\phi - m}) \mid M_\phi = m]  = \E[f(\cA_\psi(\cS), \cL_\psi(\R), \cL_\psi(\R \times \{\pi\}),0)]  
\end{equation}
where $\psi$ on the right hand side is the field from Definition~\ref{def-thick-disk} with $\beta$ replaced by $\beta_1$.
%Then as $\beta_3 \downarrow \beta_1 - \beta_2$, the conditional law of $(\cA_\phi(\cS), \cL_\phi((0,\infty) \times \{0\}), \cL_\phi(\R \times \{\pi\}), \cL_\phi((-\infty,0) \times \{0\}))$ conditioning on $\{M_\phi = m\}$ weakly converge to the law of $(\cA_{\psi+m}(\cS), \cL_{\psi+m}(\R \times \{0\}), \cL_{\psi+m}(\R \times \{\pi\}),0)$ 
\end{proposition}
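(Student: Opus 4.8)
The plan is to follow the strategy of \cite[Proposition 8.1]{DOZZ_proof}, reducing everything to the one-dimensional horizontal (vertical-average) process together with a conditionally independent lateral field. First I would decompose the free-boundary GFF $h$ on $\cS$ entering \eqref{eq-wth} as $h=h^{\mathrm c}+h^\ell$, where $h^{\mathrm c}$ is constant on each vertical segment $\{t\}\times(0,\pi)$ and $h^\ell$ is the lateral component; these are independent, $t\mapsto h^{\mathrm c}$ is a two-sided Brownian motion, and only $h^{\mathrm c}$ enters $M_\phi$. The key preliminary computation is that the boundary insertion $\frac{\beta_3}2 G_\cS(\cdot,\ii\pi)$ has \emph{vanishing} vertical average: using $G_\cS(z,\ii\pi)=G_\bbH(e^z,-1)=-2\log|e^z+1|+2\log|e^z|_+$ and $\frac1\pi\int_0^\pi\log|1+e^{t}e^{\ii\theta}|\,d\theta=0\vee t$, the two terms cancel. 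Hence $\beta_3$ affects the horizontal process only through the exponent $s=\frac12\sum\beta_i-Q$ reweighting the constant mode $\mathbf c$, while adding a localized lateral shift near $\ii\pi$. Consequently the vertical-average process of $\phi$ is $\mathbf c+\bar h(t)-(Q-\beta_1)(0\vee t)-(Q-\beta_2)(0\vee(-t))$, a two-sided Brownian motion with drift $-(Q-\beta_1)$ on $t>0$ and $-(Q-\beta_2)$ on $t<0$, and conditioning on $M_\phi=m$ amounts to fixing $\mathbf c=m-\mathcal M_0$, where $\mathcal M_0:=\sup_{t>0}\bigl(\bar h(t)-(Q-\beta_1)t\bigr)$ is the maximum of the centered process.

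Next I would analyze the conditional law of the centered horizontal process via Williams' path decomposition for Brownian motion with negative drift. The post-maximum piece is a Brownian motion with drift $-(Q-\beta_1)$ conditioned to stay negative, which matches exactly the $t\ge 0$ part of the process $Y$ in Definition~\ref{def-thick-disk} with $\beta=\beta_1$. Crucially, by Williams' theorem this piece is independent of $\mathcal M_0$ and of the pre-maximum piece, so it is unaffected by the reweighting $e^{s\mathbf c}\propto e^{-s\mathcal M_0}$ and requires no limit. The limit $\beta_3\downarrow\beta_1-\beta_2$ (equivalently $s\uparrow\beta_1-Q$) is instead what governs the pre-maximum region together with the $t<0$ half: it is exactly the Seiberg-type resonance $\beta_2+\beta_3\to\beta_1$ producing the pole recorded in Lemma~\ref{lem:HlimR}, and the attendant degeneration of the reweighting localizes the field, pushing the field on the $\beta_2$-side to $-\infty$ so that this part of the surface pinches. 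I would make this precise as local-uniform convergence in law of the conditioned, recentered horizontal process to $Y$, and combine it with the lateral component, which is conditionally independent and converges to the free lateral GFF away from the insertion at $\ii\pi$.

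Finally I would upgrade the field-level convergence to convergence of the observables $T_{\phi-m}$, showing jointly that $\cA_{\phi-m}(\cS)\to\cA_\psi(\cS)$, that $\cL_{\phi-m}(\R\times\{\pi\})\to\cL_\psi(\R\times\{\pi\})$ and $\cL_{\phi-m}((0,\infty)\times\{0\})\to\cL_\psi(\R)$ under the arc identification dictated by the merging of the $\beta_2$ and $\beta_3$ insertions, and that the pinched arc satisfies $\cL_{\phi-m}((-\infty,0)\times\{0\})\to 0$. The convergence of the Gaussian multiplicative chaos measures follows from the field convergence by the standard stability of GMC, after which \eqref{eq:weak} for bounded continuous $f$ follows provided the $\beta_3$-limit may be interchanged with the GMC limits. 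I expect the main obstacle to be precisely this interchange, for the reason emphasized in the introduction: since both a bulk and a boundary Liouville potential are present, the observables are not pure GMC moments, so one must control the \emph{joint} tails of the area and of the two boundary lengths and, in particular, establish the vanishing of the pinched length uniformly as $\beta_3\downarrow\beta_1-\beta_2$ — this requires moment and uniform-integrability estimates for area and boundary GMC on the strip that go beyond the single-moment arguments of \cite{DOZZ_proof}.
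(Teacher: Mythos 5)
Your proposal is correct and follows the same overall strategy as the paper's proof: reduce to the vertical-average process (your observation that $\frac{\beta_3}2 G_\cS(\cdot,\ii\pi)$ has vanishing vertical average is exactly what underlies the exact computation in Lemma~\ref{lem:Mphi}), note that conditioning on $M_\phi=m$ makes $m-\mathbf c$ exponential with rate $\delta=\frac12(\beta_2+\beta_3-\beta_1)\downarrow 0$, so the centered maximum diverges, and then show the recentered field converges to $\psi$ while a neighborhood of the $\beta_2$-end pinches. The difference is in implementation. The paper sidesteps Williams' decomposition: in Lemma~\ref{lem-pinch-h} it couples at the first hitting time $\tau$ of level $M-\sqrt M$, so that by the strong Markov property $(X^{\wt h}_{t+\tau}-M)_{t\ge0}$ and $(X^\psi_{t+\sigma})_{t\ge0}$ have literally the same law (drifted Brownian motion started at $-\sqrt M$ conditioned to have maximum $0$) and can be coupled almost surely, while the lateral components are coupled up to $o_M(1)$ in total variation using the decay of the Dirichlet energy of the $\beta_3$-insertion far to the right. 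This produces an \emph{exact} coupling of the fields on the right half-strip with probability $1-o_M(1)$, so the GMC observables agree on the coupled event and one only needs that the leftover regions (where the field average lies below $-\sqrt M$) carry negligible quantum mass; since $f$ is bounded, bounded convergence then finishes the proof. Consequently the ``main obstacle'' you flag --- interchanging the $\beta_3$-limit with GMC limits via joint tail and uniform-integrability estimates --- does not actually arise in this proposition: the delicate tail control is needed later, in Lemma~\ref{lem-noncompact-bound}, to integrate the conditioned statement over $m$, not here. Two small corrections: the pre-maximum region does \emph{not} pinch --- it converges to the $t<0$ half of $Y$ and carries positive quantum mass in the limit (as it must, since $\cA_\psi(\cS)$ and the lengths on the right of \eqref{eq:weak} include the $t<0$ contributions); only fixed neighborhoods of the $\beta_2$-insertion, in particular the arc $(-\infty,0)\times\{0\}$, become negligible, so state the pinching claim with that restriction. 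Also $s=\beta_1-Q+\delta$ \emph{decreases} to $\beta_1-Q$ as $\beta_3\downarrow\beta_1-\beta_2$, i.e.\ $s\downarrow\beta_1-Q$ rather than $s\uparrow\beta_1-Q$.
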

We prove Proposition~\ref{prop-phi-cond-max} by proving the following two lemmas.
\begin{lemma}\label{lem-pinch-h}
Sample $h$ from $P_\cS$ and define $\wt h$ in terms of $h$ as  $\hat h$ in~\eqref{eq-wth}.
For $M>0$, let $\wt T_{\wt h - M} = (\cA_{\wt h - M}(\cS), \cL_{\wt h - M}(0,\infty), \cL_{\wt h - M} (\R \times \{\pi\}), \cL_{\wt h - M}(-\infty, 0))$.  
Let $M_{\wt h}$ be the supremum over $t>0$ of the average of $\wt h$ on $\{t \} \times (0,\pi)$. 
Let $f:\R^4 \to \R$ be a bounded continuous function. Then
\begin{equation}\label{eq:Ttilde}
    \lim_{M \to \infty}\E[f(\wt T_{\wt h - M})) \mid M_{\wt h} = M] = \E[f(\cA_\psi(\cS), \cL_\psi(\R ), \cL_\psi(\R \times \{\pi\}),0)].
\end{equation} 
Moreover, the convergence is uniform in $\beta_3\in [\beta_1 - \beta_2,\beta_1 - \beta_2+\eps]$ for $\eps>0$ small enough. 
%Then $(\cA_{\wt h- M} (\cS), \cL_{\wt h-M}(0,\infty), \cL_{\wt h-M}(\R \times \{\pi\}), \cL_{\wt h - M}(-\infty, 0) )$ conditioned on $M_{\wt h} = M$ converges in distribution as $M \to \infty$ to %$(\cA_\psi(\cS), \cL_\psi(\R), \cL_\psi(\R \times \{\pi\}),0)$. Moreover, for each $\beta_1$ this convergence is uniform for $\beta_2, \beta_3$ in compact intervals.
\end{lemma}
\begin{proof}
 Consider $\wt h$ conditioned on $M_{\wt h} = M$. 
	Let $X_t^{\wt h}$ be the average of $\wt h$ on $\{t\}\times(0,\pi)$, and define $X_t^\psi$ likewise. Let
	%The Williams path decomposition states that $(X_t^{\wt h})_{t>0}$ evolves as variance 2 Brownian motion with drift $(Q-\beta_1)$ run until it hits $M$, then as variance 2 Brownian motion with drift $-(Q-\beta_1)$ conditioned to stay below $M$. 
\( \tau = \inf \{ t \: : \: X^{\wt h}_t >M -  \sqrt M\}\) and \( \sigma = \inf \{ t \: : \: X^{\psi}_t > -\sqrt M\}\).
We first show that we can couple $\wt h$ and $\psi$ so that with probability $1-o_M(1)$, 
 \begin{equation}\label{eq:couple}
(\wt h(\cdot - \tau)-M)|_{\cS_+}=\psi(\cdot - \sigma)|_{\cS_+}, \quad  \textrm{where } \cS_+ = (0, \infty) \times (0,\pi).
 \end{equation}

	We can couple $(X^{\wt h}_{t + \tau}-M)_{t \geq 0}$ and $(X^\psi_{t + \sigma})_{t \geq 0}$ to agree almost surely; indeed they have the law of $(B_{2t} - (Q-\beta_1)t - \sqrt M)_{t \geq 0}$ conditioned to have maximum value $0$, where $B_t$ is standard Brownian motion.	
	Let $h^2, \wt h^2, \psi^2$ denote the lateral components of $h, \wt h, \psi$, i.e.\ the projections to the space of distributions with mean zero on $\{t\} \times (0,\pi)$ for all $t$.
	We have $\tau \to \infty$ in probability as $M \to \infty$, so with probability $1-o_M(1)$ the Dirichlet energy of $\gamma G_\cS(\cdot, \ii \pi)$ on $(\tau, \infty)\times (0,\pi)$ is close to zero. Thus the law of $(h^2 + \gamma G_\cS(\cdot, i\pi))|_{(\tau, \infty) \times (0,\pi)}$ is within $o_M(1)$ in total variation distance to the law of $h^2|_{(\tau, \infty) \times (0,\pi)}$ \cite[Proposition 2.9]{ig4}, so we can further couple $\wt h^2(\cdot + \tau)|_{\cS_+}$ and $\psi^2(\cdot + \sigma)|_{\cS_+}$ to agree with probability $1-o_M(1)$. This gives the desired coupling~\eqref{eq:couple}.

To prove~\eqref{eq:Ttilde}, it suffices to show that as $M\to \infty$ the quantum areas and lengths of $(\wt h(\cdot - \tau)-M)|_{\cS \backslash \cS_+}$ and $\psi(\cdot - \sigma)|_{\cS \backslash \cS_+}$ tend to zero in probability; this holds because $(X_{t + \tau}^{\wt h} - M)_{ t \leq 0}$ and $(X_{t + \sigma}^\psi)_{t \leq 0}$ are bounded above by $-\sqrt M$ with probability $1-o_M(1)$. 
The uniform convergence in $\beta_3$ can be checked by inspecting the argument above.
\end{proof}
\begin{lemma}\label{lem:Mphi} 
Sample  $(h, \mathbf c)$ from the measure $\mathbb F:=P_\cS \times [ e^{(\frac12\sum \beta_i - Q)c}\,dc]$ and let $\hat h$ be as in ~\eqref{eq-wth}. 
Write  $\phi=\hat h+c$ so that the law of $\phi$ is $\LF_\cS^{\beta_1, \beta_2, \beta_3}$. Let $M_\phi$ be the supremum over $t>0$ of the average of $\phi$ on $\{t \} \times (0,\pi)$.
Let $\delta = \frac12(\beta_2 + \beta_3 - \beta_1)$.  Fix $m\in \R$. Then under $\mathbb F$ the conditional law of $\mathbf c$  conditioning on $\{M_\phi = m\}$ is $1_{c < m} \delta e^{\delta  (c-m)}\, dc$. Moreover, the law of $M_\phi$ is
\begin{equation}\label{eq-M-law}
    \LF_\cS^{\beta_1, \beta_2, \beta_3}[M_\phi \in dm]  = \frac{2(Q-\beta_1)}{\beta_2+\beta_3-\beta_1} e^{(\frac12\sum \beta_i - Q)m}.
\end{equation}
\end{lemma}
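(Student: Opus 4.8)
\textbf{Proof strategy for Lemma~\ref{lem:Mphi}.}

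The plan is to exploit the fact that $M_\phi = M_{\hat h} + \mathbf c$, so conditioning on the value of the overall maximum $M_\phi$ couples the zero mode $\mathbf c$ to the field maximum $M_{\hat h}$ in a tractable way. First I would record the key structural observation: if $F$ denotes the law of $M_{\hat h}$ (the supremum over $t>0$ of the average of $\hat h$ on $\{t\}\times(0,\pi)$, \emph{without} the zero mode), then under $\mathbb F = P_\cS \times [e^{(\frac12\sum\beta_i - Q)c}\,dc]$, the pair $(M_{\hat h}, \mathbf c)$ has joint law $F(dx)\otimes e^{(\frac12\sum\beta_i - Q)c}\,dc$, and $M_\phi = M_{\hat h}+\mathbf c$. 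Writing $\delta = \frac12(\beta_2+\beta_3-\beta_1)$ and $s = \frac12\sum\beta_i - Q$, I would compute the joint law of $(M_{\hat h}, M_\phi)$ by the change of variables $(x,c)\mapsto(x, m)$ with $m = x+c$; the Jacobian is trivial, giving joint density $F(dx)\,e^{s(m-x)}\,dm$ on $\{x\le m\}$ after using that $c=m-x<\dots$ — more precisely, since $\mathbf c = m - M_{\hat h}$, the constraint is automatically $M_{\hat h}\le m$ only if $\mathbf c\ge 0$, which is not imposed, so in fact the conditional law of $\mathbf c$ given $M_\phi=m$ is proportional to $e^{s\,c}$ restricted by $x = m - c \ge$ (support of $F$).

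The crucial input that turns this into the stated clean answer is the precise tail behaviour of $M_{\hat h}$. By the definition of $\hat h$ in~\eqref{eq-wth}, the average process $X_t^{\hat h}$ behaves for large $t$ like a Brownian motion with drift $-(Q-\beta_1)<0$, and for $t\to -\infty$ like a Brownian motion with drift $-(Q-\beta_2)$ shifted by the $\frac{\beta_3}2 G_\cS(\cdot,\ii\pi)$ insertion contributing drift; the relevant one-sided maximum over $t>0$ then has an exponentially distributed upper tail. Concretely, by the theory of the maximum of a Brownian motion with negative drift (or directly the computation in~\cite{DOZZ_proof} adapted to the strip), one finds that $F$ has a density whose large-$x$ asymptotics is governed by the exponent $\beta_2+\beta_3-\beta_1 = 2\delta$, i.e.\ $\P[M_{\hat h} > x]$ decays like a constant times $e^{-2\delta x}$. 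Substituting this into the joint density and carrying out the $x$-integral, the conditional law of $\mathbf c$ given $M_\phi = m$ collapses to $1_{c<m}\,\delta e^{\delta(c-m)}\,dc$, and the marginal of $M_\phi$ is obtained by integrating out, yielding~\eqref{eq-M-law} with the prefactor $\frac{2(Q-\beta_1)}{\beta_2+\beta_3-\beta_1}$.

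I would carry this out in the following order: (i) reduce to the statement about $(M_{\hat h},\mathbf c)$ and set up the change of variables to $(M_{\hat h}, M_\phi)$; (ii) establish the exact exponential form of the conditional law of $\mathbf c$ given $M_\phi = m$ — here the memorylessness is what forces the pure exponential $\delta e^{\delta(c-m)}$, independent of the detailed shape of $F$ near its bulk; (iii) compute the normalization and the density~\eqref{eq-M-law} using the exact value of the relevant exponential-tail constant for the strip field $\hat h$, which requires pinning down the drift $-(Q-\beta_1)$ of $X^{\hat h}_t$ as $t\to+\infty$ and the reflecting/absorbing structure near $t=0$. The main obstacle is step (iii): extracting the \emph{exact} constant $2(Q-\beta_1)$ rather than merely the correct exponential rate. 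This demands a careful analysis of the entrance law and the exact density of the maximum of the drifted Brownian average, including the contribution of the logarithmic singularity $\frac{\beta_3}2 G_\cS(\cdot,\ii\pi)$ at $+\infty$ and the matching of the two half-line drifts at $t=0$; the appearance of the factor $Q-\beta_1$ (rather than $\frac12\sum\beta_i - Q$ or a combination) is what ties the computation to the reflection-coefficient normalization in Definition~\ref{def-thick-disk}, and getting this bookkeeping exactly right — consistent with the prefactor $\frac{2(Q-\beta)}\gamma$ appearing in the definition of $R$ — is the delicate part.
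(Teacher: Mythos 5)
Your overall scheme (write $M_\phi = M_{\hat h} + \mathbf c$, compute the joint law, condition) is the same as the paper's, but your step (ii)--(iii) contains a genuine error that would derail the computation. You assert that $\P[M_{\hat h} > x]$ decays like $e^{-2\delta x}$ with $2\delta = \beta_2+\beta_3-\beta_1$. This is wrong: since $M_\phi$ is a supremum over $t>0$ only, the only drift that enters is the one at $+\infty$, namely $-(Q-\beta_1)$, and the correct rate is $Q-\beta_1$. Indeed, for $t>0$ the average of $\hat h$ on $\{t\}\times(0,\pi)$ is \emph{exactly} $B_{2t}-(Q-\beta_1)t$ for a standard Brownian motion $B$ started at $0$: the term $-(Q-\beta_2)(0\vee(-\Re z))$ vanishes for $t>0$, and the vertical-line average of $\frac{\beta_3}2 G_\cS(\cdot,\ii\pi)$ is identically zero for $t>0$ (the insertion point $\ii\pi$ maps to $-1\in\partial\bbH$, which lies on the unit circle where $P_\bbH$ is normalized, so the semicircle average of $G_\bbH(\cdot,-1)$ over $|x|=e^t>1$ is $2\log|-1|_+=0$). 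Hence $M_{\hat h}=\sup_{t\geq 0}(B_{2t}-(Q-\beta_1)t)$ is \emph{exactly} exponential: $\P[M_{\hat h}\geq m]=e^{-(Q-\beta_1)m}\wedge 1$, by the elementary reflection identity. With your rate $2\delta$ the conditional density of $\mathbf c$ given $M_\phi=m$ would be proportional to $e^{(s+2\delta)c}$ with $s=\frac12\sum\beta_i-Q$, which does not equal $e^{\delta c}$ in general; with the correct rate one gets $s+(Q-\beta_1)=\delta$, which is precisely how the stated answer arises. Your worries in step (iii) about entrance laws, matching half-line drifts at $t=0$, and the singularity's contribution are all unnecessary, and the factor $Q-\beta_1$ has nothing to do with the reflection-coefficient normalization of Definition~\ref{def-thick-disk}; it is just the rate of the drifted Brownian maximum.

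Relatedly, your claim that the pure exponential conditional law of $\mathbf c$ follows from memorylessness ``independent of the detailed shape of $F$ near its bulk'' is also incorrect as stated. Conditioning on $M_\phi=m$, the density of $\mathbf c$ is proportional to $e^{sc}f_F(m-c)$, so the conclusion $1_{c<m}\,\delta e^{\delta(c-m)}\,dc$ holds for every $m$ exactly because $f_F(x)\propto e^{-(Q-\beta_1)x}1_{x>0}$ exactly, not merely in the tail; a distribution with only an asymptotically exponential tail would yield only large-$m$ asymptotics, whereas the lemma is an exact identity. Once the exact exponential law of $M_{\hat h}$ is in hand, the rest is the paper's two-line computation: the joint density is $1_{m>c}(Q-\beta_1)e^{sc-(Q-\beta_1)(m-c)}\,dm\,dc$, integrating out $c$ gives $\frac{2(Q-\beta_1)}{\beta_2+\beta_3-\beta_1}e^{sm}\,dm$, and Bayes' rule gives the conditional law of $\mathbf c$.
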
 
\begin{proof} For a standard Brownian motion $(B_t)_{t \geq 0}$ we have $\P[ \sup_{t\geq0} B_{2t} - bt \geq m] = \min(e^{-bm}, 1)$ for $b >0$.
Therefore, with $M_{\wt h}$ defined in Lemma~\ref{lem-pinch-h}, we see that $ \mathbb F[M_\phi \geq m \text{ and } \mathbf c \in dc]$ equals 
\[
e^{(\frac12\sum \beta_i - Q)c} P_\cS[M_{\wt h} \geq m-c] = e^{(\frac12\sum \beta_i - Q)c}(1_{m > c} e^{- (Q - \beta_1)(m-c)} + 1_{m \leq c}).
\]
Therefore  $\mathbb F [M_\phi \in  dm \text{ and } \mathbf c \in dc] = 1_{m > c} (Q-\beta_1) e^{(\frac12\sum \beta_i - Q)c - (Q-\beta_1)(m-c)}$, hence 
\[
 \mathbb F[M_\phi \in dm] = \int 1_{m>c} (Q-\beta_1) e^{(\frac12\sum \beta_i - Q)c - (Q-\beta_1)(m-c)}\, dc= \frac{2(Q-\beta_1)}{\beta_2+\beta_3-\beta_1} e^{(\frac12\sum \beta_i - Q)m},
\]
which gives~\eqref{eq-M-law}. By  Bayes rule we have \( \mathbb F [\mathbf c \in dc \mid M_\phi = m] = 1_{c < m} \delta e^{\delta  (c-m)}\, dc\) as desired.
\end{proof}
\begin{proof}[Proof of Proposition~\ref{prop-phi-cond-max}] By Lemma~\ref{lem:Mphi}, with $\delta = \frac12(\beta_2 + \beta_3 - \beta_1)$ we have
	\[
	\LF_\cS^{\beta_1,\beta_2,\beta_3}[f(T_{\phi - m}) \mid M_\phi = m] = \int_{-\infty}^m \delta e^{\delta  (c-m)}\E[f(\wt T_{ \wt h - (m-c)}) \mid M_{\wt h} = m - c] \, dc.
	\]
	As $\beta_3 \downarrow \beta_1 - \beta_2$ we have $\delta \downarrow 0$. By Lemma~\ref{lem-pinch-h} we get  the desired~\eqref{eq:weak}.
	\end{proof}

 We now prove~\eqref{eq-limit-HR-strip} hence Lemma~\ref{lem:HlimR} using the  following lemma.
\begin{lemma}\label{lem-noncompact-bound}
Fix $\beta_1 \in (\frac\gamma2 \vee (Q-\gamma),  Q)$ and $\beta_2 \in (0 ,\beta_1)$. Suppose $\beta_3 > \beta_1 - \beta_2$, $\beta_3<2 Q-\beta_1-\beta_2$, and $\beta_3 < \beta_1+\beta_2$.
	Let $\Re \mu_i > 0$ for $i=1,2,3$. Recall $M_\phi$ from Proposition~\ref{prop-phi-cond-max} and 
$\mathfrak H_\phi$ from~\eqref{eq-Hphi}. We have \(\lim_{C \to \infty} (\beta_2 + \beta_3 - \beta_1) \LF_\cS^{\beta_1,\beta_2,\beta_3}[| \mathfrak H_\phi | 1_{|M_\phi| > C}  ] = 0.\)
Moreover, the convergence is uniform in $\beta_3\in (\beta_1 - \beta_2,\beta_1 - \beta_2+\eps]$ for $\eps>0$ small enough. 
\end{lemma}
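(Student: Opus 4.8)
The plan is to split the event $\{|M_\phi|>C\}$ into $\{M_\phi>C\}$ and $\{M_\phi<-C\}$ and estimate the two separately, after recording the relevant algebraic facts. With $s=\tfrac12\sum\beta_i-Q$ and $\delta=\tfrac12(\beta_2+\beta_3-\beta_1)$, the hypotheses give $s<0$ (from $\beta_3<2Q-\beta_1-\beta_2$), $\delta>0$ (from $\beta_3>\beta_1-\beta_2$), and, using $s=\beta_1-Q+\delta$, also $s+\gamma=(\beta_1-(Q-\gamma))+\delta>0$ (from $\beta_1>Q-\gamma$) and $Q-\beta_1+s=\delta$. I will use throughout the pointwise bound $|\mathfrak H_\phi|\le K$ for a constant $K=K(\gamma,s,\mu_1,\mu_2,\mu_3)$: writing $\hat L=\sum_i\mu_i\hat L_i$, the inequality $\Re\hat L\ge c_0|\hat L|$ (valid as $\Re\mu_i>0$) together with the boundedness of $xe^{-x}$, $y^2e^{-c_0y}$ and $xye^{-x-c_0y}$ on $[0,\infty)^2$ bounds each of the three summands of $\mathfrak H_\phi$ in \eqref{eq-Hphi}. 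All constants below depend continuously on $\beta_3$ on the compact closure of the stated range, which will give the uniformity at the end.

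For the easy half $\{M_\phi>C\}$ I bound $\LF_\cS^{\beta_1,\beta_2,\beta_3}[|\mathfrak H_\phi|1_{M_\phi>C}]\le K\,\LF_\cS^{\beta_1,\beta_2,\beta_3}[1_{M_\phi>C}]$ and compute the mass of the event from the law of $M_\phi$ in Lemma~\ref{lem:Mphi}:
\[
(\beta_2+\beta_3-\beta_1)\,\LF_\cS^{\beta_1,\beta_2,\beta_3}[|\mathfrak H_\phi|1_{M_\phi>C}]\le 2(Q-\beta_1)K\int_C^\infty e^{sm}\,dm=\frac{2(Q-\beta_1)K}{|s|}e^{sC},
\]
which tends to $0$ as $C\to\infty$ since $s<0$, uniformly because $|s|$ is bounded below. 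Here the prefactor $(\beta_2+\beta_3-\beta_1)$ cancels the $1/(\beta_2+\beta_3-\beta_1)$ in the density.

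For the hard half I use the representation $\LF_\cS^{\beta_1,\beta_2,\beta_3}[\,\cdot\,]=\int_\R e^{sc}\,\E_{P_\cS}[\,\cdot\,]\,dc$, with $\hat A=e^{\gamma c}\tilde A$, $\hat L_i=e^{\gamma c/2}\tilde L_i$ for $\tilde A=\cA_{\hat h}(\cS)$ and $\tilde L_i$ the arc lengths of $\hat h$, and the event $\{M_\phi<-C\}=\{c<-C-M_{\hat h}\}$. Since $c$ is now bounded above, I bound $|\mathfrak H_\phi|$ by its two leading summands $(\text{const})\,(e^{\gamma c}\tilde A+e^{\gamma c}\bar L^2)\,e^{-e^{\gamma c}\tilde A-c_0e^{\gamma c/2}\bar L}$ with $\bar L=\sum_i\tilde L_i$ (the mixed $\hat A\hat L$ term is subdominant for $c\to-\infty$ and is handled the same way), apply Tonelli, and integrate the zero mode over $c\in(-\infty,-C-M_{\hat h})$ by the substitutions $v=e^{\gamma c}\tilde A$ and $w=e^{\gamma c/2}\bar L$. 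Because $s+\gamma>0$ the truncated integrals converge at their lower ends; with $\theta=(s+\gamma)/\gamma$ and the interpolation bound $\int_0^{v_*}v^{\theta\rho-1}e^{-v}\,dv\le v_*^{\theta(1-\rho)}\Gamma(\theta\rho)$ for a parameter $\rho\in(0,1)$, they produce a decay factor $e^{-(s+\gamma)(1-\rho)C}$ times weighted GMC moments. Writing $\tilde A=e^{\gamma M_{\hat h}}A'$ and $\bar L=e^{\gamma M_{\hat h}/2}L'$ for the shape observables (field shifted to maximal average $0$), the $M_{\hat h}$-exponents collapse, leaving $\E_{P_\cS}[A'^{-\theta\rho}e^{-(s+\gamma)M_{\hat h}}]$ from the area term and $\E_{P_\cS}[L'^{\,p}e^{-sM_{\hat h}}]$ from the length-squared term, where $p=-2s/\gamma+2\theta(1-\rho)$.

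The main obstacle is the finiteness, and uniform boundedness in $\beta_3$, of these two weighted moments. The area weight $e^{-(s+\gamma)M_{\hat h}}$ decays at $+\infty$, so that moment is finite unconditionally: the Gaussian lower tail of $M_{\hat h}$ controls $M_{\hat h}\to-\infty$ and the negative area moment $\E[A'^{-\theta\rho}]<\infty$ by standard GMC negative-moment bounds, giving a contribution that vanishes with rate $e^{-(s+\gamma)(1-\rho)C}$ after multiplication by $2\delta$. The length-squared weight $e^{-sM_{\hat h}}$ grows at $+\infty$ (as $-s>0$), so one must use the positive tail $P_{P_\cS}[M_{\hat h}>M]\asymp e^{-(Q-\beta_1)M}$ (the maximum of a Brownian motion with drift $-(Q-\beta_1)$ plus the bounded drift from $\tfrac{\beta_3}2G_\cS(\cdot,\ii\pi)$); convergence at $+\infty$ holds iff $Q-\beta_1+s>0$, i.e.\ precisely $\delta>0$, and the integral behaves like $\delta^{-1}$ — consistent with the simple pole of $H$ at $\beta_3=\beta_1-\beta_2$ — so that the external factor $(\beta_2+\beta_3-\beta_1)=2\delta$ keeps the product bounded. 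To control $\E[L'^{\,p}\mid M_{\hat h}=M]$ uniformly in $M$ one combines the convergence of the conditioned shape law from Lemma~\ref{lem-pinch-h} with the length-moment bound of \cite[Corollary 3.10]{hrv-disk}, which requires $p<\tfrac4{\gamma^2}\wedge\min_i\tfrac2\gamma(Q-\beta_i)$; at $\rho\to1$ one has $p\to-2s/\gamma=(2Q-\sum\beta_i)/\gamma$, and the three triangle inequalities $\beta_2+\beta_3>\beta_1$, $\beta_1+\beta_2>\beta_3$, $\beta_1+\beta_3>\beta_2$ (the first two are the hypotheses $\beta_3>\beta_1-\beta_2$ and $\beta_3<\beta_1+\beta_2$, the last is automatic since $\beta_2<\beta_1$) guarantee $p$ lies strictly below the critical exponent, so a choice of $\rho$ slightly below $1$ keeps $p$ subcritical while retaining a positive decay rate $(s+\gamma)(1-\rho)$. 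Putting the halves together yields $(\beta_2+\beta_3-\beta_1)\LF_\cS^{\beta_1,\beta_2,\beta_3}[|\mathfrak H_\phi|1_{|M_\phi|>C}]\le (\text{const})\,e^{sC}+(\text{const})\,e^{-(s+\gamma)(1-\rho)C}\to0$; since every constant and exponent depends continuously on $\beta_3$ and both $|s|$ and $s+\gamma$ stay bounded away from $0$ on $(\beta_1-\beta_2,\beta_1-\beta_2+\eps]$, the convergence is uniform in $\beta_3$ as claimed.
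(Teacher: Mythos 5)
Your decomposition is the same as the paper's: split on $\{M_\phi > C\}$ versus $\{M_\phi < -C\}$, use the explicit law of $M_\phi$ from Lemma~\ref{lem:Mphi} (whose $\delta^{-1}$ normalization is cancelled by the prefactor $2\delta = \beta_2+\beta_3-\beta_1$), bound $|\mathfrak H_\phi|$ by a constant on the upper tail, and control the lower tail by GMC moments with exponents just above $-s/\gamma$ (area) and $-2s/\gamma$ (length), whose subcriticality is exactly the triangle inequalities; the paper implements this by fixing $q \in (-\frac s\gamma, \frac2{\gamma^2})$ with $q < \frac1\gamma(Q-\beta_j)$ for $j=2,3$, which corresponds to your $\rho$ slightly below $1$. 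But there is a genuine gap at the crucial technical step: you need conditional moment bounds, \emph{uniform in $M$}, of the form $\E[A'^{\,q} \mid M_{\hat h} = M] \le K$ and $\E[L'^{\,p} \mid M_{\hat h} = M] \le K$, equivalently $\LF_\cS^{\beta_1,\beta_2,\beta_3}[\cA_\phi(\cS)^q \mid M_\phi] \le K e^{q\gamma M_\phi}$, and your proposed derivation --- combining the weak convergence of the conditioned shape law from Lemma~\ref{lem-pinch-h} with the unconditional moment bounds of \cite[Corollaries 3.8 and 3.10]{hrv-disk} --- cannot deliver them. Lemma~\ref{lem-pinch-h} gives convergence only against bounded continuous test functions, and only in the limit $M \to +\infty$; moments such as $L'^{\,p}$ are unbounded functionals, so weak convergence yields no bound on them, and it says nothing about finite $M$ or $M \to -\infty$, where uniformity is still required. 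This is precisely the content of the paper's Lemma~\ref{lem-AL-mixed}, which is proved by a separate argument (the radial decomposition of the field, the description of the line-average process conditioned to stay below its maximum, the estimate of \cite[Lemma A.5]{wedges} for positive exponents, and a domination trick for negative ones). Without Lemma~\ref{lem-AL-mixed} or an equivalent, your lower-tail estimate does not close.

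Secondarily, your bookkeeping for the area term is wrong, though fixably so. With $\theta = (s+\gamma)/\gamma \in (0,1)$, the interpolation $v^{\theta-1} \le v_*^{\theta(1-\rho)} v^{\theta\rho - 1}$ leaves the moment $\wt A^{-s/\gamma + \theta(1-\rho)} = \wt A^{1-\theta\rho}$ --- a \emph{positive} power close to $-s/\gamma$, not the negative power $A'^{-\theta\rho}$ --- and after writing $\wt A = e^{\gamma M_{\hat h}} A'$ the $M_{\hat h}$-exponents collapse to the weight $e^{-sM_{\hat h}}$, exactly as in your length term, not to the decaying weight $e^{-(s+\gamma)M_{\hat h}}$. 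So the area term is not ``finite unconditionally'' via negative GMC moments: its weight also grows at rate $-s>0$ as $M_{\hat h} \to +\infty$, so it needs the exponential tail $e^{-(Q-\beta_1)M}$ and the condition $Q-\beta_1+s = \delta > 0$, and it produces its own $\delta^{-1}$ to be absorbed by the prefactor. This does not affect your final bound $K e^{sC} + K e^{-(s+\gamma)(1-\rho)C}$, which matches the paper's $K(e^{sC} + e^{-(\gamma q + s)C})$ upon setting $q = 1-\theta\rho$, but as written the claimed mechanism for the area term is incorrect.
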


\begin{proof}%[Proof of Lemma~\ref{lem-noncompact-bound}]
In this proof let $K$ denote a deterministic constant whose value may change from line to line, but which does not depend on $\beta_3$.
Set $s=\frac12 \sum \beta_i - Q$.	Choose $q \in (-\frac s\gamma, \frac2{\gamma^2})$ such that
$q < \frac1\gamma (Q - \beta_j)$ for $j=2,3$. This choice is possible due to the conditions placed on $\beta_1,\beta_2,\beta_3$. The upper bounds for $q$ imply that $\E[\cA_{\wt h}((-\infty, 1) \times (0,\pi))^q]$ and $\E[\cL_{\wt h}((-\infty, 1)\times \{0,\pi\})^{2q}] $ are finite; see \cite[Corollaries 3.8 and 3.10]{hrv-disk}. Combining with Lemma~\ref{lem-AL-mixed} gives
	\[\LF_\cS^{\beta_1,\beta_2,\beta_3}[ \cA_\phi(\cS)^ q\mid M_\phi] \leq K e^{q\gamma M_\phi}  \quad \textrm{and}\quad \LF_\cS^{\beta_1,\beta_2,\beta_3}[\cL_\phi(\partial \cS)^{2q} \mid M_\phi] \leq K e^{q\gamma M_\phi}. \]
 Writing $L = \sum \mu_i L_i$, we now show that $\lim_{C \to \infty} \LF_\cS^{\beta_1, \beta_2, \beta_3}[1_{|M_\phi|>C}\mid AL e^{-A - L}|]= 0$. 
	Clearly we have $|AL e^{-A-\mu L}| \leq K A^q$. Then the law of $M_\phi$ from~\eqref{eq-M-law} and the moment bounds give
	\begin{align*}
		&(\beta_2 + \beta_3 - \beta_1) \LF_\cS^{\beta_1, \beta_2, \beta_3}[|ALe^{-A-\mu L}| 1_{|M_\phi| > C} ] \\
		&\leq K\int_{-\infty}^{-C} \LF_\cS^{\beta_1, \beta_2, \beta_3}[ A^q \mid M_\phi=m] e^{sm} \, \mathrm dm + K\int_C^\infty \LF_\cS^{\beta_1, \beta_2, \beta_3}[A e^{-A} \mid M_\phi = m] e^{sm} \, \mathrm dm \\
		&\leq K\int_{-\infty}^{-C} e^{q \gamma m + sm} \, \mathrm dm + K\int_C^\infty e^{sm}\, dm \leq K(e^{-(\gamma q + s)C} + e^{sC})\xrightarrow{C \to \infty} 0.		
	\end{align*}
 Here, we are using  $\gamma q + s > 0 > s$ which follows from our conditions on $q$ and $s$. This proves $\lim_{C \to \infty}  (\beta_2 + \beta_3 - \beta_1)\LF_\cS^{\beta_1, \beta_2, \beta_3}[1_{|M_\phi|>C}|AL e^{-A - L}|]= 0$. The analogous terms for $Ae^{-A-L}$ and $L^2 e^{-A-L}$ are similarly bounded, so the triangle inequality yields the result. 
 The uniform convergence follows by inspecting the proof.
\end{proof}
\begin{proof}[Proof of Lemma~\ref{lem:HlimR}]
Recall $R_{\psi+m}$ from~\eqref{eq:Rpsi} and compare with $\mathfrak H_\phi$. By Proposition~\ref{prop-phi-cond-max} we have
\(\lim_{\beta_3 \downarrow \beta_1-\beta_2}\LF_\cS^{\beta_1,\beta_2,\beta_3}[\mathfrak H_\phi \mid M_\phi = m]=  \frac1{Q-\beta_1} \E[\mathfrak R_{\psi+m}].
\)
 Moreover, for each  $C>0$ the convergence is uniform in $m \in (-C,C)$.
Using the law of $M_\phi$ from~\eqref{eq-M-law}, we get
	\[\lim_{\beta_3 \downarrow \beta_1-\beta_2} (\beta_2+\beta_3-\beta_1) \LF_\cS^{\beta_1,\beta_2,\beta_3}[ 1_{|M_\phi|\leq C} \mathfrak H_\phi] = 2 \int_{-C}^{C} e^{(\beta_1-Q)m} \E[\mathfrak R_{\psi + m}] \, dm .\]
 Sending $C \to \infty$ we get the desired~\eqref{eq-limit-HR-strip} from Lemma~\ref{lem-noncompact-bound}.
\end{proof}

\subsection{Two-point quantum disk from mating of trees: proof of Lemma~\ref{lem-mot-match}} \label{sec:Rgamma}

In the setting of  Lemma~\ref{lem-mot-match}, we have the following two lemmas.

\begin{lemma}\label{lem-MOT-input-0}
Let $c = \sqrt{1/\sin(\frac{\pi\gamma^2}4)}$ and $C = \frac{(2\pi)^{\frac4{\gamma^2}-1}}{(1-\frac{\gamma^2}4) \Gamma(1 - \frac{\gamma^2}4)^{\frac4{\gamma^2}}} \cdot \frac2{\Gamma(\frac4{\gamma^2})} 2^{-\frac4{\gamma^2}}$. Then
\[ \cM_2^\disk {(\gamma)}[ L_1^m e^{-A}] = Cc^{\frac4{\gamma^2}}\iint_0^\infty  \ell_1^m \cdot  (\ell_1 + \ell_2)^{-1} K_{\frac4{\gamma^2}}(c(\ell + r) ) \, d\ell_1 \, d\ell_2.\]
\end{lemma}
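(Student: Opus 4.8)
The plan is to obtain the stated identity by combining the explicit mating-of-trees description of the joint law of the two boundary arc lengths $(L_1,L_2)$ and the area $A$ of $\cM_2^\disk(\gamma)$ with the standard integral representation of the modified Bessel function. First I would isolate the role of the zero mode. By Definition~\ref{def-thick-disk} a sample from $\cM_2^\disk(\gamma)$ is $\phi=\psi+\mathbf c$ where $\mathbf c$ has ``law'' $\frac\gamma2 e^{(\gamma-Q)c}\,dc$ and $\psi$ is the unit-scale field on the strip; consequently $L_i=e^{\gamma\mathbf c/2}\wt L_i$ and $A=e^{\gamma\mathbf c}\wt A$, where $(\wt L_1,\wt L_2,\wt A)$ are the lengths and area of $\psi$. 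Integrating out $\mathbf c$ shows that the measure $\cM_2^\disk(\gamma)[\,\cdot\,;L_1\in d\ell_1,L_2\in d\ell_2,A\in dA]$ has a density $\rho(\ell_1,\ell_2,A)$ which is homogeneous of degree $-3-\tfrac4{\gamma^2}$ under the scaling $(\ell_1,\ell_2,A)\mapsto(\lambda\ell_1,\lambda\ell_2,\lambda^2 A)$; this is exactly the exponent $\tfrac{2(\gamma-Q)}\gamma=1-\tfrac4{\gamma^2}$ dictated by the weight of the disk, and it pins down the allowed shape of $\rho$.

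Second, I would feed in the mating-of-trees law of the quantum disk from \cite{wedges,ag-disk}. Two facts are needed: (i) conditioned on the total boundary length $L=\ell_1+\ell_2$, the split of $L$ into the two arcs is uniform, so that $\rho$ depends on $(\ell_1,\ell_2)$ only through $L$, say $\rho=\tilde\rho(L,A)$; and (ii) the conditional law of the area $A$ given $L$ is, after the above scaling, a generalized inverse-Gaussian law of the form $\tilde\rho(L,A)\propto L^{-3-4/\gamma^2}(A/L^2)^{-4/\gamma^2-1}e^{-\kappa L^2/A}$ with $\kappa=\tfrac1{4\sin(\pi\gamma^2/4)}$, the exponent $4/\gamma^2$ and the constant $\kappa$ being the mating-of-trees data attached to $\cM_2^\disk(\gamma)$. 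Both (i) and (ii) are consistent with the homogeneity found in the first step, and together they are what produce a Bessel function rather than some other transcendental function.

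Finally, I would carry out the area integral. Substituting $u=A/L^2$ and applying $\int_0^\infty u^{\nu-1}e^{-pu-q/u}\,du=2(q/p)^{\nu/2}K_\nu(2\sqrt{pq})$ with $\nu=-\tfrac4{\gamma^2}$, $p=L^2$, $q=\kappa$ and $K_{-\nu}=K_\nu$ yields
\[
\int_0^\infty e^{-A}\tilde\rho(L,A)\,dA = \mathrm{const}\cdot L^{-1}K_{\frac4{\gamma^2}}(2\sqrt\kappa\,L),
\]
and $2\sqrt\kappa=\sqrt{1/\sin(\pi\gamma^2/4)}=c$, which produces exactly $(\ell_1+\ell_2)^{-1}K_{\frac4{\gamma^2}}(c(\ell_1+\ell_2))$. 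Inserting the observable $L_1^m=\ell_1^m$ and integrating over $(\ell_1,\ell_2)\in(0,\infty)^2$ then gives the claimed formula, provided the overall constant is $Cc^{4/\gamma^2}$. I expect the main obstacle to be bookkeeping: matching the normalization of the mating-of-trees density in the convention of \cite{wedges,ag-disk} (together with the $\frac\gamma2$ from the zero-mode measure and the Jacobian from the scaling) so that the accumulated constant collapses to the stated $C=\frac{(2\pi)^{4/\gamma^2-1}}{(1-\gamma^2/4)\Gamma(1-\gamma^2/4)^{4/\gamma^2}}\cdot\frac{2}{\Gamma(4/\gamma^2)}2^{-4/\gamma^2}$; the probabilistic structure and the Bessel identity are the easy parts, while pinning down every $2\pi$, $\Gamma(1-\gamma^2/4)$ and $\Gamma(4/\gamma^2)$ against the cited normalizations is the delicate step.
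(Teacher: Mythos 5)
Your proposal is correct and follows essentially the same route as the paper: your facts (i) and (ii) are precisely \cite[Proposition 5.2]{AHS-SLE-integrability} (joint density of $(L_1,L_2)$ proportional to $(\ell_1+\ell_2)^{-4/\gamma^2-1}$) and \cite[Theorem 1.2]{ag-disk} combined with the variance constant from \cite[Theorem 1.3]{ARS-FZZ} (conditional law of $A$ given the boundary length is inverse gamma with shape $\frac4{\gamma^2}$ and scale $\kappa L^2$, $\kappa = \frac1{4\sin(\pi\gamma^2/4)}$), and your generalized inverse-Gaussian integral is just the Laplace transform of that inverse-gamma law, which the paper instead quotes directly in its Bessel form $\cM_2^\disk(\gamma)[e^{-A}\mid L_1,L_2] = \frac{2}{\Gamma(4/\gamma^2)}\kappa^{2/\gamma^2}(L_1+L_2)^{4/\gamma^2}K_{4/\gamma^2}(c(L_1+L_2))$. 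The constant bookkeeping you flag as delicate is resolved exactly by those two cited normalizations, so no further work is needed.
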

\begin{proof}
    \cite[Proposition 5.2]{AHS-SLE-integrability} gives the density function of $(L_1, L_2)$ as
    \[1_{\ell_1, \ell_2>0}\frac{(2\pi)^{\frac4{\gamma^2}-1}}{(1-\frac{\gamma^2}4) \Gamma(1 - \frac{\gamma^2}4)^{\frac4{\gamma^2}}} (\ell_1 + \ell_2)^{-\frac4{\gamma^2}-1} \, d\ell_1 \, d\ell_2.\]
    Next, the conditional law of $A$ given $L_1, L_2$ was obtained in \cite[Theorem 1.2]{ag-disk} modulo the mating-of-trees variance which was later computed in \cite[Theorem 1.3]{ARS-FZZ}, from which we get:
    \[\cM_2^\disk(\gamma)[e^{- A} \mid L_1, L_2] = \frac2{\Gamma(\frac4{\gamma^2})} \left( \frac{1}{4\sin(\frac{\pi\gamma^2}4)}\right)^{\frac2{\gamma^2}} (L_1+L_2)^{\frac4{\gamma^2}} K_{\frac4{\gamma^2}}\left((L_1+L_2)\sqrt{\frac1{\sin (\frac{\pi \gamma^2}4)}}\right). \]
    Combining the two indented equations gives the result. 
\end{proof}

\begin{lemma}\label{lem-mot-input}
%	Consider the setup of Lemma~\ref{lem-MOT-input-0}. 
 Suppose $k\in \mathbb N$ satisfies $k+1 > \frac4{\gamma^2}$ and $\mu \in (0, c)$. Then with $C' = \frac{1}{2} c^ {\frac{4}{\gamma^2} - 1 } C$:
	\[\cM_2^\disk(\gamma) [(-L_1)^k e^{-A - \mu L_1}] = C' \sum_{i=0}^\infty \frac{(-2/c)^{k+i}}{i! (k+i+1)} \Gamma(\frac12(k+i+1-\frac4{\gamma^2})) \Gamma(\frac12(k+i+1+\frac4{\gamma^2})) \mu^i. \]
\end{lemma}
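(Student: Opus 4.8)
The plan is to expand $e^{-\mu L_1}$ as a power series in $\mu$ and reduce the claim to the moment computation already carried out in Lemma~\ref{lem-MOT-input-0}. Writing
\[
(-L_1)^k e^{-\mu L_1} = (-1)^k \sum_{i=0}^\infty \frac{(-\mu)^i}{i!}\, L_1^{k+i},
\]
I would first argue that this series may be integrated term by term against $e^{-A}\, d\cM_2^\disk(\gamma)$, so that
\[
\cM_2^\disk(\gamma)[(-L_1)^k e^{-A-\mu L_1}] = (-1)^k \sum_{i=0}^\infty \frac{(-\mu)^i}{i!}\, \cM_2^\disk(\gamma)[L_1^{k+i} e^{-A}].
\]
Each summand is then given explicitly by Lemma~\ref{lem-MOT-input-0} applied with $m = k+i$.

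Second, I would evaluate the double integral produced by Lemma~\ref{lem-MOT-input-0}. The natural change of variables is $u = \ell_1 + \ell_2$, keeping $\ell_1$ as the second coordinate; its Jacobian is trivial, and performing the inner integral $\int_0^u \ell_1^{k+i}\, d\ell_1 = u^{k+i+1}/(k+i+1)$ turns the integral into
\[
\iint_0^\infty \ell_1^{k+i}(\ell_1+\ell_2)^{-1} K_{\frac4{\gamma^2}}(c(\ell_1+\ell_2))\,d\ell_1\,d\ell_2 = \frac{1}{k+i+1}\int_0^\infty u^{k+i} K_{\frac4{\gamma^2}}(cu)\,du.
\]
The remaining one-dimensional integral is a Mellin transform of the modified Bessel function, and I would invoke the standard identity
\[
\int_0^\infty u^{s-1} K_\nu(cu)\,du = 2^{s-2} c^{-s}\,\Gamma\!\Big(\tfrac{s+\nu}{2}\Big)\Gamma\!\Big(\tfrac{s-\nu}{2}\Big), \qquad \Re s > |\Re\nu|,
\]
with $s = k+i+1$ and $\nu = \frac4{\gamma^2}$. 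The hypothesis $k+1 > \frac4{\gamma^2}$ ensures $\Re s > \nu$ for every $i \ge 0$, so each integral converges and yields the product of Gamma functions in the statement.

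Third, I would collect the constants. Substituting the Bessel evaluation into Lemma~\ref{lem-MOT-input-0} gives $\cM_2^\disk(\gamma)[L_1^{k+i}e^{-A}] = C c^{\frac4{\gamma^2}} \frac{2^{k+i-1}}{k+i+1} c^{-(k+i+1)} \Gamma(\tfrac12(k+i+1-\tfrac4{\gamma^2}))\Gamma(\tfrac12(k+i+1+\tfrac4{\gamma^2}))$; then combining $(-1)^k(-\mu)^i = (-1)^{k+i}\mu^i$ with $(-1)^{k+i}2^{k+i}c^{-(k+i)} = (-2/c)^{k+i}$ and $\tfrac12 C c^{\frac4{\gamma^2}-1} = C'$ reproduces the claimed series term by term. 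The main obstacle is justifying the term-by-term integration, and this is precisely where the assumption $\mu \in (0,c)$ is decisive. The clean route is to control the series of absolute values: by Tonelli it suffices to show $\sum_i \frac{\mu^i}{i!}\cM_2^\disk(\gamma)[L_1^{k+i}e^{-A}] = \cM_2^\disk(\gamma)[L_1^k e^{\mu L_1 - A}] < \infty$. Using the explicit $i$-th moment and Stirling's approximation, $\frac{1}{i!}\Gamma(\tfrac12(k+i+1-\tfrac4{\gamma^2}))\Gamma(\tfrac12(k+i+1+\tfrac4{\gamma^2}))$ is asymptotically $2^{-i}$ up to subexponential factors, so the $i$-th term behaves like $(\mu/c)^i$ up to subexponential corrections; hence the series converges exactly for $\mu < c$. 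With finiteness in hand, dominated convergence (the partial sums of $L_1^k e^{-\mu L_1}$ are dominated by $L_1^k e^{\mu L_1}$) legitimizes the interchange and completes the proof.
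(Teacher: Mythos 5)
Your proof is correct and follows essentially the same route as the paper: Taylor-expand $e^{-\mu L_1}$, reduce each term to the moment $\cM_2^\disk(\gamma)[L_1^{k+i}e^{-A}]$ via Lemma~\ref{lem-MOT-input-0}, evaluate the resulting integral $\int_0^\infty u^{k+i}K_{4/\gamma^2}(cu)\,du$ by the standard Mellin transform of the Bessel function (the paper cites \cite[(10.43.19)]{NIST:DLMF} for exactly this identity), and collect constants. Your Tonelli/dominated-convergence justification of the term-by-term integration, with the duplication-formula asymptotics showing the $i$-th term is $(\mu/c)^i$ up to polynomial factors, fills in a step the paper passes over silently and correctly pinpoints where the hypothesis $\mu \in (0,c)$ enters.
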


\begin{proof}
	Suppose $m+1 > \frac4{\gamma^2}$. 
	Then
	\begin{align*}
	\cM_2^\disk (\gamma)[ L_1^m e^{-A}] &= Cc^{\frac4{\gamma^2}}\iint_0^\infty  \ell_1^m \cdot  (\ell_1 + \ell_2)^{-1} K_{\frac4{\gamma^2}}(c(\ell_1 + \ell_2) ) \, d\ell_1 \, d\ell_2\\
	&= Cc^{\frac4{\gamma^2}}\int_0^\infty\int_0^x \ell_1^m \cdot x^{-1} K_{\frac4{\gamma^2}}(cx) \, d\ell_1 \, dx\\
	&= Cc^{\frac4{\gamma^2}}\int_0^\infty \frac1{m+1} \cdot x^{m} K_{\frac4{\gamma^2}}(cx)  \, dx\\
	&= Cc^{\frac4{\gamma^2} - m - 1} \frac{2^{m-1}}{m+1} \Gamma(\frac12(m+1 - \frac4{\gamma^2})) \Gamma(\frac12(m+1 + \frac4{\gamma^2})).
	\end{align*}
 The first step used Lemma~\ref{lem-MOT-input-0}. The integral in the last step was evaluated via \cite[(10.43.19)]{NIST:DLMF}, and the condition $m+1 > \frac4{\gamma^2}$ is necessary for this step. Now, using the Taylor expansion $e^{-\mu L} = \sum_{i=0}^\infty \frac{(-\mu L)^i}{i!}$ gives the result. 	
\end{proof}

\begin{proof}[Proof of Lemma~\ref{lem-mot-match}]
	By Lemma~\ref{lem-mot-input}, gathering equal powers of $\mu$ and simplifying gives
	\[\cM_2^\disk(\gamma)[(\mu(-L_1)^k + k (-L_1)^{k-1}) e^{-A - \mu L_1}] = C' \sum_{i=0}^\infty \frac{(-2/c)^{k+i-1} }{i! } \Gamma(\frac12(k+i-\frac4{\gamma^2})) \Gamma(\frac12(k+i+\frac4{\gamma^2})) \mu^i, \]
 with the constant $C'$ here being the same as in Lemma~\ref{lem-mot-input}.
	Now the result is immediate from the following identity, (see e.g.\ \cite[Lemma 4.15]{ARS-FZZ}), %for $a \in\mathbb R\backslash \mathbb Z$ and $x \in (-1, 1)$,
	\[\cos (a \cos^{-1} (x)) = \frac {a\sin(\pi a)}{2\pi}  \sum_{n=0}^\infty \frac{(-2)^{n-1}}{n!} \Gamma(\frac12(n+a))\Gamma(\frac12(n-a))x^n   \textrm{ for }a \in\mathbb R\backslash \mathbb Z \textrm{ and }x \in (-1, 1),\]
which by setting $a = \frac{4}{\gamma^2}$ and $x = \frac{\mu}{c} $ implies:
\[ f^{(k)}(\mu) = \frac{2}{\pi \gamma^2 c} \sin(\frac{4 \pi}{\gamma^2})   \sum_{n=0}^\infty \frac{(-2/c)^{n + k -1}}{n!} \Gamma(\frac12(n + k + \frac{4}{\gamma^2}))\Gamma(\frac12(n + k - \frac{4}{\gamma^2})) \mu^n.\]
The constant $\mathcal{C}_1$ is given by: \(\mathcal{C}_1 = \frac{2}{\pi \gamma^2 c} \sin(\frac{4 \pi}{\gamma^2}) \frac{1}{C'} = \frac{\pi(\frac{4}{\gamma^2} -1)}{\Gamma(1 - \frac{4}{\gamma^2})} \left( \frac{\pi \Gamma( \frac{\gamma^2}{4} ) }{\Gamma( 1 - \frac{\gamma^2}{4} )} \right)^{ - \frac{2}{\gamma^2}}.\)
\end{proof}

\appendix

\section{Background on special functions}\label{sec:special}

\subsection{The hypergeometric equation}\label{sec:hyp_eq}

Here we recall some facts we have used on the hypergeometric equation and its solution space.
For $A>0$ let $\Gamma(A) = \int_0^{\infty} t^{A-1} e^{-t} dt $ denote the standard Gamma function which can then be analytically extended to $\mathbb{C} \setminus \{ - \mathbb{N} \} $. Record the following properties:
\begin{align}\label{prop_gamma}
\Gamma(A+1) = A \Gamma(A), \quad \Gamma(A) \Gamma(1-A) = \frac{\pi}{\sin(\pi A)}, \quad \Gamma(A) \Gamma(A + \frac{1}{2}) =  \sqrt{\pi} 2^{1 - 2A}  \Gamma(2A).
\end{align}
Let $(A)_n : = \frac{\Gamma(A +n)}{\Gamma(A)}$. For $A, B, C,$ and $t$ real numbers we define the hypergeometric function $F$ by:
\begin{equation}
F(A,B,C,t) :=  \sum_{n=0}^{\infty} \frac{(A)_n (B)_n}{n! (C)_n} t^n.
\end{equation}
This function can be used to solve the following hypergeometric equation:
\begin{equation}
\left( t (1-t) \frac{d^2}{d t^2} + ( C - (A +B +1)t) \frac{d}{dt} - AB\right) f(t) =0.
\end{equation}
We have  three bases of solutions corresponding respectively to $t \in (0,1)$, $t \in (1, + \infty)$ and $t \in (- \infty, 0)$.  For   $t \in (0,1)$, under the assumption that $C$  and $ C - A - B $ are not integers,  we have: 
\begin{align}
f(t) &= C_1 F(A,B,C,t) + C_2^+ t^{1 -C} F(1 + A-C, 1 +B - C, 2 -C, t) \\ \nonumber
&= B_1 F(A,B,1+A+B- C, 1 -t)\\ \nonumber
& + B_2^- (1-t)^{C- A - B} F(C- A, C- B, 1 + C - A - B , 1 -t).
\end{align}
For $t \in (- \infty,0)$, under the assumption that $C$ and $ A - B $ are not integers:
\begin{align}
f(t) &= C_1 F(A,B,C,t) + C_2^- t^{1 -C} F(1 + A-C, 1 +B - C, 2 -C, t) \\ \nonumber
&= D_1 t^{-A}F(A,1+A-C,1+A-B,t^{-1})\\  \nonumber
& + D_2^+ t^{-B} F(B, 1 +B - C, 1 +B - A, t^{-1}).
\end{align}
The case $t\in (1,\infty)$ is similar, but we do not need it. For each of the three cases we have four real constants that parametrize the solution space. %namely $C_1, C_2^+, B_1, B_2^-$, $B_1, B_2^+, D_1, D_2^-$ and $D_1, D_2^+, C_1, C_2^-$. 
There is an explicit change of basis formula that in particular gives a link between $C_1, C_2^+$, $B_1, B_2^-$, and 
similarly a link between $C_1,C_2^-,D_1,D_2^+$.
These are  the so-called connection formulas:
\begin{equation}\label{hpy1}
\begin{pmatrix}
C_1  \\
C_2^-
\end{pmatrix} 
= 
\begin{pmatrix}
\frac{\Gamma(1 -C ) \Gamma( A - B +1  )  }{\Gamma(A - C +1 )\Gamma(1- B   )  } & \frac{\Gamma(1 -C) \Gamma( B- A +1 )  }{\Gamma(B - C +1 )\Gamma( 1- A )   } \\
\frac{\Gamma(C-1 ) \Gamma( A - B +1  )  }{\Gamma( A  )  \Gamma( C - B  )} & \frac{\Gamma(C-1 )  \Gamma(B - A +1 ) }{ \Gamma(  B ) \Gamma( C - A )  }
\end{pmatrix} 
\begin{pmatrix}
D_1  \\
D_2^+ 
\end{pmatrix},
\end{equation}

\begin{equation}\label{connection1}
\begin{pmatrix}
B_1  \\
B_2^-
\end{pmatrix} 
= 
\begin{pmatrix}
\frac{\Gamma(C)\Gamma(C-A-B)}{\Gamma(C-A)\Gamma(C-B)} & \frac{\Gamma(2-C)\Gamma(C-A-B)}{\Gamma(1-A)\Gamma(1-B)}  \\
\frac{\Gamma(C)\Gamma(A+B-C)}{\Gamma(A)\Gamma(B)} &  \frac{\Gamma(2-C)\Gamma(A+B-C)}{\Gamma(A-C+1)\Gamma(B-C+1)}
\end{pmatrix} 
\begin{pmatrix}
C_1  \\
C_2^+ 
\end{pmatrix}.
\end{equation}
\begin{comment}
Note that in the present paper we will have $C_2^+ \neq C_2^-$, $B_2^+ \neq B_2^-$, $D_2^+ \neq D_2^-$, which is why we must distinguish which interval $t$ belongs to.    
\end{comment}

\subsection{Double Gamma and Double Sine functions}\label{sec:double_gamma}
We will now provide some explanations on the functions $\Gamma_{\frac{\gamma}{2}}(x)$ and $S_{\frac{\gamma}{2}}(x)$ that we have introduced. For all $\gamma \in (0,2) $ and for $\Re(x) >0$, $\Gamma_{\frac{\gamma}{2}}(x)$ is defined by the integral formula
\begin{equation}
\log \Gamma_{\frac{\gamma}{2}}(x) = \int_0^{\infty} \frac{dt}{t} \left[ \frac{ e^{-xt} -e^{- \frac{Qt}{2}}   }{(1 - e^{- \frac{\gamma t}{2}})(1 - e^{- \frac{2t}{\gamma}})} - \frac{( \frac{Q}{2} -x)^2 }{2}e^{-t} + \frac{ x -\frac{Q}{2}  }{t} \right],
\end{equation} 
where we have $Q = \frac{\gamma}{2} +\frac{2}{\gamma}$. Since the function $\Gamma_{\frac{\gamma}{2}}(x)$ is continuous it is completely determined by the following two shift equations 
\begin{align}\label{eq:shift_G1}
\frac{\Gamma_{\frac{\gamma}{2}}(x)}{\Gamma_{\frac{\gamma}{2}}(x + \frac{\gamma}{2}) }&= \frac{1}{\sqrt{2 \pi}}
\Gamma(\frac{\gamma x}{2}) ( \frac{\gamma}{2} )^{ -\frac{\gamma x}{2} + \frac{1}{2}
}, \\ \label{eq:shift_G2}
\frac{\Gamma_{\frac{\gamma}{2}}(x)}{\Gamma_{\frac{\gamma}{2}}(x + \frac{2}{\gamma}) }&= \frac{1}{\sqrt{2 \pi}} \Gamma(\frac{2
	x}{\gamma}) ( \frac{\gamma}{2} )^{ \frac{2 x}{\gamma} - \frac{1}{2} },
\end{align}
and by its value at $\frac{Q}{2}$, $\Gamma_{\frac{\gamma}{2}}(\frac{Q}{2} ) =1$. Furthermore $x \mapsto \Gamma_{\frac{\gamma}{2}}(x)$ admits a meromorphic extension to all of $\mathbb{C}$ with single poles at $x = -n\frac{\gamma}{2}-m\frac{2}{\gamma}$  for any $n,m \in \mathbb{N}$ and $\Gamma_{\frac{\gamma}{2}}(x)$ is never equal to $0$.  We also need the double sine function defined by:
\begin{equation} 
S_{\frac{\gamma}{2}}(x) = \frac{\Gamma_{\frac{\gamma}{2}}(x)}{\Gamma_{\frac{\gamma}{2}}(Q -x)}.
\end{equation}
It obeys the following two shift equations:
\begin{align}\label{eq:shift_S1}
\frac{S_{\frac{\gamma}{2}}(x+\frac{\gamma}{2})}{S_{\frac{\gamma}{2}}(x)} = 2\sin(\frac{\gamma\pi}{2}x), \quad\textrm{and}\quad \frac{S_{\frac{\gamma}{2}}(x+\frac{2}{\gamma})}{S_{\frac{\gamma}{2}}(x)} = 2\sin(\frac{2\pi}{\gamma}x).
\end{align}
The double sine function admits a meromorphic extension to $\mathbb{C}$ with poles at $x = -n\frac{\gamma}{2}-m\frac{2}{\gamma}$ and with zeros at $x = Q+n\frac{\gamma}{2}+m\frac{2}{\gamma}$ for any $n,m \in \mathbb{N}$. We also record the following asymptotic for $S_{\frac{\gamma}{2}}(x) $ which can be found in e.g.~\cite[Equation (B.52)]{Lorenz_notes}:
\begin{align}\label{eq:lim_S}
S_{\frac{\gamma}{2}}(x) \sim 
\begin{cases}
e^{-\ii\frac{\pi}{2}x(x-Q)} e^{- \ii \frac{\pi}{12}(Q^2 +1)} & \text{as} \quad \textnormal{Im}(x) \to \infty,\\
e^{\ii\frac{\pi}{2}x(x-Q)} e^{ \ii \frac{\pi}{12}(Q^2 +1)} &\text{as} \quad \textnormal{Im}(x) \to -\infty.
\end{cases}
\end{align}

\subsection{Properties of the Ponsot-Teschner function $\HPT $ and the Hosimichi function $G_{\mathrm{Hos}}$}\label{subsec:PT}

For the purpose of proving Theorem \ref{thm:H}, we establish the following two facts about $\HPT$.

\begin{lemma}\label{lem:secH1}
The function $\HPT$ satisfies the shift equations of Theorem \ref{full_shift_H} satisfied by $H$.
\end{lemma}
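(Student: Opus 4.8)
The plan is to verify the two three-term relations \eqref{shift three point 1} and \eqref{shift three point 2} directly on the explicit formula \eqref{formule_PT}, treating the case $\chi = \frac{\gamma}{2}$ first; the case $\chi = \frac{2}{\gamma}$ is then handled by the identical computation with the roles of \eqref{eq:shift_G1} and \eqref{eq:shift_G2} interchanged, since both $\Gamma_{\frac{\gamma}{2}}$ and $S_{\frac{\gamma}{2}}$ obey shift equations for the two periods $\frac{\gamma}{2}$ and $\frac{2}{\gamma}$ simultaneously. Recall that $\HPT$ is the contour integral $\mathcal{J}_{\mathrm{PT}}$ of \eqref{J_PT_app} multiplied by an explicit prefactor built from $\Gamma_{\frac{\gamma}{2}}$ and $S_{\frac{\gamma}{2}}$. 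For each of the three $\HPT$ terms entering a shift equation I would substitute \eqref{formule_PT} with the corresponding shifted arguments $\beta_i \mapsto \beta_i \pm \chi$ and $\sigma_2 \mapsto \sigma_2 + \frac{\chi}{2}$, and use \eqref{eq:shift_G1} and \eqref{eq:shift_S1} to express each shifted prefactor as the unshifted prefactor times an explicit finite product of $\Gamma$-factors and $2\sin(\tfrac{\gamma\pi}{2}\,\cdot)$-factors. After dividing through by a common prefactor, this reduces each shift equation to a purely trigonometric three-term relation among the three contour integrals $\mathcal{J}_{\mathrm{PT}}$ carrying the shifted parameters.

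The second step is to align the three contour integrals. The arguments of the five $S_{\frac{\gamma}{2}}$ factors in the integrand of $\mathcal{J}_{\mathrm{PT}}$ depend on $r$ only through $r$ itself, so shifting $\beta_1$, $\beta_2$ or $\sigma_2$ is equivalent, up to the shift equation \eqref{eq:shift_S1}, to shifting the integration variable $r \mapsto r \pm \chi$ in a contiguous integrand. I would therefore translate $r$ in one or two of the three integrals so that all three integrands become functions of a common variable on a common contour; by the description of the pole lattices $P^\pm_{\mathrm{PT}}$ and the contour $\mathcal{C}$ preceding Lemma~\ref{lem:J}, for generic $(\beta_i,\sigma_i)$ these translations cross no poles, so no residue terms are produced (the excluded non-generic values being recovered by continuity, as elsewhere in the paper). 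At this point the claimed shift equation is equivalent to a pointwise identity, valid for every $r$ on $\mathcal{C}$, among the reduced integrands, and each ratio of reduced integrands is, by \eqref{eq:shift_S1}, an explicit product of sines in $r$ and in the parameters.

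The final step is to verify this pointwise identity by elementary trigonometry. Expanding the sine products with product-to-sum formulas, the $r$-dependence must cancel between the three terms, and the surviving parameter-dependent coefficients must reorganize into the combinations $g_{\chi}(\sigma_1) - g_{\chi}(\sigma_2 + \tfrac{\beta_1}{2})$, $g_{\chi}(\sigma_3) - g_{\chi}(\sigma_2 + \tfrac{\beta_2}{2})$, and so on, appearing in \eqref{shift three point 1}--\eqref{shift three point 2}; here one uses that $g_{\chi}(\sigma) = (\sin\tfrac{\pi\gamma^2}{4})^{-\chi/\gamma}\cos(2\pi\chi(\sigma-\tfrac{Q}{2}))$ from \eqref{eq:g-chi}, so that differences of the sine products produced by \eqref{eq:shift_S1} are exactly differences of such cosines. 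This is precisely the computation carried out for the analogous function in \cite{rz-boundary}, the only new feature being that the boundary cosmological constants enter through $g_{\chi}$ (a cosine) rather than through the exponential $e^{2\chi\pi\ii(\sigma - Q/2)}$ used there; the trigonometric identities go through after replacing each exponential by the corresponding cosine. The main obstacle is the bookkeeping: one must track the prefactor ratios and the $r$-translation simultaneously and check that the numerous sine factors collapse to the exact three-term coefficients of Theorem~\ref{full_shift_H}. I expect no conceptual difficulty beyond this algebra, together with the verification that the contour $\mathcal{C}$ can be chosen consistently for all three shifted integrals.
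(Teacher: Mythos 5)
Your overall strategy---verifying the two three-term relations directly on \eqref{formule_PT}, dividing out a common prefactor via the shift relations for $\Gamma_{\frac{\gamma}{2}}$ and $S_{\frac{\gamma}{2}}$, reducing to trigonometric relations among the contour integrals, and invoking a pole-free strip when contours are moved---is exactly the paper's strategy, and your remark that the computation is uniform in $\chi\in\{\frac{\gamma}{2},\frac{2}{\gamma}\}$ is correct. The gap is in your middle step. It is not true that ``shifting $\beta_1$, $\beta_2$ or $\sigma_2$ is equivalent, up to \eqref{eq:shift_S1}, to shifting the integration variable $r\mapsto r\pm\chi$'': a translation of $r$ moves \emph{all eight} $S_{\frac{\gamma}{2}}$-arguments in the integrand of \eqref{J_PT_app} by the same amount, whereas, e.g., $\beta_1\mapsto\beta_1-\chi$ moves only the two denominator arguments $\frac{3Q}{2}\pm\frac{Q-\beta_1}{2}-\frac{\beta_2}{2}+\sigma_3-\sigma_1+r$, and by $+\frac{\chi}{2}$ and $-\frac{\chi}{2}$ respectively, leaving the other six fixed. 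No $r$-translation reproduces this. The correct comparison (which is what the paper does) is at the \emph{same} $r$: the ratio of shifted to unshifted integrands is an explicit product of sines by \eqref{eq:shift_S1}, and one then examines the resulting three-term combination pointwise in $r$.

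Once set up this way, the two equations behave very differently, and your claim that ``the shift equation is equivalent to a pointwise identity, valid for every $r$ on $\mathcal{C}$'' holds only for the first. For \eqref{shift three point 1} (in the paper's equivalent form \eqref{shift three point 3}) the combination of integrands vanishes identically at each $r$, with no contour translation needed at all. For \eqref{shift three point 2} (equivalently \eqref{shift three point 4}) the residual integrand is genuinely nonzero pointwise; the paper's proof works because, after nontrivial trigonometric reorganization, it can be written as a discrete difference $(T_{-\chi}-1)G(r)$, where $G$ is a ratio of sines multiplying the integrand with parameters $(\sigma_2,\sigma_3)$ replaced by $(\sigma_2+\chi,\sigma_3+\chi)$, and the integral of such a difference vanishes because $\mathcal{C}$ and its translate by $\chi$ bound a strip free of poles of $G$, by the placement of $\mathcal{C}$ relative to $P^{\pm}_{\mathrm{PT}}$ described before Lemma~\ref{lem:J}. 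Finding this telescoping decomposition is the substantive content of the verification: your alignment procedure, carried out as described, would stall when the sine factors fail to cancel pointwise in the second equation, and your genericity remark about crossing no poles does not substitute for it, since the pole-free condition must be checked for the telescoped function $G$ with its shifted $\sigma$-parameters, not for the original integrands.
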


\begin{proof}
Recalling \eqref{formule_PT}, we introduce a function $\varphi$ in the following way
\begin{equation}
\HPT
\begin{pmatrix}
\beta_1 , \beta_2, \beta_3 \\
\sigma_1,  \sigma_2,   \sigma_3 
\end{pmatrix} =  \int_{\mathcal{C}} \varphi^{(\beta_1 , \beta_2, \beta_3)}_{(  \sigma_1,  \sigma_2,   \sigma_3)}(r) dr,
\end{equation}
where here $\varphi^{(\beta_1 , \beta_2, \beta_3)}_{(  \sigma_1,  \sigma_2,   \sigma_3)}(r)$ contains the $r$ dependent integrand of the contour integral $\int_{\mathcal{C}}$ present in \eqref{formule_PT} times the prefactor in front of the integral (which does not depend on $r$).

Checking that $\HPT$ satisfies the shift equations of Theorem \ref{full_shift_H} is equivalent to checking the following shift equations

\begin{align}\label{shift three point 3}
\HPT
\begin{pmatrix}
\beta_1 , \beta_2, \beta_3 \\
\sigma_1,  \sigma_2,   \sigma_3 
\end{pmatrix} =& \frac{\Gamma(\chi(\beta_1-\chi)) \Gamma(1 - \chi\beta_2)}{\Gamma(1-\frac{\chi}{2}(\beta_2+\beta_3-\beta_1)) \Gamma(\frac{\chi}{2}(\beta_1+\beta_3-\beta_2-2\chi)) }\HPT
\begin{pmatrix}
\beta_1-\chi , \beta_2 + \chi, \beta_3 \\
\sigma_1,  \sigma_2 + \frac{\chi}{2},   \sigma_3 
\end{pmatrix}  \\ \nonumber 
& + \chi^2 \left( \frac{\pi \Gamma(\frac{\gamma^2}{4})}{\Gamma(1-\frac{\gamma^2}{4})}\right)^{\frac{\chi}{\gamma}} \frac{\Gamma(1-\chi\beta_1) \Gamma(1 - \chi\beta_2 )}{\sin(\pi\chi(\chi-\beta_1)) \Gamma(1 + \chi(Q-\frac{\bar{\beta}}{2})) \Gamma(1-\frac{\chi}{2}(\beta_1+\beta_2-\beta_3))   }\\
&\quad \times 2 \sin(\pi\chi(\frac{\beta_1}{2}+\sigma_1+\sigma_2-Q)) \sin(\pi\chi(\frac{\beta_1}{2}-\sigma_1+\sigma_2)) \HPT
\begin{pmatrix}
\beta_1 + \chi , \beta_2 + \chi, \beta_3 \\
\sigma_1,  \sigma_2 + \frac{\chi}{2},   \sigma_3 
\end{pmatrix} \nonumber,
\end{align}
and:
\begin{align}\label{shift three point 4}
&\frac{\chi^2}{\pi}  \Gamma(1-\chi\beta_2) 2 \sin(\pi\chi(\frac{\beta_2}{2}+\sigma_2+\sigma_3-Q)) \sin(\pi\chi(\frac{\beta_2}{2}+\sigma_2-\sigma_3)) \HPT
\begin{pmatrix}
\beta_1 + \chi , \beta_2 + \chi, \beta_3 \\
\sigma_1,  \sigma_2 + \frac{\chi}{2},   \sigma_3 
\end{pmatrix}\\
&= \left( \frac{\pi  \Gamma(\frac{\gamma^2}{4})}{\Gamma(1-\frac{\gamma^2}{4})}\right)^{-\frac{\chi}{\gamma}}\frac{\Gamma(\chi \beta_1)}{\Gamma( \frac{\chi}{2}(\bar{\beta}-2Q) ) \Gamma(\frac{\chi}{2}(\beta_1+\beta_2-\beta_3) ) } \HPT
\begin{pmatrix}
\beta_1, \beta_2, \beta_3 \\
\sigma_1,  \sigma_2,   \sigma_3 
\end{pmatrix} \nonumber \\
& - \chi^2  \frac{2 \sin(\pi\chi(\frac{\beta_1}{2}-\sigma_1-\sigma_2+Q)) \sin(\pi\chi(\frac{\beta_1}{2}+\sigma_1-\sigma_2)) \Gamma(1-\chi\beta_1 -\chi^2) }{\sin(\pi\chi\beta_1)\Gamma(\frac{\chi}{2}(\beta_2+\beta_3-\beta_1-2\chi) ) \Gamma( 1-\frac{\chi}{2}(\beta_1+\beta_3-\beta_2) ) } \HPT
\begin{pmatrix}
\beta_1 + 2 \chi, \beta_2, \beta_3 \\
\sigma_1,  \sigma_2,   \sigma_3 
\end{pmatrix}. \nonumber
\end{align}
This formulation of the two shift equations has been derived from Theorem \ref{full_shift_H} by shifting $\beta_2$ to $\beta_2 +\chi$ in \eqref{shift three point 1} for the first and shifting $\beta_1$ to $\beta_1 +\chi$ in \eqref{shift three point 2} for the second. We have also used the explicit expression \eqref{eq:g-chi} for $g_{\chi}$ and written the difference of cosines as a product of sines. 
We now compute the following ratios of $\varphi$:
\begin{align}
\frac{\varphi^{(\beta_1-\chi , \beta_2+\chi, \beta_3)}_{(  \sigma_1,  \sigma_2 +\frac{\chi}{2},   \sigma_3)}(r)}{\varphi^{(\beta_1 , \beta_2, \beta_3)}_{(  \sigma_1,  \sigma_2,   \sigma_3)}(r)} =& \frac{\Gamma(\frac{\chi}{2}(\beta_1+\beta_3-\beta_2-2\chi)) \Gamma(1-\frac{\chi}{2}(\beta_2+\beta_3-\beta_1)) \Gamma(1-\chi\beta_1+\chi^2) }{ \pi \Gamma(1-\chi \beta_2) }\\ \nonumber
&\times \sin(\pi \chi (\frac{\beta_1}{2}-\chi+\sigma_1-\sigma_2))\frac{ \sin(\pi \chi (-\frac{\beta_1}{2}+\frac{\beta_2}{2}+\sigma_1-\sigma_3-r))}{\sin(\pi \chi (\frac{\beta_2}{2}+\sigma_2-\sigma_3-r) )},\\
\frac{\varphi^{(\beta_1+\chi , \beta_2+\chi, \beta_3)}_{(  \sigma_1,  \sigma_2 +\frac{\chi}{2},   \sigma_3)}(r)}{\varphi^{(\beta_1 , \beta_2, \beta_3)}_{(  \sigma_1,  \sigma_2,   \sigma_3)}(r)} =&  \chi^{-2}\left(\frac{\pi \Gamma(\frac{\gamma^2}{4})}{\Gamma(1-\frac{\gamma^2}{4})} \right)^{-\frac{\chi}{\gamma}} \frac{\Gamma(1+\chi(Q-\frac{\bar{\beta}}{2})) \Gamma(1-\frac{\chi}{2}(\beta_1+\beta_2-\beta_3))}{\Gamma(1-\chi\beta_1) \Gamma(1-\chi\beta_2)}\\
&\times \frac{\sin(\pi\chi(\frac{\beta_1}{2}+\frac{\beta_2}{2}-\chi+\sigma_1-\sigma_3-r))}{2\sin(\pi\chi(\frac{\beta_1}{2}-\chi+\sigma_1+\sigma_2))\sin(\pi\chi(\frac{\beta_2}{2}+\sigma_2-\sigma_3-r))}. \nonumber
\end{align}
If we plug these expressions into equation \eqref{shift three point 3} and regroup the terms on one side we get:
\begin{align}
\int_{\mathcal{C}} dr \, \varphi^{(\beta_1 , \beta_2, \beta_3)}_{(  \sigma_1,  \sigma_2,   \sigma_3)}(r)\Bigg[\frac{ \sin(\pi\chi(\frac{\beta_1}{2}-\chi+\sigma_1-\sigma_2))\sin(\pi\chi (-\frac{\beta_1}{2}+\frac{\beta_2}{2}+\sigma_1-\sigma_3-r))}{\sin(\pi\chi(\beta_1-\chi))\sin(\pi \chi(\frac{\beta_2}{2}+\sigma_2-\sigma_3-r) )}-1\\ \nonumber
+\frac{\sin(\pi\chi(\frac{\beta_1}{2}-\sigma_1+\sigma_2))\sin(\pi\chi(\frac{\beta_1}{2}+\frac{\beta_2}{2}-\chi+\sigma_1-\sigma_3-r))}{\sin(\pi\chi(\beta_1-\chi))\sin(\pi\chi(\frac{\beta_2}{2}+\sigma_2-\sigma_3-r))} \Bigg].
\end{align}
We can verify with some algebra that the integrand of the above integral equals $0$, hence \eqref{shift three point 3} holds. To check the second shift equation, we will need additionally the ratio:
\begin{align*}
&\frac{\varphi^{(\beta_1+2\chi , \beta_2, \beta_3)}_{(  \sigma_1,  \sigma_2 ,   \sigma_3)}(r)}{\varphi^{(\beta_1 , \beta_2, \beta_3)}_{(  \sigma_1,  \sigma_2,   \sigma_3)}(r)} =  - \frac{\pi}{\chi^2}  \frac{\Gamma(1+\chi(Q-\frac{\bar{\beta}}{2})) \Gamma(1-\frac{\chi}{2}(\beta_1+\beta_2-\beta_3))}{\Gamma(\frac{\chi}{2}(\beta_1+\beta_3-\beta_2)) \Gamma(1-\frac{\chi}{2}(\beta_2+\beta_3-\beta_1-2\chi)) \Gamma(1-\chi\beta_1-\chi^2)) \Gamma(1-\chi\beta_1) }\\
&\quad \times \left(\frac{\pi \Gamma(\frac{\gamma^2}{4})}{\Gamma(1-\frac{\gamma^2}{4})} \right)^{-\frac{\chi}{\gamma}} \frac{\sin(\pi\chi(\frac{\beta_1}{2}+\frac{\beta_2}{2}-\chi+\sigma_1-\sigma_3-r))}{2  \sin(\pi\chi(\frac{\beta_1}{2}+\sigma_1-\sigma_2)) \sin(\pi\chi(\frac{\beta_1}{2}+\sigma_1+\sigma_2-\chi)) \sin(\pi\chi(\frac{\beta_1}{2}-\frac{\beta_2}{2}+\chi+\sigma_3-\sigma_1+r))}.
\end{align*}
Substituting this time into equation \eqref{shift three point 4} and regrouping terms on one side we get:
\begin{small}
\begin{align*}
&\frac{\Gamma(\chi \beta_1) }{\Gamma( \frac{\chi}{2}(\overline{\beta}-2Q) ) \Gamma(\frac{\chi}{2}(\beta_1+\beta_2-\beta_3)) } \int_{\mathcal{C}} dr \,  \varphi^{(\beta_1 , \beta_2, \beta_3)}_{(  \sigma_1,  \sigma_2,   \sigma_3)}(r)\\
 &\Bigg[ \frac{\sin(\pi\chi\beta_1)\sin(\pi\chi(\frac{\beta_2}{2}-\chi+\sigma_2+\sigma_3)) \sin(\pi\chi(\frac{\beta_2}{2}+\sigma_2-\sigma_3))\sin(\pi\chi(\frac{\beta_1}{2}+\frac{\beta_2}{2}-\chi+\sigma_1-\sigma_3-r))}{\sin(\pi\chi(\frac{\bar{\beta}}{2}-\chi)) \sin(\frac{\pi\chi}{2}(\beta_1+\beta_2-\beta_3)) \sin(\pi\chi(\frac{\beta_1}{2}-\chi+\sigma_1+\sigma_2))\sin(\pi\chi(\frac{\beta_2}{2}+\sigma_2-\sigma_3-r))}-1\\
& - \frac{\sin(\frac{\pi\chi}{2}(\beta_1+\beta_3-\beta_2)) \sin(\frac{\pi\chi}{2}(\beta_2+\beta_3-\beta_1-2\chi))\sin(\pi\chi(\frac{\beta_1}{2}+\chi-\sigma_1-\sigma_2)) \sin(\pi\chi(\frac{\beta_1}{2}+\frac{\beta_2}{2}-\chi+\sigma_1-\sigma_3-r))}{\sin(\chi(\frac{\bar{\beta}}{2}-\chi)) \sin(\frac{\pi\chi}{2}(\beta_1+\beta_2-\beta_3)) \sin(\pi\chi(\frac{\beta_1}{2}-\chi+\sigma_1+\sigma_2)) \sin(\pi\chi(\frac{\beta_1}{2}-\frac{\beta_2}{2}+\chi+\sigma_3-\sigma_1+r))} \Bigg].
\end{align*}
\end{small}
After some algebra we can write this quantity in the form:
\begin{small}
\begin{align*}
&\frac{\Gamma(\chi \beta_1) }{\Gamma( \frac{\chi}{2}(\overline{\beta}-2Q) ) \Gamma(\frac{\chi}{2}(\beta_1+\beta_2-\beta_3)) } \frac{\sin(\pi\chi\beta_1)}{\sin(\pi\chi(\frac{\overline{\beta}}{2}-\chi)) \sin(\frac{\pi \chi}{2}(\beta_1+\beta_2-\beta_3)) \sin(\pi\chi(\frac{\beta_1}{2}-\chi+\sigma_1+\sigma_2))} \\
&\times \int_{\mathcal{C}} dr \,  \varphi^{(\beta_1 , \beta_2, \beta_3)}_{(  \sigma_1,  \sigma_2,   \sigma_3)}(r)\Bigg[\frac{\sin(\pi\chi(\frac{\beta_1}{2}+\frac{\beta_2}{2}-\chi+\sigma_1-\sigma_3-r))\sin(\pi\chi r)\sin(\pi\chi(2\sigma_3-\chi+r))}{\sin(\pi\chi(\frac{\beta_2}{2}+\sigma_2-\sigma_3-r))} \\
& \quad \quad - \frac{\sin(\pi\chi(\frac{\beta_3}{2}+\sigma_3-\sigma_1+r)) \sin(\pi\chi(\frac{\beta_3}{2}-\chi+\sigma_1-\sigma_3-r)) \sin(\pi\chi(\frac{\beta_2}{2}-\sigma_2-\sigma_3-r))}{\sin(\pi\chi(\frac{\beta_1}{2}-\frac{\beta_2}{2}+\chi+\sigma_3-\sigma_1+r))} \Bigg]\\
& =\frac{\Gamma(\chi \beta_1) }{\Gamma( \frac{\chi}{2}(\overline{\beta}-2Q) ) \Gamma(\frac{\chi}{2}(\beta_1+\beta_2-\beta_3)) } \\
& \times \frac{\sin(\pi\chi\beta_1)\sin(\pi\chi(\frac{\beta_3}{2}+\sigma_3-\sigma_1)) }{\sin(\pi\chi(\frac{\overline{\beta}}{2}-\chi)) \sin(\frac{\pi \chi}{2}(\beta_1+\beta_2-\beta_3))\sin(\pi\chi(\frac{\beta_1}{2}-\chi+\sigma_1-\sigma_2)) \sin(\pi\chi(\frac{\beta_3}{2}-\sigma_1-\sigma_3))} \\
&\times \int_{\mathcal{C}} dr \, (T_{-\chi}-1) \left( \frac{\sin(\pi\chi(\frac{\beta_1}{2}+\frac{\beta_2}{2}-\chi+\sigma_1-\sigma_3-r)) \sin(\pi\chi(2\sigma_3-\chi+r)) \sin(\pi\chi(2\sigma_3-2\chi+r))}{\sin(\pi\chi(\frac{\beta_2}{2}+\chi-\sigma_2-\sigma_3-r))}\varphi^{(\beta_1 , \beta_2, \beta_3)}_{(  \sigma_1,  \sigma_2+\chi,   \sigma_3+\chi)}(r) \right).
\end{align*}
\end{small}

Here we have used the notation $T_{-\chi}$ for the operator that shifts the argument of the function it is applied to by $-\chi$, the variable we are shifting being $r$. Since we have the combination $(T_{-\chi}-1)$ in the integrand, this corresponds to integrating $r$ over two contours that are separated by a horizontal shift of $\chi$ and which are in the opposite direction. Provided that there are no poles in between these two contours, the whole contour integral will be equal to $0$. This is indeed the case thanks to the way the contour $\mathcal{C}$ has been chosen, which is to the right of the lattice $P^{-}_{\mathrm{PT}}$ of poles extending in the $-\infty$ direction, and to the left of the lattice $P^{+}_{\mathrm{PT}}$ of poles extending in the $+\infty$ direction.
\end{proof}

Let us now perform a residue computation directly for the contour integral part of $\HPT$, namely $\mathcal{J}_{\mathrm{PT}}$, without multiplying by the global prefactor. See equation \eqref{J_PT_app} for a precise definition. This gives the following result.

\begin{lemma}\label{lem:secH2}
Set $\overline{\beta} = \beta_1 + \beta_2 + \beta_3$. The function $\mathcal{J}_{\mathrm{PT}} $ satisfies the following properties:
\begin{equation}\label{eq:PT-property1}
\lim_{\beta_1\to 2Q-\beta_2-\beta_3}(\frac{\bar{\beta}}{2}-Q) \mathcal{J}_{\mathrm{PT}}
\begin{pmatrix}
\beta_1 , \beta_2, \beta_3 \\
\sigma_1,  \sigma_2,   \sigma_3 
\end{pmatrix} = \frac{1}{2\pi } \frac{S_{\frac{\gamma}{2}}(\frac{\beta_1}{2}+\sigma_1+\sigma_2-Q) S_{\frac{\gamma}{2}}(\frac{\beta_1}{2}+\sigma_1-\sigma_2)S_{\frac{\gamma}{2}}(Q-\beta_3)}{S_{\frac{\gamma}{2}}(\beta_1) S_{\frac{\gamma}{2}}(-\frac{\beta_3}{2}+\sigma_1+\sigma_3) S_{\frac{\gamma}{2}}(Q-\frac{\beta_3}{2}+\sigma_1-\sigma_3)}.
\end{equation}
\begin{align}
&\lim_{\beta_1\to 2Q-\beta_2-\beta_3 - \gamma}(\frac{\bar{\beta}}{2}-Q + \frac{\gamma}{2}) \mathcal{J}_{\mathrm{PT}}
\begin{pmatrix}
\beta_1 , \beta_2, \beta_3 \\
\sigma_1,  \sigma_2,   \sigma_3 
\end{pmatrix}\label{eq:PT-property2}\\
&=  - \frac{1  }{ \pi \sin( \frac{\gamma^2 \pi}{4} )  }  \frac{S_{\frac{\gamma}{2}}(\frac{2}{\gamma} -\beta_3 ) S_{\frac{\gamma}{2}}( \frac{2}{\gamma} + \frac{  - \beta_2 - \beta_3 }{2}  + \sigma_1  - \sigma_2) S_{\frac{\gamma}{2}}(\frac{ - \beta_2 - \beta_3 -\gamma}{2}  + \sigma_1  + \sigma_2)}{S_{\frac{\gamma}{2}}( Q + \frac{2}{\gamma} -\beta_2 - \beta_3 ) S_{\frac{\gamma}{2}}( - \frac{\beta_3}{2}  + \sigma_3 + \sigma_1) S_{\frac{\gamma}{2}}(Q - \frac{\beta_3}{2} -\sigma_3 + \sigma_1 )} \nonumber\\
& \times \left( \sin( \frac{\gamma \pi \beta_2}{2}) \cos( \frac{\gamma \pi}{2}( - \frac{\gamma}{2}  +2 \sigma_1 ) ) +  \sin( \frac{\gamma \pi \beta_3}{2}) \cos( \frac{\gamma \pi}{2}( - \frac{\gamma}{2}  +2 \sigma_2 ) ) -  \sin( \frac{\gamma \pi (\beta_2 + \beta_3)}{2}) \cos( \frac{\gamma \pi}{2}( - \frac{\gamma}{2}  +2 \sigma_3 ) ) \right).\nonumber
\end{align}
Furthermore the ratio of the second limit divided by the first is given by
\begin{align*}
& - \frac{ 1   }{ 2 \sin( \frac{\gamma^2 \pi}{4} )  } \frac{\sin( \frac{\pi \gamma}{2}(Q + \frac{2}{\gamma} - \beta_2 - \beta_3 ) )}{ \sin( \frac{\pi \gamma}{2}(\frac{2}{\gamma} - \beta_3 ) ) \sin( \frac{\pi \gamma}{2}(\frac{2}{\gamma} - \frac{\beta_2 + \beta_3 }{2} + \sigma_1 - \sigma_2 ) ) \sin( \frac{\pi \gamma}{2}( - \frac{\beta_2 + \beta_3 +\gamma }{2} + \sigma_1 - \sigma_2 ) ) }  \\
&\times \left( \sin( \frac{\gamma \pi \beta_2}{2}) \cos( \frac{\gamma \pi}{2}( - \frac{\gamma}{2}  +2 \sigma_1 ) ) +  \sin( \frac{\gamma \pi \beta_3}{2}) \cos( \frac{\gamma \pi}{2}( - \frac{\gamma}{2}  +2 \sigma_2 ) ) -  \sin( \frac{\gamma \pi (\beta_2 + \beta_3)}{2}) \cos( \frac{\gamma \pi}{2}( - \frac{\gamma}{2}  +2 \sigma_3 ) ) \right),
\end{align*}
which is a meromorphic function which is not identically $1$.
%Furthermore $\HPT $ satisfies the reflection principle \eqref{eq:reflection_id} of Lemma \ref{reflection_H} satisfied by $H$, namely:
%\begin{equation}
	%\HPT^{(\beta_1,\beta_2, \beta_3)}_{( \sigma_1, \sigma_2,  \sigma_3)} = R_{\mathrm{FZZ}}(\beta_1,\sigma_1,\sigma_2) \HPT^{(2Q-\beta_1,\beta_2 , \beta_3)}_{( \sigma_1, \sigma_2,  \sigma_3)}.
%\end{equation}
\end{lemma}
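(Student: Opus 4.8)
The plan is to compute the two limits directly as residues of the contour integral $\mathcal{J}_{\mathrm{PT}}$, following the same deformation-of-contour strategy outlined in the proof of Lemma~\ref{lem:J}. Recall from Lemma~\ref{lem:J} that a pole of $\mathcal{J}_{\mathrm{PT}}$ arises precisely when a pole of the lattice $P^{-}_{\mathrm{PT}}$ collides with one of $P^{+}_{\mathrm{PT}}$, forcing the contour $\mathcal{C}$ to be pinched. First I would locate which colliding pair produces the pole at $\frac{\bar\beta}{2}=Q$, i.e.\ at $\beta_1=2Q-\beta_2-\beta_3$. Inspecting the list in Lemma~\ref{lem:J}, the relevant entry is $Q-\frac{\beta_1}{2}-\frac{\beta_2}{2}-\frac{\beta_3}{2}\in\{n\frac\gamma2+m\frac2\gamma\}$ with $n=m=0$, which comes from the pinching of the pole at $r=-\frac{\beta_1}{2}+\frac{\beta_2}{2}-\sigma_3+\sigma_1$ in $P^{+}_{\mathrm{PT}}$ (a zero of $S_{\frac{\gamma}{2}}(\frac{3Q}{2}-\frac{Q-\beta_1}{2}-\frac{\beta_2}{2}+\sigma_3-\sigma_1+r)$ in the denominator) against the pole at $r=\frac{\beta_2}{2}-\sigma_2-\sigma_3$ in $P^{-}_{\mathrm{PT}}$. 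To extract the residue, I would deform $\mathcal{C}$ across one of these two poles before they collide, picking up $2\pi\ii$ times the residue of the integrand; the remaining integral stays finite in the limit, so only this residue term contributes to $\lim_{\beta_1\to 2Q-\beta_2-\beta_3}(\frac{\bar\beta}{2}-Q)\mathcal{J}_{\mathrm{PT}}$. Evaluating $S_{\frac{\gamma}{2}}$ at the relevant arguments using the pole/zero structure of the double sine function and the reflection identity $S_{\frac{\gamma}{2}}(x)S_{\frac{\gamma}{2}}(Q-x)=1$ from Section~\ref{sec:double_gamma} then yields~\eqref{eq:PT-property1}.

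For the second limit~\eqref{eq:PT-property2}, the pole is at $\beta_1=2Q-\beta_2-\beta_3-\gamma$, corresponding to $Q-\frac{\bar\beta}{2}=\frac\gamma2$, i.e.\ the lattice condition with $n=1,m=0$. Here two poles must be crossed as the contour is pinched (since we are one step $\frac\gamma2$ deeper into the lattice), so the residue computation involves summing two residue contributions rather than one. I would again deform $\mathcal{C}$ to cross the colliding poles and extract the residue term by term; the double sine shift equations~\eqref{eq:shift_S1} relate the $S_{\frac{\gamma}{2}}$ values at the two lattice sites, and the combination of the two residues naturally produces the three-term sum of $\sin\cdot\cos$ products displayed in the statement. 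The trigonometric identity that packages $\sin(\frac{\gamma\pi\beta_2}{2})\cos(\ldots\sigma_1)+\sin(\frac{\gamma\pi\beta_3}{2})\cos(\ldots\sigma_2)-\sin(\frac{\gamma\pi(\beta_2+\beta_3)}{2})\cos(\ldots\sigma_3)$ is the same symmetric structure that appears in the probabilistic residue $\mathbb{E}[\wt L]$ computed in Lemma~\ref{lem:res_H}, which gives a useful consistency check: the two computations must agree once the explicit prefactor relating $\HPT$ and $\mathcal{J}_{\mathrm{PT}}$ is accounted for.

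Finally, to obtain the ratio of the second limit to the first, I would simply divide the two expressions and simplify using the shift equation~\eqref{eq:shift_S1} to convert ratios of the form $\frac{S_{\frac{\gamma}{2}}(x+\frac\gamma2)}{S_{\frac{\gamma}{2}}(x)}=2\sin(\frac{\gamma\pi}{2}x)$ into elementary sine factors. This collapses all the double sine functions that are common to both limits, leaving the ratio expressed purely in terms of $\sin$ and $\cos$ functions as stated. To verify the ratio is not identically $1$, it suffices to exhibit one generic choice of parameters where it differs from $1$—for instance examining the behavior in a limit of one variable, or noting that the $\sigma$-dependence through the three-term $\sin\cos$ combination cannot be cancelled by the denominator, which depends on $\sigma_1-\sigma_2$ alone.

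The main obstacle I expect is the bookkeeping in the second limit: correctly identifying \emph{which} two poles pinch the contour at the $n=1$ lattice site and assembling their two residues into the compact three-term trigonometric form. The residue extraction itself is mechanical once the colliding poles are pinned down, but the double sine function has an infinite lattice of poles, so care is needed to ensure no spurious contributions from nearby lattice points are included and that the finite leftover integral genuinely vanishes against the diverging prefactor $(\frac{\bar\beta}{2}-Q+\frac\gamma2)$. The cross-check against Lemma~\ref{lem:res_H}, where the identical trigonometric combination arises from a completely independent GMC computation, is the strongest safeguard against sign or combinatorial errors in this step.
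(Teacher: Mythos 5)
Your overall strategy is exactly the paper's: both limits are computed by deforming $\mathcal{C}$ across the colliding poles and extracting residues (one residue term for the first limit, two for the second), and the ratio is then simplified with the shift equation \eqref{eq:shift_S1}. However, you have misidentified the colliding pair responsible for the pole at $\beta_1 = 2Q-\beta_2-\beta_3$, and with your pair the computation would fail. The pair you name, $r=-\frac{\beta_1}{2}+\frac{\beta_2}{2}-\sigma_3+\sigma_1 \in P^{+}_{\mathrm{PT}}$ against $r=\frac{\beta_2}{2}-\sigma_2-\sigma_3 \in P^{-}_{\mathrm{PT}}$, collides precisely when $\frac{\beta_1}{2}-\sigma_1-\sigma_2 = 0$ (cancel $\frac{\beta_2}{2}$ and $-\sigma_3$ from both sides); that is the \emph{first} entry in the list of Lemma~\ref{lem:J}, a different pole of $\mathcal{J}_{\mathrm{PT}}$, which for generic $\sigma_i$ stays away from $\beta_1=2Q-\beta_2-\beta_3$. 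Deforming across either of these poles yields a residue that remains finite as $\beta_1\to 2Q-\beta_2-\beta_3$, so after multiplication by the vanishing factor $(\frac{\bar{\beta}}{2}-Q)$ it contributes zero and you would not recover \eqref{eq:PT-property1}. The correct pinching is between the $P^{-}_{\mathrm{PT}}$ pole at $r=-(\frac{\beta_3}{2}+\sigma_3-\sigma_1)$, coming from the minus branch of the numerator factor $S_{\frac{\gamma}{2}}(\frac{Q}{2}\pm\frac{Q-\beta_3}{2}+\sigma_3-\sigma_1+r)$, and the $P^{+}_{\mathrm{PT}}$ point at $r=-(Q-\frac{\beta_1}{2}-\frac{\beta_2}{2}+\sigma_3-\sigma_1)$, a zero of the denominator factor $S_{\frac{\gamma}{2}}(2Q-\frac{\beta_1}{2}-\frac{\beta_2}{2}+\sigma_3-\sigma_1+r)$; these coincide exactly when $\frac{\bar{\beta}}{2}=Q$. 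For the second limit the same pair pinches one lattice step deeper, and the paper indeed crosses the two poles at $r=-(Q-\frac{\beta_1}{2}-\frac{\beta_2}{2}+\sigma_3-\sigma_1)$ and at this value plus $\frac{\gamma}{2}$, consistent with your two-residue expectation.

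The remainder of your plan is sound and matches the paper: the two residues for the second limit combine, via a trigonometric identity, into the three-term $\sin\cdot\cos$ expression; the ratio follows from \eqref{eq:shift_S1}; and your non-constancy argument is valid, since the denominator of the ratio depends on the $\sigma_i$ only through $\sigma_1-\sigma_2$ while the numerator genuinely depends on $\sigma_3$. Two cautions. First, within the paper's logical structure the agreement of the $\HPT$ residues with the probabilistic residues of $H$ is precisely the content of Lemma~\ref{lem:res_H}, whose $\HPT$ half is deduced \emph{from} the present lemma; so the $\E[\wt L]$ computation is a legitimate independent arithmetic check but cannot serve as a proof step. Second, a device worth adopting from the paper's proof: before taking residues, use \eqref{eq:shift_S1} to peel off one $\frac{2}{\gamma}$-shift from the two offending $S_{\frac{\gamma}{2}}$ factors, rewriting the integrand as $f(r)$ divided by a product of two explicit sine factors. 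This makes the pinching structure transparent (the same sine factor carries both poles crossed in the second limit, separated by $\frac{\gamma}{2}$), reduces each residue to an evaluation of $f$ at the pole, and addresses your concern about spurious contributions from nearby lattice points.
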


\begin{proof}
%It is rather direct to observe that $\HPT$ satisfies the reflection principle \eqref{eq:reflection_id}, since the contour integral is not changed when applying the transform $\beta_1 \to 2Q-\beta_1$. 
The evaluation of the residue is an easy algebra using the shift equations of $\Gamma_{\frac{\gamma}{2}}$ and $S_{\frac{\gamma}{2}}$. When $\beta_1$ approaches $2Q-\beta_2-\beta_3$ from the right hand side, the two poles at $r=-(\frac{\beta_3}{2}+\sigma_3-\sigma_1) $ and $r=-(Q-\frac{\beta_1}{2}-\frac{\beta_2}{2}+\sigma_3-\sigma_1)$ in the contour integral will collapse. To extract the divergent term, we can slightly modify the contour to let it go from the right hand side of $r=-(Q-\frac{\beta_1}{2}-\frac{\beta_2}{2}+\sigma_3-\sigma_1)$, this allows us to pick up the divergent term by the residue theorem. Let us carry this out in detail, we first start by writing the contour integral as:
\begin{align}\label{eq:HPT_values_1}
&\int_{\mathcal{C}} \frac{S_{\frac{\gamma}{2}}(\frac{Q - \beta_2}{2} + \sigma_3 \pm ( \frac{Q}{2} - \sigma_2) +r)  S_{\frac{\gamma}{2}}(\frac{Q}{2} \pm \frac{Q - \beta_3}{2}  +\sigma_3-\sigma_1+r)}{S_{\frac{\gamma}{2}}(\frac{3Q}{2} \pm \frac{Q - \beta_1}{2}-\frac{\beta_2}{2}+\sigma_3-\sigma_1+r) S_{\frac{\gamma}{2}}(2\sigma_3 + r) S_{\frac{\gamma}{2}}(Q+r)} \frac{dr}{i} \nonumber \\
&= \int_{\mathcal{C}} \frac{1}{4 \sin( \frac{2 \pi}{\gamma}( \frac{\beta_3}{2} + \sigma_3 - \sigma_1 +r )) \sin(\frac{2 \pi}{\gamma}(2 Q - \frac{2}{\gamma} - \frac{ \beta_1}{2}-\frac{\beta_2}{2}+\sigma_3-\sigma_1+r) )} f(r)  \frac{dr}{i},
\end{align}
where here:
\begin{align*}
f(r) = \frac{S_{\frac{\gamma}{2}}(\frac{Q - \beta_2}{2} + \sigma_3 \pm ( \frac{Q}{2} - \sigma_2) +r)  S_{\frac{\gamma}{2}}(Q - \frac{\beta_3}{2}  +\sigma_3-\sigma_1+r) S_{\frac{\gamma}{2}}( \frac{2}{\gamma } + \frac{\beta_3}{2}  +\sigma_3-\sigma_1+r)}{S_{\frac{\gamma}{2}}(2 Q - \frac{2}{\gamma} - \frac{ \beta_1}{2}-\frac{\beta_2}{2}+\sigma_3-\sigma_1+r) S_{\frac{\gamma}{2}}(Q + \frac{ \beta_1}{2}-\frac{\beta_2}{2}+\sigma_3-\sigma_1+r) S_{\frac{\gamma}{2}}(2\sigma_3 + r) S_{\frac{\gamma}{2}}(Q+r)}.
\end{align*}
We compute:
\begin{align*}
&f( \frac{\beta_1}{2} + \frac{\beta_2}{2} - \sigma_3 + \sigma_1 - Q) = \frac{S_{\frac{\gamma}{2}}(\frac{ \beta_1 - Q}{2} + \sigma_1 \pm ( \frac{Q}{2} - \sigma_2))  S_{\frac{\gamma}{2}}( \frac{\beta_1}{2} + \frac{\beta_2}{2} - \frac{\beta_3}{2} ) S_{\frac{\gamma}{2}}( \frac{2}{\gamma } + \frac{\overline{\beta}}{2} - Q)}{S_{\frac{\gamma}{2}}(\frac{\gamma}{2}) S_{\frac{\gamma}{2}}( \beta_1) S_{\frac{\gamma}{2}}(- Q + \frac{\beta_1}{2} + \frac{\beta_2}{2} + \sigma_3 + \sigma_1) S_{\frac{\gamma}{2}}(\frac{\beta_1}{2} + \frac{\beta_2}{2} - \sigma_3 + \sigma_1)},\\
&f( \frac{\beta_1}{2} + \frac{\beta_2}{2} - \sigma_3 + \sigma_1 - \frac{2}{\gamma}) = \frac{S_{\frac{\gamma}{2}}(\frac{ \beta_1 - Q}{2} + \sigma_1 \pm ( \frac{Q}{2} - \sigma_2) + \frac{\gamma}{2} )  S_{\frac{\gamma}{2}}( \frac{\beta_1}{2} + \frac{\beta_2}{2} - \frac{\beta_3}{2} + \frac{\gamma}{2} ) S_{\frac{\gamma}{2}}( \frac{\overline{\beta}}{2} )}{S_{\frac{\gamma}{2}}(\gamma) S_{\frac{\gamma}{2}}( \beta_1 + \frac{\gamma}{2})  S_{\frac{\gamma}{2}}(- \frac{2}{\gamma} + \frac{\beta_1}{2} + \frac{\beta_2}{2} + \sigma_3 + \sigma_1) S_{\frac{\gamma}{2}}(\frac{\gamma}{2} +  \frac{\beta_1}{2} + \frac{\beta_2}{2} - \sigma_3 + \sigma_1)}
\end{align*}

Let first start by checking the limit $\beta_1\to 2Q-\beta_2-\beta_3$. By a simple residue computation at the pole $r = -(Q-\frac{\beta_1}{2}-\frac{\beta_2}{2}+\sigma_3-\sigma_1)$ and by using the value of $f(r)$ at that point, one obtains that as $\beta_1\to 2Q-\beta_2-\beta_3$, the right hand side of \eqref{eq:HPT_values_1} is equivalent to:
\begin{align*}
\frac{1}{2\pi (\frac{\overline{\beta}}{2}-Q)} \frac{S_{\frac{\gamma}{2}}(\frac{\beta_1}{2}+\sigma_1+\sigma_2-Q) S_{\frac{\gamma}{2}}(\frac{\beta_1}{2}+\sigma_1-\sigma_2)S_{\frac{\gamma}{2}}(Q-\beta_3)}{S_{\frac{\gamma}{2}}(\beta_1) S_{\frac{\gamma}{2}}(-\frac{\beta_3}{2}+\sigma_1+\sigma_3) S_{\frac{\gamma}{2}}(Q-\frac{\beta_3}{2}+\sigma_1-\sigma_3)}.
\end{align*}
Therefore we obtain~\eqref{eq:PT-property1}.

We now move to the case of the limit $\beta_1 \rightarrow 2Q - \beta_2 - \beta_3 -\gamma$. This time we need to move the contour of integration $\mathcal{C}$ to the right of the poles at $r= -(Q-\frac{\beta_1}{2}-\frac{\beta_2}{2}+\sigma_3-\sigma_1)$ and $r=-(Q-\frac{\beta_1}{2}-\frac{\beta_2}{2}+\sigma_3-\sigma_1) + \frac{\gamma}{2}$.
We get two contributions which in the limit $\beta_1 \rightarrow 2Q - \beta_2 - \beta_3 - \gamma$ are equivalent to:
\begin{align*}
&\frac{1}{ \frac{4}{\gamma} \sin( \frac{2 \pi}{\gamma}( \frac{\overline{\beta}}{2} - Q )) } \frac{S_{\frac{\gamma}{2}}( \frac{2}{\gamma} + \frac{  - \beta_2 - \beta_3 }{2}  + \sigma_1  - \sigma_2) S_{\frac{\gamma}{2}}(\frac{ - \beta_2 - \beta_3 -\gamma}{2}  + \sigma_1  + \sigma_2) S_{\frac{\gamma}{2}}(Q  - \beta_3 - \frac{\gamma}{2} ) S_{\frac{\gamma}{2}}( \frac{2}{\gamma } - \frac{\gamma}{2} )}{S_{\frac{\gamma}{2}}(\frac{\gamma}{2}) S_{\frac{\gamma}{2}}( \frac{4}{\gamma} - \beta_2 -\beta_3 ) S_{\frac{\gamma}{2}}( - \frac{\beta_3}{2} - \frac{\gamma}{2} + \sigma_3 + \sigma_1) S_{\frac{\gamma}{2}}(Q - \frac{\beta_3}{2} - \frac{\gamma}{2} -\sigma_3 + \sigma_1 )} \\
& + \frac{1}{ \frac{4}{\gamma} \sin( \frac{2 \pi}{\gamma}( \frac{\overline{\beta}}{2} - Q  ))  } \frac{S_{\frac{\gamma}{2}}( Q + \frac{  - \beta_2 - \beta_3 }{2}  + \sigma_1  - \sigma_2) S_{\frac{\gamma}{2}}(\frac{ - \beta_2 - \beta_3 }{2}  + \sigma_1  + \sigma_2) S_{\frac{\gamma}{2}}(Q  - \beta_3) S_{\frac{\gamma}{2}}( \frac{2}{\gamma }  )}{S_{\frac{\gamma}{2}}(\gamma) S_{\frac{\gamma}{2}}( \frac{4}{\gamma} - \beta_2 -\beta_3 +\frac{\gamma}{2} ) S_{\frac{\gamma}{2}}( - \frac{\beta_3}{2}  + \sigma_3 + \sigma_1) S_{\frac{\gamma}{2}}(Q - \frac{\beta_3}{2} -\sigma_3 + \sigma_1 )}.
\end{align*}
We obtain:
\begin{align*}
&\frac{ 2 \sin( \frac{\gamma^2 \pi}{4} ) S_{\frac{\gamma}{2}}( \frac{2}{\gamma} - \frac{\gamma}{2} ) S_{\frac{\gamma}{2}}(\frac{2}{\gamma} -\beta_3 ) S_{\frac{\gamma}{2}}( \frac{2}{\gamma} + \frac{  - \beta_2 - \beta_3 }{2}  + \sigma_1  - \sigma_2) S_{\frac{\gamma}{2}}(\frac{ - \beta_2 - \beta_3 -\gamma}{2}  + \sigma_1  + \sigma_2) }{ \frac{4}{\gamma} \sin( \frac{2 \pi}{\gamma}( \frac{\overline{\beta}}{2} - Q )) S_{\frac{\gamma}{2}}( \gamma) S_{\frac{\gamma}{2}}( Q + \frac{2}{\gamma} -\beta_2 - \beta_3 ) S_{\frac{\gamma}{2}}( - \frac{\beta_3}{2}  + \sigma_3 + \sigma_1) S_{\frac{\gamma}{2}}(Q - \frac{\beta_3}{2} -\sigma_3 + \sigma_1 ) }   \\
& \times \Big[ 8 \sin(\frac{\gamma \pi}{2}(\beta_2 + \beta_3)) \sin( \frac{\gamma \pi}{2}( - \frac{\beta_3}{2} - \frac{\gamma }{2} +\sigma_3 + \sigma_1) ) \sin( \frac{\gamma \pi}{2}( \frac{\beta_3}{2}  +\sigma_3 - \sigma_1) )\\
& \quad - 8 \sin( \frac{\gamma \pi \beta_3}{2})   \sin( \frac{\gamma \pi}{2} (- \frac{\beta_2 + \beta_3 +\gamma}{2} + \sigma_1 + \sigma_2 ) ) \sin( \frac{\gamma \pi}{2} ( \frac{\beta_2 + \beta_3 }{2} - \sigma_1 + \sigma_2 ) )\Big].
\end{align*}
Record the trigonometric identity:
\begin{align*}
&  2 \sin(\frac{\gamma \pi}{2}(\beta_2 + \beta_3)) \sin( \frac{\gamma \pi}{2}( - \frac{\beta_3}{2} - \frac{\gamma }{2} +\sigma_3 + \sigma_1) ) \sin( \frac{\gamma \pi}{2}( \frac{\beta_3}{2}  +\sigma_3 - \sigma_1) )\\
&  - 2 \sin( \frac{\gamma \pi \beta_3}{2})   \sin( \frac{\gamma \pi}{2} (- \frac{\beta_2 + \beta_3 +\gamma}{2} + \sigma_1 + \sigma_2 ) ) \sin( \frac{\gamma \pi}{2} ( \frac{\beta_2 + \beta_3 }{2} - \sigma_1 + \sigma_2 ) ) \\
& =  \sin( \frac{\gamma \pi \beta_2}{2}) \cos( \frac{\gamma \pi}{2}( - \frac{\gamma}{2}  +2 \sigma_1 ) ) +  \sin( \frac{\gamma \pi \beta_3}{2}) \cos( \frac{\gamma \pi}{2}( - \frac{\gamma}{2}  +2 \sigma_2 ) ) -  \sin( \frac{\gamma \pi (\beta_2 + \beta_3)}{2}) \cos( \frac{\gamma \pi}{2}( - \frac{\gamma}{2}  +2 \sigma_3 ) ).
\end{align*}
Using the above identity we arrive at~\eqref{eq:PT-property2}.
For the final claim of the lemma, the ratio of the second limit divided by the first can be easily computed using \eqref{eq:shift_S1}.
\end{proof}

We need the following lemma when showing $G =G_{\mathrm{Hos}}$ in Section~\ref{sec:proof_G}.

\begin{lemma}\label{lem:comp1_G}
The following holds:
\begin{align*}
\underset{\alpha \rightarrow Q - \frac{\beta}{2}}{\lim} (\alpha + \frac{\beta}{2} - Q) \int_{ \ii \mathbb{R}} \frac{d \sigma_2}{\ii} e^{2 \ii \pi (Q - 2 \sigma) \sigma_2} \frac{   S_{\frac{\gamma}{2}}( \frac{1}{2}(\alpha + \frac{\beta}{2} - Q) \pm \sigma_2 ) }{  S_{\frac{\gamma}{2}}( \frac{1}{2}(\alpha - \frac{\beta}{2} + Q) \pm \sigma_2 )  } = \frac{1}{ 2 \pi }    \frac{ 1  }{ S_{\frac{\gamma}{2}}(Q - \frac{\beta}{2} )^2 }.
\end{align*}
\end{lemma}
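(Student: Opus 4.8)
The plan is to extract the divergence as $\alpha \to Q - \frac{\beta}{2}$ from a pinching of the contour by two colliding poles. Set $\nu := \frac{1}{2}(\alpha + \frac{\beta}{2} - Q)$ and $\kappa := \frac{1}{2}(\alpha - \frac{\beta}{2} + Q)$, so that the integrand is
\[
\Phi(\sigma_2) := e^{2\ii\pi(Q - 2\sigma)\sigma_2}\,\frac{S_{\frac{\gamma}{2}}(\nu + \sigma_2)\,S_{\frac{\gamma}{2}}(\nu - \sigma_2)}{S_{\frac{\gamma}{2}}(\kappa + \sigma_2)\,S_{\frac{\gamma}{2}}(\kappa - \sigma_2)},
\]
and the limit $\alpha \to Q - \frac{\beta}{2}$ is exactly $\nu \downarrow 0$, with $\kappa \to Q - \frac{\beta}{2}$ and $\alpha + \frac{\beta}{2} - Q = 2\nu$. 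Since $S_{\frac{\gamma}{2}}$ has a simple pole at $0$, the numerator factors $S_{\frac{\gamma}{2}}(\nu \pm \sigma_2)$ give simple poles of $\Phi$ at $\sigma_2 = \mp \nu$; these are the first elements of $P^{-}_{\mathrm{Hos}}$ and $P^{+}_{\mathrm{Hos}}$ respectively, so by the contour prescription recalled around \eqref{eq:G-range1}--\eqref{eq:G-range2} the contour $\ii\R$ runs between them, and as $\nu \downarrow 0$ they pinch the contour at the origin. The remaining poles (the others in $P^{\pm}_{\mathrm{Hos}}$, which come from the zeros of $S_{\frac{\gamma}{2}}(\kappa \pm \sigma_2)$) converge to $\pm\frac{\beta}{2}$ and stay a fixed distance from the contour, and the denominator is regular near $\sigma_2 = 0$ because $\kappa \to Q - \frac{\beta}{2}>0$ avoids the pole lattice. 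Hence the whole divergence localizes at the origin.

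The key input is the residue $r_0 := \mathrm{Res}_{x = 0} S_{\frac{\gamma}{2}}(x)$. Sending $x \to 0$ in \eqref{eq:shift_S1}, i.e.\ $S_{\frac{\gamma}{2}}(x + \frac{\gamma}{2})/S_{\frac{\gamma}{2}}(x) = 2\sin(\frac{\gamma\pi}{2}x)$, gives $r_0 = S_{\frac{\gamma}{2}}(\frac{\gamma}{2})/(\gamma\pi)$, and $S_{\frac{\gamma}{2}}(\frac{\gamma}{2}) = \frac{\gamma}{2}$ follows from \eqref{eq:shift_G1}, \eqref{eq:shift_G2} together with $Q - \frac{\gamma}{2} = \frac{2}{\gamma}$ and $\Gamma_{\frac{\gamma}{2}}(\frac Q2)=1$; thus $r_0 = \frac{1}{2\pi}$. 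Next set
\[
N(\sigma_2) := (\nu^2 - \sigma_2^2)\,\Phi(\sigma_2) = \big[(\nu + \sigma_2)S_{\frac{\gamma}{2}}(\nu + \sigma_2)\big]\big[(\nu - \sigma_2)S_{\frac{\gamma}{2}}(\nu - \sigma_2)\big]\,\frac{e^{2\ii\pi(Q - 2\sigma)\sigma_2}}{S_{\frac{\gamma}{2}}(\kappa + \sigma_2)S_{\frac{\gamma}{2}}(\kappa - \sigma_2)},
\]
which removes both pinching poles and is holomorphic in a fixed neighborhood of the origin; as $\nu \downarrow 0$ one has $N(0) = \nu^2 S_{\frac{\gamma}{2}}(\nu)^2/S_{\frac{\gamma}{2}}(\kappa)^2 \to r_0^2/S_{\frac{\gamma}{2}}(Q - \frac{\beta}{2})^2$. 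Splitting $\Phi = N(0)(\nu^2 - \sigma_2^2)^{-1} + (N(\sigma_2) - N(0))(\nu^2 - \sigma_2^2)^{-1}$ and using
\[
\int_{\ii\R}\frac{d\sigma_2}{\ii}\,\frac{1}{\nu^2 - \sigma_2^2} = \int_{-\infty}^{\infty} \frac{dt}{\nu^2 + t^2} = \frac{\pi}{\nu},
\]
the first piece contributes $\frac{\pi}{\nu}N(0)$.

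It then remains to show the second piece is $O(1)$, so that after multiplying by $\alpha + \frac{\beta}{2} - Q = 2\nu$ it vanishes in the limit. On $\sigma_2 = \ii t$ the function $(N(\ii t) - N(0))/(\nu^2 + t^2)$ is handled near $t = 0$ by Taylor expansion of $N$: the linear term is odd and integrates to zero, while the quadratic and higher terms give $\int O(t^2)/(\nu^2 + t^2)\,dt$, whose only $\nu$-dependent part is $O(\nu)$ and which otherwise stays bounded; away from the origin the denominator is bounded below and the decay of $\Phi$ at $\pm\ii\infty$ from the asymptotics \eqref{eq:lim_S} guarantees integrability. Combining,
\[
\Big(\alpha + \tfrac{\beta}{2} - Q\Big)\int_{\ii\R}\frac{d\sigma_2}{\ii}\,\Phi(\sigma_2) = 2\nu\Big(\tfrac{\pi}{\nu}N(0) + O(1)\Big) \xrightarrow[\nu\downarrow0]{} 2\pi\,\frac{r_0^2}{S_{\frac{\gamma}{2}}(Q - \frac{\beta}{2})^2} = \frac{1}{2\pi}\,\frac{1}{S_{\frac{\gamma}{2}}(Q - \frac{\beta}{2})^2},
\]
using $r_0 = \frac{1}{2\pi}$. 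The main obstacle is the rigorous justification that the divergence localizes at the origin and that the remainder is genuinely $O(1)$ uniformly as $\nu \downarrow 0$: this needs a uniform-in-$\nu$ bound on $N$ along the entire contour (not merely near $0$), together with the exponential tail control supplied by \eqref{eq:lim_S}, which is the single point where the argument goes beyond a formal residue extraction.
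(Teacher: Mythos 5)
Your proposal is correct, and it reaches the right constant, but it takes a genuinely different route from the paper. The paper's proof is an asymmetric residue extraction: it first applies the shift equation \eqref{eq:shift_S1} to rewrite the numerator factor $S_{\frac{\gamma}{2}}(\frac{1}{2}(\alpha+\frac\beta2-Q)-\sigma_2)$ as $S_{\frac{\gamma}{2}}(\frac\gamma2 + \frac{1}{2}(\alpha+\frac\beta2-Q)-\sigma_2)\big/\big(2\sin(\frac{\gamma\pi}{2}(\frac{1}{2}(\alpha+\frac\beta2-Q)-\sigma_2))\big)$, then picks up the residue at the single pole $\sigma_2 = \frac12(\alpha+\frac\beta2-Q)$ as it collides with the contour; the resulting residue contains the factor $S_{\frac{\gamma}{2}}(\alpha+\frac\beta2-Q)$, whose simple pole at the origin carries the whole $1/(\alpha+\frac\beta2-Q)$ divergence, and the final limit is evaluated via $\lim_{x\to0}xS_{\frac\gamma2}(x)=\frac{\gamma}{4\pi}S_{\frac\gamma2}(\frac2\gamma)=\frac1{2\pi}$ — exactly your $r_0$, computed by you instead from $S_{\frac\gamma2}(\frac\gamma2)=\frac\gamma2$ via \eqref{eq:shift_G1}--\eqref{eq:shift_G2}; both evaluations check out. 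Your symmetric pinching decomposition $\Phi = N(0)(\nu^2-\sigma_2^2)^{-1} + (N(\sigma_2)-N(0))(\nu^2-\sigma_2^2)^{-1}$ with the explicit kernel $\int_{\ii\R}\frac{d\sigma_2}{\ii}\frac{1}{\nu^2-\sigma_2^2}=\frac\pi\nu$ avoids any contour deformation and any shift-equation manipulation, and it makes the error analysis more transparent: the only delicate contribution, $\int \ii t N'(0)(\nu^2+t^2)^{-1}dt$, is killed by oddness (correctly so, since it is not absolutely integrable uniformly in $\nu$), while the quadratic remainder is uniformly bounded by Cauchy estimates on a fixed disk where $N$ is holomorphic and bounded uniformly in $\nu$ — your identification of this uniformity as the one genuinely nontrivial point is accurate, and the same tail control via \eqref{eq:lim_S} is needed (and equally unbelabored) in the paper's version. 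What the paper's route buys is shorter algebra and consistency with the contour-deformation machinery it uses elsewhere (e.g.\ Lemma \ref{lem:secH2}); what yours buys is that the divergence is localized by an elementary real-variable argument directly on $\ii\R$, with the pole bookkeeping reduced to noting that the non-pinching poles stay at distance about $\frac\beta2$ (and $\frac\gamma2\wedge\frac2\gamma$) from the contour.
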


\begin{comment}
Lets do some checks on this formula, first lets try to take the limit $\alpha \rightarrow Q - \frac{\beta}{2}$.
The prefactor in front of the contour integral becomes:
\begin{align*}
   2 \pi^2 2^{-\frac{1}{2}(Q - \frac{\beta}{2})^2}      \frac{  \Gamma_{\frac{\gamma}{2}}(Q - \frac{\beta}{2}  )^2 }{    \Gamma_{\frac{\gamma}{2}}(  \frac{\beta}{2})^2 } 
\end{align*}
Using the above formula it will now be easy to find the residue at $\alpha = Q - \frac{\beta}{2}$. 
For this let us first do a residue computation and pick up the pole at $\sigma_2 = \frac{1}{2}(\alpha + \frac{\beta}{2} - Q)$. 
\end{comment}
\begin{proof}
Write:
\begin{align*}
& \int_{ \ii \mathbb{R}} \frac{d \sigma_2}{\ii} e^{2 \ii \pi (Q - 2 \sigma) \sigma_2} \frac{   S_{\frac{\gamma}{2}}( \frac{1}{2}(\alpha + \frac{\beta}{2} - Q) \pm \sigma_2 )  }{  S_{\frac{\gamma}{2}}( \frac{1}{2}(\alpha - \frac{\beta}{2} + Q) \pm \sigma_2 ) } \\
& = \int_{ \ii \mathbb{R}} \frac{d \sigma_2}{\ii} e^{2 \ii \pi (Q - 2 \sigma) \sigma_2} \frac{1}{2 \sin(\frac{\gamma \pi }{2} (\frac{1}{2}(\alpha + \frac{\beta}{2} - Q) - \sigma_2 ) )} \frac{   S_{\frac{\gamma}{2}}( \frac{\gamma}{2} +  \frac{1}{2}(\alpha + \frac{\beta}{2} - Q) - \sigma_2 ) S_{\frac{\gamma}{2}}( \frac{1}{2}(\alpha + \frac{\beta}{2} - Q) + \sigma_2 ) }{  S_{\frac{\gamma}{2}}( \frac{1}{2}(\alpha - \frac{\beta}{2} + Q) + \sigma_2 )       S_{\frac{\gamma}{2}}(\frac{1}{2}(\alpha - \frac{\beta}{2} + Q) - \sigma_2)}.
\end{align*}
The residue we are interested in is given by:
\begin{align*}
 2 \pi  \frac{1}{\gamma \pi } e^{ \ii \pi (Q - 2 \sigma) (\alpha + \frac{\beta}{2} - Q)} \frac{   S_{\frac{\gamma}{2}}( \frac{\gamma}{2}  ) S_{\frac{\gamma}{2}}(\alpha + \frac{\beta}{2} - Q) }{  S_{\frac{\gamma}{2}}( \alpha) S_{\frac{\gamma}{2}}(Q - \frac{\beta}{2} ) }.
\end{align*}
We are then interested in the limit:
\begin{align*}
&\underset{\alpha \rightarrow Q - \frac{\beta}{2}}{\lim} (\alpha + \frac{\beta}{2} - Q)   2 \pi  \frac{1}{\gamma \pi } e^{ \ii \pi (Q - 2 \sigma_1) (\alpha + \frac{\beta}{2} - Q)} \frac{   S_{\frac{\gamma}{2}}( \frac{\gamma}{2}  ) S_{\frac{\gamma}{2}}(\alpha + \frac{\beta}{2} - Q) }{  S_{\frac{\gamma}{2}}( \alpha) S_{\frac{\gamma}{2}}(Q - \frac{\beta}{2} ) }\\
& = \frac{2}{\gamma} \frac{   S_{\frac{\gamma}{2}}( \frac{\gamma}{2}  )  }{ S_{\frac{\gamma}{2}}(Q - \frac{\beta}{2} )^2 }  \underset{\alpha \rightarrow Q - \frac{\beta}{2}}{\lim} (\alpha + \frac{\beta}{2} - Q)  S_{\frac{\gamma}{2}}(\alpha + \frac{\beta}{2} - Q)  = \frac{2}{\gamma} \frac{   S_{\frac{\gamma}{2}}( \frac{\gamma}{2}  )  }{ S_{\frac{\gamma}{2}}(Q - \frac{\beta}{2} )^2 }  \frac{\gamma}{4 \pi}  S_{\frac{\gamma}{2}}(\frac{2}{\gamma}) = \frac{1}{ 2 \pi }    \frac{ 1  }{ S_{\frac{\gamma}{2}}(Q - \frac{\beta}{2} )^2 }.
\end{align*}
\end{proof}

\section{Analytic continuation of the reflection coefficient}\label{app:GMC}
In this section we prove Proposition~\ref{prop:R-def} and an easier variant Lemma~\ref{lem-hol-dR}, and along the way supply a technical ingredient also needed in Section~\ref{app:lim_H_R}. The argument for Proposition~\ref{prop:R-def} is similar to that of Section~\ref{subsec-anal-H}, but has two additional complications. Firstly, the reflection coefficient describes the law of a weight $W$ quantum disk, which  does not immediately arise from a Liouville field; instead, the weight $W$ quantum disk weighted by boundary length is described by a  Liouville field \cite[Proposition 2.18]{AHS-SLE-integrability}. Secondly, the GMC moment we need to bound has positive quantum area exponent but negative quantum length exponent (Lemma~\ref{lem-str-near-infty}), and is only finite due to cancellation between the two terms. Thus a naive application of H\"older's inequality fails. We use Lemma~\ref{lem-AL-mixed} which estimates GMC moments conditioned on the field average maximum.

\begin{proposition}\label{prop:analytic-appendix-R}
	Fix $\mu_i$ with $\Re \mu_i > 0$ for $i=1,2,3$, and 
	consider the following three maps from $((Q-\gamma) \vee \frac\gamma2, Q)$ to $\C$:
		\begin{align*}
		\beta &\mapsto \int \frac1{L_2} A e^{-A - \mu_1L_1 - \mu_2L_2} \, \LF_\bbH^{(\beta,0), (\gamma, 1), (\beta, \infty)}(d\phi),\\
		\beta &\mapsto \int \frac1{L_2}A (\mu_1L_1+ \mu_2L_2) e^{-A - \mu_1 L_1 - \mu_2L_2} \, \LF_\bbH^{(\beta,0), (\gamma, 1), (\beta, \infty)}(d\phi),\\
		\beta &\mapsto \int \frac1{L_2} (\mu_1L_1+ \mu_2L_2)^2 e^{-A - \mu_1L_1 - \mu_2L_2} \, \LF_\bbH^{(\beta,0) ,(\gamma, 1), (\beta, \infty)}(d\phi),
	\end{align*}
	where we write $A = \cA_\phi(\bbH)$, $L_1 = \cL_\phi(-\infty,0)$ and $L_2 = \cL_\phi(0,\infty)$. 
	Then each map 	is well-defined in the sense that the integrals converge absolutely, and extends analytically to a neighborhood of $((Q-\gamma) \vee \frac\gamma2, Q)$ in $\C$. 
\end{proposition}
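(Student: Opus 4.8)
The plan is to mirror the truncation argument used for $H$ in Section~\ref{subsec-anal-H}, reducing each of the three maps to a Gaussian multiplicative chaos moment and then establishing a locally uniform bound that survives analytic continuation in $\beta$. First I would integrate out the zero mode $\mathbf c$. Writing $\phi = \wt h + \mathbf c$ with $\wt h = h - 2Q\log|\cdot|_+ + \frac{\beta}2 G_\bbH(\cdot,0) + \frac\gamma2 G_\bbH(\cdot,1) + \frac\beta2 G_\bbH(\cdot,\infty)$, and setting $\wt A = \cA_{\wt h}(\bbH)$, $\wt L_1 = \cL_{\wt h}(-\infty,0)$, $\wt L_2 = \cL_{\wt h}(0,\infty)$, the substitution $A = e^{\gamma\mathbf c}\wt A$, $L_i = e^{\gamma\mathbf c/2}\wt L_i$ followed by $u = e^{\gamma \mathbf c/2}$ turns the $\mathbf c$-integral in the first map into $\frac2\gamma \wt A\,\wt L_2^{-1}\int_0^\infty u^{b}e^{-u^2\wt A - u\wt\ell}\,du$, where $\wt\ell = \mu_1\wt L_1 + \mu_2\wt L_2$ and $b = \tfrac{2(\beta - Q)}\gamma + 1$. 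Convergence of the $u$-integral at $u=0$ forces $\Re b > -1$, i.e.\ $\Re\beta > Q-\gamma$, which is exactly the stated lower endpoint; bounding $|e^{-u\wt\ell}|\le 1$ (valid since $\Re\mu_i\ge 0$) and integrating out $u$ shows the first map is dominated by $\E[\wt A^{\,p}\wt L_2^{-1}]$ with $p = \tfrac{Q - \Re\beta}\gamma$, and the other two maps reduce to the same type of moment. Thus everything comes down to finiteness and local uniform control of GMC moments carrying a \emph{positive} area exponent $p$ and a \emph{negative} length exponent $-1$.

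The analyticity itself I would obtain exactly as in Lemma~\ref{lem:claim1} and Lemma~\ref{lem-exp-bound-holo}: approximate each map by truncating $\bbH$ away from the three insertions, replacing $\wt A,\wt L_i$ by the versions $\wt A_r,\wt L_{i,r}$ on $\bbH\setminus\bigcup_j B_{e^{-r}}(p_j)$ and the insertions by the corresponding circle averages. Each truncated functional $f_r(\beta)$ is holomorphic in $\beta$ (the insertion weights enter holomorphically and the truncated field has all exponential moments), and one shows $f_r\to f$ uniformly on a complex neighborhood of each interior point of the interval by controlling the increments $|f_{r+1}(\beta)-f_r(\beta)|$. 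As in the $H$ case, Girsanov's theorem extracts the imaginary part of $\beta$ as a factor $e^{\frac{r+1}4\sum v_j^2}$, after which the increment bounds reduce to real-parameter GMC estimates of the same kind as the main moment. A uniform limit of holomorphic functions is holomorphic, and the limit agrees with $f$ on the real interval by dominated convergence, which furnishes the analytic continuation.

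The hard part will be the GMC estimate $\E[\wt A^{\,p}\wt L_2^{-1}]<\infty$ (and its local uniformity) in the regime $\Re\beta\in(\gamma,Q)$, where $p = \tfrac{Q-\Re\beta}\gamma$ is \emph{exactly} the critical area exponent: by \cite[Corollary 6.11]{hrv-disk} the moment $\E[\wt A^{\,q}]$ is finite precisely for $q < \tfrac1\gamma\min_i(Q-\beta_i) = \tfrac{Q-\Re\beta}\gamma = p$, so $\E[\wt A^{\,p}]=\infty$, and any application of H\"older's inequality that separates $\wt A^{\,p}$ from $\wt L_2^{-1}$ (forcing an area exponent $pp'>p$) immediately diverges. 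The finiteness is genuinely due to compensation between the two factors: field configurations producing large area also produce large boundary length $\wt L_2$, so $\wt L_2^{-1}$ damps exactly the configurations that make $\E[\wt A^{\,p}]$ blow up. To make this quantitative I would not integrate the zero mode out but instead work with $\LF_\bbH^{(\beta,0),(\gamma,1),(\beta,\infty)}$ directly and condition on the supremum $M_\phi$ of the field-average profile, as in Lemma~\ref{lem-noncompact-bound}. Lemma~\ref{lem-str-near-infty} identifies the source of the negative length exponent and the correct scaling near the marked points, while Lemma~\ref{lem-AL-mixed} supplies the conditional estimates bounding the mixed moment $\LF[\,\wt A^{\,p}\wt L_2^{-1}\mid M_\phi]$ by an explicit exponential in $M_\phi$. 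Integrating that exponential against the law of $M_\phi$ (computed as in Lemma~\ref{lem:Mphi}) converges precisely when $\beta > Q-\gamma$, recovering the lower endpoint; the remaining constraint $\beta>\tfrac\gamma2$ is what guarantees the requisite conditional negative length moment is finite. Tracking all constants continuously in $\Re\beta$ over compact subintervals then yields the locally uniform bounds needed to feed the truncation argument above.
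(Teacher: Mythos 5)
Your plan is, in substance, the paper's own proof: the same truncation-and-Girsanov scheme for analyticity (the analogues of Lemmas~\ref{lem:claim1} and~\ref{lem-exp-bound-holo}), the same zero-mode reduction to mixed GMC moments in which the area exponent $p=\frac{Q-\beta}{\gamma}$ is exactly critical and is rescued only by the compensating factor $\wt L_2^{-1}$, and the same mechanism for beating the failure of naive H\"older, namely conditioning on field-average maxima and invoking Lemmas~\ref{lem-AL-mixed} and~\ref{lem-str-near-infty}. Your first paragraph's derivation of the domination $\E[\wt A^{p}\wt L_2^{-1}]$ and of the constraint $\Re\beta>Q-\gamma$ from convergence of the $u$-integral is correct and matches the paper's computation.

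Two bookkeeping points in your third paragraph should be corrected, though neither affects the architecture. First, the conditioning used for the key moment bound is not on the single profile maximum $M_\phi$ of Lemmas~\ref{lem:Mphi} and~\ref{lem-noncompact-bound}, but on the three maxima $M_1,M_2,M_3$ of the radial field averages around the insertions $p_1,p_2,p_3$; these are exponential with rates $Q-\beta$, $Q-\gamma$, $Q-\beta$ respectively, and it is the worst of the three that produces the threshold $\frac\gamma2(2x+y+z)<\min(Q-\beta,Q-\gamma)$ in Lemma~\ref{lem-str-near-infty}. Second, your attribution of the two endpoint constraints contradicts your own (correct) first paragraph: with $x=p=\frac{Q-\beta}\gamma$, $y=0$, $z=-1$, integrating $e^{(2x+z)\frac\gamma2 M}$ against these exponential laws requires $(Q-\beta)-\frac\gamma2<Q-\gamma$, i.e.\ $\beta>\frac\gamma2$ --- so it is this integration, not the conditional negative length moment, that produces $\frac\gamma2$; Lemma~\ref{lem-AL-mixed} gives $\E[L^{2p}\mid M]\lesssim e^{p\gamma M}$ for every $p<\frac2{\gamma^2}$ with no constraint on $\beta$ at all, while $\beta>Q-\gamma$ enters solely through the zero-mode integral (the $\Gamma(\frac s\gamma+\frac12)$ factor with $s=\beta+\frac\gamma2-Q$). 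Two smaller caveats: the identity $\frac1\gamma\min_i(Q-\beta_i)=\frac{Q-\beta}\gamma$ holds only for $\beta\geq\gamma$ (when $\gamma^2>4/3$ and $\beta\in(Q-\gamma,\gamma)$ the critical exponent is $\frac{Q-\gamma}\gamma<p$, so $\E[\wt A^{p}]=\infty$ a fortiori and your conclusion stands); and the increments $|f_{r+1}-f_r|$ generate terms carrying a positive power of the total boundary length $\wt K$ as well, such as $\E[\frac{\wt K_{r+1}-\wt K_r}{\wt L_{r+1}}\wt A_{r+1}^{-s/\gamma}]$, so one genuinely needs the three-exponent bound of Lemma~\ref{lem-str-near-infty} and not only the two-factor moment $\E[\wt A^{p}\wt L_2^{-1}]$; your citation of that lemma covers this.
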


\begin{proof}%[Proof of Proposition~\ref{prop:analytic-appendix-R}]
The proof is a modification of that of Proposition~\ref{prop:analytic-appendix}, so we will be briefer. 
	We only prove the result for the second function, but the others are proved identically. As before, $(p_1, p_2, p_3) = (-2,0,2)$. We work in $(\bbH, p_1,p_2,p_3)$ rather than $(\bbH, 0, 1, \infty)$. To simplify notation we assume  $\mu_1=\mu_2=1$; the argument works the same way for general $\mu_1, \mu_2$. We need to show that the function 
	\[f(\beta) = \int\frac{ \cA_\phi(\bbH)  \cL_\phi(\R)}{\cL_\phi((p_1,p_3))} e^{-\cA_\phi(\bbH) -  \cL_\phi(\R)} \LF_{\bbH}^{(\beta, p_1), (\gamma, p_2), (\beta, p_3)}(d\phi)\]
	extends analytically to a neighborhood of $((Q-\gamma) \vee \frac\gamma2, Q)$ in $\C$. 
	
	For $r \geq 1$ let $\bbH_r := \bbH\backslash \bigcup_i B_{e^{-r}}(p_i)$, $I_r = \R \backslash (p_1-e^{-r}, p_3 + e^{-r})$ and $J_r = (p_1+ e^{-r}, p_3 - e^{-r})$. For a sample $h \sim P_\bbH$, write $A_r$ (resp. $K_r, L_r$) for the quantum area of $\bbH_r$ (resp. quantum length of $I_r \cup J_r$, $J_r$) with respect to the field $h - 2Q \log |\cdot|_+ + \frac\gamma2 G_\bbH(\cdot, p_2)$. Write $h_r(x)$ for the average of $h$ on $\partial B_{e^{-r}}(x) \cap \bbH$. Then define for $r \geq 1$ the function
	\[f_r(\beta) = \int_\R \, dc e^{(\beta + \frac\gamma2 - Q)c} \E \left[e^{\frac\beta2 (h_r(p_1) + h_r(p_3))  - \frac{\beta^2}2 r } \frac{(e^{\gamma c}A_r)(e^{\frac\gamma2c} K_r )}{e^{\frac\gamma2 c} L_r} e^{-A_r - K_r}  \right]\]
	on a neighborhood of $((Q-\gamma) \vee \frac\gamma2, Q)$ in $\C$ to be specified later. Clearly $f_r$ is analytic.

	\medskip 
	\noindent \textbf{Claim 1: Each point in $((Q-\gamma)\vee \frac\gamma2, Q)$ has an open neighborhood in $\C$ on which $f_r$ converges uniformly as $r \to \infty$.}

 Pick a point in $((Q - \gamma) \vee \frac\gamma2, Q)$ and let $O$ be a neighborhood of the point in $\C$, which we will choose sufficiently small as described below.
	We have the alternative expression 
	\[f_r(\beta) = \int_\R  e^{(\beta + \frac\gamma2 - Q)c} \E \left[e^{\frac\beta2 (h_{r+1}(p_1) + h_{r+1}(p_3))  - \frac{\beta^2}2 (r+1) } \frac{(e^{\gamma c}A_r)(e^{\frac\gamma2c} K_r)}{e^{\frac\gamma2 c} L_r} e^{-e^{\gamma c}A_r -  e^{\gamma c/2}K_r}  \right] \, dc.\]
	Write $\beta = u + \ii v$, and let $\wt A_r = \cA_{\wt h} (\bbH_r), \wt K_r = \cL_{\wt h}(I_r \cup J_r), \wt L_r = \cL_{\wt h}(J_r)$ where $\wt h = h - 2Q \log |\cdot|_+ + \frac\beta2 (G_\bbH (\cdot, p_1) + G_\bbH(\cdot, p_3))+ \frac\gamma2 G_\bbH(\cdot,p_2)$. 	 As before, with $g(a,k,\ell) = \frac{ak}\ell e^{-a -  k}$, for $\beta \in O$ we have 
	\[|f_{r+1} (\beta) - f_r(\beta)| \leq C_0 e^{\frac r2 v^2} \int_\R  e^{(u + \frac\gamma2-Q)}\E\left[ | g(e^{\gamma c} \wt A_{r+1}, e^{\frac\gamma2c}\wt K_{r+1}, e^{\frac\gamma2c}  \wt L_{r+1}) - g(e^{\gamma c} \wt A_{r}, e^{\frac\gamma2c}\wt K_{r}, e^{\frac\gamma2c} \wt L_{r})| \right] \, dc,\]
 where, as in~\eqref{eq-C0}, $C_0$ is a constant depending $O$ but not on $r$.
	As before, we can bound
	\begin{align*}
	|g(a_{r+1}, &k_{r+1}, \ell_{r+1}) - g(a_r, k_r, \ell_r)| \\
	&< C_1 \left( \frac{a_{r+1} e^{-a_{r+1}}}{\ell_{r+1}} (k_{r+1} - k_r) + \frac{(a_{r+1}-a_r)e^{-a_{r+1}}}{\ell_{r+1}} + \frac{a_r (e^{-a_r} - e^{a_{r+1}})}{\ell_{r+1}} + \frac{a_re^{-a_r} (\ell_{r+1}-\ell_r)}{\ell_r\ell_{r+1}}\right)
	\end{align*}
 where $C_1$ is a universal constant.
This gives us four terms to bound. Writing $s = u + \frac\gamma2 - Q \in ((-\frac\gamma2) \vee (\frac\gamma2 - \frac2\gamma), \frac\gamma2)$,
\begin{align*}
	\int dc \, e^{(s + \gamma)c} \E[\frac{\wt A_{r+1} (\wt K_{r+1} - \wt K_r)}{\wt L_{r+1}} e^{-e^{\gamma c} \wt A_{r+1}}] &= \frac1\gamma \Gamma(\frac s\gamma + 1) \E[\frac{\wt K_{r+1} - \wt K_r}{\wt L_{r+1}} \wt A_{r+1}^{-\frac s\gamma}], \\
	\int dc \, e^{(s + \frac\gamma2)c} \E[\frac{\wt A_{r+1}  - \wt A_r}{\wt L_{r+1}} e^{-e^{\gamma c} \wt A_{r+1}}] &= \frac1\gamma \Gamma(\frac s\gamma + \frac12) \E[\frac{\wt A_{r+1} - \wt A_r}{\wt L_{r+1}} \wt A_{r+1}^{-\frac s\gamma - \frac12}], \\
		\int dc \, e^{(s + \frac\gamma2)c} \E[\frac{\wt A_r}{\wt L_{r+1}} (e^{-e^{\gamma c} \wt A_{r}} - e^{-e^{\gamma c} \wt A_{r+1}})] &= \frac1\gamma \Gamma(\frac s\gamma + \frac12) \E[\frac{\wt A_r}{\wt L_{r+1}} (\wt A_r^{-\frac s\gamma - \frac12} - \wt A_{r+1}^{-\frac s\gamma - \frac12})], \\
		\int dc \, e^{(s + \frac\gamma2)c} \E[\frac{\wt A_r (\wt L_{r+1} - \wt L_r)}{\wt L_{r+1} \wt L_r} e^{-e^{\gamma c} \wt A_{r}}] &= \frac1\gamma \Gamma(\frac s\gamma + \frac12) \E[\frac{\wt L_{r+1} - \wt L_r}{\wt L_{r+1} \wt L_r} \wt A_r^{-\frac s\gamma - \frac12}].
\end{align*}
All four integrals follow from the identity $\int  e^{tc} e^{-e^{\gamma c} a}\, \mathrm dc = \frac1\gamma \Gamma(\frac {t}\gamma) a^{-\frac {t}\gamma}$ which holds for $t,a >0$. 

By Lemma~\ref{lem-many-moments}, for each point in $((Q-\gamma) \vee \frac\gamma2, Q)$ there is a neighborhood $O$ and a constant $C>0$ such that these four terms are uniformly bounded by $Ce^{-r/C}$ on $O$. Choosing $O$ such that $v$ is small gives the desired uniform convergence. 

	\medskip 
\noindent \textbf{Claim 2: $\lim_{r\to\infty} f_r(\beta) = f(\beta)$  when $\beta \in ((Q-\gamma)\vee \frac\gamma2, Q)$. }

The proof is the same as that of Claim 1 with $(r+1)$ replaced by $\infty$. 

\medskip
\noindent \textbf{Conclusion.} By Claim 1, there is a neighborhood of $((Q-\gamma)\vee \frac\gamma2, Q)$ on which $\lim_{r \to \infty} f_r$ exists and is holomorphic. By Claim 2, this limit agrees with $f$ on $((Q-\gamma)\vee \frac\gamma2, Q)$, hence is the desired analytic continuation of $f$. 
\end{proof}

\begin{lemma} \label{lem-many-moments}
	For each point in $((Q-\gamma) \vee \frac\gamma2, Q)$ there is a neighborhood $U \subset \R$ and a constant $C>0$ such that the following holds. For $\beta \in U$ and $r \geq 1$, writing $s = \beta + \frac\gamma2 - Q$, the expectations 
	\[\E[\frac{\wt K_{r+1} - \wt K_r}{\wt L_{r+1}} \wt A_{r+1}^{-\frac s\gamma}], \quad  \E[\frac{\wt A_{r+1} - \wt A_r}{\wt L_{r+1}} \wt A_{r+1}^{-\frac s\gamma - \frac12}], \quad \E[\frac{\wt L_{r+1} - \wt L_r}{\wt L_{r+1} \wt L_r} \wt A_r^{-\frac s\gamma - \frac12}], \quad \E[\frac{\wt A_r}{\wt L_{r+1}} (\wt A_r^{-\frac s\gamma - \frac12} - \wt A_{r+1}^{-\frac s\gamma - \frac12})]\]
	 are each bounded by $C e^{-r/C}$.  Moreover, the statement still holds when all instances of $(r+1)$ are replaced by $\infty$. 
	 
	 Here, we let $(p_1, p_2, p_3)= (-2,0,2)$, 
	 $\bbH_r := \bbH\backslash \bigcup_i B_{e^{-r}}(p_i)$, $I_r = \R \backslash (p_1-e^{-r}, p_3 + e^{-r})$ and $J_r = (p_1+ e^{-r}, p_3 - e^{-r})$. For a sample $h \sim P_\bbH$, let $\wt h = h - 2Q \log |\cdot|_+ + \frac\beta2 (G_\bbH(\cdot, p_1) + G_\bbH(\cdot, p_3))+ \frac\gamma2 G_\bbH(\cdot, p_2)$, and define $\wt A_r = \cA_{\wt h}(\bbH_r)$, $\wt K_r = \cL_{\wt h}(I_r \cup J_r)$ and $\wt L_r = \cL_{\wt \phi}(J_r)$.

\end{lemma}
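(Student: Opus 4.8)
\textbf{Proof plan for Lemma~\ref{lem-many-moments}.}

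The plan is to estimate each of the four expectations by the same moment-plus-tail strategy used in the proof of Lemma~\ref{lem-exp-bound-holo}, but adapted to handle the negative power of $\wt L$, which is the source of the difficulty. Fix a point $\beta_0 \in ((Q-\gamma)\vee\frac\gamma2, Q)$ and let $s_0 = \beta_0 + \frac\gamma2 - Q$; by the hypotheses on $\beta_0$ we have $s_0 \in ((-\frac\gamma2)\vee(\frac\gamma2 - \frac2\gamma), \frac\gamma2)$, so $-\frac{s_0}\gamma$ and $-\frac{s_0}\gamma - \frac12$ are strictly below the critical area-moment threshold $\frac2{\gamma^2}\wedge\frac1\gamma\min_i(Q - \beta_i)$, and the positive length exponents stay below the critical length-moment threshold. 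I would choose a small real neighborhood $U$ of $\beta_0$ so that these bounds hold uniformly and invoke the moment bounds from \cite[Corollaries~3.8 and 3.10]{hrv-disk} for $\wt A_{r+1}$ and $\wt K_{r+1}$ exactly as in the earlier proof.

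The genuinely new step is controlling factors of $\wt L_{r+1}^{-1}$ and $\wt L_r^{-1}$. Since $\wt L_r$ is the quantum length of the bulk boundary arc $J_r$ bounded away from all three insertions for $r$ bounded, the key fact I would establish (or cite from the analogous estimates in Section~\ref{subsec-anal-H} and Appendix~\ref{app:GMC}) is a uniform-in-$r$ bound on negative moments $\E[\wt L_{r+1}^{-p}]$ for small $p>0$: because $J_r$ increases to a fixed macroscopic interval and $\wt L_r \uparrow \wt L_\infty$, the random variable $\wt L_{r+1}^{-1}$ is dominated by $\wt L_1^{-1}$, whose negative moments are finite since boundary GMC of a fixed interval has a density with sufficiently fast decay near $0$ (equivalently, $\P[\wt L_1 < \eps]$ decays faster than any power). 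With this in hand, each target expectation is handled by splitting on the event $\{\wt A_{r+1} - \wt A_r > \eps\}$ (resp.\ the corresponding length increment), bounding the main part by $\eps$ times a finite moment and the tail part by H\"older's inequality, now with three factors rather than two: one factor for the area power, one for the negative-length power, and one for the probability of the increment event. The increment probabilities decay like $e^{-cr}$ by the multifractal spectrum estimate \eqref{eq-decay-A} and its boundary analogue, so choosing $\eps = e^{-r/C}$ yields the claimed $Ce^{-r/C}$ bound.

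For the first and third expectations the increment appears in the numerator ($\wt K_{r+1} - \wt K_r$ or $\wt L_{r+1} - \wt L_r$), so these are the most direct: I would apply the triangle inequality, pull out a bounded-length-moment factor, and estimate the increment's tail. For the second ($\wt A_{r+1} - \wt A_r$ in the numerator) the same scheme applies verbatim. The fourth expectation involves the difference $\wt A_r^{-s/\gamma - 1/2} - \wt A_{r+1}^{-s/\gamma - 1/2}$; since $\wt A_{r+1} > \wt A_r$ and the exponent is negative, I would use the elementary inequality $\wt A_r(\wt A_r^{-\alpha} - \wt A_{r+1}^{-\alpha}) \lesssim (\wt A_{r+1} - \wt A_r)\wt A_r^{-\alpha}$ (with $\alpha = \frac{s}\gamma + \frac12 > 0$, arranged by shrinking $U$) to reduce it to the same increment estimate. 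The $\infty$ versions follow identically using $\wt A_\infty, \wt K_\infty, \wt L_\infty$ in place of the $(r+1)$-quantities, together with the finiteness of their moments.

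\textbf{Main obstacle.} The crux is the uniform-in-$r$ control of the negative $\wt L$-moments: unlike the positive GMC moments, negative moments require a lower tail estimate $\P[\wt L_{r+1} < \eps]$, and one must check that the H\"older exponent allotted to $\wt L^{-1}$ can be taken small enough that it does not interfere with keeping the area and length exponents below their respective critical thresholds. Balancing these three H\"older exponents simultaneously — while still extracting a strictly positive exponential decay rate from the increment tail — is the delicate point, and is exactly the complication flagged in the text (positive area exponent but negative length exponent, with finiteness only from cancellation); this is why Lemma~\ref{lem-AL-mixed} on conditioned GMC moments is needed as the replacement for a naive H\"older bound.
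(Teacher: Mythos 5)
Your skeleton (the $\eps$-split into a main term and an increment-tail term, the elementary inequality $\wt A_r(\wt A_r^{-\alpha} - \wt A_{r+1}^{-\alpha}) \lesssim (\wt A_{r+1}-\wt A_r)\wt A_r^{-\alpha}$ for the fourth term, the multifractal decay $\P[\wt A_{r+1}-\wt A_r>\eps] \lesssim \eps^{-m}e^{-cr}$, and the choice $\eps = e^{-r/C}$) matches the paper's proof, and your observation that $\wt L_{r+1}^{-1} \le \wt L_1^{-1}$ with all negative moments of $\wt L_1$ finite is correct. But the operative step of your plan --- ``the tail part by H\"older's inequality, now with three factors rather than two: one factor for the area power, one for the negative-length power, and one for the probability of the increment event'' --- fails quantitatively, and this is precisely the failure the appendix warns about. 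Compute the exponent that actually appears in the tail terms: with $s = \beta + \frac\gamma2 - Q$, after bounding the increment by the total (e.g.\ $\wt A_{r+1}-\wt A_r \le \wt A_{r+1}$) the area factor carries exponent $-\frac s\gamma + \frac12 = \frac{Q-\beta}\gamma$, which is \emph{exactly} the critical threshold for moments of $\wt A$ in the presence of a $\beta$-insertion ($\E[\wt A_\infty^q]=\infty$ for $q \ge \frac1\gamma(Q-\beta)$, and $\E[\wt A_{r+1}^q]\uparrow\infty$, so no bound uniform in $r$ exists above threshold). Any H\"older separation of the area factor from $\wt L^{-1}$ forces this exponent to $p\lambda_1 \cdot \frac{Q-\beta}\gamma$ with $p\lambda_1>1$, i.e.\ strictly supercritical, no matter how you tune the three exponents. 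The same problem hits the first tail term when $\beta \ge \frac2\gamma$: there $\frac2\gamma(Q-\beta)\le 1$, so $\E[\wt K_{r+1}^{p\lambda}]$ with $p\lambda>1$ already diverges. The quantity is finite only because of cancellation between the positive area/length exponents and the negative length exponent, and separating the factors destroys that cancellation; your uniform negative-moment bound on $\wt L$, while true, cannot recover it.

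Your closing sentence name-drops Lemma~\ref{lem-AL-mixed} as ``the replacement for a naive H\"older bound,'' but your plan never says how it is deployed, and as written the plan \emph{is} the naive bound. What the paper actually does is prove an intermediate mixed-moment estimate (Lemma~\ref{lem-str-near-infty}): $\E[\wt A_r^x \wt K_r^y \wt L_r^z] \le C(\beta,x,y,z)$ uniformly in $r$, under the \emph{combined} condition $\frac\gamma2(2x+y+z) < \min(Q-\beta, Q-\gamma)$ together with individual caps ($x<\frac2{\gamma^2}$, etc.). The mechanism is to condition on the maxima $M_i$ of the field averages near each insertion: by Lemma~\ref{lem-AL-mixed}, conditionally on $M = \max_i M_i$ one has $\E[\wt A_r^x \mid M] \lesssim e^{x\gamma M}$ and $\E[\wt L_r^z \mid M] \lesssim e^{z\frac\gamma2 M}$ with the \emph{insertion-free} thresholds $\frac2{\gamma^2}$, $\frac4{\gamma^2}$, so H\"older is applied only conditionally; the $M$-exponents then add, giving $e^{\frac\gamma2(2x+y+z)M}$, and since $M_i$ is exponential with rate $Q-\beta$ (resp.\ $Q-\gamma$), integrability requires only the combined exponent to be subcritical. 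For the problematic tail term one takes $x = p\frac{Q-\beta}\gamma$, $z=-p$, whence $\frac\gamma2(2x+z) = p(Q-\beta-\frac\gamma2) < Q-\beta$ for $p$ close to $1$ --- the negative length exponent literally buys back the supercriticality of the area exponent. Without formulating and proving this mixed-moment lemma (or an equivalent), your proposal does not close.
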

\begin{proof}
	We explain the exponential bounds for each $\beta$ (rather than uniformly in $U$); all inputs in this argument vary continuously so we get uniform bounds in neighborhoods $U$. We only explain the case where the subscripts are $(r+1)$ rather than $\infty$ since the argument is the same. We again write $x \lesssim y$ to denote that $x \leq ky$ for some $k$ depending only on $U$.
	
	Let $\eps > 0$ and let $p,q>1$ satisfy $\frac1p + \frac1q = 1$. By the same trick used for the first inequality of Lemma~\ref{lem-exp-bound-holo}, for the first three terms we have 
	\[\E[\frac{\wt K_{r+1} - \wt K_r}{\wt L_{r+1}} \wt A_{r+1}^{-\frac s\gamma}] \lesssim  \eps \E[\frac{\wt A_{r+1}^{-\frac s\gamma}}{\wt L_{r+1}}] + \E[(\frac{\wt K_{r+1}}{\wt L_{r+1}} \wt A_{r+1}^{-\frac s\gamma})^p]^{1/p} \P[\wt K_{r+1} - \wt K_r > \eps]^{1/q},\]
	\[\E[\frac{\wt A_{r+1} - \wt A_r}{\wt L_{r+1}} \wt A_{r+1}^{-\frac s\gamma-\frac12}] \lesssim  \eps \E[\frac{\wt A_{r+1}^{-\frac s\gamma - \frac12}}{\wt L_{r+1}}] + \E[(\frac{\wt A_{r+1}^{-\frac s\gamma + \frac12}}{\wt L_{r+1}})^p]^{1/p} \P[\wt A_{r+1} - \wt A_r > \eps]^{1/q},\]
	\[\E[\frac{\wt L_{r+1} - \wt L_r}{\wt L_{r+1} \wt L_r} \wt A_r^{-\frac s\gamma - \frac12}] \lesssim \eps \E[\frac{\wt A_r^{-\frac s\gamma + \frac12}}{\wt L_{r+1}\wt L_r}] + \E[(\frac{\wt A_r^{-\frac s\gamma + \frac12}}{\wt L_r})^p]^{1/p} \P[\wt L_{r+1} - \wt L_r > \eps]^{1/q} .\]
	For the fourth term, we use the same argument as in the second inequality of Lemma~\ref{lem-exp-bound-holo} to get
	\[\E[\frac{\wt A_r}{\wt L_{r+1}} (\wt A_r^{-\frac s\gamma - \frac12} - \wt A_{r+1}^{-\frac s\gamma - \frac12})] \lesssim \eps \E[\frac{\wt A_r^{-\frac s\gamma - \frac12}}{\wt L_{r+1}}] + \E[(\frac{\wt A_r^{-\frac s\gamma + \frac12}}{\wt L_{r+1}})^p]^{1/p} \P[A_{r+1} - A_r > \eps]^{1/q} \]
	Choosing $p>1$ small, all the expectations in the above inequalities are uniformly bounded in $r$ by Lemma~\ref{lem-str-near-infty} since $\beta > \frac\gamma2$ implies $-\frac {2s}\gamma + 1 < \frac4{\gamma^2}$, so we can ignore those terms. The choice of $\eps>0$ and the bounds on the probabilities can be done as in Lemma~\ref{lem-exp-bound-holo} to complete the proof.
\end{proof}

\begin{lemma}\label{lem-str-near-infty}
	Let $\beta < Q$. 
	Let  $x < \frac2{\gamma^2}$ and $y, z < \frac{4}{\gamma^2}$ satisfy $\frac\gamma2(2x+y+z)< \min(Q-\beta, Q - \gamma)$ and $\max(2x,0) + \max (y, 0) + \max (z,0) < \frac4{\gamma^2}$. 	In the setting of Lemma~\ref{lem-many-moments}, there is a continuous function $C(\beta, x, y, z)$ such that for all $r>1$,
	\[ \E[\wt A_r^x \wt K_r^y \wt L_r^z ] < C(\beta, x, y, z).\]
\end{lemma}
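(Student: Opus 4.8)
The plan is to establish the bound uniformly in $r$ by localizing the three measures near the insertions $p_1,p_2,p_3$, where the cancellation between a positive quantum-area exponent and a negative quantum-length exponent is essential, and by treating the complementary bulk region (including the neighborhood of $\infty$) with the global moment estimates of \cite{hrv-disk}. Because $x,y,z$ may have either sign, I cannot apply H\"older directly across the three measures: doing so would pair, say, the area near $p_j$ with a length far from $p_j$ and destroy the cancellation. Instead I would first fix $\rho_0>0$ below half the minimal distance between the $p_j$ and split each of $\wt A_r,\wt K_r,\wt L_r$ into its restrictions to $B_{\rho_0}(p_j)$, $j=1,2,3$, and to the bulk region $D_0$ lying at distance $\ge \rho_0$ from all $p_j$. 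A factor with nonnegative exponent is bounded by a constant times the sum of its four regional pieces (using $(\sum_k u_k)^t\le C_t\sum_k u_k^t$ for $t\ge0$), whereas a factor with negative exponent is bounded by a single regional piece (using $(\sum_k u_k)^t\le u_\ell^t$ for $t<0$), the region $\ell$ being chosen to agree with that of the positive area factor it must cancel. In this way $\wt A_r^x\wt K_r^y\wt L_r^z$ is dominated by a finite sum of terms in which the cancelling area and length stay attached to the same region, so no cancellation is lost.

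On $D_0$ there is no log-singularity, so the corresponding mixed moment is finite by an ordinary H\"older argument combining \cite[Corollary 6.11]{hrv-disk} for the area with \cite[Corollaries 3.8 and 3.10]{hrv-disk} for the boundary lengths; here the hypotheses $x<\tfrac2{\gamma^2}$, $y,z<\tfrac4{\gamma^2}$ and $\max(2x,0)+\max(y,0)+\max(z,0)<\tfrac4{\gamma^2}$ supply exactly the admissible H\"older exponents, and the bound is uniform in $r$ since $D_0$ is fixed. For a term supported near a single insertion $p_j$ I would map a neighborhood of $p_j$ conformally onto the half-strip $(0,\infty)\times(0,\pi)$ sending $p_j$ to $+\infty$; in these coordinates the insertion $\tfrac{\beta_j}2 G_\bbH(\cdot,p_j)$ becomes a linear drift and, as in Definition~\ref{def-thick-disk} and Lemma~\ref{lem:Mphi}, the field splits into a Brownian motion with drift in the horizontal direction plus an independent lateral field. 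Conditioning on the maximum $M_j$ of the horizontal averages, Lemma~\ref{lem-AL-mixed} bounds the conditional mixed moment by $C e^{\frac\gamma2(2x+y+z)M_j}$, the weight $2$ on $x$ and $1$ on $y,z$ reflecting that area scales like the square of length; finiteness of this conditional moment again uses the three bounds of the previous paragraph. Finally, the law of $M_j$ has an exponential tail of rate $Q-\beta_j$ (the drift of the underlying Brownian motion, cf.\ Lemma~\ref{lem:Mphi}), so integrating $e^{\frac\gamma2(2x+y+z)M_j}$ against it converges precisely when $\frac\gamma2(2x+y+z)<Q-\beta_j$; minimizing over $j$ gives the hypothesis $\frac\gamma2(2x+y+z)<\min(Q-\beta,Q-\gamma)$ and a bound depending continuously on $(\beta,x,y,z)$.

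The main obstacle is precisely the retention of the area/length cancellation: it rules out a blanket H\"older inequality and forces the sign-sensitive decomposition above, after which the sharp conditional estimate of Lemma~\ref{lem-AL-mixed}---whose dependence on $M_j$ must be exactly $e^{\frac\gamma2(2x+y+z)M_j}$ for the rate $Q-\beta_j$ to be matched---is the crux. A secondary difficulty is the uniformity in $r$: since $\wt A_r,\wt K_r,\wt L_r$ increase to their $r=\infty$ limits while the negative powers decrease, one cannot pass to $r=\infty$ factor by factor, so the decomposition and the conditioning estimate must be carried out at each truncation with constants independent of the excision radius $e^{-r}$; this is possible because the drifted-Brownian description of the field near each $p_j$ and the invoked moment bounds are unaffected by truncating the half-strip at length $r$.
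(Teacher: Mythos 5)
Your overall architecture — conditioning on the field-average maxima near the insertions, invoking Lemma~\ref{lem-AL-mixed} for the conditional moments, and integrating against the exponential laws of the maxima (rates $Q-\beta$ at $p_1,p_3$ and $Q-\gamma$ at $p_2$) — is the right one and matches the paper. But your opening diagnosis, that H\"older "cannot be applied directly across the three measures," steers you into a decomposition with a genuine gap. Expanding each positive-exponent factor over the four regions while bounding the negative-exponent factor by a \emph{single} regional piece (matched to the area factor) produces cross-region terms of the form $A_j^x K_m^y L_\ell^z$ with $j \neq m$, and one factor $L_\ell^z$ can sit in only one region. In the regime the lemma must cover — $\beta$ close to $Q$, so that $Q-\beta$ is tiny, with $x>0$ and $y>0$ each individually supercritical ($\gamma x > Q-\beta$ and $\tfrac\gamma2 y > Q-\beta$) but $\tfrac\gamma2(2x+y+z) < \min(Q-\beta,\,Q-\gamma)$ thanks to $z<0$ — whichever positive factor is left unpaired contributes a conditional bound growing like $e^{aM_m}$ with $a$ exceeding the exponential rate of $M_m$, and nothing in your term decays in $M_m$; integrating over the law of $M_m$ then diverges, so your bound is vacuous. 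This regime is precisely the one needed downstream, since Lemma~\ref{lem-many-moments} is applied for $\beta$ in neighborhoods of points of $((Q-\gamma)\vee\frac\gamma2, Q)$ arbitrarily close to $Q$. (One could try to patch this by splitting $z$ and using $L^{z_1+z_2} \leq L_j^{z_1}L_m^{z_2}$ for $z_1,z_2<0$, but your proposal does not do this, and the term-by-term distribution of $z$ would still need justification.)

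The paper avoids the issue by doing exactly what you ruled out: it applies H\"older directly to $\wt A_r^x \wt K_r^y \wt L_r^z$, but \emph{conditionally} on the global maximum $M=\max_i M_i$. The key observation is that the single-factor bound $\E[\wt A_r^p \mid M] \lesssim e^{p\gamma M}$ holds for all $p < \frac2{\gamma^2}$, negative $p$ included: for $p \geq 0$ one sums the regional pieces (your step), while for $p<0$ one uses $\wt A_r^p \lesssim \min_i A_i^p$, whose conditional expectation is $\lesssim \min_i e^{p\gamma M_i} = e^{p\gamma M}$ — the negative factor automatically cancels against whichever region has the largest maximum, with no pairing choice to make. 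Conditional H\"older, with exponents chosen as you propose ($\max(\lambda_1\cdot 2x, \lambda_2 y, \lambda_3 z)<\frac4{\gamma^2}$), then aggregates everything into $e^{(2x+y+z)\frac\gamma2 M}$, and since $M$ is the maximum of exponentials of rates $Q-\beta, Q-\gamma, Q-\beta$, integrability is exactly the hypothesis $\frac\gamma2(2x+y+z)<\min(Q-\beta, Q-\gamma)$. Your per-region pairing can only yield the stronger region-wise conditions, which do not follow from the stated hypotheses; to repair your proof you should replace the sign-sensitive matching by the paper's global-maximum conditioning.
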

\begin{proof}
	We will just show that this expectation is finite; all the inputs in the proof vary continuously so the statement about $C(\beta,x,y,z)$ follows. We write $x \lesssim y$ to mean $x < ky$ holds for some constant $k$ not depending on $r$. 
 
For $i = 1,3$ let $M_i = \sup_{0 \leq t \leq r} h_r(p_i) - Qr$, and let $M_2 = \sup_{t \geq 0} h_r(p_2) - Qr$. Let $M = \max_i M_i$.
 We first show that for any $p < \frac 2{\gamma^2}$ we have 
 \begin{equation}\label{eq-combined-moments-M}
\max(\E[\wt A_r^p \mid M],  \E[\wt K_r^{2p} \mid M], \E[\wt L_r^{2p} \mid M] )\lesssim e^{p \gamma M}.
 \end{equation}
	For $i=1,2,3$ let $A_i = \cA_{\wt h} (\bbH_r \cap B_1(p_i))$ and let $A_0 = \cA_{\wt h} (\bbH_r \backslash \bigcup_i B_1(p_i))$, so $\wt A_r = \sum_{i=0}^3 A_i$.
Write $(M_j)_j = (M_1, M_2, M_3)$. 
	  If $p \geq 0$, then by Lemma~\ref{lem-AL-mixed} and the Markov property of the GFF we have $\E[A_i^p \mid (M_j)_j] \lesssim e^{p \gamma M_i}$ so $\E[\wt A_r^p \mid (M_j)_j] \lesssim \E[A_0^p \mid (M_j)_j] + \sum_{i=1}^3 \E[A_i^p \mid (M_j)_j] \lesssim e^{p \gamma M}$. If instead $p < 0$ we get the desired bound using $\wt A_r^p \lesssim \min_i A_i^p$. Arguing similarly for $K$ and $L$ gives~\eqref{eq-combined-moments-M}. 

   Next, we show that $\E[\wt A_r^x \wt K_r^y \wt L_r^z] \lesssim  \E[ e^{(2x + y + z) \frac\gamma2 M}]$. We explain the argument in the case $x, y > 0$ and $z<0$; the other cases are similar. Let $\lambda_1, \lambda_2, \lambda_3>1$   satisfy $\frac1{\lambda_1} + \frac1{\lambda_2} + \frac1{\lambda_3} = 1$. We can choose these parameters so that 
   $\max(\lambda_1 \cdot 2x, \lambda_2 \cdot y, \lambda_3 \cdot z) < \frac4{\gamma^2}$, e.g.\ by taking $\lambda_3 \gg 0$ and choosing $\lambda_1, \lambda_2$ such that $\frac{\lambda_1}{2x} = \frac{\lambda_2}y$. H\"older's inequality then gives the desired
   \[ \E[\wt A_r^x \wt K_r^y \wt L_r^z \mid M ]\leq \E[ \wt A_r^{\lambda_1 x} \mid M]^{1/\lambda_1} \E[\wt K_r^{\lambda_2 y} \mid M]^{1/\lambda_2} \E[\wt L_r^{\lambda_3 z} \mid M]^{1/\lambda_3} \lesssim e^{(2x+y+z)\frac\gamma2 M}.\]
Finally, for $b>0$, if $(B_t)_{t \geq 0}$ is standard Brownian motion, the law of $\sup_{t\geq0} B_{2t} - bt$ is the exponential distribution with rate $b$. Thus $M_1$ and $M_3$ are exponential with rate $Q-\beta$, and $M_2$ is exponential with rate $Q -\gamma$. We conclude
that $\E[e^{(2x+y+z) \frac\gamma2 M}]<\infty$  if and only if $\frac\gamma2(2x+y+z)< \min(Q-\beta, Q - \gamma)$. 
  \end{proof}

\begin{lemma}\label{lem-AL-mixed}
Let $a>0$ and $r\in(1, \infty]$. Let $\cS = \R\times (0,\pi)$ be the horizontal strip. Let $h$ be a GFF on $\cS$ normalized so $(h, \rho) = 0$ for some $\rho$ supported on $(-\infty, 0) \times (0,\pi)$, and let $\wt h = h - a \Re \cdot$. Let $X_t$ be the average of $\wt h$ on $\{t\} \times (0,\pi)$ and let $M = \sup_{0<t<r-1} X_t$. Then for any $p < \frac2{\gamma^2}$ and some constant $C = C(p, a, \rho)$ we have 
\[\max(\E[A^p \mid M], \E[K^{2p} \mid M], \E[L^{2p} \mid M]) \leq C e^{p\gamma M}\]
where $A := \cA_{\wt h} ((0, r)\times (0,\pi))$, $K := \cL_{\wt h} ((0, r)\times \{0,\pi\})$ and $L := \cL_{\wt h}((0,r) \times \{0\})$. 
\end{lemma}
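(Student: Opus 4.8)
The plan is to reduce the statement to standard Gaussian multiplicative chaos moment bounds by using the decomposition of the free-boundary GFF on the strip into its projection onto vertical line averages and an independent lateral part. Write $X_t$ for the average of $\wt h$ on $\{t\}\times(0,\pi)$, which by hypothesis equals (the average process of $h$, a continuous Gaussian process) minus $a t$, and decompose $\wt h = X_{\Re\,\cdot} + h^\ell$, where $h^\ell$ is the lateral component (mean zero on every vertical line), independent of $X$ (see \cite[Section 4.1.6]{wedges}). Since $X$ is continuous, the chaos measures factorize as $\cA_{\wt h}(dz) = e^{\gamma X_{\Re z}}\cA_{h^\ell}(dz)$ and $\cL_{\wt h}(dz) = e^{\frac\gamma2 X_{\Re z}}\cL_{h^\ell}(dz)$, so that $A = \int_0^r e^{\gamma X_t}\,d\cA^\ell_t$ and analogously for $K, L$. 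Note that raising $K$ and $L$ to the power $2p$ while raising $A$ to the power $p$ is exactly what makes the two $\gamma$-prefactors agree, since $e^{\frac\gamma2 M\cdot 2p}=e^{\gamma M p}$.

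First I would condition on the whole process $X$ (hence on $M$) and bound the resulting conditional chaos moments. Partitioning $(0,r)$ into unit cells $I_j=(j,j+1)$ and using $e^{\gamma X_t}\le e^{\gamma\sup_{I_j}X}$ on each cell, Minkowski's inequality (for $p\ge1$; subadditivity of $x\mapsto x^p$ for $p<1$) reduces the task to the per-cell moments $\E[\cA_{h^\ell}(I_j\times(0,\pi))^p]$ and $\E[\cL_{h^\ell}(I_j)^{2p}]$, which are finite and uniformly bounded in $j$ by stationarity of $h^\ell$. This is where the two thresholds enter and explains the common restriction: the bulk area chaos up to the free boundary of the strip has finite moments exactly for $p<\frac2{\gamma^2}$ (\cite[Corollary 6.11]{hrv-disk}), whereas the boundary-length chaos has finite moments for exponents below $\frac4{\gamma^2}$ (\cite[Corollary 3.10]{hrv-disk}), i.e.\ for $2p<\frac4{\gamma^2}$, matching the stated hypothesis $p<\frac2{\gamma^2}$.

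It then remains to control $\sum_j e^{\gamma\sup_{I_j}X}$ after conditioning on $M$. Writing $\overline X := X - M$, the increments of the underlying Gaussian average process are stationary and the drift is the linear $-a\,\Re\,\cdot$, so the conditional law of $\overline X$ given $M=m$ does not depend on $m$. Since $A = e^{\gamma M}\int_0^r e^{\gamma\overline X_t}\,d\cA^\ell_t$, this extracts the claimed factor $e^{p\gamma M}$ (and, for $K^{2p},L^{2p}$, the factor $e^{2p\cdot\frac\gamma2 M}=e^{p\gamma M}$) and reduces everything to a bound that is uniform in $r$ for the conditioned ($M=0$) problem. As $\overline X\le 0$ on $(0,r-1)$ and the downward drift $-a$ makes $\sup_{I_j}\overline X$ decay like $-aj$ with Gaussian fluctuations, the exponential moments $\E[e^{\gamma p\,\sup_{I_j}\overline X}\mid M]$ are summable in $j$ uniformly in $r$; the short leftover cell $(r-1,r)$, on which $\overline X$ is no longer constrained to be nonpositive, is absorbed by a crude Gaussian exponential-moment estimate exploiting the same drift.

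The main obstacle is making the conditioning on the measure-zero event $\{M=m\}$ rigorous while keeping every estimate uniform in $r\in(1,\infty]$. I would handle this exactly as in Lemma~\ref{lem:Mphi}: disintegrate along $M$ via the explicit density of the running maximum of the drifted average process (an exponential law with rate determined by $a$), and realize the conditioned path through the strong Markov property at the argmax, splitting $\overline X$ into two independent drifted pieces (left and right of the maximum) whose laws are $m$-independent. With that in hand, the only quantitative inputs are the uniform per-cell chaos moment bounds and the geometric summability above, both classical; since the constant $C$ is allowed to depend on $p, a, \rho$, the $a$-dependence of the drift never needs to be tracked sharply here, and I will keep the area and boundary-length estimates entirely separate, as they carry the two distinct critical exponents $\frac2{\gamma^2}$ and $\frac4{\gamma^2}$.
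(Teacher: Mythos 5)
Your argument for $p>0$ is essentially the paper's: decompose $\wt h$ into the vertical-average process plus an independent lateral field, cut $(0,r)$ into unit cells, bound per-cell chaos moments (with exactly the thresholds $p<\frac2{\gamma^2}$ for area near a free boundary and $2p<\frac4{\gamma^2}$ for boundary length), and control $\sum_j e^{\gamma \sup_{I_j}X}$ conditionally on $M$; the paper compresses this by citing a variant of \cite[Lemma A.5]{wedges} and the argument of \cite[(A.11)]{wedges}, which is the same mechanism you describe.

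However, there is a genuine gap: the lemma asserts the bound for \emph{any} $p<\frac2{\gamma^2}$, including $p\le 0$, and the negative-moment case is actually used downstream (in the proof of Lemma~\ref{lem-str-near-infty}, negative exponents $y,z<0$ are reduced via $\wt A_r^p \lesssim \min_i A_i^p$ to exactly this conditional bound with $p<0$). Your machinery does not cover this case. For $p<0$ the inequality $\E[A^p\mid M]\le Ce^{p\gamma M}$ is a \emph{decay} statement in $M$, so what is needed is a \emph{lower} bound on $A$ of order $e^{\gamma M}$, not upper bounds per cell: your cell-decomposition route gives, after the factorization, terms like $e^{p\gamma \inf_{I_j}X}\,\E[\cA_{h^\ell}(I_j\times(0,\pi))^p]$, and since $p<0$ and $X$ drifts down at rate $a$, the factors $e^{p\gamma\inf_{I_j}X}\approx e^{|p|\gamma a j}$ blow up and the sum over cells diverges, while Minkowski is unavailable. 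The correct device (the one the paper uses) is to condition on $M$ \emph{and} the argmax time $t_*$, restrict to the single cell $S=(t_*,t_*+1)\times(0,\pi)$ using $A^p\le \cA_{\wt h}(S)^p$, bound $Y:=M-\inf_{[t_*,t_*+1]}X_t$ stochastically by $\hat Y:=-\inf_{0<s<1}(B_{2s}-as)$ via the conditioned-path decomposition, and then use $\E[e^{p\gamma\hat Y}]<\infty$ together with the finiteness of all negative moments of GMC to get $\E[A^p\mid M,t_*]\le e^{p\gamma M}\,\E[e^{p\gamma\hat Y}]\,\E[\cA_{h_2}(0,1)^p]$. Relatedly, your claim that the conditional law of $\overline X=X-M$ given $M=m$ is $m$-independent is not quite accurate (the pre-argmax piece starts at $-m$, so its law does depend on $m$; only the post-argmax piece is $m$-independent), though for your positive-$p$ estimates uniform-in-$m$ bounds suffice and this is a minor point compared with the missing $p\le 0$ argument.
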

\begin{proof}
	We prove the inequality for $A$ and $r < \infty$; the others are identically proved. We write $x \lesssim y$ to mean $x < ky$ for some constant $k$ not depending on $M$. 
	Suppose $p>0$. 
	A variant of \cite[Lemma A.5]{wedges} gives, with $j^*$ the value of $t \in [j,j+1]$  maximizing $X_t$, we have 
	\[\E[(\sum_{j=0}^{\lfloor r \rfloor}\gamma X_{j^*})^p \mid M] \lesssim e^{p \gamma M}.\]
	Then the argument of \cite[(A.11)]{wedges} gives the desired result. 
	
	Now suppose $p \leq 0$. Conditon on $M$ and on the time $t_* \in (0,r-1)$ at which $X_{t_*}$ is maximal. The conditional law of $(X_t)_{[t_*, r-1]}$ is variance 2 Brownian motion with drift $-a$ conditioned to stay below $M$, and $(X_t)_{[r-1,r]}$ then evolves as unconditioned variance 2 Brownian motion with drift $-a$. Thus $Y:= M - \inf_{[t_*, t_*+1]} X_t$ is stochastically dominated by $\hat Y := -\inf_{0<s<1} (B_{2s} - as)$ where $B$ is standard Brownian motion. Let $h_2$ be the projection of $h$ to $\mathcal{H}_2(\cS)$, then $h_2$ is independent of $(X_t)$ and hence $(M_r, t_*)$, so writing $S = (t_*, t_*+1) \times (0,\pi)$, 
	\begin{align*}
		\E[A^p \mid M, t_*] &\leq \E[\cA_{\wh h}(S)^{p} \mid M, t_*] \leq \E[\cA_{h_2 + M - Y}(S)^{p} \mid M_{r}, t_*] \\
		&\leq e^{p\gamma M} \E[ e^{p \gamma \hat Y}] \E[\cA_{h_2}(0,1)^{p}] \lesssim e^{p \gamma M}.
	\end{align*}
\end{proof}

\begin{proof}[Proof of Proposition~\ref{prop:R-def}]
		Morera's theorem gives analyticity in $\mu$. Next, 
	\cite[Proposition 2.18]{AHS-SLE-integrability} relates the two-pointed quantum disk weighted by the quantum length of a side to the three-point Liouville field: writing $A = \cA_\phi(\bbH), L_1 = \cL_\phi(-\infty,0), L_2 = \cL_\phi(0,\infty)$ and $s=\beta - Q$, we have 
	% \[R_{\mu_1, \mu_2}(\beta) = \frac1{Q-\beta}\int \frac1{L_2}(e^{-A - \sum \mu_i L_i} - 1 + \sum\mu_i L_i)  \LF_{\bbH}^{(\beta, 0),(\gamma, 1),(\beta, \infty)}(d\phi).\]
	%Consequently, integration by parts as in the proof of Proposition~\ref{prop:H-def} yields, with $s=\beta - Q$, 
	\[R_{\mu_1, \mu_2}(\beta) = \frac1{Q-\beta}\int (\frac\gamma s A + \frac{\gamma}{2s} \frac\gamma{s+\frac\gamma2} A (\sum_i \mu_i L_i) + \frac\gamma{2s} \frac{\frac\gamma2}{s+\frac\gamma2} (\sum_i \mu_i L_i)^2 ) \frac{e^{-A - \sum_i \mu_i L_i}}{L_2} \LF_{\bbH}^{(\beta, 0),(\gamma, 1),(\beta, \infty)}(d\phi). \]
	Therefore Proposition~\ref{prop:analytic-appendix-R} gives the analyticity of $R_{\mu_1,\mu_2}(\beta)$ in $\beta$. 
\end{proof}

Finally, we give a holomorphicity result for an observable of $\cMtwo(\beta)$.

\begin{lemma}\label{lem-hol-dR}
    Let $n > \frac4{\gamma^2}$ be an integer and let $\mu_1, \mu_2 > 0$, then the function 
    \[ \beta \mapsto  \cMtwo(\beta)[L_1^n e^{-A-\mu_1L_1-\mu_2 L_2}] \quad \textrm{for }\beta\in (\frac{\gamma}2,Q)\]
    extends holomorphically to a neighborhood of $(\frac\gamma2,Q)$ in $\C$. Here, $A$ and $(L_1, L_2)$ are the quantum area and boundary arc lengths respectively of the quantum disk. 
\end{lemma}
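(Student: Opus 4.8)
The plan is to reduce the statement to a moment bound and analyticity argument analogous to the one carried out for $\hat R$ in Proposition~\ref{prop:R-def} via Proposition~\ref{prop:analytic-appendix-R}. First I would recall the description of $\cMtwo(\beta)$ from Definition~\ref{def-thick-disk}, where the field is $\phi = \psi + \mathbf c$ with $\mathbf c$ sampled from $\frac\gamma2 e^{(\beta-Q)c}\,dc$ and $\psi = h^1 + h^2_\cS$ on the strip $\cS$. Writing $\wt A = \cA_\psi(\cS)$, $\wt L_1 = \cL_\psi(\R\times\{\pi\})$, $\wt L_2 = \cL_\psi(\R)$, the scaling relations $A = e^{\gamma c}\wt A$ and $L_i = e^{\gamma c/2}\wt L_i$ give, after integrating out the zero mode $\mathbf c$ using the identity $\int e^{tc} e^{-e^{\gamma c}a - e^{\gamma c/2}\ell}\,dc$ type manipulations (or more directly the Gamma-function identity $\int e^{tc}e^{-e^{\gamma c/2}b}\,dc = \frac2\gamma\Gamma(\frac{2t}\gamma)b^{-2t/\gamma}$ after noting the interplay of the two exponentials), an expression for $\cMtwo(\beta)[L_1^n e^{-A-\mu_1L_1-\mu_2L_2}]$ as a $c$-integral of an expectation over $\psi$.

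The key point is that because of the explicit factor $L_1^n$ and the presence of the truncating exponential $e^{-A}$, the resulting expectation is a GMC moment whose convergence and holomorphic dependence on $\beta$ I would establish by the same strategy as Lemma~\ref{lem:claim1} and Lemma~\ref{lem-exp-bound-holo}: approximate by truncating the strip away from $\pm\infty$, show the approximants $f_r(\beta)$ are holomorphic, and prove they converge uniformly on a complex neighborhood of $(\frac\gamma2,Q)$. The condition $n > \frac4{\gamma^2}$ is exactly what is needed so that the relevant moments are finite — this mirrors the role played by the same inequality in Lemma~\ref{lem-str-near-infty} and Lemma~\ref{lem-mot-input}, where $k+1 > \frac4{\gamma^2}$ (equivalently the exponent on the boundary length lies below the critical threshold $\frac4{\gamma^2}$) guarantees integrability of the density of $(L_1,L_2)$ against $K_{\frac4{\gamma^2}}$. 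Concretely, the boundary length density of $\cMtwo(\gamma)$ behaves like $(\ell_1+\ell_2)^{-\frac4{\gamma^2}-1}$ near the origin (and similarly for general $\beta$ with exponent $\frac{2(Q-\beta)}\gamma\cdot\frac2\gamma$-type corrections), so multiplying by $L_1^n$ with $n>\frac4{\gamma^2}$ renders the small-length region integrable, while the $e^{-A-\mu_1L_1-\mu_2L_2}$ factor with $\mu_1,\mu_2>0$ controls the large-length and large-area regions.

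For the holomorphic extension in $\beta$, I would use Girsanov and Morera's theorem as in Proposition~\ref{prop:R-def}: the $\beta$-dependence enters through the drift $-(Q-\beta)|\Re z|$ of the field $h^1$ and through the measure $\frac\gamma2 e^{(\beta-Q)c}\,dc$ on the zero mode, both of which are entire in $\beta$, so after bounding the relevant expectations uniformly in $r$ on a complex neighborhood of $(\frac\gamma2,Q)$ (replacing $\beta$ by its real part in the exponential weights and absorbing the imaginary-part contribution into a constant, exactly as in the $C_0 e^{\frac{r+1}4\sum v_i^2}$ estimate of~\eqref{eq-C0}), the uniform limit is holomorphic. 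The main obstacle I anticipate is the same mixed-sign issue flagged before Proposition~\ref{prop:analytic-appendix-R}: the GMC moment here combines a positive power of $L_1$ with the need for good behavior near the two marked points, so a naive application of H\"older's inequality may fail to close. I would resolve this exactly as in Lemma~\ref{lem-many-moments} and Lemma~\ref{lem-str-near-infty}, by conditioning on the field-average maximum $M$ and using the conditional moment bounds of Lemma~\ref{lem-AL-mixed} together with the explicit exponential law of $M$, which yields the finiteness precisely in the regime $n>\frac4{\gamma^2}$, $\beta\in(\frac\gamma2,Q)$. Since $\cMtwo(\beta)$ only involves one boundary length weighting (not the three insertions of $H$), this step is in fact lighter than its counterparts in the main text.
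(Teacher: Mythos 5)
Your scaffolding---the truncated approximants $f_r$, the Gamma-function identities for the zero-mode integral, moment bounds obtained by conditioning on the field-average maximum as in Lemmas~\ref{lem-many-moments} and~\ref{lem-str-near-infty}, and the threshold $n>\frac4{\gamma^2}$---matches the machinery the paper actually deploys. But there is a genuine gap at the holomorphicity step. You propose to work directly with Definition~\ref{def-thick-disk} on the strip and assert that the $\beta$-dependence enters through the drift $-(Q-\beta)|\Re z|$ of $h^1$, ``entire in $\beta$,'' so that Girsanov and Morera apply. That is not an accurate description of $\cMtwo(\beta)$: in Definition~\ref{def-thick-disk} the process $Y$ is a drifted Brownian motion \emph{conditioned} on the zero-probability event $B_{2s}-(Q-\beta)s<0$ for all $s>0$, so the law of $h^1$ depends on $\beta$ through a singular conditioning (a Doob transform made sense of by a limiting procedure), not through an explicit exponential density against a $\beta$-independent base measure. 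The Girsanov mechanism of Lemma~\ref{lem:claim1} works precisely because the insertion weights $e^{\frac{\beta_i}2 h_r(p_i)-\frac{\beta_i^2}4 r}$ are manifestly entire in $\beta_i$ against the fixed measure $P_\bbH$; in your setup there is no analogous density for the conditioned field, so the uniform-in-$r$ estimates you invoke (the analogue of the $C_0\,e^{\frac{r+1}4\sum v_i^2}$ bound in \eqref{eq-C0}) have nothing to act on. Note also that Proposition~\ref{prop:R-def}, which you cite as your model, does not work with $\cMtwo(\beta)$ directly either: its proof first converts to a Liouville field.

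The paper closes exactly this gap by spending one of your $n$ powers of $L_1$: by \cite[Proposition 2.18]{AHS-SLE-integrability}, $\cMtwo(\beta)$ weighted by $L_1$ is described by the three-point Liouville field $\LF_\bbH^{(\beta,p_1),(\gamma,p_2),(\beta,p_3)}$ with $(p_1,p_2,p_3)=(-2,0,2)$, so it suffices to analytically continue
\begin{equation*}
\beta\ \mapsto\ \int \cL_\phi((p_1,p_3))^{n-1}\,e^{-\cA_\phi(\bbH)-\mu\cL_\phi(\R)}\;\LF_\bbH^{(\beta,p_1),(\gamma,p_2),(\beta,p_3)}(d\phi),
\end{equation*}
where the $\beta$-dependence now sits in honest insertion densities, and the truncation scheme of Proposition~\ref{prop:analytic-appendix-R} runs with $\hat g(a,k,\ell)=\ell^{n-1}e^{-a-k}$ (only positive powers of the lengths appear, so this is in fact easier than Proposition~\ref{prop:analytic-appendix-R}, and the maximum-conditioning of Lemma~\ref{lem-AL-mixed} you anticipated is needed only through Lemma~\ref{lem-str-near-infty}). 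The hypothesis $n>\frac4{\gamma^2}$ then enters as positivity of the Gamma argument, $(n-1)+\frac{2s}\gamma>0$ uniformly down to $\beta=\frac\gamma2$, in identities such as $\int e^{(s+(n-1)\frac\gamma2)c}\,\E[\wt L_{r+1}^{n-2}(\wt L_{r+1}-\wt L_r)e^{-e^{\gamma c/2}\wt L_{r+1}}]\,dc=\frac2\gamma\Gamma((n-1)+\frac{2s}\gamma)\E[\wt L_{r+1}^{-\frac{2s}\gamma-1}(\wt L_{r+1}-\wt L_r)]$; your heuristic via the small-length behavior of the boundary-length density is the same divergence viewed probabilistically, so that part of your reasoning is fine. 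Your direct route could in principle be repaired by computing the Radon--Nikodym derivative of the conditioned law of $Y$ with respect to a fixed reference law and verifying its analyticity in $\beta$ with uniform bounds, but as written that step is absent and the proof does not close.
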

\begin{proof}
The proof  is similar to that of  Proposition~\ref{prop:analytic-appendix-R} but is substantially easier. We only point out the differences. 
As before, we assume $\mu_1 = \mu_2 = 1$, but the general case follows the same argument. 
    \cite[Proposition 2.18]{AHS-SLE-integrability} explains that the law of a sample from $\cMtwo(\beta)$ weighted by $L_1$ can be described by a three-point Liouville field with insertions $\beta, \beta, \gamma$; we choose the three-pointed domain $(\bbH, p_1, p_2, p_3)$ where $(p_1, p_2, p_3) = (-2, 0, 2)$. It thus suffices to show that
	\[f(\beta) = \int \cL_\phi((p_1, p_3))^{n-1} e^{-\cA_\phi(\bbH) - \mu \cL_\phi(\R)} \LF_{\bbH}^{(\beta, p_1), (\gamma, p_2), (\beta, p_3)}(d\phi)\]
	extends analytically to a neighborhood of $(\frac\gamma2, Q)$ in $\C$. 

 We use the setup and notation of the proof of Proposition~\ref{prop:analytic-appendix-R}. Instead of $g$, we use $\hat g(a,k,\ell) = \ell^{n-1}e^{-a - k}$ for $a > 0$ and $k > \ell > 0$, for which we have the bound 
 \[|\hat g(a_{r+1}, k_{r+1}, \ell_{r+1}) - \hat g(a_r, k_r, \ell_r)|<
C_1(\ell_{r+1}^{n-2} (\ell_{r+1} - \ell_r) e^{- \ell_{r+1}} + \ell_r^{n-1} (e^{-k_r} - e^{-k_{r+1}}) + \ell_r^{n-1} (e^{-a_r} - e^{a_{r+1}}))\]
for $(a_{r+1}, k_{r+1}, \ell_{r+1})$ and $(a_{r}, k_{r}, \ell_{r})$ satisfying $a_{r+1} > a_r, k_{r+1} > k_r, \ell_{r+1} > \ell_r$. 

Our assumption $n > \frac4{\gamma^2}$ implies $(n-1) + \frac{2s}\gamma > 0$. So we can obtain 
\[ \int e^{(s + (n-1)\frac\gamma2)c} \E [\wt L_{r+1}^{n-2}(\wt L_{r+1} - \wt L_r)  e^{-e^{\frac\gamma2c} \wt L_{r+1}} ]  \, dc = \frac2\gamma\Gamma((n-1) + \frac{2s}\gamma) \E[\wt L_{r+1}^{-\frac{2s}\gamma - 1} (\wt L_{r+1} - \wt L_r)]\]
and two similar identities. These identities and the bound for $\hat g$ reduce the problem to proving 
\[\E[\wt L_{r+1}^{-\frac{2s}\gamma-1} (\wt L_{r+1} - \wt L_r)],   \E[\wt L_r^{n-1} ( \wt K_r^{-(n-1) - \frac{2s}\gamma}  - \wt K_{r+1}^{-(n-1) - \frac{2s}\gamma} )] ,  \E[\wt L_r^{n-1} (
\wt A_r^{-\frac{n-1}2 - \frac s\gamma}
 - \wt A_{r+1}^{-\frac{n-1}2 - \frac s\gamma} )]  < C e^{-r/C}\]
for sufficiently large $C$ for all $r$. Using Lemma~\ref{lem-str-near-infty}, this can be done exactly as in Lemma~\ref{lem-exp-bound-holo}. 
\end{proof}

\section{Operator product expansion}\label{sec:ope}

In this appendix we take a look at the Operator Product Expansion (OPE) lemmas that are used in Section \ref{sec:reff}. These proofs were first done for LCFT on the sphere in \cite{DOZZ_proof} and then in the boundary case in \cite{rz-boundary}.
The main novelty in our case is that we must handle a Liouville functional containing both area and boundary GMC measures and therefore it cannot be reduced to a simple moment of GMC as in \cite{DOZZ_proof, rz-boundary}. Nonetheless we will be quite brief in several places of the proofs below when the arguments are similar to those of \cite{rz-boundary}. 

Recall the functions $H_\chi$ and $\tilde H_\chi$ defined in and below~\eqref{eq:def_H_chi}. We first give the OPE result without reflection which is valid only for $\chi = \frac{\gamma}{2}$.

\begin{lemma}(OPE without reflection)\label{lem:ope1} Let $\chi =\frac{\gamma}{2}$. Assume that the constraints of \eqref{eq:para_H_chi} hold, meaning that  $ \sigma_1, \sigma_2, \sigma_3, \sigma_1 - \frac{\gamma}{4}, \sigma_2 -\frac{\gamma}{4} \in  [-\frac{1}{2 \gamma} + \frac{Q}{2}, \frac{1}{2 \gamma} + \frac{Q}{2}] \times \mathbb{R} $,  $\beta_1, \beta_2, \beta_3 < Q$, $ \sum_{i=1}^3 \beta_i > 2Q + \frac{\gamma}{2}$. Assume also that $\beta_1 \in (\frac{\gamma}{2}, \frac{2}{\gamma})$. Then as $ t \rightarrow 0$, one has
\begin{align} 
H_{\frac{\gamma}{2}}(t) - H_{\frac{\gamma}{2}}(0) \underset{t>0}{=}  C_2^{+} t^{1-C} + o(t^{1-C}), \quad H_{\frac{\gamma}{2}}(t) - H_{\frac{\gamma}{2}}(0) \underset{t<0}{=}  C_2^{-} t^{1-C} + o(t^{1-C}),
\end{align}
where:
\begin{align*} 
C_2^+ &=  - \frac{\Gamma(-1+\frac{\gamma \beta_1}{2} - \frac{\gamma^2}{4} ) \Gamma(1 - \frac{\gamma \beta_1}{2})}{\Gamma(- \frac{\gamma^2}{4})}  \left( g_{\frac{\gamma}{2}}(\sigma_1-\frac{\gamma}{4} ) - g_{\frac{\gamma}{2}}(\sigma_2+\frac{\beta_1}{2}-\frac{\gamma}{4}) \right)
H
\begin{pmatrix}
\beta_1 + \frac{\gamma}{2}, \beta_2, \beta_3 \\
\sigma_1 - \frac{\gamma}{4},  \sigma_2,   \sigma_3 
\end{pmatrix},\\
C_2^- &=  - \frac{\Gamma(-1+\frac{\gamma \beta_1}{2} - \frac{\gamma^2}{4} ) \Gamma(1 - \frac{\gamma \beta_1}{2})}{\Gamma(- \frac{\gamma^2}{4})}  \left( g_{\frac{\gamma}{2}}(\sigma_1 ) - g_{\frac{\gamma}{2}}(\sigma_2+\frac{\beta_1}{2}) \right) H
\begin{pmatrix}
\beta_1 + \frac{\gamma}{2}, \beta_2, \beta_3 \\
\sigma_1 - \frac{\gamma}{4},  \sigma_2,   \sigma_3 
\end{pmatrix}.
\end{align*}
A similar statement holds for $\tilde H_{\frac{\gamma}{2}}(t)$. Assume this time that \eqref{eq:para_H_chi2} hold, and also $\beta_1 \in (\frac{\gamma}{2}, \frac{2}{\gamma})$. Then as $ t \rightarrow 0$, one has
\begin{align} 
\tilde H_{\frac{\gamma}{2}}(t) - \tilde H_{\frac{\gamma}{2}}(0) \underset{t>0}{=}  \tilde C_2^{+} t^{1-C} + o(t^{1-C}), \quad \tilde H_{\frac{\gamma}{2}}(t) - \tilde  H_{\frac{\gamma}{2}}(0) \underset{t<0}{=}  \tilde C_2^{-} t^{1-C} + o(t^{1-C}),
\end{align}
where:
\begin{align}
\tilde{C}_2^+ &=- \frac{\Gamma(-1+\frac{\gamma \beta_1}{2} - \frac{\gamma^2}{4} ) \Gamma(1 - \frac{\gamma \beta_1}{2})}{\Gamma(- \frac{\gamma^2}{4})} \left( g_{\frac{\gamma}{2}}(\sigma_1+\frac{\gamma}{4} ) -g_{\frac{\gamma}{2}}(\sigma_2 - \frac{\beta_1}{2} + \frac{\gamma}{4}) \right)  H
\begin{pmatrix}
\beta_1 + \frac{\gamma}{2}, \beta_2, \beta_3 \\
\sigma_1 + \frac{\gamma}{4},  \sigma_2,   \sigma_3 
\end{pmatrix},\\
\tilde{C}_2^- &=- \frac{\Gamma(-1+\frac{\gamma \beta_1}{2} - \frac{\gamma^2}{4} ) \Gamma(1 - \frac{\gamma \beta_1}{2})}{\Gamma(- \frac{\gamma^2}{4})} \left( g_{\frac{\gamma}{2}}(\sigma_1) -g_{\frac{\gamma}{2}}(\sigma_2 - \frac{\beta_1}{2} ) \right) H
\begin{pmatrix}
\beta_1 + \frac{\gamma}{2}, \beta_2, \beta_3 \\
\sigma_1 + \frac{\gamma}{4},  \sigma_2,   \sigma_3 
\end{pmatrix}.
\end{align}
In a similar fashion one can perform expansions as $t \rightarrow 1$, we give the following one, under \eqref{eq:para_H_chi2} and $\beta_2 \in (\frac{\gamma}{2}, \frac{2}{\gamma})$, one has $\tilde H_{\frac{\gamma}{2}}(t) - \tilde H_{\frac{\gamma}{2}}(1) \underset{t<1}{=}  \tilde{B}_2^- (1-t)^{C-A-B} + o((1-t)^{C-A-B})$ where:

\begin{equation}
    \tilde{B}_2^- =  - \frac{\Gamma(-1+ \frac{ \gamma \beta_2}{2}-\frac{\gamma^2}{4}) \Gamma(1- \frac{\gamma \beta_2}{2})}{\Gamma(-\frac{\gamma^2}{4})} \left( g_{\frac{\gamma}{2}}(\sigma_3) - g_{\frac{\gamma}{2}}(\sigma_2+\frac{\beta_2}{2}) \right) H \begin{pmatrix}
\beta_1 , \beta_2 + \chi, \beta_3 \\
\sigma_1 + \frac{\gamma}{4},  \sigma_2 + \frac{\gamma}{4},    \sigma_3 
\end{pmatrix}.
\end{equation}

\end{lemma}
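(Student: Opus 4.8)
The plan is to prove the OPE expansion in Lemma~\ref{lem:ope1} by directly analyzing the small-$t$ behavior of the integral defining $H_{\frac{\gamma}{2}}(t)$ from~\eqref{eq:def_H_chi}. Recall that $H_{\frac{\gamma}{2}}(t)$ contains a degenerate insertion $|x-t|^{\gamma\chi}$ in the bulk and $|r-t|^{\frac{\gamma\chi}{2}}$ on the boundary, with $\chi = \frac{\gamma}{2}$. When $t \to 0$ from the right, the degenerate insertion approaches the boundary point $0$ where the $\beta_1$ insertion sits, and the leading correction of order $t^{1-C}$ comes from the region where the GMC charge fuses with $s_1 = 0$. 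The key structural fact is that $C = \chi(\beta_1 - \chi) = \frac{\gamma}{2}(\beta_1 - \frac{\gamma}{2})$, so $1 - C = 1 - \frac{\gamma\beta_1}{2} + \frac{\gamma^2}{4}$, which is the scaling exponent governing how a single GMC charge near $0$ contributes. First I would differentiate in $t$ (or isolate the non-analytic part of $H_{\frac{\gamma}{2}}(t) - H_{\frac{\gamma}{2}}(0)$) and Taylor-expand the analytic-in-$t$ part away to leave the singular term.

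The main computation is to identify the coefficient $C_2^+$. The strategy, following the approach used in~\cite{DOZZ_proof} and~\cite{rz-boundary}, is to expand $e^{-e^{\gamma c}(\cdots) - e^{\gamma c/2}(\cdots)}$ and use Girsanov's theorem to absorb the fusion of the degenerate charge at $t$ with the insertion at $0$. Concretely, the singular $t^{1-C}$ contribution arises from splitting off one factor of the boundary GMC integral $\int |r-t|^{\frac{\gamma\chi}{2}}|r|^{-\frac{\gamma\beta_1}{2}} e^{\frac{\gamma}{2}h(r)} d\mu^t_\partial(r)$ near $r \in (0,t)$ and near $r<0$, rescaling $r = t\rho$, and recognizing that the resulting integral over $\rho$ produces an explicit Selberg-type constant. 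This is where the Gamma function factor $\frac{\Gamma(-1+\frac{\gamma\beta_1}{2}-\frac{\gamma^2}{4})\Gamma(1-\frac{\gamma\beta_1}{2})}{\Gamma(-\frac{\gamma^2}{4})}$ emerges, together with the factor $g_{\frac{\gamma}{2}}(\sigma_1-\frac{\gamma}{4}) - g_{\frac{\gamma}{2}}(\sigma_2+\frac{\beta_1}{2}-\frac{\gamma}{4})$. The difference of $g_{\frac{\gamma}{2}}$ values reflects the two boundary segments (one to the left of $0$ with cosmological constant $g_{\frac{\gamma}{2}}(\sigma_1-\frac{\gamma}{4})$, one between $0$ and $t$ with $g_{\frac{\gamma}{2}}(\sigma_2-\frac{\gamma}{4})$) bordering the fusion point; after rescaling, the $\rho$-integral combines these into the stated difference via the reflection symmetry of the relevant boundary Selberg integral. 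The remaining field, after removing the fused charge, is exactly a Liouville field with insertion $\beta_1 + \frac{\gamma}{2}$ at $0$ (the degenerate charge merges into $\beta_1$), giving $H\begin{pmatrix}\beta_1 + \frac{\gamma}{2}, \beta_2, \beta_3 \\ \sigma_1 - \frac{\gamma}{4}, \sigma_2, \sigma_3\end{pmatrix}$.

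The computation of $C_2^-$ (the $t<0$ case) follows identically, with the only change being that the degenerate charge now lies to the \emph{left} of $0$, so the two bordering segments carry $g_{\frac{\gamma}{2}}(\sigma_1)$ and $g_{\frac{\gamma}{2}}(\sigma_2 + \frac{\beta_1}{2})$ without the $-\frac{\gamma}{4}$ shift; the underlying $\rho$-integral and hence the Gamma prefactor are unchanged, since the analytic structure of the fusion does not depend on the sign of $t$. The statements for $\tilde H_{\frac{\gamma}{2}}(t)$ are handled by the same argument with $d\tilde\mu^t_\partial$ in place of $d\mu^t_\partial$, which shifts the phases by $+\frac{\chi}{2}$ rather than $-\frac{\chi}{2}$; the $t\to 1$ expansion giving $\tilde B_2^-$ is the same computation with the roles of $0$ and $1$ interchanged (so the exponent becomes $C-A-B$ and the bordering segments carry $g_{\frac{\gamma}{2}}(\sigma_3)$ and $g_{\frac{\gamma}{2}}(\sigma_2 + \frac{\beta_2}{2})$, with $\beta_1 \to \beta_2$).

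The main obstacle, and the chief difference from~\cite{DOZZ_proof,rz-boundary}, is that the Liouville functional here contains \emph{both} the bulk area measure $\cA_\phi$ and the boundary length measures $\cL_\phi$, rather than a single GMC moment. This means the small-$t$ asymptotic cannot be extracted from a closed-form GMC moment formula; instead I must justify rigorously that the truncated exponential $e^{-A - \sum \mu_i L_i}$ does not interfere with the fusion asymptotics — that is, that the leading $t^{1-C}$ term is insensitive to the truncation and that error terms are genuinely $o(t^{1-C})$ uniformly. The hard part will be controlling the interaction between the fusing boundary charge and the bulk area term: one needs moment bounds (of the type established in Proposition~\ref{prop:analytic-appendix} and Lemma~\ref{lem-exp-bound-holo}) to show that the contribution of the bulk measure near the fusion point is negligible and that dominated convergence applies after rescaling $r = t\rho$. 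The condition $\beta_1 \in (\frac{\gamma}{2}, \frac{2}{\gamma})$ is precisely what guarantees $1-C \in (0,1)$ together with convergence of the rescaled $\rho$-integral, so I would verify carefully that this range makes both the fusion integral and the leftover GMC moments finite.
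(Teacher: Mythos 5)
Your proposal follows essentially the same route as the paper's proof: linearize the exponential in the difference of boundary potentials $|r-t|^{\frac{\gamma^2}{4}} - |r|^{\frac{\gamma^2}{4}}$, apply Girsanov to merge the $\frac{\gamma}{2}$ charge with the $\beta_1$ insertion (yielding the $H$ function with $\beta_1 + \frac{\gamma}{2}$), rescale $r = ut$, and evaluate the resulting one-dimensional beta-type integral — the paper does this last step by a contour rotation, which is in fact where the shift $\sigma_2 + \frac{\beta_1}{2}$ inside $g_{\frac{\gamma}{2}}$ arises, with all three boundary segments $r<0$, $0<r<t$, $r>t$ contributing, not only the two bordering the fusion point as your sketch suggests. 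Your identification of the main new difficulty (the joint area-plus-boundary functional, handled by showing the bulk term contributes only $o(t^{1-C})$) and of $\beta_1 \in (\frac{\gamma}{2}, \frac{2}{\gamma})$ as precisely the convergence range of the rescaled integral both match the paper's argument.
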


\begin{proof}
This proof follows exactly the steps of \cite[Equation (3.15)]{rz-boundary}. When computing the asymptotic of the difference $H_{\frac{\gamma}{2}}(t) - H_{\frac{\gamma}{2}}(0)$, even if in our case there is an area GMC term in the functional, there will be no contribution at order $t^{1-C}$ from this area GMC term. We will only write out the proof for the first asymptotic as the others follow from a similar argument. Now starting from the expression \eqref{eq:def_H_chi} of $H_{\frac{\gamma}{2}}(t)$ one can write:
\begin{align*}
&H_{\frac{\gamma}{2}}(t) - H_{\frac{\gamma}{2}}(0) \\
&= - \int_{\mathbb{R}} dc  \,e^{(\frac{\gamma p}{2}-\frac{\gamma}{4})c}\E \Bigg[ e^{\frac{\gamma c}{2}} \int_{\mathbb{R}} \frac{ ( |r_1-t|^{\frac{\gamma^2}{4}} - |r_1|^{\frac{\gamma^2}{4}} ) \hat g(r_1)^{\frac{\gamma^2 q}{8}}}{|r_1|^{\frac{\gamma\beta_1}{2}}|r_1-1|^{\frac{\gamma\beta_2}{2}}} e^{\frac{\gamma}{2} h(r_1)}  d\mu_{\partial}^t (r_1)  \\ 
& \times \exp\Bigg( - e^{\gamma c}\int_{\mathbb{H}} \frac{|x|^{\frac{\gamma^2}{2}} \hat g(x)^{\frac{\gamma^2}{4}(q +1 )} }{ |x|^{\gamma\beta_1}|x-1|^{\gamma\beta_2}}e^{\gamma h(x)} d^2x
 -e^{\frac{\gamma c}{2}}\int_{\mathbb{R}} \frac{|r|^{\frac{\gamma^2}{4}} \hat g(r)^{\frac{\gamma^2 q}{8}}}{|r|^{\frac{\gamma\beta_1}{2}}|r-1|^{\frac{\gamma\beta_2}{2}}}e^{\frac{\gamma}{2} h(r)}  d\mu_{\partial}^t (r)  \Bigg) \Bigg] + o(t^{1-C}) \\
&= -t^{1-C}  \sqrt{\frac{1}{\sin(\pi\frac{\gamma^2}{4})}} \Bigg(  \cos(\pi \gamma(\sigma_1 - \frac{\gamma}{4}-\frac{Q}{2}))\int_{\mathbb{R}_+} d u  \frac{(1+u)^{\frac{\gamma^2}{4}}- u^{\frac{\gamma^2}{4}}}{ u^{\frac{\gamma \beta_1}{2}} } \nonumber\\
& \quad + \cos(\sigma_2 - \frac{Q}{2} - \frac{\gamma}{4}) \int_0^1 du  \frac{
|u-1|^{\frac{\gamma^2}{4}} -|u|^{ \frac{\gamma^2}{4} } }{|u|^{\frac{\gamma \beta_1}{2}}} + \cos(\sigma_2 - \frac{Q}{2} ) \int_1^{+\infty} du  \frac{
|u-1|^{\frac{\gamma^2}{4}} -|u|^{ \frac{\gamma^2}{4} } }{|u|^{\frac{\gamma \beta_1}{2}}}  \Bigg) \nonumber\\
& \quad \times H
\begin{pmatrix}
\beta_1 + \frac{\gamma}{2}, \beta_2, \beta_3 \\
\sigma_1 - \frac{\gamma}{4},  \sigma_2,   \sigma_3 
\end{pmatrix}  + o(t^{1-C}) \nonumber.
\end{align*}
As in \cite{rz-boundary}, in the last equality above we have used the change of variable $r_1 = ut$ and applied the Girsanov theorem to the boundary GMC measure $e^{\frac{\gamma}{2}h(r_1)}  d\mu_{\partial}^t (r_1)$. To get the final answer we can simply use the following integral identity,
\begin{align*}
& \cos(\sigma_2 - \frac{Q}{2} - \frac{\gamma}{4}) \int_0^1 du  \frac{
|u-1|^{\frac{\gamma^2}{4}} -|u|^{ \frac{\gamma^2}{4} } }{|u|^{\frac{\gamma \beta_1}{2}}} + \cos(\sigma_2 - \frac{Q}{2} ) \int_1^{+\infty} du  \frac{
|u-1|^{\frac{\gamma^2}{4}} -|u|^{ \frac{\gamma^2}{4} } }{|u|^{\frac{\gamma \beta_1}{2}}}\\
& = -\frac{1}{2}e^{ \ii \pi\gamma(\sigma_2-\frac{Q}{2} - \frac{\gamma}{4} + \frac{\beta_1}{2})} \int_{\mathbb{R}_+ e^{\ii \pi}}d u  \frac{(1+u)^{\frac{\gamma^2}{4}}- u^{\frac{\gamma^2}{4}}}{ u^{\frac{\gamma \beta_1}{2}} }  -\frac{1}{2}e^{- \ii \pi\gamma(\sigma_2-\frac{Q}{2} - \frac{\gamma}{4} +\frac{\beta_1}{2}) } \int_{\mathbb{R}_+ e^{- \ii \pi}}d u  \frac{(1+u)^{\frac{\gamma^2}{4}}- u^{\frac{\gamma^2}{4}}}{ u^{\frac{\gamma \beta_1}{2}} }\\
& = - \frac{\Gamma(-1+\frac{\gamma \beta_1}{2} - \frac{\gamma^2}{4} ) \Gamma(1 - \frac{\gamma \beta_1}{2})}{\Gamma(- \frac{\gamma^2}{4})}   \cos(\sigma_2 - \frac{Q}{2} -\frac{\gamma}{4} +\frac{\beta_1}{2}), 
\end{align*}
where in the second line $\mathbb{R}_+ e^{ \ii \pi}$ (resp. $\mathbb{R}_+ e^{ - \ii \pi}$) should be understood as a $(-\infty,0)$ contour passing above (resp. below) $-1$.
The computations for other cases are similar.
\end{proof}
Let us now turn to the OPE with reflection. The following result holds for both values of $\chi$.

\begin{lemma}\label{lem_reflection_ope2} (OPE with reflection) For $\chi = \frac{\gamma}{2}$ or $\frac{2}{\gamma}$, assume again the constraints of \eqref{eq:para_H_chi} are satisfied. Then there exists a parameter $\beta_0 >0$ small enough so that under the assumption that $\beta_1 \in (Q - \beta_0,Q)$, as $t \rightarrow 0$ the following asymptotic holds
\begin{align}
H_{\chi}(t) - H_{\chi}(0) \underset{t >0}{=}  C_2^{+} t^{1 - \chi \beta_1 + \chi^2 } + o(|t|^{1 - \chi \beta_1 + \chi^2}), \: \:  H_{\chi}(t) - H_{\chi}(0) \underset{t<0}{=}  C_2^{-} t^{1 - \chi \beta_1 + \chi^2 } + o(|t|^{1 - \chi \beta_1 + \chi^2}),
\end{align}
where:
\begin{align}
 &C_2^+ = R(\beta_1, \sigma_1 - \frac{\chi}{2}, \sigma_2-\frac{\chi}{2}) H
\begin{pmatrix}
2Q - \beta_1 - \chi, \beta_2, \beta_3 \\
\sigma_1 - \frac{\chi}{2},  \sigma_2,   \sigma_3 
\end{pmatrix},\\
&C_2^- = e^{- \ii \pi(1 - \chi \beta_1 + \chi^2 )} R(\beta_1, \sigma_1, \sigma_2) H
\begin{pmatrix}
2Q - \beta_1 - \chi, \beta_2, \beta_3 \\
\sigma_1 - \frac{\chi}{2},  \sigma_2,   \sigma_3 
\end{pmatrix}.
\end{align}
Similarly for $\tilde H_{\chi}$, still for $\beta_1 \in (Q - \beta_0,Q)$ and assuming this time \eqref{eq:para_H_chi2} holds, the following asymptotic holds
\begin{align}
\tilde H_{\chi}(t) - \tilde H_{\chi}(0) \underset{t >0}{=}  \tilde C_2^{+} t^{1 - \chi \beta_1 + \chi^2 } + o(|t|^{1 - \chi \beta_1 + \chi^2}), \: \:  \tilde H_{\chi}(t) - \tilde H_{\chi}(0) \underset{t<0}{=} \tilde C_2^{-} t^{1 - \chi \beta_1 + \chi^2 } + o(|t|^{1 - \chi \beta_1 + \chi^2}),
\end{align}
with this time:
\begin{align}
 &\tilde C_2^+ = R(\beta_1, \sigma_1 + \frac{\chi}{2}, \sigma_2 + \frac{\chi}{2}) H
\begin{pmatrix}
2Q - \beta_1 + \chi, \beta_2, \beta_3 \\
\sigma_1 + \frac{\chi}{2},  \sigma_2,   \sigma_3 
\end{pmatrix},\\
& \tilde C_2^- = e^{- \ii \pi(1 - \chi \beta_1 + \chi^2 )} R(\beta_1, \sigma_1, \sigma_2) H
\begin{pmatrix}
2Q - \beta_1 + \chi, \beta_2, \beta_3 \\
\sigma_1 + \frac{\chi}{2},  \sigma_2,   \sigma_3 
\end{pmatrix}.
\end{align}
\end{lemma}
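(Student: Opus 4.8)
The statement to prove is the OPE with reflection, Lemma~\ref{lem_reflection_ope2}. The plan is to follow the same strategy as the proof of Lemma~\ref{lem:ope1} (the OPE without reflection), but now tracking the leading correction term at the reflected exponent $1-\chi\beta_1+\chi^2$ rather than the generic exponent $1-C$. The key structural difference is that when $\beta_1$ is close to $Q$, the insertion at $0$ becomes ``heavy,'' and the asymptotic of the correlation function as the degenerate insertion $t\to 0$ is governed by the reflection coefficient $R$ rather than by a naive Taylor expansion. This is exactly the phenomenon quantified by the reflection principle, and the $R$ factor appears because the relevant GMC integral near $0$ diverges and must be renormalized via its reflected value.

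First I would start from the defining expression~\eqref{eq:def_H_chi} of $H_\chi(t)$ and isolate the contribution to $H_\chi(t)-H_\chi(0)$ coming from the region of the boundary (and bulk) GMC integral near the insertion at $0$. As in the proof of Lemma~\ref{lem:ope1}, I would apply Girsanov's theorem to the insertion $e^{\frac{\beta_1}{2}\phi(0)}$ and perform the change of variables rescaling the neighborhood of $0$ by $t$. The new feature is that, because $\beta_1\in(Q-\beta_0,Q)$ is near the reflection threshold, the rescaled integral over the full half-line does not converge absolutely; instead, one recognizes the divergent part as producing a factor of $R(\beta_1,\cdot,\cdot)$ together with a Liouville correlation with the reflected weight $2Q-\beta_1-\chi$. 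Concretely, the heavy insertion at $0$ fuses with the degenerate insertion $-\chi$ at $t$, and the standard computation (cf.\ \cite[Section~3]{rz-boundary} and \cite[Section~6]{DOZZ_proof}) identifies the coefficient of $t^{1-\chi\beta_1+\chi^2}$ with the product $R(\beta_1,\sigma_1\mp\tfrac\chi2,\sigma_2\mp\tfrac\chi2)\,H\begin{pmatrix}2Q-\beta_1-\chi,\beta_2,\beta_3\\ \sigma_1-\tfrac\chi2,\sigma_2,\sigma_3\end{pmatrix}$, with the $\sigma$-shifts dictated by the change of boundary cosmological constant across $t$ in the measure $d\mu_\partial^t$. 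The phase factor $e^{-\ii\pi(1-\chi\beta_1+\chi^2)}$ in $C_2^-$ arises exactly as in Lemma~\ref{lem:ope1}: for $t<0$ the rescaling variable traverses a contour $\mathbb R_+e^{\pm\ii\pi}$, so $t^{1-\chi\beta_1+\chi^2}$ must be assigned the appropriate branch, and the $\sigma_1,\sigma_2$ arguments of $R$ remain unshifted because the degenerate insertion sits on the same side as the bulk of the boundary arc.

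The same computation would then be repeated verbatim for $\tilde H_\chi(t)$ under the constraints~\eqref{eq:para_H_chi2}, where the only change is the sign of the $\frac\chi2$ shifts in the $\sigma$ arguments (coming from $d\tilde\mu_\partial^t$ rather than $d\mu_\partial^t$), which accounts for the $+\tfrac\chi2$ shifts in $\tilde C_2^\pm$ and the reflected weight $2Q-\beta_1+\chi$. Since the argument is structurally identical, I would present the $H_\chi$ case in detail and then indicate that the $\tilde H_\chi$ case follows by the analogous manipulation. The role of the threshold $\beta_0$ is to guarantee that $1-\chi\beta_1+\chi^2$ is the genuine leading correction exponent, i.e.\ that it is smaller in real part than the competing analytic contributions and than any other fusion exponent, so that the claimed asymptotic is clean; $\beta_0$ must be taken small enough that these orderings hold for both $\chi=\frac\gamma2$ and $\chi=\frac2\gamma$.

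The main obstacle I anticipate is the rigorous justification of the emergence of the reflection coefficient from the divergent rescaled GMC integral, in the presence of both bulk and boundary GMC measures. In \cite{DOZZ_proof,rz-boundary} the correlation functions reduce to moments of a single GMC, so the reflection asymptotics follow from known tail estimates of GMC (the quantity $R$ being precisely the leading tail coefficient). In our setting the Liouville functional carries both the area measure $\cA_\phi$ and the boundary measure $\cL_\phi$ simultaneously, with the truncation structure of~\eqref{eq:2-point-sec2}, so the relevant tail expansion is that of the joint law encoded by $\cMtwo$. The careful point is to control the error terms uniformly: one must show that the contribution of the bulk GMC near $0$ is of strictly lower order (it does not contribute at $t^{1-\chi\beta_1+\chi^2}$, paralleling the remark in Lemma~\ref{lem:ope1} that the area term produces no contribution at the leading correction order), and that the remaining boundary integral converges to the advertised reflected correlation after the $t$-rescaling. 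I would handle this by combining the moment bounds for the mixed area/length GMC established in Appendix~\ref{app:GMC} (Lemmas~\ref{lem-str-near-infty} and~\ref{lem-AL-mixed}) with the tail/reflection input from the probabilistic definition of $R$, exactly as the analytic continuation arguments of Section~\ref{subsec-anal-H} trade a divergent integral for its renormalized value. Once uniform integrability is in place, the identification of the coefficient is a direct computation along the lines sketched above.
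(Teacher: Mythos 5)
Your overall route is the paper's route: localize the GMC near the insertion at $0$, decouple the inner field from the rest, extract the reflected exponent from the heavy-insertion asymptotics, and control errors with the mixed area/length moment bounds of Appendix~\ref{app:GMC}. The paper implements this (for $\chi=\frac2\gamma$) via the two-scale localization $K_{(0,|t|^{1+\eta})\cup(|t|,\infty)}$, the radial decomposition $h(x)=B_{-2\log|x|}+Y(x)$ with the lateral field on the inner region replaced by an independent copy, and the explicit law $\P\bigl(e^{\frac\gamma2 M}>v\bigr)=v^{-\frac2\gamma(Q-\beta_1)}$ of $M=\sup_{s\ge 0}\bigl(\tilde B_s-\frac{Q-\beta_1}{2}s\bigr)$, which is what generates the power $t^{1-\chi\beta_1+\chi^2}$ and, after recentering the field at the time of the maximum, the two-pointed-quantum-disk functional defining $R$.

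There is, however, a genuine error in your plan: you assert that the bulk GMC near $0$ "is of strictly lower order (it does not contribute at $t^{1-\chi\beta_1+\chi^2}$)" and that only "the remaining boundary integral converges to the advertised reflected correlation." This is false in the reflection regime. In Lemma~\ref{lem:ope1} the area term drops out of the coefficient of $t^{1-C}$ because there the fractional power arises from Taylor-expanding $|r-t|^{\frac{\gamma\chi}2}$ inside the boundary measure only. Here the fractional power instead comes from the tail of the supremum $M$, and after the shift by $M$ the bulk mass near $0$ survives in the limit as the term $e^{\gamma w}\rho_1(\beta_1)$ inside $\E\bigl[\int_\R dw\, e^{-w(Q-\beta_1)}\bigl(\exp(-e^{\gamma w}\rho_1(\beta_1)-e^{\frac{\gamma}{2}w}\rho_2(\beta_1))-1\bigr)\bigr]$, which is exactly the truncated joint area-and-length functional $(e^{-A-\mu_1L_1-\mu_2L_2}-1)$ defining $R_{\mu_1,\mu_2}(\beta_1)$ through $\cMtwo(\beta_1)$. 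Discarding the bulk mass near $0$ would produce the $\mu=0$ reflection coefficient of \cite{rz-boundary}, which satisfies different shift equations and would break the downstream arguments; your own preceding sentence about the joint law encoded by $\cMtwo$ acknowledges the correct picture, so the two halves of your plan contradict each other. Relatedly, rescaling the neighborhood of $0$ "by $t$" misses the needed two-scale structure: the inner region must be taken at scale $|t|^{1+\eta}$, and the admissible window $\frac2\gamma(Q-\beta_1)<\eta<\frac{1+(\frac2\gamma-\frac\gamma2)\beta_1-\frac4{\gamma^2}}{\frac{\gamma\beta_1}2-1}$ required by the decoupling and error estimates is nonempty only for $\beta_1$ close to $Q$ --- this concrete constraint, not merely an ordering of fusion exponents, is where the threshold $\beta_0$ enters.
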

\begin{proof} We will only give the proof for $\chi = \frac{2}{\gamma}$, as the case of $\chi = \frac{\gamma}{2}$ can also be deduced similarly by adapting the proof strategy of \cite{rz-boundary}. Throughout this proof we will use the notation $q = \frac{1}{\gamma}(2 Q  - \beta_1 - \beta_2 - \beta_3 + \frac{2}{\gamma})$. For a Borel set $I \subseteq [0,\infty)$, consider the symmetric interval $\hat{I} = I \cup -I$ and the domain of the upper-half plane $D_I = \{ e^{ \ii \theta}t \: \vert \: t \in I, \: \theta \in [0, \pi]   \}$. We introduce the notation:
\begin{equation}
K_{I}(t) : = e^{\gamma c} \int_{D_I} \frac{|x-t |^{2} \hat g(x)^{\frac{\gamma^2}{4}(q+1)}}{ |x|^{\gamma \beta_1 }  |x-1|^{\gamma \beta_2 }  }   e^{\gamma h(x)} d^2x + e^{\frac{\gamma c}{2}}  \int_{\hat{I}} \frac{|r-t | \hat g(r)^{\frac{\gamma^2 q}{8}}}{|r|^{\frac{\gamma \beta_1 }{2}} |r-1|^{\frac{\gamma \beta_2 }{2}}}   e^{\frac{\gamma}{2} h(r)} d \mu^t_{\partial}(r).
\end{equation}
Now we want to study the asymptotic of
\begin{equation}
\int_{\mathbb{R}} e^{-\frac{\gamma}{2} q c} \E[\exp(-K_{\mathbb{R}_+}(t))] dc - \int_{\mathbb{R}} e^{-\frac{\gamma}{2} q c} \E[\exp(-K_{\mathbb{R}_+}(0))] dc =: T_1+T_2,
\end{equation}
where we defined:
\begin{align*}
&T_1: = \int_{\mathbb{R}} e^{-\frac{\gamma}{2} q c} \E \left[\exp( -K_{(|t|, + \infty)}(t) ) - \exp( - K_{\mathbb{R}_+}(0) ) \right] dc,\\
&T_2:= \int_{\mathbb{R}} e^{-\frac{\gamma}{2} q c} \E \left[ \exp( -K_{\mathbb{R}_+}(t) ) - \exp( -K_{(|t|, + \infty)}(t) ) \right] dc.
\end{align*}
Following the same inequalities as \cite[Equations (5.21) - (5.24)]{rz-boundary} one obtains $T_1 =o(|t|^{1 - \frac{ 2 \beta_1}{\gamma} + \frac{4}{\gamma^2}}) = o(|t|^{\frac{2}{\gamma}(Q-\beta_1)})$.
Now we focus on $T_2$. The goal is to restrict $K_I$ to the choice $I = (0, |t|^{1+\eta})\cup(|t|,\infty)$, with $\eta>0$ a small positive constant to be fixed, and then the area and boundary GMCs on the three disjoint parts will be weakly correlated.  
By using the inequalities of \cite[Equation (5.26)]{rz-boundary} and taking $\eta$ satisfying the condition
\begin{equation}
\eta < \frac{1+(\frac{2}{\gamma}-\frac{\gamma}{2})\beta_1 -\frac{4}{\gamma^2}}{\frac{\gamma\beta_1}{2}-1}, 
\end{equation}
we have $\int_{\mathbb{R}} e^{-\frac{\gamma}{2} q c} \; \E \left[\exp(-K_{\mathbb{R}_+}(t)) - \exp (-K_{(0, |t|^{1+\eta})\cup(|t|,\infty)}(t)) \right]  dc = o(|t|^{\frac{2}{\gamma}(Q-\beta_1)}).$
It remains to evaluate $\int_{\mathbb{R}} e^{-\frac{\gamma}{2} q c}  \E \left[\exp( - K_{(0, |t|^{1+\eta})\cup(|t|,\infty)}(t)) - \exp( - K_{(|t|,+\infty)}(t)) \right]  dc$. We now introduce the radial decomposition of the field $h$, $h(x) = B_{-2\log|x|} + Y(x)$,
where $B$, $Y$ are independent Gaussian processes with $(B_{s})_{s \in \mathbb{R}}$ a Brownian motion starting from $0$ for $s \geq 0$, $B_s= 0$ when $s<0$, and $Y$ is a centered Gaussian process with covariance,
\begin{equation}\label{equation correlation Y}
\E[Y(x)Y(y)] = 
\begin{cases}
2\log \frac{|x|\lor |y|}{|x-y|}, \quad & |x|,|y| \le 1,\\
2\log \frac{1}{|x-y|} -\frac{1}{2}\log \hat g(x) -\frac{1}{2}\log \hat g(y), \quad &\text{else.}
\end{cases}
\end{equation}
Now with this decomposition one can write:
\begin{align}
K_{I}(t)  &= e^{\gamma c} \int_{D_I} \frac{|x-t|^2 \hat g(x)^{\frac{\gamma^2}{4}(q + 1)} e^{\gamma B_{-2\log|x|}} }{ |x|^{\gamma \beta_1 } |x-1|^{\gamma \beta_2  } }  e^{ \gamma Y(x)}  d^2 x + e^{\frac{\gamma c}{2}}  \int_{\hat{I}} \frac{|r-t| \hat g(r)^{\frac{\gamma^2 q}{8}} e^{\frac{\gamma}{2} B_{-2\log|r|}} }{|r|^{\frac{\gamma \beta_1 }{2}} |r-1|^{\frac{\gamma \beta_2 }{2}}}    e^{\frac{\gamma}{2}Y(r)} d \mu_{\partial}^t(r).
\end{align}
Define the processes $P(x) = Y(x)\mathbf{1}_{|x| \le t^{1+\eta}} + Y(x)\mathbf{1}_{|x| \ge t}$, $\tilde{P}(x) = \tilde{Y}(x)\mathbf{1}_{|x| \le t^{1+ \eta}} + Y(x)\mathbf{1}_{|x| \ge t}$
where $\tilde{Y}$ is an independent copy of $Y$. 
By using the inequalities as in \cite[Equation (5.39)]{rz-boundary}
\begin{align}
& \left| \int_{\mathbb{R}} e^{-\frac{\gamma}{2} q c}  \E \left[\exp( - K_{(0, |t|^{1+\eta})\cup(|t|,\infty)}(t))  - \exp(- \tilde{K}_{(0,|t|^{1+\eta})}(t) - K_{(|t|,+\infty)}(t)) \right]  dc \right| \le c_3\, t^\eta,
\end{align}
for some constant $c_3 >0$, and where in $\tilde{K}_{(0, |t|^{1+\eta})}(t)$ we simply use the field $\tilde{Y}$ instead of $Y$. When $\eta > \frac{2}{\gamma}(Q-\beta_1)$, we can bound the previous term by $o(|t|^{\frac{2}{\gamma}(Q-\beta_1)})$.
Consider now the change of variable $x = t^{1+\eta}e^{-s/2}$. By the Markov property of the Brownian motion and stationarity of
\begin{equation*}
 N_{\tilde{Y}}(ds d \theta) := e^{\gamma \tilde{Y}( e^{- \frac{s}{2} } e^{ \ii \theta} ) - \frac{\gamma^2}{2} \mathbb{E}[\tilde{Y}( e^{- \frac{s}{2} } e^{\ii \theta})^2 ]} ds d\theta,  \quad d\mu_{\tilde{Y}}(s): = \mu_1 e^{ \frac{\gamma}{2} \tilde{Y}(-e^{-s/2}) }ds + \mu_2 e^{ \frac{\gamma}{2} \tilde{Y}(e^{-s/2}) }ds,
\end{equation*}
we can express $\tilde{K}_{(0, |t|^{1+\eta})}(t)$ by the formula
\begin{align*}
\tilde{K}_{(0, |t|^{1+\eta})}(t) &= e^{\frac{\gamma c}{2}} \frac{1}{2} t^{1+(1+\eta)(1-\frac{\gamma\beta_1}{2}+\frac{\gamma^2}{4})}e^{\frac{\gamma}{2}B_{2(1+h)\log (1/t)}}\int_{0}^{\infty} \frac{|1 - t^\eta e^{-s/2}|}{|t^{1+\eta}e^{-s/2}-1|^{\frac{\gamma\beta_2}{2}}}  e^{ \frac{\gamma}{2}(\tilde{B}_s - \frac{s}{2}(Q-\beta_1) ) } d\mu_{\tilde{Y}}(s)\\
&+ e^{\gamma c} \frac{1}{2} \sigma_t^2  \int_{0}^{\infty}   e^{\gamma (\tilde{B}_s- \frac{s}{2}(Q -\beta_1))}  \int_0^{\pi}   \frac{|e^{\ii \theta} t^\eta e^{-\frac{s}{2}} -1 |^2}{|e^{\ii \theta} - e^{- \ii \theta} |^{\frac{\gamma^2}{2}} |e^{\ii \theta} t^{1+\eta} e^{-\frac{s}{2}} - 1 |^{ \gamma \beta_2} } N_{\tilde{Y}}(ds d \theta),
\end{align*}
with $\tilde{B}$ an independent Brownian motion and $\sigma_t:= t^{1+(1+\eta)(1-\frac{\gamma\beta_1}{2}+\frac{\gamma^2}{4})}e^{\frac{\gamma}{2}B_{2(1+\eta)\log (1/t)}}$. We denote:
\begin{equation*}
 V_1: =\frac{1}{2} e^{\frac{\gamma c}{2}} \int_{0}^{\infty} e^{ \frac{\gamma}{2}(\tilde{B}_s - \frac{s}{2}(Q-\beta_1) ) } d\mu_{\tilde{Y}}(s), \: \:
V_2: = \frac{1}{2} e^{ \gamma c} \int_{0}^{\infty}   e^{\gamma (\tilde{B}_s- \frac{s}{2}(Q -\beta_1))}  \int_0^{\pi}   \frac{1}{|e^{\ii \theta} - e^{- \ii  \theta} |^{\frac{\gamma^2}{2}}} N_{\tilde{Y}}(ds d \theta).
\end{equation*}
One can then prove that
\begin{align*}
\left| \int_{\mathbb{R}} e^{-\frac{\gamma}{2} q c}  \E \left[\exp(- \tilde{K}_{(0,|t|^{1+\eta})}(t) - K_{(|t|,+\infty)}(t)) - \exp(- \sigma_t V_1 - \sigma_t^2 V_2 - K_{(|t|,+\infty)}(t)) \right]  dc \right| \leq c_4\,|t|^{(1+\eta)(2-\frac{\gamma\beta_1}{2})},
\end{align*}
for some $c_4>0$. This upper bound is also a $o(|t|^{\frac{2}{\gamma}(Q-\beta_1)})$. 
Now let $ M = \sup_{s \geqslant 0} (\tilde{B}_s -\frac{Q-\beta_1}{2} s) $ and let $L_M$ be the last time $\left( \mathcal{B}^{\frac{Q-\beta_1}{2}}_{-s} \right)_{s\ge 0} $ hits $-M$. Recall that the law of $M$ is known, for $v\ge 1$, $\mathbb{P}( e^{\frac{\gamma}{2} M} >v ) = v^{ -\frac{2}{\gamma}(Q-\beta_1)}$.
For simplicity, we introduce the notation:
\begin{align*}
&\rho_1(\beta_1) := \frac{1}{2}   \int_{- \infty}^{\infty}    e^{ \gamma \mathcal{B}^{\frac{Q-\beta_1}{2}}_s}  \int_0^{\pi}     \frac{1}{|e^{\ii \theta} - e^{- \ii \theta} |^{\frac{\gamma^2}{2}} }    N_{\tilde{Y}}(ds d \theta), \quad \rho_2(\beta_1) := \frac{1}{2}\int_{-\infty}^{\infty}  e^{ \frac{\gamma}{2} \mathcal{B}^{\frac{Q-\beta_1}{2}}_s} \mu_{\tilde{Y}}(ds).
%&\sigma_t:= t^{1+(1+\eta)(1-\frac{\gamma\beta_1}{2}+\frac{\gamma^2}{4})}e^{\frac{\gamma}{2}B_{2(1+ \eta) \log (1/t)}}.
\end{align*}
We can finally show that:
\begin{align*}
T_2 &=  \int_{\mathbb{R}} dc e^{ \frac{q \gamma c}{2} }  \E \left[\exp( -K_{(|t|, +\infty)}(t) - \sigma^2_t e^{\gamma M + \gamma c} \rho_1(\beta_1) - \sigma_t e^{\frac{\gamma}{2} M +  \frac{\gamma}{2} c} \rho_2(\beta_1)) - \exp(-K_{(|t|, +\infty)}(t)) \right]\\
&+ o(|t|^{\frac{2}{\gamma}(Q-\beta_1)}).
\end{align*}

We now evaluate the above difference at first order explicitly using the fact that density of $e^{\frac{\gamma}{2}M}$ is known:
\begin{align*}
& \int_{\mathbb{R}} dc e^{\frac{q\gamma c}{2}} \left( \E[\exp( -K_{(|t|, +\infty)}(t) - \sigma^2_t e^{\gamma M + \gamma c} \rho_1(\beta_1) - \sigma_t e^{\frac{\gamma}{2} M +  \frac{\gamma}{2} c} \rho_2(\beta_1))] -   \E[\exp(-K_{(|t|, +\infty)}(t))] \right) \\ \nonumber
& = \int_{\mathbb{R}} dc e^{\frac{q\gamma c}{2}} \E\left[\int_1^{\infty} \frac{dv}{v^{\frac{2}{\gamma}(Q-\beta_1)+1}}\left(\exp \left(- K_{(|t|, +\infty)}(t) - \sigma^2_t v^2 e^{\gamma c} \rho_1(\beta_1)  - \sigma_t v e^{\frac{\gamma}{2} c} \rho_2(\beta_1) \right) - \exp(-K_{(|t|, +\infty)}(t)) \right) \right]\\
& = \frac{\gamma}{2} \int_{\mathbb{R}} dc e^{\frac{q\gamma c}{2}} \E\Bigg[\int_{c + \frac{2}{\gamma} \log \sigma_t }^{\infty} dw \sigma_t^{\frac{2}{\gamma}(Q -\beta_1)} e^{(c-w)(Q-\beta_1)} \\
& \qquad \qquad \qquad \qquad \qquad \times  \Big(\exp \left(- K_{(|t|, +\infty)}(t) - e^{\gamma w} \rho_1(\beta_1)  - e^{\frac{\gamma}{2} w} \rho_2(\beta_1) \right) -  \exp(-K_{(|t|, +\infty)}(t)) \Big) \Bigg]\\
&  = \frac{\gamma}{2} \int_{\mathbb{R}} dc e^{\frac{q\gamma c}{2}} e^{c(Q-\beta_1)}  \E\left[ \sigma_t^{\frac{2}{\gamma}(Q -\beta_1)} \exp \left(- K_{(|t|, +\infty)}(t)  \right)  \right]\\
&  \times   \E\left[\int_{ \mathbb{R} } dw  e^{-w(Q-\beta_1)} \left(\exp \left( - e^{\gamma w} \rho_1(\beta_1)  - e^{\frac{\gamma}{2} w} \rho_2(\beta_1) \right) - 1 \right) \right] + o(|t|^{\frac{2}{\gamma}(Q-\beta_1)}). 
\end{align*}
%In the last equality we have used the fact that for $\eta < \frac{2}{\gamma(Q-\beta_1)}-1$,
%\begin{align}
%\sigma_t = t^{1-(1+\eta)(1-\frac{\gamma\beta_1}{2}+\frac{\gamma^2}{4})}e^{\frac{\gamma}{2}B_{2(1+ \eta)\log (1/t)}} \underset{\text{a.s.}}{\overset{t\to 0}{\longrightarrow}} 0.
%\end{align}
By apply the Girsanov theorem to the power of $\sigma_t$  in the first expectation  and taking the limit $t \rightarrow 0$, the first expectation will become $t^{\frac{2}{\gamma}(Q-\beta_1)}$ times an $H$ function. The second expectation is the reflection coefficient $R$. Therefore we arrive at the final claim for the $t >0$ limit:
\begin{align}
& \int_{\mathbb{R}} dc e^{q\gamma c/2} \left( \E[\exp( -K_{(|t|, +\infty)}(t) - \sigma^2_t e^{\gamma M + \gamma c} \rho_1(\beta_1) - \sigma_t e^{\frac{\gamma}{2} M +  \frac{\gamma}{2} c} \rho_2(\beta_1))] -   \E[\exp(-K_{(|t|, +\infty)}(t))] \right) \\ \nonumber
&= t^{\frac{2}{\gamma}(Q-\beta_1)} R(\beta_1, \sigma_1 - \frac{1}{\gamma}, \sigma_2- \frac{1}{\gamma}) H
\begin{pmatrix}
2Q - \beta_1 - \frac{2}{\gamma}, \beta_2, \beta_3 \\
\sigma_1 - \frac{1}{\gamma},  \sigma_2,   \sigma_3 
\end{pmatrix}  + o(|t|^{\frac{2}{\gamma}(Q-\beta_1)}).
\end{align}
The $t< 0 $ case is obtained in a similar fashion. 
\end{proof}

\bibliographystyle{alpha}
\bibliography{cibib}
 
\end{document}